\numberwithin{equation}{section}
\let\old@tocline\@tocline
\let\section@tocline\@tocline
\newcommand{\subsection@dotsep}{4.5}
\newcommand{\subsubsection@dotsep}{4.5}
     \leaders\hbox{$\m@th
        \mkern \subsection@dotsep mu\hbox{.}\mkern \subsection@dotsep mu$}\hfill
\let\subsection@tocline\@tocline
\let\@tocline\old@tocline
     \leaders\hbox{$\m@th
        \mkern \subsubsection@dotsep mu\hbox{.}\mkern \subsubsection@dotsep mu$}\hfill
\let\subsubsection@tocline\@tocline
\let\@tocline\old@tocline
\let\old@l@subsection\l@subsection
\let\old@l@subsubsection\l@subsubsection
\def\@tocwriteb#1#2#3{%
  \begingroup
    \@xp\def\csname #2@tocline\endcsname##1##2##3##4##5##6{%
      \ifnum##1>\c@tocdepth
      \else \sbox\z@{##5\let\indentlabel\@tochangmeasure##6}\fi}%
    \csname l@#2\endcsname{#1{\csname#2name\endcsname}{\@secnumber}{}}%
  \endgroup
  \addcontentsline{toc}{#2}%
    {\protect#1{\csname#2name\endcsname}{\@secnumber}{#3}}}%
\newlength{\@tocsectionindent}
\newlength{\@tocsubsectionindent}
\newlength{\@tocsubsubsectionindent}
\newlength{\@tocsectionnumwidth}
\newlength{\@tocsubsectionnumwidth}
\newlength{\@tocsubsubsectionnumwidth}
\newcommand{\settocsectionnumwidth}[1]{\setlength{\@tocsectionnumwidth}{#1}}
\newcommand{\settocsubsectionnumwidth}[1]{\setlength{\@tocsubsectionnumwidth}{#1}}
\newcommand{\settocsubsubsectionnumwidth}[1]{\setlength{\@tocsubsubsectionnumwidth}{#1}}
\newcommand{\settocsectionindent}[1]{\setlength{\@tocsectionindent}{#1}}
\newcommand{\settocsubsectionindent}[1]{\setlength{\@tocsubsectionindent}{#1}}
\newcommand{\settocsubsubsectionindent}[1]{\setlength{\@tocsubsubsectionindent}{#1}}
\renewcommand{\l@section}{\section@tocline{1}{\@tocsectionvskip}{\@tocsectionindent}{}{\@tocsectionformat}}%
\renewcommand{\l@subsection}{\subsection@tocline{2}{\@tocsubsectionvskip}{\@tocsubsectionindent}{}{\@tocsubsectionformat}}%
\renewcommand{\l@subsubsection}{\subsubsection@tocline{3}{\@tocsubsubsectionvskip}{\@tocsubsubsectionindent}{}{\@tocsubsubsectionformat}}%
\newcommand{\@tocsectionformat}{}
\newcommand{\@tocsubsectionformat}{}
\newcommand{\@tocsubsubsectionformat}{}
\def\csname toc@1format\endcsname{\@tocsectionformat}
\def\csname toc@2format\endcsname{\@tocsubsectionformat}
\def\csname toc@3format\endcsname{\@tocsubsubsectionformat}
\newcommand{\settocsectionformat}[1]{\renewcommand{\@tocsectionformat}{#1}}
\newcommand{\settocsubsectionformat}[1]{\renewcommand{\@tocsubsectionformat}{#1}}
\newcommand{\settocsubsubsectionformat}[1]{\renewcommand{\@tocsubsubsectionformat}{#1}}
\newlength{\@tocsectionvskip}
\newcommand{\settocsectionvskip}[1]{\setlength{\@tocsectionvskip}{#1}}
\newlength{\@tocsubsectionvskip}
\newcommand{\settocsubsectionvskip}[1]{\setlength{\@tocsubsectionvskip}{#1}}
\newlength{\@tocsubsubsectionvskip}
\newcommand{\settocsubsubsectionvskip}[1]{\setlength{\@tocsubsubsectionvskip}{#1}}
\patchcmd{\tocsection}{\indentlabel}{\makebox[\@tocsectionnumwidth][l]}{}{}
\patchcmd{\tocsubsection}{\indentlabel}{\makebox[\@tocsubsectionnumwidth][l]}{}{}
\patchcmd{\tocsubsubsection}{\indentlabel}{\makebox[\@tocsubsubsectionnumwidth][l]}{}{}
\newcommand{\@sectypepnumformat}{}
\renewcommand{\contentsline}[1]{%
  \expandafter\let\expandafter\@sectypepnumformat\csname @toc#1pnumformat\endcsname%
  \csname l@#1\endcsname}
\newcommand{\@tocsectionpnumformat}{}
\newcommand{\@tocsubsectionpnumformat}{}
\newcommand{\@tocsubsubsectionpnumformat}{}
\newcommand{\setsectionpnumformat}[1]{\renewcommand{\@tocsectionpnumformat}{#1}}
\newcommand{\setsubsectionpnumformat}[1]{\renewcommand{\@tocsubsectionpnumformat}{#1}}
\newcommand{\setsubsubsectionpnumformat}[1]{\renewcommand{\@tocsubsubsectionpnumformat}{#1}}
\renewcommand{\@tocpagenum}[1]{%
  \hfill {\mdseries\@sectypepnumformat #1}}
\let\oldappendix\appendix
\renewcommand{\appendix}{%
  \leavevmode\oldappendix%
  \addtocontents{toc}{%
    \protect\settowidth{\protect\@tocsectionnumwidth}{\protect\@tocsectionformat\sectionname\space}%
    \protect\addtolength{\protect\@tocsectionnumwidth}{2em}}%
}
\let\oldtableofcontents\tableofcontents
\renewcommand{\tableofcontents}{%
  \vspace*{-\linespacing}
  \oldtableofcontents}
\newcommand{\PB}{\mathbb{PB}}
\newcommand{\prk}{{\mathrm{pr}_{h}}}
\newcommand{\Mrr}{{M_{[r]}}}
\newcommand{\mrr}{{m_{[r]}}}
\newcommand{\comp}{{\mathrm{comp}}}
\newcommand{\Cset}{{\underline{\text{Set}}}}
\newcommand{\magma}{{\underline{\text{Magma}}}}
\newcommand{\magmas}{{\underline{\text{Magma}_*}}}
\newcommand{\Hom}{{\mathrm{Hom}}}
\newcommand{\an}{{\mathrm{an}}}
\newcommand{\cC}{{\mathcal C}}
\newcommand{\cD}{{\mathcal D}}
\newcommand{\Irr}{{\text{Irr}(V)}}
\newcommand{\Vmodu}{{\underline{V{\text{-mod}}}}}
\newcommand{\Vmodo}{{\underline{V{\text{-mod}}}^\op}}
\newcommand{\AVm}{{\underline{A(V){\text{-mod}}}}}
\newcommand{\AVmodu}{{\underline{A(V)\text{-mod}}_{\text{ fin}}}}
\newcommand{\Vmodf}{{\underline{V\text{-mod}}_{f.g.}}}
\newcommand{\Vmodfr}{{\underline{V\text{-mod}}_{f.g.}^r}}
\newcommand{\Vmodfo}{{\underline{V\text{-mod}}_{f.g.}^\op}}
\newcommand{\CPaB}{{\underline{\text{PaB}}}}
\newcommand{\PaB}{{\underline{\text{PaB}}}}
\newcommand{\N}{\mathbb{N}}
\newcommand{\Z}{\mathbb{Z}}
\newcommand{\R}{\mathbb{R}}
\newcommand{\C}{\mathbb{C}}
\newcommand{\Q}{\mathbb{Q}}
\newcommand{\bF}{{\overline{F}}}
\newcommand{\pr}{{\mathrm{pr}_{\Delta'}}}
\newcommand{\Y}{\mathcal{Y}}
\newcommand{\mO}{\mathrm{O}}
\newcommand{\Xr}{{X_r}}
\newcommand{\Or}{\mathrm{O}_{\Xr}}
\newcommand{\Om}{\Omega}
\newcommand{\Ort}{\mathrm{O}_{\Xr/\C}}
\newcommand{\Mr}{{M_{[0;r]}}}
\newcommand{\Mrh}{{M_{[0;\hat{r}]}}}
\newcommand{\Mrt}{{M_{[0;r]}^t}}
\newcommand{\Mra}{{M_{[0;r]}^{r_A}}}
\newcommand{\mr}{{m_{[0;r]}}}
\newcommand{\mrh}{{m_{[0;\hat{r}]}}}
\newcommand{\Ora}{\mathrm{O}^{\text{an}}(X_r)}
\newcommand{\Dr}{{\mathrm{D}_{\Xr}}}
\newcommand{\Leaf}{\text{Leaf}}
\newcommand{\reg}{\text{-reg}}
\newcommand{\Ae}{{A\circ_p \emptyset}}
\newcommand{\alg}{\text{alg}}
\newcommand{\conv}{\text{conv}}
\newcommand{\PI}{\mathcal{I}}
\newcommand{\std}{{\text{std}}}
\newcommand{\tstd}{{\overline{\text{std}}}}
\newcommand{\sstd}{{1(2(\dots(r-1 r)\dots)}}
\newcommand{\ee}{{e_{0}}}
\newcommand{\Drt}{{\mathrm{D}_{\Xr/\C}}}
\newcommand{\va}{\bm{1}}
\newcommand{\1}{\bm{1}}
\newcommand{\id}{{\mathrm{id}}}
\newcommand{\tF}{{\tilde{F}}}
\newcommand{\z}{{\bar{z}}}
\newcommand{\pa}{{\partial}}
\newcommand{\Vect}{{\underline{\text{Vect}}_\C}}
\newcommand{\cM}{{\mathcal{M}}}
\newcommand{\Ob}{{\text{Ob}}}
\newcommand{\al}{\alpha}
\newcommand{\ep}{\epsilon}
\newcommand{\ga}{\gamma}
\newcommand{\ze}{\zeta}
\newcommand{\om}{\omega}
\newcommand{\la}{\lambda}
\newcommand{\si}{\sigma}
\newcommand{\Log}{\mathrm{Log}}
\newcommand{\Arg}{\mathrm{Arg}}
\newcommand{\g}{{\mathfrak{g}}}
\newcommand{\cO}{\mathcal O}
\newcommand{\CB}{{\mathcal{CB}}}
\newcommand{\D}{{\mathbb{D}}}
\newcommand{\End}{\mathrm{End}}
\newcommand{\Func}{\mathrm{Func}}
\newcommand{\Endo}{{\mathcal{E}\mathrm{nd}}}
\newcommand{\Endp}{{\mathcal{PE}\mathrm{nd}}}
\newcommand{\cat}{{\underline{\mathrm{Cat}}_\mathbb{C}}}
\newcommand{\Tr}{{\mathcal{T}}}
\newcommand{\PW}{{\mathcal{PW}}}
\newcommand{\mfp}{{\mathfrak{p}}}
\newcommand{\cut}{\mathrm{cut}}
\newcommand{\op}{{\mathrm{op}}}
\newtheorem{thm}{Theorem}[section]
\newtheorem{dfn}[thm]{Definition}
\newtheorem{lem}[thm]{Lemma}
\newtheorem{prop}[thm]{Proposition}
\newtheorem{cor}[thm]{Corollary}
\newtheorem{rem}[thm]{Remark}
\newtheorem{mainthm}{Main Theorem}
\begin{document}

\begin{center}
{{\LARGE \bf Vertex operator algebra and parenthesized braid operad}
} \par \bigskip

\renewcommand*{\thefootnote}{\fnsymbol{footnote}}
{\normalsize
Yuto Moriwaki \footnote{email: \texttt{moriwaki.yuto (at) gmail.com}}
}
\par \bigskip
{\footnotesize RIKEN Center for Interdisciplinary Theoretical and
Mathematical Sciences,\\ Wako 351-0198, Japan}

\par \bigskip
\end{center}

\noindent

\vspace{10mm}

\begin{center}
\textbf{Abstract}
\end{center}

We study conformal blocks of vertex operator algebras on configuration spaces from the viewpoint of the parenthesized braid operad,
a combinatorial model of the fundamental groupoid of the little 2-disk operad. For each binary tree we introduce coordinates and a simply connected domain in the configuration space, and show that conformal blocks admit convergent expansions on these domains.
 Inserting one binary tree into a leaf of another
gives gluing maps for the corresponding conformal blocks, while analytic continuation along paths in configuration spaces gives isomorphisms between conformal blocks associated with different trees. We prove that these operations are compatible with the operadic composition in the parenthesized braid operad.

As a consequence, the category of $C_1$-cofinite modules whose contragredient
modules are finitely generated carries a canonical unital pseudo-braided
category structure, without assuming rationality or $C_2$-cofiniteness of the
vertex operator algebra. In the rational $C_2$-cofinite case, this structure
is represented by tensor products and recovers the balanced braided tensor
category structure with twist $\exp(2\pi iL(0))$.

\clearpage

\tableofcontents

\begin{center}
{\large \bf Introduction}
\end{center}
\vspace{5mm}

{Braided tensor categories} arise naturally from the geometry of configuration
spaces.  For \(n\geq 1\), let
\[
  X_n(\C)
  =
  \{(z_1,\ldots,z_n)\in \C^n \mid z_i\neq z_j
  \text{ for } i\neq j\}
\]
be the configuration space of \(n\) ordered points in the complex plane.
The {Knizhnik--Zamolodchikov (KZ) connections} on \(X_n(\C)\) illustrate this relation: their flat sections form local systems whose monodromy gives representations of pure braid groups. 
Moreover, the connections \(\mathrm{KZ}_n\) are compatible with insertion
of configurations: inserting a cluster of \(m\) points into one point of an
\(n\)-point configuration relates 
solutions on \(X_n(\C)\) and \(X_m(\C)\) to
those on \(X_{n+m-1}(\C)\).
Passing to monodromy, this compatibility
gives the relations between the associator and the braiding that underlie the
pentagon and hexagon identities of a braided tensor category
\cite{KZ,Dr2,Kohno,EFK}.

This paper studies an analogous structure for conformal blocks of vertex operator algebras. For a sequence of modules over a vertex operator algebra, conformal blocks form a sheaf of solutions of differential equations on configuration spaces \cite{FB,NT}.
For affine vertex operator algebras, this system specializes to the KZ equations \cite{TK}.
Thus conformal blocks provide a natural setting for studying the interplay
between analytic continuation and gluing operations arising from insertion of
configurations.

Factorization of conformal blocks and coinvariants has been studied extensively in the algebro-geometric setting, especially for sheaves on moduli spaces of stable pointed curves and under strong finiteness assumptions such as rationality and \(C_2\)-cofiniteness \cite{TUY,DGT1,DGT2}.
Analytic sewing and factorization for conformal blocks of
\(C_2\)-cofinite vertex operator algebras have also been developed \cite{Gui,GZ1,GZ2,GZ3}.

In contrast, we formulate the gluing of conformal blocks directly on
configuration spaces and describe it in terms of the combinatorics of binary
trees. A binary tree encodes a parenthesization of an \(n\)-fold product. In the
setting of vertex operator algebras, 
this combinatorial datum
specifies the shape
of an iterated composition of vertex operators and determines a convergence
domain in the configuration space, together with a natural system of coordinates
on that domain. On these domains, conformal blocks admit convergent expansions.
Moreover, inserting one tree into another gives gluing maps for conformal blocks, defined by absolutely convergent sums over intermediate
states. Analytic continuation along paths in configuration spaces gives
isomorphisms between conformal blocks associated with different trees.

%

The main result, proved as Theorem \ref{thm_action}, is that these gluing maps
and analytic continuations satisfy the compatibility governed by the
\emph{parenthesized braid operad} \(\PaB\), a combinatorial model of the
fundamental groupoid of the little \(2\)-disks operad \cite{Bar,Ta1,Fr}. 
The operad \(\PaB\) encodes both insertion of parenthesized configurations
and analytic continuation between the associated convergence domains.

As a consequence, the category of \(C_1\)-cofinite modules whose contragredient
modules are finitely generated carries a canonical \emph{unital pseudo-braided
category structure}, without assuming rationality or \(C_2\)-cofiniteness of the
vertex operator algebra. In the rational \(C_2\)-cofinite case, Theorem
\ref{thm_BTC} shows that this pseudo-braided structure is represented by tensor
products and recovers the balanced braided tensor category structure constructed
by Huang--Lepowsky and Huang--Lepowsky--Zhang.

We now make this construction more concrete by explaining the relation between
binary trees, parenthesized products of vertex operators and their convergence domains.

%
 A vertex operator algebra \(V\) has
a product depending on a formal variable,
\begin{align*}
  V\otimes V \longrightarrow V((z)),
  \qquad
  a\otimes b \longmapsto Y(a,z)b
  =
  \sum_{n\in\mathbb Z}a(n)b\,z^{-n-1}.
\end{align*}
Iterating this product requires a choice of parenthesization, hence a binary
tree, e.g., 
\begin{align}
  Y(Y(a_1,z_{12})a_2,z_{23})a_3,
  \qquad
  Y(a_1,z_{13})Y(a_2,z_{23})a_3
  \label{intro_iterated}
\end{align}
correspond to the binary trees
\begin{tikzpicture}[baseline=-.55ex,scale=.45]
  \coordinate (r) at (0,2);
  \coordinate (v) at (-.7,1);
  \coordinate (l1) at (-1.4,0);
  \coordinate (l2) at (-.25,0);
  \coordinate (l3) at (.8,0);
  \draw (r)--(v);
  \draw (r)--(l3);
  \draw (v)--(l1);
  \draw (v)--(l2);
  \node[below=1pt] at (l1) {$1$};
  \node[below=1pt] at (l2) {$2$};
  \node[below=1pt] at (l3) {$3$};
\end{tikzpicture}
\quad\text{and}\quad
\begin{tikzpicture}[baseline=-.55ex,scale=.45]
  \coordinate (r) at (0,2);
  \coordinate (v) at (.7,1);
  \coordinate (l1) at (-.8,0);
  \coordinate (l2) at (.25,0);
  \coordinate (l3) at (1.4,0);
  \draw (r)--(l1);
  \draw (r)--(v);
  \draw (v)--(l2);
  \draw (v)--(l3);
  \node[below=1pt] at (l1) {$1$};
  \node[below=1pt] at (l2) {$2$};
  \node[below=1pt] at (l3) {$3$};
\end{tikzpicture},
respectively.  Writing \(z_{ij}=z_i-z_j\), these formal series converge
absolutely on the regions in $X_3(\C)$
\begin{align}
\{|z_{1}-z_2|<|z_{2}-z_3|\},\qquad \{|z_{2}-z_3|<|z_{1}-z_3|\},\label{intro_domain_ex3}
\end{align}
%
and their analytic continuations coincide as holomorphic functions on
\(X_3(\C)\).
The two domains \eqref{intro_domain_ex3} are described by $|\zeta|<1$, $|\zeta'|<1$, where
\begin{align}
  \zeta=\frac{z_1-z_2}{z_2-z_3},
  \qquad
  \zeta'=\frac{z_2-z_3}{z_1-z_3}.
  \label{intro_zeta}
\end{align}
In general, for each binary tree
\(A\in\Tr_n\), we introduce coordinates on \(X_n(\C)\) adapted to \(A\) and a simply connected open subset
\[
  U_A\subset X_n(\C),
\]
which is the convergence domain of the \(A\)-shaped iterated vertex operators.
Conformal blocks are expanded on these domains, and the gluing maps are defined on the domains corresponding to tree insertions.

Let \(V\) be a vertex
operator algebra of CFT type,
\[
  V=\bigoplus_{n\geq 0}V_n,\qquad
  V_0=\C\mathbf 1,\qquad
  \dim V_n<\infty.
\]
A sequence of \(V\)-modules $M_{[0;n]}=(M_0,M_1,\ldots,M_n)$ determines a \(\mathcal D\)-module \(\mathcal{D}_{M_{[0;n]}}\) on \(X_n(\C)\), which encodes a
system of differential equations. The corresponding conformal block is the sheaf of
holomorphic solutions
\[
  \CB_{M_{[0;n]}}(U)
  =
  \Hom_{\mathcal{D}_{X_n(\C)}}
  \bigl(\mathcal{D}_{M_{[0;n]}},\mathcal O_{X_n(\C)}(U)\bigr).
\]
Here \(M_1,\ldots,M_n\) are the modules inserted at \(z_1,\ldots,z_n\), while \(M_0\) is the module inserted at
\(\infty\).
If \(M_1,\ldots,M_n\) are \(C_1\)-cofinite and \(M_0^\vee\) is finitely
generated, then this sheaf is locally constant.  Hence paths in \(X_n(\C)\)
act on conformal blocks by analytic continuation.


\begin{wrapfigure}{r}{0.4\textwidth}
  \centering
  \vspace{-2mm}
  \includegraphics[width=\linewidth]{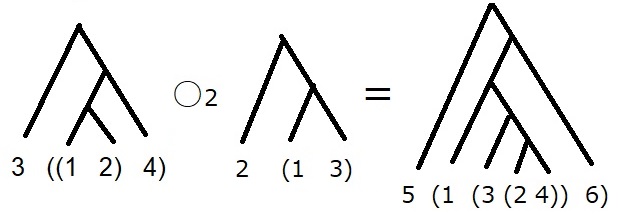}
\end{wrapfigure}

For \(A\in\Tr_r\), \(B\in\Tr_s\), and \(1\leq p\leq r\), let
\[
  A\circ_p B\in\Tr_{r+s-1}\qquad\qquad\qquad\qquad\qquad\qquad\qquad\qquad
\]
be the tree obtained by inserting \(B\) into the\\ \(p\)-th leaf of \(A\), as
shown on the right.

On conformal blocks, the corresponding operation is a gluing map
\begin{align}
\begin{split}
\comp_p:
  \CB_{M_0;M_1,\ldots,M_{p-1},L,M_{p+1},\ldots,M_r}(U_A)
  \otimes
  \CB_{L;N_1,\ldots,N_s}(U_B)\\
  \longrightarrow
  \CB_{M_0;M_1,\ldots,M_{p-1},N_1,\ldots,N_s,M_{p+1},\ldots,M_r}
  (U_{A\circ_p B}).
\end{split}
\label{intro_glue2}
\end{align}
The target is the conformal block on \(U_{A\circ_p B} \subset X_{r+s-1}(\C)\).  The map is defined by an absolutely convergent sum over
intermediate states, and it agrees with the corresponding iterated
composition of logarithmic intertwining operators \eqref{intro_iterated}.

For \(A,B\in\Tr_n\), analytic continuation along a path \(\gamma\) in
\(X_n(\C)\) from \(U_A\) to \(U_B\) gives an isomorphism
\begin{align}
  \rho(\gamma):
  \CB_{M_{[0;n]}}(U_A)
  \longrightarrow
  \CB_{M_{[0;n]}}(U_B).
  \label{intro_mon2}
\end{align}

We prove that the gluing maps in \eqref{intro_glue2} are compatible with analytic continuations \eqref{intro_mon2}.
This compatibility is encoded by the {parenthesized braid operad} \(\PaB\). 

\begin{wrapfigure}{r}{0.4\textwidth}
  \centering
  \includegraphics[scale=0.13]{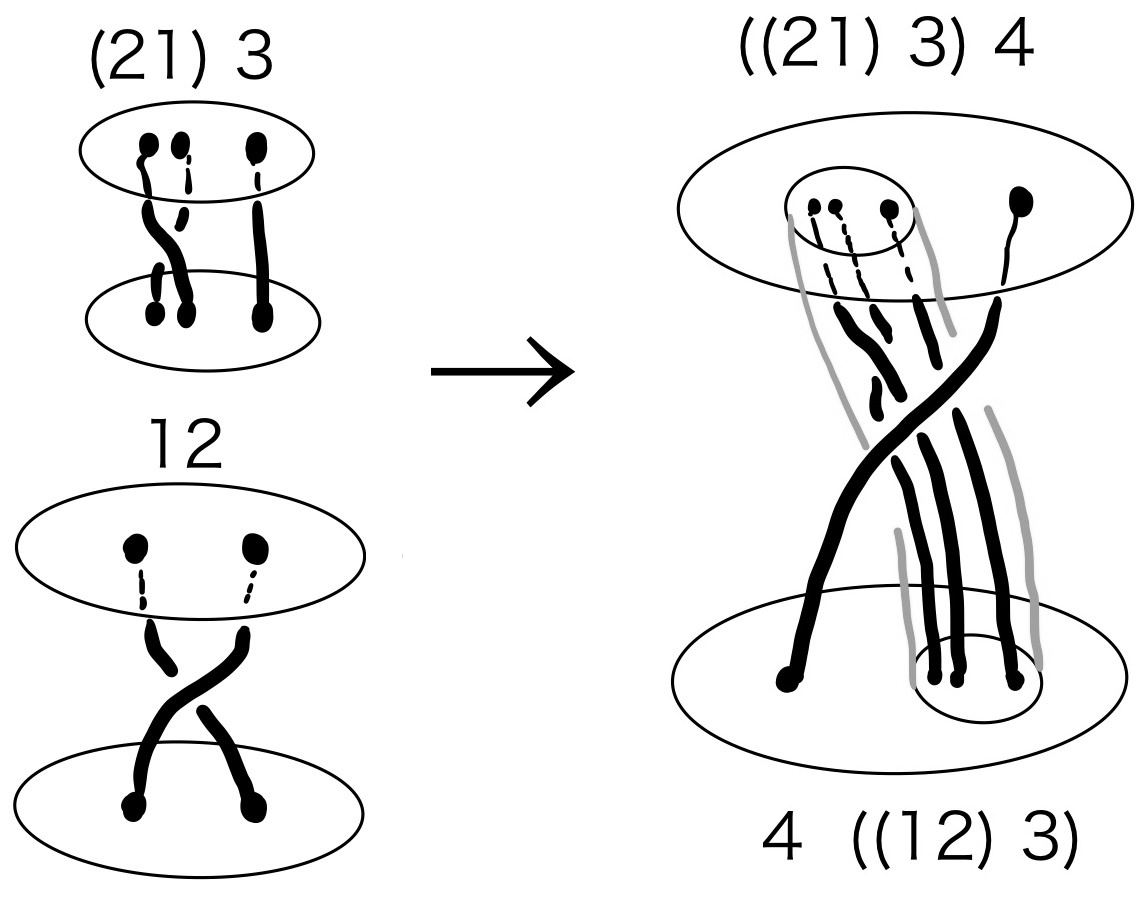}
\end{wrapfigure}

The objects of $\PaB(r)$ are the binary trees \(\Tr_r\), and its
morphisms are parenthesized braids; geometrically, they may be represented by homotopy classes of paths in $X_r(\mathbb C)$ whose endpoints are specified by the corresponding parenthesizations.
The operadic composition inserts one parenthesized braid into a strand of another, as shown on the right. Thus \(\PaB\) encodes both the
insertion of trees and analytic continuation between the associated
convergence domains.

%

Let \(\Vmodf\) denote the category of \(C_1\)-cofinite \(V\)-modules whose
contragredient modules are finitely generated (for the precise definition, see Section \ref{sec_pre_vertex}).  For \(A\in\Tr_r\), set
\[
  \Hom_A(M_1,\ldots,M_r;M_0)
  =
  \CB_{M_{[0;r]}}(U_A).
\]
As a functor, this is contravariant in \(M_1,\ldots,M_r\) and covariant in
\(M_0\).  We regard it as the multi-hom space with inputs
\(M_1,\ldots,M_r\) and output \(M_0\).
The gluing maps give the composition of these multi-hom spaces, while
morphisms in \(\PaB\) act by analytic continuation between the spaces
associated with different trees.
We prove that these operations are compatible with the operadic composition in $\PaB$, and hence define a {unital pseudo-braided category} in the sense of Soibelman \cite{So}.

When \(V\) is rational and \(C_2\)-cofinite, this pseudo-braided category is
represented by tensor products.
Choose a complete set \(\{N_\lambda\}_{\lambda\in\Irr}\) of simple
\(V\)-modules. We realize the tensor product by
\[
  M_1\boxtimes M_2
  =
  \bigoplus_{\lambda\in\Irr}
  N_\lambda\otimes_{\C}
  I\binom{N_\lambda}{M_1\,M_2}^{*}.
\]
The dual appears because the space of intertwining operators $I\binom{N_\lambda}{M_1\,M_2}$ is
contravariant in $M_1$ and $M_2$, whereas the tensor product is
covariant.  For a binary tree \(A\), let
\(\boxtimes_A(M_1,\ldots,M_r)\) be the iterated tensor product of shape
\(A\); for example,
$\boxtimes_{(31)2}(M_1,M_2,M_3)
  =
  (M_3\boxtimes M_1)\boxtimes M_2$.
We prove natural isomorphisms
\begin{align}
  \Hom_{V\text{-mod}_{f.g.}}
  \bigl(\boxtimes_A(M_1,\ldots,M_r),M_0\bigr)
  \cong
  \CB_{M_{[0;r]}}(U_A)\qquad \text{ for all \(A\in\Tr_r\).}
  \label{intro_yoneda}
\end{align}
By the Yoneda lemma, analytic continuation of conformal blocks gives the
associator and braiding.  Together with the  twist
\[
  \theta_M=\exp(2\pi iL(0)),
\]
this yields a balanced braided tensor category structure on the module
category of \(V\). 

\vspace{3mm}
\noindent
\begin{center}
{0.1. \bf 
Trees, coordinates and convergence domains
}
\end{center}
We now describe more systematically the geometric data attached to a binary tree.



Notice that the variables $z_{ij}$ of vertex operators appearing in \eqref{intro_iterated} are different. The expression \(Y(a,z)b\) corresponds to inserting \(a\) at \(z\) and \(b\) at \(0\).  Since this operation is not symmetric in \(a\) and \(b\), 
the natural variables appearing in an iterated product depend on the shape
of the binary tree.  They are attached to the vertices of the tree, according
to the rule described in Section \ref{sec_config}.  By contrast, the ratio
functions in \eqref{intro_zeta} should be regarded as coordinates attached
to the internal edges.

%
%
\begin{minipage}[c]{.6\textwidth}
For example, for the tree on the right, the variables are assigned to the
internal vertices.  The corresponding iterated vertex operator can be
written as
\begin{align*}
Y\left(Y\left(a_2,z_{23}\right)a_3,z_{34}\right)Y\left(Y\left(a_1,z_{15}\right)a_5,z_{54}\right)a_4
\end{align*}
The internal edges, labeled by \(a,b,c\), give the ratios
\begin{align*}
  \ze_a=\frac{z_2-z_3}{z_3-z_4},\qquad
  \ze_b=\frac{z_1-z_5}{z_5-z_4},\qquad
  \ze_c=\frac{z_5-z_4}{z_3-z_4}.
\end{align*}
Then
$\Psi_{(23)((15)4)}=
(z_4,z_3-z_4,\ze_a,\ze_b,\ze_c):X_5(\C) \rightarrow \C^5$
is a local coordinate system on $X_5(\C)$.
\end{minipage}
\begin{minipage}[c]{.4\textwidth}
\centering
\begin{forest}
for tree={
  l sep=20pt,
  parent anchor=south,
  align=center
}
[$z_3-z_4$
[$z_2-z_3$,edge label={node[midway,left]{a}}[2][3]]
[$z_5-z_4$,edge label={node[midway,right]{c}}[$z_1-z_5$,edge label={node[midway,left]{b}}[1][5]]
[4]]
]
\end{forest}
\label{intro_fig_tree_example2}
\end{minipage}

\vspace{2mm}

In general, let \(E(A)\) be the set of internal edges of
\(A\in \Tr_n\).  To each \(e\in E(A)\), we assign the ratio of the variables attached
to the upper and lower vertices of \(e\).  This gives a local coordinate
system
\[
\Psi_A=(z_A,x_A,(\zeta_e)_{e\in E(A)}):X_n(\C) \rightarrow \C^n.
\]
By imposing suitable inequalities on the variables
$\{\zeta_e\}_{e\in E(A)}$ together with a
choice of branches for the multivalued functions, we define a simply connected domain
$U_A\subset X_n(\mathbb C)$.

The same coordinates also define spaces of formal expansions. Let \(T_A\)
be the space of formal series in the \(A\)-coordinates. A typical element
has the form
\[
  x_A^\alpha (\log x_A)^{k}
  \prod_{e\in E(A)} \zeta_e^{\alpha_e} (\log\ze_e)^{k_e} \cdot F(\zeta)
\]
with $F(\zeta) \in  \C[[\ze_e \mid e\in E(A)]]$, $k,k_e \in \Z_{\geq 0}$ and $\al,\al_e \in \C$. 
We denote by \(T_A^{\mathrm{conv}}\) the subspace of \(T_A\) consisting of
series that converge absolutely on the domain \(U_A\).

\vspace{3mm}
\noindent
\begin{center}
{0.2. \bf 
Conformal blocks and gluing
}
\end{center}

We next describe conformal blocks, their gluing, and analytic continuation.
Let
$M_{[0;r]}=(M_0,M_1,\ldots,M_r)$ be a sequence of \(V\)-modules. One associates to \(M_{[0;r]}\) a \(D_{X_r(\mathbb C)}\)-module
\(D_{M_{[0;r]}}\) and its sheaf of holomorphic solutions
\[
  \CB_{M_{[0;r]}}
  =
  \Hom_{D_{X_r(\mathbb C)}}
  \bigl(D_{M_{[0;r]}},\mathcal O_{X_r(\mathbb C)}\bigr).
\]
By refining the argument in \cite{H2}, if $M_i \in \Vmodf$, then
\(D_{M_{[0;r]}}\) is a finitely generated \(\mathcal O_{X_r(\mathbb C)}\)-module, and the
solution sheaf
\(\CB_{M_{[0;r]}}\) is locally constant of finite rank.
For each tree \(A\in\Tr_r\), we analyze the singularities of
\(D_{M_{[0;r]}}\) along the divisors \(\zeta_e=0\) (${e\in E(A)}$) in the \(A\)-coordinates and show that expansion induces an isomorphism
(Theorem \ref{thm_expansion})
\[
  s_A:
  \Hom_{D_{X_r(\mathbb C)}}(D_{M_{[0;r]}},T_A^{\mathrm{conv}})
  \xrightarrow{\;\cong\;}
  \CB_{M_{[0;r]}}(U_A).
\]

Thus a conformal block on \(U_A\) can
be regarded as a convergent formal series in the $A$-coordinates. In the case \(r=2\), this gives the identification
between the spaces of logarithmic intertwining operators and conformal blocks on \(U_{12} \subset X_2(\mathbb C)\):
\begin{align}
  I_{\log}\binom{M_0}{M_1\,M_2}
  \cong
  \CB_{(M_0;M_1,M_2)}(U_{12}).
\end{align}

We next define the gluing operation. Suppose \(A\in\Tr_n\), \(B\in\Tr_m\), and
\(1\leq p\leq n\). Let
\[
  C_A\in
  \CB_{(M_0;N_1,\ldots,N_{p-1},L,N_{p+1},\ldots,N_n)}(U_A)
\]
and
\[
  C_B\in
  \CB_{(L;M_1,\ldots,M_m)}(U_B).
\]
The gluing operation inserts \(C_B\) into the \(p\)-th input of \(C_A\):
choosing a homogeneous basis \(\{e_i\}\) of \(L\) with dual basis
\(\{e^i\}\subset L^\vee\), it is formally given by
\[
  C_A\circ_p C_B
  =
  \sum_i
  C_A(\ldots,e_i,\ldots)\,
  C_B(e^i,\ldots).
\]
We prove that, in the coordinates associated with \(A\circ_p B\), this
series converges absolutely on \(U_{A\circ_p B}\) and defines a conformal
block (Theorem \ref{thm_composition}). 
This gives a composition map
\[
\begin{aligned}
\comp_p:&\CB_{(M_0;N_1,\ldots,N_{p-1},L,N_{p+1},\ldots,N_n)}(U_A)
  \otimes
  \CB_{(L;M_1,\ldots,M_m)}(U_B)
  \\
  &\qquad\longrightarrow
  \CB_{(M_0;N_1,\ldots,N_{p-1},M_1,\ldots,M_m,N_{p+1},\ldots,N_n)}
  (U_{A\circ_p B}).
\end{aligned}
\]
Moreover, if \(\ga:A\to A'\) is a morphism in
the parenthesized braid operad, choose a geometric representative of \(\ga\) by
a path from \(U_A\) to \(U_{A'}\) in \(X_n(\mathbb C)\). Since the conformal
block is locally constant, analytic continuation along this path gives
an isomorphism
\[
  \rho(\ga):
  \CB_{M_{[0;n]}}(U_A)
  \rightarrow
  \CB_{M_{[0;n]}}(U_{A'}).
\]
The gluing maps are compatible with these analytic continuations.
Namely,
for a path $\ga:A \rightarrow A'$ in $X_n(\C)$ and a path
$\mu: B \rightarrow B'$ in $X_m(\C)$, the analytic continuations associated
with these paths make the following diagram commute
(Proposition \ref{prop_natural_comp}):
\begin{align}
\begin{array}{ccc}
\CB(U_A)\otimes \CB(U_B)  & \overset{\comp_p}{\rightarrow} &\CB(U_{A\circ_p B})\\
{}_{\rho(\ga)\otimes \rho(\mu)}\downarrow && \downarrow_{\rho(\ga \circ_p \mu)} \\
\CB(U_{A'})\otimes \CB(U_{B'}) & \overset{\comp_p}{\rightarrow} &\CB(U_{A'\circ_p B'}).
\end{array}
\label{eq_intro_com}
\end{align}
Consequently, $\Vmodf$ has the structure of a unital pseudo-braided category with multi-hom spaces:
$\Hom_A(M_1,\ldots,M_n;M_0)
  =
  \CB_{M_{[0;n]}}(U_A)$ (see Theorem \ref{thm_action}).

\vspace{3mm}
\noindent
\begin{center}
{0.3. \bf 
From pseudo-braided categories to braided tensor categories
}
\end{center}

The pseudo-braided category constructed above is not, by itself, a tensor
category.  A braided tensor category structure is obtained only after the
multi-morphism functors are represented by tensor products.  In Theorem
\ref{thm_add_BTC}, we give a necessary and sufficient criterion for this
representability.
We apply this criterion in the rational \(C_2\)-cofinite case. 

Assume that
\(V\) is rational and \(C_2\)-cofinite.
Let
\(\{N_\lambda\}_{\lambda\in\Irr}\) be a complete set of representatives of
simple \(V\)-modules. For \(V\)-modules \(M_1,M_2\), define
\begin{align}
  M_1\boxtimes M_2
  =
  \bigoplus_{\lambda\in\Irr}
  N_\lambda\otimes_{\mathbb C}
  I\binom{N_\lambda}{M_1\,M_2}^{*}.
  \label{intro_dual}
\end{align}
Here \(I\binom{N_\lambda}{M_1\,M_2}\) is the space of intertwining operators
of type \(\binom{N_\lambda}{M_1\,M_2}\).
For a tree \(A\in\Tr_r\), let
$\boxtimes_A(M_1,\ldots,M_r)$
be the iterated tensor product of shape \(A\). We prove natural isomorphisms (Theorem \ref{cor_factorize})
\begin{align}
\mu_A:
\Hom_V\bigl(\boxtimes_A(M_1,\ldots,M_r),M_0\bigr)
  \cong
  \CB_{M_{[0;r]}}(U_A).\label{intro_rep}
\end{align}
Thus the multi-hom spaces of the pseudo-braided category are represented by the
tensor products \(\boxtimes_A\).
A morphism \(\ga:A\to B\) in the parenthesized braid operad gives
an analytic continuation $\rho(\ga):\CB_{M_{[0;r]}}(U_A)
  \longrightarrow
  \CB_{M_{[0;r]}}(U_B)$.
Through \eqref{intro_rep}, by the Yoneda lemma, this becomes a natural isomorphism in the opposite direction
\begin{align*}
\boxtimes_B (M_1,\ldots,M_r) \rightarrow \boxtimes_A (M_1,\ldots,M_r).
\end{align*}
Hence, we define $\rho_\boxtimes(\ga):\boxtimes_A (M_1,\ldots,M_r) \rightarrow \boxtimes_B (M_1,\ldots,M_r)$
by the following diagram:
\begin{align*}
\begin{split}
\begin{array}{ccc}
\Hom(\boxtimes_B (M_1,\ldots,M_r),L) &\overset{\mu_{B}}{\longrightarrow}& \CB_{\Mr}(U_B)\\
\downarrow{\rho_{\boxtimes}(\ga)^*}
      && 
    \downarrow{\rho(\ga)^{-1}}
    \\
\Hom(\boxtimes_A (M_1,\ldots,M_r),L) &\overset{\mu_{A}}{\longrightarrow}& \CB_{\Mr}(U_A)
\end{array}
\end{split}
\end{align*}

This contravariance is the reason inverse maps appear in the explicit
formulas for the braiding and associator, as well as the dual appearing in
\eqref{intro_dual}.  Together with \eqref{eq_intro_com}, these formulas
define a braided tensor category structure on $\Vmodf$.

\begin{wrapfigure}{r}{0.2\textwidth}
  \centering
  \vspace{-5mm}
    \includegraphics[width=2cm]{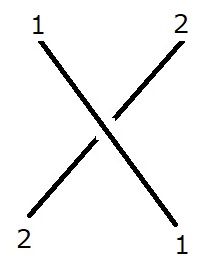}
    \caption{$\si$}\label{intro_fig_sigma}
\end{wrapfigure}

We conclude by describing this braided tensor category structure explicitly.
Let $\sigma:(12)\longrightarrow(21)$
be the morphism in $\PaB(2)$ shown 
in Fig. \ref{intro_fig_sigma}.
We choose its geometric representative in \(X_2(\mathbb C)\)
to be the clockwise half-turn
\footnote{The operad $\CPaB$ is defined abstractly in terms of braid
groups (see Section \ref{sec_def_cpab}).  When one realizes $\sigma$ as a path in $X_2(\C)$, there are two
choices, clockwise and counterclockwise.  Choosing the opposite direction
gives the reverse braiding.  The clockwise convention used here is the one
compatible with the standard twist \(\theta_M=\exp(2\pi iL(0))\).}.
 For \(M_1,M_2\), analytic
continuation along this path gives a linear isomorphism
\[
  R_{\lambda;M_1,M_2}:
  I\binom{N_\lambda}{M_1\,M_2}
  \longrightarrow
  I\binom{N_\lambda}{M_2\,M_1}.
\]
More precisely, \(R_{\lambda;M_1,M_2}\) is obtained by identifying
intertwining operators with conformal blocks on \(U_{12}\), analytically
continuing along \(\sigma\), and then expanding on \(U_{21}\). Under the
decomposition of \(M_1\boxtimes M_2\), the categorical braiding is given by
\[
  B_{M_1,M_2}
  =
  \bigoplus_{\lambda\in\Irr}
  \id_{N_\lambda}\otimes
  \bigl(R_{\lambda;M_1,M_2}^{-1}\bigr)^*.
\]

\begin{wrapfigure}{r}{0.2\textwidth}
  \centering
  \vspace{-12mm}
    \includegraphics[width=2.5cm]{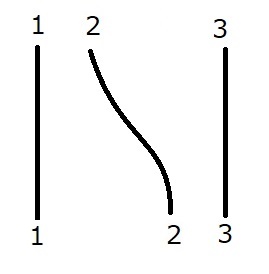}
    \caption{$\al$}\label{intro_fig_alpha}
\end{wrapfigure}
The associator is obtained in the same way from the morphism
$\alpha:(12)3\longrightarrow 1(23)$ of \(\CPaB(3)\) shown in Fig. \ref{intro_fig_alpha}.
 For \(M_1,M_2,M_3\), set
\[
  \mathcal H^{(12)3}_\lambda
  =
  \bigoplus_{\mu\in\Irr}
  I\binom{N_\lambda}{N_\mu\,M_3}
  \otimes
  I\binom{N_\mu}{M_1\,M_2},
\]
and
\[
  \mathcal H^{1(23)}_\lambda
  =
  \bigoplus_{\nu\in\Irr}
  I\binom{N_\lambda}{M_1\,N_\nu}
  \otimes
  I\binom{N_\nu}{M_2\,M_3}.
\]
By \eqref{intro_rep}, these spaces are naturally identified with $\CB_{N_\la;M_1,M_2,M_3}(U_{(12)3})$ and
$\CB_{N_\la;M_1,M_2,M_3}(U_{1(23)})$.
Let $F_{\lambda;M_1,M_2,M_3}:
  \mathcal H^{(12)3}_\lambda
  \longrightarrow
  \mathcal H^{1(23)}_\lambda$ be the linear isomorphism obtained by analytic continuation along \(\alpha\).
Then the categorical associator is given by
\[
  \alpha_{M_1,M_2,M_3}
  =
  \bigoplus_{\lambda\in\Irr}
  \id_{N_\lambda}\otimes
  \bigl(F_{\lambda;M_1,M_2,M_3}^{-1}\bigr)^*.
\]

The pentagon identities follow from the operadic relations in
\(\CPaB\).  In fact,
\begin{align*}
\begin{array}{ccc}
& (M_1 \boxtimes M_2) \boxtimes (M_3 \boxtimes M_4)\\
 {}^{{\alpha_{M_1 \boxtimes M_2, M_3, M_4}}}\nearrow
&&
\searrow^{{\alpha_{M_1,M_2,M_3 \boxtimes M_4}}}
\\
((M_1 \boxtimes M_2 ) \boxtimes M_3) \boxtimes M_4
&&
M_1 \boxtimes (M_2 \boxtimes (M_3 \boxtimes M_4))
\\
{}^{{\alpha_{M_1,M_2,M_3}} \boxtimes id_{M_4} }\downarrow 
&& 
\uparrow^{{ id_{M_1} \boxtimes \alpha_{M_2,M_3,M_4} }}
\\
(M_1 \boxtimes (M_2 \boxtimes M_3)) \boxtimes M_4
& \underset{\alpha_{M_1,M_2 \boxtimes M_3, M_4}}{\longrightarrow} &
M_1 \boxtimes ( (M_2 \boxtimes M_3) \boxtimes M_4)
\end{array}
\end{align*}
\begin{figure}[h]
\begin{minipage}[l]{.58\textwidth}
\vspace{-4mm}

follows from \eqref{eq_intro_com}, because the corresponding paths in
$X_4(\C)$ are homotopic, as indicated in the figure on the right, and each
path in the diagram decomposes as an operadic composition of paths in
$X_3(\C)$ and $X_2(\C)$.
The hexagon identities are obtained in the same way from the relations in \(\PaB\).
Together with the twist
\[
  \theta_M=\exp(2\pi iL(0)),
\]
\end{minipage}
\begin{minipage}[r]{.4\textwidth}
\vspace{-5mm}
    \centering
    \includegraphics[scale=0.62]{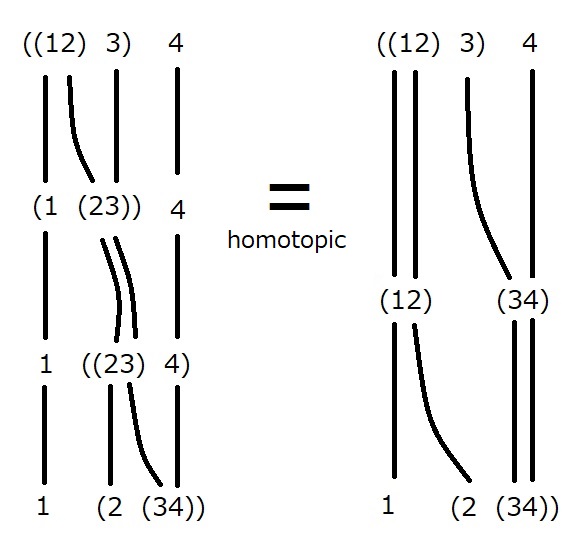}
\end{minipage}
\end{figure}
\vspace{-4mm}

\noindent
the above construction gives a balanced braided tensor category structure on $\Vmodf$ (see Theorem \ref{thm_BTC}).
This conclusion is in line with the tensor category theory of Huang--Lepowsky and
Huang--Lepowsky--Zhang, where the associativity and braiding constraints are
constructed from the convergence and analytic continuation properties of logarithmic intertwining operators \cite{HL1,HL2,HL3,HL4,H1,H2,HLZ1,HLZ2,HLZ3,HLZ4,HLZ5,HLZ6,HLZ7,HLZ8}.

The formulas for the braiding and associator are worked out in concrete
examples in \cite{MPalermo}.
For the simple Virasoro vertex operator algebra \(L(\frac12,0)\), the relevant conformal blocks on
\(X_3(\mathbb C)\) are solutions of the BPZ differential equations and can be
written in terms of hypergeometric functions \cite{BPZ}. Their analytic continuation
gives the braiding and associator of the module category, which is identified
with a Tambara--Yamagami category over \(\mathbb Z/2\mathbb Z\).

Finally, the construction also has a natural interpretation in full
two-dimensional conformal field theory. In a full, non-chiral theory, correlation
functions are real-analytic functions on configuration spaces and are locally
expanded by real-analytic operator product expansions \cite{HK, M1,AMT}. From
this viewpoint, the domains \(U_A\) are the convergence regions for iterated
real-analytic OPEs associated with binary trees \cite{MSwiss}. In rational theories, full
correlation functions are obtained by gluing monodromy-invariant combinations
of products of holomorphic and anti-holomorphic conformal blocks
\cite{MS1, MS2, FRS,HK}.

\vspace{5mm}

This paper is organized as follows.  In Section \ref{sec_pre}, we recall the
basic definitions of vertex operator algebras, the class of modules used in
the paper, contragredient modules, \(C_1\)-cofiniteness, and logarithmic
intertwining operators.  In Section \ref{sec_operad}, we review the operadic
background.  We recall the magma operad of binary trees, the parenthesized
braid operad \(\PaB\), the formulation of unital pseudo-braided categories
in terms of \(\PaB\), 
and the representability criterion under which a unital pseudo-braided
category gives rise to a braided tensor category.
In Section \ref{sec_config}, we introduce the coordinates and convergence
domains attached to binary trees.  For each \(A\in\Tr_r\), we define the
\(A\)-coordinates, the simply connected domain \(U_A\subset X_r(\C)\), and
the space \(T_A^{\mathrm{conv}}\) of convergent parenthesized formal series.
In Section \ref{sec_D}, we define the \(\mathcal D_{X_r(\C)}\)-modules whose
solution sheaves are conformal blocks.  We prove the finiteness of these
\(\mathcal D\)-modules over \(\mathcal O_{X_r(\C)}\), analyze their
singularities in the coordinates associated with a tree, and identify
conformal blocks on \(U_A\) with convergent \(A\)-shaped formal expansions.
Section \ref{sec_parenthesized} constructs the gluing operation for
conformal blocks.  The operation is defined by summing over intermediate
states, and the resulting series is absolutely convergent on \(U_{A\circ_p B}\).  In Section
\ref{sec_lax}, we prove that these gluing maps are compatible with analytic
continuation along morphisms in the parenthesized braid operad.  This gives
a unital pseudo-braided category structure on \(\Vmodf\).
In Section \ref{sec_rep}, we assume that \(V\) is rational and
\(C_2\)-cofinite.  Using the Zhu algebra and Frenkel--Zhu bimodules, we
prove that the multi-hom spaces given by conformal blocks are represented by tensor products. 
Analytic continuation is then transported to the representing objects,
giving explicit natural isomorphisms for the braiding and associator.
This yields a balanced braided tensor category structure on \(\Vmodf\). 
Appendix A contains the analytic result on differential systems arising from the tree coordinates, which is used to establish the expansion and convergence statements for conformal blocks.

\section{Preliminaries}
\label{sec_pre}

\subsection{Notation}
We will use the following notations:
\begin{itemize}
\item[$\Z_{\geq 0}$:] the set of non-negative integers
\item[$\Z_{> 0}$:] the set of positive integers
\item[$\cC$:] a $\C$-linear category
\item[$\Endo_\cC$:] the endomorphism operad associated with $\cC$
\item[$\Endp_\cC$:] a $2$-operad defined by coends associated with $\cC$, \S \ref{sec_appendix_B}
\item[$\circ_p$:] the $p$-th composition of an operad
\item[${[r]}$:] $=\{1,2,\dots,r\}$
\item[$\Tr_r$:] the set of all binary trees with $r$ leaves labeled by $[r]$, \S \ref{sec_operad_def}
\item[$\PW_r$:] the set of all binary trees with $r$ leaves ordered from $1$ to $r$, \S \ref{sec_operad_def}
\item[$\magma$:] the non-unitary magma operad, \S \ref{sec_operad_def}
\item[$\CPaB$:] the parenthesized braid operad, \S \ref{sec_def_cpab}
\item[$\D_p$:] $=\{\ze \in \C\mid |\ze|<p \}$, the unit disk of radius $p \in \R_{>0}$
\item[$\D_p^\times$:] $=\{\ze \in \C\mid 0<|\ze|<p \}$
\item[$\D_p^\cut$:] $=\{\ze \in \D_p^\times \mid -\pi < \Arg(\ze)<\pi \}=\D_p \setminus \R_{\leq 0}$
\item[$X_r$:] $=\{(z_1,\dots,z_r)\in \C^r\mid z_i\neq z_j \text{ for any }i\neq j\}$, the $r$ configuration space
\item[$\Or^\alg$:] $=\C[z_1,\dots,z_r,(z_i-z_j)^\pm\mid 1\leq i\neq j \leq r]$, the ring of regular functions on $X_r$
\item[$\Dr$:] $=\C[z_1,\pa_1,\dots,z_r,\pa_r,(z_i-z_j)^\pm \mid 1\leq i\neq j \leq r]$, the ring of differential operators on $X_r$
\item[$\Drt$:] $=\C[\pa_1,\dots,\pa_r,(z_i-z_j)^\pm \mid 1\leq i\neq j \leq r]$, a subalgebra of $\Dr$
\item[$\Or^\an$:] the sheaf of holomorphic functions on $X_r$
\item[$\Ort^\an$:] the sheaf of holomorphic functions on $X_r/\C$
\item[$A$:] a binary tree in $\Tr_r$
\item[$\Leaf(A)$:] a set of leaves of $A$
\item[$V(A)$:] a set of vertices of $A$ which are not leaves.
\item[$E(A)$:] a set of edges of $A$ which are not connected to leaves
\item[$d(e)$:] the lower vertex of an edge $e \in E(A)$, \S \ref{sec_config_coordinate}
\item[$u(e)$:] the upper vertex of an edge $e\in E(A)$
\item[$\Leaf(e)$:] a set of all leaves which are descendants of $d(e)$, \S \ref{sec_config_D}
\item[alpha-type edge:] a type of an edge (Definition \ref{def_alpha})
\item[$U_A$:] an open subset of $X_r$ associated with $A$, \S \ref{sec_config_coordinate}
\item[$z$-coordinate:] the standard coordinate $(z_1,z_2,\dots,z_r)$ on $\C^r$
\item[$x$-coordinate:] a local coordinate on $U_A$ associated with $A$, \S \ref{sec_config_coordinate}
\item[$A$-coordinate:] a local coordinate on $U_A$ associated with $A$, \S \ref{sec_config_coordinate}
\item[$T_A$:] a space of formal power series on $A$-coordinate, \S \ref{sec_config_conv}
\item[$V$:] a vertex operator algebra of CFT type
\item[$\Delta_a$:] the $L(0)$-degree of $a \in V$
\item[$\Vmodu$:] the category of $V$-modules which satisfies (M1)-(M4), \S \ref{sec_pre_vertex}
\item[$\Vmodf$:] the category of $C_1$-cofinite $V$-modules, \S \ref{sec_pre_vertex}
\item[$\Mr$:] $=M_0^\vee \otimes \bigotimes_{i=1}^r M_i$ for $V$-modules $M_0,\dots,M_r$
\item[$\Mrr$:] $=\bigotimes_{i=1}^r M_i$
\item[$a(n)_p$:] $n$-th product on $p$-th component on $\Mr$ for $a\in V$ and $n\in\Z$
\item[$I\binom{M_0}{M_1M_2}$:] the space of intertwining operators of type $\binom{M_0}{M_1M_2}$
\item[$I_{\log} \binom{M_0}{M_1M_2}$:] the space of logarithmic intertwining operators of type $\binom{M_0}{M_1M_2}$
\item[$D_\Mr$:] a $\Dr$-module associated with $\Mr$, \S \ref{sec_D_def}
\item[$\CB_\Mr$:] $=\Hom_{\Dr}(D_\Mr,\Or^\an)$, the solution sheaf, \S \ref{sec_D_def}
\item[$D_\Mrt$:] a $\Drt$-module associated with $\Mr$, \S \ref{sec_D_translation}
\item[$\CB_\Mrt$:] $=\Hom_{\Drt}(D_\Mrt,\Ort^\an)$, the solution sheaf, \S \ref{sec_D_translation}
\item[$\PI_A\binom{M_0}{\Mrr}$] a space of parenthesized intertwining operator of type $(A,M_0,M_{[r]})$, \S \ref{sec_parenthesized_def}
\end{itemize}

\vspace{5mm}

Let $z$ be a formal variable. We consider $\log z$ a formal variable independent of $z$.
For a vector space $V$,
we denote by $V[[z^\C]][\log z]$
the set of formal sums 
$$\sum_{k=0}^N \sum_{s \in \C} a_{s,k}
z^{s} (\log z)^k $$ such that $N\in \Z_{\geq 0}$ and $a_{s,k} \in V$.
The differential operator $\frac{d}{dz}$ acts on this linear space as 
$\frac{d}{dz}z^s=sz^{s-1}$ and $\frac{d}{dz}\log z=z^{-1}$.
The space $V[[z^\C]][\log z]$ contains various subspaces:
\begin{align*}
V[[z^\C]]&=\{\sum_{s \in \C} a_{s}z^{s}\;|\;a_s \in V \}, \\
V[z^\C]&=\{\sum_{s \in \C} a_{s}z^{s}\;|\;a_s \in V\text{ and }a_s=0 \text{ for all but finitely many }s \in \C \},\\
V[[z^\pm]]&=\{\sum_{n \in \Z} a_{n}
z^{n}\;|\;a_n \in V \},\\
V[z^\pm]&=\{\sum_{n \in \Z} a_{n}z^{n}\;|\;a_n \in V\text{ and }a_n=0 \text{ for all but finitely many }n \in \Z \},\\
V[[z]]&=\{\sum_{n \in \Z_{\geq 0}} a_{n}
z^{n}\;|\;a_n \in V \}.
\end{align*}

We often consider a combination of the above formal series. 
 For example, $V[[z]][z^\C]$ is the linear space spanned by formal series of the form:
 \begin{align*}
\sum_{n \in \Z_{\geq 0}}a_{n} z^{s+n}
\end{align*}
for $s \in \C$ and $a_n\in V$
and $V[[z]][z^{\pm}]$ is the same as the familiar space $V((z))$, the space of formal Laurent series.


For a map $f:X\rightarrow Y$, write its input as $f(\bullet)$.
When there is more than one input, that is, $f:X_1\times X_2 \times \cdots \times X_r\rightarrow Y$, 
write $f(\bullet_1,\bullet_2,\dots,\bullet_r)$ etc.

\subsection{Vertex operator algebra and modules}
\label{sec_pre_vertex}
This section briefly recalls the definitions of a vertex operator algebra and its modules \cite{FLM,FHL,LL}.

{\it A vertex algebra} is a $\C$-vector space $V$ equipped with a linear map 
$$Y(\bullet,z):V \rightarrow \End (V)[[z^\pm]],\; a\mapsto Y(a,z)=\sum_{n \in \Z}a(n)z^{-n-1}$$
and a non-zero element $\1 \in V$ satisfying the following conditions:
\begin{enumerate}
\item[V1)]
For any $a,b \in V$, $Y(a,z)b\in V((z))$, i.e.,
$a(n)b=0$ for all sufficiently large $n \in \Z$;
\item[V2)]
For any $a \in V$, $Y(a,z)\1 \in V[[z]]$ and $\lim_{z \to 0}Y(a,z)\1 = a(-1)\1=a$;
\item[V3)]
$Y(\1,z)=\id_V$, where $\id_V$ is the identity map on $V$.
\item[V4)]
For any $a,b \in V$ and $n\in \Z$,
\begin{align}
\begin{split}
[a(n),Y(b,z)]&= \sum_{k \geq 0} \binom{n}{k}Y(a(k)b,z)z^{n-k},\\
Y(a(n)b,z)&=\sum_{k \geq 0}\binom{n}{k} \left(a(n-k)Y(b,z)(-z)^k - Y(b,z) a(k) (-z)^{n-k}\right).
\label{eq_Borcherds}
\end{split}
\end{align}
\end{enumerate}
The Fourier mode $a(n)$ is called the {\it $n$-th product.}

\begin{rem}
\label{rem_vertex_borcherds}
Equation \eqref{eq_Borcherds} is equivalent to the Borcherds identity, 
which is usually used in the definition of a vertex algebra  (see \cite[Proposition 3.4.3]{LL} or Proposition \ref{prop_int_equiv}).
Writing the definition in this form is more convenient for our purposes.
\end{rem}

{\it A vertex operator algebra (of CFT type)}
is a vertex algebra with a distinguished vector
$\om \in V$ such that:
\begin{enumerate}
\item
There exists a scalar $c \in \C$ such that 
\begin{align}
[L(m),L(n)]=(m-n)L(m+n)+\frac{m^3-m}{12}\delta_{m+n,0}\,c
\label{eq_cc}
\end{align}
holds for any $n,m \in \Z$, where $L(n)=\om(n+1)$;
\item
$[L(-1),Y(a,z)]=\frac{d}{dz}Y(a,z)$ for any $a\in V$;
\item
$L(0)$ is semisimple on $V$, and all its eigenvalues are non-negative integers;
\item
$V_n$ is a finite-dimensional vector space for any $n \in \Z_{\geq 0}$,
where $V_n=\{a\in V\mid L(0)a=na \}$.
\item
$V_0$ is spanned by $\1$.
\end{enumerate}
The scalar $c \in \C$ in \eqref{eq_cc} is called {\it the central charge} of the vertex operator algebra.
We will denote the $L(0)$-degree of $a \in V$ by $\Delta_a$.

Let $V$ be a vertex operator algebra.
A $V$-module is defined similarly by
a linear map $Y_M(-,z):V \rightarrow \End M[[z^\pm]]$ which satisfies 
trivial modifications of (V1), (V3), and (V4)
(see for example \cite{LL}).
\begin{dfn}
\label{def_module_grade}
Throughout this paper, we assume that a $V$-module $M$ satisfies the following conditions:
\begin{enumerate}
\item[M1)]
The action of $L(0)=\om(1)$ on $M$ is locally finite;
\item[M2)]
$M_h$ is a finite-dimensional vector space for any $h\in \C$,
where 
\begin{align*}
M_h=\{m\in M\;|\; (L(0)-h)^k m=0 \text{ for sufficiently large }k \};
\end{align*}
\item[M3)]
There exists $N \in \Z_{\geq 0}$ such that for any $h\in \C$ and $m\in M_h$,
$(L(0)-h)^N m=0$;
\item[M4)]
There exists $n\in \Z_{>0}$ and $\Delta_1,\dots,\Delta_n \in \C$ such that
\begin{align*}
M=\bigoplus_{i=1}^n \bigoplus_{k\geq 0} M_{\Delta_i+k}.
\end{align*}
\end{enumerate}
\end{dfn}
Denote the category of $V$-modules which satisfy (M1)-(M4) by $\Vmodu$.

For a $V$-module $M$,
set 
$$M^\vee=\bigoplus_{h\in \C} M_h^*,
$$
a restricted dual space, where $M_h^*$ is the dual vector space of $M_h$.
Denote the canonical pairing $M^\vee \otimes M\rightarrow \C$
by $\langle \bullet,\bullet \rangle$.
A $V$-module structure on $M^\vee$ is defined by
\begin{align}
\langle Y(a,z)f, \bullet \rangle 
= \langle f, Y\left(\exp(L(1)z)(-z^{-2})^{L(0)} a,z^{-1}\right)\bullet \rangle \label{eq_dual}
\end{align}
for $a\in V$ and $f \in M^\vee$ \cite{FHL}.
It is called {\it a dual module}.

For $a\in V$, $n\in \Z$ and $f \in M^\vee$,
set
\begin{align*}
(a(n)^* f)(\bullet) = f \Bigl(a(n)\bullet \Bigr).
\end{align*}

Then, \eqref{eq_dual} can be written as:
\begin{align}
a(\Delta_a-1 - n)f=\sum_{k \geq 0}\frac{(-1)^{\Delta_a}}{k!} (L(1)^k a)\Bigl((\Delta_a-k) -1+n\Bigr)^*f
\label{eq_dual2}
\end{align}
for $a\in V$ and $n\in \Z$.

\begin{rem}
\label{rem_dual_closed}
Note that if $M\in \Vmodu$, then $M^\vee\in \Vmodu$.
\end{rem}



Let $M$ be a $V$-module.
For any $n \in \Z_{>0}$, set 
\begin{align*}
C_n(M)= \{a(-n)m\mid m\in M \text{ and }a \in V_+=\bigoplus_{k\geq 1}V_k \}.
\end{align*}

\begin{dfn}
A $V$-module $M$ is called {\it $C_n$-cofinite} if $M/C_n(M)$ is a finite-dimensional vector space.
\end{dfn}

Since $(L(-1)a)(-n)=na(-n-1)$ for any $a \in V$ and $n\in \Z_{>0}$,
$C_{n+1}(M) \subset C_{n}(M)$. Hence, if $M$ is $C_{n+1}$-cofinite,
then $M$ is $C_{n}$-cofinite.
Note that any vertex algebra is $C_1$-cofinite by (V2).

The main result is shown for the following category:
\begin{dfn}
\label{def_category}
Let $\Vmodf$ be the full subcategory of $\Vmodu$ consisting of $V$-modules $M$ such that
$M$ is $C_1$-cofinite and $M^\vee$ is a finitely generated $V$-module.
\end{dfn}

\begin{dfn}
A vertex operator algebra $V$ is called {\it $C_2$-cofinite} if $V$ is $C_2$-cofinite as a $V$-module.
\end{dfn}

We end this section by showing the following proposition:
\begin{prop}
\label{prop_C2_C1}
If $V$ is $C_2$-cofinite, then any module in $\Vmodu$ is a finitely generated $V$-module.
In particular, $\Vmodf$ and $\Vmodu$ coincide.
\end{prop}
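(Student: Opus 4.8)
The plan is to derive both assertions from the standard spanning-set theorem for $C_2$-cofinite vertex operator algebras together with the grading conditions (M1)--(M4). First I would fix a finite-dimensional $L(0)$-homogeneous subspace $U\subset V_+$ whose image spans $V_+/C_2(V)$; since $V_0=\C\1$ and $C_2(V)\subset \bigoplus_{n\geq 2}V_n\subset V_+$, the span of $U\cup\{\1\}$ then maps onto $V/C_2(V)$. Let $M\in\Vmodu$. By (M4) we have $M=\bigoplus_{i=1}^{n}\bigoplus_{k\geq 0}M_{\Delta_i+k}$, so, grading $M$ by the real part of the $L(0)$-weight, $M$ is lower truncated, and its bottom $B:=\bigoplus_{i=1}^{n}M_{\Delta_i}$ is finite-dimensional by (M2). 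Applying the $C_2$-cofinite spanning-set theorem (Gaberdiel--Neitzke, Buhl, Abe--Buhl--Dong) to $M$ — after, if one wishes, decomposing it into its finitely many $L(0)$-generalized-weight towers — one obtains that $M$ is linearly spanned by the monomials
$$a^{1}(-m_1)a^{2}(-m_2)\cdots a^{s}(-m_s)\,u,\qquad s\geq 0,\ a^{j}\in U,\ m_1\geq m_2\geq\cdots\geq m_s\geq 1,\ u\in B.$$

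From this, two facts follow. (i) Every such monomial lies in the $V$-submodule generated by $B$; since $B$ is finite-dimensional, $M$ is a finitely generated $V$-module, which is the first assertion. (ii) For $s\geq 1$, using the identity $a(-m)=\frac{1}{(m-1)!}(L(-1)^{m-1}a)(-1)$ for $m\geq 1$ (a consequence of $(L(-1)a)(-n)=n\,a(-n-1)$), the leading factor satisfies $a^{1}(-m_1)=\frac{1}{(m_1-1)!}(L(-1)^{m_1-1}a^{1})(-1)$, and $L(-1)^{m_1-1}a^{1}\in V_+$ because $a^{1}\in V_+$; hence every monomial with $s\geq 1$ lies in $C_1(M)$. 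Therefore $M=B+C_1(M)$, so $M$ is $C_1$-cofinite.

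Finally, $\Vmodf\subset\Vmodu$ holds by definition of $\Vmodf$. Conversely, if $M\in\Vmodu$, then $M^\vee\in\Vmodu$ by Remark \ref{rem_dual_closed}, so $M^\vee$ is a finitely generated $V$-module by (i), while $M$ is $C_1$-cofinite by (ii); hence $M\in\Vmodf$, and the two categories coincide. The one genuinely technical point is the invocation of the spanning-set theorem in the exact form needed here, namely for the generalized-weight, not-necessarily-$\N$-gradable modules of $\Vmodu$; this is handled by running the usual induction on the real part of the $L(0)$-weight, with (M2)--(M4) guaranteeing that the induction is anchored at the finite-dimensional bottom $B$ and terminates. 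Everything else is formal.
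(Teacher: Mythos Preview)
Your argument has a gap at the crucial step: the spanning-set theorems of Gaberdiel--Neitzke, Buhl, and Abe--Buhl--Dong describe, in PBW form, the submodule $\langle S\rangle$ generated by a \emph{given} set $S$; they do not assert that an arbitrary module in $\Vmodu$ equals $\langle B\rangle$ for $B$ its bottom weight space. In fact this can fail. Take any $C_2$-cofinite $V$ admitting two irreducibles $L_1,L_2$ whose lowest weights satisfy $h_2-h_1\in\Z_{>0}$ (for instance the lattice VOA $V_L$ with $L=\sqrt{8}\,\Z$, which has irreducibles of conformal weight $0$ and $1$), and set $M=L_1\oplus L_2$. The direct-sum requirement in (M4) forces the $\Delta_i$ to be pairwise incongruent modulo $\Z$, so here $n=1$ and your $B=M_{h_1}=(L_1)_{h_1}$. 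Then $\langle B\rangle=L_1\subsetneq M$, and likewise $(L_2)_{h_2}\not\subset B+C_1(M)$ because $C_1(L_2)\subset\bigoplus_{k\geq 1}(L_2)_{h_2+k}$. Thus both your (i) and (ii) break down for this $B$. The ``usual induction on the real part of the $L(0)$-weight'' you invoke at the end does not rescue this: the induction in the spanning-set proofs rewrites monomials that already lie in $\langle B\rangle$; it gives no mechanism for placing an arbitrary $m\in M_{\Delta+k}$ inside $\langle B\rangle$ in the first place.

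The paper avoids this by choosing a different finite-dimensional subspace. Using that $C_2$-cofiniteness makes the Zhu algebra finite-dimensional, hence that $V$ has finitely many irreducibles with lowest weights $\Delta_1,\dots,\Delta_r$, it sets $\Delta=\max_i\mathrm{Re}\,\Delta_i$ and $S=\bigoplus_{\mathrm{Re}(h)\leq\Delta}M_h$. Any nonzero $M/\langle S\rangle$ would contain an irreducible subquotient, whose lowest weight necessarily has real part $\leq\Delta$; but $(M/\langle S\rangle)_h=0$ for $\mathrm{Re}(h)\leq\Delta$, a contradiction. The $C_1$-cofiniteness of $M$ is then obtained not by a direct spanning argument but via Proposition~\ref{prop_ABD} (finitely generated $\Rightarrow$ $C_2$-cofinite $\Rightarrow$ $C_1$-cofinite), applied only after finite generation is in hand. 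The ingredient your approach is missing is precisely this uniform bound, over all of $\Vmodu$, on where the lowest weight of an irreducible subquotient can sit.
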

To prove this proposition, we need the following result by \cite{Bu,ABD}:
\begin{prop}{\cite[Proposition 5.2]{ABD}}
\label{prop_ABD}
If $V$ is $C_2$-cofinite and $M$ is a finitely generated $V$-module,
then $M$ is $C_2$-cofinite.
\end{prop}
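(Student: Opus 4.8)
The plan is to reduce the statement to a finiteness assertion about modules over the finite-dimensional \emph{$C_2$-algebra} $R_V := V/C_2(V)$. First recall that $C_2$-cofiniteness of $V$ means exactly that $R_V$ is a finite-dimensional (commutative) algebra; since $V_0=\C\1$ and the grading is positive, I may choose finitely many homogeneous elements $a_1,\dots,a_s \in V_+$ whose images $\bar a_1,\dots,\bar a_s$, together with the unit $\bar\1$, span $R_V$. The goal then becomes: $M/C_2(M)$ is a finitely generated $R_V$-module, whence finite-dimensional because $R_V$ is.

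The first step is to endow $M/C_2(M)$ with an $R_V$-module structure by $\bar a \cdot \bar m := \overline{a(-1)m}$. This rests on three consequences of the mode identities (V4). (i) For $a\in V_+$ and $n\ge 2$ one has $a(-n)m \in C_2(M)$: indeed $(L(-1)a)(-n)=n\,a(-n-1)$ gives $a(-n-1)=\tfrac1n (L(-1)a)(-n)$ with $L(-1)a\in V_+$, so an induction starting from the defining case $n=2$ places every such $a(-n)m$ in $C_2(M)$. (ii) The $(-1)$-modes commute modulo $C_2(M)$: the commutator $[a(-1),b(-1)]=\sum_{k\ge 0}(-1)^k (a(k)b)(-2-k)$ involves only modes $\le -2$, so by (i) it maps $M$ into $C_2(M)$. (iii) Elements of $C_2(V)$ act by zero: for $c\in V_+$ and $d\in V$ the iterate formula (the second line of (V4)) expresses $(c(-2)d)(-1)$ as a sum of terms each carrying a mode $\le -2$ of $c$ or of the $V_+$-component of $d$, landing in $C_2(M)$ by (i) (the $\1$-component of $d$ contributing only $c(-2)$ itself). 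Together with the analogous associativity identity $\overline{(a(-1)b)(-1)m}=\overline{a(-1)b(-1)m}$, these show the action is well defined and makes $M/C_2(M)$ a genuine $R_V$-module, graded by $L(0)$.

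The second step produces finitely many $R_V$-generators. Here I invoke the spanning-set theorem for $C_2$-cofinite vertex operator algebras (Gaberdiel--Neitzke for $V$, Buhl for modules): since $V$ is $C_2$-cofinite and $M$ is finitely generated, say by homogeneous $w_1,\dots,w_t$, the module $M$ is spanned by the ordered monomials $a_{i_1}(-n_1)\cdots a_{i_k}(-n_k)\,w_j$ with $n_1\ge n_2\ge\cdots\ge n_k\ge 1$. Now reduce modulo $C_2(M)$: if the leftmost exponent satisfies $n_1\ge 2$, then writing the tail as $x\in M$ we get $a_{i_1}(-n_1)x\in C_2(M)$ by (i); hence, using the decreasing condition, only the all-$(-1)$ monomials survive, and each of these equals $\bar a_{i_1}\cdots\bar a_{i_k}\cdot\bar w_j$ in the $R_V$-module $M/C_2(M)$. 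Therefore $M/C_2(M)=R_V\cdot\{\bar w_1,\dots,\bar w_t\}$ is finitely generated over the finite-dimensional algebra $R_V$, hence finite-dimensional, i.e. $M$ is $C_2$-cofinite.

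The main obstacle is the spanning-set theorem invoked in the second step: turning an arbitrary product of modes acting on the generators into the ordered, negative-exponent normal form requires a delicate induction using the Borcherds identity to reorder modes and to absorb the non-negative ones, and it is precisely here that $C_2$-cofiniteness of $V$ (finiteness of $R_V$) bounds the process. This is the technical heart of the Buhl / Gaberdiel--Neitzke arguments, which I would cite rather than reprove; everything else is bookkeeping with the identities (V4). One also uses that modules in $\Vmodu$ are lower-bounded and $L(0)$-graded, so that $C_2(M)$ is a graded subspace and the reductions terminate degree by degree.
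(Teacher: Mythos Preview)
The paper does not give its own proof of this proposition; it is quoted verbatim from \cite{ABD} and used as a black box (the proof environment that follows in the paper is for Proposition~\ref{prop_C2_C1}, not for this one). Your argument is correct and is exactly the strategy of \cite{ABD}: endow $M/C_2(M)$ with its natural $R_V=V/C_2(V)$-module structure and then feed in Buhl's spanning-set theorem \cite{Bu} to exhibit finitely many $R_V$-generators. Your checks (i)--(iii) for the module structure are the standard ones (this is the ``Zhu $C_2$-algebra'' picture and its module analogue), and your reduction of the ordered spanning set modulo $C_2(M)$ is the finishing move. The only substantive input you are outsourcing is the spanning-set theorem itself, which is precisely what the paper's phrase ``based on the technique developed in \cite{Bu}'' is pointing to.
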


\begin{proof}[proof of Proposition \ref{prop_C2_C1}]
Since $V$ is $C_2$-cofinite, there are finitely many irreducible modules $M_1,\dots,M_r$.
Take $\Delta_i \in \C$ such that $M_i=\bigoplus_{k\geq 0} (M_i)_{\Delta_i+k}$ and $(M_i)_{\Delta_i}\neq 0$.
Set $\Delta =\max_{i=1}^r \mathrm{Re}\,\Delta_i$,
where $\mathrm{Re}$ is the real part.

Let $M \in \Vmodu$ and set
\begin{align*}
S=\bigoplus_{\substack{h \in \C\\ \mathrm{Re}(h) \leq  \Delta}}M_h.
\end{align*}
Then, by (M2) and (M4), $S$ is finite-dimensional.
It is easy to show that any non-zero $V$-module which satisfies (M4) contains an irreducible module as its sub-quotient.
Hence, $M$ is generated by $S$ and thus finitely generated.
Two categories $\Vmodf$ and $\Vmodu$ coincide by Remark \ref{rem_dual_closed} and Proposition \ref{prop_ABD}.
\end{proof}

\subsection{Logarithmic intertwining operators}

Now, we recall the definition of a logarithmic intertwining operator among
modules of a vertex operator algebra from \cite{FHL,Mil1,Mil2}.
Let $M_0,M_1,M_2$ be $V$-modules.
{\it A logarithmic intertwining operator} of type $\binom{M_0}{M_1 M_2}$
is a linear map
$$\Y_1(\bullet,z):M_1 \rightarrow \text{Hom} (M_2,M_0)[[z^\C]][\log z],
\; m \mapsto \Y_1(m,z)=\sum_{k \geq 0} \sum_{r \in \C} m(r;k) z^{-r-1}(\log z)^k$$
such that:
\begin{enumerate}
\item[I1)]
For any $m \in M_1$ and $m' \in M_2$,
$\Y_1(m,z)m' \in M_3[[z]][z^\C,\log z]$;
\item[I2)]
$[L(-1),\Y_1(m,z)]=\frac{d}{dz}\Y_1(m,z)$ for any $m \in M_1$;
\item[I3)]
For any $m \in M_1$, $a \in V$ and $n\in \Z$,
\begin{align*}
[a(n), \Y_1(m,z)]&=\sum_{k\geq 0} \binom{n}{k} \Y_1(a(k)m,z)z^{n-k}\\
\Y_1(a(n)m,z)&=\sum_{k\geq 0}\binom{n}{k}
\left( 
a(n-k)\Y_1(m,z)(-z)^k
- \Y_1(m,z)a(k)(-z)^{n-k}
\right).
\end{align*}
\end{enumerate}

\begin{rem}
As in Remark \ref{rem_vertex_borcherds}, this definition of a (logarithmic) 
intertwining operator differs from the usual definition of a (logarithmic) intertwining operator.
The equivalence with the usual definition will be seen later in Proposition 
\ref{prop_int_equiv}.
\end{rem}

The space of all logarithmic intertwining operators of type $\binom{M_0}{M_1M_2}$ forms a vector space,
which is denoted by $I_{\log} \binom{M_0}{M_1M_2}$.
If $\Y_1(\bullet,z) \in I_{\log} \binom{M_0}{M_1M_2}$ does not contain any logarithmic term,
i.e., $\Y_1(m,z) \in \mathrm{Hom}(M_2,M_0)[[z]][z^\C]$ for any $m\in M_1$,
then
$\Y_1(\bullet,z)$ is called {\it an intertwining operator} of type $\binom{M_0}{M_1M_2}$.
Denote by $I\binom{M_0}{M_1M_2}$
the space of all intertwining operators of type $\binom{M_0}{M_1M_2}$.

By substituting $a=\om$ and $n=1$ into (I3), we obtain from (I2) the following equality:
\begin{align*}
[L(0),\Y_1(m,z)]=\Y_1(L(0)m,z) +z\frac{d}{dz}\Y_1(m,z),
\end{align*}
which relates terms in $\log z$ and $z^r$ to the $L(0)$-eigenvalues of $M_0,M_1,M_2$.
This equation has been well examined by \cite{Mil1}.
We quote the following result from \cite[Proposition 1.10]{Mil1}:
\begin{prop}
Let $N_1,N_2$ be the integers in Definition \ref{def_module_grade} (M3) for the modules $M_1,M_2$, respectively.
Then, for any $\Y_1(\bullet,z) \in I_{\log} \binom{M_0}{M_1M_2}$ and $m\in M_1$
\begin{align*}
\Y_1(m, z) \in \bigoplus_{k=0}^{N_1+N_2-2} (\log z)^k \mathrm{Hom}(M_2,M_0)[[z^\C]].
\end{align*}
In particular, if $L(0)$ is semisimple on $M_1$ and $M_2$, then
\begin{align*}
I_{\log} \binom{M_0}{M_1M_2}=I \binom{M_0}{M_1M_2}.
\end{align*}
\end{prop}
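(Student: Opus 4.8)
\emph{Proof proposal.} Everything is extracted from the operator identity recorded just above the statement,
\begin{align*}
[L(0),\Y_1(m,z)]=\Y_1(L(0)m,z)+z\frac{d}{dz}\Y_1(m,z).
\end{align*}
The plan is first to specialize to generalized $L(0)$-eigenvectors. Fix $h_1,h_2\in\C$ and take $m\in M_1$ with $(L(0)-h_1)^{N_1}m=0$ and $m'\in M_2$ with $(L(0)-h_2)^{N_2}m'=0$; by (M1)--(M3) it suffices to treat such vectors. Applying the identity to $m'$, rearranging, and adding $(h_1+h_2)$ times $\Y_1(m,z)m'$ to both sides, I get, writing $\mathcal{E}:=z\frac{d}{dz}-L(0)_{M_0}+h_1+h_2$,
\begin{align*}
\mathcal{E}\bigl(\Y_1(m,z)m'\bigr)=-\Y_1\bigl((L(0)-h_1)m,z\bigr)m'-\Y_1(m,z)\bigl((L(0)-h_2)m'\bigr).
\end{align*}
Since $(L(0)-h_1)^am$ is again a generalized eigenvector of eigenvalue $h_1$, and likewise on the $M_2$ side, iterating gives
\begin{align*}
\mathcal{E}^{\,n}\bigl(\Y_1(m,z)m'\bigr)=(-1)^n\sum_{a+b=n}\binom{n}{a}\,\Y_1\bigl((L(0)-h_1)^am,z\bigr)(L(0)-h_2)^bm'.
\end{align*}
For $n=N_1+N_2-1$ every summand vanishes, because $a+b=N_1+N_2-1$ forces $a\ge N_1$ or $b\ge N_2$; hence $\mathcal{E}^{\,N_1+N_2-1}\bigl(\Y_1(m,z)m'\bigr)=0$.

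The second step converts this nilpotency into the bound on the $\log z$-degree. I would use that $z\frac{d}{dz}$ acts on monomials by $z\frac{d}{dz}\bigl(z^s(\log z)^k\bigr)=s\,z^s(\log z)^k+k\,z^s(\log z)^{k-1}$, and decompose $\Y_1(m,z)m'\in M_0[[z^\C]][\log z]$ by the $z$-exponent $s$ and by the generalized $L(0)$-eigenspace $(M_0)_h$ of its coefficients, the latter being possible since $M_0=\bigoplus_h(M_0)_h$ by (M1)--(M2). On the $(s,h)$-component $\mathcal{E}$ equals the scalar $s-h+h_1+h_2$ plus a nilpotent operator built from the $\log z$-lowering map $(\log z)^k\mapsto k(\log z)^{k-1}$ together with the nilpotent $L(0)-h$ on $(M_0)_h$; the components with nonzero scalar vanish because $\mathcal{E}$ is then invertible, and on the remaining ones $\mathcal{E}^{\,N_1+N_2-1}=0$ forces the power of $\log z$ to be at most $N_1+N_2-2$. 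Hence $\Y_1(m,z)m'\in\bigoplus_{k=0}^{N_1+N_2-2}(\log z)^kM_0[[z^\C]]$ for generalized eigenvectors, and then for all $m\in M_1$, $m'\in M_2$ by bilinearity and (M1)--(M2), which is the first assertion. For the second assertion, if $L(0)$ is semisimple on $M_1$ and $M_2$ then $N_1=N_2=1$, the bound becomes $k\le 0$, so no $\log z$ term can occur; thus every logarithmic intertwining operator of type $\binom{M_0}{M_1M_2}$ is an ordinary one, and $I_{\log}\binom{M_0}{M_1M_2}=I\binom{M_0}{M_1M_2}$ since the reverse inclusion is trivial.

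The main obstacle is the second step, turning $\mathcal{E}^{\,N_1+N_2-1}=0$ into the sharp $\log z$-degree bound: since $L(0)$ need not act semisimply on $M_0$, the operator $\mathcal{E}$ is not merely a scalar plus the $\log z$-shift but also carries the nilpotent part of $L(0)_{M_0}$, so one must keep the $\log z$-filtration separate from the generalized-eigenspace filtration on $M_0$ and verify that only the former governs the power of $\log z$. This is precisely the analysis carried out in \cite[Proposition 1.10]{Mil1}, whose statement we quote rather than reprove here.
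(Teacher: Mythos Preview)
The paper does not prove this proposition at all; it simply quotes the result from \cite[Proposition~1.10]{Mil1}. Your proposal therefore goes further than the paper does: your first step---the derivation of $\mathcal{E}^{N_1+N_2-1}\bigl(\Y_1(m,z)m'\bigr)=0$ from the $[L(0),\Y_1]$ identity---is correct and is indeed the natural starting point, and your concluding deferral to Milas coincides with the paper's own treatment.

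One remark on your second step. You correctly flag that the nilpotent part of $L(0)$ on $M_0$ interferes with reading off the $\log z$-degree from $\mathcal{E}^{N_1+N_2-1}=0$. In fact this obstruction is genuine: on the $(s,h)$-component with $s-h+h_1+h_2=0$, the operator $\mathcal{E}$ equals the commuting sum of the $\log z$-lowering operator and $-(L(0)-h)$, and the nilpotency of that sum does \emph{not} by itself bound the $\log z$-degree by $N_1+N_2-2$. The missing ingredient is that $L_n(0)$ on $M_0$ commutes with every mode $a(n)$ (because $L(0)$ is semisimple on $V$), so $L_n(0)$ is a $V$-module endomorphism; this lets one separate semisimple and nilpotent parts of the $[L(0),\Y_1]$ identity and obtain the sharper relation $D\,\Y_1(m,z)m' = L_n(0)\Y_1(m,z)m' - \Y_1(L_n(0)m,z)m' - \Y_1(m,z)L_n(0)m'$, where $D$ is the pure $\log z$-derivative. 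Even from here the bound $N_1+N_2-2$ is not a one-line consequence, which is why both you and the paper cite \cite{Mil1} for the full argument.
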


%

For any $a \in V$ and $n \geq 0$, set
\begin{align*}
Y^{-}(a,z) &= \sum_{k\geq 0}a(-k-1)z^{k},\\
Y^+(a,z) &= \sum_{k\geq 0}a(k)z^{-k-1},\\
P_n(a,z) &= \sum_{k\geq 0}^n \binom{n}{k} a(k)(-z)^{n-k}.
\end{align*}

The following lemma immediately follows from the definition of a logarithmic intertwining operator:
\begin{lem}
\label{lem_P}
Let $M_1,M_2,M_0$ be $V$-modules and
$Y_1(\bullet,z) \in I_{\log}\binom{M_0}{M_1M_2}$.
For any $a \in V$ and $m \in M_1$ and $n \geq 0$, 
the following equalities hold:
\begin{enumerate}
\item
$\exp(L(-1)z)a(n)\exp(-L(-1)z)=P_n(a,z)$;
\item
$\exp(L(-1)z)a(-n-1)\exp(-L(-1)z)=\pa_z^{(n)}Y^-(a,z)$;
\item
$\pa_z P_n(a,z) = -n P_{n-1}(a,z) =P_n(L(-1)a,z)=[L(-1),P_n(a,z)]$;
\item
\begin{align*}
\Y_1(a(n)m,z)&=P_n(a,z) \Y_1(m,z) - \Y_1(m,z)P_n(a,z),\\
[a(n),\Y_1(m,z)]&= \Y_1(P_n(a,-z)m,z).
\end{align*}
\item
\begin{align*}
Y_1(a(-n-1)m,z)
&=\left(\frac{1}{n!}\pa_z^n Y^-(a,z)\right)\Y_1(m,z)+
\Y_1(m,z)\left(\frac{1}{n!}\pa_z^nY^+(a,z)\right).
\end{align*}
\item
For any $h_i \in \C$, $m_i \in (M_i)_{h_i}$ and $r\in \C$, $k \in \Z_{\geq 0}$,
$m_1(r;k)m_2 \in (M_3)_{h_1+h_2-r-1}$.
\end{enumerate}
\end{lem}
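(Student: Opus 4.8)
The plan is to obtain each of the six identities by a direct expansion from the axioms of a vertex operator algebra, its modules, and a logarithmic intertwining operator; none of them needs anything beyond bookkeeping with binomial coefficients, so I will indicate the mechanism for each rather than carry out the arithmetic. For (1) and (2) I would expand the conjugation by $\exp(L(-1)z)$ as the formal adjoint series $\sum_{j\ge 0}\tfrac{z^{j}}{j!}(\ad L(-1))^{j}$ and use that the $L(-1)$-derivative axiom $[L(-1),Y(\bullet,z)]=\tfrac{d}{dz}Y(\bullet,z)$ (valid on every $V$-module after the trivial modification) gives $[L(-1),a(k)]=-k\,a(k-1)$ for all $k\in\Z$. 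Iterating, $(\ad L(-1))^{j}a(n)$ is a scalar multiple of $a(n-j)$ that vanishes once $j>n$ when $n\ge 0$, so the series in (1) collapses to the finite sum $\sum_{j}\binom{n}{j}(-z)^{j}a(n-j)$, which is $P_n(a,z)$ after reindexing; for (2) one gets $(\ad L(-1))^{j}a(-n-1)=\tfrac{(n+j)!}{n!}a(-n-1-j)$, and the resulting series equals $\tfrac1{n!}\pa_z^{n}Y^{-}(a,z)$ by differentiating $Y^{-}(a,z)=\sum_{k\ge 0}a(-k-1)z^{k}$ term by term.

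For (3) I would differentiate the finite sum defining $P_n(a,z)$ and use the identities $(n-k)\binom{n}{k}=n\binom{n-1}{k}$ and $k\binom{n}{k}=n\binom{n-1}{k-1}$; the same manipulations, now applying $(L(-1)a)(k)=-k\,a(k-1)$ and $[L(-1),a(k)]=-k\,a(k-1)$ termwise, identify both $P_n(L(-1)a,z)$ and $[L(-1),P_n(a,z)]$ with $-n\,P_{n-1}(a,z)$. Identities (4) and (5) are specializations of axiom (I3). Taking $n\ge 0$ in the two relations of (I3) makes the sums over $k$ finite, and after the substitution $j=n-k$ (and using linearity of $\Y_1$ in its first argument) they become exactly $\Y_1(a(n)m,z)=P_n(a,z)\Y_1(m,z)-\Y_1(m,z)P_n(a,z)$ and $[a(n),\Y_1(m,z)]=\Y_1(P_n(a,-z)m,z)$. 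For (5) I would feed the negative index $-n-1$ into the first relation of (I3) and use $\binom{-n-1}{k}=(-1)^{k}\binom{n+k}{n}$; the two infinite series then appearing are, term by term, $\bigl(\tfrac1{n!}\pa_z^{n}Y^{-}(a,z)\bigr)\Y_1(m,z)$ and $\Y_1(m,z)\bigl(\tfrac1{n!}\pa_z^{n}Y^{+}(a,z)\bigr)$.

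For (6) I would first record the weight relation $[L(0),\Y_1(m,z)]=\Y_1(L(0)m,z)+z\tfrac{d}{dz}\Y_1(m,z)$, obtained by setting $a=\om$, $n=1$ in (I3) and combining with (I2) — this is the equation already displayed just before the proposition quoted from \cite{Mil1}. Writing $L(0)m_i=h_i m_i+\nu_i$ with $\nu_i=(L(0)-h_i)m_i$ in the same generalized eigenspace but of strictly smaller nilpotency degree, and extracting the coefficient of $z^{-r-1}(\log z)^{k}$ in the relation applied to $m_2$, one finds that $(L(0)-(h_1+h_2-r-1))$ applied to $m_1(r;k)m_2$ is a sum of the three terms $\nu_1(r;k)m_2$, $m_1(r;k)\nu_2$ and $(k+1)\,m_1(r;k+1)m_2$. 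Each of these is again of the form $m_1'(r;k')m_2'$ but with a strictly smaller value of the triple (nilpotency degree of $m_1$, nilpotency degree of $m_2$, maximal log-degree minus $k$) — the last quantity being finite by (M3) via the Milas bound quoted above — so an induction on this triple shows $(L(0)-(h_1+h_2-r-1))^{N}$ annihilates $m_1(r;k)m_2$ for large $N$, which is the assertion. This inductive step is the only part of the lemma that is not purely mechanical: it is needed precisely because module weight spaces are generalized rather than honest $L(0)$-eigenspaces and because of the way the powers of $\log z$ feed into the $z\tfrac{d}{dz}$ term, so a one-line comparison of eigenvalues does not suffice.
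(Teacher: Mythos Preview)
Your proposal is correct; each of the six parts is indeed obtained exactly as you describe, by direct expansion from the axioms (V4)/(I3) together with the translation relation $[L(-1),a(k)]=-k\,a(k-1)$ and, for (6), the weight relation $[L(0),\Y_1(m,z)]=\Y_1(L(0)m,z)+z\tfrac{d}{dz}\Y_1(m,z)$. The paper does not give a proof at all---it merely states that the lemma ``immediately follows from the definition of a logarithmic intertwining operator''---so your write-up is not a different route but rather a spelled-out version of what the paper declares routine.
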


The following identities will be used repeatedly in this paper:
\begin{lem}
\label{lem_binom}
For any $n,k\in \Z$ with $k \geq 0$, 
\begin{align*}
\binom{n}{k}= (-1)^k\binom{-n-1+k}{k}.
\end{align*}
Furthermore, if $n \geq 0$ and $k\geq 0$,
then
\begin{align*}
\binom{-n-1}{k}=(-1)^{n+k}\binom{-k-1}{n}=(-1)^k\binom{k+n}{n}.
\end{align*}
\end{lem}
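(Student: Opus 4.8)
The plan is to argue directly from the definition of the generalized binomial coefficient
$\binom{a}{k}=\frac{1}{k!}\prod_{j=0}^{k-1}(a-j)$ for $a\in\Z$ and $k\in\Z_{\geq 0}$, treating every assertion as an identity between finite products; no case distinctions will be needed, since whenever one side vanishes so does the other. These are the classical ``upper negation'' reflections for binomial coefficients, so the argument is essentially bookkeeping of signs and reindexing.

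First I would establish the reflection identity $\binom{n}{k}=(-1)^k\binom{-n-1+k}{k}$. Writing $\binom{-n-1+k}{k}=\frac{1}{k!}\prod_{j=0}^{k-1}\bigl((-n-1+k)-j\bigr)$ and substituting $i=k-1-j$ turns the product into $\prod_{i=0}^{k-1}(i-n)$; pulling a factor $-1$ out of each of the $k$ terms gives $(-1)^k\prod_{i=0}^{k-1}(n-i)$, and dividing by $k!$ yields $\binom{-n-1+k}{k}=(-1)^k\binom{n}{k}$. Multiplying by $(-1)^k$ (and using $(-1)^{2k}=1$) gives the first claim of the lemma.

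For the second part, assume $n,k\geq 0$. Applying the reflection identity with $n$ replaced by $-n-1$ gives $\binom{-n-1}{k}=(-1)^k\binom{-(-n-1)-1+k}{k}=(-1)^k\binom{n+k}{k}$, and since $0\leq k\leq n+k$ this ordinary binomial coefficient equals $\binom{n+k}{n}$; this is the equality $\binom{-n-1}{k}=(-1)^k\binom{k+n}{n}$. Interchanging the roles of $n$ and $k$ in this same formula yields $\binom{-k-1}{n}=(-1)^n\binom{k+n}{n}$, hence $(-1)^{n+k}\binom{-k-1}{n}=(-1)^{2n+k}\binom{k+n}{n}=(-1)^k\binom{k+n}{n}$, which matches the other two expressions and completes the chain of equalities. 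The only thing to keep in mind throughout is that the products may contain zero factors (e.g.\ when $0\leq n<k$), but the product-level manipulations are valid regardless; there is no genuine obstacle here, so one could equally well just cite a standard reference for these identities.
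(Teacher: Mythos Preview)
Your proof is correct. The paper states this lemma without proof, treating these upper-negation identities as standard; your direct verification from the product definition $\binom{a}{k}=\frac{1}{k!}\prod_{j=0}^{k-1}(a-j)$ is exactly the expected elementary argument.
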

\begin{lem}
\label{identity}
For any $n, d \in \Z$ with $d \geq 0$,
\begin{align*}
\sum_{k \geq 0} \sum_{l\geq 0}
\binom{n}{k}\binom{k-d}{l} p^k q^l &=\frac{(1+p+pq)^n}{(1+q)^d}|_{|p|,|q|<1} \in \C[[p,q]]
\end{align*}
holds for formal variables $p,q$.
In particular, for any $l \geq 0$,
\begin{align*}
\sum_{k \geq 0}
\binom{n}{k}\binom{k}{l} p^k &=\binom{n}{l} p^l (1+p)^{n-l}|_{|p|<1},
\end{align*}
where $(1+p)^{n-l}|_{|p|<1}$ means the Taylor expansion around $p=0$.
\end{lem}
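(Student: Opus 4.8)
The plan is to prove the two identities of Lemma \ref{identity} purely formally, working in the ring of formal power series $\C[[p,q]]$, and then to specialize. First I would establish the auxiliary expansion
\begin{align*}
\sum_{k\geq 0}\binom{k-d}{l}q^l = (1+q)^{k-d},
\end{align*}
which is just the generalized binomial theorem applied with exponent $k-d$; since $d\geq 0$ and $k\geq 0$ this is a genuine element of $\C[[q]]$ and no convergence issue arises at the formal level. Summing over $k$ with the weight $\binom{n}{k}p^k$ then reduces the double sum to
\begin{align*}
\sum_{k\geq 0}\binom{n}{k}p^k(1+q)^{k-d} = (1+q)^{-d}\sum_{k\geq 0}\binom{n}{k}\bigl(p(1+q)\bigr)^k = (1+q)^{-d}\bigl(1+p(1+q)\bigr)^n,
\end{align*}
again by the binomial theorem, now with exponent $n$ and argument $p(1+q)$, which is a well-defined substitution of a power series with zero constant term. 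Rewriting $1+p(1+q)=1+p+pq$ gives the first claimed identity.

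For the second identity I would simply set $d=0$ in the first one and then extract the coefficient of $q^{\,0}$ on both sides — equivalently, set $q=0$ but being careful that the right-hand side $\frac{(1+p+pq)^n}{(1+q)^d}$ at $d=0$ is $(1+p+pq)^n$, whose value at $q=0$ is $(1+p)^n$. On the left-hand side, setting $d=0$ and then picking out the $q^l$-coefficient yields $\sum_{k\geq 0}\binom{n}{k}\binom{k}{l}p^k$; but it is cleaner to instead use the Vandermonde-type rearrangement directly: in $\sum_{k\geq 0}\binom{n}{k}p^k(1+q)^k$ the coefficient of $q^l$ is $\sum_{k\geq 0}\binom{n}{k}\binom{k}{l}p^k$, while on the right $(1+p+pq)^n=\bigl((1+p)+pq\bigr)^n=\sum_{l\geq 0}\binom{n}{l}(1+p)^{n-l}p^lq^l$ by the binomial theorem in $q$, so the coefficient of $q^l$ there is $\binom{n}{l}p^l(1+p)^{n-l}$. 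Comparing coefficients gives the second identity.

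The only genuinely delicate point — and the step I expect to need the most care — is bookkeeping about when the various ``binomial theorem'' manipulations are legitimate as formal identities in $\C[[p,q]]$ rather than merely as identities of convergent functions on $|p|,|q|<1$. Concretely: (i) $(1+q)^{k-d}$ makes sense in $\C[[q]]$ for every integer exponent $k-d$ (positive, zero, or negative), since $1+q$ is a unit; (ii) the substitution $p\mapsto p(1+q)$ into $(1+u)^n=\sum\binom{n}{k}u^k$ is valid because $p(1+q)$ has zero constant term, so only finitely many terms contribute to each monomial $p^aq^b$; (iii) the interchange of the two summations over $k$ and $l$ is justified because for fixed total degree only finitely many $(k,l)$ contribute, each series being a power series with no negative powers. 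Once these finiteness observations are recorded, the remaining computation is the routine chain of equalities above, and the ``$|p|,|q|<1$'' annotation in the statement is just the reminder that the closed form is the Taylor expansion around the origin. I would also invoke Lemma \ref{lem_binom} only if convenient for rewriting a $\binom{k-d}{l}$ with possibly negative upper argument, but it is not strictly needed here.
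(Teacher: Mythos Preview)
Your proof is correct; the paper states Lemma~\ref{identity} without proof, treating it as an elementary binomial identity, so there is nothing to compare against. Your argument---summing the inner $l$-sum to $(1+q)^{k-d}$ by the generalized binomial theorem, then summing over $k$ via the substitution $u=p(1+q)$, and finally extracting the $q^l$-coefficient of $((1+p)+pq)^n$ for the second identity---is exactly the natural direct verification one would expect for such a lemma.
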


The proof of the following proposition is essentially the same as \cite{MN}:
\begin{prop}
\label{prop_int_equiv}
Let $\Y_1(\bullet,z) \in  I_{\log}\binom{M_0}{M_1M_2}$.
Then, $\Y_1(\bullet,z)$ satisfies the Borcherds identity, that is, for any $a\in V$, $m\in M$ and $p,r\in\Z$,
\begin{align}
\begin{split}
&\sum_{i\geq 0}\binom{p}{i}x_2^{p-i}\Y_1(a(r+i)m)x_2)\\
&= \sum_{i \geq 0}\binom{r}{i}(-1)^i (a(p+r-i)\Y_1(m,x_2)x_2^i -(-1)^r \Y_1(m,x_2)x_2^{r-i}a(p+i)).
\end{split}
\label{eq_Borcherds_id}
\end{align}

\end{prop}
\begin{proof}
Let $B_{p,r}(x_2)$ denote the formal power series with coefficients in $\mathrm{Hom}(M_2,M_0)$ obtained by subtracting the right-hand side of \eqref{eq_Borcherds_id} from the left-hand side.  
It suffices to show that $B_{p,r}(x_2)=0$ for all $p,r \in \Z$.  
We claim that
\begin{align}
B_{p+1,r}(x_2)-B_{p,r+1}(x_2)-x_2 B_{p,r}(x_2)=0 \label{eq_Borcherds_recur}
\end{align}
holds for any $p,r \in \Z$.
Since $\binom{q+1}{i}=\binom{q}{i}+\binom{q}{i-1}$ holds for any $q\in\Z$ and $i\geq 0$,
\begin{align*}
&\sum_{i\geq 0}\left(\binom{p+1}{i}x_2^{p+1-i}\Y_1(a(r+i)m)x_2)
-\binom{p}{i}x_2^{p+1-i}\Y_1(a(r+i)m)x_2)\right)\\
&=\sum_{i\geq 0}\binom{p}{i-1}x_2^{p+1-i}\Y_1(a(r+i)m)x_2)=\sum_{i\geq 0}\binom{p}{i}x_2^{p-i}\Y_1(a(r+1+i)m)x_2)
\end{align*}
and
\begin{align*}
&\sum_{i \geq 0}\left(\binom{r+1}{i}(-1)^i a(p+(r+1)-i)\Y_1(m,x_2)x_2^i-\binom{r}{i}(-1)^i a((p+1)+r-i)\Y_1(m,x_2)x_2^i\right)\\
&=\sum_{i \geq 0}\binom{r}{i-1}(-1)^i a(p+r+1-i)\Y_1(m,x_2)x_2^i = - x_2\sum_{i \geq 0}\binom{r}{i}(-1)^i a(p+r-i)\Y_1(m,x_2)x_2^i
\end{align*}
and
\begin{align*}
&\sum_{i \geq 0}\left(\binom{r+1}{i}(-1)^{i+r+1}\Y_1(m,x_2)x_2^{r+1-i}a(p+i) + \binom{r}{i}(-1)^{i+r}\Y_1(m,x_2)x_2^{r-i+1}a(p+i)\right)\\
&\sum_{i \geq 0}\binom{r}{i-1}(-1)^{i+r+1}\Y_1(m,x_2)x_2^{r+1-i}a(p+i)=\sum_{i \geq 0}\binom{r}{i}(-1)^{i+r}\Y_1(m,x_2)x_2^{r-i}a(p+1+i),
\end{align*}
which imply \eqref{eq_Borcherds_recur}.
If two among $B_{p+1,r}(x_2)$, $B_{p,r+1}(x_2)$, and $x_2 B_{p,r}(x_2)$ vanish, then the remaining one also vanishes.  
Here, $B_{p,0}(x_2)$ and $B_{0,r}(x_2)$ are nothing but (I3), so that
\begin{align*}
B_{p,0}(x_2)=0=B_{0,r}(x_2)
\end{align*}
for any $p,r\in\Z$.  
Therefore, from \eqref{eq_Borcherds_recur}, if either $p$ or $r$ is a non-negative integer, we obtain $B_{p,r}(x_2)=0$.  
In the case $p,r<0$, the claim follows by induction on $-(p+r)$.
\end{proof}
Proposition \ref{prop_int_equiv} identifies the definition of logarithmic
intertwining operators used here with the standard one.  This allows us to
apply the results on intertwining operators used below, in particular those
of \cite{FZ,Li}.

\section{PaB operad, 2-action and representativity}
\label{sec_operad}

This section reviews the definitions and basics of operads used in this paper.
In Section \ref{sec_operad_def}, we will recall the definition of an operad in a symmetric tensor category
from \cite{Fr,LoV}.
In Section \ref{sec_def_cpab}, 
we recall the definition of the parenthesized braid operad \(\PaB\) and its
relation to braided tensor categories.
In Section \ref{sec_appendix_B}, we give a definition of a unital pseudo-braided  category by using $\CPaB$. Necessary and sufficient conditions for a unital pseudo-braided category to determine a braided tensor category are provided in Section \ref{app_pseudo}.


\subsection{Operad and its action}
\label{sec_operad_def}
In this section, we briefly review the definitions of operads and their algebras.
For more detailed accounts of operad theory, we refer the reader to \cite{LoV, Fr}.

Let $\cC$ be a strict symmetric tensor category with the tensor product $\times$ and the unit object $*_\cC$.
We assume that $\cC$ is closed, i.e., the Hom-functor provides $\mathrm{Hom}_\cC(\bullet,\bullet):\cC^\op\times \cC \rightarrow \cC$, and the tensor product of $\cC$ distributes over colimits (see \cite[Chapter 3]{Fr}).
Throughout this paper, categories are enriched by the category of $\C$-vector spaces $\Vect$
unless otherwise noted.

\begin{dfn}
A {\bf plain operad} $\mathcal{O}$ in a symmetric monoidal category $\cC$ consists of the following data \cite[Section 5.3.4]{LoV}:
\begin{enumerate}
  \item For each $n \geq 0$, an object $\mathcal{O}(n)$ (the set or space of $n$-ary operations).
  \item For each $m,n \geq 0$ and $1 \leq i \leq m$, a \emph{partial composition map}
  \[
     \circ_i : \mathcal{O}(m) \times \mathcal{O}(n) \longrightarrow \mathcal{O}(m+n-1),
  \]
  which inserts an $n$-ary operation into the $i$-th input of an $m$-ary operation.
  \item A distinguished unit element $i_{\mathcal{O}}: *_\cC \rightarrow \mathcal{O}(1)$.
\end{enumerate}
These data satisfy the following axioms:
\begin{itemize}
  \item \textbf{Associativity:} For all $f \in \mathcal{O}(m)$, $g \in \mathcal{O}(n)$, $h \in \mathcal{O}(\ell)$,
  \[
     (f \circ_i g) \circ_{i+j-1} h \;=\; f \circ_i (g \circ_j h),
  \]
  and if $i < k$ then
  \[
     (f \circ_i g) \circ_{k+n-1} h \;=\; (f \circ_k h) \circ_i g.
  \]
  \item \textbf{Unit:} For all $f \in \mathcal{O}(n)$, both
\begin{align*}
\cO(n) \cong \cO(n)\times *_\cC \overset{\id\times i_\cO}{\rightarrow} \cO(n)\times \cO(1) \overset{\circ_i}{\rightarrow} \cO(n),\\
\cO(n) \cong *_\cC \times \cO(n) \overset{i_\cO \times \id}{\rightarrow} \cO(1)\times \cO(n) \overset{\circ_1}{\rightarrow} \cO(n)
\end{align*}
are equal to $f$ for any $i=1,\dots,n$.
\end{itemize}
\end{dfn}

\begin{dfn}\label{def_permutation_operad}
A {\bf permutation operad} \cite[Section 5.3.4]{LoV} is a plain operad $\{\cO(n)\}_{n\geq 0}$ equipped with a right action of the symmetric group $S_n$ on $\mathcal{O}(n)$ for any $n \geq 0$ such that for any $f \in\cO(n)$, $g\in \cO(m)$
\begin{itemize}
\item
For any $\si \in S_n$, $\tau \in S_m$ and $i \in [n]$, the following equalities hold
\begin{align*}
f^\si \circ_i g = (f\circ_{i} g)^{\si'},
\end{align*}
where $\si' \in S_{n+m-1}$ is the permutation which acts by the identity, except on the
 block $\{i,\dots,i-1+m\}$ on which it acts via $\sigma$, and
\begin{align*}
f \circ_i g^\tau = (f\circ_{\tau(i)} g)^{\tau'},
\end{align*}
$\tau' \in S_{n+m-1}$ is acting like $\tau$ on the block 
$\{1,\dots,n+m-1\} \setminus \{i,\dots,i-1+m\}$ with values in
$\{1,\dots, n-1+m\} \setminus \{ \tau(i),\dots, \tau(i)-1+m\}$ and identically on the
 block $\{i,\dots,i-1+m\}$ with values in $\{\tau(i),\dots, \tau(i)-1+m\}$.
\end{itemize}
\end{dfn}
We simply refer to the permutation operad as an operad.

\begin{dfn}
A morphism of operads $f:P\rightarrow Q$ consists of a sequence
of morphisms $f_n:P(n)\rightarrow Q(n)$ in $\cC$ such that for any $n \geq 0$
\begin{itemize}
\item[MO1)]
$f_1\circ i_P=i_Q$.
\item[MO2)]
$f_{n+m-1}(\bullet \circ_i \bullet)=f_n(\bullet)\circ_i f_m(\bullet)$ for any $m \geq 0$ and $i \in 1,\dots,n$.
\item[MO3)]
For any permutation $g \in S_n$,
$f_n(\bullet\cdot g)=f_n(\bullet)\cdot g$.
\end{itemize}
\end{dfn}
For an object $A \in \cC$, set
\begin{align*}
\Endo_A(n) = \mathrm{Hom}_{\cC}(A^n,A)
\end{align*}
for $n \geq 0$. Then, $\Endo_A=\{\Endo_A(n)\}_{n\geq 0}$ is a symmetric operad, which is called an {\it endomorphism operad}.

\begin{dfn}
Let $\mathcal{O}$ be an operad in $\cC$ and $A \in \cC$.
An {\it action} of $\cO$ on $A$ is an operad homomorphism $\cO \rightarrow \Endo_A$.
\end{dfn}

Denote the category of small sets by $\Cset$,
which is a symmetric monoidal category by the direct product.
We first consider an operad in $\Cset$.

Let $\Tr_{r}$ be the set of all binary trees whose leaves are labeled by $[r]=\{1,2,\dots,r\}$.
Each element in $\Tr_r$ can be regarded as a parenthesized word of $\{1,2,\dots,r\}$,
that is non-associative, non-commutative monomials on this set in which every letter appears exactly once.
For example, $(5(23))((17)(64))$ corresponds to the tree in Fig. \ref{fig_tree_example0}.
Note that $\Tr_0$ consists of the empty word
and $\Tr_3$, for example, is a set of 12 elements
\begin{align*}
\Tr_0&=\{\emptyset\},\\
\Tr_3&=\{1(23), (12)3, 1(32), (13)2, 2(13),(21)3,2(31),(23)1,3(12),(31)2,3(21),(32)1\}.
\end{align*}

\begin{minipage}[c]{.35\textwidth}
\centering
\begin{forest}
for tree={
  l sep=20pt,
  parent anchor=south,
  align=center
}
[
[[5][[2][3]]]
[[[1][7]]
[[6][4]]]
]
\end{forest}
\captionof{figure}{}
\label{fig_tree_example0}
\end{minipage}
\begin{minipage}[l]{.7\textwidth}
\centering
For $A\in \Tr_r$, we will use the following notations:
\begin{align*}
\Leaf(A)&=\{\text{the set of all leaves of }A\}\\
V(A)&=\{\text{the set of all vertices of }A  \text{ which are}\\
&\quad\quad\text{ not leaves}\}\\
E(A)&=\{\text{the set of all edges of }A \text{ which are}\\
&\quad\quad\text{ not connected to leaves}\}.
\end{align*}
\end{minipage}

We will see that an operad structure can be introduced on $\{ \Tr_r\}_{r\geq 0}.$
Let $A \in \Tr_n$ and $B\in \Tr_m$ with $n> 0$ and $p \in [n]$.
The partial composition of the operad is then defined as shown in Fig. \ref{fig_partial_comp}.
The figure shows the composition of $3((12)4)\circ_2 2(13)$.
\begin{figure}[b]
    \centering
    \includegraphics[scale=1]{partial_comp.jpg}
    \caption{$3((12)4)\circ_2 2(13)$}\label{fig_partial_comp}
\end{figure}
In general, $A\circ_p B$ is defined by inserting the tree $B$ into the leaf labeled with $p$ in $A$,
adding $p-1$ to labels of leaves in $B$, and adding $m-1$ to the labels of leaves after $p+1$ in $A$.
If $B=\emptyset$, the $p$-th leaf in $A$ is simply erased, and the numbers are shifted forward.
For example,
\begin{align*}
3((12)4) \circ_2 \emptyset = 2(13).
\end{align*}
The symmetric group $S_r$ acts on $\Tr_r$ by the permutation of labels,
which satisfies the definition of a symmetric operad.

This operad is called {\it a magma operad} and is denoted by $\magmas$.
Since $\{\Tr_r\}_{r\geq 1}$ is closed under the operadic compositions, this is also an operad, which is denoted by
$\magma$ and called {\it a non-unitary magma operad}.

\begin{rem}
\label{rem_magma}
Giving an action of $\magma$ on a set $S$ is equivalent to giving a binary operation $m:S\times S\rightarrow S$.
The binary operation $m$ is not assumed to be associative nor commutative, just a binary operation.
Hence, $(S,m)$ is a magma.
In addition, there is an equivalence between giving an action of $\magmas$ on a set $S$ 
and giving a binary operation $m:S\times S\rightarrow S$ together with an element $1\in S$ such that
\begin{align*}
m(1,a)=m(a,1)=a,\quad \text{ for any }a\in S,
\end{align*}
that is, $(S,m,1)$ is a unital magma.
\end{rem}
Denote by $\PW_r$ the subset of $\Tr_r$ consisting of all binary trees whose leaves are ordered
from $1$ to $r$. For example, $\PW_4$ is a set of $5$ elements
\[
\PW_4=\{(12)(34), 1(2(34)), 1((23)4), (1(23))4, ((12)3)4\}.
\]
Note that $\{\PW_r\}_{r\geq 0}$ is a plain operad.
Moreover, every tree in $\Tr_r$ is uniquely obtained from an element of
$\PW_r$ by relabelling the leaves. Thus we have a natural bijection
\begin{equation}
\label{eq_Tr_PW_Sr}
\Tr_r \simeq \PW_r\times S_r,
\qquad
A\longmapsto (w_A,g_A).
\end{equation}

\subsection{Parenthesized braid operad and braided tensor category}
\label{sec_def_cpab}
In this section, based on \cite{Fr}, we review the parenthesized braid operad, which plays a central role in this paper, and its relation to braided tensor categories.

\begin{figure}[h]
\begin{minipage}[c]{.8\textwidth}
    \centering
    \includegraphics[scale=0.15]{thin_braid_gray.jpg}
    \caption{}\label{fig_thin}
\end{minipage}
\end{figure}

The idea of the parenthesized braid operad is as follows. Set
\begin{align*}
X_n(\C) &= \{(z_1,\dots,z_n) \in \C^n \mid z_i \neq z_j \text{ for } i\neq j \},\\
X_n(\R) &= \{(z_1,\dots,z_n) \in \R^n \mid z_i \neq z_j \text{ for } i\neq j \},
\end{align*}
the configuration space of $n$ points.
A path in $X_n(\C)$ can be represented by a braid as in Figure~\ref{fig_thin}.
The two braids on the left of the figure correspond to paths in $X_2(\C)$ and $X_3(\C)$, respectively.
In this situation, by thickening the path of $z_1$ in $X_2(\C)$ and inserting the path from $X_3(\C)$ into it, one can construct a new path in $X_4(\C)$.
The operad defined in this way is called the {\it parenthesized braid operad} \cite{Bar,Ta1}.

Let $\Pi_1(X_n(\C))$ denote the fundamental groupoid of $X_n(\C)$, that is, the category whose objects are points of $X_n(\C)$ and whose morphisms are homotopy classes of paths between points with fixed endpoints. 

Conformal blocks in conformal field theory are multivalued functions on $X_n(\C)$, and their monodromy defines a representation of $\Pi_1(X_n(\C))$.
One of the purposes of this paper is to show that this monodromy representation is furthermore compatible with the operad structure (see Figure~\ref{fig_thin}).
\begin{rem}
The functor $\Pi_1$ (fundamental groupoid) is symmetric monoidal. Therefore, applying $\Pi_1$ to the little 2-disk operad $E_2$ yields an operad in the category of categories.
%
The parenthesized braid operad is essentially equivalent to this operad \cite{Fr}.
\end{rem}

%
%

More precisely, let $\cat$ be the category of small categories, i.e., objects are small categories and morphisms are functors. 
By the direct product of categories, $\cat$ has a structure of a symmetric monoidal category.
The parenthesized braid operad $\CPaB$ is an operad in $\cat$.
For each $r \geq 1$, the category $\CPaB(r)$ can be defined as a full subcategory of $\Pi_1(X_r(\C))$.  
We first give an abstract definition, and then show how it can be realized geometrically.  

For each element of $\Tr_r$, by forgetting the parenthesization and retaining only the ordering of the leaves, one obtains a map $g:\Tr_r \longrightarrow S_r$.
%

For each $r\geq 1$, $\CPaB(r)$ is the category defined as follows:
The set of all objects in $\CPaB(r)$ is the set of binary trees $\Tr_r$,
\begin{align*}
\mathrm{Ob}(\CPaB(r))=\Tr_r.
\end{align*}
Let $p:B_r \rightarrow S_r$ be the canonical projection from the braid group to the symmetric group whose kernel is the pure braid group $PB_r$.
Then, for  $A,B \in \Tr_{r}$, the space of homomorphisms is defined by
\begin{align}
\Hom_{\CPaB(r)}(A,B)=\C p^{-1}({g_A^{-1}g_B}),
\label{eq_perm_braid}
\end{align}
where $\C p^{-1}({g_A^{-1}g_B})$ is a $\C$-linear space with a basis $p^{-1}({g_A^{-1}g_B})$.
The composition law is induced from the one on $B_r$.
The symmetric group $S_r$ acts on $\CPaB(r)$ via renumbering the objects $\Tr_r$ and acts identically on morphisms.

Next, we realize $\CPaB(r)$ as a full subcategory of $\Pi_1(X_r(\C))$.  
Define a map
\begin{align}
q_{[r]} : \Tr_r \;\longrightarrow\; X_r(\R), 
\qquad 
A \;\longmapsto\; q_{[r]}^A = (q_1^A, \dots, q_r^A),\label{eq_q_def}
\end{align}
subject to the following conditions:
\begin{enumerate}
\item[(Q1)]
For any $i,j \in \{1,\dots,r\}$ with $i\neq j$, we have 
\[
q_i^A < q_j^A \quad \text{if and only if the leaf $i$ lies to the right of the leaf $j$ in $A$}.
\]
\item[(Q2)]
If $A\neq B$ for $A,B\in \Tr_r$, then $q^A \neq q^B$.
\end{enumerate}

Later, we will see that $q_{[r]}$ reflects the parenthesization data of $A$.  
For example, when $A=((12)3)(45)$, the configuration $q^A \in X_5(\R)$ is chosen so that the points $1$ and $2$ are placed close to each other, as illustrated in the following figure.  
However, for the definition of $\CPaB$ itself, such geometric information is not required.

%
%

\vspace{4mm}

\begin{tikzpicture}[baseline=(current bounding box.center)] 
\tikzstyle point=[circle, fill=black, inner sep=0.05cm]
 \node[point, label=below:$(5$] at (0.5,0.5) {};
 \node[point, label=below:$4)$] at (1,0.5) {};
 \node[point, label=below:$(3$] at (3,0.5) {};
 \node[point, label=below:$(2$] at (4,0.5) {};
 \node[point, label=below:$1))$] at (4.5,0.5) {};
\end{tikzpicture}
\vspace{4mm}

For each map $q_{[r]}:\Tr_r \to X_r(\R)$, one may consider the full subcategory
\[
\Pi_1\bigl(X_r(\C)\bigr)\big|_{q_{[r]}}
\]
of $\Pi_1(X_r(\C))$ whose objects are precisely the points $\{q_{[r]}^A \in X_r(\R)\}_{A \in \Tr_r}$.
If $q_{[r]}, q'_{[r]}:\Tr_r \to X_r(\R)$ both satisfy (Q1) and (Q2), then the ordering of the points is preserved by (Q1), and hence there is a natural isomorphism of categories $\Pi_1(X_r(\C))\big|_{q_{[r]}} \cong \Pi_1(X_r(\C))\big|_{q'_{[r]}}$.
Moreover, the category $\CPaB(r)$ defined above is canonically equivalent to $\Pi_1(X_r(\C))|_{q_{[r]}}$. Consequently, the morphisms in $\CPaB(r)$ can be realized as braids in the configuration space, as illustrated in Figure~\ref{fig_thin}.

The composition
\begin{align*}
\circ_p: \CPaB(n)\times \CPaB(m)\rightarrow \CPaB(n+m-1)
\end{align*}
is given by replacing the $p$-th strand of the first braid, by the second braid made very thin (see Fig. \ref{fig_thin}). This composition is consistent with the magma operad when restricted to objects.

\begin{figure}[h]
  \begin{minipage}[b]{0.45\linewidth}
    \centering
    \includegraphics[width=2.5cm]{sigma.jpg}
    \caption{morphism $\sigma$}\label{fig_sigma}
  \end{minipage}
  \begin{minipage}[b]{0.45\linewidth}
    \centering
    \includegraphics[width=3cm]{alpha.jpg}
    \caption{morphism $\alpha$}\label{fig_alpha}
  \end{minipage}
      \begin{minipage}[b]{0.45\linewidth}
    \centering
    \includegraphics[width=3.2cm]{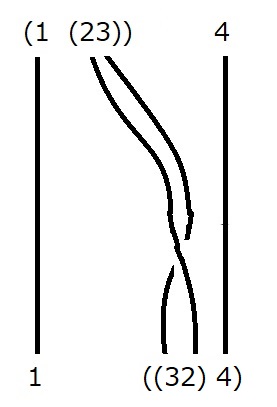}
    \caption{morphism $\alpha\circ_2\sigma$}\label{fig_alphasigma}
  \end{minipage}
    \begin{minipage}[b]{0.45\linewidth}
    \centering
    \includegraphics[width=3.2cm]{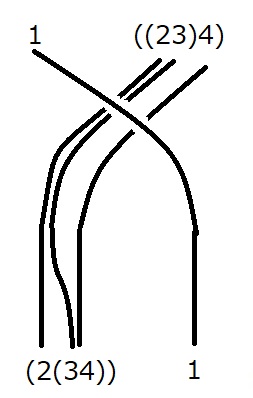}
    \caption{morphism $\si\circ_2\al$}\label{fig_sigmaalpha}
  \end{minipage}
\end{figure}

For $r=0$, $\CPaB(0)$ is a category whose object is the only empty parenthesized word $\emptyset$ and whose 
morphism consists only of the identity map $\Hom(\emptyset,\emptyset)=\C\{\id\}$.
The composition
\begin{align*}
\circ_p: \CPaB(n)\times \CPaB(0)\rightarrow \CPaB(n-1)
\end{align*}
is given by just erasing the $p$-th strand.

An important point here is that the operad $\CPaB$ is generated by $\sigma \in \CPaB(2)$ and $\alpha \in \CPaB(3)$ (see Fig. \ref{fig_sigma}, Fig.~\ref{fig_alpha}, Fig.~\ref{fig_alphasigma}, and Fig.~\ref{fig_sigmaalpha}). These elements correspond to the braiding and the associator in a braided tensor category, respectively, which is explained in detail by Fresse \cite{Fr}.

\begin{rem}
\label{rem_fix_braid}
We fix the following convention for the generator
$\sigma:(12)\longrightarrow(21)$
of \(\mathrm{PaB}(2)\). Choose $a_1, a_2 \in \mathbb{R}$ such that $a_1 > a_2 > 0$,
and set $q^{(12)} = (a_1,a_2)$ and $q^{(21)}=(a_2,a_1)$, which are points of $X_2(\R)$. Define a path
\[
\sigma: [0,1] \rightarrow X_2(\mathbb{C})
\]
by
\begin{align}
\si(t) = \left(\frac{a_1+a_2}{2}+\frac{a_1-a_2}{2}e^{-\pi i t}, \frac{a_1+a_2}{2}-\frac{a_1-a_2}{2}e^{-\pi i t}\right) \in X_2(\C).
\label{eq_add_path_si}
\end{align}
Then, $\si(0)=q^{12}$ and 
$\si(1) = q^{21}$.
In particular, the difference of the two coordinates is analytically continued as 
\begin{align}
z_1-z_2=(a_1-a_2)e^{-\pi i t},\label{eq_add_difference}
\end{align}
so that it makes a clockwise half-turn in \(\mathbb C^\times\). 
This clockwise choice gives the geometric representative of the morphism
\(\sigma\) in \(\mathrm{PaB}(2)\) that we use throughout the paper.

Since the braiding is constructed from the monodromy representation on conformal
blocks, such a choice of representative is essential. There is no canonical
choice between the clockwise and counterclockwise representatives; they are
exchanged by complex conjugation and give rise to opposite braidings.
However, once we choose the balancing isomorphism to be
$\theta_M=\exp(2\pi iL(0))$,
the compatible choice of braiding is fixed (see Section \ref{sec_explicit_braiding}). With the conventions of this
paper, this requires the above clockwise representative of \(\sigma\).
\end{rem}

\begin{dfn}
\label{def_alpha}
Let $r \geq 3$ and $A\in \Tr_r$.
An edge $e \in E(A)$ is called {\it alpha-type} if
the vertex on the right side of $d(e)$ is not a leaf.
\end{dfn}
Let $e\in E(A)$ be alpha-type. Then, $A$ can be written as $A=B\circ_p 1(23) \circ (A_1,A_2,A_3)$ for some trees $A_1,A_2,A_3,B$ and $p\in \{1,2,\dots,n\}$.
Set $A'=B\circ_p (12)3 \circ (A_1,A_2,A_3)$.

\vspace{4mm}

\begin{minipage}[c]{7cm}
\begin{forest}
for tree={
  l sep=20pt,
  parent anchor=south,
  align=center
}
[$\fbox{B}$,edge label={node[midway,right]{p}}
[$e$
[\fbox{$A_1$}]
[[\fbox{$A_2$}][\fbox{$A_3$}]]
]
]
\end{forest}
\end{minipage}
\hfill
\begin{minipage}[c]{7cm}
\begin{forest}
for tree={
  l sep=20pt,
  parent anchor=south,
  align=center
}
[$\fbox{B}$
[$e$
[[\fbox{$A_1$}][\fbox{$A_2$}]]
[\fbox{$A_3$}]
]
]
\end{forest}
\end{minipage}
\vspace{4mm}

Define a $\CPaB(r)$-morphism
$\alpha_e:A\rightarrow A'$ by
\begin{align}
\alpha_e = \id_B \circ_p \alpha \circ (\id_{A_1},\id_{A_2},\id_{A_3}):
B\circ_p 1(23) \circ (A_1,A_2,A_3)\rightarrow 
B\circ_p (12)3 \circ (A_1,A_2,A_3),
\label{eq_alpha_type_def}
\end{align}
which we call {\it an alpha-type map}.
We will also write $\alpha_e A$ for $A'$ using the same symbol as the morphism.
\begin{rem}
\label{rem_alpha_untwist}
Note that $\Hom_{\CPaB(r)}(A,\al_e A)=\C p^{-1}(\id)=\C \PB_r$
and $\al_e:A\rightarrow A'$ corresponds to the identity map in $\PB_r$.
In particular, $\al_e$ is a topologically untwisted braid (see Fig. \ref{fig_alpha}).
\end{rem}

Next, we will recall the notion of a braided tensor category (see \cite{EGNO} for more detail).
A {\it tensor category} is an essentially small, $\C$-linear, category $\mathcal{C}$
equipped with a bifunctor 
$$\boxtimes\colon \cC \times \cC \rightarrow \cC,\quad (M,N)\mapsto M\boxtimes N$$
and a distinguished object $1_\mathcal{B}$ together with natural isomorphisms
\begin{align*}
&l_{\bullet}\colon 1_\mathcal{B} \boxtimes \bullet\xrightarrow{\simeq} \bullet,\quad (l_M\colon 1_\mathcal{B}\boxtimes M\xrightarrow{\simeq} M),\\
&r_{\bullet}\colon \bullet \otimes 1_\mathcal{B}\xrightarrow{\simeq} \bullet,\quad (r_M\colon M\otimes  1_\mathcal{B}\xrightarrow{\simeq} M),\\
&{\al}_{\bullet,\bullet,\bullet}\colon (\bullet \boxtimes\bullet)\otimes \bullet\xrightarrow{\simeq} \bullet\otimes (\bullet\boxtimes \bullet),\quad \left({\al}_{M,N,L}\colon (M\boxtimes N)\boxtimes L\xrightarrow{\simeq} M\boxtimes (N\boxtimes L)\right),
\end{align*}
satisfying the pentagon and triangle identities:
\begin{enumerate}
\item[(triangle identities)]
\begin{align*}
\begin{array}{ccc}
(M\boxtimes 1) \boxtimes N & \overset{\alpha_{M,1,N}}{\longrightarrow} & M\boxtimes (1\boxtimes N)\\
{}_{r_N}\searrow&&\swarrow_{l_N}\\
&M\boxtimes N &
\end{array}
\end{align*}
\item[(pentagon identity)]
\begin{align}
\begin{array}{ccc}
& (M_1 \boxtimes M_2) \boxtimes (M_3 \boxtimes M_4)\\
 {}^{{\alpha_{M_1 \boxtimes M_2, M_3, M_4}}}\nearrow
&&
\searrow^{{\alpha_{M_1,M_2,M_3 \boxtimes M_4}}}
\\
((M_1 \boxtimes M_2 ) \boxtimes M_3) \boxtimes M_4
&&
M_1 \boxtimes (M_2 \boxtimes (M_3 \boxtimes M_4))
\\
{}^{{\alpha_{M_1,M_2,M_3}} \boxtimes id_{M_4} }\downarrow 
&& 
\uparrow^{{ id_{M_1} \boxtimes \alpha_{M_2,M_3,M_4} }}
\\
(M_1 \boxtimes (M_2 \boxtimes M_3)) \boxtimes M_4
& \underset{\alpha_{M_1,M_2 \boxtimes M_3, M_4}}{\longrightarrow} &
M_1 \boxtimes ( (M_2 \boxtimes M_3) \boxtimes M_4)
\end{array}
\label{eq_pentagon_id}
\end{align}
\end{enumerate}

{\it A braided tensor category} is a tensor category $\mathcal{C}$ equipped with a natural isomorphism of functors, called the \emph{braiding},
$${B}_{\bullet,\bullet}\colon(\bullet\otimes\bullet)\xrightarrow{\simeq} ( \bullet\otimes \bullet)\circ \sigma,\quad ({B}_{M,N}\colon M\otimes N\xrightarrow{\simeq}N\otimes M),$$
where $\sigma$ is the functor
$$\sigma\colon \mathcal{C}\times \mathcal{C}\rightarrow \mathcal{C}\times \mathcal{C},\quad (M,N)\mapsto (N,M).$$
The natural isomorphism ${B}_{\bullet,\bullet}$ is required to satisfy {\it the hexagon identity},
$$
\begin{array}{ccccc}
   (M \otimes N) \otimes L 
   &\stackrel{\al_{M,N,L}}{\rightarrow}&
   M \otimes (N \otimes L)
   &\stackrel{B_{M,N \otimes L}}{\rightarrow}&
   (N \otimes L) \otimes M
   \\
   \downarrow^{B_{M,N}\otimes Id}
   &&&&
   \downarrow^{\al_{N,L,M}}
   \\
   (N \otimes M) \otimes L
   &\stackrel{\al_{N,M,L}}{\rightarrow}&
   N \otimes (M \otimes L)
   &\stackrel{Id \otimes B_{M,L}}{\rightarrow}&
   N \otimes (L \otimes M)
\end{array}
$$
and
$$
\begin{array}{ccccc}
   M \otimes (N \otimes L) 
   &\stackrel{\al_{M,N,L}^{-1}}{\to}&
   (M \otimes N) \otimes z
   &\stackrel{B_{M \otimes N, L}}{\to}&
   L \otimes (M \otimes N)
   \\
   \downarrow^{Id \otimes B_{N,L}}
   &&&&
   \downarrow^{\al_{L,M,N}^{-1}}
   \\
   M \otimes (L \otimes N)
   &\stackrel{\al^{-1}_{M,L,N}}{\to}&
   (M \otimes L) \otimes N
   &\stackrel{B_{M,L} \otimes Id}{\to}&
   (L \otimes M) \otimes N
\end{array}.
$$

\begin{dfn}
\label{def_balanced}
A \emph{twist}, or a \emph{balance}, on a braided tensor category $\cC$ is a natural isomorphism $\theta$ from the identity functor on $\cC$ to itself satisfying 
$$
B_{M,N}B_{N,M}=\theta_{M\otimes N}\circ (\theta_M^{-1} \otimes \theta_N^{-1})
$$
for any $M,N\in \cC$.
A {\it balanced braided tensor category} is a braided tensor category equipped with such a balance.
\end{dfn}
We say that a braided tensor category $\cC$ has strict unitors if $l_M =\id_M$ and $r_M= \id_M$ for any $M\in \cC$.

Let $\cC$ be a small $\C$-linear category.
Fresse shows that a braided tensor category structure on $\cC$ with strict unitors is equivalent to an operad homomorphism $\CPaB\rightarrow \Endo_\cC$.
We will briefly recall this fact (for more detail, see \cite[Section 6]{Fr}).

We first consider a braided tensor category with strict unitors $(\cC,\boxtimes,\al,\si)$.
Let $A \in \Tr_{[r]}$ and $M_{[r]}=(M_1,\dots,M_r)$ be a sequence consisting of $r$ objects in $\cC$.
Since $A$ is a parenthesized product of words in $[r]$,
we can define a parenthesized tensor product by using the bifunctor 
$\boxtimes$. We denote it by $\boxtimes_A:\cC^r \rightarrow \cC$.
For example, 
\begin{align}
\boxtimes_{1(23)}\cM_{[3]}&=\cM_1 \boxtimes (\cM_2 \boxtimes \cM_3) \nonumber\\
\boxtimes_{(3(24))(15)}\cM_{[5]}&=(M_3\boxtimes (M_2\boxtimes M_4))\boxtimes (M_1 \boxtimes M_5)\label{eq_iterated_tensor} \\
\boxtimes_{1(2(3(\cdots (r-1 r)\cdots))} \cM_{[r]}&=M_1 \boxtimes(M_2 \boxtimes M_3 \boxtimes (\cdots
(M_{r-1}\boxtimes M_r)\cdots)).\nonumber
\end{align}
Moreover, set 
\begin{align}
\boxtimes_1&=\id : \cC \rightarrow \cC \in \Func(\cC,\cC),\label{eq_iterated_tensor2} \\
\boxtimes_\emptyset &= I \in \cC=\Endo_\cC(0) \nonumber.
\end{align}
Hence, we associate each object $A \in \CPaB(r)$
with a functor $\boxtimes_A \in \Endo_\cC(r) = \Func(\cC^r,\cC)$.
Next, for each morphism $\ga:A\rightarrow B$ in $\CPaB(r)$, we would like to define a natural isomorphism
$\rho_{\mathrm{BTC}}(\ga):\boxtimes_A\rightarrow \boxtimes_B$, which follows from the Mac Lane coherence theorem
(see \cite[Chapter XI]{Mac}). Therefore, we obtain an operad homomorphism
\begin{align}
\rho_{\mathrm{BTC}}:\CPaB(r)\rightarrow \Endo_\cC(r),\qquad r \geq 0.
\label{eq_add_BTC_rho}
\end{align}

Conversely, let $\rho:\CPaB \rightarrow \Endo_\cC$ be an operad homomorphism.
Set 
\begin{align*}
\boxtimes &= \rho(12):\cC \times \cC \rightarrow \cC,\\
I&=\rho(\emptyset) \in \cC=\Endo_\cC(0).
\end{align*}
By using the functor $\boxtimes$, for each $A\in \Tr_{[r]}$,
we can define a functor $\boxtimes_A:\cC^r \rightarrow \cC$ as above \eqref{eq_iterated_tensor} and \eqref{eq_iterated_tensor2}.

Let $A \in \PW_r$.
Then, $A$ can be obtained from iterated operadic compositions of the
2-leaves tree $(12) \in \Tr_{[2]}$.
For example, 
\begin{align}
(12)3=(12)\circ_1 (12) \text{ and } 1(23)=(12)\circ_2 (12). \label{eq_PW}
\end{align}
Combining such a decomposition of $A$ with the definition of $\boxtimes_A$ and $\boxtimes = \rho(12)$, 
we obtain $\boxtimes_A = \rho(A)$ for any $A \in \PW_r$.

\begin{figure}[h]
    \centering
    \includegraphics[scale=1]{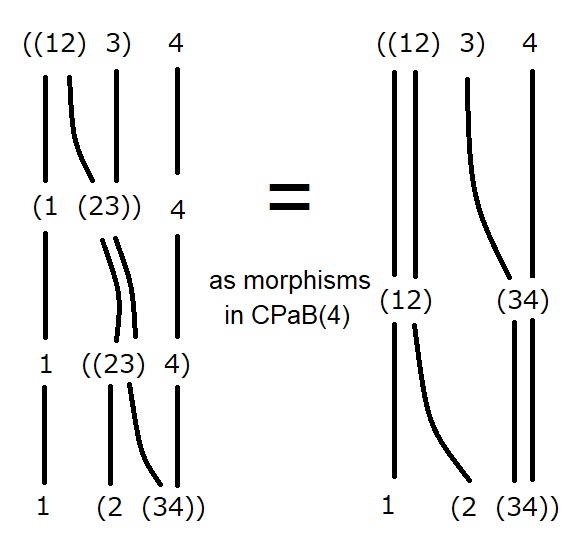}
    \caption{Pentagon type braid}\label{fig_pentagon}
\end{figure}

Consider the map $\al:(12)3\rightarrow 1(23)$ in the category $\CPaB(3)$ (see Fig. \ref{fig_alpha}).
Since $\rho$ is a functor, $\rho(\alpha)$ is a natural isomorphism between $\rho((12)3)\rightarrow \rho(1(23))$.
Hence, we have:
\begin{align*}
\rho(\alpha):
(\bullet \boxtimes \bullet)\boxtimes \bullet \rightarrow 
\bullet \boxtimes (\bullet \boxtimes \bullet).
\end{align*}
We denote it by $\tilde{\alpha}$.
We will show that $\tilde{\al}$ satisfies the pentagon identity.

See Fig. \ref{fig_pentagon}.
All maps between the five elements of $\PW_4$
are alpha type maps (see Remark \ref{rem_alpha_untwist}), and thus can be written of the form \eqref{eq_alpha_type_def}.
Hence, $\tilde{\al}$ satisfies the pentagon identity by Fig. \ref{fig_pentagon}.


Similarly,
the map $\si:(12)\rightarrow (21)$ in $\CPaB(2)$ (see Fig. \ref{fig_sigma})
defines a natural isomorphism
\begin{align*}
\tilde{\si}: \bullet_1 \boxtimes \bullet_2\rightarrow \bullet_2 \boxtimes \bullet_1.
\end{align*}
It can be similarly verified that this satisfies the hexagon identity.
%

\subsection{Pseudo-braided category and proendomorphism operad}
\label{sec_appendix_B}
In this section, we review the definition of a pseudo-braided category introduced by Soibelman. Furthermore, we introduce the proendomorphism operad, a multi-input analogue of the profunctor endomorphism, and we show that considering a pseudo-braided category is equivalent to giving a lax 2-morphism of operads from $\CPaB$ to the proendomorphism operad.

A {\it pseudo-tensor category} $C$ consists of a set of objects $\Ob(C)$ and multi-hom spaces,
\begin{align*}
\mathrm{Hom}(M_1,\dots,M_r;L),
\end{align*}
which are \(\C\)-vector spaces assigned to \(r\) input objects
\(M_1,\ldots,M_r\) and one output object \(L\),
together with composition maps satisfying associativity (for the precise definition, see \cite{Lam,BD}).
%
For a strict tensor category $(C,\boxtimes,1)$, we can define a pseudo-tensor category by
\begin{align*}
\mathrm{Hom}(M_1,\dots,M_r;L) = \mathrm{Hom}_C(M_1 \boxtimes \cdots \boxtimes M_r,L).
\end{align*}
The notion of a pseudo-braided category was introduced by Soibelman as a generalization of pseudo-tensor categories to the non-strict setting equipped with a braiding \cite{So}.
Note that the original definition is not unital. Here, we will give the unital version in terms of the parenthesized braid operad as follows:
\begin{dfn}\label{def_non_strict_pseudo}
A {\it unital pseudo-braided category} consists of the following data:
\begin{description}
\item[object)]
A set of objects $\Ob(C)$;
\item[hom space)]
$\C$-vector spaces
\begin{align*}
\mathrm{Hom}_A(M_1,\dots,M_r;L)
\end{align*}
for each $r \geq 1$, objects $L,M_1,\dots,M_r \in \Ob(C)$ and a tree $A\in\Tr_r$;
\item[unit]
A distinguished object $V \in \Ob(C)$, which is associated with $\emptyset \in \Tr_0$.
\item[identity]
A distinguished element $\id_M \in  \mathrm{Hom}_1(M,M)$ for any $M \in \cC$.
\item[composition]
For each $p \in \{1,\dots,r\}$,
$\C$-linear maps 
\begin{align}
\begin{split}
\circ_p: \mathrm{Hom}_A&(N_1,\dots,N_{p-1},L,N_{p+1},\dots,N_r;R)\otimes \mathrm{Hom}_B(M_1,\dots,M_s;L)\\
&\rightarrow \mathrm{Hom}_{A\circ_p B}(N_1,\dots,N_{p-1},M_1,\dots,M_s,N_{p+1},\dots,N_r;R),\quad (f,g) \mapsto f\circ_p g 
\end{split}
\label{eq_pseudo_p}
\end{align}
for any $r \geq 1$ and $s \geq 0$ and
\begin{align*}
\circ_p \emptyset: \mathrm{Hom}_A(N_1,\dots,N_{p-1},V,N_{p+1},\dots,N_r;R) \rightarrow \mathrm{Hom}_{A\circ_p \emptyset}(N_1,\dots,N_{p-1},N_{p+1},\dots,N_r;R);
\end{align*}
for any $r \geq 1$.
\item[braiding and associator]
$\CPaB(r)$-action on $\mathrm{Hom}_A(M_1,\dots,M_r;L)$, i.e.,
\begin{align*}
\rho(g): \mathrm{Hom}_A(M_1,\dots,M_r;L) \rightarrow \mathrm{Hom}_{A'}(M_1,\dots,M_r;L)
\end{align*}
for any $A,A' \in \Tr_r$ and $\CPaB$-morphisms $g:A\rightarrow A'$, which is associative and unital, that is,
$\rho(g')\circ \rho(g) =\rho(g'\circ g)$ for any $A,A',A'' \in \Tr_r$ and $\CPaB$-morphisms $g:A\rightarrow A'$
and $g':A' \rightarrow A''$ and $\rho(1)=\mathrm{id}$;
\begin{itemize}
\item
Here, $\rho(g)$ depends on $M_1,\dots,M_r;L$.If we want to make this explicit, we write it as $\rho(g)_{M_1,\dots,M_r;L}$.
\end{itemize}
\end{description}
such that they satisfy the following conditions:
\begin{enumerate}
\item[(SB0)]
For any $\si \in S_r$, $A,A'\in \Tr_{r}$ and $g: A \rightarrow A'$ in $\CPaB(r)$,
\begin{align*}
\mathrm{Hom}_{A^\si}(M_1,\dots,M_r;L) = \mathrm{Hom}_{A}(M_{\si(1)},\dots,M_{\si(r)};L),
\end{align*}
 and $\rho(g^\si)_{M_{1},\dots,M_r,L} = \rho(g)_{M_{\si(1)},\dots,M_{\si(r)};L}$, that is, the following diagram commutes
\begin{align*}
\begin{split}
\begin{array}{ccc}
\mathrm{Hom}_{A^\si}(M_1,\dots,M_r;L) &{=}&
\mathrm{Hom}_{A}(M_{\si(1)},\dots,M_{\si(r)};L)\\
\downarrow^{\rho(g^\si)_{M_{1},\dots,M_r,L}}
      && 
    \downarrow^{{\rho(g)_{M_{\si(1)},\dots,M_{\si(r)};L}}}
    \\
\mathrm{Hom}_{(A')^\si}(M_1,\dots,M_r;L) &{=}&
\mathrm{Hom}_{A'}(M_{\si(1)},\dots,M_{\si(r)};L).
\end{array}
\end{split}
\end{align*}
\item[(SB1)]
The compositions of morphisms $\circ_p$ are associative.
\item[(SB2)]
For any $A \in \Tr_r$,
\begin{align*}
\begin{split}
\circ_p: \mathrm{Hom}_A&(N_1,\dots,N_{p-1},L,N_{p+1},\dots,N_r;R)\otimes \mathrm{Hom}_1(L;L)\\
&\rightarrow \mathrm{Hom}_{A}(N_1,\dots,N_{p-1},L,N_{p+1},\dots,N_r;R),\quad (f,g) \mapsto f\circ_p g 
\end{split}
\end{align*}
satisfies $f\circ_p \id_L= f$, and
\begin{align*}
\begin{split}
\circ_1: \mathrm{Hom}_1&(L;L)\otimes \mathrm{Hom}_B(M_1,\dots,M_r;L)\\
&\rightarrow \mathrm{Hom}_{B}(M_1,\dots,M_r;L),\quad (f,g) \mapsto f\circ_1 g 
\end{split}
\end{align*}
satisfies $\id_L \circ_1 g= g$.
\item[(SB3)]
For any $n\geq 1$ and $m \geq 0$ and 
$A,A'\in \Tr_n$, $B,B' \in \Tr_m$, $g_A:A \rightarrow A'$
and $g_B:B\rightarrow B'$ and $p \in \{1,\dots,n\}$,
the following diagram commutes:
\begin{align*}
\begin{split}
\begin{array}{ccc}
\mathrm{Hom}_A(\bullet;\bullet) \otimes \mathrm{Hom}_B(\bullet;\bullet)
      &\overset{\circ_p}{\longrightarrow}&
\mathrm{Hom}_{A\circ_p B}(\bullet;\bullet)\\
    {}^{{\rho(g_A)\otimes \rho(g_B)}}\downarrow 
      && 
    \downarrow^{{\rho(g_A\circ_p g_B)}}
    \\
\mathrm{Hom}_{A'}(\bullet;\bullet) \otimes \mathrm{Hom}_{B'}(\bullet;\bullet)
      &\overset{\circ_p}{\longrightarrow}&
\mathrm{Hom}_{A'\circ_p B'}(\bullet;\bullet).
\end{array}
\end{split}
\end{align*}
\item[(SB4)]
For any $\si \in S_n$, $\tau \in S_m$, $i \in [n]$ and $A \in \CPaB(n)$, $B\in \CPaB(m)$, the following diagram commutes (see Definition \ref{def_permutation_operad}):
\begin{align*}
\begin{split}
\begin{array}{ccc}
\mathrm{Hom}_{A^\tau}(\bullet;\bullet) \otimes \mathrm{Hom}_{B^\si}(\bullet;\bullet)
      &\overset{\circ_p}{\longrightarrow}&
\mathrm{Hom}_{A^\tau\circ_p B^\si}(\bullet,\bullet)\\
    {=}\downarrow 
      && 
    \downarrow {=}
    \\
\left(\mathrm{Hom}_{A}(\bullet^\tau;\bullet) \otimes \mathrm{Hom}_{B}(\bullet^\si;\bullet)\right)
      &\overset{\circ_{\tau(p)}}{\longrightarrow}&
\mathrm{Hom}_{A\circ_{\tau(p)} B}(\bullet^{\tau' \si'};\bullet).
\end{array}
\end{split}
\end{align*}
Here, the vertical arrow (equality) is obtained by (SB0).
\end{enumerate}
\end{dfn}

It is clear that for any unital non-strict braided tensor category $(C,\boxtimes,\al)$ we can define a unital pseudo-braided category by
\begin{align*}
\mathrm{Hom}_A(M_1,\dots,M_r;L) = \mathrm{Hom}_C(\boxtimes_A M_{[r]};L)
\end{align*}
for any tree $A\in \Tr_r$ with the canonical pure braid group action (see Section \ref{sec_def_cpab}).

In the following, we reformulate the definition of a pseudo-braided category using a proendomorphism operad.
Let $\cC$ be a unital pseudo-braided category.  
Then, by (SB1) and (SB2), $\Hom_1(\bullet,\bullet)$ defines a $\C$-linear category structure on $\cC$ with $\id_\bullet$ as the identity morphism.  
We write $\Hom_\cC(\bullet,\bullet)$ for $\Hom_1(\bullet,\bullet)$.  

%
%
%

In this situation, for any $A \in \Tr_r$, it follows from (SB1) and (SB2) that
\begin{align*}
\Hom_A: (\cC^r)^\op \times \cC \rightarrow \Vect
\end{align*}
is a functor with respect to the categorical structure of $\cC$.
When $A = \emptyset$, we have
\begin{align*}
\mathrm{Hom}_{\emptyset}:\cC \rightarrow \Vect,\quad M \mapsto \mathrm{Hom}_\cC(V,M)
\end{align*}
which is the representable functor represented by $V$.
Moreover, we have a sequence of functors:
\begin{align}
\CPaB(r) &\rightarrow \Func\left((\cC^r)^{\op} \times \cC,\Vect\right),\qquad (\text{for $r \geq\ 0$})\label{eq_seq_pab_add}\\
g:A \rightarrow A' &\mapsto \rho(g): \Hom_A \rightarrow \Hom_{A'}.
\nonumber
\end{align}
Here $\Func((\cC^r)^{\op} \times \cC,\Vect)$ denotes the functor category.

The defining data of a pseudo-braided category further include the composition of $\Hom_A$ and $\Hom_B$. Such structures can be described by means of coends.
More precisely, the collection $\{\Func\left((\cC^r)^{\op} \times \cC,\Vect\right)\}_{r \geq 0}$ admits a natural 2-operad structure defined using coends, which we call the \emph{proendomorphism operad}.
In this section, we show that the defining data of a pseudo-braided category on a $\C$-linear category $\cC$ are equivalent to giving an operad homomorphism from $\CPaB$ to the proendomorphism operad.

We begin by recalling the notion of a coend in category theory \cite{Mac}, which will be used to define the proendomorphism operad.
\begin{dfn}\label{def_coend}
Let $\cC$ and $\cD$ be small $\C$-linear categories and
$H:\cC^\op \times \cC \;\longrightarrow\; \cD$ a functor. 
The \emph{coend} of $H$ consists of an object $x_H$ of $\cD$ together with a family of morphisms $\{\iota_a: H(a,a) \rightarrow x_H\}_{a\in \cC}$
such that for any morphism $f:a \rightarrow b$ in $\cC$, the following diagram commutes:
\begin{align}
\begin{split}
\begin{array}{ccc}
H(b,a) & \overset{{H(f,\id)}}{\longrightarrow} & H(a,a) \\
{}_{H(\id,f)}\downarrow & &\downarrow{}_{\iota_a} \\
H(b,b) & \overset{\iota_b}{\longrightarrow} & x_H
\end{array}
\end{split}
\label{eq_coend_com}
\end{align}
and it satisfies the following universal property:
\begin{itemize}
\item
For any object $z \in \cD$ and a family of morphisms $\{\omega_a: H(a,a) \rightarrow z\}_{a\in\cC}$ making the diagram \eqref{eq_coend_com} commute, there exists a unique morphism $t:x_H \rightarrow z$ such that the following diagram commutes for all $a\in \cC$:
\begin{align}
\begin{split}
\begin{array}{cc}
H(a,a) &   \\
  \iota_a\downarrow & \quad\searrow {\omega_a} \\
x_H &\underset{t}{\longrightarrow}  z.
\end{array}
\end{split}
\label{eq_coend_univ}
\end{align}
\end{itemize}
\end{dfn}
If $\cD$ is cocomplete, the coend always exists. We denote it by
\[
\int^{a \in \cC} H(a,a).
\]
When $H$ depends on multiple variables, e.g.,
$H:(\cC^\op \times \cC)^n \longrightarrow \cD$,
there exist natural isomorphisms
\begin{align}
 \int^{(a_1,\dots,a_n) \in \cC^n}H(a_1,a_1,\dots,a_n,a_n) &\cong \int^{a_1\in \cC}\dots \int^{a_n \in \cC} H(a_1,a_1,\dots,a_n,a_n)  \label{eq_add_Fubini}\\
 &\cong \int^{a_1,\dots,a_k \in \cC^k}\dots \int^{a_{k+1},\dots ,a_n \in \cC^{n-k}} H(a_1,a_1,\dots,a_n,a_n), 
 \label{eq_add_Fubini2}
\end{align}
so that partial coends can be performed in any order as in \eqref{eq_add_Fubini2}.
This property is called \emph{Fubini's theorem} for coends, and it holds whenever the target category is cocomplete (see for example \cite{Mac}).

Next, we will defined the proendomorphism operad using coends.
Let $F,G \in \Func(\cC^\op \times \cC,\Vect)$.
Define $F \otimes G: \cC^\op \times \cC \times \cC^\op \times \cC \rightarrow \Vect$ by $F(\bullet_1,\bullet_2) \otimes_\C G(\bullet_1,\bullet_2)$. Then, by taking the coend,
\begin{align*}
F \circ G =\int^{N\in \cC} F(N,\bullet) \otimes_\C G(\bullet,N) \in \Func(\cC^\op \times \cC).
\end{align*}
Hence, one can define a composition on $\Func(\cC^\op \times \cC,\Vect)$, which is associative up to the unique universal isomorphism. Hence, $\Func(\cC^\op \times \cC,\Vect)$ is a 2-monoid in the category of small categories, which is a strict 2-category. An element in $\Func(\cC^\op \times \cC,\Vect)$ is called a {\it profunctor} \cite{Bor}.
Set
\begin{align*}
\Endp_\cC(n) =\Func((\cC^\op)^n \times \cC,\Vect)
\end{align*}
for $n \geq 0$. 
Then, in analogy with profunctors, the collection $\{\Endp_\cC(n)\}_{n \geq 0}$ carries the structure of a 2-operad (up to the universal isomorphism, the composition is associative), which we call the {\it proendomorphism operad} 
(for 2-operads, see for example \cite{Le}).
Here, the unit of $\Endp_\cC$ is given by $\Hom_\cC(\bullet,\bullet) \in \Endp_\cC(1)$, since the coend with $\Hom(\bullet,\bullet)$ reduces simply to substitution, that is,
there are universal isomorphisms
\begin{align}
\begin{split}
F \circ_p \mathrm{Hom}_\cC(-,-) &= 
\int^{N\in \cC} F(\bullet_1,\dots,\bullet_{p-1},N,\bullet_{p+1}, \dots,\bullet_n,\bullet_0) \otimes_\C \mathrm{Hom}_\cC(\bullet_p,N)\\
&\cong F(\bullet_1,\dots,\bullet_{p-1},\bullet_p,\bullet_{p+1},\dots,\bullet_n,\bullet_0)
\end{split}
\label{eq_substi1}
\end{align}
and
\begin{align}
\mathrm{Hom}_\cC(-,-) \circ_1 F = 
\int^{N\in \cC} \mathrm{Hom}_\cC(N,\bullet_0) \otimes_\C F(\bullet_{[n]},N) \cong F(\bullet_{[n]},\bullet_0)
\label{eq_substi2}
\end{align}
for any $F \in \Endp_\cC(n)$.


\begin{dfn}\label{def_2action}
Let $(P,\circ_P,i_P)$ be an 1-operad in $\cat$ (e.g. $\CPaB$) and $\cC \in \cat$.
A lax 2-morphism from $P$ to $\Endp_\cC$ is a sequence of functors
\begin{align*}
f_n:P(n) \rightarrow \Endp(n)
\end{align*}
and natural transformations $\mu_{n,m,p}: f_n(\bullet) \circ_p f_m(\bullet)\rightarrow f_{n+m-1}(\bullet\circ_p \bullet)$, that is,
\begin{align}
\begin{split}
\begin{array}{ccc}
P(n)\times P(m) &\overset{f_n \times f_m}{\longrightarrow}& \Endp_\cC(n)\times \Endp_\cC(m)\\
\downarrow_{\circ_p}
      &\Downarrow{\mu_{n,m,p}}& 
   \downarrow_{\circ_p}
    \\
    P(n+m-1) &\overset{f_{n+m-1}}{\rightarrow}& \Endp_\cC(n+m-1),
\end{array}
\end{split}
\label{eq_mu_diag_add}
\end{align}
such that the following conditions hold:
\begin{itemize}
\item[LM0)]
With respect to the actions of $S_r$ on $P(r)$ and on $\Endp_\cC(r)$ as category isomorphisms (not merely equivalences),  the map $f_r : P(r) \rightarrow \Endp_\cC(r)$ is $S_r$-equivariant.
\item[LM1)]
$\mu$ is associative.
\item[LM2)]
$f_1$ sends the unit to the unit, $f_1\circ i_P=\mathrm{Hom}(\bullet,\bullet)$.
\item[LM3)]
For any $\si \in S_n$, $\tau \in S_m$, $i \in [n]$ and $A \in P(n)$, $B\in P(m)$,
\begin{align*}
\mu_{n,m,p}(A^\tau,B^{\si}) &= (\mu_{n,m,\tau(p)}(A,B))^{\tau'\si'},
\end{align*}
holds (see Definition \ref{def_permutation_operad} and Definition \ref{def_non_strict_pseudo} (SB4)).
\end{itemize}
\end{dfn}

Let $(\cC,\Hom_A(\bullet,\bullet),\rho)$ be a unital pseudo-braided category.
Then, define a sequence of functors 
\begin{align*}
f_n : \CPaB(n) \rightarrow \Endp_\cC(n)
\end{align*}
as in \eqref{eq_seq_pab_add},
and $f_0:\CPaB(0) \rightarrow \Endp_\cC(0)$ by
\begin{align*}
f_0(*)= \Hom(V,\bullet).
\end{align*}
Define the natural transformations $\mu$ \eqref{eq_mu_diag_add} by the coend of
\begin{align*}
\circ_p: \Hom_A(\bullet;\bullet) \otimes \Hom_B(\bullet;\bullet) \rightarrow \Hom_{A \circ_p B}(\bullet;\bullet),
\end{align*}
where the naturality follows from (SB3).
It is clear that (SB0) and (SB4) imply (LM0) and (LM3), respectively.  
Conversely, if a lax 2-morphism $(f,\mu):\CPaB \to \Endp_\cC$ is given and $f_0(*)$ is representable by some $V \in \cC$, then $\cC$ admits a unital pseudo-braided category structure.

\begin{prop}\label{prop_app_operad}
For a \(\mathbb C\)-linear category \(\mathcal C\), giving a unital
pseudo-braided category structure on \(\mathcal C\) is equivalent to giving
a lax \(2\)-morphism \((f,\mu):\PaB\to\Endp_{\mathcal C}\) together with a
choice of an object \(V\in\mathcal C\) representing \(f_0(*)\).
\end{prop}

\subsection{From unital pseudo-braided category to braided tensor category}\label{app_pseudo}
In this section, we investigate when a unital pseudo-braided category gives the structure of a braided tensor category, and explicitly describe the braided tensor category structure.


%
Let $\cC$ be a unital pseudo-braided category.
Assume that there is a bifunctor 
\begin{align*}
\boxtimes: \cC \times \cC \rightarrow \cC
\end{align*}
and a natural isomorphism
\begin{align}
\mu: \Hom_\cC(M_1 \boxtimes M_2, N) \rightarrow \Hom_{(12)}(M_1,M_2;N), \label{eq_mu_def}
\end{align}
where the naturality is taken with respect to functors $\cC^\op \times \cC^\op \times \cC \rightarrow \Vect$.
Then, for any $A \in \Tr_r$, one obtains a (not necessarily isomorphic) natural transformation
\begin{align*}
\mu_A: \Hom(\boxtimes_A M_{[r]},M_0) \rightarrow \Hom_A(M_{[r]}; M_0),
\end{align*}
where $\boxtimes_A$ is defined in \eqref{eq_iterated_tensor}.
We explain the case $A=1(23)$.  
Considering the decomposition $1(23) = (12)\circ_2 (12)$, 
\begin{align}
\circ_2:\Hom_{(12)}(M_1,L;M_0) \otimes_\C \Hom_{(12)}(M_2,M_3;L) \rightarrow \Hom_{1(23)}(M_1,M_2,M_3;M_0)
\label{eq_coend_L}
\end{align}
is obtained by \eqref{eq_pseudo_p}. 
Then,
the universality of the coend yields
\begin{align}
\circ_2: \int^L \Hom_{(12)}(M_1,L;M_0) \otimes_\C \Hom_{(12)}(M_2,M_3;L) \rightarrow \Hom_{1(23)}(M_1,M_2,M_3;M_0).
\label{eq_coend2}
\end{align}
%
%
%
%
Here, by the existence of $\boxtimes$, equation \eqref{eq_coend2} can be rewritten as
\begin{align*}
\circ_2: \int^L \Hom(M_1 \boxtimes L;M_0) \otimes_\C \Hom(M_2 \boxtimes M_3;L) \rightarrow \Hom_{1(23)}(M_1,M_2,M_3;M_0).
\end{align*}
Hence, by \eqref{eq_substi1} and \eqref{eq_substi2},
we obtain a natural isomorphism
\[
\circ_2: \int^L \Hom(M_1 \boxtimes L;M_0) \otimes_\C \Hom(M_2 \boxtimes M_3;L) \cong \Hom(M_1 \boxtimes (M_2 \boxtimes M_3);M_0),
\]
and thus, a natural transformation
\begin{align}
\mu_{1(23)}:\Hom(M_1 \boxtimes (M_2 \boxtimes M_3);M_0) \rightarrow \Hom_{1(23)}(M_1,M_2,M_3;M_0).
\label{eq_coend3}
\end{align}

\begin{rem}
Equation \eqref{eq_coend3} can be described more explicitly as follows: in
\[
\circ_2:\Hom(M_1 \boxtimes L;M_0) \otimes_\C \Hom(M_2 \boxtimes M_3;L) \rightarrow \Hom_{1(23)}(M_1,M_2,M_3;M_0),
\]
taking $L = M_2 \boxtimes M_3$ and considering the image of $\id_{M_2 \boxtimes M_3}$ gives \eqref{eq_coend3}.
\end{rem}

Note that the natural transformation $\mu_{1(23)}$ is determined uniquely from the universality once \eqref{eq_mu_def} is given.  
For any $A \in \Tr_r$, by decomposing $A$ into products of $(12)$ in the magma operad as in \eqref{eq_PW}, one obtains a natural transformation
\begin{align*}
\mu_A: \Hom(\boxtimes_A M_{[r]},M_0) \rightarrow \Hom_A(M_{[r]}; M_0).
\end{align*}

From (SB0) and (SB4), the natural transformation $\mu_A$ is equivariant with respect to the action of $S_r$ on $A$.  
Although $\mu_A$ is obtained by taking coends $r-2$ times, by Fubini's theorem for coends, it coincides with the partially taken coend.  
Hence we have the following:

\begin{lem}\label{lem_add_factor}
For any $A\in \Tr_r$ and $B \in \Tr_s$, with $p \in [r]$ and $r,s \geq 1$, the following diagram commutes:
\begin{align}
\begin{split}
\begin{array}{ccc}
\mathrm{Hom}(\boxtimes_{A \circ_p B} \bullet,\bullet)
&\overset{\mu_{A \circ_p B}}{\longrightarrow}&
\mathrm{Hom}_{A \circ_p B}(\bullet,\bullet)\\
\downarrow^{\cong} 
      && 
    \uparrow{\mu_{A,B}}
    \\
\int (\mathrm{Hom}(\boxtimes_{A} \bullet, \bullet) \otimes_\C \mathrm{Hom}(\boxtimes_{B} \bullet,\bullet))
      &\overset{\int \mu_{A}\otimes \mu_{B}}{\longrightarrow}&
\int (\mathrm{Hom}_{A}(\bullet;\bullet) \otimes_\C \mathrm{Hom}_{B}(\bullet;\bullet)).
\end{array}
\end{split}
\label{add_factor_lem}
\end{align}
Here, the vertical arrows are the morphism determined by the universality of the coend.
\end{lem}

Set
\begin{align}
\std_r =\sstd = (12) \circ_2 \biggl( (12)\circ_2  \Bigl(\cdots\bigl( (12) \circ_2 (12)\bigr)\dots \biggr) \in \Tr_r,
\label{eq_std_fac}
\end{align}
which we call a \emph{standard tree}.
Then,
\begin{align*}
\boxtimes_{\std_r}(M_{[r]}) = M_1\boxtimes (M_2 \boxtimes (\dots (M_{r-1}\boxtimes M_r )\dots ).
\end{align*}


\begin{lem}\label{prop_add_pseudo1}
Let $\mathcal C$ be a unital pseudo-braided category.
Assume that the following conditions hold:
\begin{enumerate}
\item
There exists a bifunctor
\[
  \boxtimes:\mathcal C\times \mathcal C\longrightarrow \mathcal C
\]
together with a natural isomorphism
\[
  \mu_{(12)}:
  \mathrm{Hom}_{\mathcal C}(M_1\boxtimes M_2,N)
  \rightarrow
  \mathrm{Hom}_{(12)}(M_1,M_2;N).
\]
\item
For any $r\geq 2$, the natural transformation 
\begin{align}
\mu_{\std_r}:
\mathrm{Hom}(\boxtimes_{\std_r}(M_{[r]}),N) \rightarrow \mathrm{Hom}_{\std_r}(M_{[r]}; N)\label{eq_prop_std_isom}
\end{align}
is a natural isomorphism.
\end{enumerate}
Then, for any $A \in \Tr_r$, and $M_1,\dots,M_r, N\in \cC$,
\begin{align}
\mu_A: \mathrm{Hom}(\boxtimes_A M_{[r]}, N) \rightarrow \Hom_A(M_1,\dots,M_r;N)
\label{eq_prop_muA_isom}
\end{align}
is a natural isomorphism.
In particular, the functor
\[
  \mathrm{Hom}_A(M_1,\dots,M_r; \bullet):
  \mathcal C \longrightarrow \Vect
\]
is represented by $\boxtimes_A(M_1,\ldots,M_r)$.
\end{lem}
\begin{proof}
We prove that \eqref{eq_prop_muA_isom} is an isomorphism by induction on $r$.  
For $r=2$, the claim follows from the assumption and (SB0), since $\mathrm{Hom}_{21}(M_1,M_2;L)$ is represented by the bifunctor 
\[
\cC^2 \rightarrow \cC, \qquad (M_1,M_2) \mapsto M_2 \boxtimes M_1.
\]

Assume $r \geq 3$.  
By taking coends in (SB3), we obtain the commutative diagram
\begin{align}
\begin{split}
\begin{array}{ccc}
\int (\mathrm{Hom}_A(\bullet;\bullet) \otimes_\C \mathrm{Hom}_B(\bullet;\bullet))
      &\overset{\int \circ_p}{\longrightarrow}&
\mathrm{Hom}_{A\circ_p B}(\bullet;\bullet)\\
    {{\rho(g_A)\otimes \rho(g_B)}}\downarrow 
      && 
    \downarrow{{\rho(g_A\circ_p g_B)}}
    \\
\int (\mathrm{Hom}_{A'}(\bullet;\bullet) \otimes_\C \mathrm{Hom}_{B'}(\bullet;\bullet))
      &\overset{\int \circ_p}{\longrightarrow}&
\mathrm{Hom}_{A'\circ_p B'}(\bullet;\bullet).
\end{array}
\end{split}
\label{add_coend1}
\end{align}

For any $k=2,\dots,r-1$, there is a decomposition 
\[
\std_r = \std_k \circ_k \std_{r-k+1}.
\]
Applying this decomposition to Lemma \ref{lem_add_factor}, and using that $\mu_{\std_i}$ is an isomorphism, we see that
\[
\int (\mathrm{Hom}_{\std_k}(\bullet;\bullet) \otimes_\C \mathrm{Hom}_{\std_{r-k+1}}(\bullet;\bullet)) \rightarrow \Hom_{\std_r}(\bullet,\bullet)
\]
is an isomorphism.  
Therefore, for any $A' \in \Tr_k$ and $B' \in \Tr_{r-k+1}$, choosing $g_A:\std_k \rightarrow A'$ and $g_B:\std_{r-k+1} \rightarrow B'$, it follows from \eqref{add_coend1} that
\[
\mu_{A',B'}:\int (\mathrm{Hom}_{A'}(\bullet;\bullet) \otimes_\C \mathrm{Hom}_{B'}(\bullet;\bullet)) \rightarrow \Hom_{A' \circ_k B'}(\bullet,\bullet)
\]
is also an isomorphism.  
Hence, by Lemma \ref{lem_add_factor} and the induction hypothesis, using that $\mu_{A'}$ and $\mu_{B'}$ are isomorphisms, we deduce that
\[
\mu_{A' \circ_k B'}: \mathrm{Hom}(\boxtimes_{A'\circ_k B'} \bullet;\bullet) \rightarrow \Hom_{A' \circ_k B'}(\bullet,\bullet)
\]
is an isomorphism.  
Since every element of $\Tr_r$ admits such a decomposition, the assertion holds.

\end{proof}

\begin{lem}
\label{prop_add_pseudo2}
Assume that the hypotheses of
Lemma \ref{prop_add_pseudo1}
are satisfied. 
Suppose moreover that, for any $r\geq 1$ and 
$p\in\{1,\ldots,r\}$, the vacuum insertion map for the standard tree
\[
  \circ_p\emptyset:
  \mathrm{Hom}_{\mathrm{std}_r}
  (M_1,\dots,M_{p-1},V,M_{p+1},\dots,M_r;L)
  \rightarrow
  \mathrm{Hom}_{\mathrm{std}_r\circ_p\emptyset}
  (M_1,\dots,M_{p-1},M_{p+1},\dots,M_r;L)
\]
is a natural isomorphism.
Then, for any $A\in T_r$ and $p\in\{1,\ldots,r\}$, 
\[
  \circ_p\emptyset:
  \operatorname{Hom}_{A}
  (M_1,\ldots,M_{p-1},V,M_{p+1},\ldots,M_r;L)
  \longrightarrow
  \operatorname{Hom}_{A\circ_p\emptyset}
  (M_1,\ldots,M_{p-1},M_{p+1},\ldots,M_r;L)
\]
is a natural isomorphism.
%
\end{lem}
\begin{proof}
Take a morphism $g:\std_r \rightarrow A$ in $\CPaB(r)$.
From (SB3), we obtain the following commutative diagram:
\begin{align*}
\begin{split}
\begin{array}{ccc}
\Hom_{\std_r}(M_{[r]};L) &\overset{\circ_p \emptyset}{\longrightarrow}& \Hom_{\std_r \circ_p \emptyset}(M_1,\dots,\hat{M_p},\dots, M_r;L)\\
{}_\cong\downarrow{\rho(g)}
      && 
    {}_\cong\downarrow{\rho(g \circ_p \emptyset)}\\
\Hom_A(M_{[r]};L) &\overset{\circ_p \emptyset}{\longrightarrow}& \Hom_{A\circ_p \emptyset}(M_1,\dots,\hat{M_p},\dots, M_r;L),
\end{array}
\end{split}
\end{align*}
where $M_p=V$.
By the assumption, the top horizontal arrow is an isomorphism, hence the bottom arrow is also an isomorphism.
\end{proof}

Assume that the hypotheses of
Lemma \ref{prop_add_pseudo1}
and Lemma \ref{prop_add_pseudo2} are satisfied.
Let $A\in \Tr_r$ and let $p\in\{1,\ldots,r\}$.
We define a natural morphism
\[
  \nu_A^p:
  \boxtimes_A(M_1,\ldots,M_{p-1},V,M_{p+1},\ldots,M_r)
  \longrightarrow
  \boxtimes_{A\circ_p\varnothing}
  (M_1,\ldots,M_{p-1},M_{p+1},\ldots,M_r)
\]
by the following compositions and the Yoneda lemma:
\[
\begin{aligned}
&
\operatorname{Hom}_{\mathcal C}
\Bigl(
  \boxtimes_{A\circ_p\varnothing}
  (M_1,\ldots,M_{p-1},M_{p+1},\ldots,M_r),
  L
\Bigr)
\\
&\xrightarrow{\ \mu_{A\circ_p\varnothing}\ }
\operatorname{Hom}_{A\circ_p\varnothing}
(M_1,\ldots,M_{p-1},M_{p+1},\ldots,M_r;L)
\\
&\xrightarrow{\ (\circ_p\varnothing)^{-1}\ }
\operatorname{Hom}_{A}
(M_1,\ldots,M_{p-1},V,M_{p+1},\ldots,M_r;L)
\\
&\xrightarrow{\ \mu_A^{-1}\ }
\operatorname{Hom}_{\mathcal C}
\Bigl(
  \boxtimes_A(M_1,\ldots,M_{p-1},V,M_{p+1},\ldots,M_r),
  L
\Bigr).
\end{aligned}
\]

%

We now prove that the representability conditions above allow us to transport
the PaB-action on the pseudo-braided Hom-spaces to the representing objects.
This gives the braided tensor category structure.
%
\begin{thm}\label{thm_add_BTC}
Let $\cC$ be a unital pseudo-braided category.
Assume that the following conditions hold:
\begin{enumerate}
\item
There exists a bifunctor
\[
  \boxtimes:\mathcal C\times \mathcal C\longrightarrow \mathcal C
\]
together with a natural isomorphism
\[
  \mu_{(12)}:
  \mathrm{Hom}_{\mathcal C}(M_1\boxtimes M_2,N)
  \rightarrow
  \mathrm{Hom}_{(12)}(M_1,M_2;N).
\]
\item
For any $r\geq 2$, the natural transformation
\begin{align}
\mu_{\std_r}:
\mathrm{Hom}(\boxtimes_{\std_r}(M_{[r]}),N) \rightarrow \mathrm{Hom}_{\std_r}(M_{[r]}; N)\label{eq_prop_std_isom}
\end{align}
is a natural isomorphism.
\item
For any $r\geq 1$ and 
$p\in\{1,\ldots,r\}$, 
\[
  \circ_p\emptyset:
  \mathrm{Hom}_{\mathrm{std}_r}
  (M_1,\dots,M_{p-1},V,M_{p+1},\dots,M_r;L)
  \rightarrow
  \mathrm{Hom}_{\mathrm{std}_r\circ_p\emptyset}
  (M_1,\dots,M_{p-1},M_{p+1},\dots,M_r;L)
\]
is a natural isomorphism.
\end{enumerate}
Then, $(\cC,\boxtimes)$ inherits a braided tensor category structure, where the braiding $B$ and the associator $\al$ are defined by
the Yoneda lemma from
\begin{align}
\begin{split}
\begin{array}{ccc}
\Hom(M_2\boxtimes M_1,L) &\overset{\mu_{21}}{\rightarrow}& \Hom_{(21)}(M_1,M_2;L)\\
\downarrow{B_{M_1,M_2}^*}
      && 
    \downarrow{\rho(\si)^{-1}}
    \\
\Hom(M_1\boxtimes M_2,L) &\overset{\mu_{12}}{\rightarrow}& \Hom_{(12)}(M_1,M_2;L).
\end{array}
\end{split}
\label{eq_add_braid}
\end{align}
and
\begin{align}
\begin{split}
\begin{array}{ccc}
\Hom(M_1\boxtimes (M_2\boxtimes M_3),L) &\overset{\mu_{1(23)}}{\rightarrow}& \Hom_{1(23)}(M_1,M_2,M_3;L)\\
\downarrow{\al_{M_1,M_2,M_3}^*}
      && 
    \downarrow{\rho(\al)^{-1}}
    \\
\Hom((M_1\boxtimes M_2)\boxtimes M_3,L) &\overset{\mu_{(12)3}}{\rightarrow}& \Hom_{(12)3}(M_1,M_2,M_3;L).
\end{array}
\end{split}
\label{eq_add_ass}
\end{align}
and the right and left unitor are defined by $\nu_{12}^2:M\boxtimes V {\rightarrow} M$ and 
$\nu_{12}^1:V \boxtimes M {\rightarrow} M$. Moreover, the following diagram commutes
\begin{align*}
\begin{split}
\begin{array}{ccc}
\Hom(\boxtimes_B M_{[r]},L) &\overset{\mu_{B}}{\longrightarrow}& \Hom_{B}(M_{[r]};L)\\
\downarrow{\rho_{\mathrm{BTC}}(g)^*}
      && 
    \downarrow{\rho(g)^{-1}}
    \\
\Hom(\boxtimes_A M_{[r]},L) &\overset{\mu_{A}}{\longrightarrow}& \Hom_{A}(M_{[r]};L)
\end{array}
\end{split}
\end{align*}
for any $A \in \Tr_r$ and $g:A \rightarrow B$, where $\rho_{\mathrm{BTC}}(g)$ is a map defined by the associator and the braiding in \eqref{eq_add_BTC_rho}.
Conversely, every braided tensor category
$(\mathcal C,\boxtimes,\mathbf 1,\alpha,l,r,B)$
defines a unital pseudo-braided category satisfying the assumptions in Lemma 
\ref{prop_add_pseudo1} and Lemma \ref{prop_add_pseudo2}.
\end{thm}

\begin{proof}
By Lemma \ref{prop_add_pseudo1}, for any \(A\in T_r\), $\mathrm{Hom}_A(M_{[r]};\bullet)$ is represented by $\boxtimes_A M_{[r]}$.
For \(\gamma:A\to B\) in \(\mathrm{PaB}(r)\), define a natural isomorphism $\rho^{\boxtimes}(\gamma): \boxtimes_A M_{[r]}
  \rightarrow
  \boxtimes_B M_{[r]}$
by 
\[
\begin{CD}
\operatorname{Hom}_{\mathcal C}(\boxtimes_B M_{[r]},L)
@>{\mu_B}>>
\operatorname{Hom}_B(M_{[r]};L)
\\
@V{(\rho^{\boxtimes}(\gamma))^*}VV
@VV{\rho(\gamma)^{-1}}V
\\
\operatorname{Hom}_{\mathcal C}(\boxtimes_A M_{[r]},L)
@>{\mu_A}>>
\operatorname{Hom}_A(M_{[r]};L).
\end{CD}
\]
For \(\gamma:A\to A'\) and \(\mu:B\to B'\) in
\(\mathrm{PaB}\), the following diagram commutes by (SB3)
\begin{align}
\begin{CD}
\boxtimes_{A\circ_p B}
@>{\rho^{\boxtimes}(\gamma\circ_p\mu)}>>
\boxtimes_{A'\circ_p B'}
\\
@V{\simeq}VV
@VV{\simeq}V
\\
\boxtimes_A\circ_p\boxtimes_B
@>{\rho^{\boxtimes}(\gamma)\circ_p\rho^{\boxtimes}(\mu)}>>
\boxtimes_{A'}\circ_p\boxtimes_{B'} .
\end{CD}
\label{eq_add_operadic_comm2}
\end{align}
We define the braiding and associator by
$B_{M_1,M_2}=\rho^{\boxtimes}(\sigma)$ and $\alpha_{M_1,M_2,M_3} =  \rho^{\boxtimes}(\alpha)$.
It is now clear that the pentagon and hexagon identities follow from \eqref{eq_add_operadic_comm2} and arguments similar to
the one illustrated in Figure \ref{fig_pentagon}. Finally, by  Lemma \ref{prop_add_pseudo2}, vacuum insertion gives natural isomorphisms
$\nu^2_{(12)}:M\boxtimes V\longrightarrow M$ and $\nu^1_{(12)}:V\boxtimes M\longrightarrow M$.
We define $r_M=\nu^2_{(12)}$ and $l_M=\nu^1_{(12)}$.
The compatibility of vacuum insertion with the PaB-action and operadic
composition gives the triangle identity in the same way as above. Therefore
$(\mathcal C,\boxtimes,V,\alpha,l,r,B)$ is a braided tensor category.
Moreover, since \(\mathrm{PaB}\) is generated, as an operad, by \(\sigma\) and \(\alpha\), \eqref{eq_add_operadic_comm2}  shows that all morphisms \(\rho^{\boxtimes}(\gamma)\) are determined by \(B\) and \(\alpha\).
Thus $\rho^\boxtimes$ coincides with $\rho_{\mathrm{BTC}}$ \eqref{eq_add_BTC_rho} defined by the braided tensor category structure.
The converse follows from the construction in Section \ref{sec_def_cpab}.
\end{proof}

\begin{rem}
\label{rem_PaB_op}
The opposite-category construction
\[
  (-)^{\op}:\cat\rightarrow \cat,\quad \cC \mapsto \cC^\op
\]
is symmetric monoidal with respect to the cartesian product and the terminal category is preserved. Therefore, $\{\CPaB^{\op}(r)\}_{r \geq 0}$ is an operad object in $\cat$.
Moreover, since each $\CPaB(r)$ is a groupoid, inversion of braids
gives an isomorphism of operads
\[
  \iota:\mathrm{PaB}^{\op}\xrightarrow{\ \sim\ }\mathrm{PaB}.
\]
Here, \(\iota_r:\mathrm{PaB}(r)^{\op}\to \mathrm{PaB}(r)\) is the identity
on objects and sends a morphism \(g:B \to A\), regarded as a morphism
\(A\to B\) in \(\mathrm{PaB}(r)^{\op}\), to its inverse $g^{-1}:A\to B$
in \(\mathrm{PaB}(r)\). This is compatible with operadic compositions since $(g\circ_p h)^{-1}=g^{-1}\circ_p h^{-1}$,
and it is also compatible with relabelling of leaves.
\end{rem}

\section{Configuration space and parenthesized formal power series}
\label{sec_config}
Let $r\in \Z_{>0}$ and set 
\begin{align*}
X_r&=\{(z_1,\dots,z_r) \in \C^r \mid z_i\neq z_j \text{ for any }i\neq j \},
\end{align*}
which is called {\it an $r$ configuration space}.
$r$-point  conformal blocks of a conformal field theory are multivalued holomorphic functions on $X_r$.
These multivalued functions typically have the following singularities along $\{z_i=z_j\}$
\begin{align}
f(z_1,\dots,z_r) \sim (z_i-z_j)^\al(\log (z_i-z_j))^k \quad (k\in \Z_{\geq 0},\al\in \C).
\label{eq_singularity}
\end{align}
For conformal blocks defined from $C_1$-cofinite modules of a vertex operator algebra, 
$f(z_1,\dots,z_r)$ has a series expansion from the theory of differential equations of regular singularities. 
Notably, the series expansions are expressed in terms of a broader class of series than the usual Laurent expansion $\C((z_1,\dots,z_r))$, which allows for \eqref{eq_singularity}.

Since the function is multivalued, it is necessary to specify in what order $(z_1,z_2,\dots,z_r)$ are approached to each other to expand the series for all variables.
Such general singularity behavior can be described using trees $\Tr_r$.
For example, $(12)3$ means to bring $z_3$ closer to $z_2$ after bringing $z_1$ and $z_2$ closer together.
The purpose of this section is to introduce and study the properties of the open set $U_A \subset X_r$, its local coordinates $\{\zeta_e\}_{e\in E(A)}$ and the space of formal power series $T_A$ for each tree $A\in \Tr_r$.

Section \ref{sec_config_coordinate} introduces local coordinate systems, $x$-coordinate and $A$-coordinate on $X_r$ for each $A\in \Tr_r$, and the space of formal power series with the singularity \eqref{eq_singularity}.
Section \ref{sec_config_conv} examines the convergence property of $T_A$ and Section \ref{sec_config_D} examines a structure of a D-module on $T_A$. Section \ref{sec_config_deg} defines a degree map on formal expansions, which will be used in
the analysis of singularities of conformal blocks.




\subsection{Configuration space and local coordinates}
\label{sec_config_coordinate}
The $z$-coordinate of $X_r$ is the standard coordinate $(z_1,\dots,z_r)$ of $\C^r$.
When examining global properties, it is convenient to use $z$-coordinate.
In this section, we define local coordinates $x$-coordinate and $A$-coordinate for $A\in \Tr_r$.

For each edge $e\in E(A)$, let $u(e)$ denote the upper vertex and $d(e)$ denote the lower vertex.
Define maps $L,R: V(A)\rightarrow \Leaf(A)$ as follows:
For each vertex $v\in V(A)$, $R(v)$ is defined by the rightmost leaf that is the descendant of $v$
and $L(v)$ by the rightmost leaf among the leaves that are descendants of the child to the left of $v$. 
Let $t_A$ be the uppermost vertex and $r_A$ be the rightmost leaf among all leaves.
Then, $r_A=R(t_A)$.

\begin{minipage}[c]{.5\textwidth}
\centering
\begin{forest}
for tree={
  l sep=20pt,
  parent anchor=south,
  align=center
}
[$t_{A}$
[$v_1$[5][[2][3]]]
[$v_2$,edge label={node[midway,right]{$e_0$}}[[1][7]]
[[6][4]]]
]
\end{forest}
\end{minipage}
\begin{minipage}[c]{.5\textwidth}
\centering
In the case of the left figure,
\begin{align*}
A&=(5(23))((17)(64))\\
d(e_0)&=v_2\quad u(e_0)=t_{A}\\
L(v_1)&=5 \quad R(v_1)=3\\
L(v_2)&=7 \quad R(v_2)=4\\
r_A&=4.
\end{align*}
\end{minipage}


The functions $\{x_v:\Xr\rightarrow \C\}_{v\in V(A)}$ and $\{\zeta_e:\Xr\rightarrow \C\}_{e\in E(A)}$ are
defined by
\begin{align}
x_v &= z_{L(v)}-z_{R(v)},\\
\zeta_e &= \frac{x_{d(e)}}{x_{u(e)}}
\end{align}
This gives the family of $r-1$ functions $\{x_v:\Xr\rightarrow \C\}_{v\in V(A)}$
and the family of $r-2$ functions $\{\zeta_e:\Xr\rightarrow \C\}_{e\in E(A)}$.


{\it The $x$-coordinate system} is the system of functions
\begin{align*}
(x_v)_{v\in V(A)}\times z_A: \Xr\rightarrow \C^{r-1}\times \C,
\end{align*}
where $z_A:\Xr \rightarrow \C,\quad(z_1,\dots,z_r)\mapsto z_{r_A}$, the projection onto the $r_A$-th component.

\begin{minipage}[c]{.5\textwidth}
\centering
To see $x$-coordinate, it is easier to draw a tree with the function $x_v=z_{L(v)}-z_{R(v)}$ filled in at each vertex $v \in V(A)$.
The right figure is an example for $(23)((15)4)  \in P_5$.
\end{minipage}
\begin{minipage}[c]{.5\textwidth}
\centering
\begin{forest}
for tree={
  l sep=20pt,
  parent anchor=south,
  align=center
}
[$z_3-z_4$
[$z_2-z_3$,edge label={node[midway,left]{a}}[2][3]]
[$z_5-z_4$,edge label={node[midway,right]{c}}[$z_1-z_5$,edge label={node[midway,left]{b}}[1][5]]
[4]]
]
\end{forest}
\captionof{figure}{}
\label{fig_tree_example2}
\end{minipage}

{\it The $A$-coordinate system} is the system of functions
\begin{align}
\Psi_A = 
z_A\times x_A\times (\zeta_e)_{e\in E(A)}: \Xr\rightarrow \C\times \C\times  \C^{r-2},
\label{eq_zeta_coordinate}
\end{align}
where $x_A=x_{t_A}:\Xr \rightarrow \C$.

For $A=(23)((15)4) \in \Tr_5$, $A$-coordinate is given as:
\begin{align}
\Psi_{(23)((15)4)}=
(z_4,z_3-z_4,\ze_a=\frac{z_2-z_3}{z_3-z_4},\ze_b=\frac{z_1-z_5}{z_5-z_4},\ze_c=\frac{z_5-z_4}{z_3-z_4}),
\label{eq_example_psi}
\end{align}
where the labels $\{a,b,c\}$ of edges are given as in Fig. \ref{fig_tree_example2}.

It is easy to see that the inverse function $\Psi_A^{-1}:\C\times \C^{E(A)}\rightarrow \C^r$ is a polynomial
of $\{\ze_e\}_{e\in E(A)}$ and $x_A,z_A$.
For example,
\begin{align*}
\Psi_{(23)((15)4)}^{-1}=(z_1,z_2,z_3,z_4,z_5)
=(x_A\ze_c(1+\ze_b)+z_A, (1+\ze_a)x_A+z_A,x_A+z_A,z_A,\ze_cx_A+z_A).
\end{align*}
Thus, we have:
\begin{prop}
\label{prop_psiA}
For any $A\in \Tr_r$, 
$\Psi_A$ is a bi-holomorphic function from $X_r$ onto the image in $\C^r$.
Furthermore, $\Psi_A^{-1}$ is a polynomial of $\{\ze_e\}_{e\in E(A)}$ and $x_A,z_A$,
and thus can be extended to a holomorphic function $\Psi_A^{-1}:\C^2\times \C^{E(A)}\rightarrow \C^r$.
\end{prop}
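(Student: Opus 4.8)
The plan is to establish the statement in two stages: first for the auxiliary \emph{$x$-coordinate system} $(z_1,\dots,z_r)\mapsto\bigl((x_v)_{v\in V(A)},z_{r_A}\bigr)$, then for the $A$-coordinate system, in both cases by induction following the recursive structure $A=A_L\cdot A_R$ at the top vertex $t_A$. I would begin by noting that $\Psi_A$ is holomorphic on $X_r$: for every $v\in V(A)$ the leaves $L(v)$ and $R(v)$ are distinct (one lies under the left child of $v$, the other under the right child), so $x_v=z_{L(v)}-z_{R(v)}$ is nowhere zero on $X_r$, whence each $\ze_e=x_{d(e)}/x_{u(e)}$ is a ratio of holomorphic functions with non-vanishing denominator.

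Next I would show that the $x$-coordinate map is a linear isomorphism $\C^r\to\C^{V(A)}\times\C\cong\C^r$ (both sides have dimension $r$, since $|V(A)|=r-1$) with \emph{linear} inverse. The inverse is produced by induction on $A$: at $t_A$ one has $z_{L(t_A)}=z_{r_A}+x_{t_A}$, and since $r_A$ is the rightmost leaf of the right subtree $A_R$ while $L(t_A)$ is the rightmost leaf of the left subtree $A_L$, the inductive recovery applied to $A_R$ (with base point $z_{r_A}$) and to $A_L$ (with base point $z_{L(t_A)}$) expresses every $z_\ell$ as a linear function of $z_{r_A}$ and the $x_v$'s.

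I would then treat the passage from $\bigl(z_{r_A},(x_v)_{v\in V(A)}\bigr)$ to $\bigl(z_{r_A},x_A,(\ze_e)_{e\in E(A)}\bigr)$, which is governed by a monomial (hence polynomial) formula. The assignment $v\mapsto$ (the edge of $A$ directly above $v$) is a bijection $V(A)\setminus\{t_A\}\to E(A)$ with inverse $e\mapsto d(e)$, and along it $x_{d(e)}=\ze_e\,x_{u(e)}$; descending from $t_A$, where $x_{t_A}=x_A$, this gives $x_v=x_A\prod_{e}\ze_e$, the product taken over the edges $e$ on the path in $A$ from $t_A$ down to $v$. Composing this polynomial map with the linear inverse of the previous step yields a polynomial map $G\colon\C^2\times\C^{E(A)}\to\C^r$ in the variables $z_A,x_A,(\ze_e)_{e\in E(A)}$ such that $G\circ\Psi_A=\id_{X_r}$; this identity is checked by running the reconstruction above, namely $\Psi_A(z)$ determines all the $x_v$ by descending induction (base case $x_{t_A}=x_A$, step $x_{d(e)}=\ze_e x_{u(e)}$), and hence all the $z_i$.

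Finally, from $G\circ\Psi_A=\id_{X_r}$ it follows that $\Psi_A$ is injective and that the set-theoretic inverse of $\Psi_A\colon X_r\to\Psi_A(X_r)$ is the restriction of the holomorphic (polynomial) map $G$; hence $\Psi_A$ is biholomorphic onto its image, and $\Psi_A^{-1}$ extends to the polynomial map $G$ on all of $\C^2\times\C^{E(A)}$, as claimed. The main obstacle here is routine rather than deep: it is the careful bookkeeping in the two nested inductions, matching the functions $L,R,t_A,r_A$ with the decomposition $A=A_L\cdot A_R$. The explicit computation of $\Psi_{(23)((15)4)}^{-1}$ given just before the proposition is the template for the general polynomial formula.
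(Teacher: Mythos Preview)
Your proposal is correct and is precisely the argument the paper leaves implicit: the paper gives no proof beyond ``It is easy to see'' together with the worked example $\Psi_{(23)((15)4)}^{-1}$, and your two-step reconstruction (linear inverse of the $x$-coordinate map, then monomial formula $x_v=x_A\prod_e\ze_e$ along the descending path) is exactly the general pattern that example exhibits.
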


\begin{rem}
Another local coordinate of $X_r$ is defined in \cite{Va},
which is called an {\it asymptotic zone},
to study the asymptotic behavior of the Knizhnik-Zamolodchikov
equation.
The definition of the asymptotic zone and \eqref{eq_zeta_coordinate}
are similar but different.
The local coordinate \eqref{eq_zeta_coordinate} is natural from the viewpoint of vertex algebras, since it describes the change of variables that naturally arises in their theory.
\end{rem}

A conformal block has a translation symmetry $(z_1,\dots,z_r) \mapsto (z_1+a,\dots, z_r+a)$ ($a \in\C$)
(see Section \ref{sec_D_translation}).
So the conformal block can be naturally regarded as a function on $\Xr/\C$.
Since $x_v=z_{L(v)}-z_{R(v)}$ is translation-invariant, $x_v$ and $\zeta_e$ can be naturally regarded as functions on the quotient space $\Xr/\C$.
The system of functions on $\Xr/\C$
\begin{align*}
(x_v)_{v\in V(A)}:\Xr/\C\rightarrow \C^{r-1},
\end{align*}
and
\begin{align*}
x_A\times (\zeta_e)_{e\in E(A)}: \Xr/\C\rightarrow \C\times \C^{r-2},
\end{align*}
are also called $x$-coordinate and $A$-coordinate, respectively.


\subsection{Formal power series and convergence}
\label{sec_config_conv}
Set 
\begin{align*}
\Or^\alg = \C[z_1,\dots,z_r,(z_i-z_j)^\pm],
\end{align*}
a ring of regular functions on $X_r$,
and
\begin{align*}
\Ort^\alg=\C[(z_i-z_j)^\pm \mid i \neq j],
\end{align*}
a ring of translation invariant functions in $\Or^\alg$.
In this section, we consider formal power series and their radii of convergence using the $A$-coordinate.
Any function of $\Ort^\alg$ can be expanded as a formal power series in
$\C[[\zeta_e \mid e\in E(A)]][x_A^\pm]$.
Let us first look at an example in the case of $A=(23)((15)4) \in \Tr_5$.
For $(z_2-z_1)^{-1} \in \mO_{X_5}^\alg$, we have:
\begin{align}
\begin{split}
(z_2-z_1)^{-1} &= \left((z_2-z_3)+(z_3-z_4)-(z_5-z_4)-(z_1-z_5)
\right)^{-1}\\
&= (z_3-z_4)^{-1}\left(1 + \frac{(z_2-z_3)}{(z_3-z_4)}-\frac{(z_5-z_4)}{(z_3-z_4)}-\frac{(z_1-z_5)}{(z_3-z_4)}
\right)^{-1}\\
&=x_{(23)((15)4)}^{-1}(1+\zeta_a-\zeta_c-\zeta_b\zeta_c)^{-1}\\
&=x_{(23)((15)4)}^{-1}\sum_{l=0}^\infty (-\zeta_a+\zeta_c+\zeta_b\zeta_c)^l
\in \C[[\zeta_a,\zeta_b,\zeta_c]][x_{(23)((15)4)}^{-1}].\label{eq_example_conv}
\end{split}
\end{align}
As this example shows, the series
 expansion determines an injective ring homomorphism 
\begin{align}
e_A: \Ort^\alg \rightarrow \C[[\zeta_e\mid e \in E(A)]][x_{A}^\pm,\zeta_e^\pm \mid e\in E(A)]
\label{eq_emap}
\end{align}
for $A\in \Tr_r$.
\begin{rem}
To obtain expansions of functions in $\Or^\alg$,
which are not always translation invariant, we have to consider
\begin{align}
\C[[\zeta_e\mid e \in E(A)]][z_A,x_{A}^\pm,\zeta_e^\pm \mid e\in E(A)].
\end{align}
\end{rem}

Consider the following linear space consisting of formal power series which allow the singularity \eqref{eq_singularity}:
\begin{align*}
T_A &= \C[[\zeta_e\mid e \in E(A)]][\log x_A, x_A^\C,\log\zeta_e,\zeta_e^\C \mid e \in E(A)],
\end{align*}
which is a space of formal power series spanned by
the finite sum of formal power series of the form:
\begin{align*}
x_A^r (\log x_A)^k \Pi_{e\in E(A)}\zeta_e^{r_e}(\log \zeta_e)^{k_e}F
\end{align*}
with $F\in\C[[\zeta_e\mid e \in E(A)]]$,
$k,k_e \in \Z_{\geq 0}$ and $r_e,r \in \C$ ($e\in E(A)$).

\begin{rem}
\label{rem_TA_sub}
By $\ze_e=\frac{x_{d(e)}}{x_{u(e)}}$,
$T_A$ is a subspace of 
\begin{align*}
\C[[x_v^\pm\mid v\in V(A)]][x_v^\C,\log x_v \mid v\in V(A)].
\end{align*}
\end{rem}


We remark that for $A=(23)((15)4)$
\begin{align*}
\log(z_3-z_5)&=\log((z_3-z_4)-(z_5-z_4))\\
&=\log(z_3-z_4)+\log(1-\frac{z_5-z_4}{z_3-z_4})\\
&=\log(x_{(23)((15)4)}) - \sum_{k \geq 1}\frac{1}{k}\zeta_c^k \in T_{(23)((15)4)}.
\end{align*}
Thus $\log(z_3-z_5)$ and so on are expressed as an elements in $T_{(23)((15)4)}$.
The series in $T_A$ is called {\it a parenthesized formal power series}.
It is noteworthy that $T_A$ naturally inherits a $\C$-algebra structure, and we have:
\begin{lem}
\label{lem_module_ort}
$e_A:\Ort^\alg \rightarrow T_A$ is a $\C$-algebra homomorphism.
In particular, $T_A$ is an $\Ort^\alg$-module.
\end{lem}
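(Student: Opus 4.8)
The plan is to make the $\C$-algebra structure on $T_A$ explicit, recognize the codomain of the expansion map $e_A$ from \eqref{eq_emap} as a subalgebra of $T_A$, and then read off the module structure for free. First I would record the multiplication on $T_A$. By definition an element of $T_A$ is a finite $\C$-linear combination of blocks
\[
x_A^r(\log x_A)^k\prod_{e\in E(A)}\zeta_e^{r_e}(\log\zeta_e)^{k_e}\,F,\qquad F\in\C[[\zeta_e\mid e\in E(A)]],\ k,k_e\in\Z_{\geq 0},\ r,r_e\in\C,
\]
and I declare the product of two blocks to be the block obtained by adding the matching exponents ($x_A^rx_A^{r'}=x_A^{r+r'}$, $(\log x_A)^k(\log x_A)^{k'}=(\log x_A)^{k+k'}$, and likewise for each $\zeta_e$ and $\log\zeta_e$) and multiplying the power-series factors $F,F'$ inside $\C[[\zeta_e\mid e\in E(A)]]$. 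Extending bilinearly gives a commutative, associative, unital product, with unit the block having $F=1$ and all exponents zero; equivalently, $T_A$ is the commutative $\C$-algebra obtained from $\C[[\zeta_e\mid e\in E(A)]]$ by formally adjoining the group-algebra symbols $x_A^\C$ and $\zeta_e^\C$ (with $\zeta_e^n$, $n\in\Z$, identified with the honest power of $\zeta_e$) together with the polynomial variables $\log x_A$ and $\log\zeta_e$. The one point requiring attention is that this product does not depend on the (non-unique) way a block is written; this is immediate, since it only invokes the additive group law on $\C$, the monoid law on $\Z_{\geq 0}$, and the usual multiplication of formal power series.

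Next, set $R_A:=\C[[\zeta_e\mid e\in E(A)]][x_A^\pm,\zeta_e^\pm\mid e\in E(A)]$, the codomain of $e_A$ in \eqref{eq_emap}. Every element of $R_A$ is a finite sum of blocks with $k=k_e=0$ and $r,r_e\in\Z$, so $R_A$ is a $\C$-subspace of $T_A$, and by the description above the $T_A$-product restricts on $R_A$ to the usual multiplication of Laurent-in-$x_A$, $\zeta_e$-localized power series; hence $R_A$ is a $\C$-subalgebra of $T_A$ and the inclusion $R_A\hookrightarrow T_A$ is a unital $\C$-algebra homomorphism. By \eqref{eq_emap} the map $e_A\colon\Ort^\alg\to R_A$ is an (injective) ring homomorphism, and it is visibly $\C$-linear and sends $1$ to $1$; if one prefers to see this directly, note that by Proposition \ref{prop_psiA} each $z_i-z_j$ expands as a monomial in $x_A$ and the $\zeta_e$ times a power series with constant term $\pm1$, hence a unit of $R_A$, that the relations $(z_i-z_j)+(z_j-z_k)=(z_i-z_k)$ already hold as polynomial identities in the $A$-coordinates, and that $\Ort^\alg$ is generated as a $\C$-algebra by the $(z_i-z_j)^{\pm1}$ modulo exactly these relations, so the assignment extends uniquely to a ring homomorphism. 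Composing with $R_A\hookrightarrow T_A$ shows $e_A\colon\Ort^\alg\to T_A$ is a $\C$-algebra homomorphism.

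Finally, $T_A$ becomes an $\Ort^\alg$-module via $f\cdot t:=e_A(f)\,t$ for $f\in\Ort^\alg$, $t\in T_A$: the module axioms $f\cdot(g\cdot t)=(fg)\cdot t$, $(f+g)\cdot t=f\cdot t+g\cdot t$, $f\cdot(t+t')=f\cdot t+f\cdot t'$ and $1\cdot t=t$ are precisely the assertions that $e_A$ is a ring homomorphism. One may also route everything through Remark \ref{rem_TA_sub}: under the inclusion $T_A\subset\C[[x_v^\pm\mid v\in V(A)]][x_v^\C,\log x_v\mid v\in V(A)]$ the map $e_A$ is simply the standard expansion of a regular function into the variables $x_v$, so the ring-homomorphism property is inherited from that larger algebra. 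The whole argument is essentially formal; the only genuine content — and the step I would be most careful about — is the internal consistency of the block multiplication on $T_A$ and its compatibility with the concrete series produced by $e_A$, i.e.\ the identifications carried out in the two paragraphs above.
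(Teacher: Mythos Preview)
Your proposal is correct. The paper does not actually give a proof of this lemma: it simply remarks that $T_A$ ``naturally inherits a $\C$-algebra structure'' and then states the lemma, treating both the algebra structure and the ring-homomorphism property of the expansion map as self-evident from the definitions. Your write-up is a careful unpacking of exactly that implicit argument --- making the block multiplication on $T_A$ explicit, identifying $R_A=\C[[\zeta_e]][x_A^\pm,\zeta_e^\pm]$ as a subalgebra, and noting that $e_A$ lands there as the usual series expansion --- so it matches the paper's (unstated) reasoning.
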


Next, consider the radius of convergence of parenthesized formal power series.
For $p>0$, set
\begin{align*}
\D_p &= \{\ze\in \C\mid |\ze|<p\},\\
\D_p^\times &=\{\ze \in \C\mid 0<|\ze|<p\}.
\end{align*}

Let $\mathfrak{p}=(p_e)_{e\in E(A)}\in \R_{>0}^{E(A)}$.
Let $\C[[\zeta_e\mid e\in E(A)]]_{\mathfrak{p}}^\conv$
be a subspace of $\C[[\zeta_e\mid e\in E(A)]]$
consisting of formal power series which is absolutely convergent in $\Pi_{e\in E(A)}\D_{p_e}$
and set
\begin{align*}
T_A^{\mathfrak{p}}=\C[[\zeta_e\mid e\in E(A)]]_{\mathfrak{p}}^\conv[x_v^\C,\log x_v\mid v\in V(A)],
\end{align*}
a subspace of $T_A$ spanned by
the finite sum of formal power series of the form:
\begin{align*}
\Pi_{v\in V(A)}x_v^{r_v}(\log x_v)^{k_v}F
\end{align*}
with $x_v\in \C$, $k_v\in\Z_{\geq 0}$
and $F\in \C\{\zeta_e\mid e \in E(A)\}_{\mathfrak{p}}$.
It is important to note that the region of absolute convergence of 
$e_{(23)((15)4)}((z_2-z_1)^{-1})$ in the example  \eqref{eq_example_conv}
is not $|\zeta_a|<1,|\zeta_b|<1,|\zeta_c|<1$.
Since 
$e_{(23)((15)4)}((z_2-z_1)^{-1})=x_{{(23)((15)4)}}^{-1}\sum_{l=0}^\infty (-\zeta_a+\zeta_c+\zeta_b)^l$,
if $p_a+p_b+p_c <1$, then $e_{(23)((15)4)}\left((z_2-z_1)^{-1}\right) \in T_{(23)((15)4)}^{\mathfrak{p}}$.


\begin{dfn}
A sequence of positive real numbers $(p_e)_{e\in E(A)} \in \R_{>0}^{E(A)}$ is called {\it $A$-admissible} if $\Psi_A^{-1}(\C\times \C^\times \times \Pi_{e\in E(A)}\D_{p_e}^\times) \subset X_r$,
where $\Psi_A^{-1}$ is the polynomials in Proposition \ref{prop_psiA}.
\end{dfn}
\begin{rem}
\label{rem_pole_p}
For example, for $A=(23)((15)4)$, $\{z_{ij}=z_i-z_j\}_{1\geq i<j\geq 5}$ in $A$-coordinate are
\begin{align*}
(z_{12},z_{13},z_{14},z_{15},z_{23})
&=\Bigl(x_A(\ze_c+\ze_b\ze_c-1-\ze_a),x_A(\ze_c+\ze_b\ze_c-1),x_A\ze_c(1+\ze_b),x_A\ze_b\ze_c,\ze_a x_A\Bigr)\\
(z_{24},z_{25},z_{34},z_{35},z_{45})
&=\Bigl(
x_A(1+\ze_a),x_A(1+\ze_a-\ze_c),x_A,x_A(1-\ze_c),-x_A \ze_c\Bigr).
\end{align*}
Therefore, for the image of $\Psi_A^{-1}$ to be in $\Xr$, the following holds:
\begin{align*}
&\ze_a,\ze_b,\ze_c,x_A \neq 0,\quad 1+\ze_a,1-\ze_c,1+\ze_b\neq 0\\
&1+\ze_a-\ze_c,1-\ze_c-\ze_b\ze_c,1+\ze_a-\ze_c-\ze_b\ze_c \neq 0.
\end{align*}
Thus, $\mathfrak{p}$ is $(23)((15)4)$-admissible if and only if
\begin{align*}
p_a,p_b,p_c<1,\\
p_a+p_c,p_c+p_c p_b,p_a+p_c+p_cp_b<1.
\end{align*}
In particular, $\mathfrak{p}$ is $(23)((15)4)$-admissible if $p_a,p_b,p_b$ are sufficiently small.

More to the point, let $r$ be $0<r<1$
and let one of $p_a,p_b,p_c$ be $r$.
Then, if the remaining two are sufficiently small, then $\mathfrak{p}$ is $(23)((15)4)$-admissible.
\end{rem}

Then, we have:
\begin{lem}
\label{lem_admissible_radius}
Let $A \in \Tr_r$ and $\mathfrak{p}=(p_e)_{e\in E(A)} \in \R_{>0}^{E(A)}$ be $A$-admissible.
Then, $e_A(f) \in T_A^{\mathfrak{p}}$ for any $f \in \Ort^\alg$.
\end{lem}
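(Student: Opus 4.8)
\textbf{Proof plan for Lemma \ref{lem_admissible_radius}.}

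The plan is to reduce the statement to the single building block $f=(z_i-z_j)^{-1}$ and then use multiplicativity. First I would observe that $\Ort^\alg=\C[(z_i-z_j)^\pm\mid i\neq j]$ is generated as a $\C$-algebra by the elements $(z_i-z_j)$ and $(z_i-z_j)^{-1}$, and that, by Lemma \ref{lem_module_ort}, $e_A:\Ort^\alg\to T_A$ is a $\C$-algebra homomorphism. Since $T_A^\mathfrak{p}$ is visibly closed under multiplication and under $\C$-linear combinations (a product of two series absolutely convergent on $\Pi_{e}\D_{p_e}$ is again absolutely convergent there, and the finite prefactors $\Pi_v x_v^{r_v}(\log x_v)^{k_v}$ multiply), it suffices to show $e_A(z_i-z_j)\in T_A^\mathfrak{p}$ and $e_A((z_i-z_j)^{-1})\in T_A^\mathfrak{p}$ for every pair $i\neq j$.

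For the numerators this is immediate: each $z_i-z_j$ is, in $A$-coordinates, a polynomial in $x_A$ and the $\zeta_e$ (as the explicit formulas in Remark \ref{rem_pole_p} illustrate in the running example, and as follows in general from the polynomiality of $\Psi_A^{-1}$ in Proposition \ref{prop_psiA}), hence certainly lies in $T_A^\mathfrak{p}$. The substance is the reciprocals. Here I would argue as follows. Write $z_i-z_j$ in $A$-coordinates; by Proposition \ref{prop_psiA} it equals $x_A^{m_{ij}}\cdot g_{ij}(\zeta_e\mid e\in E(A))$ for some integer $m_{ij}\geq 1$ (one reads off $m_{ij}$ from the least vertex $v$ whose subtree contains both leaves $i$ and $j$: all the $x_v$ below that vertex contribute, leaving an overall power of $x_A$ times a polynomial $g_{ij}$ in the $\zeta_e$), where $g_{ij}\in\C[\zeta_e\mid e\in E(A)]$ has $g_{ij}(0,\dots,0)=\pm1$ — this last fact is exactly the normalization $x_A=z_{L(t_A)}-z_{R(t_A)}$ together with the way the $\zeta_e$ are defined as ratios, and it is precisely what makes the geometric-series expansion work. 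Then
\begin{align*}
e_A\bigl((z_i-z_j)^{-1}\bigr)=x_A^{-m_{ij}}\,g_{ij}^{-1}
=x_A^{-m_{ij}}\sum_{l\geq 0}\bigl(1\mp g_{ij}\bigr)^l,
\end{align*}
and $1\mp g_{ij}$ is a polynomial in the $\zeta_e$ with no constant term. The definition of $A$-admissibility says exactly that $\Psi_A^{-1}(\C\times\C^\times\times\Pi_e\D_{p_e}^\times)\subset X_r$, i.e.\ that $z_i-z_j$ does not vanish on that polydisc, equivalently that $g_{ij}$ has no zero on $\Pi_e\overline{\D_{p_e}}$ (taking the closure costs nothing after shrinking $\mathfrak p$ slightly, or one argues on the open polydisc directly). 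Hence $1\mp g_{ij}$ has sup-norm $<1$ on $\Pi_e\D_{p_e}$, so the geometric series $\sum_l(1\mp g_{ij})^l$ converges absolutely and uniformly on $\Pi_e\D_{p_e}$, giving an element of $\C[[\zeta_e]]_\mathfrak{p}^\conv$; multiplying by $x_A^{-m_{ij}}\in\C[x_A^\pm]\subset\C[x_v^\C]$ lands in $T_A^\mathfrak{p}$.

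The main obstacle is bookkeeping rather than analysis: one must verify cleanly, for an arbitrary tree $A$, that in $A$-coordinates every $z_i-z_j$ factors as $x_A^{m_{ij}}$ (positive integer power) times a polynomial in the $\zeta_e$ whose constant term is a unit, and then check that ``$g_{ij}$ nonvanishing on the polydisc'' is equivalent to the $A$-admissibility condition. Both are straightforward from the recursive structure of the coordinate change — $x_v=x_{u(v)}\zeta_{e}$ along each edge, and each $z_k-z_l$ is an alternating sum of the $x_v$ along the tree path joining $k$ and $l$ — but they require a careful induction on the depth of $A$, or equivalently on $E(A)$. Once that normal form is in hand, the convergence statement is the elementary geometric-series estimate above, and multiplicativity of $e_A$ finishes the proof.
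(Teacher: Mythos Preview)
Your reduction-to-generators strategy is reasonable, but there are two genuine gaps.

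First, the claimed factorization $z_i-z_j=x_A^{m_{ij}}\,g_{ij}(\zeta)$ with $g_{ij}(0)=\pm1$ is false. In the paper's running example $A=(23)((15)4)$, one has $z_1-z_5=x_A\zeta_b\zeta_c$ and $z_2-z_3=x_A\zeta_a$ (see Remark~\ref{rem_pole_p}), so $g_{ij}$ can have constant term $0$. The correct normal form is $z_i-z_j=x_A\cdot M_{ij}(\zeta)\cdot Q_{ij}(\zeta)$ where $M_{ij}$ is a monomial in the $\zeta_e$ (determined by the edges on the path from the root to the least common ancestor of $i,j$) and $Q_{ij}(0)=\pm1$. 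The monomial factor must be pulled out separately and absorbed into the $x_v^\C$ part of $T_A^{\mathfrak p}$.

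Second, even after this correction, your convergence step ``$Q_{ij}$ nonvanishing on $\overline{\prod_e\D_{p_e}}$ implies $|1\mp Q_{ij}|<1$'' does not follow. Nonvanishing gives no sup-norm bound: e.g.\ $Q(\zeta_a,\zeta_b)=(1-2\zeta_a)(1-2\zeta_b)$ is nonvanishing on $\D_{1/2}\times\D_{1/2}$ with $Q(0)=1$, yet $|1-Q|>2$ at $(-0.4,-0.4)$. What you actually need is only that $Q_{ij}\neq0$ on the \emph{open} polydisc $\prod_e\D_{p_e}$, because then $Q_{ij}^{-1}$ is holomorphic there and its Taylor series converges automatically. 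To get this from admissibility (which only gives $Q_{ij}\neq0$ on the \emph{punctured} polydisc) you need an extra argument: since $Q_{ij}(0)\neq0$, no $\zeta_e$ divides $Q_{ij}$, so any irreducible component of $\{Q_{ij}=0\}$ meeting the polydisc cannot be contained in $\bigcup_e\{\zeta_e=0\}$ and hence meets the punctured polydisc.

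The paper bypasses all of this with a two-line observation: $f\circ\Psi_A^{-1}$ is a rational function, and by admissibility it is regular on $\C\times\C^\times\times\prod_e\D_{p_e}^\times$; hence on the full polydisc it is meromorphic with poles only along $\bigcup_e\{\zeta_e=0\}$, which immediately gives a convergent Laurent expansion in $T_A^{\mathfrak p}$. This avoids both the explicit tree induction and the convergence estimate.
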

\begin{proof}
Note that $f$ can have poles only along $\{z_i=z_j\}_{i\neq j}$.
Since $\mathfrak{p}$ is A-admissible, $f \circ \Psi^{-1}$ is a holomorphic function on $\C \times \C^\times \Pi_{e\in E(A)}\D_{p_e}^\times$ with possible poles at $\zeta_e=0$ ($e\in E(A)$). Hence, the assertion holds.
\end{proof} 
The following lemma is easy to show (see the last sentence of Remark \ref{rem_pole_p}):
\begin{lem}
\label{lem_one_fix}
Let $\ee\in E(A)$ and $r \in \R$ with $0<r<1$.
Then, there exists $\{p_e\}_{e\in E(A)} \in \R_{>0}^{E(A)}$ such that:
\begin{enumerate}
\item
$p_\ee=r$;
\item
$\{p_e\}_{e\in E(A)}$ is A-admissible.
\end{enumerate}
\end{lem}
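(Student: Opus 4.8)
The plan is to reduce $A$-admissibility to finitely many explicit non-vanishing conditions for polynomials in the coordinates $\zeta_e$, and then to verify those conditions by a diagonal-dominance estimate in which $p_\ee$ is held at the prescribed value $r$ while the remaining radii are made small.

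First I would unwind the definition. By Proposition \ref{prop_psiA}, $\Psi_A^{-1}$ is polynomial in $z_A$, $x_A$ and $\{\zeta_e\}_{e\in E(A)}$. Each $z_i-z_j$ is translation invariant and, passing to $x$-coordinates and using $x_v=x_A\prod_{e\in\gamma(v)}\zeta_e$ — where $\gamma(v)\subseteq E(A)$ denotes the set of edges on the path from the top vertex $t_A$ down to $v$ — equals $x_A$ times a polynomial $P_{ij}\in\C[\zeta_e\mid e\in E(A)]$ (these $P_{ij}$ are exactly the expressions computed in Remark \ref{rem_pole_p}). Since $x_A\neq 0$ throughout $\C\times\C^\times\times\prod_{e\in E(A)}\D_{p_e}^\times$, the condition $\Psi_A^{-1}\bigl(\C\times\C^\times\times\prod_{e\in E(A)}\D_{p_e}^\times\bigr)\subset X_r$ is equivalent to: $P_{ij}(\zeta)\neq 0$ for all $i\neq j$ and all $\zeta\in\prod_{e\in E(A)}\D_{p_e}^\times$. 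This last condition is clearly preserved when any $p_e$ is decreased, so it is enough to exhibit one admissible tuple with $p_\ee=r$.

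The substance of the argument is the shape of the $P_{ij}$. Expanding $z_i-z_j$ in the basis $\{x_v\}_{v\in V(A)}$ one checks that $z_i-z_j=\sum_{v\in S_{ij}}\epsilon_v\,x_v$ with all $\epsilon_v\in\{\pm1\}$, where $S_{ij}\subseteq V(A)$ consists of (some of) the non-leaf vertices lying on the path of $A$ that joins the leaves labelled $i$ and $j$, and where the lowest common ancestor $w_{ij}$ of those two leaves always belongs to $S_{ij}$. Since $x_v=x_A\prod_{e\in\gamma(v)}\zeta_e$ and $\gamma(w_{ij})\subseteq\gamma(v)$ for every $v\in S_{ij}$, this gives $P_{ij}=\bigl(\prod_{e\in\gamma(w_{ij})}\zeta_e\bigr)\,\tilde P_{ij}$, where $\tilde P_{ij}$ is a sum of distinct squarefree monomials with coefficients in $\{\pm1\}$ whose constant term is $\epsilon_{w_{ij}}=\pm1$; moreover, apart from this constant term, the only monomial of $\tilde P_{ij}$ that is a power of $\zeta_\ee$ alone is at most $\pm\zeta_\ee$, because a longer such monomial would require a path in $A$ that repeats the edge $\ee$. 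I expect the verification of this structural lemma — in particular that the $x_v$-coordinates of $z_i-z_j$ take only the values $0,\pm1$, are supported on the relevant tree-path, and equal $\pm1$ at its top vertex $w_{ij}$ — to be the main obstacle; it is an elementary but somewhat fiddly induction on binary trees.

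Granting the lemma, fix $0<\delta<1$ and set $p_\ee=r$ and $p_e=\delta$ for $e\neq\ee$. On $\D_r\times\prod_{e\neq\ee}\D_\delta$ every monomial of $\tilde P_{ij}$ that is neither the constant nor $\pm\zeta_\ee$ involves some $\zeta_e$ with $e\neq\ee$, hence has modulus $<\delta$ there (as $r,\delta<1$), so
\begin{align*}
|\tilde P_{ij}(\zeta)|\ \ge\ 1-|\zeta_\ee|-\sum_{\substack{\alpha\neq 0\\ \zeta^\alpha\notin\C[\zeta_\ee]}}|\zeta^\alpha|\ \ge\ 1-r-M_{ij}\,\delta ,
\end{align*}
where $M_{ij}$ is the number of monomials of $\tilde P_{ij}$ and the term $-|\zeta_\ee|$ appears only if $\pm\zeta_\ee$ actually occurs in $\tilde P_{ij}$. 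Since $0<r<1$ we have $1-r>0$, so taking $\delta<(1-r)/\max_{i\neq j}M_{ij}$ (and $\delta<1$) makes every $\tilde P_{ij}$ nonvanishing on $\D_r\times\prod_{e\neq\ee}\D_\delta$. On the punctured polydisc $\D_r^\times\times\prod_{e\neq\ee}\D_\delta^\times$ the prefactor $\prod_{e\in\gamma(w_{ij})}\zeta_e$ is also nonzero, whence $P_{ij}\neq 0$ there for every $i\neq j$. Therefore the tuple $(p_e)_{e\in E(A)}$ with $p_\ee=r$ and $p_e=\delta$ ($e\neq\ee$) is $A$-admissible, which is the assertion of the lemma.
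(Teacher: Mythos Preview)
Your proof is correct and follows exactly the approach the paper gestures at (the paper only says the lemma ``is easy to show'' and refers to the example in Remark \ref{rem_pole_p}): fix $p_\ee=r$ and make the remaining $p_e$ small. Your structural lemma on the shape of the $P_{ij}$---that $z_i-z_j=\sum_{v\in S_{ij}}\epsilon_v x_v$ with $\epsilon_v\in\{\pm1\}$, $w_{ij}\in S_{ij}$, and squarefree monomials after factoring out $\prod_{e\in\gamma(w_{ij})}\zeta_e$---is the right way to make the ``sufficiently small'' heuristic rigorous, and the diagonal-dominance estimate is exactly what the paper's worked example is illustrating.
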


A convergent series $f\in T_A^{\mathfrak{p}}$ is a multi-valued holomorphic function on $\Psi_A^{-1}(\C \times \C^\times \times \Pi_{e\in E(A)}\D_{p_e}^\times)$ 
because it contains $\log(x_v)$ and $x_v^r$.
Below, we will fix the branch.
For $A$-admissible numbers $\mathfrak{p}$,
set
\begin{align*}
U_{A}^{\mathfrak{p}}=\Psi_A^{-1}(\C \times \C^\cut \times \Pi_{e\in E(A)}\D_{p_e}^\cut),
\end{align*}
where 
\begin{align*}
\R_-&= \{r\in \R\mid r\leq 0\},\\
\C^\cut &= \C \setminus \R_-,\\
\D_p^\cut &=\{\ze \in \C \setminus \R_- \mid |\ze|<p \}.
\end{align*}

Define the branch of $\Log:\C^\cut \rightarrow \C$ by
\begin{align}
\Log(\exp(\pi i t))= \pi i t
\label{eq_log_def}
\end{align}
for $t\in (-1,1)$. In particular, $\Arg= \mathrm{Im}\,\Log$ takes the values in $(-\pi,\pi)$.

Then, each formal power series in $T_A^{\mathfrak{p}}$
can be regarded as a single-valued holomorphic function on
$U_{A}^\mathfrak{p}$.
Set
\begin{align*}
U_A  = \cup_{\mathfrak{p}:A\text{-admissible}}U_{A}^{\mathfrak{p}} \subset \Xr.
\end{align*}
The following lemma is clear from the definition:
\begin{lem}
\label{lem_simply_conn}
For A-admissible number $\mathfrak{p}\in \R_{>0}^{E(A)}$,
$U_{A}^{\mathfrak{p}}$ and $U_A$ are connected simply-connected open subsets of $\Xr$.
\end{lem}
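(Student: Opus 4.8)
The plan is to reduce both assertions, via the biholomorphism $\Psi_A$ of Proposition \ref{prop_psiA}, to elementary statements about products of contractible open subsets of $\C$.

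First I would treat $U_A^{\mathfrak p}$ for a fixed $A$-admissible $\mathfrak p$. By Proposition \ref{prop_psiA}, $\Psi_A$ maps $\Xr$ biholomorphically onto the open subset $\Psi_A(\Xr)\subset\C^r$, and $A$-admissibility says precisely that $\Psi_A^{-1}\bigl(\C\times\C^\times\times\prod_{e\in E(A)}\D_{p_e}^\times\bigr)\subset\Xr$; applying $\Psi_A$ gives $\C\times\C^\times\times\prod_{e\in E(A)}\D_{p_e}^\times\subset\Psi_A(\Xr)$, hence also $\C\times\C^\cut\times\prod_{e\in E(A)}\D_{p_e}^\cut\subset\Psi_A(\Xr)$ since $\C^\cut\subset\C^\times$ and $\D_p^\cut\subset\D_p^\times$. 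Thus $U_A^{\mathfrak p}$ is the biholomorphic image of the open set $\C\times\C^\cut\times\prod_{e\in E(A)}\D_{p_e}^\cut$, which is a product of contractible sets: $\C$ is convex, the slit plane $\C^\cut=\C\setminus\R_{\leq 0}$ is star-shaped with respect to $1$, and each slit disk $\D_{p_e}^\cut$ is star-shaped with respect to $p_e/2$. Being a homeomorphic image of a contractible set, $U_A^{\mathfrak p}$ is an open, connected, simply-connected (indeed contractible) subset of $\Xr$.

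For $U_A$ I would first identify it with a product. Write $\mathcal A\subset\R_{>0}^{E(A)}$ for the set of $A$-admissible tuples; it is downward-closed, since shrinking radii only shrinks polydisks, and nonempty by Lemma \ref{lem_one_fix}. Put $\mathcal A'=\bigcup_{\mathfrak p\in\mathcal A}\{q\in\R_{>0}^{E(A)}\mid q_e<p_e\ \forall e\}$, which is then open, nonempty and still downward-closed, and set $S=\{\zeta\in(\C\setminus\R_{\leq 0})^{E(A)}\mid (|\zeta_e|)_{e\in E(A)}\in\mathcal A'\}$. Since $\Psi_A^{-1}$ is a bijection onto $\Psi_A(\Xr)$, which contains every set $\C\times\C^\cut\times\prod_{e\in E(A)}\D_{p_e}^\cut$ with $\mathfrak p\in\mathcal A$ by the previous paragraph, one computes $U_A=\bigcup_{\mathfrak p\in\mathcal A}U_A^{\mathfrak p}=\Psi_A^{-1}\bigl(\C\times\C^\cut\times S\bigr)$ using the identity $\bigcup_{\mathfrak p\in\mathcal A}\prod_{e\in E(A)}\D_{p_e}^\cut=S$. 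This exhibits $U_A$ as the biholomorphic image of the open set $\C\times\C^\cut\times S$, so $U_A$ is open; since $\C$ and $\C^\cut$ are contractible, it remains only to prove that $S$ is contractible.

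Finally I would show $S$ is contractible. Applying the fixed branch $\Log$ of \eqref{eq_log_def} coordinatewise gives a homeomorphism of $(\C\setminus\R_{\leq 0})^{E(A)}$ onto $\{w\in\C^{E(A)}\mid \mathrm{Im}\,w_e\in(-\pi,\pi)\ \forall e\}$ carrying $S$ onto $L\times(-\pi,\pi)^{E(A)}$ in real/imaginary coordinates, where $L=\{(\log p_e)_{e\in E(A)}\mid \mathfrak p\in\mathcal A'\}\subset\R^{E(A)}$ is the image of $\mathcal A'$ under the coordinatewise logarithm, an order isomorphism; hence $L$ is open, nonempty and downward-closed. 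Now any open nonempty downward-closed $D\subset\R^{N}$ is contractible: fix $x^*\in D$, so $\prod_{e}(-\infty,x_e^*]\subset D$ by downward-closedness; the map $\phi(x)=(\min(x_e,x_e^*))_e$ satisfies $\phi(x)\leq x$, hence lands in $D$, and the straight-line homotopy $H(x,t)=(1-t)x+t\phi(x)$ satisfies $H(x,t)\leq x$, hence stays in $D$, deforming $D$ onto a subset of the convex set $\prod_{e}(-\infty,x_e^*]$, which retracts linearly to $x^*$. Thus $L$, and therefore $L\times(-\pi,\pi)^{E(A)}$ and $S$, are contractible, which finishes the proof. No step here is genuinely hard; the one point demanding care is the one isolated above — that admissibility is downward-closed, which is what makes the union $\mathcal A'$ (equivalently $S$, equivalently $L$) an \emph{open} lower set rather than an arbitrary region, so that the elementary contraction applies.
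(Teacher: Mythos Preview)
Your argument is correct. The paper does not actually give a proof of this lemma --- it simply states ``The following lemma is clear from the definition'' --- so there is no approach to compare against; your write-up supplies a full justification where the paper leaves it to the reader. The one nontrivial point is indeed the one you isolate: for $U_A^{\mathfrak p}$ the product decomposition via $\Psi_A$ makes contractibility immediate, but for $U_A=\bigcup_{\mathfrak p}U_A^{\mathfrak p}$ the family of admissible $\mathfrak p$ is \emph{not} upward-directed (as Remark~\ref{rem_pole_p} shows, conditions like $p_a+p_c+p_cp_b<1$ prevent taking componentwise maxima), so one cannot simply argue by an increasing union of simply-connected sets. Your passage to the log-picture, turning the modulus condition into an open lower set $L\subset\R^{E(A)}$ and contracting that, is exactly the right way to handle this.
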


Set
\begin{align*}
T_A^\conv = \cap_{\mathfrak{p}:A\text{-admissible}} T_A^\mathfrak{p} \subset T_A,
\end{align*}
which is a linear space of convergent parenthesized formal power series.
Then, by Lemma \ref{lem_module_ort} and Lemma \ref{lem_admissible_radius},
we have a $\C$-algebra homomorphism
\begin{align}
e_A: \Ort^\alg \rightarrow T_A^\conv.
\label{eq_eA_conv}
\end{align}
In the next section, we will define and study D-module structures on $T_A$ and $T_A^\conv$.

\subsection{D-module structure}
\label{sec_config_D}
Set
\begin{align*}
\pa_i=\frac{d}{dz_i},
\end{align*}
the partial differential operator  on $\Xr$ with respect to $z$-coordinate,
and
\begin{align*}
\Dr = \C[\pa_1,\dots,\pa_r, z_1,\dots,z_r,(z_i-z_j)^\pm\mid 1\leq i <j\leq r],
\end{align*}
a ring of differential operators on $\Xr$,
and
\begin{align*}
\Drt=\C[\pa_1,\dots,\pa_r, (z_i-z_j)^\pm\mid 1\leq i <j\leq r],
\end{align*}
which is a subalgebra of $\Dr$.

In this section we will study a $\Drt$-module structure on $T_A$ and $T_A^\conv$.
Note that differential operators $\pa_i$ naturally acts on $\C[[x_v^\pm\mid v\in V(A)]][x_v^\C,\log x_v \mid v\in V(A)]$
and by Remark \ref{rem_TA_sub},
$T_A$ is a subspace of 
$\C[[x_v^\pm\mid v\in V(A)]][x_v^\C,\log x_v \mid v\in V(A)]$.
It is easy to show that $T_A$ and $T_A^\conv$ are closed under the action of $\{\pa_i\}_{i=1,\dots,r}$.
Hence, by \ref{eq_eA_conv}, $T_A$ and $T_A^\conv$ are $\Drt$ modules and 
\begin{prop}
$e_A:\Ort\rightarrow T_A^\conv$ is a $\Drt$-module homomorphism.
\end{prop}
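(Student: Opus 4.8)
The plan is to reduce the proposition to the single identity $e_A\circ\partial_i=\partial_i\circ e_A$ on $\Ort^\alg$, and then to check that identity on a $\C$-algebra generating set. First note that $\Drt=\C[\partial_1,\dots,\partial_r,(z_i-z_j)^\pm]$ is generated as a ring by the operators $\partial_i=\frac{d}{dz_i}$ and by the functions $(z_i-z_j)^{\pm1}$, and that $T_A^\conv$ is already known to be a $\Drt$-submodule of $T_A$ (it is closed under every $\partial_i$ and under multiplication by the $e_A((z_i-z_j)^{\pm1})$). Hence, by a routine induction on the length of a monomial in these generators, the claim that $e_A$ is a $\Drt$-module homomorphism follows from two facts: (i) $e_A$ commutes with multiplication by $(z_i-z_j)^{\pm1}$, and (ii) $e_A(\partial_i f)=\partial_i\big(e_A(f)\big)$ for all $f\in\Ort^\alg$ and all $i$. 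Fact (i) is immediate, since $e_A:\Ort^\alg\to T_A^\conv$ is a $\C$-algebra homomorphism (Lemma~\ref{lem_module_ort} together with \eqref{eq_eA_conv}) and the $\Ort^\alg$-module structure on $T_A$ is by definition the one induced through $e_A$. So the real content is (ii).

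To prove (ii) I would observe that both $f\mapsto e_A(\partial_i f)$ and $f\mapsto\partial_i\big(e_A(f)\big)$ are $\C$-linear derivations from $\Ort^\alg$ into $T_A$, where $T_A$ is regarded as an $\Ort^\alg$-module via $e_A$: the first because $\partial_i$ is a derivation on $\Ort^\alg$ and $e_A$ is additive and multiplicative, the second because $\partial_i$ satisfies the Leibniz rule on $T_A$ and $e_A$ is a ring homomorphism. Two derivations out of $\Ort^\alg=\C[(z_i-z_j)^\pm]$ that agree on a $\C$-algebra generating set agree everywhere, and because $D\big((z_j-z_k)^{-1}\big)=-(z_j-z_k)^{-2}D(z_j-z_k)$ for any derivation $D$, it suffices to check agreement on the linear functions $z_j-z_k$. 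There $\partial_i(z_j-z_k)=\delta_{ij}-\delta_{ik}$ is a constant, so $e_A(\partial_i(z_j-z_k))=\delta_{ij}-\delta_{ik}$ since $e_A$ fixes $\C$. For the other side, iterating $x_{d(e)}=\zeta_e\,x_{u(e)}$ downward from the top vertex shows that $e_A(x_v)$ is exactly the monomial $x_v$ of $T_A$; writing $z_j-z_k=\sum_{v\in V(A)}c_v\,x_v$ with $c_v\in\Z$ — possible since the $x$-coordinate system of Section~\ref{sec_config_coordinate} is a linear coordinate system on $X_r/\C$ (compare Proposition~\ref{prop_psiA}) — gives $\partial_i\big(e_A(z_j-z_k)\big)=\sum_{v}c_v\big(\delta_{i,L(v)}-\delta_{i,R(v)}\big)$, and this equals $\delta_{ij}-\delta_{ik}$ by differentiating the identity $z_j-z_k=\sum_v c_v\,(z_{L(v)}-z_{R(v)})$ of linear functions of $z$. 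This establishes (ii) and hence the proposition.

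Conceptually this is just the familiar statement that $e_A$, which is nothing but the Laurent--Taylor expansion in the $A$-coordinate of a regular function on $X_r$ restricted to $U_A$ (Proposition~\ref{prop_psiA}, Lemma~\ref{lem_admissible_radius}), intertwines differentiation of functions with term-by-term differentiation of series, so I do not expect any genuine obstacle. The only point that needs care is expository rather than mathematical: the symbol $\partial_i$ denotes two a priori distinct operators — the usual partial derivative on the function algebra $\Ort^\alg$ and the formally defined operator on $T_A\subset\C[[x_v^\pm\mid v\in V(A)]][x_v^\C,\log x_v\mid v\in V(A)]$ — so the argument must keep these separate and identify them only at the level of the generators $z_j-z_k$, so as not to become circular.
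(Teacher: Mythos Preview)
Your argument is correct. The paper itself gives no proof of this proposition; it simply states it as an immediate consequence of the definitions after observing that $T_A$ and $T_A^\conv$ are closed under each $\partial_i$ and under multiplication by $e_A\big((z_i-z_j)^{\pm1}\big)$. Your reduction to the single identity $e_A\circ\partial_i=\partial_i\circ e_A$, followed by the observation that both sides are $\Ort^\alg$-valued derivations into $T_A$ and hence are determined by their values on the generators $z_j-z_k$, is the natural way to make this precise, and every step checks out. The only comment is that your linear decomposition $z_j-z_k=\sum_{v\in V(A)}c_v\,x_v$ with $c_v\in\Z$ can be justified more directly than by appealing to Proposition~\ref{prop_psiA}: the $\{x_v\}_{v\in V(A)}$ are $r-1$ linearly independent translation-invariant linear forms on $\C^r$, hence a basis of that $(r-1)$-dimensional space, and the integrality of the coefficients is visible from walking along the tree from the leaf $j$ to the leaf $k$.
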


More specifically, for example, for $A=(23)((15)4)$, by \eqref{eq_example_psi},
\begin{align*}
\pa_3 \zeta_a &= \pa_3 (\frac{z_2-z_3}{z_3-z_4})=\frac{z_4-z_2}{(z_3-z_4)^2}=-x_{(23)((15)4)}^{-1}(1+\zeta_a).
\end{align*}

Note that $\{\pa_i\}_{i=1,\dots,r}$ are the partial derivatives in $z$-coordinate on $\Xr$.
We can also define formal $\zeta$-derivations with respect to $A$-coordinate.

For $A=(23)((15)4)$, by \eqref{eq_example_psi},
\begin{align}
\begin{split}
\frac{d}{d\zeta_b} z_1 &= \frac{d}{d\zeta_b} ((z_1-z_5) + (z_5-z_4)+z_4)\\
 &= \frac{d}{d\zeta_b} (z_5-z_4) (\frac{z_1-z_5}{z_5-z_4} +1+\frac{z_3-z_4}{z_5-z_4}\frac{z_4}{z_3-z_4})\\
&=\frac{d}{d\zeta_b} \zeta_c^{-1}x_{A}(1+\zeta_b +\zeta_c \frac{z_A}{x_A})=\zeta_c^{-1}x_A=z_5-z_4.
\label{eq_nandemo_ex}
\end{split}
\end{align}
Let $\Leaf(e)$ be the set of all leaves which are descendants of $d(e)$ for an edge $e\in E(A)$.
The next proposition is significant in showing a coherence of $\Drt$-modules near the singularity:
\begin{lem}
\label{lem_change_of_derivation}
For any $e\in E(A)$,
\begin{align*}
\zeta_e \frac{d}{d\zeta_e} = \sum_{l \in \Leaf(e)}(z_l - z_{R(d(e))}) \pa_l,
\end{align*}
as differential operators on $\Xr$.
\end{lem}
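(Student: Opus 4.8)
The plan is to verify this identity of holomorphic vector fields on $X_r$ by evaluating both sides on the coordinate functions $z_1,\dots,z_r$. By Proposition \ref{prop_psiA} the map $\Psi_A$ is a biholomorphism onto its image, so $(z_A,x_A,(\zeta_e)_{e\in E(A)})$ is a genuine holomorphic coordinate system on $X_r$ and $\frac{d}{d\zeta_e}$ is an honest holomorphic vector field; since a holomorphic vector field on $X_r$ is determined by its action on $z_1,\dots,z_r$, it suffices to show that $\zeta_e\frac{dz_k}{d\zeta_e}=z_k-z_{R(d(e))}$ when $k\in\Leaf(e)$ and $\zeta_e\frac{dz_k}{d\zeta_e}=0$ when $k\notin\Leaf(e)$.

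First I would write each $z_k$ explicitly in $A$-coordinates. For $v\in V(A)$ let $P(v)\subseteq E(A)$ be the set of edges on the path from the root $t_A$ down to $v$; telescoping the relations $\zeta_e=x_{d(e)}/x_{u(e)}$ along this path gives $x_v=x_A\prod_{e\in P(v)}\zeta_e$. Next, for any vertex $v$ and any leaf $k$ lying below $v$, an induction on the length of the path from $v$ to $k$ — using $x_v=z_{L(v)}-z_{R(v)}$ together with the fact that the rightmost leaf below $v$ is the rightmost leaf below the right child of $v$, while $L(v)$ is the rightmost leaf below the left child of $v$ — yields the telescoping formula
\[
z_k-z_{R(v)}=\sum_{w\in L(v,k)}x_w ,
\]
where $L(v,k)$ denotes the set of vertices on the path from $v$ to $k$ at which the path descends into the left subtree. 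Taking $v=t_A$ and recalling $z_A=z_{R(t_A)}$ gives $z_k=z_A+x_A\sum_{w\in L(t_A,k)}\prod_{e\in P(w)}\zeta_e$.

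Then I would differentiate. Holding $z_A$, $x_A$ and all $\zeta_{e'}$ with $e'\neq e$ fixed, $\frac{d}{d\zeta_e}$ sends the monomial $\prod_{e'\in P(w)}\zeta_{e'}$ to $\zeta_e^{-1}\prod_{e'\in P(w)}\zeta_{e'}$ when $e\in P(w)$ and to $0$ otherwise, so $\zeta_e\frac{dz_k}{d\zeta_e}=\sum x_w$ with the sum running over those $w\in L(t_A,k)$ with $e\in P(w)$. The condition $e\in P(w)$ holds exactly when $w$ lies in the subtree rooted at $d(e)$. Hence if $k\notin\Leaf(e)$ the path $t_A\to k$ never reaches $d(e)$, the index set is empty, and $\zeta_e\frac{dz_k}{d\zeta_e}=0$; while if $k\in\Leaf(e)$ the vertices of the path $t_A\to k$ that lie in that subtree are precisely those on the sub-path $d(e)\to k$, and ``turning left'' is the same notion on the sub-path, so the index set equals $L(d(e),k)$ and the telescoping formula with $v=d(e)$ gives $\zeta_e\frac{dz_k}{d\zeta_e}=\sum_{w\in L(d(e),k)}x_w=z_k-z_{R(d(e))}$. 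Comparing with $\bigl(\sum_{l\in\Leaf(e)}(z_l-z_{R(d(e))})\pa_l\bigr)(z_k)$ for every $k$ then finishes the argument.

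I expect the main obstacle to be the combinatorial bookkeeping: proving the telescoping formula $z_k-z_{R(v)}=\sum_{w\in L(v,k)}x_w$ cleanly, and then correctly identifying $\{\,w\in L(t_A,k):e\in P(w)\,\}$ with $L(d(e),k)$ (or with the empty set). Once those combinatorial facts are in place the $\zeta$-derivative computation is routine. It is also worth bearing in mind the degenerate cases — for instance $k=R(d(e))$, where both sides vanish since $L(d(e),k)=\emptyset$, and the possibility that the globally rightmost leaf $r_A$ sits below $d(e)$, so that $r_A\in\Leaf(e)$ but its contribution $(z_{r_A}-z_{R(d(e))})\pa_{r_A}$ is zero — all of which are consistent with the stated formula.
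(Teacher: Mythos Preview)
Your argument is correct. You verify the identity by computing $\zeta_e\,\frac{d}{d\zeta_e}$ on the $z$-coordinates, after first expressing each $z_k$ in $A$-coordinates via the telescoping identity $z_k-z_{R(v)}=\sum_{w\in L(v,k)}x_w$; the induction you sketch for this identity and the subsequent identification of $\{w\in L(t_A,k):e\in P(w)\}$ with $L(d(e),k)$ (or $\emptyset$) go through without difficulty.

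The paper proceeds in the dual direction: it sets $D_e=\sum_{l\in\Leaf(e)}(z_l-z_{R(d(e))})\pa_l$ and evaluates $D_e$ on the $A$-coordinates $z_A,x_A,\{\zeta_{e'}\}$, checking that $D_e\zeta_{e'}=0$ for $e'\neq e$, $D_ez_A=D_ex_A=0$, and $D_e\zeta_e=\zeta_e$. The key observation there is that on the variables $\{z_l\}_{l\in\Leaf(e)}$ the operator $D_e$ is an Euler (scaling) operator minus a translation, so any $\zeta_{e'}$ with $e'$ strictly below $e$ is killed for free by scale- and translation-invariance, avoiding the explicit monomial expansion you carry out. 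Your route, by contrast, produces the explicit formula $z_k=z_A+x_A\sum_{w\in L(t_A,k)}\prod_{e'\in P(w)}\zeta_{e'}$, which is more labor here but makes $\Psi_A^{-1}$ completely transparent; the paper's route is shorter for this lemma because it never needs that formula.
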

(For example, $\zeta_b \frac{d}{d\zeta_b} = (z_1-z_5)\pa_1$,
which is compatible with \eqref{eq_nandemo_ex})
\begin{proof}
Set $D_e = \sum_{l \in \Leaf(e)}(z_l - z_{R(d(e))}) \pa_l$.
By considering the action of the differential operator $D_e$ on $\{\zeta_{e'}\}_{e'\in E(A)}$ and $z_A,x_A$, we show that the assertion holds.
Set
$$B(e)=\{e'\in E(A)\mid e' \text{is located below }e\} \cup \{e\}.$$
For any $e' \in E(A)\setminus B(e)$, 
\begin{align*}
\zeta_{e'}=\frac{z_{L(d(e'))}-z_{R(d(e'))}}{z_{L(u(e'))}-z_{R(u(e'))}}
\end{align*}
and
$$\{L(u(e')),R(u(e')),L(d(e')),R(d(e'))\} \cap \Leaf(e) \subset \{R(d(e))\}.$$
Since the coefficient of $\pa_{R(u(e))}$ on $D_e$ is $0$,
$D_e \zeta_{e'}=0$ for any $e'\in E(A)\setminus B(e)$.
Similarly, $D_e z_A=D_e x_A=0$.

Next consider the case of $e' \in B(e)$ and $e'\neq e$.

On the variable $\{z_i\}_{i \in \Leaf(e)}$, $\sum_{l \in \Leaf(e)}z_l\pa_l$ is a scale transformation, and
$\sum_{l \in \Leaf(e)}\pa_l$ corresponds to a translation.
Since $\zeta_{e'}$ is scale-invariant and translation-invariant
and 
\begin{align*}
\{L(u(e')),R(u(e')),L(d(e')),R(d(e'))\} \subset \Leaf(e),
\end{align*}
we have $D_e \zeta_{e'}=0$.

%
Finally, we show that $D_e \zeta_e=\zeta_e$.
Assume that the tree $A$ has the shape shown in Fig. \ref{fig_ij_up}.

\begin{minipage}[c]{.5\textwidth}
\centering
\begin{forest}
for tree={
  l sep=20pt,
  parent anchor=south,
  align=center
}
[[
$z_j-z_p$
[$z_i-z_j$,edge label={node[midway,left]{e}}[$\cdots i$][$\cdots j$]]
[$\cdots p$]
]]
\end{forest}
\captionof{figure}{}
\label{fig_ij_up}
\end{minipage}
\begin{minipage}[c]{.5\textwidth}
\centering
\begin{forest}
for tree={
  l sep=20pt,
  parent anchor=south,
  align=center
}
[[
$z_p-z_j$
[$\cdots p$]
[$z_i-z_j$,edge label={node[midway,left]{e}}[$\cdots i$][$\cdots j$]]
]]
\end{forest}
\captionof{figure}{}
\label{fig_ij_down}
\end{minipage}
Then, $\zeta_e=\frac{z_i-z_j}{z_j-z_p}$
and 
\begin{align*}
D_e \frac{z_i-z_j}{z_j-z_p}&=\left(\sum_{l\neq p} (z_l - z_{j}) \pa_l\right)\frac{z_i-z_j}{z_j-z_p}
 \\
&= \left(-z_p\pa_p+z_j\pa_p \right) \frac{z_i-z_j}{z_j-z_p}= \frac{z_i-z_j}{z_j-z_p}=\zeta_e.
\end{align*}
The same is true when the tree $A$ is Fig. \ref{fig_ij_down}. Thus, the assertion holds.
\end{proof}

\subsection{Degree map}
\label{sec_config_deg}
In this section, for each $A\in \Tr_r$, we introduce a degree map
\begin{align*}
\deg_A^\ee:\Ort \rightarrow \Z\cup \{\infty\}
\end{align*}
which measures the order of the poles in $\{z_i=z_j\}$ and studies its properties.

Let $A\in \Tr_r$.
By the $\C$-algebra homomorphism $e_A:\Ort\rightarrow \C((\zeta_e \mid e\in E(A)))[x_A^\pm]$ \eqref{eq_emap}, 
$\Ort$ is embedded in a ring of formal power series.
Let $\ee \in E(A)$ and $f\in \C((\zeta_e \mid e\in E(A)))[x_A^\pm]$.
Define $\deg_A^\ee(f)$ by the smallest integer $n$ for which the coefficient of $\zeta_\ee^n$ is non-zero.
Here, if $f$ is zero, let $\deg_A^\ee(0)=\infty$.
This determines the map $\deg_A^\ee: \C((\zeta_e \mid e\in E(A)))[x_A^\pm]\rightarrow \Z\cup \{\infty\}$.
Denote the composite map
\begin{align*}
\Ort \overset{e_A}{\rightarrow} \C((\zeta_e \mid e\in E(A)))[x_A^\pm] \overset{\deg_A^\ee}{\rightarrow} \Z\cup \{\infty\}
\end{align*}
by the same symbol $\deg_A^\ee$

The following lemma is easy to show:
\begin{lem}
\label{lem_deg_formula}
For $1\leq i <\leq j$,
\begin{align*}
\deg_A^\ee(z_i-z_j)=
\begin{cases}
&1\quad \text{ if }i,j \in \Leaf(\ee),\\
&0 \quad \text{otherwise}.
\end{cases}
\end{align*}
\end{lem}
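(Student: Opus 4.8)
The plan is to unwind the definition of $\deg_A^\ee$ through the expansion map $e_A$ and reduce everything to the factorization $z_i - z_j = x_{d(e)}\cdot(\text{unit involving }\ze_e)$ along the edges separating $i$ and $j$. First I would recall that by Lemma \ref{lem_module_ort} the map $e_A:\Ort^\alg \to T_A$ is a $\C$-algebra homomorphism, so $\deg_A^\ee(fg) = \deg_A^\ee(f) + \deg_A^\ee(g)$ whenever neither side is $\infty$; this lets me compute $\deg_A^\ee$ of the linear polynomial $z_i - z_j$ by factoring it. The basic identity I would use is the telescoping expression of $z_i - z_j$ in the $x$-coordinate: writing the path in the tree $A$ from leaf $i$ to leaf $j$, one gets $z_i - z_j$ as an alternating sum of the $x_v = z_{L(v)}-z_{R(v)}$ over the vertices $v$ on that path, exactly as in the worked examples of Remark \ref{rem_pole_p} (e.g. $z_{12} = x_A(\ze_c+\ze_b\ze_c-1-\ze_a)$).

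The key step is the following case analysis. Fix $\ee\in E(A)$ with lower vertex $v_0 = d(\ee)$, and recall $\Leaf(\ee)$ is the set of leaves below $v_0$. If both $i,j\in\Leaf(\ee)$, then the portion of $z_i - z_j$ that "crosses" the edge $\ee$ contributes: concretely, in the $A$-coordinate $z_i - z_j = x_{v_0}\cdot g + (\text{terms not involving }x_{v_0})$, and since $\ze_\ee = x_{d(\ee)}/x_{u(\ee)} = x_{v_0}/x_{u(\ee)}$, dividing through by $x_{u(\ee)}$ and using the expansion $e_A$ shows the lowest power of $\ze_\ee$ is exactly $1$ — the constant term vanishes because $z_i - z_j$ is a combination of differences of leaves all lying below $v_0$ together with exactly one difference at $v_0$ itself, and scaling-invariance (as in the proof of Lemma \ref{lem_change_of_derivation}) forces the $\ze_\ee^0$ coefficient to be zero while the $\ze_\ee^1$ coefficient is a nonzero unit. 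If at least one of $i,j$ is not in $\Leaf(\ee)$, then $z_i - z_j$, expanded via $e_A$, has a nonzero constant term in $\ze_\ee$: indeed when neither is below $v_0$ the variable $\ze_\ee$ does not appear at all (its defining combination of $z$'s is disjoint from the relevant leaves), and when exactly one of them is below $v_0$ the difference still contributes a term independent of $\ze_\ee$ coming from the vertices above $v_0$, so $\deg_A^\ee = 0$. I would make this precise by invoking Proposition \ref{prop_psiA}, which expresses $\Psi_A^{-1}$ as an explicit polynomial in $\{\ze_e\}$ and $x_A, z_A$, and tracking which monomials of that polynomial involve $\ze_\ee$.

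The main obstacle I anticipate is the bookkeeping in the "exactly one of $i,j$ below $v_0$" subcase: one must verify that the $\ze_\ee^0$-coefficient is genuinely nonzero (so the degree is $0$ and not positive) rather than accidentally cancelling. The cleanest way around this is to argue geometrically rather than by brute-force expansion: since $\mathfrak p$ can be chosen $A$-admissible with $p_\ee$ arbitrarily small (Lemma \ref{lem_one_fix}), $e_A(z_i-z_j) \in T_A^\mathfrak p$ is a convergent function on $U_A^\mathfrak p$, and $\deg_A^\ee(z_i-z_j) = n$ exactly when $(z_i-z_j)/x_{d(\ee)}^{\,n}$ extends holomorphically and nonvanishingly across $\{\ze_\ee = 0\}$; but the hypersurface $\{\ze_\ee = 0\}$ in $X_r$-coordinates is precisely $\{x_{d(\ee)} = 0\}$, i.e. the partial diagonal where the leaves in $\Leaf(\ee)$ collide, and $z_i - z_j$ vanishes there iff both $i$ and $j$ lie in $\Leaf(\ee)$, with a simple zero since $z_i-z_j$ is linear. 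This gives $\deg_A^\ee(z_i-z_j) = 1$ in that case and $0$ otherwise, which is the assertion. I would present the argument in this geometric form, citing Lemma \ref{lem_admissible_radius} and Lemma \ref{lem_one_fix} for the convergence input and Proposition \ref{prop_psiA} for the identification of $\{\ze_\ee=0\}$ with the partial diagonal.
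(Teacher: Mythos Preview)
The paper gives no proof of this lemma, declaring it ``easy to show,'' so there is nothing to compare against. Your geometric argument in the final paragraph is essentially correct and is the most efficient route; the algebraic sketch in the middle paragraph is muddled (for instance, when both $i,j\in\Leaf(\ee)$ the tree-path between them does \emph{not} cross $\ee$ but stays entirely below $d(\ee)$, which is precisely why every $x_v$ involved carries a factor of $\ze_\ee$).

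Two points deserve tightening. First, the identification ``$\{\ze_\ee=0\}$ is the partial diagonal where the leaves in $\Leaf(\ee)$ collide'' is correct only in the $A$-chart: in $z$-coordinates $\{x_{d(\ee)}=0\}$ is just the single hyperplane $z_{L(d(\ee))}=z_{R(d(\ee))}$, but under $\Psi_A^{-1}$ with the other $\ze_e$ fixed and finite, setting $\ze_\ee=0$ does force every $z_k$ with $k\in\Leaf(\ee)$ to the common value $z_{R(d(\ee))}$, while each $z_k$ with $k\notin\Leaf(\ee)$ is independent of $\ze_\ee$. Second, ``simple zero since $z_i-z_j$ is linear'' is not quite the reason---linearity in $z$ does not a priori control the $\ze_\ee$-order. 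The clean justification is that each $x_v$ is a monomial in the $\ze_e$'s times $x_A$, containing the factor $\ze_\ee$ exactly once if $v$ lies at or below $d(\ee)$ and not at all otherwise; hence for $i,j\in\Leaf(\ee)$ the difference $z_i-z_j$ (a $\pm$-sum of such $x_v$'s with $v$ below $d(\ee)$) equals $\ze_\ee$ times a polynomial independent of $\ze_\ee$, nonzero because $e_A$ is injective. This gives degree exactly $1$; the other case gives degree $0$ since $(z_i-z_j)|_{\ze_\ee=0}=z_{\phi(i)}-z_{\phi(j)}\neq 0$, where $\phi$ collapses $\Leaf(\ee)$ to $R(d(\ee))$.
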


The following lemma will be used in Section \ref{sec_parenthesized_comp}:
\begin{lem}
\label{lem_well_inverse}
Let $A \in \Tr_r$ and $e_0\in E(A)$. Let $r$ be the rightmost leaf among $\Leaf(e_0)$
and $s \in \Leaf(e_0)$ with $s\neq r$ and $i \in \Leaf(A) \setminus \Leaf(e_0)$.
Then, $e_A\left(\frac{z_s-z_r}{z_r-z_i}\right)\in \ze_{e_0}\C[[\ze_e\mid e\in E(A)]]$
and for any $n\in \Z$ and $l \geq 0$,
\begin{align}
\sum_{k\geq 0}\binom{n}{k}\binom{n-k}{l} e_A\left(\frac{z_s-z_r}{z_r-z_i}\right)^k&=
\binom{n}{l} e_A\left(\left(\frac{z_s-z_i}{z_r-z_i}\right)^{n-l} \right), \label{eq_well_sum_inv}\\
\sum_{k \geq 0}\binom{n}{k}\binom{k}{l}e_A\left(\frac{z_s-z_r}{z_r-z_i}\right)^k &=
\binom{n}{l} e_A\left(\left(\frac{z_s-z_r}{z_r-z_i} \right)^l\right)e_A\left(\left(\frac{z_s-z_i}{z_r-z_i} \right)^{n-l}\right).
\label{eq_well_sum_inv2}
\end{align}
\end{lem}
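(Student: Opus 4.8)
Set $v_{0}=d(e_{0})$, so that $\Leaf(e_{0})$ is the set of leaves descended from $v_{0}$ and the leaf $r$ in the statement equals $R(v_{0})$. The strategy is to prove the first assertion by an explicit analysis of the $A$-coordinate expansions of $z_{s}-z_{r}$ and $z_{r}-z_{i}$ (thereby refining Lemma \ref{lem_deg_formula}), and then to deduce \eqref{eq_well_sum_inv} and \eqref{eq_well_sum_inv2} as purely formal consequences.

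The combinatorial input is the identity, proved by an immediate induction on the length of the path from $w$ to $a$: for a leaf $a$ with ancestor $w$,
\begin{align*}
z_{a}-z_{R(w)}=\sum_{v}x_{v},
\end{align*}
where $v$ ranges over the vertices on the path from $w$ down to $a$ at which the path turns to the left child. Since $x_{v}/x_{w}=\prod_{e}\ze_{e}$, the product running over the edges of the path from $v$ up to $w$ (all of which lie in $E(A)$), for every descendant $v$ of $w$, and since $v=w$ occurs in the sum precisely when $a$ lies in the left subtree of $w$, this gives $e_{A}(z_{a}-z_{R(w)})=e_{A}(x_{w})\,q$ with $q\in\C[\ze_{e}\mid e\in E(A)]$ and $q(0)\neq0$ precisely when $a$ lies in the left subtree of $w$. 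Applying this with $a=s$, $w=v_{0}$ (and subtracting the identity for $a=r=R(v_{0})$, which is $0$) yields $e_{A}(z_{s}-z_{r})=e_{A}(x_{v_{0}})\,p_{1}$ with $p_{1}\in\C[\ze_{e}\mid e\in E(A)]$; applying it with $w$ the lowest common ancestor of $r$ and $i$ --- which, because $i\notin\Leaf(e_{0})$, is a \emph{strict} ancestor of $v_{0}$, with exactly one of $r,i$ in its left subtree --- yields $e_{A}(z_{r}-z_{i})=e_{A}(x_{w})\,p_{2}$ with $p_{2}\in\C[\ze_{e}\mid e\in E(A)]$ and $p_{2}(0)=\pm1$. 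As $v_{0}$ is a descendant of $w$ and $e_{0}$ is the first edge on the path from $v_{0}$ up to $w$, the ratio $e_{A}(x_{v_{0}})/e_{A}(x_{w})$ is a monomial in the $\ze_{e}$ divisible by $\ze_{e_{0}}$, so
\begin{align*}
e_{A}\Bigl(\tfrac{z_{s}-z_{r}}{z_{r}-z_{i}}\Bigr)=\tfrac{e_{A}(x_{v_{0}})}{e_{A}(x_{w})}\cdot\tfrac{p_{1}}{p_{2}}\in\ze_{e_{0}}\,\C[[\ze_{e}\mid e\in E(A)]],
\end{align*}
the point being that $p_{2}(0)\neq0$ makes $p_{1}/p_{2}$ an actual power series. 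In particular $e_{A}\bigl(\tfrac{z_{s}-z_{i}}{z_{r}-z_{i}}\bigr)=1+e_{A}\bigl(\tfrac{z_{s}-z_{r}}{z_{r}-z_{i}}\bigr)$ has constant term $1$, hence is a unit of $\C[[\ze_{e}\mid e\in E(A)]]$.

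For \eqref{eq_well_sum_inv} and \eqref{eq_well_sum_inv2} I would prove the corresponding one-variable identities in $\C[[p]]$ and then substitute $p=e_{A}\bigl(\tfrac{z_{s}-z_{r}}{z_{r}-z_{i}}\bigr)$, which is allowed since $p$ lies in the ideal $(\ze_{e}\mid e\in E(A))$ by the first part. Identity \eqref{eq_well_sum_inv2} is exactly the specialization $\sum_{k\ge0}\binom{n}{k}\binom{k}{l}p^{k}=\binom{n}{l}p^{l}(1+p)^{n-l}$ from Lemma \ref{identity}. For \eqref{eq_well_sum_inv} I would first record $\binom{n}{k}\binom{n-k}{l}=\binom{n}{l}\binom{n-l}{k}$ --- valid for all $n\in\Z$ since both sides are polynomials in $n$ of degree $k+l$ agreeing whenever $n\ge k+l$ --- and then apply the binomial series to get $\sum_{k\ge0}\binom{n}{k}\binom{n-k}{l}p^{k}=\binom{n}{l}(1+p)^{n-l}$ in $\C[[p]]$. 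In both cases one finishes by identifying $p^{l}$ with $e_{A}\bigl((\tfrac{z_{s}-z_{r}}{z_{r}-z_{i}})^{l}\bigr)$ and $(1+p)^{n-l}$ with $e_{A}\bigl((\tfrac{z_{s}-z_{i}}{z_{r}-z_{i}})^{n-l}\bigr)$, using that $e_{A}$ is a $\C$-algebra homomorphism (Lemma \ref{lem_module_ort}), that $1+\tfrac{z_{s}-z_{r}}{z_{r}-z_{i}}=\tfrac{z_{s}-z_{i}}{z_{r}-z_{i}}$ in $\Ort^{\alg}$, and that $\tfrac{z_{s}-z_{i}}{z_{r}-z_{i}}$ is a unit of $\Ort^{\alg}$ (as $s\neq i$), so the negative powers are computed inside the units of $\C[[\ze_{e}\mid e\in E(A)]]$.

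The essential difficulty lies entirely in the first assertion: the bookkeeping of the $x$-coordinate expansions, in particular locating $r$ and $i$ relative to their lowest common ancestor $w$ and checking that exactly one factor $\ze_{e_{0}}$ survives in $e_{A}(x_{v_{0}})/e_{A}(x_{w})$. After that, the two sums reduce to manipulations of binomial coefficients and a substitution into a formal power series identity, both routine. If \eqref{eq_well_sum_inv} and \eqref{eq_well_sum_inv2} are wanted as identities of convergent parenthesized series rather than merely formal ones, this follows by combining the above with Lemma \ref{lem_admissible_radius}.
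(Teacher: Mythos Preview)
Your argument is correct and essentially parallel to the paper's, but with two noticeable differences in execution.

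For the first assertion, the paper invokes Lemma~\ref{lem_deg_formula} directly: since $s,r\in\Leaf(e_0)$ and $i\notin\Leaf(e_0)$ one has $\deg_A^{e_0}(z_s-z_r)=1$ and $\deg_A^{e_0}(z_r-z_i)=0$, and for any edge $e$ with $\deg_A^e(z_r-z_i)=1$ also $\deg_A^e(z_s-z_r)=1$; the conclusion is then asserted. Your explicit path formula $z_a-z_{R(w)}=\sum_v x_v$ and the resulting factorizations $e_A(z_s-z_r)=e_A(x_{v_0})p_1$, $e_A(z_r-z_i)=e_A(x_w)p_2$ with $p_2(0)=\pm1$ make the argument more self-contained: in particular you verify explicitly that the denominator, after extracting its monomial part, is a unit in the power-series ring, which is what the paper's degree argument leaves implicit.

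For \eqref{eq_well_sum_inv} and \eqref{eq_well_sum_inv2} the paper takes an analytic route: it observes that $\sum_{k\ge0}\binom{n-l}{k}x^k$ converges to $(1+x)^{n-l}$ for $|x|<1$, notes that $e_A\bigl(\tfrac{z_s-z_r}{z_r-z_i}\bigr)$ is small when all $|\ze_e|$ are small, and identifies the sum with $e_A\bigl((\tfrac{z_s-z_i}{z_r-z_i})^{n-l}\bigr)$ by matching convergent expansions. Your approach---establishing the identities in $\C[[p]]$ via Lemma~\ref{identity} and the relation $\binom{n}{k}\binom{n-k}{l}=\binom{n}{l}\binom{n-l}{k}$, then substituting $p=e_A\bigl(\tfrac{z_s-z_r}{z_r-z_i}\bigr)$, which lies in the maximal ideal by the first part---is purely formal and slightly more elementary. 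It avoids any appeal to convergence, and the identification $(1+p)^{n-l}=e_A\bigl((\tfrac{z_s-z_i}{z_r-z_i})^{n-l}\bigr)$ follows cleanly from $e_A$ being a $\C$-algebra homomorphism together with the fact that $\tfrac{z_s-z_i}{z_r-z_i}$ is a unit of $\Ort^\alg$. Either route is fine; yours is a bit tidier here.
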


\begin{rem}
\label{rem_ill_exp}
Before showing the above lemma, a few remarks are in order.
It is easy to show that as formal power series
\begin{align*}
\sum_{k\geq 0}\binom{n}{k}\binom{n-k}{l} x^k = \binom{n}{l}\sum_{k\geq 0}\binom{n-l}{k}x^k \in \C[[x]].
\end{align*}
Assume that $n \geq 0$.
Then, the right-hand-side of \eqref{eq_well_sum_inv} is a finite sum.
Since $e_A$ is a ring homomorphism,
\begin{align}
\begin{split}
\sum_{k\geq 0}\binom{n}{k}\binom{n-k}{l} e_A\left(\frac{z_s-z_r}{z_r-z_i}\right)^k
&=e_A\left(\sum_{k\geq 0}\binom{n}{k}\binom{n-k}{l} \left(\frac{z_s-z_r}{z_r-z_i}\right)^k\right)\\
&=\binom{n}{l}
e_A\left(
\left(1+\frac{z_s-z_r}{z_r-z_i}\right)^{n-l}
\right)\\
&=\binom{n}{l}
e_A\left(
\left(\frac{z_s-z_i}{z_r-z_i}\right)^{n-l}
\right).
\label{eq_well_first}
\end{split}
\end{align}
If $n <0$, then the first equality of \eqref{eq_well_first} is not always true since the sum is infinite.
In fact, that the right-hand-side of \eqref{eq_well_sum_inv} is in $T_A$ follows from $e_A\left(\frac{z_r-z_i}{z_i-z_j}\right)\in \ze_{e_0}\C[[\ze_e\mid e\in E(A)]]$.
This does not hold for general $i,j,r$ as below.

\begin{minipage}[c]{.5\textwidth}
\centering
\begin{forest}
for tree={
  l sep=20pt,
  parent anchor=south,
  align=center
}
[$z_2-z_4$
[$z_1-z_2$,edge label={node[midway,left]{a}}[1][2]]
[$z_3-z_4$,edge label={node[midway,right]{b}}[3][4]]
]
\end{forest}
\captionof{figure}{}
\label{fig_no_ref}
\end{minipage}
\begin{minipage}[c]{.5\textwidth}
Since
\begin{align*}
e_{(12)(34)}\left(\frac{z_1-z_2}{z_3-z_4}\right)
&=\frac{\ze_a}{\ze_b} \notin \C[[\ze_a,\ze_b]]\\
e_{(12)(34)}\left(\frac{z_1-z_4}{z_1-z_2}\right)
&=\frac{1+\ze_a}{\ze_a} \notin \C[[\ze_a,\ze_b]],
\end{align*}
Lemma \ref{lem_well_inverse} does not hold.
\end{minipage}
\end{rem}

\begin{proof}[proof of Lemma \ref{lem_well_inverse})]
Since $s,r \in \Leaf(e_0)$ and $i \notin \Leaf(e_0)$, by Lemma \ref{lem_deg_formula}, $\deg_A^{e_0}(z_s-z_r)=1$ and $\deg_A^{e_0}(z_s-z_r)=0$.
Let $e\in E(A)$ satisfy $\deg_A^e(z_r-z_i)=1$. Then, it is clear that $\deg_A^e(z_s-z_r)=1$.
Hence, $e_A\left(\frac{z_s-z_r}{z_r-z_i}\right)\in \ze_{e_0}\C[[\ze_e\mid e\in E(A)]]$.

Since $\binom{n}{k}\binom{n-k}{l}=\frac{n(n-1)\cdots (n-k-l+1)}{k!l!}=\binom{n}{l}\binom{n-l}{k}$,
\begin{align}
\sum_{k,l\geq 0}\binom{n}{k}\binom{n-k}{l} x^k&=\binom{n}{l}\sum_{k\geq 0}\binom{n-l}{k} x^k.
\label{eq_x_well}
\end{align}
The formal power series \eqref{eq_x_well} converges absolutely and uniformly to $\binom{n}{l}(1+x)^{n-l}$
in the region $\{x\in \C\mid|x|<1\}$.
Hence, the right-hand-side of \eqref{eq_well_sum_inv} converges absolutely and uniformly
to $\binom{n}{l}\left(1+\frac{z_s-z_r}{z_r-z_i}\right)^{n-l}=\binom{n}{l}\left(\frac{z_s-z_i}{z_r-z_i}\right)^{n-l}$
if all $|\ze_e|$ $e\in E(A)$ are sufficiently small.
Hence, it coincides with $\binom{n}{l}
e_A\left(
\left(\frac{z_s-z_i}{z_r-z_i}\right)^{n-l}
\right)$ as formal power series.
Similarly, by Lemma \ref{identity}, \eqref{eq_well_sum_inv2} converges absolutely and uniformly
to 
\begin{align*}
\binom{n}{l}\left(\frac{z_s-z_r}{z_r-z_i} \right)^l\left(1+\frac{z_s-z_r}{z_r-z_i}\right)^{n-l}=
\binom{n}{l}\left(\frac{z_s-z_r}{z_r-z_i} \right)^l\left(\frac{z_s-z_i}{z_r-z_i}\right)^{n-l}
\end{align*}
if if all $|\ze_e|$ $e\in E(A)$ are sufficiently small. Hence, the assertion holds.
\end{proof}

%
%

%

\section{Global and formal conformal blocks}
\label{sec_D}
Let $V$ be a vertex operator algebra, $r\in \Z_{>0}$,
and $\{M_i\}_{i =0,1,\dots,r}$ $V$-modules.
Set 
\begin{align*}
\Mr=M_0^\vee \otimes M_1 \otimes \cdots \otimes M_r.
\end{align*}
In this section we introduce a $\Dr$-module $D_{\Mr}$ for $\{M_i\}_{i =0,1,\dots,r}$ (see also \cite{FB,NT}).
The analytic solution sheaf of this D-module is {\it the conformal block} in physics.
In section \ref{sec_D_coherent}, by refining the idea of Huang \cite{H2},
we will prove that $D_{\Mr}$ is a holonomic D-module on $\Xr$ and the analytic solution sheaf $\mathrm{Hom}_{\Dr}(D_{M_{[r]}},\Or^\an)$ is a locally constant sheaf,
where $\Or^\an$ is a sheaf of holomorphic functions.


Section \ref{sec_D_translation} shows that the conformal block can be modified to be
invariant under translations.
In section \ref{sec_D_local_coherent}, we prove a deeper result, a coherence of $D_{\Mr}$ on the singular sets $\{z_i=z_j\}$.

It follows from this that for any tree $A\in \Tr_r$, analytic solutions $\mathrm{Hom}(D_{\Mr},\Or^\an(U_A))$ on 
the simply-connected domain $U_A\subset \Xr$ has expansions as parenthesized formal power series, i.e., we obtain an isomorphic map (Theorem \ref{thm_expansion}):
\begin{align*}
e_A:\mathrm{Hom}(D_\Mr,\Or^\an(U_A)) \rightarrow \mathrm{Hom}(D_\Mr,T_A^\conv).
\end{align*}
The existence of this series expansion is the main result of this section and will play an important role when defining the operadic compositions in the next section.
In section \ref{sec_D_rotation}, we will discuss a rotation-scale symmetry of the conformal block.


\subsection{Definition of conformal block}
\label{sec_D_def}
For each $a \in V$, $i =1,\dots,r $ and $n\in \Z$,
define a linear map $a(n)_i:\Mr\rightarrow M_{[r]}$ by
the action of $a(n):M_i \rightarrow M_i$ on the $i$-th component.

On the $0$-th component, we consider two actions.
Define $a(n)_0:\Mr\rightarrow \Mr$ by
\eqref{eq_dual}, the dual module structure on $M_0^\vee$,
and
$a(n)_0^*:\Mr\rightarrow \Mr$ by
$a(n)^*:M_0^\vee \rightarrow M_0^\vee$ on the $0$-th component,
where 
\begin{align*}
(a(n)^*u)(\bullet)=u(a(n)\bullet) \text{ for } u\in M_0^\vee.
\end{align*}
\begin{rem}
\label{rem_dual}
Note that for $a,b\in V$, $n,m\in \Z$, $a(n)_0^*,b(m)_0^* \in \End M_0^\vee$ are actions on the dual vector space, thus $\left(b(m)a(n)\right)_0^*=a(n)_0^*b(m)_0^*$, that is,
\begin{align}
(a(n)_0^*b(m)_0^*f)(\bullet)=f(b(m)_0a(n)_0\bullet)\quad\text{ for }f\in M_0^\vee.
\label{eq_rem_dual}
\end{align}
\end{rem}


Let $\Or^\alg \otimes \Mr$ be an $\Or^\alg$-module,
where the $\Or^\alg$-module structure is defined by the multiplication on the left component.
Define a $\Dr$-module structure on $\Or^\alg \otimes \Mr$
by
\begin{align}
\pa_i \cdot (f\otimes \mr) = (\pa_i f) \otimes \mr + f\otimes L(-1)_i \mr
\label{eq_D_def}
\end{align}
for $i=1,\dots,r$, $f \in \Or^\alg$, $\mr \in \Mr$.


Let $N_{\Mr}$ be the $\Dr$-submodule of 
$\Or^\alg \otimes \Mr$ generated by the following elements:
\begin{align}
1 \otimes a(n)_i \mr - &\sum_{k \geq 0}\binom{n}{k} (-z_i)^k \otimes a(-k+n)_0^* \mr
+ \sum_{1\leq s\leq r, s \neq i} \sum_{k \geq 0} \binom{n}{k}(z_s-z_i)^{n-k}\otimes a(k)_s \mr,
\label{eq_ker1}
\end{align}
for all $\mr \in \Mr$, $a \in V$ and $i \in \{1,\dots,r\}$ and $n \in \Z$.
We note that the all above sums are finite
and $z_s^l, (z_p-z_q)^{k}$ is in $\Or^\alg$ for any $l \geq 0$ and $k\in \Z$.
Thus, the above definition is well-defined.

The following lemma is useful:
\begin{lem}
\label{N_contain}
Let $\mr \in \Mr$, $i \in \{1,\dots,r\}$ $a \in V$ and $\Delta \in \Z_{>0}$,
$b \in V_\Delta$, $n\in \Z_{\geq 0}$.
The following elements are contained in $N_{\Mr}$:
\begin{align*}
&1 \otimes a(-1)_i \mr - \sum_{k \geq 0}z_i^k \otimes a(-k-1)_0^* \mr
 - \sum_{s\in [r], s \neq i} \sum_{k \geq 0} (z_i-z_s)^{-k-1}\otimes a(k)_s \mr,\\
&1\otimes a(n)_0^* \mr - \sum_{s \in [r]} \sum_{k \geq 0} \binom{n}{k}z_s^{n-k}\otimes a(k)_s \cdot \mr,\\
&1\otimes b(-1)_0 \mr - \sum_{s\in [r]} \sum_{k,l \geq 0} 
\frac{(-1)^\Delta}{l!}\binom{2\Delta-l-1}{k}
z_s^{2\Delta-l-1-k} \otimes \Bigl(L(1)^l b\Bigr)(k)_s \mr.
\end{align*}
\end{lem}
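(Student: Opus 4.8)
The plan is to derive all three elements of Lemma \ref{N_contain} from the single family of generators \eqref{eq_ker1} of $N_\Mr$, using only the $\Dr$-module structure on $\Or^\alg\otimes\Mr$ together with the identities collected in Lemma \ref{lem_binom}, Lemma \ref{identity}, and the dual-module formula \eqref{eq_dual2}. First I would rewrite \eqref{eq_ker1} slightly: setting $n=-1$ and using $\binom{-1}{k}=(-1)^k$ from Lemma \ref{lem_binom} immediately turns the general generator into
\begin{align*}
1\otimes a(-1)_i\mr - \sum_{k\geq 0} z_i^k\otimes a(-k-1)_0^*\mr - \sum_{s\neq i}\sum_{k\geq 0}(z_i-z_s)^{-k-1}\otimes a(k)_s\mr,
\end{align*}
which is the first claimed element (the sign $(z_s-z_i)^{n-k}=(z_s-z_i)^{-1-k}=-(z_i-z_s)^{-1-k}$ flips, matching). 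So the first bullet is essentially a specialization and costs nothing.

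For the second bullet, the idea is to reverse the roles: rather than expanding $a(n)_i$ at the point $z_i$, I want the expression for $a(n)_0^*$ at infinity. Observe that the first term of \eqref{eq_ker1}, namely $1\otimes a(n)_i\mr$, does not itself lie in the submodule, but the combination $\sum_{k\geq 0}\binom{n}{k}(-z_i)^k\otimes a(-k+n)_0^*\mr$ does, modulo $N_\Mr$, equal $1\otimes a(n)_i\mr + (\text{terms }a(k)_s)$. The trick is that $a(n)_0^*$ acts only on the $0$-th component and is therefore independent of which insertion point $z_i$ we chose; summing the relations \eqref{eq_ker1} over the different $z_s$ with appropriate binomial weights, or rather re-expanding $\sum_k\binom{n}{k}(-z_i)^k a(n-k)_0^*$ in powers of $z_s$, one isolates $1\otimes a(n)_0^*\mr$. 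Concretely I would apply Lemma \ref{identity} to resum the double sum $\sum_{k,l}\binom{n}{k}\binom{n-k}{l}$ that arises when one shifts the expansion base point from $z_i$ to $z_s$; the geometric-series identity there is exactly what converts the $z_i$-centered expansion into the $z_s$-centered one. This yields the middle element as an element of $N_\Mr$.

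For the third bullet I would use the dual-module relation \eqref{eq_dual2}: for $b\in V_\Delta$, $b(-1)_0$ (the genuine dual-module action, as opposed to the transpose $b(-1)_0^*$) is expressed via \eqref{eq_dual2} as $b(\Delta_b-1-n)_0 = \sum_{l}\frac{(-1)^{\Delta_b}}{l!}(L(1)^l b)((\Delta_b-l)-1+n)_0^*$, specialized at the degree that makes the left side $b(-1)_0$, i.e. $n=\Delta_b-1$ gives $b(-1)_0 = \sum_l \frac{(-1)^\Delta}{l!}(L(1)^l b)(2\Delta-l-1)_0^*$ — here one uses $(\Delta-l)-1+(\Delta-1) = 2\Delta-l-2$, so I need to be careful about the exact shift, but the exponent $2\Delta-l-1$ appearing in the statement pins it down. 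Then I substitute the second bullet (already shown to be in $N_\Mr$) to replace each $(L(1)^lb)(2\Delta-l-1)_0^*\mr$ by its expansion $\sum_s\sum_k\binom{2\Delta-l-1}{k}z_s^{2\Delta-l-1-k}\otimes (L(1)^lb)(k)_s\mr$, and collect.

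The main obstacle I expect is purely bookkeeping: making the index shifts and the binomial resummations in the second bullet line up exactly, since the generator \eqref{eq_ker1} mixes a ``creation'' piece expanded around $z_i=0$ (the $(-z_i)^k$ term) and an ``annihilation'' piece expanded around $z_s=z_i$, and converting both into a clean expansion around $z_s=0$ requires Lemma \ref{identity} applied twice with the correct $d$. There is no conceptual difficulty — everything is forced by the axioms — but the verification that the infinitely many auxiliary $a(k)_s$-terms cancel (they do, because each such term is itself $0$ modulo $N_\Mr$ by another instance of \eqref{eq_ker1} with a shifted mode) is the delicate part. I would organize the write-up so that the first bullet is a one-line specialization, the second is the resummation lemma, and the third is \eqref{eq_dual2} plus the second bullet, keeping all the combinatorics inside references to Lemma \ref{lem_binom} and Lemma \ref{identity}.
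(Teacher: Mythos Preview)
Your overall architecture matches the paper's: the first element is the $n=-1$ specialization of \eqref{eq_ker1}; the third follows from the second via \eqref{eq_dual2} (with $n=\Delta$ in that formula, which gives $(L(1)^lb)(2\Delta-l-1)_0^*$; your arithmetic slip ``$n=\Delta_b-1$'' is harmless, and the paper also notes that $2\Delta-l-1\geq 0$ since $V$ is of CFT type, so the second bullet applies). The second element is where your description goes off the rails.

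The ``auxiliary $a(k)_s$-terms'' do \emph{not} cancel, and they are certainly not individually $0$ modulo $N_\Mr$: the relation \eqref{eq_ker1} re-expresses $a(k)_s\mr$ as a combination of other modes, it never annihilates it. If you recursively apply \eqref{eq_ker1} to each such term you get an infinite regress with no termination. What the paper actually does is take the $\Or^\alg$-linear combination
\[
\sum_{k\geq 0}\binom{n}{k}z_i^{\,n-k}\cdot\bigl[\text{generator \eqref{eq_ker1} with mode }k\text{ at }i\bigr]
\]
for a \emph{single fixed} $i$ (this sum is finite since $n\geq 0$). The resulting double sums involve $\binom{n}{k}\binom{k}{l}$, not $\binom{n}{k}\binom{n-k}{l}$. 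Lemma \ref{identity} with $p=-1$ collapses the $a(\cdot)_0^*$ part to exactly $1\otimes a(n)_0^*\mr$, and the same lemma collapses each $s\neq i$ part to $\sum_l\binom{n}{l}z_s^{\,n-l}\otimes a(l)_s\mr$. Together with the surviving $\sum_k\binom{n}{k}z_i^{\,n-k}\otimes a(k)_i\mr$ from the left-hand side, this is precisely the second claimed element (note the target sum runs over \emph{all} $s\in[r]$, including $s=i$). No cross-term cancellation is needed; the terms assemble, they do not vanish.
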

\begin{proof}
The first equality is obtained by substituting $n=-1$ into \eqref{eq_ker1}.
By \eqref{eq_ker1}, for any $i \in \{1,\dots,r\}$ and $n\geq 0$, we have
\begin{align*}
&\sum_{k \geq 0} \binom{n}{k} 
z_i^{n-k} \otimes a(k)_i \cdot \mr \\
&=\sum_{k,l \geq 0} \binom{n}{k} \binom{k}{l}
(-1)^l z_i^{n-k+l} \otimes a(-l+k)_0^* \mr
- \sum_{s \in [r], s \neq i} \sum_{k, l \geq 0}\binom{n}{k}
\binom{k}{l}z_i^{n-k}(z_s-z_i)^{k-l} \otimes a(l)_s\cdot \mr.
\end{align*}
By setting $l'=k-l$ and by Lemma \ref{identity}, we have
\begin{align*}
\sum_{k,l \geq 0} \binom{n}{k} \binom{k}{l}
(-1)^l z_i^{n-k+l} \otimes a(-l+k)_0^* \mr
&=
\sum_{k,l \geq 0} \binom{n}{k} \binom{k}{k-l}
(-1)^l z_i^{n-k+l} \otimes a(-l+k)_0^* \mr\\
&=
\sum_{k,l' \geq 0} \binom{n}{k} \binom{k}{l'}
(-1)^{k+l'} z_i^{n-l'} \otimes a(l')_0^* \mr\\
&=\sum_{l' \geq 0}(-1)^{l'}z_i^{n-l'} 
 \left(\sum_{k\geq 0} \binom{n}{k} \binom{k}{l'}(-1)^{k}  \right)\otimes a(l')_0^* \mr\\
&=a(n)_0^*\mr
\end{align*}
and similarly
\begin{align*}
\sum_{k, l \geq 0}\binom{n}{k}
\binom{k}{l}z_i^{n-k}(z_s-z_i)^{k-l} \otimes a(l)_s\cdot \mr
&=\sum_{l \geq 0}\binom{n}{l}
z_s^{n-l} \otimes a(l)_s\cdot \mr.
\end{align*}

Thus, the second element is in $N_{\Mr}$.
By \eqref{eq_dual2}, we have
\begin{align*}
b(-1)_0
&=\sum_{l \geq 0} \frac{(-1)^\Delta}{l!}\Bigl(L(1)^l b\Bigr)(2\Delta-l-1)_0^*.
\end{align*}
Since $V$ is a VOA of CFT type, $l$ runs through $l=0,\dots,\Delta$.
Hence, $2\Delta -l-1 \geq 0$. Thus, the third element is obtained from the second element.
\end{proof}

Set 
$$
D_{\Mr} = (\Or^\alg \otimes \Mr) / N_{\Mr},
$$
which is a $\Dr$-module.

\begin{rem}
\label{remark_D_KZ}
Let $\g$ be a finite-dimensional simple Lie algebra,
and $V_{\g,k}$ be the affine vertex operator algebra at level $k \in \C \setminus \Q$,
$M_0,M_1,\dots,M_r$ be Weyl modules which are the induced modules from finite-dimensional $\g$-modules.
Then, $D_\Mr$ is the Knizhnik--Zamolodchikov equations.
For another example, if $V$ is a Virasoro minimal model, then the corresponding D-module is the BPZ equation \cite{BPZ}.
\end{rem}
The following lemma is clear:
\begin{lem}
\label{lem_D_functor}
The assignment of $\Mr$ to $D_{\Mr}$ determines the following $\C$-linear functor:
\begin{align*}
D_{\bullet}:{\Vmodu}^\op \times {\Vmodu}^r &\rightarrow \underline{\Dr \text{-mod}},\\
(M_0,M_1,\dots,M_r) \quad &\mapsto \quad D_{\Mr}.
\end{align*}
\end{lem}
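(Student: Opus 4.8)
The plan is to construct the functor on morphisms directly from the dual and tensor product operations and then check that everything descends through the quotient by the $N$'s. Fix a morphism $(g,f_1,\dots,f_r)$ in $\Vmodu^\op\times\Vmodu^r$: here $g\colon M_0'\to M_0$ is a $V$-module homomorphism (the arrow is reversed because of the $\op$), and $f_i\colon M_i\to M_i'$ are $V$-module homomorphisms for $i=1,\dots,r$. Write $\Mr'=(M_0')^\vee\otimes M_1'\otimes\cdots\otimes M_r'$ and set $\phi=g^\vee\otimes f_1\otimes\cdots\otimes f_r\colon\Mr\to\Mr'$, where $g^\vee\colon M_0^\vee\to (M_0')^\vee$, $u\mapsto u\circ g$, is the restricted dual map (well defined since $g$ commutes with $L(0)$, hence is graded). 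The first step is to verify that $\phi$ intertwines, for every $a\in V$ and $n\in\Z$, the operators $a(n)_i$ for $i=1,\dots,r$, the operator $a(n)_0^*$, and the operators $L(-1)_i$ for $i=1,\dots,r$. For the slots $i\geq 1$ this is immediate because each $f_i$ is a module homomorphism and hence commutes with $a(n)$ and with $L(-1)$. For the $0$-th slot one computes, for $u\in M_0^\vee$ and $m'\in M_0'$,
\[
\big(a(n)_0^*(g^\vee u)\big)(m') = u\big(a(n)g(m')\big) = u\big(g(a(n)m')\big) = \big(g^\vee(a(n)_0^* u)\big)(m'),
\]
using that $g$ commutes with $a(n)$; and since the dual-module action $a(n)_0$ is expressed through the $a(n)_0^*$ by \eqref{eq_dual2}, $g^\vee$ also intertwines $a(n)_0$.

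Second, let $\Phi=\id_{\Or^\alg}\otimes\phi\colon\Or^\alg\otimes\Mr\to\Or^\alg\otimes\Mr'$. By construction $\Phi$ is $\Or^\alg$-linear, and by \eqref{eq_D_def} together with the fact that $\phi$ commutes with each $L(-1)_i$ it commutes with every $\pa_i$; hence $\Phi$ is a homomorphism of $\Dr$-modules. Third, apply $\Phi$ to a generator of $N_{\Mr}$ of the form \eqref{eq_ker1} attached to data $(\mr,a,i,n)$. Since $\Phi$ is $\Or^\alg$-linear and $\phi$ intertwines $a(\bullet)_i$ and $a(\bullet)_0^*$, the image of that generator is exactly the element \eqref{eq_ker1} for $\Mr'$ attached to $(\phi(\mr),a,i,n)$, which lies in $N_{\Mr'}$. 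As these elements generate $N_{\Mr}$ as a $\Dr$-module and $\Phi$ is $\Dr$-linear, we get $\Phi(N_{\Mr})\subseteq N_{\Mr'}$, so $\Phi$ descends to a $\Dr$-module homomorphism
\[
D_{(g,f_1,\dots,f_r)}\colon D_{\Mr}\longrightarrow D_{\Mr'}.
\]

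Finally, functoriality and $\C$-linearity are formal. The assignment $g\mapsto g^\vee$ is a contravariant $\C$-linear functor and $(f_1,\dots,f_r)\mapsto f_1\otimes\cdots\otimes f_r$ is a covariant $\C$-linear functor, so $\phi$, and therefore $\Phi$, depends functorially and linearly on the morphism; passing to the quotients by the $N$'s preserves these properties. In particular identity morphisms induce identities and composites induce composites, which yields the stated functor $D_\bullet$ together with Lemma \ref{lem_D_functor}. I do not expect any genuine obstacle here: the only points that require care are the bookkeeping around the contravariant $M_0$-slot — namely that a $V$-module map $g\colon M_0'\to M_0$ induces $g^\vee$ intertwining not merely the dual-module vertex operators but also the auxiliary $*$-action $a(n)_0^*$ appearing in the defining relations \eqref{eq_ker1} — and the check that $\Phi$ carries the chosen generators of $N_{\Mr}$ to generators of $N_{\Mr'}$ rather than merely into the submodule.
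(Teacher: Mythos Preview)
Your proof is correct; the paper gives no proof at all, stating only ``The following lemma is clear'' before the statement. You have spelled out in full the routine verification the paper omits---constructing $\Phi=\id\otimes(g^\vee\otimes f_1\otimes\cdots\otimes f_r)$, checking it is $\Dr$-linear via \eqref{eq_D_def}, and showing it sends generators \eqref{eq_ker1} of $N_{\Mr}$ to the corresponding generators of $N_{\Mr'}$---so your argument is exactly the expected one.
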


Let $\Or^\an$ be the sheaf of holomorphic functions on $\Xr$.
Denote the solution sheaf \\
$\mathrm{Hom}_{\Dr}(D_{\Mr},\Or^\an)$ by $\CB_{\Mr}$,
which is called {\it a conformal block} associated with $\Mr$.
Let $U\subset \Xr$ be an open subset and $C\in \CB_\Mr(U)$.
Let $\Mr\rightarrow \Or^\alg \otimes \Mr$ be the embedding defined by
$\mr\mapsto 1\otimes \mr$ for $\mr\in \Mr$.
Then, by $\Mr\rightarrow \Or^\alg \otimes \Mr \rightarrow D_{\Mr}$,
$C$ can be regarded as a linear map $\Mr \rightarrow \Or^\an$.
The following lemma characterizes a section of $\CB_\Mr(U)$ as a linear map $\Mr \rightarrow \Or^\an$:
\begin{lem}
\label{lem_block_linear}
A linear map
$$
C:\Mr \rightarrow \Or^\an(U),
$$
defines a section of $\CB_\Mr(U)$ if and only if it satisfies the following conditions:
\begin{enumerate}
\item[CB1)]
For any $i \in \{1,2,\dots,r\}$, $\mr\in \Mr$,
$$
C(L(-1)_i \cdot \mr)= \frac{d}{dz_i} C(\mr).
$$
\item[CB2)]
For any $i \in \{1,2,\dots,r\}$, $\mr\in \Mr$,
and $a \in V$, $n \in \Z$,
\begin{align*}
C(a(n)_i  \mr)
&=\sum_{k \geq 0}\binom{n}{k} (-z_i)^k C(a(-k+n)_0 \mr)
- \sum_{s \in [r], s \neq i} \sum_{k \geq 0}
\binom{n}{k}(z_s-z_i)^{n-k} C(u,a(k)_s \mr).
\end{align*}
\end{enumerate}
\end{lem}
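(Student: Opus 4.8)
The plan is to unwind the definition $\CB_\Mr(U)=\Hom_{\Dr}(D_\Mr,\Or^\an(U))$ through the presentation $D_\Mr=(\Or^\alg\otimes\Mr)/N_\Mr$. By the universal property of the quotient, a section of $\CB_\Mr$ over $U$ is precisely a $\Dr$-module map $\phi:\Or^\alg\otimes\Mr\rightarrow\Or^\an(U)$ that annihilates the submodule $N_\Mr$. Accordingly the argument splits into two steps: (i) identify the $\Dr$-module maps $\phi$ out of $\Or^\alg\otimes\Mr$ (before imposing vanishing on $N_\Mr$) with the linear maps $C:\Mr\rightarrow\Or^\an(U)$ satisfying (CB1), and (ii) show that, for such $\phi$, the condition $\phi(N_\Mr)=0$ is equivalent to (CB2). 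Combining the two gives the stated bijection.

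For step (i) I would use that $\Or^\alg\otimes\Mr$ is the free $\Or^\alg$-module on a $\C$-basis of $\Mr$, so that an $\Or^\alg$-linear map is the same data as its restriction $C=\phi|_{1\otimes\Mr}$, extended by $\phi(f\otimes\mr)=fC(\mr)$; conversely any $\C$-linear $C:\Mr\rightarrow\Or^\an(U)$ extends uniquely in this way. Since $\Dr$ is generated as a ring by $\Or^\alg$ together with the $\pa_i$, such a $\phi$ is $\Dr$-linear iff additionally $\phi\circ\pa_i=\pa_i\circ\phi$ for each $i$; by the Leibniz formula \eqref{eq_D_def} and $\Or^\alg$-linearity this identity need only be checked on the generators $1\otimes\mr$, where it reduces exactly to $C(L(-1)_i\mr)=\frac{d}{dz_i}C(\mr)$, i.e. (CB1). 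This establishes a bijection between $\Dr$-module maps $\phi:\Or^\alg\otimes\Mr\rightarrow\Or^\an(U)$ and linear maps $C$ obeying (CB1).

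For step (ii), fix such a $\phi$. Since $N_\Mr$ is by definition the $\Dr$-submodule generated by the elements \eqref{eq_ker1} and $\phi$ is $\Dr$-linear into a $\Dr$-module, $\phi$ annihilates $N_\Mr$ iff it annihilates each of the generators \eqref{eq_ker1}. Applying the $\Or^\alg$-linear map $\phi$ to \eqref{eq_ker1} (all sums being finite by (V1)) and solving for the leading term yields precisely the relation (CB2), and conversely (CB2) asserts exactly that $\phi$ kills every generator. Together with step (i) this proves that $C\mapsto\phi$ identifies $\CB_\Mr(U)$ with the set of linear maps $\Mr\rightarrow\Or^\an(U)$ satisfying (CB1) and (CB2).

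I expect the main difficulty to be bookkeeping rather than substance. First, one must keep the two $0$-th-component actions $a(n)_0$ and $a(n)_0^\ast$ carefully apart and translate correctly between the form of the generators \eqref{eq_ker1} and the form of (CB2); when these are written in different conventions the equivalence of the two follows from \eqref{eq_dual2}, equivalently from the additional elements of $N_\Mr$ produced in Lemma \ref{N_contain}. Second, the reduction ``$\Dr$-linearity of $\phi$ is detected on $1\otimes\Mr$ plus compatibility with the $\pa_i$ there'' is immediate from the product rule but should be spelled out, since it is the only place where the precise shape of the $\Dr$-action \eqref{eq_D_def} on $\Or^\alg\otimes\Mr$ enters. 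No analytic input about $U$ is needed: the statement is purely module-theoretic, and $\Or^\an(U)$ is a $\Dr$-module because $z_i\neq z_j$ on $U\subset X_r$.
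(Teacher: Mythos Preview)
Your proposal is correct and follows essentially the same route as the paper: extend $C$ to an $\Or^\alg$-linear map $\tilde C$, verify via the Leibniz rule and \eqref{eq_D_def} that (CB1) is equivalent to $\tilde C$ being $\Dr$-linear, and then observe that vanishing on the generators \eqref{eq_ker1} of $N_\Mr$ is exactly (CB2). The paper's proof is terser (it spells out only the $\Dr$-linearity check and leaves the quotient step implicit), while you make both steps explicit and also flag the $a(n)_0$ versus $a(n)_0^\ast$ discrepancy between \eqref{eq_ker1} and the stated (CB2), which the paper does not address.
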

\begin{proof}
The only if part is clear. Let $C:\Mr\rightarrow \Or^\an$ be a linear map satisfying (CB1).
Let $\tilde{C}: \Or \otimes \Mr \rightarrow \Or^\an(U)$ be the linear extension of $C$ as an $\Or$-module.
Then, $\tilde{C}$ is a $\Dr$-module homomorphism since
for $f\in \Or$ and $\mr\in \Mr$
$\pa_i \tilde{C}(f\otimes \mr)=\pa_i (f {C}(\mr))
=(\pa_if) {C}(\mr)+ f \pa_i{C}(\mr)
=(\pa_if) {C}(\mr)+ f {C}(L(-1)_i \mr)
=\tilde{C}(\pa_i f\otimes \mr)+ \tilde{C}(f \otimes L(-1)_i \mr)
=\tilde{C}(\pa_i \cdot (f\otimes \mr))$.
Hence, the assertion holds.
\end{proof}

%
%


\subsection{Coherence of conformal block}
\label{sec_D_coherent}
The following proposition is proved in \cite{H2} under the assumption that $M_0^\vee$ is $C_1$-cofinite and $r=3$.
We refine the proof but the idea is essentially due to Huang:
\begin{prop}
\label{prop_coherence}
Let $M_0,M_1,\dots,M_r\in \Vmodu$.
Assume that $M_1,\dots,M_r$ are $C_1$-cofinite $V$-modules and $M_0^\vee$ is a finitely generated $V$-module.
Then, $D_{\Mr}$ is a finitely generated $\Or^\alg$-module.
\end{prop}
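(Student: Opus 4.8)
The plan is to show $D_{\Mr}$ is generated over $\Or^\alg$ by the image of a finite-dimensional subspace of $\Mr$, exploiting the relations \eqref{eq_ker1} to ``push'' negative modes to the dual factor $M_0^\vee$ and positive modes onto the $M_i$. First I would use $C_1$-cofiniteness of each $M_i$ ($i\geq 1$): choose a finite-dimensional subspace $S_i\subset M_i$ such that $M_i = S_i + C_1(M_i)$; equivalently $M_i$ is spanned by elements of the form $a_1(n_1)\cdots a_k(n_k)s$ with $a_j\in V_+$, $n_j\leq -1$, and $s\in S_i$. The idea (following Huang \cite{H2}) is that in $D_{\Mr}$ one can trade a factor $a(-k-1)_i$ acting on the $i$-th slot (for $a\in V_+$, $k\geq 0$) for a combination, with coefficients in $\Or^\alg$, of $a(m)_0^*$ acting on the $0$-th slot and $a(l)_s$ acting on other slots $s\neq i$ with $l\geq 0$ — this is exactly the first relation in Lemma \ref{N_contain}. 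Since $a(l)_s$ with $l\geq 0$ and $a\in V_+$ lowers the $L(0)$-degree on $M_s$ (by Lemma \ref{lem_P}(6), the degree drops by $\Delta_a - l - 1$, and $\Delta_a\geq 1$), repeated application terminates: only finitely many such lowerings are possible before reaching the bottom of $M_s$, which by (M4) and finite-dimensionality of graded pieces is controlled.

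The key bookkeeping step is therefore a degree/filtration argument. On $\Mr = M_0^\vee\otimes M_1\otimes\cdots\otimes M_r$ introduce the $L(0)$-grading; because $M_0^\vee$ is finitely generated, there is $d_0\in\R$ such that $M_0^\vee$ is generated as a $V$-module by $\bigoplus_{\mathrm{Re}(h)\leq d_0}(M_0^\vee)_h$, and moreover by the second and third relations of Lemma \ref{N_contain} any $a(n)_0^*$ or $b(-1)_0$ acting on the $0$-slot can be rewritten modulo $N_{\Mr}$ in terms of $\Or^\alg$-coefficients times operators $a(l)_s$, $b(l')_s$ ($l,l'\geq 0$) acting on slots $s\in[r]$. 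Thus every generator of $M_0^\vee$ as a $V$-module, when hit in $D_{\Mr}$, can be replaced by lowering operations on the $M_i$. Combining: starting from an arbitrary $1\otimes(u_0\otimes m_1\otimes\cdots\otimes m_r)$, first use the $C_1$-cofiniteness of each $M_i$ to reduce each $m_i$ to a spanning expression in raising operators on $S_i$; then use the relations \eqref{eq_ker1}/Lemma \ref{N_contain} to move all raising operators off the $M_i$, turning them into $\Or^\alg$-coefficients times lowering operators that act on the $M_i$ (and on $M_0^\vee$, then moved back down again). A careful induction on a suitable total degree — say, the sum over $i\geq 1$ of the $L(0)$-degree of the $M_i$-component minus its bottom, plus the ``height'' of the $M_0^\vee$-component above its generating degree — shows the process terminates, so $D_{\Mr}$ is spanned over $\Or^\alg$ by the images of $u_0^{(j)}\otimes s_1\otimes\cdots\otimes s_r$ where $\{u_0^{(j)}\}$ is a finite set of generators of $M_0^\vee$ and $s_i$ ranges over a basis of $S_i$. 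That is a finite generating set.

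\textbf{Main obstacle.}
The hard part is making the termination of this reduction rigorous: a single application of a relation in Lemma \ref{N_contain} simultaneously creates operators on \emph{several} slots (the $0$-slot and all $s\neq i$), and re-expressing the $0$-slot operators via the second/third relations feeds operators back onto the $M_i$-slots, so one must exhibit a genuinely decreasing complexity measure that is not spoiled by this interaction. The resolution is that every operator produced on an $M_i$-slot ($i\geq 1$) is a \emph{non-negative} mode of an element of $V_+$, hence strictly lowers $L(0)$-degree there (Lemma \ref{lem_P}(6)), while the $\Or^\alg$-coefficients absorb all the $z_i$, $(z_s-z_i)^{\pm}$ factors; meanwhile the $M_0^\vee$-component only ever needs to be expanded down to its finitely many generators. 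One must check (as in \cite[H2]{H2}) that the $L(0)$-degrees appearing on each $M_i$ stay bounded below during the recursion — this uses (M4) and finite-dimensionality of the $(M_i)_h$ — so that the relevant filtration has finite length and Noetherian induction applies. Once the finite generating set over $\Or^\alg$ is produced, the proposition follows, and it is this finite generation that feeds into holonomicity and the local-constancy of $\CB_{\Mr}$ asserted in the surrounding text.
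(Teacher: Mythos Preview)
Your strategy is the same as the paper's and uses exactly the right ingredients (the three relations of Lemma \ref{N_contain}, $C_1$-cofiniteness of the $M_i$, and finite generation of $M_0^\vee$), and you correctly locate the one genuine difficulty. However, your proposed resolution has a gap.

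The single ``total degree'' measure you suggest --- roughly $T+\max(0,S-N_0)$, with $T$ the summed height on the $M_i$ slots and $S$ the height on the $M_0^\vee$ slot --- does \emph{not} decrease monotonically. When you rewrite $a(\Delta_a-1+k)_0^*\mr$ (with $k\geq 1$, $\mr$ at level $(S-k,T)$) via the second relation of Lemma \ref{N_contain}, the resulting terms $a(l)_s\mr$ sit at level $(S-k,\,T+\Delta_a-l-1)$: the $S$-coordinate drops by $k$, but the $T$-coordinate can \emph{rise} by as much as $\Delta_a-1$, and $\Delta_a$ is unbounded. No weighted sum $T+cS$ survives this. (Relatedly, your claim that a non-negative mode of $V_+$ ``strictly lowers $L(0)$-degree'' on an $M_s$-slot is false as stated: $a(0)$ with $a\in V_1$ preserves degree, and in the second relation the modes $a(l)_s$ with $0\leq l<\Delta_a-1$ genuinely raise the $T$-coordinate.)

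The paper's fix is to separate the two gradings and run a \emph{double} induction: first, for all $S\leq N_0$ simultaneously, induct on $T$ alone (here only the first relation of Lemma \ref{N_contain} is used, and every output term has strictly smaller $T$ and $S'\leq S$); then, for $S>N_0$, induct on $S$ alone (using the second relation, which strictly lowers $S$ while letting $T$ float --- harmless, since the outer induction hypothesis already covers \emph{all} $T$ at lower $S$). Equivalently, your argument goes through if you replace the single numerical measure by the lexicographic order on $(S,T)$; that is the missing idea.
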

\begin{proof}
By (M4), any module in $\Vmodu$ is a finite direct sum of modules of the form:
\begin{align*}
N=\bigoplus_{k \geq 0}N_{\Delta+k}
\end{align*}
for some $\Delta \in \C$.
Since the functor $D:\Vmodu^\op\times \Vmodu^r\rightarrow \underline{\Dr \text{-mod}}$
commutes with a finite direct sum,
we may assume that $M_0,M_1,\dots,M_r$ satisfy
\begin{align*}
M_i=\bigoplus_{k \geq 0}(M_i)_{\Delta_i+k}\;\;\;\text{ and }\;\;\;
M_0=\bigoplus_{k \geq 0}(M_0)_{\Delta'+k}
\end{align*}
for some $\Delta_1,\dots,\Delta_r,\Delta' \in \C$.

Define a $\Z_{\geq 0}^2$-grading on $\Mr$ by
\begin{align}
(\Mr)_{S,T}=\bigoplus_{\substack{k_1,\dots,k_r\geq 0\\k_0+k_1+\dots+k_r=T}}
(M_0^\vee)_{\Delta' + S}\otimes \bigotimes_{s=1}^r (M_s)_{\Delta_s+k_s}.
\label{eq_Mr_T}
\end{align}

By the assumption, there exists $N_0,N_1,\dots,N_r \in \Z_{\geq 0}$ such that:
$M_0^\vee$ is generated by $\bigoplus_{N_0 \geq k_0 \geq 0} (M_0^\vee)_{\Delta'+k_0}$ as a $V$-module
and $M_s = C_1(M_s)+\bigoplus_{N_s \geq n \geq 0} (M_s)_{\Delta_s+n}$
for any $s \in [r]$. Set
\begin{align}
N=N_1+\dots+N_r.
\label{eq_N_sum}
\end{align}

Let $W$ be a $\Or^\alg$-module generated by 
$\bigoplus_{\substack{N_0 \geq s\geq 0\\ N \geq t \geq 0 }}
 (\Mr)_{s,t}$.
We will show that 
\begin{align}
\Or^\alg \otimes \Mr_{S,T} \subset  W + N_\Mr\label{eq_induction}
\end{align}
for any $S,T\geq 0$ by a double induction on $S$,
which implies the assertion since $\bigoplus_{\substack{N_0 \geq s\geq 0\\ N \geq t \geq 0 }}
 (\Mr)_{s,t}$ is finitely dimensional.

First, we show that if $S \leq N_0$, then \eqref{eq_induction} holds for all $T\geq 0$  by induction on $T$.
If $T \leq N$, then the claim is clear.
Suppose that $T >N$ and the assertion hold for any $T-1$ or less.
Let $v \in (\Mr)_{S,T}$. 
Then, $v$ is a linear sum of the following vectors:
\begin{itemize}
\item
$a(-1)_s \mr$
\end{itemize}
for some $\Delta \geq 1$, $a\in V_\Delta$, $s\in [r]$ and $\mr \in (\Mr)_{S,T-\Delta}$.

Since $a(-k-1)_0^*\mr \in (\Mr)_{S-\Delta-k,T-\Delta}$
and $a(k)_s \mr \in (\Mr)_{S,T-k-1}$ for any $k \geq 0$
and $s\in [r]$,
by Lemma \ref{N_contain} and the induction hypothesis,
$a(-1)_s \mr \in W + N_\Mr$.
Hence, \eqref{eq_induction} holds for any $T\geq 0$ and $S\leq N_0$.

Let $S >N_0$ and assume that \eqref{eq_induction} holds for any $S-1$ or less and any $T\geq 0$.
Let $v \in (\Mr)_{S,T}$. 
Then, $v$ is a linear sum of the following vectors:
\begin{itemize}
\item
$a(\Delta-1+k)_0^* \mr$
\end{itemize}
for some $\Delta \geq 1$, $a\in V_\Delta$, $k\geq 1$ and $\mr \in (\Mr)_{S-k,T}$.
Since $a(k)_s \mr \in (\Mr)_{S-k,T+\Delta_a-k-1}$,
by Lemma \ref{N_contain}, 
$a(\Delta-1+k)_0^* \mr \in W + N_\Mr$ by the induction hypothesis.
Hence, the assertion follows.
\end{proof}

If $D_\Mr$ is a finitely generated $\Or^\alg$-module, then, by \cite[Theorem 1.4.10]{HTT}, 
$D_{\Mr}$ is a locally free $\Or^\alg$-module of finite rank as a $\Dr$-module.
Hence, we have:
\begin{cor}
\label{equi_dim}
For any $M_0,M_1,\dots,M_r \in \Vmodf$,
$D_{\Mr}$ is a finitely generated $\Or^\alg$-module.
In particular, the holomorphic solution sheaf $\mathrm{Hom}(D_{\Mr},\Or^\an)$ is a locally constant sheaf of finite rank on 
$\Xr$ and the rank is equal to the dimension of the vector space $\mathrm{Hom}(D_{\Mr},\Or^\an(U))$
for any connected simply-connected open subset $U \subset X_r$.
\end{cor}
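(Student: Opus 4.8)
The plan is to obtain the corollary as a formal consequence of Proposition \ref{prop_coherence} together with the standard dictionary between $\mathcal{O}$-coherent $\mathcal{D}$-modules and flat vector bundles. First I would unpack the hypothesis: if $M_0,\dots,M_r\in\Vmodf$, then by Definition \ref{def_category} the modules $M_1,\dots,M_r$ are $C_1$-cofinite and $M_0^\vee$ is a finitely generated $V$-module, which are precisely the hypotheses of Proposition \ref{prop_coherence}. Hence $D_\Mr$ is a finitely generated $\Or^\alg$-module, and by the criterion recalled just above (a $\Dr$-module is holonomic if and only if it is finitely generated over $\Or^\alg$, see \cite{KKK}) it is holonomic on $\Xr$.

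For the ``in particular'' clause, the plan is the following. Since $\Xr$ is a smooth affine variety and $D_\Mr$ is a $\Dr$-module which is coherent as an $\Or^\alg$-module, it is automatically locally free of finite rank over $\Or^\alg$; that is, $D_\Mr$ is a vector bundle on $\Xr$ equipped with the integrable connection $\nabla$ determined by the action of the $\pa_i$ via \eqref{eq_D_def}. Write $d=\mathrm{rank}_{\Or^\alg}D_\Mr$. Analytifying, $D_\Mr^\an$ is a rank-$d$ holomorphic vector bundle with flat connection on the complex manifold $\Xr$, and a $\Dr$-linear map $D_\Mr\to\Or^\an$ is the same datum as an $\Or^\an$-linear map $D_\Mr^\an\to\Or^\an$ compatible with the connections, i.e. a horizontal section of the dual bundle $(D_\Mr^\an)^\vee$ for the dual connection. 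By the local existence-and-uniqueness theorem for flat connections (Frobenius integrability), the sheaf of such horizontal sections --- which is exactly $\CB_\Mr=\Hom_{\Dr}(D_\Mr,\Or^\an)$ --- is a locally constant sheaf of rank $d$ on $\Xr$. Finally, on any connected simply-connected open $U\subset\Xr$ a locally constant sheaf is constant, so $\dim_\C\Hom_{\Dr}(D_\Mr,\Or^\an(U))=d$, independently of $U$.

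I do not expect a genuine obstacle here: all the substantive content is in Proposition \ref{prop_coherence}, already proved, and the remainder is the textbook correspondence between $\mathcal{O}$-coherent $\mathcal{D}$-modules and local systems. The only points deserving a line of care are (i) that $\Or^\alg$-coherence forces local freeness, which uses smoothness of $\Xr$, and (ii) the harmless passage from the algebraic $\Hom$ to the analytic one, for which one notes that $\Hom_{\Dr}(D_\Mr,\Or^\an)$ depends only on the analytification $D_\Mr^\an=\Or^\an\otimes_{\Or^\alg}D_\Mr$ since $\Or^\an$ is a sheaf of analytic functions and $D_\Mr$ is $\Or^\alg$-coherent. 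Neither of these invokes the full algebraic Riemann--Hilbert correspondence, so the proof stays short.
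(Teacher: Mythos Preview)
Your proposal is correct and follows the same approach as the paper: the paper simply records that a $\Dr$-module is holonomic if and only if it is finitely generated over $\Or^\alg$ (citing \cite{KKK}) and states the corollary without further proof, so all the content is indeed in Proposition~\ref{prop_coherence}. Your expansion of the ``in particular'' clause via the flat-bundle/local-system dictionary is the standard justification the paper leaves implicit.
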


\begin{rem}
Let $M(\la)$ be a (parabolic) Verma module of an affine VOA which has infinitely many singular vectors. Then, $M(\la)$ is $C_1$-cofinite, but $M(\la)^\vee$ is not finitely generated, and the D-module is not holonomic.
\end{rem}
%
%
%
%
%

For a locally constant sheaf, we can define a representation of the fundamental groupoid $\Pi_1(\Xr)$,
which plays an essential role in our construction of a lax 2-action of $\CPaB$.
We will show that this representation is natural with respect to $M_0,M_1,\dots,M_r \in \Vmodf$.
Let $U,V \subset \Xr$ be connected simply-connected open subsets, $q_0 \in U$, $q_1\in V$
and $\ga:[0,1]\rightarrow \Xr$ be a continuous map such that $\ga(0)=q_0$ and $\ga(1)=q_1$.
For any $C\in \CB_\Mr(U)$, 
let $A_\ga C$ be the analytic continuation of $C$ along the path $\ga$.
Then, $A_\ga$ depends on the homotopy classes of $\ga$ and thus gives a map
\begin{align*}
A:\mathrm{Hom}_{\Pi_1(\Xr)}(q_0,q_1) \rightarrow \mathrm{Hom}_{\Vect}(\CB_\Mr(U),\CB_\Mr(V)),
\quad \ga\mapsto A_\ga.
\end{align*}
Then, we have:
\begin{prop}
\label{prop_functor_monodromy}
Let $N_0,N_1,\dots,N_r \in \Vmodf$, and $f_i: N_i \rightarrow M_i$, $g:M_0\rightarrow N_0$ be $V$-module homomorphisms.
Then, the following diagram commutes:
\begin{align*}
\begin{array}{ccc}
\CB_\Mr(U) & \overset{A_\ga}{\rightarrow}&\CB_\Mr(V)\\
{}_{(g_*,f_1^*,\dots,f_r^*)}\downarrow &&\downarrow_{(g_*,f_1^*,\dots,f_r^*)}\\
\CB_{N_{[0;r]}}(U) & \overset{A_\ga}{\rightarrow}&\CB_{N_{[0;r]}}(V),
\end{array}
\end{align*}
where $(g_*,f_1^*,\dots,f_r^*)$ is defined by
 functoriality in Lemma \ref{lem_D_functor}.
\end{prop}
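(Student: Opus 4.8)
The plan is to reduce the statement to two independent facts: (i) the vertical maps $(g_*, f_1^*, \dots, f_r^*)$ are morphisms of sheaves of solutions, i.e. they commute with the \emph{restriction} maps of the locally constant sheaves $\CB_\Mr$ and $\CB_{N_{[0;r]}}$; and (ii) analytic continuation $A_\ga$ along a path is, by its very construction for locally constant sheaves, an isomorphism built out of restriction maps along a chain of connected simply-connected opens covering $\ga([0,1])$. Granting (i) and (ii), the square commutes because a morphism of locally constant sheaves intertwines the monodromy/analytic-continuation actions of $\Pi_1(\Xr)$; this is a purely formal consequence once (i) is in place.

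First I would make (i) precise. By Lemma \ref{lem_D_functor}, the assignment $\Mr \mapsto D_\Mr$ is a $\C$-linear functor $\Vmodu^\op \times \Vmodu^r \to \underline{\Dr\text{-mod}}$, so the $V$-module maps $f_i: N_i \to M_i$ and $g: M_0 \to N_0$ induce a $\Dr$-module homomorphism $D_{(g,f_1,\dots,f_r)}: D_{N_{[0;r]}} \to D_\Mr$ (note the variance: contravariant in the $0$-th slot, covariant in the rest — this matches $g: M_0 \to N_0$ producing the arrow on dual modules the correct way). Applying the contravariant functor $\Hom_{\Dr}(-, \Or^\an(U))$ to this homomorphism yields precisely the vertical map $(g_*, f_1^*, \dots, f_r^*): \CB_\Mr(U) \to \CB_{N_{[0;r]}}(U)$, for every open $U$. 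Since $\Hom_{\Dr}(-,\Or^\an(-))$ is functorial in the open set $U$ (restriction of holomorphic functions), this family of maps is a morphism of sheaves $\CB_\Mr \to \CB_{N_{[0;r]}}$; in particular it commutes with all restriction maps $\CB_\Mr(U) \to \CB_\Mr(W)$ for $W \subset U$. Concretely, on a solution $C: \Mr \to \Or^\an(U)$ viewed as a linear map (Lemma \ref{lem_block_linear}), the vertical map sends $C$ to $C \circ (g \otimes f_1 \otimes \cdots \otimes f_r): N_{[0;r]} \to \Or^\an(U)$, and one checks (CB1), (CB2) for the composite directly from the $V$-module axioms for $f_i, g$ — but invoking the functor is cleaner.

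Next, (ii): by Corollary \ref{equi_dim}, both $\CB_\Mr$ and $\CB_{N_{[0;r]}}$ are locally constant sheaves of finite rank on $\Xr$. Choose $0 = t_0 < t_1 < \dots < t_n = 1$ and connected simply-connected opens $U = W_0, W_1, \dots, W_n = V$ with $\ga([t_{k-1}, t_k]) \subset W_{k-1} \cap W_k$, small enough that $\CB_\Mr$ and $\CB_{N_{[0;r]}}$ are constant on each $W_k$; then $A_\ga$ is by definition the zig-zag composite of the isomorphisms $\CB_\Mr(W_{k-1}) \xrightarrow{\sim} \CB_\Mr(W_{k-1}\cap W_k) \xleftarrow{\sim} \CB_\Mr(W_k)$ (restrictions, the backward arrows being invertible because the sheaf is locally constant and the intersections connected). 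Because the vertical morphism of sheaves from step (i) commutes with every restriction map and with inverses of the invertible ones, it commutes with each such zig-zag, hence with the full composite $A_\ga$; and the whole thing depends only on the homotopy class of $\ga$ by the standard monodromy argument for locally constant sheaves. This gives the claimed commuting square.

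The main obstacle is not conceptual but bookkeeping: getting the variance right so that a $V$-module map $g: M_0 \to N_0$ genuinely induces the arrow $\CB_\Mr(U) \to \CB_{N_{[0;r]}}(U)$ (through the dual modules, where $g$ induces $g^\vee: N_0^\vee \to M_0^\vee$ and hence a map $N_{[0;r]} \to M_{[r]} \otimes \cdots$, on which $D_{N_{[0;r]}} \to D_\Mr$ is built), and verifying that this induced $\Dr$-module map indeed sends the submodule $N_{N_{[0;r]}}$ into $N_\Mr$ so it descends to the quotients $D$. That last point — compatibility of the generating relations \eqref{eq_ker1} with $g^\vee, f_1, \dots, f_r$ — is where one actually uses that $f_i, g$ are $V$-module homomorphisms, i.e. intertwine all the $a(n)$; it is a direct check but it is the real content. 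Once Lemma \ref{lem_D_functor} is accepted as stated (it already packages this check), the proposition is essentially immediate, so in the write-up I would simply invoke Lemma \ref{lem_D_functor} together with the sheaf-theoretic description of $A_\ga$.
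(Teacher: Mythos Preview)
Your proposal is correct and takes essentially the same approach as the paper. The paper's proof is much terser: it simply observes that $\bigl((g_*,f_1^*,\dots,f_r^*)C\bigr)(n_{[0;r]}) = C(g^*u, f_1(n_1),\dots,f_r(n_r))$ is literally one of the component holomorphic functions of $C$, so its analytic continuation along $\gamma$ is tautologically read off from that of $C$ --- your sheaf-theoretic zig-zag argument is the same observation unpacked.
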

\begin{proof}
Let $n_{[0;r]}=u\otimes n_1\otimes n_2\otimes\cdots\otimes n_r \in N_{[0;r]}$ and $C \in \CB_\Mr(U)$.
Then, 
\begin{align*}
\left((g_*,f_1^*,\dots,f_r^*)C\right)(n_{[0;r]})=C(g^* u,f_1(n_1),\dots,f_r(n_r)).
\end{align*}
Hence, it is clear that $A_\ga \left((g_*,f_1^*,\dots,f_r^*)C\right)$ coincides
with $(g_*,f_1^*,\dots,f_r^*)\left(A_\ga C\right)$.
\end{proof}

\subsection{Fixing translation symmetry}
\label{sec_D_translation}
Let $M_0,M_1,\dots,M_r \in \Vmodu$.
In this section, we identify sections of the conformal block $\CB_\Mr$ and linear maps 
$C: \Mr\rightarrow \Or^\an(U)$ by Lemma \ref{lem_block_linear} for $U\subset \Xr$.
We note that $L(-1)_0^*=L(1)_0$ is locally nilpotent on $\Mr$.
Hence, for any $t\in \{1,\dots,r\}$ and $C\in \CB_\Mr(U)$,
\begin{align*}
C(\exp(-L(-1)_0^* z_t)\mr)=\sum_{k\geq 0} \frac{(-1)^k z_t^k}{k!}C(L(1)_0^k\mr)
\end{align*}
is a finite sum and defines an element in $\Or^\an(U)$.
We start from the following lemma:
\begin{lem}
\label{lem_translation_exp}
Let $t\in \{1,\dots,r\}$ and $C\in \CB_\Mr(U)$.
Then, 
\begin{align*}
(\pa_1+\dots+\pa_r) C(\exp(-L(-1)_0^* z_t)\mr) =0
\end{align*}
for any $\mr \in \Mr$.
\end{lem}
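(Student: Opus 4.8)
The plan is to reduce the statement about the operator $\pa_1 + \dots + \pa_r$ applied to $C(\exp(-L(-1)_0^* z_t)\mr)$ to the two defining relations (CB1) and (CB2) of a conformal block from Lemma \ref{lem_block_linear}. First I would use the Leibniz rule: since $\exp(-L(-1)_0^* z_t)$ depends only on $z_t$ (and $L(-1)_0^* = L(1)_0$ is locally nilpotent, so the sum is finite), for each $i \in \{1,\dots,r\}$ we have
\begin{align*}
\pa_i C(\exp(-L(-1)_0^* z_t)\mr)
= C(L(-1)_i \exp(-L(-1)_0^* z_t)\mr)
- \delta_{i,t}\, C(L(-1)_0^* \exp(-L(-1)_0^* z_t)\mr),
\end{align*}
where the first term uses (CB1) and the second term is the explicit $z_t$-derivative of the exponential factor. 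Summing over $i$, the key identity I would need is that $\sum_{i=1}^r L(-1)_i$ acting on $\Mr$ equals $L(-1)_0^*$ on the image; more precisely, applying $C$, that $\sum_{i=1}^r C(L(-1)_i \mr') = C(L(-1)_0^* \mr')$ for all $\mr' \in \Mr$ (here with $\mr' = \exp(-L(-1)_0^* z_t)\mr$). Granting this, the sum over $i$ of the first terms collapses to $C(L(-1)_0^* \exp(-L(-1)_0^* z_t)\mr)$, which exactly cancels the single $i=t$ second term, giving $0$.

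The remaining work is therefore to establish the translation Ward identity $\sum_{i=1}^r C(L(-1)_i \mr') = C(L(-1)_0^* \mr')$. I would derive this from (CB2) by specializing $a = \om$, $n = 0$ — that is, $\om(0) = L(-1)$. Applying (CB2) with the index $i$ running over $\{1,\dots,r\}$ and with $n=0$: the term $\sum_{k\geq 0}\binom{0}{k}(-z_i)^k C(\om(-k)_0 \mr')$ collapses to $C(\om(0)_0\mr') = C(L(-1)_0 \mr')$, and the cross terms $\sum_{s\neq i}\sum_{k\geq 0}\binom{0}{k}(z_s-z_i)^{-k} C(\om(k)_s\mr')$ collapse to $\sum_{s\neq i} C(\om(0)_s\mr') = \sum_{s\neq i} C(L(-1)_s\mr')$. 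So (CB2) with $n=0$, $a=\om$ gives $C(L(-1)_i\mr') = C(L(-1)_0\mr') - \sum_{s\neq i} C(L(-1)_s\mr')$, i.e. $\sum_{s=1}^r C(L(-1)_s \mr') = C(L(-1)_0\mr')$, independent of $i$. Here one must be careful that $L(-1)_0$ (the dual-module action via \eqref{eq_dual}) relates to $L(-1)_0^* = L(1)_0$ correctly: by \eqref{eq_dual2} applied to $a = \om \in V_2$ and the relevant index, $\om(0)_0$ on $M_0^\vee$ coincides with $L(1)_0 = L(-1)_0^*$ on $\Mr$ up to the sign bookkeeping there, so $C(L(-1)_0\mr') = C(L(-1)_0^*\mr')$; I would check this substitution explicitly (it is the $\Delta_\om = 2$, $n$-shifted case of \eqref{eq_dual2}).

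The main obstacle I anticipate is purely the careful identification of the various $L(-1)$-type operators on the $0$-th (dual) component: distinguishing $L(-1)_0$ (the genuine dual-module action, which enters (CB2)) from $L(-1)_0^* = L(1)_0$ (the naive transpose action, which enters the definition of the translated section), and verifying via \eqref{eq_dual} / \eqref{eq_dual2} that when fed into $C$ they produce the same function — this is where a sign or a shift in the exponent could easily go wrong. Everything else is a short manipulation of finite sums. I would organize the writeup as: (i) state and prove the Ward identity $\sum_i C(L(-1)_i \bullet) = C(L(-1)_0^* \bullet)$ as a preliminary step from (CB2); (ii) apply Leibniz plus (CB1) to compute $(\pa_1+\dots+\pa_r)C(\exp(-L(-1)_0^* z_t)\mr)$ and observe the cancellation.
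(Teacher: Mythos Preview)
Your proposal is correct and follows essentially the same route as the paper: first derive the translation Ward identity $\sum_{i=1}^r C(L(-1)_i\,\bullet)=C(L(-1)_0^*\,\bullet)$ from the defining relation with $a=\om$, $n=0$, then combine the Leibniz rule with (CB1) to see the cancellation. One simplification: your extra step of identifying $L(-1)_0$ with $L(-1)_0^*$ via \eqref{eq_dual2} is not needed. The relation (CB2), read off from the generators \eqref{eq_ker1} of $N_{\Mr}$, already carries $a(-k+n)_0^*$ (the transpose action) rather than the dual-module action $a(-k+n)_0$; the paper's own proof invokes \eqref{eq_ker1} directly and obtains $C(\om(0)_0^*\mr)$ with no intermediate identification. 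So you may skip the $\Delta_\om=2$ computation in \eqref{eq_dual2} entirely, which also sidesteps the (genuine) issue that $L(1)\om$ need not vanish.
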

\begin{proof}
By \eqref{eq_ker1}, $C(\om(0)_t \mr)=
C(\om(0)_0^* \mr)-\sum_{1\leq s\leq r, s\neq t}C(\om(0)_s \mr)$.
Hence,
\begin{align}
C(L(-1)_0^* \mr)&= \sum_{1\leq s\leq r}C(L(-1)_s \mr),\label{eq_cyclic_translation}
\end{align}
which implies, by (CB1),
\begin{align*}
(\pa_1+&\dots+\pa_r) C(\exp(-L(-1)_0^* z_t)\mr) \\
&= -C(\exp(-L(-1)_0^* z_t)L(-1)_0^* \mr) 
+  \sum_{1\leq s\leq r} C(\exp(-L(-1)_0^* z_t) L(-1)_s \mr) =0.
\end{align*}
\end{proof}
Set 
\begin{align*}
\Ort^\an(U)=\{f\in \Or^\an\mid (\pa_1+\dots+\pa_r)f=0\},
\end{align*}
the sheaf of translation invariant holomorphic functions.
Then, Lemma \ref{lem_translation_exp}
implies that $C(\exp(-L(-1)_0^* z_t)\mr)$ is in $\Ort^\an(U)$ for any $\mr\in \Mr$.


Using this fact, we can rewrite the characterization of conformal blocks (CB1) and (CB2) to be translation invariant.
However, as can be seen from Lemma \ref{lem_translation_exp}, this requires that $t\in \{1,\dots,r\}$ be fixed and special.

Let $\CB_{\Mrt}(U)$ be a vector space consisting of linear maps
$$
C_t:\Mr \rightarrow \Ort^\an(U),
$$
such that:
\begin{enumerate}
\item[CBT1)]
For any $i \in \{1,2,\dots,r\}\setminus \{t\}$, $\mr\in \Mr$,
$$
C_t(L(-1)_i \cdot \mr)= \frac{d}{dz_i} C_t(\mr).
$$
\item[CBT2)]
For any $i \in \{1,2,\dots,r\}$, $\mr\in \Mr$,
and $a \in V$, $n \in \Z$,
\begin{align*}
C_t(a(n)_i  \mr)
&=\sum_{k \geq 0}\binom{n}{k} (z_t-z_i)^k C_t(a(-k+n)_0 \mr)
- \sum_{s \in [r], s \neq i} \sum_{k \geq 0}
\binom{n}{k}(z_s-z_i)^{n-k} C_t(u,a(k)_s \mr).
\end{align*}
\end{enumerate}

Define a linear map
$Q_t:\CB_\Mr(U) \rightarrow \CB_\Mrt(U)$ by
$Q_t(C)(\bullet)=C(\exp(-L(-1)_0^*z_t)\bullet)$ for $C\in \CB_\Mr(U)$.
Then, we have:
\begin{lem}
\label{lem_translation_isomorphism}
The linear map $Q_t$ is well-defined and gives an isomorphism of sheaves.
\end{lem}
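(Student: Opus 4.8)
The plan is to produce an explicit inverse to $Q_t$ and to reduce everything to checking that $Q_t$ and that inverse are well-defined. Define $P_t:\CB_\Mrt(U)\rightarrow \CB_\Mr(U)$ by $P_t(C_t)(\bullet)=C_t(\exp(L(-1)_0^* z_t)\bullet)$. Since $L(-1)_0^*=L(1)_0$ is locally nilpotent on $\Mr$, both operators $\exp(\pm L(-1)_0^* z_t)$ are well-defined on $\Mr$ with coefficients in $\C[z_t]$, they are mutually inverse, and (being powers of a single operator) they commute with $L(-1)_0^*$; so once $Q_t$ and $P_t$ are shown to land in the stated spaces, the relations $P_t\circ Q_t=\id$ and $Q_t\circ P_t=\id$ are immediate. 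The maps are given by the same formula over every open set and therefore commute with restriction, so they are morphisms of sheaves; being sectionwise bijections they give the desired isomorphism $\CB_\Mr\cong\CB_\Mrt$. Thus it remains to prove that $Q_t(C)$ satisfies (CBT1)--(CBT2) and takes values in $\Ort^\an(U)$, and that $P_t(C_t)$ satisfies (CB1)--(CB2).

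That $Q_t(C)$ takes values in translation-invariant functions is exactly Lemma \ref{lem_translation_exp}. For (CBT1) with $i\neq t$ (and likewise for (CB1) of $P_t(C_t)$ with $i\neq t$), one just notes that $\tfrac{d}{dz_i}$ does not hit $z_t$ and that $L(-1)_i$ commutes with $L(-1)_0^*$, so $\tfrac{d}{dz_i}C(\exp(-L(-1)_0^* z_t)\mr)=C(L(-1)_i\exp(-L(-1)_0^* z_t)\mr)=Q_t(C)(L(-1)_i\mr)$ by (CB1). The substantive point is (CBT2) (and symmetrically (CB2) for $P_t(C_t)$): pushing $a(n)_i$ and $a(k)_s$ with $s\neq 0$ through the exponential is harmless, so after applying (CB2) to the vector $\exp(-L(-1)_0^* z_t)\mr$ the whole claim reduces to the operator identity on $\Mr$
\begin{align}
\exp(L(-1)_0^* z_t)\Bigl(\textstyle\sum_{k\geq 0}\binom{n}{k}(-z_i)^k a(n-k)_0^*\Bigr)\exp(-L(-1)_0^* z_t)=\textstyle\sum_{k\geq 0}\binom{n}{k}(z_t-z_i)^k a(n-k)_0^*,
\label{eq_plan_star}
\end{align}
valid for all $a\in V$ and $n\in\Z$ (the apparent infinite sums are finite on each fixed vector, since $a(l)_0^*$ kills vectors of sufficiently low degree), together with its conjugate by $\exp(-L(-1)_0^* z_t)$, which is the instance of \eqref{eq_plan_star} needed for $P_t(C_t)$. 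To prove \eqref{eq_plan_star} I would first establish $[L(-1)_0^*,a(l)_0^*]=l\,a(l-1)_0^*$ for all $l\in\Z$, by transposing the standard relation $[L(-1),a(l)]=-l\,a(l-1)$ on $M_0$ (using Remark \ref{rem_dual}); this yields $\exp(L(-1)_0^* z_t)\,a(l)_0^*\,\exp(-L(-1)_0^* z_t)=\sum_{j\geq 0}\binom{l}{j}z_t^{\,j}a(l-j)_0^*$. Substituting, reindexing the double sum by $p=k+j$, using the identity $\binom{n}{k}\binom{n-k}{p-k}=\binom{n}{p}\binom{p}{k}$ (as in the proof of Lemma \ref{lem_well_inverse}), and collapsing $\sum_{k}\binom{p}{k}(-z_i)^k z_t^{p-k}=(z_t-z_i)^p$ gives \eqref{eq_plan_star}.

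It then remains to verify (CB1) for $P_t(C_t)$ in the case $i=t$, which is not covered by (CBT1). Here the translation invariance of $C_t$ enters: $\pa_t C_t(\mr')=-\sum_{s\neq t}\pa_s C_t(\mr')=-\sum_{s\neq t}C_t(L(-1)_s\mr')$ by (CBT1), and combining this with the cyclic identity $\sum_{s=1}^{r}C_t(L(-1)_s\mr')=C_t(L(-1)_0^*\mr')$ — the analogue of \eqref{eq_cyclic_translation}, obtained by specializing (CBT2) to $a=\om$, $n=0$, $i=t$ — one finds $\pa_t C_t(\mr')=C_t(L(-1)_t\mr')-C_t(L(-1)_0^*\mr')$. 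Differentiating $P_t(C_t)(\mr)=\sum_{k\geq0}\tfrac{z_t^{\,k}}{k!}C_t((L(-1)_0^*)^k\mr)$ in $z_t$ produces one term from the explicit $z_t^k$ and one from $\pa_t C_t$; the contribution $C_t(\exp(L(-1)_0^* z_t)L(-1)_0^*\mr)$ cancels and what survives is $C_t(L(-1)_t\exp(L(-1)_0^* z_t)\mr)=P_t(C_t)(L(-1)_t\mr)$, as required. The main obstacle is the bookkeeping behind \eqref{eq_plan_star}, and in particular getting the commutator $[L(-1)_0^*,a(l)_0^*]$ and the resulting conjugation formula right, since $L(-1)_0^*$ is the transpose action (equal to the dual-module action $L(1)_0$) rather than the dual-module action of $L(-1)$.
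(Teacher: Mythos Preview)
Your proof is correct and follows essentially the same strategy as the paper: define the explicit inverse $P_t$ (the paper calls it $R_t$), reduce everything to checking the axioms, and identify the conjugation identity for $a(n)_0^*$ by $\exp(\pm L(-1)_0^* z_t)$ as the heart of (CBT2)/(CB2). Your verification of (CB1) for $i=t$ via the cyclic identity is also exactly what the paper does.

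The one noteworthy difference is in how the key identity \eqref{eq_plan_star} is obtained. The paper splits into the cases $n\geq 0$ and $n<0$ and invokes the pre-packaged formulas of Lemma~\ref{lem_P} (items (1)--(3) for $P_n$ and $\pa^{(n)}Y^-$), together with Remark~\ref{rem_dual} to pass to the transpose. You instead compute the single commutator $[L(-1)_0^*,a(l)_0^*]=l\,a(l-1)_0^*$ directly, exponentiate, and collapse the double sum with the Vandermonde-type identity $\binom{n}{k}\binom{n-k}{p-k}=\binom{n}{p}\binom{p}{k}$. Your route is more uniform (no case split on the sign of $n$) and slightly more self-contained; the paper's route amortizes the computation by reusing Lemma~\ref{lem_P}, which it needs elsewhere anyway. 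Both yield the same identity and the same proof structure.
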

\begin{proof}
Let $C\in \CB_\Mr(U)$ and $\mr\in \Mr$, $a\in V$ and $n\in \Z$.
Set $C_t = Q_t(C)$. By Lemma \ref{lem_translation_exp}, 
$C_t(\mr) \in \Ort^\an(U)$. (CBT1) clearly follows.
We note that by (CB2)
\begin{align*}
C_t(a(n)_i \mr)
&=C(\exp(-L(-1)_0^*z_t)a(n)_i \mr)\\
&=C(a(n)_i \exp(-L(-1)_0^*z_t)\mr)\\
&=\sum_{k \geq 0}\binom{n}{k} (-z_i)^k C(a(-k+n)_0^* \exp(-L(-1)_0^*z_t)\mr)\\
&- \sum_{s \in [r], s \neq i} \sum_{k \geq 0}
\binom{n}{k}(z_s-z_i)^{n-k} C(a(k)_s \exp(-L(-1)_0^* z_t)\mr).
\end{align*}
Since $a(k)_s$ and $\exp(-L(-1)_0^*z_t)$ commute with each other,
it suffices to consider the first term.
Assume $n \geq 0$.
Then by Lemma \ref{lem_P} and \eqref{eq_rem_dual} in Remark \ref{rem_dual}
\begin{align*}
\sum_{k \geq 0}\binom{n}{k} (-z_i)^k \left(\exp(-L(-1)z_t) a(-k+n)\right)_0^*
&= \left(\exp(-L(-1)z_t) P_{n}(a,z_i)\right)_0^* \\
&=\exp(-L(-1)_0^*z_t)P_{n}(a,z_i-z_t)_0^*,
\end{align*}
which implies (CBT2).
Assume $n \leq -1$ and set $m=-n-1$.
Then, by Lemma \ref{lem_binom}
\begin{align*}
\sum_{k \geq 0}&\binom{-m-1}{k} (-z_i)^k a(-k-m-1)_0^* \exp(-L(-1)_0^*z_t)\\
&=\sum_{k \geq 0}\binom{m+k}{k} z_i^k a(-k-m-1)_0^* \exp(-L(-1)_0^* z_t)\\
&=\left(\exp(-L(-1)z_t) \frac{1}{m!} \pa_i^m Y^-(a,z_i)\right)_0^*\\
&=  \exp(-L(-1)_0^*z_t) \left(\frac{1}{k!} \pa_i^k Y^-(a,z_i-z_t)_0^*\right).
\end{align*}
Hence, $Q_t$ is well-defined.

We will show that the linear map
$R_t: \CB_\Mrt \rightarrow \CB_\Mr, C_t \mapsto C_t(\exp( L(-1)_0^* z_t)\bullet)$ gives an inverse map.
Let $C_t \in \CB_\Mr$.
By (CBT2), similar to \eqref{eq_cyclic_translation}, we have
\begin{align}
C_t(L(-1)_0^* \mr)&= \sum_{1\leq s \leq r} C_t(L(-1)_s \mr).
\label{eq_Ct_0}
\end{align}
Hence, we have
\begin{align*}
R_t(C_t)(L(-1)_t \mr) &= C_t(\exp( L(-1)_0^* z_t) L(-1)_t \mr)\\
&=C_t( L(-1)_t\exp( L(-1)_0^* z_t) \mr)\\
&=C_t( L(-1)_0^* \exp( L(-1)_0^* z_t) \mr)-\sum_{1\leq s \leq r,s\neq t} C_t( L(-1)_s \exp( L(-1)_0^* z_t) \mr)\\
&=C_t( \left(\pa_t  \exp( L(-1)_0^* z_t)\right) \mr)-((\sum_{1\leq s \leq r,s\neq t} \pa_s) C_t)(\exp( L(-1)_0^* z_t) \mr)\\
&=\pa_t \left(R_t(C_t)(\mr)\right),
\end{align*}
where in the last equation we used $C_t((L(-1)_0^*)^k \mr) \in \Ort^\an$ for all $k \geq 0$.
Thus, $R_t(C_t)$ satisfies (CB1).
Similar to the above calculations, we can also show that (CB2) holds.
\end{proof}

The definition of a conformal block is symmetric with respect to the variables $z_1,\dots,z_r$.
The action of the fundamental groupoid $\Pi_1(X_r)$ is more obvious in this form (see Section \ref{sec_lax_def}).
However, we sacrifice symmetry to make a conformal block a translation invariant. 
This invariance is naturally required to define an operadic composition.

In the rest of this section, we give the definition of $\CB_\Mrt$ using a $\Drt$-module $D_\Mrt$.
It is the same as in Section \ref{sec_D_def}.

Let $\Ort^\alg \otimes \Mr$ be an $\Ort^\alg$-module
where the $\Ort^\alg$-module structure is defined by the multiplication on the left component.
Recall that
\begin{align*}
\Drt=\C[(z_i-z_j)^\pm,\pa_k\mid 1\leq i<j\leq r,k \in [r]],
\end{align*}
which is a subalgebra of $\Dr$.
Define a $\Drt$-module structure on $\Ort^\alg \otimes \Mr$
by
\begin{align}
\pa_i \cdot (f\otimes \mr) =
\begin{cases}
 (\pa_i f) \otimes \mr + f\otimes L(-1)_i \mr,\\
 (\pa_t f) \otimes \mr + f\otimes L(-1)_t \mr - f\otimes L(-1)_0^* \mr
\end{cases}
\label{eq_Dt_def}
\end{align}
for $i=1,\dots,r$ with $i\neq t$, $f \in \Ort^\alg$, $\mr \in \Mr$.

Let $N_{\Mrt}$ be the $\Drt$-submodule of 
$\Ort^\alg \otimes \Mr$ generated by the following elements:
\begin{align}
1 \otimes a(n)_i \mr - &\sum_{k \geq 0}\binom{n}{k} (z_t-z_i)^k \otimes a(-k+n)_0^* \mr
+ \sum_{s\in [r], s \neq i} \sum_{k \geq 0} \binom{n}{k}(z_s-z_i)^{n-k}\otimes a(k)_s \mr,
\label{eq_ker_t}
\end{align}
for all $\mr \in \Mr$, $a \in V$ and $i \in \{1,\dots,r\}$ and $n \in \Z$.
Set
\begin{align*}
D_\Mrt=(\Ort^\alg \otimes \Mr) /N_\Mrt,
\end{align*}
which is a $\Drt$-module.

\begin{lem}
For any $\mr \in \Mr$,
\begin{align*}
(\sum_{i\in [r]} \pa_i )\cdot 1\otimes \mr =0
\end{align*}
as an element in $D_\Mrt$.
\end{lem}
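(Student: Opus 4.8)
The plan is to unwind the definition of the $\Drt$-module structure \eqref{eq_Dt_def} and the generators \eqref{eq_ker_t} of $N_\Mrt$, and to show that the element $(\sum_{i\in [r]}\pa_i)\cdot(1\otimes \mr)$ of $\Ort^\alg\otimes\Mr$ already lies in $N_\Mrt$. By \eqref{eq_Dt_def}, applying $\pa_i$ for $i\neq t$ to $1\otimes \mr$ gives $1\otimes L(-1)_i\mr$, while applying $\pa_t$ gives $1\otimes L(-1)_t\mr-1\otimes L(-1)_0^*\mr$. Summing over all $i\in[r]$, we obtain
\begin{align*}
\Bigl(\sum_{i\in[r]}\pa_i\Bigr)\cdot(1\otimes \mr)=\sum_{i\in[r]}1\otimes L(-1)_i\mr - 1\otimes L(-1)_0^*\mr.
\end{align*}
So the assertion reduces to showing that $\sum_{i\in[r]}1\otimes L(-1)_i\mr - 1\otimes L(-1)_0^*\mr$ lies in $N_\Mrt$.

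The key step is to extract exactly this combination from the defining relations \eqref{eq_ker_t}. First I would specialize \eqref{eq_ker_t} to $a=\om$ and $n=0$, so that $a(n)=\om(0)=L(-1)$ and the binomial $\binom{0}{k}$ kills all terms with $k\geq 1$. This yields that
\begin{align*}
1\otimes L(-1)_i\mr - 1\otimes L(-1)_0^*\mr + \sum_{s\in[r],\,s\neq i}1\otimes L(-1)_s\mr
\end{align*}
is in $N_\Mrt$ — note the term $(z_s-z_i)^{0-0}=1$ survives with coefficient $\binom00=1$, and one must check that $(z_t-z_i)^0=1$ even when $i=t$ so the formula is uniform. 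But $\sum_{s\in[r],\,s\neq i}1\otimes L(-1)_s\mr + 1\otimes L(-1)_i\mr = \sum_{s\in[r]}1\otimes L(-1)_s\mr$, so the displayed element equals precisely $\sum_{s\in[r]}1\otimes L(-1)_s\mr - 1\otimes L(-1)_0^*\mr$, which is what we needed. (This mirrors the computation in the proof of Lemma \ref{lem_translation_exp}, leading to \eqref{eq_Ct_0}, but now at the level of the $\Drt$-module rather than its solution sheaf.)

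I do not anticipate a genuine obstacle here; the only care needed is bookkeeping. One should double-check the sign conventions: in \eqref{eq_ker_t} the middle term is $-\sum_k\binom{n}{k}(z_t-z_i)^k\otimes a(-k+n)_0^*\mr$, which at $n=0$ contributes $-1\otimes a(0)_0^*\mr = -1\otimes L(-1)_0^*\mr$ for $a=\om$, since $\om(0)=L(-1)$ acts on $M_0^\vee$ via the star-action; and the third term $+\sum_{s\neq i}\binom{n}{k}(z_s-z_i)^{n-k}\otimes a(k)_s\mr$ at $n=k=0$ contributes $+\sum_{s\neq i}1\otimes L(-1)_s\mr$. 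Combined with the generator's own leading term $1\otimes a(n)_i\mr = 1\otimes L(-1)_i\mr$, the three pieces assemble exactly into $\sum_{s\in[r]}1\otimes L(-1)_s\mr - 1\otimes L(-1)_0^*\mr$. Hence this element is in $N_\Mrt$, so it vanishes in $D_\Mrt$, completing the proof.
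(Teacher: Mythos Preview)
Your proof is correct and follows essentially the same approach as the paper: both use \eqref{eq_Dt_def} to compute $(\sum_{i\in[r]}\pa_i)\cdot(1\otimes\mr)=1\otimes(\sum_{i\in[r]}L(-1)_i-L(-1)_0^*)\mr$ and then observe that specializing \eqref{eq_ker_t} to $a=\om$, $n=0$ shows this element lies in $N_\Mrt$. The paper's proof is simply a two-line compression of what you have written out in full.
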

\begin{proof}
By \eqref{eq_ker_t},
\begin{align*}
\left(-\om(0)_0^* + \sum_{i\in [r]} \om(0)_i \right) \mr \in N_\Mrt.
\end{align*}
Hence, the assertion follows from \eqref{eq_Dt_def}.
\end{proof}

Let $C_t\in \CB_\Mrt(U)$ for an open subset $U\subset \Xr$.
Then, $C_t$ can be uniquely extended to a $\Ort$-module homomorphism
\begin{align*}
\tilde{C_t}: \Ort\otimes \Mr\rightarrow \Ort^\an(U), f\otimes \mr\mapsto f C_t(\mr).
\end{align*}
Then, $\tilde{C_t}$ is a $\Drt$-module homomorphism,
since 
for any $i \in [r]$ with $i \neq t$
\begin{align*}
\pa_i \tilde{C_t}(f\otimes \mr) &=
\pa_i (f {C_t}(\mr))= (\pa_i f)C_t(\mr)+ f (\pa_i C_t(\mr))\\
&=\tilde{C_t}(\pa_i \cdot f\otimes \mr)
\end{align*}
and by \eqref{eq_Ct_0}
\begin{align*}
\pa_t \tilde{C_t}(f\otimes \mr)&=(-\sum_{i \in [r],i\neq t}\pa_i)\tilde{C_t}(f\otimes \mr)\\
&=\tilde{C_t}\left(\left((-\sum_{i \in [r],i\neq t}\pa_i)f\right)\otimes \mr\right)
+\tilde{C_t}\left(f \otimes \left((-\sum_{i \in [r],i\neq t}L(-1)_i)\mr\right)\right)\\
&=\tilde{C_t}\left((\pa_t f)\otimes \mr\right)
+f\otimes {C_t}\left(\left((-\sum_{i \in [r],i\neq t}L(-1)_i)\mr\right)\right)\\
&=\tilde{C_t}\left((\pa_t f)\otimes \mr\right)
+f\otimes {C_t}\left(L(-1)_t\mr\right)
-f\otimes {C_t}\left(L(-1)_0^*\mr\right)\\
&=\tilde{C_t}\left(\pa_t \cdot (f\otimes \mr)\right).
\end{align*}
Hence, by (CBT2),
the $\Drt$-homomorphism $\tilde{C_t}$ factors through the quotient $D_\Mrt$.
Hence, we may regard it as an element in $\mathrm{Hom}_{\Drt}(D_\Mrt,\Ort^\an(U))$.

Conversely, for any map $\tilde{C_t}\in \mathrm{Hom}_{\Drt}(D_\Mrt,\Ort^\an(U))$,
by restriction, we have $\tilde{C_t}|_{\Mr}:\Mr\rightarrow \Ort^\an(U)$
and it is clear that $\tilde{C_t}|_{\Mr}$ satisfies (CBT1) and (CBT2).
Hence, we have:
\begin{prop}
The above map
\begin{align*}
\CB_\Mrt(U)\rightarrow \mathrm{Hom}_{\Drt}(D_\Mrt,\Ort^\an(U)),
\quad C_t \mapsto \tilde{C_t}
\end{align*}
gives an isomorphism of vector spaces.
\end{prop}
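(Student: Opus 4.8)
The plan is to establish the stated isomorphism $\CB_\Mrt(U)\cong \Hom_{\Drt}(D_\Mrt,\Ort^\an(U))$ by constructing the two mutually inverse maps and checking they are well-defined. This is a routine "module-theoretic repackaging" argument parallel to Lemma \ref{lem_block_linear}, so the proof will be short; the content has essentially already been laid out in the paragraphs immediately preceding the statement, and the task is to assemble it into a clean two-sided verification.

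First I would define the forward map. Given $C_t\in\CB_\Mrt(U)$, i.e.\ a linear map $C_t:\Mr\to\Ort^\an(U)$ satisfying (CBT1) and (CBT2), extend it $\Ort^\alg$-linearly to $\tilde{C_t}:\Ort^\alg\otimes\Mr\to\Ort^\an(U)$, $f\otimes\mr\mapsto f\,C_t(\mr)$. The key step is to verify that $\tilde{C_t}$ is a $\Drt$-module homomorphism and that it kills $N_\Mrt$, hence factors through $D_\Mrt$. For $i\neq t$ the compatibility with $\pa_i$ is the Leibniz computation already displayed, using (CBT1). For $\pa_t$ one uses the translation-invariance relation \eqref{eq_Ct_0}, $C_t(L(-1)_0^*\mr)=\sum_{s}C_t(L(-1)_s\mr)$ — which itself follows from (CBT2) applied with $a=\om$, $n=0$ — together with the fact that $C_t$ takes values in $\Ort^\an(U)$, so that $\pa_t\tilde{C_t}(f\otimes\mr)=(-\sum_{i\neq t}\pa_i)\tilde{C_t}(f\otimes\mr)$; expanding this and matching it against \eqref{eq_Dt_def} gives exactly $\tilde{C_t}(\pa_t\cdot(f\otimes\mr))$. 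Finally, the generators \eqref{eq_ker_t} of $N_\Mrt$ lie in the kernel of $\tilde{C_t}$ precisely because of (CBT2); so $\tilde{C_t}$ descends to $D_\Mrt$ and defines an element of $\Hom_{\Drt}(D_\Mrt,\Ort^\an(U))$.

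Next I would define the inverse map. Given $\varphi\in\Hom_{\Drt}(D_\Mrt,\Ort^\an(U))$, precompose with the natural map $\Mr\to\Ort^\alg\otimes\Mr\to D_\Mrt$, $\mr\mapsto 1\otimes\mr$, to obtain $\varphi|_\Mr:\Mr\to\Ort^\an(U)$. One checks directly that $\varphi|_\Mr$ satisfies (CBT1) — because $\varphi$ commutes with $\pa_i$ for $i\neq t$ and by \eqref{eq_Dt_def} $\pa_i\cdot(1\otimes\mr)=1\otimes L(-1)_i\mr$ in $D_\Mrt$ — and (CBT2), because the relations \eqref{eq_ker_t} vanish in $D_\Mrt$. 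Composing the two assignments in either order recovers the identity: starting from $C_t$, restricting $\tilde{C_t}$ to $\Mr$ returns $C_t$; starting from $\varphi$, the $\Ort^\alg$-linear extension of $\varphi|_\Mr$ agrees with $\varphi$ on the generating set $\{1\otimes\mr\}$ and both are $\Ort^\alg$-linear, hence they coincide. Both assignments are manifestly $\C$-linear, and they are visibly compatible with restriction to smaller open sets, so the isomorphism is an isomorphism of sheaves.

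I do not expect a serious obstacle here: the only mildly delicate point is the $\pa_t$-compatibility, where one must be careful that the "extra" term $-f\otimes L(-1)_0^*\mr$ in \eqref{eq_Dt_def} is exactly accounted for by the translation relation \eqref{eq_Ct_0} combined with $C_t(\Mr)\subset\Ort^\an(U)$; this is the same maneuver already carried out in the proof of Lemma \ref{lem_translation_isomorphism}, so it can be invoked almost verbatim. Everything else is formal unwinding of definitions.
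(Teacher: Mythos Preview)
Your proposal is correct and follows essentially the same route as the paper: the argument is exactly the content of the paragraphs immediately preceding the proposition, where the paper extends $C_t$ $\Ort^\alg$-linearly, verifies $\Drt$-compatibility (with the $\pa_t$-case handled via \eqref{eq_Ct_0} and translation-invariance of $\Ort^\an$), factors through the quotient by (CBT2), and then observes that restriction to $\Mr$ gives the inverse. Your identification of the $\pa_t$-compatibility as the only nontrivial point is spot on.
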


\begin{rem}
\label{rem_range_n}
From Lemma \ref{N_contain}, $N_\Mr$ contains the following elements:
\begin{align*}
&1\otimes a(n)_0^* \mr - \sum_{s \in [r]} \sum_{k \geq 0} \binom{n}{k}z_s^{n-k}\otimes a(k)_s \cdot \mr,
\end{align*}
where $n$ is a non-negative integer.
The proof of Lemma \ref{N_contain} does not run well if $n$ is a negative integer, and $z_s^{n}$ does not
contained in $\Or^\alg=\C[(z_i-z_j)^\pm,z_k\mid k,i\neq j]$.
\end{rem}
However, for $N_\Mrt$, by substituting $i=t$ into \eqref{eq_ker_t}, we have:
\begin{lem}
\label{N_contain_t}
Let $\mr \in \Mr$, $i \in \{1,\dots,r\}$ $a \in V$ and $\Delta \in \Z_{>0}$,
$b \in V_\Delta$, $n\in \Z$.
The following elements are contained in $N_{\Mrt}$:
\begin{align*}
&1 \otimes a(-1)_i \mr - \sum_{k \geq 0}(z_i-z_t)^k \otimes a(-k-1)_0^* \mr
 - \sum_{1\leq s\leq r, s \neq i} \sum_{k \geq 0} (z_i-z_s)^{-k-1}\otimes a(k)_s \mr,\\
&1\otimes a(n)_0^* \mr - 1\otimes a(n)_t \mr -
 \sum_{s\in [r], s\neq t} \sum_{k \geq 0} \binom{n}{k}(z_s-z_t)^{n-k}\otimes a(k)_s \cdot \mr,\\
&1\otimes b(-1)_0 \mr - \sum_{1\leq s  \leq r} \sum_{k,l \geq 0} 
\frac{(-1)^\Delta}{l!}\binom{2\Delta-l-1}{k}
(z_s-z_t)^{2\Delta-l-1-k} \otimes \Bigl(L(1)^l b\Bigr)(k)_s \mr.
\end{align*}
\end{lem}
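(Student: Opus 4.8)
The plan is to imitate the proof of Lemma \ref{N_contain}, using that the generators \eqref{eq_ker_t} of $N_\Mrt$ are obtained from those of $N_\Mr$ by replacing $z_i$ with $z_i-z_t$, and that specializing $n=-1$ or $i=t$ collapses several of the sums. For the first element I would set $n=-1$ in \eqref{eq_ker_t}: since $\binom{-1}{k}=(-1)^k$, the coefficient $\binom{-1}{k}(z_t-z_i)^k$ becomes $(z_i-z_t)^k$ and $\binom{-1}{k}(z_s-z_i)^{-1-k}$ becomes $-(z_i-z_s)^{-1-k}$, both lying in $\Ort^\alg$, and the sums remain finite for the same reason as in \eqref{eq_ker_t}. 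This reproduces the first displayed element verbatim.

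For the second element I would set $i=t$ in \eqref{eq_ker_t}: the factor $(z_t-z_i)^k=(z_t-z_t)^k$ kills every term with $k\geq 1$, so the middle sum reduces to its $k=0$ term $1\otimes a(n)_0^*\mr$, and the generator becomes $1\otimes a(n)_t\mr-1\otimes a(n)_0^*\mr+\sum_{s\neq t}\sum_k\binom{n}{k}(z_s-z_t)^{n-k}\otimes a(k)_s\mr$, where each $(z_s-z_t)^{n-k}$ with $s\neq t$ lies in $\Ort^\alg$. This is the negative of the second displayed element, and since $N_\Mrt$ is a $\C$-subspace the element itself lies in $N_\Mrt$.

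The third element is the only one requiring a short computation. I would start from \eqref{eq_dual2}, which gives $b(-1)_0=\sum_{l\geq 0}\frac{(-1)^\Delta}{l!}(L(1)^l b)(2\Delta-l-1)_0^*$; because $V$ is of CFT type and $b\in V_\Delta$ with $\Delta\geq 1$, the sum runs over $l=0,\dots,\Delta$, so the mode $n:=2\Delta-l-1$ satisfies $n\geq\Delta-1\geq 0$ in every term. Applying the already established second element with $a=L(1)^l b$ and this $n$, and summing against $\frac{(-1)^\Delta}{l!}$, I obtain that $1\otimes b(-1)_0\mr$ is congruent modulo $N_\Mrt$ to $\sum_l\frac{(-1)^\Delta}{l!}(1\otimes(L(1)^l b)(2\Delta-l-1)_t\mr+\sum_{s\neq t}\sum_k\binom{2\Delta-l-1}{k}(z_s-z_t)^{2\Delta-l-1-k}\otimes(L(1)^l b)(k)_s\mr)$. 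The remaining step is bookkeeping: restoring the index $s=t$ into the double sum recovers exactly the terms $1\otimes(L(1)^l b)(2\Delta-l-1)_t\mr$, because $(z_t-z_t)^{2\Delta-l-1-k}$ is nonzero only for $k=2\Delta-l-1$, where $\binom{2\Delta-l-1}{k}=1$. Hence the right-hand side equals $\sum_{s\in[r]}\sum_{k,l}\frac{(-1)^\Delta}{l!}\binom{2\Delta-l-1}{k}(z_s-z_t)^{2\Delta-l-1-k}\otimes(L(1)^l b)(k)_s\mr$, proving that the third element lies in $N_\Mrt$.

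I do not expect a serious obstacle. The one point to keep track of is the summation ranges: one must use $n=2\Delta-l-1\geq 0$ so that $\binom{2\Delta-l-1}{k}$ vanishes for $k>2\Delta-l-1$, which guarantees that every power $(z_s-z_t)^m$ occurring with $s=t$ has $m\geq 0$ and therefore lies in $\Ort^\alg$, and one must correctly isolate the diagonal contribution $s=t$ in the last step. That diagonal term — the $1\otimes a(n)_t\mr$ piece — is precisely what distinguishes Lemma \ref{N_contain_t} from Lemma \ref{N_contain}, and it is what makes $D_\Mrt$ behave better than $D_\Mr$ in the range of admissible modes (cf.\ Remark \ref{rem_range_n}).
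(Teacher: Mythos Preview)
Your proposal is correct and follows essentially the same approach as the paper: the paper simply says ``by substituting $i=t$ into \eqref{eq_ker_t}'' and leaves the rest implicit, while you spell out the three specializations ($n=-1$, $i=t$, and the combination via \eqref{eq_dual2}) exactly as in the proof of Lemma~\ref{N_contain}. Your observation that the $s=t$ term in the final double sum recovers the $1\otimes (L(1)^l b)(2\Delta-l-1)_t\mr$ contribution is the right bookkeeping step, and your remark about $2\Delta-l-1\geq 0$ ensuring the exponents are nonnegative is precisely the point of Remark~\ref{rem_range_n}.
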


\subsection{Local coherence of conformal block}
\label{sec_D_local_coherent}
Let $A\in \Tr_r$ and $\ee\in E(A)$.
Recall $r_A \in [r]$ is the label of the rightmost leaf of $A$
and $\deg_A^\ee:\Ort \rightarrow \Z$ is the degree map introduced in Section \ref{sec_config_deg}.
Let $\Ort^{\ee\reg}$ is a subalgebra of $\Ort^\alg$ generated by
\begin{align*}
(z_i-z_j),\quad \frac{\ze_{\ee}^{\deg_A^\ee(z_i-z_j)}}{z_i-z_j},\quad \frac{z_i-z_j}{\ze_{\ee}^{\deg_A^\ee(z_i-z_j)}} \quad \text{ for } 1\leq i <j \leq r,
\end{align*}
where, by Lemma \ref{lem_deg_formula},
\begin{align*}
\frac{\ze_{\ee}^{\deg_A^\ee(z_i-z_j)}}{z_i-z_j}=
\begin{cases}
\frac{z_{L(d(\ee))}-z_{R(d(\ee))}}{(z_i-z_j)(z_{L(u(\ee))}-z_{R(u(\ee)))}} \quad &\text{if }i,j \in \Leaf(\ee),\\
\frac{1}{z_i-z_j} \quad &\text{otherwise}.
\end{cases}
\end{align*}

\begin{lem}
\label{lem_noether}
The following properties hold:
\begin{enumerate}
\item
$\Ort^{\ee\reg}$ is a Noetherian ring;
\item
$\ze_\ee \in \Ort^{\ee\reg}$;
\item
$\deg_A^\ee(f)\geq 0$ for any $f\in \Ort^{\ee\reg}$.
\end{enumerate}
\end{lem}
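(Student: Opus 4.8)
\textbf{Proof plan for Lemma \ref{lem_noether}.}

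The plan is to establish the three assertions in the order (2), (1), (3), since (2) is essentially immediate from Lemma \ref{lem_deg_formula} and is needed in the argument for (1), while (3) follows from a degree computation once the generating set is understood.

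First I would verify (2). By definition $\ze_\ee = \frac{x_{d(\ee)}}{x_{u(\ee)}} = \frac{z_{L(d(\ee))}-z_{R(d(\ee))}}{z_{L(u(\ee))}-z_{R(u(\ee))}}$. The numerator $z_{L(d(\ee))}-z_{R(d(\ee))}$ is of the form $z_i-z_j$ with both $i,j\in\Leaf(\ee)$, hence by Lemma \ref{lem_deg_formula} has $\deg_A^\ee = 1$, so $\frac{z_{L(d(\ee))}-z_{R(d(\ee))}}{\ze_\ee^{1}} = \frac{z_{L(d(\ee))}-z_{R(d(\ee))}}{z_{L(d(\ee))}-z_{R(d(\ee))}}\cdot\text{(something)}$ — more precisely, the generator $\frac{\ze_\ee^{\deg_A^\ee(z_i-z_j)}}{z_i-z_j}$ with $(i,j)=(L(d(\ee)),R(d(\ee)))$ equals $\frac{\ze_\ee}{z_{L(d(\ee))}-z_{R(d(\ee))}}$, and the generator $z_{L(u(\ee))}-z_{R(u(\ee))}$ (this is $z_i-z_j$ with $\deg_A^\ee=0$, using that $u(\ee)$ has $R(u(\ee))\notin\Leaf(\ee)$ or the relevant pair straddles $\ee$) lies in $\Ort^{\ee\reg}$ too. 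Multiplying $(z_{L(u(\ee))}-z_{R(u(\ee))})\cdot \frac{\ze_\ee}{z_{L(d(\ee))}-z_{R(d(\ee))}}\cdot \frac{z_{L(d(\ee))}-z_{R(d(\ee))}}{\ze_\ee^{0}}$-type combinations recovers $\ze_\ee$. One must be careful: depending on the shape of $A$ near $\ee$ (cf. Fig. \ref{fig_ij_up} and Fig. \ref{fig_ij_down}) the denominator $x_{u(\ee)}$ is $z_i-z_j$ for an explicit straddling pair, and its $\deg_A^\ee$ is $0$; I would check both cases as in the proof of Lemma \ref{lem_change_of_derivation}. So $\ze_\ee$ is a product of generators (and inverses of generators that happen to be inverted already), hence $\ze_\ee\in\Ort^{\ee\reg}$.

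Next, for (1), the key point is that $\Ort^{\ee\reg}$ is a finitely generated $\C$-algebra — it has finitely many generators, namely the $(z_i-z_j)$ and the two one-sided localizations $\frac{\ze_\ee^{d}}{z_i-z_j}$, $\frac{z_i-z_j}{\ze_\ee^{d}}$ for $1\le i<j\le r$ — and a finitely generated commutative $\C$-algebra is Noetherian by the Hilbert basis theorem. So (1) is immediate once we agree the generating set is finite, which it is. The only subtlety worth a sentence is that $\Ort^{\ee\reg}$ is realized as a subring of the field (or large ring) $\C((\ze_e\mid e\in E(A)))[x_A^\pm]$ via $e_A$, so it is a genuine integral domain and the algebra is well-defined; I would note this but it needs no real argument.

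Finally, for (3), I would observe that $\deg_A^\ee\colon \C((\ze_e\mid e\in E(A)))[x_A^\pm]\to\Z\cup\{\infty\}$ is additive: $\deg_A^\ee(fg) = \deg_A^\ee(f)+\deg_A^\ee(g)$ (the valuation property with respect to the variable $\ze_\ee$) and $\deg_A^\ee(f+g)\ge\min(\deg_A^\ee f,\deg_A^\ee g)$. Each generator has $\deg_A^\ee\ge 0$: indeed $\deg_A^\ee(z_i-z_j)\in\{0,1\}$ by Lemma \ref{lem_deg_formula}; $\deg_A^\ee\!\big(\frac{\ze_\ee^{\deg_A^\ee(z_i-z_j)}}{z_i-z_j}\big) = \deg_A^\ee(z_i-z_j) - \deg_A^\ee(z_i-z_j) = 0$; and $\deg_A^\ee\!\big(\frac{z_i-z_j}{\ze_\ee^{\deg_A^\ee(z_i-z_j)}}\big) = 0$ likewise. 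Since every element of $\Ort^{\ee\reg}$ is a $\C$-linear combination of products of these generators, additivity and the min-inequality give $\deg_A^\ee(f)\ge 0$ for all $f\in\Ort^{\ee\reg}$. The one genuine thing to check carefully — the main (minor) obstacle — is the case analysis in (2) pinning down exactly which $z_i-z_j$ equals $x_{u(\ee)}$ and confirming its $\deg_A^\ee$ is $0$ so that $\ze_\ee$ really is expressible within $\Ort^{\ee\reg}$ without introducing a forbidden inverse; this is the same bookkeeping that appears in Lemma \ref{lem_change_of_derivation} and Remark \ref{rem_pole_p}, and once it is done the rest is formal.
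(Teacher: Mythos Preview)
Your proposal is correct and follows essentially the same route as the paper. The paper's argument for (2) is slightly more direct: rather than a case analysis, it simply observes that either $L(u(\ee))$ or $R(u(\ee))$ lies outside $\Leaf(\ee)$ (since $d(\ee)$ is only one of the two children of $u(\ee)$), so by Lemma~\ref{lem_deg_formula} the pair $(L(u(\ee)),R(u(\ee)))$ falls into the ``otherwise'' case and $\frac{1}{z_{L(u(\ee))}-z_{R(u(\ee))}}$ is already one of the generators; then $\ze_\ee = (z_{L(d(\ee))}-z_{R(d(\ee))})\cdot\frac{1}{z_{L(u(\ee))}-z_{R(u(\ee))}}$ is a product of two generators with no further bookkeeping. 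Your version reaches the same conclusion via the other family of generators, and your treatments of (1) and (3) match the paper's (Hilbert basis theorem; valuation-style degree check on generators).
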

\begin{proof}
Since $\Ort^{\ee\reg}$ is a finitely generated $\C$-algebra, (1) holds.
Since either $L(u(\ee))$ or $R(u(\ee)$ is not in $\Leaf(\ee)$,
$\frac{1}{z_{L(u(\ee))}-z_{R(u(\ee))}} \in \Ort^{\ee\reg}$, which implies (2).
(3) follows from the definition.
\end{proof}

\begin{rem}
\label{rem_subtle}
A naive way to define a ring satisfying Property (3) is 
\begin{align}
\{f\in \Ort^\alg \mid \deg_A^\ee(f)\geq 0\} \subset \Ort^\alg,
\label{eq_naive}
\end{align}
but this definition doesn't make it clear (to me) whether the ring is Noether or not.
\end{rem}

In Section \ref{sec_D_coherent}, we show that $D_{\Mr}$ is a finitely generated $\Or^\alg$-module.
In this section, by refining the arguments in Section \ref{sec_D_coherent},
we show that $D_{\Mra}$ is finitely generated as $\Ort^{\ee\reg}$-module in some sense.
As a consequence, we show that conformal blocks satisfy nice differential equations.

Assume that $M_1,\dots,M_r$ are $C_1$-cofinite
and let $N$ be the integer defined in \eqref{eq_N_sum}
and $\Mr=\bigoplus_{S,T\geq 0}(\Mr)_{S,T}$ the $\Z_{\geq 0}^2$-grading in \eqref{eq_Mr_T}.

\begin{lem}
\label{lem_recursion_T}
Let $S,T\geq 0$ and $\mr \in (\Mr)_{S,T}$.
Then, there exists $L \in \Z_{>0}$ and $N\geq k_i \geq 0$, $m_i \in \bigoplus_{S\geq s\geq 0} (\Mr)_{s, k_i}$, $g_i\in \Ort^{\ee\reg}$ for $i=1,\dots,L$ such that:
\begin{itemize}
\item
$\mr=\sum_{i=1}^L \frac{g_i}{\ze_\ee^{T-k_i}} m_i$ in $D_\Mra$.
\end{itemize}
\end{lem}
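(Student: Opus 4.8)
\textbf{Proof proposal for Lemma \ref{lem_recursion_T}.}

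The plan is to run a double induction on $(S,T)$ mirroring the proof of Proposition \ref{prop_coherence}, but carefully tracking the power of $\ze_\ee$ that accumulates. Fix $A$ and $\ee$. The key point is that the relations in $N_{\Mra}$ (or rather in $N_\Mrt$, read through $e_A$) express an element of $(\Mr)_{S,T}$ with $T$ large in terms of elements of lower $T$-degree, times coefficients lying in $\Ort^{\ee\reg}$ divided by a controlled power of $\ze_\ee$. Concretely, when $S\leq N_0$, I would induct on $T$: for $T\leq N$ the statement is trivial (take $L$ enumerating a basis of $\bigoplus_{s\leq S}(\Mr)_{s,T}$, each $g_i=1$, $k_i=T$). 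For $T>N$, since $M_1,\dots,M_r$ are $C_1$-cofinite with the bound $\eqref{eq_N_sum}$, any $\mr\in(\Mr)_{S,T}$ is a linear combination of vectors $a(-1)_s\mr'$ with $a\in V_\Delta$, $\Delta\geq 1$, $s\in[r]$ and $\mr'\in(\Mr)_{S,T-\Delta}$. I would apply the first relation of Lemma \ref{N_contain_t}: in $D_\Mrt$,
\begin{align*}
1\otimes a(-1)_s\mr' = \sum_{k\geq 0}(z_s-z_t)^k\otimes a(-k-1)_0^*\mr'
 + \sum_{s'\neq s}\sum_{k\geq 0}(z_s-z_{s'})^{-k-1}\otimes a(k)_{s'}\mr'.
\end{align*}
Here $a(-k-1)_0^*\mr'\in(\Mr)_{S-\Delta-k,\,T-\Delta}$ and $a(k)_{s'}\mr'\in(\Mr)_{S,\,T-k-1}$, so the $T$-degree strictly drops, and the induction hypothesis applies to each term. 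The coefficients $(z_s-z_t)^k$ and $(z_s-z_{s'})^{-k-1}$ must be re-expressed via $e_A$ as (element of $\Ort^{\ee\reg}$)$/\ze_\ee^{(\text{something})}$; this is where Lemma \ref{lem_deg_formula} and Lemma \ref{lem_noether} enter, giving $\deg_A^\ee$ of each generator and hence the exact power of $\ze_\ee$ needed in the denominator. Bookkeeping the total $\ze_\ee$-power so that it equals $T-k_i$ for the resulting terms is the content of the computation.

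For the outer induction on $S>N_0$: by the finite generation of $M_0^\vee$ (the hypothesis that $M_0^\vee$ is finitely generated), $\mr\in(\Mr)_{S,T}$ is a linear combination of $a(\Delta-1+k)_0^*\mr'$ with $a\in V_\Delta$, $k\geq 1$, $\mr'\in(\Mr)_{S-k,T}$; here $S-k<S$, so the $S$-induction hypothesis applies, and I would use the second relation of Lemma \ref{N_contain_t} to rewrite $a(\Delta-1+k)_0^*$ (viewing $a(n)_0^*$ as $a(n)_t$ plus terms $\sum_{s'\neq t}a(k')_{s'}$ with polynomial-in-$(z_{s'}-z_t)$ coefficients) in terms that either lower $S$ or have controlled $T$-degree, again passing the coefficients through $e_A$.

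The main obstacle I anticipate is the precise denominator accounting: after applying the relations, each coefficient $(z_s-z_{s'})^{\pm k}$ has $\deg_A^\ee$ equal to $0$ or $\pm k$ depending on whether $\{s,s'\}\subset\Leaf(\ee)$, and I need to check that when an element of $(\Mr)_{s',k_i}$ with $k_i<T$ appears, the accumulated $\ze_\ee$-power is \emph{at most} $T-k_i$ (and can be padded up to exactly $T-k_i$ using $\ze_\ee\in\Ort^{\ee\reg}$ from Lemma \ref{lem_noether}(2), since multiplying $g_i$ by $\ze_\ee$ stays in $\Ort^{\ee\reg}$ and shifts the exponent). The subtlety is that the grading \eqref{eq_Mr_T} records $L(0)$-degrees on $M_1,\dots,M_r$ while $\ze_\ee$ tracks the $z$-separation within $\Leaf(\ee)$, and one must verify these are compatible, i.e. that lowering $T$ by $d$ via a mode $a(k)_{s'}$ with $s'\in\Leaf(\ee)$ indeed only costs $\ze_\ee^{-d}$ at worst; this follows from Lemma \ref{lem_change_of_derivation} and Lemma \ref{lem_deg_formula} but needs to be spelled out. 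Once that bound is established, the induction closes and the finiteness of $L$ follows from the finite-dimensionality of each $\bigoplus_{s\leq S}(\Mr)_{s,k_i}$ with $k_i\leq N$.
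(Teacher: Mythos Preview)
Your approach is essentially the paper's: apply the first relation of Lemma~\ref{N_contain_t} to $a(-1)_p v$, rewrite each coefficient $(z_p-z_s)^{-k-1}$ as $\ze_\ee^{-(k+1)\deg_A^\ee(z_p-z_s)}\bigl(\ze_\ee^{\deg_A^\ee(z_p-z_s)}/(z_p-z_s)\bigr)^{k+1}$ with the bracketed factor in $\Ort^{\ee\reg}$, observe that the resulting $\ze_\ee$-exponent is at most the drop in $T$-degree (both equal $k+1$ in the worst case $\deg_A^\ee(z_p-z_s)=1$), pad with $\ze_\ee\in\Ort^{\ee\reg}$ when the exponent falls short, and induct. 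Your bookkeeping concern is exactly the content of the paper's one displayed computation.

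Two simplifications relative to what you wrote. First, the outer induction on $S$ is unnecessary: the lemma only asks for $m_i\in\bigoplus_{s\leq S}(\Mr)_{s,k_i}$, not $\bigoplus_{s\leq N_0}$, so you never need to reduce $S$. The paper inducts on $T$ alone, noting that both $a(k)_s v\in(\Mr)_{S,\,T-k-1}$ and $a(-k-1)_0^*v\in(\Mr)_{S-\Delta_a-k,\,T-\Delta_a}$ have strictly smaller $T$-degree and $S$-degree $\leq S$; this sidesteps the issue you anticipate of $T$ increasing in the $S$-step. Second, Lemma~\ref{lem_change_of_derivation} plays no role here; only Lemma~\ref{lem_deg_formula} and the definition of $\Ort^{\ee\reg}$ are used.
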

\begin{proof}
If $T\leq N$, then $m=m_1$ and $g_1=1$ satisfy the condition.
Assume that $T>N$. Then, similarly to the proof of Proposition \ref{prop_coherence},
we may assume that $\mr=a(-1)_p v$ for some $a\in V_+$ and $v \in (\Mr)_{T-\Delta_a}$.
By definition, in $D_\Mra$
\begin{align*}
a(-1)_p v &=\sum_{k \geq 0} (z_i-z_{r_A})^k a(-k-1)_0^* v
+ \sum_{s \in [r], s \neq p} \sum_{k \geq 0}
(-1)^n (z_p-z_s)^{-k-1} a(k)_s v\\
&=\sum_{k \geq 0} (z_i-z_{r_A})^k a(-k-1)_0^* v
+ \sum_{s \in [r], s \neq p} \sum_{k \geq 0}
(-1)^n \ze_\ee^{-(k+1)\deg_A^\ee(z_p-z_s)} \Bigl(\frac{\ze_\ee^{\deg_A^\ee(z_p-z_s)}}{z_p-z_s}\Bigr)^{k+1} a(k)_s v.
\end{align*}
Since $(z_i-z_{r_A})^k,(z_s-z_{r_A})^{2\Delta-l-1-k},\Bigl(\frac{\ze_\ee^{\deg_A^\ee(z_p-z_s)}}{z_p-z_s}\Bigr)^{k+1}$
are in $\Ort^{\ee\reg}$,
the assertion follows from $a(k)_s v\in (\Mr)_{S,T-k-1}, a(-k-1)_0^* v \in (\Mr)_{S-\Delta-k, T-\Delta_a}$ and the induction used in the proof of Proposition \ref{prop_coherence}.
\end{proof}


Set 
\begin{align*}
E_{A,\ee} = \sum_{l \in \Leaf(\ee)} (z_{l}-z_{R(d(\ee))})L(-1)_l,
\end{align*}
which is a linear map from $\Ort \otimes \Mr$ to itself.
The operator $L(-1)_l$ sends $(\Mr)_{S,T}$ to $(\Mr)_{S,T+1}$.
From Lemma \ref{lem_deg_formula}, $\deg_\ee (z_{l}-z_{R(d(\ee))})=1$ for any $l \in \Leaf(\ee)$.
Since
\begin{align}
E_{A,\ee} = \sum_{l \in \Leaf(\ee)} \ze_\ee \frac{(z_{l}-z_{R(d(\ee))})}{\ze_\ee}L(-1)_l
\label{eq_E_A_ee}
\end{align}
with $\frac{(z_{l}-z_{R(d(\ee))})}{\ze_\ee}\in \Ort^{\ee\reg}$, by Lemma \ref{lem_recursion_T} and \eqref{eq_E_A_ee},
we have:
\begin{lem}
\label{lem_E_filtration}
Let $S,T \geq 0$ and $\mr\in (\Mr)_{S,T}$.
Then, for any $p\geq 0$, there exists $L^p \in \Z_{>0}$ and $k_i^p \in \Z_{\geq 0}$, $m_i^p \in \bigoplus_{S\geq s\geq 0} (\Mr)_{s,k_i^p}$, $g_i^p \in \Ort^{\ee\reg}$ for $i=1,\dots,L^p$ such that:
\begin{enumerate}
\item
$k_i^p \leq N$ for $i=1,\dots,L$;
\item
$E_{A,\ee}^p \mr=\sum_{i=1}^{L^p} \frac{g_i^p}{\ze_\ee^{T-k_i^p}} m_i^p$ in $D_\Mra$.
\end{enumerate}
\end{lem}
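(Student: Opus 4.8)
The plan is to prove Lemma \ref{lem_E_filtration} by induction on the exponent $p$, using Lemma \ref{lem_change_of_derivation} to convert the operator $E_{A,\ee}$ into the Euler-type operator $\ze_\ee \frac{d}{d\ze_\ee}$ acting on the $A$-coordinate expansion, so that passing one more power of $E_{A,\ee}$ across the coefficient $\frac{1}{\ze_\ee^{T-k_i}}$ is a bookkeeping computation of how $\ze_\ee \frac{d}{d\ze_\ee}$ interacts with $\ze_\ee^{-(T-k_i)}$ and with elements of $\Ort^{\ee\reg}$.

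First I would record the base case $p=0$: this is exactly Lemma \ref{lem_recursion_T}, which already gives $\mr = \sum_i \frac{g_i}{\ze_\ee^{T-k_i}} m_i$ in $D_\Mra$ with $g_i \in \Ort^{\ee\reg}$, $N \geq k_i \geq 0$ and $m_i \in \bigoplus_{S\geq s\geq 0}(\Mr)_{s,k_i}$. For the inductive step, assume $E_{A,\ee}^p \mr = \sum_{i=1}^{L^p} \frac{g_i^p}{\ze_\ee^{T - k_i^p}} m_i^p$ of the stated form, and apply $E_{A,\ee}$ once more. By \eqref{eq_E_A_ee} and the definition of the $\Drt$-module structure on $\Ort\otimes \Mr$, $E_{A,\ee}$ acts as a derivation, so
\begin{align*}
E_{A,\ee}\Bigl(\frac{g_i^p}{\ze_\ee^{T-k_i^p}} m_i^p\Bigr)
= \Bigl(E_{A,\ee}\frac{g_i^p}{\ze_\ee^{T-k_i^p}}\Bigr) m_i^p
+ \frac{g_i^p}{\ze_\ee^{T-k_i^p}}\, E_{A,\ee} m_i^p.
\end{align*}
For the first term I would use that, as a differential operator on $\Xr$, $E_{A,\ee} = \sum_{l\in\Leaf(\ee)}(z_l - z_{R(d(\ee))})\pa_l = \ze_\ee\frac{d}{d\ze_\ee}$ by Lemma \ref{lem_change_of_derivation}, hence $E_{A,\ee}\ze_\ee^{-m} = -m\,\ze_\ee^{-m}$ and $E_{A,\ee}g_i^p = \ze_\ee\frac{d}{d\ze_\ee}g_i^p$; the key point is that $\Ort^{\ee\reg}$ is closed under $\ze_\ee\frac{d}{d\ze_\ee}$ (or at least that $\ze_\ee\frac{d}{d\ze_\ee}g_i^p$ still has $\deg_A^\ee \geq 0$ and lies in a localization controlled by $\Ort^{\ee\reg}$, which one checks on the generators $(z_i-z_j)$, $\frac{\ze_\ee^{\deg}}{z_i-z_j}$ and its inverse, using that $\deg_A^\ee$ of each is $0$ after the normalization), so the first term is again of the form $\frac{(\text{element of }\Ort^{\ee\reg})}{\ze_\ee^{T-k_i^p}} m_i^p$ — possibly after absorbing the factor $\ze_\ee$ from \eqref{eq_E_A_ee} to lower the pole order, which only helps. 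For the second term, $E_{A,\ee}m_i^p$ raises the $T$-degree of $m_i^p$ from $k_i^p$ to $k_i^p + 1$; if $k_i^p + 1 \leq N$ we are already done, and if $k_i^p + 1 > N$ (which forces $k_i^p = N$) I apply Lemma \ref{lem_recursion_T} to $E_{A,\ee}m_i^p \in \bigoplus_{S\geq s}(\Mr)_{s,k_i^p+1}$ to rewrite it, in $D_\Mra$, as $\sum_j \frac{h_j}{\ze_\ee^{(k_i^p+1) - l_j}} n_j$ with $h_j \in \Ort^{\ee\reg}$, $N\geq l_j\geq 0$, $n_j \in \bigoplus_{S\geq s}(\Mr)_{s,l_j}$; multiplying by $\frac{g_i^p}{\ze_\ee^{T-k_i^p}}$ and noting that $\ze_\ee$ from \eqref{eq_E_A_ee} contributes, the total pole order becomes $\leq (T - k_i^p) + (k_i^p + 1 - l_j) - 1 = T - l_j$, which is exactly the required shape $\frac{(\text{element of }\Ort^{\ee\reg})}{\ze_\ee^{T-l_j}} n_j$.

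Collecting all the resulting terms over $i$ and $j$ gives the desired expression $E_{A,\ee}^{p+1}\mr = \sum \frac{g}{\ze_\ee^{T-k}} m$ with $g\in\Ort^{\ee\reg}$, $N\geq k\geq 0$, $m\in\bigoplus_{S\geq s}(\Mr)_{s,k}$, completing the induction. The main obstacle I anticipate is the first term: verifying that applying $\ze_\ee\frac{d}{d\ze_\ee}$ (equivalently $E_{A,\ee}$) to an element of $\Ort^{\ee\reg}$ keeps it inside $\Ort^{\ee\reg}$ (or inside the $\ze_\ee$-adically controlled overring that the statement implicitly allows). This needs a small separate computation: it suffices to check $E_{A,\ee}$ applied to each of the finitely many $\C$-algebra generators of $\Ort^{\ee\reg}$ lands back in $\Ort^{\ee\reg}$, using the product rule together with the identities $E_{A,\ee}(z_i-z_j) = \deg_A^\ee(z_i-z_j)\cdot(z_i-z_j)$ when $i,j\in\Leaf(\ee)$ (from Lemma \ref{lem_change_of_derivation} applied componentwise, since $\sum_l z_l\pa_l$ is a scaling and $\sum_l \pa_l$ a translation) and $E_{A,\ee}(z_i-z_j)=0$ otherwise — so $E_{A,\ee}$ acts on each generator essentially by multiplication by an integer, and Leibniz's rule then keeps us in $\Ort^{\ee\reg}$. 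Once this closure property is in hand, the rest is pure bookkeeping of pole orders and degree shifts, exactly parallel to the induction in the proof of Proposition \ref{prop_coherence} and Lemma \ref{lem_recursion_T}.
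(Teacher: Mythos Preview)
Your inductive framework and the base case are right, and your handling of what you call the ``second term'' is exactly the content of the paper's proof. But the ``first term'' in your Leibniz decomposition is spurious, because you have misidentified the operator $E_{A,\ee}$.

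By definition $E_{A,\ee} = \sum_{l\in\Leaf(\ee)} (z_l - z_{R(d(\ee))})\,L(-1)_l$, where $L(-1)_l$ acts only on the $\Mr$ factor. This is \emph{not} the $D$-module operator $\sum_l (z_l - z_R)\pa_l$; the two differ precisely by the term where $\pa_l$ differentiates the function coefficient (compare \eqref{eq_Dt_def}). Lemma~\ref{lem_change_of_derivation} identifies $\sum_l (z_l - z_R)\pa_l$ with $\ze_\ee\frac{d}{d\ze_\ee}$ as operators on \emph{functions}, and this is what makes $E_{A,\ee}$ correspond to $\ze_\ee\frac{d}{d\ze_\ee}$ \emph{after} evaluating on a conformal block (that is the content of the second assertion of Proposition~\ref{prop_differential_equation}); but as an operator on $\Ort\otimes\Mr$ itself, $E_{A,\ee}$ is $\Ort$-linear:
\[
E_{A,\ee}\Bigl(\tfrac{g_i^p}{\ze_\ee^{T-k_i^p}}\, m_i^p\Bigr)
\;=\; \tfrac{g_i^p}{\ze_\ee^{T-k_i^p}}\, E_{A,\ee}\, m_i^p
\;=\; \sum_{l\in\Leaf(\ee)} \tfrac{g_i^p\cdot (z_l-z_R)/\ze_\ee}{\ze_\ee^{\,T-k_i^p-1}}\; L(-1)_l\, m_i^p .
\]
So the closure of $\Ort^{\ee\reg}$ under $\ze_\ee\frac{d}{d\ze_\ee}$ --- the ``main obstacle'' you anticipate --- is simply not needed. (Your verification of that closure also has a small gap: when exactly one of $i,j$ lies in $\Leaf(\ee)$, one has $\ze_\ee\frac{d}{d\ze_\ee}(z_i-z_j)=z_i-z_{R(d(\ee))}\neq 0$, not zero as you state, though the closure still holds.)

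Once you drop the first term, what remains is precisely the paper's argument: each application of $E_{A,\ee}$ multiplies by an element of $\ze_\ee\cdot\Ort^{\ee\reg}$ and raises the $T$-degree by one via $L(-1)_l$, after which Lemma~\ref{lem_recursion_T} brings the result back into the required form with pole order $T-l_j$. Your pole-order bookkeeping $(T-k_i^p)+(k_i^p+1-l_j)-1 = T-l_j$ is correct and is the whole computation.
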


The following proposition is important:
\begin{prop}
\label{prop_differential_equation}
Let $M_0,M_1,\dots,M_r \in \Vmodf$ and $\mr \in \Mr$.
Let $A\in \Tr$ and $\ee \in E(A)$.
Then, there exists $K \in \Z_{> 0}$
and $f_0,\dots,f_{K-1} \in \Ort^{\ee\reg}$ such that
\begin{align*}
\left(E_{A,\ee}^K+\sum_{n=0}^{K-1} f_n E_{A,\ee}^n\right)\mr=0
\end{align*}
in $D_\Mra$.
Furthermore, for any open subset $U\subset \Xr$ and $C_{r_A} \in \CB_\Mra(U)$,
$C_{r_A}$ satisfies the following differential equation:
\begin{align*}
\left(\left(\zeta_\ee \frac{d}{d\zeta_\ee}\right)^K + \sum_{n=0}^{K-1}f_n \left(\zeta_\ee \frac{d}{d\zeta_\ee}\right)^n\right) C_{r_A}(\mr)=0.
\end{align*}
\end{prop}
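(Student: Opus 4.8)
The plan is to produce the stated identity inside the $\Drt$-module $D_\Mra$ first, by a Noetherianity argument, and then to transport it to conformal blocks through the $\Drt$-action.

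\emph{Step 1: the identity in $D_\Mra$.} As in the proof of Proposition~\ref{prop_coherence}, since $D_\bullet$ commutes with finite direct sums and $E_{A,\ee}$ is linear, I would reduce to the case where $M_0,\dots,M_r$ are of the form $\bigoplus_{k\ge 0}(\cdot)_{\Delta+k}$, so that $\Mr$ carries the $\Z_{\ge 0}^2$-grading of \eqref{eq_Mr_T}; writing $\mr$ as a finite sum of bihomogeneous vectors, it is enough to treat each one, so assume $\mr\in(\Mr)_{S,T}$. By Lemma~\ref{lem_E_filtration}, for every $p\ge 0$ one may write $E_{A,\ee}^p\mr=\sum_i \zeta_\ee^{-(T-k_i^p)}g_i^p\,m_i^p$ in $D_\Mra$ with $g_i^p\in\Ort^{\ee\reg}$, $0\le k_i^p\le N$ and $m_i^p\in\bigoplus_{0\le s\le S}(\Mr)_{s,k_i^p}$. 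Since the graded pieces $(\Mr)_{s,k}$ are finite-dimensional by (M2) and (M4), the set
\begin{align*}
G=\Bigl\{\,\zeta_\ee^{-(T-k)}m\ :\ 0\le k\le N,\ m\in\bigoplus_{0\le s\le S}(\Mr)_{s,k}\,\Bigr\}
\end{align*}
is finite, so the $\Ort^{\ee\reg}$-submodule $W\subset D_\Mra$ it generates is finitely generated, and $E_{A,\ee}^p\mr\in W$ for all $p$. The ring $\Ort^{\ee\reg}$ is Noetherian (Lemma~\ref{lem_noether}(1)), hence $W$ is a Noetherian $\Ort^{\ee\reg}$-module, so the ascending chain $\sum_{n=0}^{p}\Ort^{\ee\reg}E_{A,\ee}^n\mr\subset W$ is eventually constant. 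Choosing $K\ge 1$ with $\sum_{n=0}^{K}\Ort^{\ee\reg}E_{A,\ee}^n\mr=\sum_{n=0}^{K-1}\Ort^{\ee\reg}E_{A,\ee}^n\mr$ and moving terms gives $f_0,\dots,f_{K-1}\in\Ort^{\ee\reg}$ with $\bigl(E_{A,\ee}^K+\sum_{n=0}^{K-1}f_nE_{A,\ee}^n\bigr)\mr=0$ in $D_\Mra$ (for $\mr=0$ take $K=1$, $f_0=0$).

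\emph{Step 2: the differential equation.} By Section~\ref{sec_D_translation}, a section $C_{r_A}\in\CB_\Mra(U)$ is a $\Drt$-module homomorphism $\tilde C_{r_A}\colon D_\Mra\to\Ort^\an(U)$, and I would apply it to the identity from Step 1. Here $f_n\in\Ort^{\ee\reg}\subset\C[(z_i-z_j)^{\pm}]\subset\Drt$ acts on $\Ort^\an$ by multiplication, and $\zeta_\ee\frac{d}{d\zeta_\ee}=\sum_{l\in\Leaf(\ee)}(z_l-z_{R(d(\ee))})\pa_l$ lies in $\Drt$ by Lemma~\ref{lem_change_of_derivation}; moreover, because either $r_A\notin\Leaf(\ee)$ or $r_A=R(d(\ee))$, the $\pa_{r_A}$-term of this operator is absent, so only the derivations $\pa_l$ with $l\neq r_A$ occur, and these are exactly the ones governed by (CBT1). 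Using (CBT1) and the $\Drt$-module structure \eqref{eq_Dt_def} (cf.\ Lemma~\ref{lem_change_of_derivation}), one checks by induction on $n$ that $\tilde C_{r_A}(E_{A,\ee}^n\mr)=\bigl(\zeta_\ee\frac{d}{d\zeta_\ee}\bigr)^nC_{r_A}(\mr)$, i.e.\ that $E_{A,\ee}$ realizes the differential operator $\zeta_\ee\frac{d}{d\zeta_\ee}$ on the cyclic vector $1\otimes\mr$ of $D_\Mra$. Applying $\tilde C_{r_A}$ to the identity then yields
\begin{align*}
\Bigl(\bigl(\zeta_\ee\tfrac{d}{d\zeta_\ee}\bigr)^K+\sum_{n=0}^{K-1}f_n\bigl(\zeta_\ee\tfrac{d}{d\zeta_\ee}\bigr)^n\Bigr)C_{r_A}(\mr)=0,
\end{align*}
as desired.

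\emph{Main obstacle.} The substantive ingredient is Lemma~\ref{lem_E_filtration} (built on Lemma~\ref{lem_recursion_T}): it is what guarantees that expressing a high--$T$-degree module vector of $D_\Mra$ in terms of bounded-degree ones costs only the bounded negative power $\zeta_\ee^{-(T-k)}$ of $\zeta_\ee$ with coefficients in $\Ort^{\ee\reg}$ --- the point where $C_1$-cofiniteness of $M_1,\dots,M_r$ and the Noetherian ring $\Ort^{\ee\reg}$ of Lemma~\ref{lem_noether} are really used. Granting that, the remaining work is the bookkeeping of Step 2: matching the iterated action of $E_{A,\ee}$ on $D_\Mra$ with iterated applications of $\zeta_\ee\frac{d}{d\zeta_\ee}$ on $C_{r_A}(\mr)$, via the vanishing of the $\pa_{r_A}$-term and the compatibility of \eqref{eq_Dt_def} with Lemma~\ref{lem_change_of_derivation}. (If $E_{A,\ee}$ is normalized as the degree-raising operator $\sum_l(z_l-z_{R(d(\ee))})L(-1)_l$ rather than as the full differential operator $\zeta_\ee\frac{d}{d\zeta_\ee}$, the identification holds up to a triangular change of the coefficients $f_n$ --- still lying in $\Ort^{\ee\reg}$ --- which does not affect the statement.)
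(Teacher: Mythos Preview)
Your proof is correct and follows the same approach as the paper: invoke Lemma~\ref{lem_E_filtration} to place all $E_{A,\ee}^p\mr$ inside a finitely generated $\Ort^{\ee\reg}$-module, use Noetherianity (Lemma~\ref{lem_noether}) to extract a monic linear relation, and then pass to conformal blocks via Lemma~\ref{lem_change_of_derivation}. Your Step~2 is in fact more explicit than the paper's one-line appeal to Lemma~\ref{lem_change_of_derivation}: your observation that $r_A\notin\Leaf(\ee)$ or $r_A=R(d(\ee))$ (so no $\pa_{r_A}$-term appears) is exactly what is needed, and your closing remark about the triangular change between $E_{A,\ee}^n$ and $(\zeta_\ee\,d/d\zeta_\ee)^n$ catches a point the paper leaves implicit --- indeed $E_{A,\ee}^n$ acts on $1\otimes\mr$ as the falling factorial $\prod_{j=0}^{n-1}(\zeta_\ee\,d/d\zeta_\ee-j)$, so the passage only reshuffles the $f_n$ within $\Ort^{\ee\reg}$.
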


\begin{proof}
Let $L^p \in \Z_{>0}$ and $k_i^p \in \Z_{\geq 0}$, $m_i^p \in \bigoplus_{S\geq s\geq 0} (\Mr)_{s,k_i^p}$, $g_i^p \in \Ort^{\ee\reg}$
be those of Lemma \ref{lem_E_filtration}.
Set $v_p=\sum_{i=1}^{L^p} \frac{g_i^p}{\ze_\ee^{T-k_i^p}} m_i^p$.
Then, for any $p\geq 0$,
$v_p$ is an element of 
\begin{align}
\bigoplus_{\substack{S\geq s\geq 0 \\ N \geq  k\geq 0}} \ze_{\ee}^{-T+k}\Ort^{\ee\reg}\otimes (\Mr)_{s,k}.
\label{eq_ort_mr_k}
\end{align}
Since \eqref{eq_ort_mr_k} is a finitely generated $\Ort^{\ee\reg}$-module and
$\Ort^{\ee\reg}$ is Noether by Lemma \ref{lem_noether},
there exists $K \in \Z_{> 0}$
and $f_0,\dots,f_{K-1} \in \Ort^{\ee\reg}$ such that
\begin{align*}
v_K= \sum_{n=0}^{K-1} f_n v_n
\end{align*}
in \eqref{eq_ort_mr_k}.
Hence the assertion follows from Lemma \ref{lem_change_of_derivation}.
\end{proof}

\begin{prop}
\label{prop_L0}
Let $h_i \in \C$ and $\mr \in \Mra$
with $m_i\in (M_i)_{h_i}$ and $m_0 \in (M_0)_{h_0}^\vee$.
Then, there exists $N \in \Z_{>0}$ such that
for any $C_{r_A} \in \CB_\Mra$
\begin{align*}
(x_s \pa_s-h)^N C_{r_A}(\mr)=0,
\end{align*}
where $h =h_0- \sum_{1\leq s\leq r}h_s$.
\end{prop}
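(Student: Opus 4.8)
The plan is to read off a first-order Euler relation inside $D_\Mra$ from the defining relations of $N_\Mrt$ and then to iterate it against the nilpotent part of $L(0)$.

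First I would substitute $a=\om$, $n=1$ and $t=r_A$ into the second identity of Lemma~\ref{N_contain_t}. Since $L(0)=\om(1)$ and $L(-1)=\om(0)$, this says that
\[
1\otimes L(0)_0^*\mr-1\otimes L(0)_{r_A}\mr-\sum_{s\in[r],\,s\neq r_A}\Bigl((z_s-z_{r_A})\otimes L(-1)_s\mr+1\otimes L(0)_s\mr\Bigr)\in N_\Mrt,
\]
so that in $D_\Mra$ we have
\[
1\otimes\Bigl(L(0)_0^*-\sum_{s\in[r]}L(0)_s\Bigr)\mr=\sum_{s\in[r]}(z_s-z_{r_A})\otimes L(-1)_s\mr,
\]
the $s=r_A$ summand on the right being zero. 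Applying a section $C_{r_A}\in\CB_\Mra(U)$ (i.e.\ the associated $\Drt$-homomorphism $D_\Mra\to\Ort^\an(U)$) and using (CBT1) to replace $C_{r_A}(L(-1)_s\mr)$ by $\pa_sC_{r_A}(\mr)$ for $s\neq r_A$, the right-hand side becomes $\mathcal D\,C_{r_A}(\mr)$ with $\mathcal D:=\sum_{s\in[r]}(z_s-z_{r_A})\pa_s$. I would then record, as the top-of-the-tree counterpart of Lemma~\ref{lem_change_of_derivation}, that $\mathcal D$ generates the rescaling $z_l\mapsto z_{r_A}+\lambda(z_l-z_{r_A})$: it fixes every $\ze_e$ and scales $x_A$ with degree one, so $\mathcal D=x_A\frac{d}{dx_A}=\sum_{v\in V(A)}x_v\frac{\partial}{\partial x_v}$ in the $A$- and $x$-coordinates, which is the operator written $x_s\pa_s$ in the statement; its proof is the same bookkeeping as for Lemma~\ref{lem_change_of_derivation}. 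Hence, for every $w$ in $W:=(M_0^\vee)_{h_0}\otimes(M_1)_{h_1}\otimes\cdots\otimes(M_r)_{h_r}$,
\[
\Bigl(x_A\tfrac{d}{dx_A}\Bigr)C_{r_A}(w)=C_{r_A}\Bigl(\bigl(L(0)_0^*-\textstyle\sum_{s=1}^rL(0)_s\bigr)w\Bigr).
\]

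Next I would analyze $T:=L(0)_0^*-\sum_{s=1}^rL(0)_s$ on $W$. Writing $L(0)=h_i+\nu_i$ on $(M_i)_{h_i}$, condition (M3) gives a constant $N_i$ with $\nu_i^{N_i}=0$, and dually $(\nu_0^*)^{N_0}=0$ on $(M_0^\vee)_{h_0}=(M_0)_{h_0}^*$. Therefore $T=h+\mathcal N$ on $W$, where $h=h_0-\sum_{s=1}^rh_s$ and $\mathcal N=\nu_0^*-\sum_{s=1}^r\nu_s$ (each term tensored with the identities on the remaining factors) is a sum of pairwise commuting nilpotent operators; hence $\mathcal N^N=0$ for $N:=\sum_{i=0}^rN_i$, a bound depending only on the weights and on the modules, not on the section. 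Feeding this back in, $\bigl(x_A\frac{d}{dx_A}-h\bigr)C_{r_A}(w)=C_{r_A}(\mathcal N w)$ for all $w\in W$, and since $\mathcal N$ maps $W$ into $W$ I may iterate:
\[
\Bigl(x_A\tfrac{d}{dx_A}-h\Bigr)^N C_{r_A}(\mr)=C_{r_A}(\mathcal N^N\mr)=0,
\]
which is the assertion.

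I do not expect a genuine obstacle: the statement reduces to the $L(0)$-grading relation already isolated in Lemma~\ref{N_contain_t} together with a coordinate identification. The two points that need a little care are (i) the identification $\mathcal D=\sum_{s}(z_s-z_{r_A})\pa_s=x_A\frac{d}{dx_A}$ --- the analogue of Lemma~\ref{lem_change_of_derivation} for the uppermost vertex, proved by the same check that $\mathcal D$ kills each $\ze_e$ and scales $x_A$ by degree one --- and (ii) keeping the nilpotency index $N$ uniform over all sections $C_{r_A}$ (immediate from (M2)--(M3), which also make $W$ finite-dimensional). If $\mr$ is not a pure tensor one first splits it by multilinearity into a finite sum of pure tensors in $W$, which does not change the bound on $N$.
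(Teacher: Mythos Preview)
Your proof is correct and follows essentially the same route as the paper: substitute $a=\om$, $n=1$ into the second identity of Lemma~\ref{N_contain_t}, pass through a section $C_{r_A}$ using (CBT1) and translation invariance to obtain the Euler relation $C_{r_A}\bigl((L(0)_0^*-\sum_s L(0)_s)\mr\bigr)=\bigl(\sum_s z_s\pa_s\bigr)C_{r_A}(\mr)$, and then iterate against the nilpotent part of $L(0)_0^*-\sum_s L(0)_s-h$. Your added detail---the explicit coordinate identification $\sum_s(z_s-z_{r_A})\pa_s=x_A\tfrac{d}{dx_A}$ and the uniform bound $N=\sum_i N_i$ coming from (M3)---is welcome and fills in what the paper leaves implicit.
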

\begin{proof}
Let $\mr \in \Mra$.
By Lemma \ref{N_contain_t},
\begin{align*}
L(0)_0 \mr = \om(1)_0^* \mr =\sum_{1\leq s\leq r} \left((x_s-x_{r_A}) \om(0)_s+\om(1)_s\right) \mr.
\end{align*}
Thus, for any $C_{r_A} \in \CB_\Mra$,

\begin{align}
\begin{split}
C_{r_A}((L(0)_0-L(0)_1-\dots-L(0)_r )\mr) &= \sum_{1\leq s\leq r} (x_s-x_{r_A}) \pa_s C_{r_A} (\mr)\\
&= \sum_{1\leq s\leq r} x_s \pa_s C_{r_A} (\mr),\label{eq_cyclic_L0}
\end{split}
\end{align}
where in the last equality we used $\sum_{1\leq s\leq r} \pa_s C_{r_A} (\mr)=0$.
Since the action of the linear operator $(L(0)_0-L(0)_1-\dots-L(0)_r)-h$ on $\mr$ is nilpotent,
the assertion holds.
\end{proof}

The following Lemma is easy to show:
\begin{lem}
\label{lem_log_finite}
Let $h \in \C$ and $N\in \Z_{>0}$.
Assume that $f\in \C[[z]][z^\C,\log z]$ satisfies
\begin{align*}
(z\frac{d}{dz}-h)^N f =0.
\end{align*}
Then, there exists $a_0,\dots,a_{N-1}\in \C$ such that
 $f(z)=z^h (a_0+a_1 \log z+\dots +a_{N-1} (\log z)^{N-1})$.
\end{lem}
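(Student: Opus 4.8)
The plan is to diagonalize the Euler operator $\theta = z\frac{d}{dz}$ on the formal space $\C[[z]][z^\C][\log z]$ with respect to the natural grading by the exponent of $z$, and thereby reduce the statement to an elementary fact about the operator $\frac{d}{dw}$ on the polynomial ring $\C[w]$, where $w$ plays the role of $\log z$.

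First I would write $f \in \C[[z]][z^\C][\log z]$ uniquely as $f = \sum_{t\in \C} z^t q_t(\log z)$, where each $q_t \in \C[w]$ is a polynomial and the support $\{t : q_t \neq 0\}$ is contained in finitely many cosets $t_0 + \Z_{\geq 0}$; this is just a regrouping of the monomial basis $\{z^t(\log z)^k\}_{t\in\C,\,k\geq 0}$. The key observation is that $\theta$ preserves each homogeneous piece $z^t\,\C[w]$, acting there as the operator $D_t := t + \frac{d}{dw}$, since $\theta(z^t w^k) = t\,z^t w^k + k\,z^t w^{k-1}$. Consequently $(\theta - h)^N f = \sum_t z^t\,(D_t - h)^N q_t$, and because the pieces $z^t\,\C[w]$ are linearly independent in the formal space, the hypothesis $(\theta-h)^N f = 0$ is equivalent to $(D_t - h)^N q_t = 0$ in $\C[w]$ for every $t \in \C$.

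Next I would treat the two cases for $t$. If $t \neq h$, then $D_t - h = (t-h) + \frac{d}{dw}$ is invertible on $\C[w]$: restricted to polynomials of degree $\leq m$ it is the nonzero scalar $(t-h)$ plus a nilpotent operator, so its inverse is given by the finite Neumann series $\frac{1}{t-h}\sum_{j\geq 0}\bigl(-\tfrac{1}{t-h}\tfrac{d}{dw}\bigr)^j$; hence $(D_t - h)^N$ is invertible and $q_t = 0$. If $t = h$, then $D_h - h = \frac{d}{dw}$, so $(D_h - h)^N q_h = \frac{d^N}{dw^N} q_h = 0$ forces $\deg q_h < N$, i.e. $q_h = a_0 + a_1 w + \cdots + a_{N-1}w^{N-1}$ for some $a_0,\dots,a_{N-1} \in \C$. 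Combining, $f = z^h q_h(\log z) = z^h\bigl(a_0 + a_1\log z + \cdots + a_{N-1}(\log z)^{N-1}\bigr)$, which is the claim.

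I do not anticipate a genuine obstacle: the only two points needing care are the verification that $\theta - h$ acts component by component on the formal series, so that the single differential equation decouples into one equation per exponent $t$, and the invertibility of $(t-h) + \frac{d}{dw}$ on $\C[w]$ for $t \neq h$, both of which become immediate once the exponent grading and the action of $\theta$ on $z^t w^k$ are written out.
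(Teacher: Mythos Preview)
Your argument is correct: the decomposition $f=\sum_t z^t q_t(\log z)$ with $q_t\in\C[w]$, the observation that $z\frac{d}{dz}$ acts on $z^t\C[w]$ as $t+\frac{d}{dw}$, and the invertibility of $(t-h)+\frac{d}{dw}$ on $\C[w]$ for $t\neq h$ together give exactly the conclusion. The paper does not supply a proof at all, stating only that the lemma ``is easy to show''; your write-up is a clean and standard way to fill this in.
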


\subsection{Formal and global conformal blocks}
\label{sec_D_formal}
Let $A\in \Tr_r$.
In Section \ref{sec_config_D},
we introduce $\Drt$-modules $T_A^\conv$.
Let us consider the formal solutions of $D_{\Mra}$,
\begin{align*}
\mathrm{Hom}_{\Drt}(D_{\Mra}, T_A^\conv),
\end{align*}
which we call a {\it formal conformal block}.
Let $C_A \in \mathrm{Hom}_{\Drt}(D_{\Mra}, T_A^\conv)$.
Since formal power series in $T_A^\conv$ converge absolutely and uniformly in $U_A \subset \Xr$, 
$C_A$ defines a section of the (global) conformal block $\CB_\Mra(U_A)$.
This gives a linear map
\begin{align*}
s_A:\mathrm{Hom}_{\Drt}(D_{\Mra}, T_A^\conv) \rightarrow \mathrm{Hom}_{\Drt}(D_{\Mra}, \Ort^\an(U_A)).
\end{align*}

Conversely, we will show that any global conformal block can be expanded.
Let $C\in \CB_\Mra(U_A)$ and $\mr \in \Mra$.
In the $A$-coordinate, $C(\mr)$ is a function on $\{x_A,(\zeta_e)_{e\in E(A)}\}$.
By Proposition \ref{prop_differential_equation}
and Proposition \ref{prop_L0}, $C(\mr)$ satisfies the differential equations
and by Proposition \ref{thm_appendix} in Appendix A,
$C(\mr)$ has a convergent expansion of the form in
$$
\C[[x_A,\zeta_e\mid e\in E(A)]]^\conv[x_A^\C,\log x_A,\zeta_e^\C,\log\zeta_e \mid e\in E(A)]
$$
(see Appendix A for more detail).
We denote it by $e_A(C(\mr))$.
Furthermore, by Lemma \ref{lem_log_finite},
$e_A(C(\mr))$ is in 
\begin{align*}
T_A^\conv = \C[[\zeta_e\mid e\in E(A)]]^\conv[x_A^\C,\log x_A,\zeta_e^\C,\log\zeta_e \mid e\in E(A)]
\end{align*}
or, more exactly, under the assumptions of Proposition \ref{prop_L0},
\begin{align*}
x_A^h \bigoplus_{i=0}^N (\log x_A)^i \C[[\zeta_e\mid e\in E(A)]][\zeta_e^\C,\log\zeta_e \mid e\in E(A)].
\end{align*}
Thus, we have a linear map
\begin{align*}
e_A:\mathrm{Hom}_{\Drt}(D_{\Mra}, \Ort^\an(U_A))\rightarrow \mathrm{Hom}_{\Drt}(D_{\Mra}, T_A^\conv).
\end{align*}


It is clear that $e_A$ and $s_A$ are inverse.
Hence, we have:
\begin{thm}
\label{thm_expansion}
Let $M_0,M_1,\dots,M_r \in \Vmodf$.
For $A \in \Tr_r$, the expansion map
$$
e_A:\CB_{\Mra}(U_A) \rightarrow \mathrm{Hom}_{\Drt}(D_{\Mra}, T_A^\conv)
$$
is a natural isomorphism.
\end{thm}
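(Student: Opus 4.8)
\textbf{Proof proposal for Theorem \ref{thm_expansion}.}

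The plan is to establish the two maps $e_A$ and $s_A$ between $\CB_{\Mra}(U_A)$ and $\mathrm{Hom}_{\Drt}(D_{\Mra}, T_A^\conv)$, and then to check they are mutually inverse. The easy direction is $s_A$: a formal solution $C_A \in \mathrm{Hom}_{\Drt}(D_{\Mra},T_A^\conv)$ is, by definition of $T_A^\conv=\cap_{\mathfrak p} T_A^{\mathfrak p}$, a collection of formal power series each of which converges absolutely and uniformly on every $U_A^{\mathfrak p}$ and hence on $U_A=\cup_{\mathfrak p}U_A^{\mathfrak p}$ (using the fixed branch of $\Log$ from \eqref{eq_log_def} to make each element single-valued there). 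Since the $\Drt$-action on $T_A^\conv$ is the restriction of the genuine action of the differential operators $\pa_i$ on honest holomorphic functions (Proposition in Section \ref{sec_config_D}), evaluating/restricting term-by-term commutes with $\pa_i$, so $s_A(C_A)\in\mathrm{Hom}_{\Drt}(D_{\Mra},\Ort^\an(U_A))=\CB_{\Mra}(U_A)$. Naturality in $M_0,\dots,M_r$ is immediate because both $s_A$ and $e_A$ are built only out of the functor $D_\bullet$ of Lemma \ref{lem_D_functor} and the linear structure.

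The substantive direction is the construction of $e_A$, i.e.\ showing every global solution on the simply-connected domain $U_A$ admits a convergent parenthesized power series expansion. First I would fix $C\in\CB_{\Mra}(U_A)$ and $\mr\in\Mra$; we may reduce to the case $m_i\in(M_i)_{h_i}$ homogeneous by linearity and by (M4). Then $C(\mr)$, read in the $A$-coordinate $\Psi_A=(z_A,x_A,(\ze_e)_{e\in E(A)})$ (Proposition \ref{prop_psiA}), is a holomorphic function of $x_A$ and the $\ze_e$ on the region $\C^\cut\times\Pi_e\D_{p_e}^\cut$ (translation invariance, via Section \ref{sec_D_translation}, removes the $z_A$-dependence). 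Proposition \ref{prop_differential_equation} gives, for each edge $\ee\in E(A)$, a monic ODE of the form $\left((\ze_\ee\tfrac{d}{d\ze_\ee})^K+\sum_{n=0}^{K-1}f_n(\ze_\ee\tfrac{d}{d\ze_\ee})^n\right)C(\mr)=0$ with $f_n\in\Ort^{\ee\reg}$, i.e.\ coefficients holomorphic (indeed with nonnegative $\ze_\ee$-degree) near $\ze_\ee=0$; this is a regular-singular equation in $\ze_\ee$. Proposition \ref{prop_L0} gives the analogous nilpotent relation $(x_A\pa_A-h)^N C(\mr)=0$ controlling the $x_A$-direction. Invoking Proposition \ref{thm_appendix} of Appendix A (the multivariable regular-singularities/convergent-Frobenius statement) applied successively in the variables $x_A$ and each $\ze_e$, I obtain a convergent expansion of $C(\mr)$ lying in $\C[[x_A,\ze_e\mid e\in E(A)]]^\conv[x_A^\C,\log x_A,\ze_e^\C,\log\ze_e\mid e\in E(A)]$; call it $e_A(C(\mr))$. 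By Lemma \ref{lem_log_finite} (with the $x_A\pa_A$-relation) the $x_A$-part has only finitely many logarithms and a single power $x_A^h$, and likewise for each $\ze_e$, so $e_A(C(\mr))$ actually lies in $T_A^\conv$, and more precisely in $x_A^h\bigoplus_{i=0}^N(\log x_A)^i\,\C[[\ze_e]][\ze_e^\C,\log\ze_e]$. Since $e_A$ on the base ring $\Ort^\alg$ is the ring homomorphism $e_A$ of \eqref{eq_eA_conv} and the expansion is compatible with $\pa_i$ (Lemma \ref{lem_change_of_derivation} rewrites $\ze_e\tfrac{d}{d\ze_e}$ and the $\pa_i$ consistently), the assignment $\mr\mapsto e_A(C(\mr))$ is a $\Drt$-module homomorphism $D_{\Mra}\to T_A^\conv$.

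Finally, $e_A$ and $s_A$ are mutually inverse: $s_A\circ e_A=\id$ because a convergent series and the holomorphic function it represents on $U_A$ determine each other (uniqueness of Taylor-type expansions in each coordinate chart, the branch being pinned down by \eqref{eq_log_def}), and $e_A\circ s_A=\id$ because expanding a holomorphic function already presented as a convergent element of $T_A^\conv$ returns the same series. Hence $e_A$ is a linear isomorphism, natural in $(M_0,\dots,M_r)\in\Vmodfo\times\Vmodfr$. The main obstacle is the middle step: one must apply the Appendix A convergence theorem in several variables at once and verify that the hypotheses of Proposition \ref{thm_appendix} are met simultaneously for all edges $\ee\in E(A)$ and for $x_A$ — i.e.\ that the finitely-generated-over-$\Ort^{\ee\reg}$ structure of $D_{\Mra}$ from Lemma \ref{lem_E_filtration} and Proposition \ref{prop_differential_equation}, together with the Noetherianity in Lemma \ref{lem_noether}, really does yield a \emph{joint} regular-singular system whose solutions converge on a common polydisc of $A$-admissible radii, rather than merely iterated one-variable statements with shrinking domains. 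Matching the radii of convergence to the $A$-admissible $\mathfrak p$ of Section \ref{sec_config_conv} (Lemma \ref{lem_admissible_radius}, Lemma \ref{lem_one_fix}) is where the geometry of $U_A$ and the algebra of $\Ort^{\ee\reg}$ must be reconciled.
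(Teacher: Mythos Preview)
Your proposal is correct and follows essentially the same approach as the paper: define $s_A$ by substitution of convergent series, construct $e_A$ by combining Proposition \ref{prop_differential_equation} and Proposition \ref{prop_L0} with the multivariable regular-singularities result Proposition \ref{thm_appendix}, then use Lemma \ref{lem_log_finite} to land in $T_A^\conv$, and observe the two maps are mutually inverse. The ``main obstacle'' you flag at the end is already absorbed into the statement of Proposition \ref{thm_appendix}: it is a genuinely multivariable result (not an iterated one-variable argument), and its hypothesis (PC) --- that the coefficients of the $\ze_\ee$-equation be holomorphic at $\ze_\ee=0$ with at worst poles in the other $\ze_e$ --- is exactly what membership in $\Ort^{\ee\reg}$ provides via Lemma \ref{lem_noether}(3) and Lemma \ref{lem_admissible_radius}, so no separate radius-matching step is needed.
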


Similar to Lemma \ref{lem_block_linear},
we can characterize an element in a formal conformal block $\mathrm{Hom}_{\Drt}(D_{\Mra}, T_A^\conv)$
by a linear map $C_A: \Mr \rightarrow T_A^\conv$.
\begin{lem}
\label{lem_formal_block}
A linear map
$$
C_A:\Mr \rightarrow T_A^\conv,
$$
defines an element in $\mathrm{Hom}_{\Drt}(D_\Mra, T_A^\conv)$ if and only if it satisfies the following conditions:
\begin{enumerate}
\item[fCB1)]
For any $i \in \{1,2,\dots,r\} \setminus \{r_A\}$, $\mr\in \Mr$,
$$
C_A(L(-1)_i \cdot \mr)= \frac{d}{dz_i} C_A(\mr).
$$
\item[fCB2)]
For any $i \in \{1,2,\dots,r\}$, $\mr\in \Mr$,
and $a \in V$, $n \in \Z$,
\begin{align*}
C_A(a(n)_i  \mr)
&=\sum_{k \geq 0}\binom{n}{k} e_A\left((z_{r_A}-z_i)^k\right) C_A(a(-k+n)_0 \mr)\\
&- \sum_{s \in [r], s \neq i} \sum_{k \geq 0}
\binom{n}{k}e_A\left((z_s-z_i)^{n-k}\right) C_A(u,a(k)_s \mr).
\end{align*}
\end{enumerate}
\end{lem}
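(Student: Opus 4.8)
The statement is the formal analogue of Lemma \ref{lem_block_linear} and of the identification $\CB_\Mrt(U)\cong\mathrm{Hom}_{\Drt}(D_\Mrt,\Ort^\an(U))$ established in Section \ref{sec_D_translation}, obtained by replacing the sheaf $\Ort^\an$ with the $\Drt$-module $T_A^\conv$. The plan is therefore to repeat that argument, using from Section \ref{sec_config_D} that $T_A^\conv$ is a $\Drt$-module and that $e_A:\Ort^\alg\rightarrow T_A^\conv$ is at once a $\C$-algebra homomorphism (Lemma \ref{lem_module_ort}) and a $\Drt$-module homomorphism.

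For the ``if'' direction I would start from a linear map $C_A:\Mr\rightarrow T_A^\conv$ satisfying (fCB1) and (fCB2) and extend it $\Ort^\alg$-linearly to
\begin{align*}
\tilde{C}_A:\Ort^\alg\otimes\Mr\rightarrow T_A^\conv,\qquad f\otimes\mr\mapsto e_A(f)\,C_A(\mr),
\end{align*}
then verify that $\tilde{C}_A$ is a $\Drt$-module homomorphism. For the derivations $\pa_i$ with $i\neq r_A$ this is the Leibniz rule on $T_A^\conv$ combined with (fCB1), exactly as in Lemma \ref{lem_block_linear}. For $i=r_A$ the $\Drt$-action on $\Ort^\alg\otimes\Mr$ is the modified one of \eqref{eq_Dt_def}; here one uses that every series in $T_A^\conv$ is annihilated by $\pa_1+\dots+\pa_r$ (the coordinates $x_v$ and $\zeta_e$ being translation invariant), so that $\pa_{r_A}=-\sum_{i\neq r_A}\pa_i$ on $T_A^\conv$, together with the cyclic relation $C_A(L(-1)_0^*\mr)=\sum_{s=1}^r C_A(L(-1)_s\mr)$ obtained from (fCB2) with $a=\omega$, $n=0$ exactly as \eqref{eq_Ct_0} is obtained from (CBT2). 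Finally $\tilde{C}_A$ annihilates every generator \eqref{eq_ker_t} of $N_\Mra$ by (fCB2)---all the sums there being finite, so no convergence in $T_A^\conv$ is at stake---whence $\tilde{C}_A$ factors through $D_\Mra$ and defines the sought element of $\mathrm{Hom}_{\Drt}(D_\Mra,T_A^\conv)$.

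Conversely, given $\tilde{C}_A\in\mathrm{Hom}_{\Drt}(D_\Mra,T_A^\conv)$, I would restrict it along $\Mr\hookrightarrow\Ort^\alg\otimes\Mr\rightarrow D_\Mra$, $\mr\mapsto 1\otimes\mr$: condition (fCB1) then follows from $\pa_i\cdot(1\otimes\mr)=1\otimes L(-1)_i\mr$ in $D_\Mra$ for $i\neq r_A$ and $\Drt$-equivariance, and (fCB2) follows from the vanishing in $D_\Mra$ of the classes of the generators \eqref{eq_ker_t}. The two assignments are mutually inverse because $\Ort^\alg$-linearity forces any $\Drt$-homomorphism out of $D_\Mra$ to be recovered from its restriction to $\Mr$, and $\Ort^\alg$ together with $\Mr$ generates $D_\Mra$. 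There is no substantial obstacle; the only point needing care, just as in Section \ref{sec_D_translation}, is the distinguished role of $r_A$ in \eqref{eq_Dt_def}, which is why translation invariance of $T_A^\conv$ and the cyclic relation must be invoked in place of a direct check of $\pa_{r_A}$-equivariance.
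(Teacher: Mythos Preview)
Your proposal is correct and follows exactly the approach the paper intends: the paper states this lemma without proof, prefacing it only with ``Similar to Lemma \ref{lem_block_linear}'', and your argument is precisely that analogy combined with the translation-invariance handling of the distinguished index $r_A$ already carried out in Section \ref{sec_D_translation} (the passage around \eqref{eq_Ct_0} and the verification that $\tilde{C}_t$ is $\Drt$-equivariant). The only point to note is that the paper treats this as immediate from those earlier arguments, so your level of detail is in fact more than what the paper supplies.
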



\subsection{Fixing scale and rotational symmetry}
\label{sec_D_rotation}
In Section \ref{sec_D_translation}, we used the translation invariance of conformal field theory to eliminate one degree of freedom. This section shows that another degree of freedom can be eliminated using scale and rotational symmetry. This result will be used when defining the operadic composition.

\begin{rem}
\label{rem_translation}
The two degrees of freedom fixed above are derived from
\begin{align*}
\C \rtimes \C^\times=\left\{
\begin{pmatrix}
a & b \\
0 & a^{-1} \\
\end{pmatrix}
\Biggl| \; a\in \C^\times,b\in \C
\right\},
\end{align*}
a subgroup of the global conformal group $\mathrm{SL}_2\C$. The reason why the third degree of freedom cannot be fixed can be explained as follows:

Physically, when considering $D_\Mr$, 
we consider the situation where $r+1$ particles labeled by $M_0^\vee,M_1,\dots,M_r$
are placed at $r+1$ points, $\infty,z_1,\dots,z_r$, on the Riemann sphere.
Since we have fixed $M_0$ to $\infty$, the only symmetry we can use is the stabilizer subgroup of $\infty$
in $\mathrm{SL}_2\C$, $\C \rtimes \C^\times$.
\end{rem}

Let $M\in \Vmodu$. 
Decompose the linear operator $L(0)\in \End M$ as $L(0)=L(0)_s+L(0)_n$,
where $L(0)_s$ is semisimple and $L(0)_n$ is nilpotent.
For a formal variable $x$,
define
$x^{L(0)}:M\rightarrow M[x^\C,\log x]$
by
\begin{align*}
x^{L(0)}m = \exp(L_n(0)\log x)x^{L_s(0)}m
\end{align*}
for any $m\in M$.
Then, $x\frac{d}{dx}(x^{L(0)}m)=L(0)x^{L(0)}m$.
For $\mr\in \Mr$, set
\begin{align*}
x_{A}^{L(0)}\mr=x_A^{-L(0)}m_0 \otimes \bigotimes_{s=1}^r x_A^{L(0)}m_s.
\end{align*}
Then, we have:
\begin{prop}
\label{prop_rotation_del}
Let $C_A \in \mathrm{Hom}_{\Drt}(D_{\Mrt}, T_A^\conv)$
and $\mr\in \Mr$.
Then,
\begin{align*}
C_A(x_{A}^{L(0)}\mr) \in \C[[\zeta_e\mid e\in E(A)]][\zeta_e^\C,\log\zeta_e\mid e\in E(A)],
\end{align*}
that is, formal power series $C_A(x_{A}^{L(0)}\mr)$ does not contain $x_A$ and $\log x_A$.
\end{prop}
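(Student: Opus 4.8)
The plan is to reduce the statement to the differential equation satisfied by $C_A$ and then invoke Lemma \ref{lem_log_finite}. First I would recall, via Lemma \ref{lem_formal_block}, that $C_A$ is determined by a linear map $\Mr\to T_A^\conv$ satisfying (fCB1) and (fCB2), and that $x_A = x_{t_A} = z_{L(t_A)}-z_{r_A}$ is translation invariant. The key computation is to show that the element $x_A^{L(0)}\mr$ is an eigenvector (up to nilpotent part) for the Euler operator $x_A\frac{d}{dx_A}$ acting through the $D$-module, i.e.\ that applying $x_A\partial_{x_A}$ to $C_A(x_A^{L(0)}\mr)$ matches $C_A$ of a certain $L(0)$-type operator applied to $\mr$, and that the latter combination acts nilpotently on $\Mr_{S,T}$. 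Concretely, I would use the relation coming from Lemma \ref{N_contain_t} (with $b=\om$, $\Delta=2$, or directly the $\om(1)$-version used in the proof of Proposition \ref{prop_L0}) to express $L(0)_0$ as $\sum_{1\le s\le r}\big((x_s-x_{r_A})\om(0)_s+\om(1)_s\big)$ inside $D_\Mrt$, giving as in \eqref{eq_cyclic_L0} that $C_A$ intertwines $L(0)_0-\sum_s L(0)_s$ with $\sum_{1\le s\le r}x_s\partial_s$.

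Next I would relate $\sum_{1\le s\le r} x_s\partial_s$, which is the full Euler (scaling) vector field on $X_r$ minus the $z_{r_A}$-translation piece, to $x_A\frac{d}{dx_A}$ in $A$-coordinates. Since the $A$-coordinate is $\Psi_A = z_A\times x_A\times(\zeta_e)_{e\in E(A)}$, and each $\zeta_e = x_{d(e)}/x_{u(e)}$ is homogeneous of degree $0$ in the $x_v$'s while $x_A$ is homogeneous of degree $1$, the scaling vector field $\sum_v x_v\partial_{x_v}$ equals $x_A\partial_{x_A}$ acting on $T_A^\conv = \C[[\zeta_e]][\log x_A,x_A^\C,\log\zeta_e,\zeta_e^\C]$. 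One must check that $\sum_{1\le s\le r}x_s\partial_s$ (a $z$-coordinate operator, translation-invariant so well-defined on $T_A^\conv$ by Remark \ref{rem_TA_sub}) coincides with $\sum_{v\in V(A)}x_v\partial_{x_v}$; this is a direct change-of-variables verification analogous to Lemma \ref{lem_change_of_derivation}, using $x_v=z_{L(v)}-z_{R(v)}$ and that $\sum_{l\in\Leaf}\partial_l$ kills all $x_v$. Then $x_A\frac{d}{dx_A}$ on $C_A(x_A^{L(0)}\mr)$ produces $C_A\big(x_A^{L(0)}(L(0)_0-\sum_s L(0)_s)\mr + (\text{scaling of the } x_A^{L(0)} \text{ factors})\big)$, and by construction of $x_A^{L(0)}$ the scaling of the prefactor contributes $\sum_s L(0)_s - L(0)_0$ acting as a semisimple-plus-nilpotent operator; the semisimple parts cancel, leaving only a nilpotent operator.

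Therefore $(x_A\frac{d}{dx_A})^N C_A(x_A^{L(0)}\mr)=0$ for $N$ bounding the nilpotency degree of $L(0)_n$ on the finite-dimensional pieces of $\Mr$ involved (using (M3)). Applying Lemma \ref{lem_log_finite} — in the appropriate multivariable form, treating $C_A(x_A^{L(0)}\mr)$ as an element of $\C[[\zeta_e]][x_A^\C,\log x_A,\zeta_e^\C,\log\zeta_e]$ and the equation as one in the single variable $x_A$ with coefficients in the $\zeta$-algebra — forces $C_A(x_A^{L(0)}\mr) = x_A^{h}(a_0+\cdots+a_{N-1}(\log x_A)^{N-1})$ with $h=0$ (since the semisimple part is exactly cancelled by the $x_A^{L(0)}$ twist) and, because $x_A^{L(0)}$ was chosen precisely to absorb both the semisimple and nilpotent $L(0)$-action, in fact no $\log x_A$ survives either. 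The main obstacle I expect is the bookkeeping of the nilpotent part of $L(0)$: one must verify carefully that twisting by $x_A^{L(0)}=\exp(L(0)_n\log x_A)x_A^{L(0)_s}$ on every tensor factor exactly conjugates away the full (non-semisimple) action of $L(0)_0-\sum_s L(0)_s$, so that the residual operator annihilated by some power is genuinely $0$ rather than merely nilpotent — this is where the choice of $x_A^{L(0)}$ rather than $x_A^{L(0)_s}$ is essential, and it is the step that needs the most care.
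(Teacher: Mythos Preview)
Your approach is the same as the paper's in its essential ingredients --- the relation \eqref{eq_cyclic_L0} together with the identification of $\sum_{s} x_s\partial_s$ with the Euler field $x_A\frac{d}{dx_A}$ in $A$-coordinates --- but you are overcomplicating the final step. The detour through nilpotency and Lemma~\ref{lem_log_finite} is unnecessary, and your statement that ``the semisimple parts cancel, leaving only a nilpotent operator'' is where the confusion enters.

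The point you flag at the end as ``the step that needs the most care'' is in fact immediate. By definition $x_A^{L(0)} = x_A^{L(0)_s}\exp(L(0)_n\log x_A)$, and differentiating termwise gives $x_A\frac{d}{dx_A}\,x_A^{L(0)} = (L(0)_s+L(0)_n)\,x_A^{L(0)} = L(0)\,x_A^{L(0)}$ --- the \emph{full} $L(0)$, not just its semisimple part. Hence with $\Lambda = -L(0)_0+\sum_{s=1}^r L(0)_s$ acting on $\Mr$, one has $x_A\frac{d}{dx_A}(x_A^{\Lambda}\mr)=\Lambda\,x_A^{\Lambda}\mr$, while \eqref{eq_cyclic_L0} gives $(\sum_s x_s\partial_s)C_A(n)=C_A(-\Lambda\,n)$ for any $n\in\Mr$. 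Applying the product rule to $C_A(x_A^{\Lambda}\mr)$ therefore yields
\[
x_A\frac{d}{dx_A}\,C_A(x_A^{L(0)}\mr)=C_A(\Lambda\,x_A^{\Lambda}\mr)+C_A(-\Lambda\,x_A^{\Lambda}\mr)=0
\]
exactly, not merely up to a nilpotent remainder. This is the paper's one-line proof: once $x_A\frac{d}{dx_A}$ annihilates an element of $T_A^\conv$, that element lies in $\C[[\zeta_e]][\zeta_e^\C,\log\zeta_e]$ by inspection of the monomial basis. So drop the appeal to Lemma~\ref{lem_log_finite} and the discussion of $(x_A\partial_{x_A})^N$; the residual operator is already zero.
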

\begin{proof}
By \eqref{eq_cyclic_L0},
$\sum_{1\leq s\leq r} x_s \pa_s C_A(x_{A}^{L(0)}\mr)=0$.
In $A$-coordinate, this implies that\\
 $C_A(x_{A}^{L(0)}\mr)$ is in
$\{f \in T_A\mid \frac{d}{dx_A} f=0\}.$
Thus, the assertion holds.
\end{proof}

%
%

\section{Parenthesized intertwining operators and their composition}
\label{sec_parenthesized}
Let $A \in \Tr_r$.  
In the previous section, we introduced the notion of a formal conformal block, namely a linear map
$C_A : M_0^\vee \otimes \bigotimes_{s=1}^r M_s \;\longrightarrow\; T_A^\conv$.
This map is, roughly speaking, equivalent to a linear map of the form $
F_A : \bigotimes_{s=1}^r M_s \;\longrightarrow\; M_0 \,\hat{\otimes}\, T_A^\conv$,
where $\hat{\otimes}$ denotes a completed tensor product. 
The advantage of considering such maps is that they allow us to define the composition of a pseudo-braided category in a natural way.

\subsection{Definition of parenthesized intertwining operators}
\label{sec_parenthesized_def}
In this section, we introduce a notion of a {\it parenthesized intertwining operator}
and show that a space of parenthesized intertwining operators is isomorphic to the space of formal conformal blocks.

Let $r \in \Z_{>0}$ and $A \in \Tr_r$.
Let $V$ a vertex operator algebra
and $M_{0},M_1,\dots,M_r$ $V$-modules.
Set $\Mrr=\bigotimes_{s=1}^r M_s$.
\begin{dfn}
A {\it parenthesized intertwining operator} of type $(A,M_0,\Mrr)$
is a linear map
\begin{align*}
F_A:\Mrr
&\rightarrow M_{0}[[x_{v}^\C,\log x_{v} \mid v \in V(A)]],\\
\mrr = m_1 \otimes \cdots \otimes m_r &\mapsto 
F^A(\mrr)
\end{align*}
such that:
\begin{enumerate}
\item[PI0)]
For any $u \in M_0^\vee$ and $\mrr \in \Mrr$,
$$
\langle u, F_A(\mrr) \rangle \in T_A^\conv.
$$
\item[PI1)]
For any $p \in [r]\setminus r_A$, and $\mrr \in \Mrr$,
\begin{align*}
F_A(L(-1)_p \mrr)&=\frac{d}{dz_{p}} F_A(\mrr).
\end{align*}
\item[PI2)]
For any $p \in [r]$, $\mrr\in \Mrr$,
and $a \in V$, $n \in \Z$,
\begin{align*}
&F_A(a(n)_p  \mrr)\\
&=\sum_{k \geq 0}\binom{n}{k} e_A\left((z_{r_A}-z_p)^k\right) a(-k+n)F_A(\mrr)
- \sum_{s \in [r], s \neq p} \sum_{k \geq 0}
\binom{n}{k}e_A\left((z_s-z_p)^{n-k}\right) F_A(a(k)_s \mrr)
\end{align*}
\end{enumerate}
\end{dfn}

By setting $p=r_A$ in (PI2), we obtain the following proposition:
\begin{prop}
\label{prop_top_recursion}
Let $F_A \in \PI\binom{M_0}{\Mrr}$.
Then, for any $a \in V$ and $n \in \Z$,
\begin{align*}
a(n) F_A(\mrr) - F_A(a(n)_{r_A}\mrr) &=\sum_{s \in [r],s\neq r_A}\sum_{k \geq 0} \binom{n}{k} 
e_A\left((z_s-z_{r_A})^{n-k}\right) F_A(a(k)_s \mrr).
\end{align*}
\end{prop}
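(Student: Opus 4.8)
The plan is to specialize condition (PI2) to the case $p = r_A$, which is the canonical choice that makes the $\log$-free part of the sums simplify, and then to rearrange terms so that the ``internal'' action $a(n)$ on $M_0$ is isolated on one side. First I would recall that $r_A \in [r]$ is the label of the rightmost leaf of $A$, and that $r_A$ is precisely the index which (PI2) treats asymmetrically: the coefficient $e_A\left((z_{r_A} - z_p)^k\right)$ attached to $a(-k+n)F_A(\mrr)$ becomes $e_A\left((z_{r_A}-z_{r_A})^k\right) = e_A(0^k)$, so only the $k=0$ term survives there and it contributes exactly $a(n)F_A(\mrr)$.

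The key steps, in order: (1) Set $p = r_A$ in (PI2). (2) In the first sum $\sum_{k\geq 0}\binom{n}{k} e_A\left((z_{r_A}-z_{r_A})^k\right) a(-k+n)F_A(\mrr)$, observe that $(z_{r_A} - z_{r_A})^k = 0$ for $k \geq 1$ and $= 1$ for $k = 0$ (interpreting $0^0 = 1$, consistent with the conventions used throughout for these formal expansions), so this sum collapses to $\binom{n}{0} a(n) F_A(\mrr) = a(n) F_A(\mrr)$. (3) In the second sum, the summation index $s$ runs over $[r] \setminus \{r_A\}$, exactly as in the statement. (4) Transpose: the left-hand side of (PI2) with $p = r_A$ is $F_A(a(n)_{r_A}\mrr)$, so moving it across and moving $a(n)F_A(\mrr)$ to the left yields precisely
\begin{align*}
a(n)F_A(\mrr) - F_A(a(n)_{r_A}\mrr) = \sum_{s\in[r],\,s\neq r_A}\sum_{k\geq 0}\binom{n}{k} e_A\left((z_s - z_{r_A})^{n-k}\right) F_A(a(k)_s\mrr),
\end{align*}
which is the claimed identity.

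The only genuinely delicate point — and the one I would be careful to spell out — is the justification that $e_A\left((z_{r_A} - z_p)^k\right)|_{p = r_A}$ behaves as $\delta_{k,0}$ inside the (a priori infinite in $n$, but here finite since the outer sum over $k$ is finite by the $\binom{n}{k}$ and the structure of (PI2)) sum; this is immediate once one notes $z_{r_A} - z_{r_A} = 0$ as an element of $\Ort^\alg$ and that $e_A$ is the ring homomorphism of \eqref{eq_eA_conv}, so $e_A(0) = 0$ and $e_A(1) = 1$, hence $e_A(0^k) = 0$ for $k \geq 1$. One should also confirm that all series involved converge in $T_A^\conv$, but this is guaranteed by (PI0) together with the fact that only finitely many $k$ contribute to each sum (the binomial coefficients and the module axioms (V1)/(M1) truncate them), so no convergence subtlety actually arises. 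I expect essentially no obstacle here: the proposition is a direct substitution $p := r_A$ into (PI2) followed by algebraic rearrangement, and the proof is a two-line computation.
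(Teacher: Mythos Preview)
Your proposal is correct and follows exactly the paper's approach: the paper simply states ``By setting $i=r_A$ in (PI2), we obtain the following proposition,'' and your proof spells out precisely this substitution and the collapse of the first sum via $e_A(0^k)=\delta_{k,0}$. The extra commentary on convergence is harmless but unnecessary, since the sums here are manifestly finite.
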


We end this section by showing that 
the space of parenthesized intertwining operators $\PI_A\binom{M_0}{\Mrr}$
and the space of formal conformal blocks $\mathrm{Hom}_{\Drt}(D_{\Mrt},T_A^\conv)$
are naturally isomorphic as vector spaces.

Let $F_A \in \PI_A\binom{M_0}{\Mrr}$.
Then, a linear map $C(F_A):M_0^\vee\otimes \Mrr \rightarrow T_A^\conv$
can be defined by
$C(F_A)(m_0 \otimes \mrr)=\langle m_0, F^A(\mrr) \rangle$ for $m_0 \in M_0^\vee$ and
$\mrr \in \Mrr$.
Then, $C(F_A)$ clearly satisfies Conditions (fCB1) and (fCB2) in Lemma \ref{lem_formal_block}
and thus defines an element in $\mathrm{Hom}_{\Drt}(D_{\Mrt},T_A^\conv)$.

Conversely, let $C_A \in \mathrm{Hom}_{\Drt}(D_{\Mrt},T_A^\conv)$
and define a linear map
$F(C_A):\Mrr \rightarrow \overline{M_0}[[x_v^\C,\log x_v\mid v\in V(A)]]$ by
$\langle u,F(C_A)(\mrr)\rangle = C_A(u\otimes \mrr)$ for any $u \in M_0^\vee$ and
$\mrr \in \Mrr$,
where 
$\overline{M_0}=\Pi_{h \in \C}(M_0)_h = (M_0^\vee)^*$, the completion of the vector space.
Here, we used $\dim (M_0)_h < \infty$.

We will show that each coefficients of $F(C_A)(\mrr)$ is in $M_0 \subset \overline{M_0}$.
By \eqref{eq_cyclic_L0}
\begin{align}
C_A((L(0)_0-\sum_{s\in [r]}L(0)_s) u, \mrr)=\left(\sum_{s \in [r]}z_s\pa_s\right)C_A(u,\mrr)
\label{eq_arg_L0}
\end{align}
for any $u \in (M_0)^\vee$ and $\mrr \in \Mrr$.
Let $m_s\in (M_s)_{h_s}$ for some $h_s\in \C$ for $s=0,1,\dots,r$.
Then, by Lemma \ref{lem_log_finite},
for any $p_v \in \C$ ($v \in V(A)$) the coefficients of $\Pi_{v\in V(A)}x_v^{p_v}$
in $F(C_A)(\mrr)$ are in $(M_0)_{r} \subset \bar{M_{0}}$ with $r=\sum_{s \in [r]}h_s + \sum_{v\in V(A)}p_v$.
Hence, $F(C_A):\Mrr \rightarrow M_0[[x_v^\C,\log x_v\mid v\in V(A)]]$
is well-defined and satisfies (PI0)-(PI2).
Hence, we have:
\begin{prop}
\label{fCB_PIO}
The functors
\begin{align*}
\PI_A\binom{\bullet_0}{\bullet_{[r]}}&:\Vmodu \times (\Vmodo)^r \rightarrow \Vect,\\
\Hom(D_{[\bullet_0;\bullet_{[r]}]^{r_A}},T_A^\conv)&:\Vmodu \times (\Vmodo)^r \rightarrow \Vect
\end{align*}
are naturally isomorphic to each other by the above maps.
\end{prop}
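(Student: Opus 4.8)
\textbf{Proof proposal for Proposition \ref{fCB_PIO}.}

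The plan is to verify that the two assignments constructed just before the statement, $F_A \mapsto C(F_A)$ and $C_A \mapsto F(C_A)$, are mutually inverse and natural in all arguments. Most of the content has already been laid out in the paragraphs preceding the proposition; what remains is to organize it into the three standard checks (well-definedness of each map, mutual inverseness, naturality).

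First I would record that $C(F_A)$ is well-defined: given $F_A\in \PI_A\binom{M_0}{\Mrr}$, the assignment $C(F_A)(m_0\otimes \mrr)=\langle m_0,F_A(\mrr)\rangle$ lands in $T_A^\conv$ by (PI0), and conditions (PI1), (PI2) translate termwise into (fCB1), (fCB2) of Lemma \ref{lem_formal_block} once one notes that $a(n)_0$ acting on $M_0^\vee$ is dual to $a(n)$ acting on $M_0$, so $\langle a(-k+n)_0 u, F_A(\mrr)\rangle = \langle u, a(-k+n)F_A(\mrr)\rangle$ matches the first sum in (PI2) with the first sum in (fCB2). Hence $C(F_A)\in \Hom_{\Drt}(D_{\Mrt},T_A^\conv)$ by Lemma \ref{lem_formal_block}. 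Conversely, for $C_A\in \Hom_{\Drt}(D_{\Mrt},T_A^\conv)$, set $\langle u,F(C_A)(\mrr)\rangle = C_A(u\otimes \mrr)$; a priori this only defines an element of the full dual $\overline{M_0}=(M_0^\vee)^*$. The key step, already sketched, is to use \eqref{eq_cyclic_L0}, i.e. equation \eqref{eq_arg_L0}, which says that on homogeneous inputs the operator $L(0)_0-\sum_s L(0)_s$ acts on $C_A$ as the Euler vector field $\sum_s z_s\pa_s$; combined with Proposition \ref{prop_L0} and Lemma \ref{lem_log_finite} this forces each coefficient of $\prod_{v\in V(A)} x_v^{p_v}$ in $F(C_A)(\mrr)$ to lie in the single finite-dimensional weight space $(M_0)_r$ with $r=\sum_s h_s+\sum_v p_v$, hence in $M_0$. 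Thus $F(C_A)$ is genuinely $M_0$-valued, and (PI0)–(PI2) follow by reading (fCB1)–(fCB2) backwards through the same duality.

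Next, mutual inverseness is immediate from the definitions: both composites $F(C(F_A))$ and $C(F(C_A))$ are characterized by the same pairing identity $\langle u,(\,\cdot\,)(\mrr)\rangle$, so they are the identity maps. For naturality, let $f_i\colon N_i\to M_i$ ($i\in[r]$) and $g\colon M_0\to N_0$ be $V$-module homomorphisms, inducing $g^\vee\colon N_0^\vee\to M_0^\vee$ on restricted duals; one checks that precomposition/postcomposition with these intertwines the two functors, i.e. $C((g_*,f_1^*,\dots,f_r^*)F_A)=(g_*,f_1^*,\dots,f_r^*)C(F_A)$, which is a direct computation from the definition of the functorial structure in Lemma \ref{lem_D_functor} and the pairing formula. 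I expect the only mildly delicate point to be the passage from $\overline{M_0}$-valued to $M_0$-valued maps in the definition of $F(C_A)$ — everything else is bookkeeping — but that point is handled cleanly by the $L(0)$-grading argument via \eqref{eq_arg_L0}, Proposition \ref{prop_L0} and Lemma \ref{lem_log_finite}, as indicated in the text above the statement.
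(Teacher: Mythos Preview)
Your proposal is correct and follows essentially the same route as the paper: the proof is precisely the content of the two paragraphs preceding the proposition, and you have organized it into the three checks (well-definedness in each direction, mutual inverseness, naturality) that the paper leaves implicit after ``Hence, we have:''. The key nontrivial point --- that $F(C_A)$ is genuinely $M_0$-valued rather than $\overline{M_0}$-valued --- is handled in the paper by the $L(0)$-grading argument via \eqref{eq_arg_L0} and Lemma \ref{lem_log_finite}, exactly as you indicate; your additional citation of Proposition \ref{prop_L0} is harmless but not needed, since \eqref{eq_arg_L0} already gives the required Euler-operator identity directly. One small caution on your duality remark: the identity $\langle a(-k+n)_0 u, F_A(\mrr)\rangle = \langle u, a(-k+n)F_A(\mrr)\rangle$ holds for the transpose action $a(n)_0^*$ (as in \eqref{eq_ker1}) rather than the contragredient action $a(n)_0$; the paper's notation in (CB2)/(fCB2) is not entirely consistent on this point, but your intended matching of (PI2) with (fCB2) is the correct one.
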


\subsection{Examples}
\label{sec_parenthesized_example}
In this section, we will consider trees with one or two leaves.
First we consider the case with one leaf, i.e., $\Tr_1=\{(1)\}$.
Let $M_0,M_1$ be $V$-modules.
In this case, there are zero vertices; thus, $T_{(1)}=\C$
and a parenthesized intertwining operator is a linear map
$f:M_1 \rightarrow M_0$.
\begin{lem}
\label{lem_PI_one}
A linear map $f:M_1 \rightarrow M_0$ is a parenthesized intertwining operator of type $((1),M_0,M_1)$
if and only if 
$f$ is a $V$-module homomorphism.
\end{lem}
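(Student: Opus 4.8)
\textbf{Proof plan for Lemma \ref{lem_PI_one}.}

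The plan is to unwind the definition of a parenthesized intertwining operator of type $((1),M_0,M_1)$ in the trivial case $r=1$ and compare it directly with the definition of a $V$-module homomorphism. Since the tree $(1)$ has a single leaf and no non-leaf vertexes, we have $V(A)=\emptyset$, $E(A)=\emptyset$, and $r_A=1$. Consequently $T_{(1)}=\C$, the target $M_0[[x_v^\C,\log x_v\mid v\in V(A)]]$ is just $M_0$, and $e_A:\Ort^\alg\to T_A^\conv$ is the trivial map (indeed $\Ort^\alg=\C$ when $r=1$, since there are no differences $z_i-z_j$). So a parenthesized intertwining operator is simply a linear map $f:M_1\to M_0$, and the conditions (PI0) and (PI1) become vacuous: (PI0) says $\langle u,f(m)\rangle\in\C$, which always holds, and (PI1) is an empty condition because $[r]\setminus\{r_A\}=\emptyset$.

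The only remaining content is (PI2), which in this case reads: for all $p=1$, $m\in M_1$, $a\in V$, $n\in\Z$,
\begin{align*}
f(a(n)_1 m)=\sum_{k\geq 0}\binom{n}{k}e_A\bigl((z_1-z_1)^k\bigr)\,a(-k+n)f(m)-\sum_{\substack{s\in[1]\\s\neq 1}}(\cdots),
\end{align*}
where the second sum is empty and $e_A((z_1-z_1)^k)=e_A(0^k)$, which is $0$ for $k\geq 1$ and $1$ for $k=0$. Hence the right-hand side collapses to the single term $a(n)f(m)$, so (PI2) is exactly the intertwining condition $f(a(n)m)=a(n)f(m)$ for all $a\in V$, $n\in\Z$, $m\in M_1$. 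This is precisely the statement that $f$ commutes with the action of $Y_M(a,z)=\sum_n a(n)z^{-n-1}$ for all $a$, i.e.\ that $f$ is a $V$-module homomorphism. Thus I would simply verify both implications: if $f$ is a parenthesized intertwining operator then (PI2) gives $f\circ a(n)=a(n)\circ f$, so $f$ is a module map; conversely, a module map satisfies (PI2) trivially and (PI0),(PI1) vacuously, so it is a parenthesized intertwining operator.

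There is essentially no obstacle here; the lemma is a sanity check that the definition of parenthesized intertwining operator specializes correctly at $r=1$. The only point requiring a moment of care is the evaluation of $e_A$ on the formally-written quantity $(z_{r_A}-z_p)^k=(z_1-z_1)^k$ appearing in (PI2): one should note that in the $A$-coordinate expansion this is literally the constant $0$ raised to the power $k$, interpreted as $\delta_{k,0}$, so that the sum over $k$ truncates to $k=0$. (Equivalently, one observes that $\Or_{X_1}^\alg=\C$ carries no such differences at all, and the term is present only as a formal placeholder for the general-$r$ formula.) Once this is observed, the equivalence is immediate.
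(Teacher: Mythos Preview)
Your proof is correct and follows essentially the same approach as the paper: both observe that (PI0) and (PI1) are vacuous in the one-leaf case and that (PI2) reduces to the condition $f(a(n)m)=a(n)f(m)$. Your version is simply more explicit about why the sums in (PI2) collapse (in particular the handling of $e_A((z_1-z_1)^k)=\delta_{k,0}$), which the paper takes for granted.
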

\begin{proof}
In the one-leaf case, (PI0) and (PI1) are obvious and (PI2) is equivalent to the following condition:
\begin{align*}
F_{(1)}(a(n)m_1)=a(n)F_{(1)}(m_1)
\end{align*}
for any $m_1\in M_1$ and $a\in V$, $n\in \Z$.
Thus, the assertion holds.
\end{proof}
%
By the above lemma, we have:
\begin{prop}
\label{two_inter}
The functor $\PI_{(1)}\binom{\bullet_0}{\bullet_1}:\Vmodu \times \Vmodu^\op \rightarrow \Vect$
is naturally isomorphic to the hom-functor 
\begin{align*}
\Hom(\bullet_1,\bullet_0)&:\Vmodu \times \Vmodu^\op \rightarrow \Vect,
(M_0,M_1) \mapsto \Hom(M_1,M_0)
\end{align*}
as $\C$-linear functors.
\end{prop}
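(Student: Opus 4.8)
\textbf{Proof proposal for Proposition \ref{two_inter}.}

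The plan is to combine Lemma \ref{lem_PI_one} with Proposition \ref{fCB_PIO} and simply verify that the identification is natural in $(M_0,M_1)$. First I would use Lemma \ref{lem_PI_one}, which already gives, for each fixed pair $(M_0,M_1)$ of objects of $\Vmodu$, a canonical bijection of sets
\begin{align*}
\PI_{(1)}\binom{M_0}{M_1} \;=\; \Hom_V(M_1,M_0),
\end{align*}
both sides being identified with the set of linear maps $f:M_1\to M_0$ intertwining the action of all $a(n)$, $a\in V$, $n\in\Z$. Since both $\PI_{(1)}\binom{\bullet_0}{\bullet_1}$ and $\Hom(\bullet_1,\bullet_0)$ are $\C$-linear in the obvious way (the $\C$-vector space structure on $\PI_{(1)}\binom{M_0}{M_1}$ is the pointwise structure on linear maps, which is exactly the structure on $\Hom_V(M_1,M_0)$), this set bijection is already an isomorphism of $\C$-vector spaces; no extra linearity check is needed.

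The remaining point is naturality. Given $V$-module homomorphisms $g:M_0\to N_0$ and $h:N_1\to M_1$, the induced map on $\PI_{(1)}$ sends $F_{(1)}$ to $g\circ F_{(1)}\circ h$ (this is the functoriality coming from $D_\bullet$ in Lemma \ref{lem_D_functor} together with the isomorphism of Proposition \ref{fCB_PIO}, or directly from the defining conditions (PI0)--(PI2), which are manifestly functorial), and the induced map on $\Hom(\bullet_1,\bullet_0)$ sends $f$ to $g\circ f\circ h$. Under the identification of Lemma \ref{lem_PI_one} these two operations are literally the same formula, so the naturality square commutes. I would write this out as a one-line diagram chase: for $f\in\Hom_V(M_1,M_0)$, tracing $f$ around the square in either direction gives $g f h\in\Hom_V(N_1,N_0)$, which is again a $V$-module homomorphism by Lemma \ref{lem_PI_one}.

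There is essentially no obstacle here; the only thing requiring a moment's care is bookkeeping about variances, i.e. making sure that the contravariance in $M_1$ is matched on both sides (a morphism $M_1\to M_1'$ induces $\Hom(M_1',M_0)\to\Hom(M_1,M_0)$ and likewise $\PI_{(1)}\binom{M_0}{M_1'}\to\PI_{(1)}\binom{M_0}{M_1}$), and that for $r=1$ the base point $r_A$ equals $1$ so that condition (PI1) is vacuous and (PI2) reduces, as in the proof of Lemma \ref{lem_PI_one}, to the single intertwining relation $F_{(1)}(a(n)m_1)=a(n)F_{(1)}(m_1)$. Once this is observed, the proposition follows immediately, and I would simply cite Lemma \ref{lem_PI_one} and Proposition \ref{fCB_PIO} and assert naturality.
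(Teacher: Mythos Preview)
Your proposal is correct and follows exactly the paper's approach: the paper simply writes ``By the above lemma, we have:'' immediately after Lemma \ref{lem_PI_one} and states the proposition without further argument. Your write-up just makes explicit the naturality check that the paper leaves implicit.
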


Next, consider a tree with two leaves, $\Tr_2=\{12,21\}$. Let us consider $12$.
In this case, there is one vertex, which we call $12$.
Then, $T_{12}=T_{12}^\conv = \C[x_{12}^\C, \log x_{12}]$.
\begin{lem}
\label{lem_log_int_pio}
Let $M_0,M_1,M_2$ be $V$-modules
and $\Y_1 \in I_{\log}\binom{M_0}{M_1M_2}$,
a logarithmic intertwining operator of type $\binom{M_0}{M_1M_2}$.
Then, a linear map defined by
$$
F(\Y_1):M_1\otimes M_2 \rightarrow M_0[[x_{12}^\C]][\log x_{12}],\;\; (m_1,m_2)\mapsto 
\Y_1(m_1,x_{12})m_2
$$
is a parenthesized intertwining operator of type $(12,M_0,(M_1,M_2))$.
\end{lem}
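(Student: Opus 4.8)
The statement asserts that if $\Y_1$ is a logarithmic intertwining operator of type $\binom{M_0}{M_1M_2}$, then the map $F(\Y_1)(m_1,m_2) = \Y_1(m_1,x_{12})m_2$ satisfies the three axioms (PI0), (PI1), (PI2) of a parenthesized intertwining operator of type $(12,M_0,(M_1,M_2))$. Since the tree $12$ has a single internal vertex, $V(12)=\{12\}$, no internal edges, $E(12)=\emptyset$, $T_{12}^\conv = \C[x_{12}^\C,\log x_{12}]$, $r_A = 2$ (the rightmost leaf), and $e_{12}$ is essentially the identity on $\Ort^\alg = \C[(z_1-z_2)^\pm]$ with $z_1 - z_2 = x_{12}$. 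So all the ``$e_A$ of a polynomial'' terms in (PI2) become honest expansions in $x_{12}$. The plan is to check the three conditions one at a time, translating each to a known property of $\Y_1$ from Section 1.3.

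First I would verify (PI0): for $u\in M_0^\vee$, I need $\langle u, \Y_1(m_1,x_{12})m_2\rangle \in T_{12}^\conv = \C[x_{12}^\C,\log x_{12}]$. By axiom (I1), $\Y_1(m_1,z)m_2 \in M_0[[z]][z^\C,\log z]$, and by the Milas-type bound (the proposition quoted from \cite{Mil1}) the power of $\log z$ is bounded, so $\Y_1(m_1,z)m_2$ lies in a finite sum $\bigoplus_{k} (\log z)^k M_0[[z^\C]]$; pairing with $u\in M_0^\vee$ and using $\dim (M_0)_h <\infty$ together with (M4) for $M_1,M_2$ (so that only finitely many $L(0)$-weights contribute to each coefficient) gives that $\langle u,\Y_1(m_1,x_{12})m_2\rangle$ is a finite sum of terms $x_{12}^r(\log x_{12})^k a_{r,k}$, hence lies in $T_{12}^\conv$. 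Next, (PI1) requires, for $p\in\{1\}$ (since $r_A=2$), that $F(L(-1)_1(m_1\otimes m_2)) = \frac{d}{dz_1}F(m_1\otimes m_2)$, i.e. $\Y_1(L(-1)m_1,x_{12})m_2 = \frac{\pa}{\pa z_1}\Y_1(m_1,x_{12})m_2$; this is exactly the $L(-1)$-derivative property (I2) of $\Y_1$ combined with the chain rule $\frac{\pa}{\pa z_1}x_{12} = 1$, since $\frac{d}{dx_{12}}\Y_1(m_1,x_{12}) = \Y_1(L(-1)m_1,x_{12})$ by (I2).

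The substantive step is (PI2). For $p=2 = r_A$, condition (PI2) reduces (via Proposition \ref{prop_top_recursion}, or directly) to $a(n)\Y_1(m_1,x_{12})m_2 - \Y_1(m_1,x_{12})a(n)m_2 = \sum_{k\geq 0}\binom{n}{k}e_{12}((z_1-z_2)^{n-k})\Y_1(a(k)m_1,x_{12})m_2$, which after writing $z_1 - z_2 = x_{12}$ and resumming is precisely the commutator formula $[a(n),\Y_1(m_1,x_{12})] = \sum_k\binom{n}{k}x_{12}^{n-k}\Y_1(a(k)m_1,x_{12})$ — but note the expansion $e_{12}((z_1-z_2)^{n-k})$ here is unambiguous because $n-k$ can be negative only finitely often relative to the pole structure and in $\C[x_{12}^\pm]$ the power $x_{12}^{n-k}$ is literally a Laurent monomial, so the sum over $k$ is the finite sum appearing in (I3). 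For $p=1$, condition (PI2) becomes $\Y_1(a(n)m_1,x_{12})m_2 = \sum_k\binom{n}{k}e_{12}((z_2-z_1)^k)a(n-k)\Y_1(m_1,x_{12})m_2 - \sum_k\binom{n}{k}e_{12}((z_2-z_1)^{n-k})\Y_1(m_1,x_{12})a(k)m_2$; here $z_2 - z_1 = -x_{12}$, so this reads $\Y_1(a(n)m_1,x_{12})m_2 = \sum_k\binom{n}{k}(-x_{12})^k a(n-k)\Y_1(m_1,x_{12})m_2 - \sum_k\binom{n}{k}(-x_{12})^{n-k}\Y_1(m_1,x_{12})a(k)m_2$. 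Recognizing $\sum_k\binom{n}{k}(-x_{12})^k a(n-k) = P_n(a,x_{12})$ applied on the left (by the definition of $P_n$ and Lemma \ref{lem_P}) and similarly on the right, this is exactly the identity $\Y_1(a(n)m_1,x_{12}) = P_n(a,x_{12})\Y_1(m_1,x_{12}) - \Y_1(m_1,x_{12})P_n(a,x_{12})$ from Lemma \ref{lem_P}(4) / Lemma \ref{inter_formula}(1). The only care needed is matching the sign conventions and the direction of expansion of the negative powers $(z_s-z_p)^{n-k}$; for the two-leaf tree there is no genuine expansion ambiguity since $e_{12}$ lands in $\C[x_{12}^\pm]$.

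\textbf{Main obstacle.} There is no deep obstacle; the work is entirely bookkeeping. The most error-prone point is the translation of the $e_A$-notation and the sign conventions $z_1 - z_2 = x_{12}$, $z_2 - z_1 = -x_{12}$ in (PI2) into the operator identities of Lemma \ref{lem_P} and Lemma \ref{inter_formula}, making sure the binomial resummations $\sum_k\binom{n}{k}(\mp x_{12})^{\bullet}$ correctly reproduce $P_n(a,\pm x_{12})$ and the raw commutator $[a(n),\Y_1(m_1,x_{12})]$; I would organize this by first recording the special-tree data ($r_A=2$, $E=\emptyset$, $e_{12}=\id$ on Laurent polynomials in $x_{12}$) and then checking (PI2) for $p=r_A$ and $p\neq r_A$ separately against the two displayed identities in Lemma \ref{lem_P}(4).
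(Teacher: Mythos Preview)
Your proposal is correct and follows essentially the same route as the paper's proof: both verify (PI0), (PI1), (PI2) directly from the axioms (I1)--(I3) of a logarithmic intertwining operator, after noting that for the two-leaf tree $E(12)=\emptyset$, $r_A=2$, $T_{12}^\conv=\C[x_{12}^\C,\log x_{12}]$, and $e_{12}$ acts as the identity on Laurent polynomials in $x_{12}$. The only cosmetic difference is that for (PI0) the paper invokes the $L(0)$-grading argument of \eqref{eq_arg_L0} (each homogeneous $u,m_1,m_2$ forces a single power of $x_{12}$), whereas you cite (I1) plus the Milas bound; the Milas bound is actually unnecessary here since the definition of $\Y_1$ already gives finitely many $\log$-powers, and (I1) together with Lemma \ref{lem_P}(6) already pins down the exponent.
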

\begin{proof}
Similarly to the argument in \eqref{eq_arg_L0}, (PI0) holds.
(PI1) holds since  
\begin{align*}
F(\Y_1)(L(-1)m_1,m_2)&=\Y_1(L(-1)m_1,x_{12})m_2\\
&=\frac{d}{dx_{12}}\Y_1(m_1,x_{12})m_2=\pa_{1}F(\Y_1)(m_1,m_2).
\end{align*}
 Let $a \in V$ and $n\in \Z$.
 By (I1),
\begin{align*}
F(\Y_1)(m_1,a(n)m_2)&=\Y_1(m_1,x_{12})a(n)m_2\\
&=
[\Y_1(m_1,x_{12}),a(n)]m_2+
a(n)\Y_1(m_1,x_{12})m_2\\
&=a(n)\Y_1(m_1,x_{12})m_2
-\sum_{k\geq 0}\binom{n}{k}x_{12}^{n-k} \Y_1(a(k)m_1,x_{12})m_2\\
&=a(n)F(\Y_1)(m_1,m_2) - \sum_{k\geq 0}\binom{n}{k}e_{(12)}\left((z_{1}-z_2)^{n-k}\right) F(\Y_1)(a(n)m_1,m_2)
\end{align*}
and
\begin{align*}
&F(\Y_1)(a(n)m_1,m_2)\\
&=\Y_1(a(n)m_1,x_{12})m_2\\
&=\sum_{k\geq 0} \binom{n}{k}\left(
a(n-k)\Y_1(m_1,x_{12})(-x_{12})^k - \Y_1(m_1,x_{12})a(k)(-x_{12})^{n-k}
\right)\\
&=\sum_{k\geq 0} \binom{n}{k}e_{(12)}\left((z_2-z_1)^k\right)
a(n-k)F(\Y_1)(m_1,m_2)
-\sum_{k\geq 0} \binom{n}{k}e_{(12)}\left((z_2-z_1)^{n-k}\right)
F(\Y_1)(m_1,a(k)m_2).
\end{align*}
 
Hence, (PI2) holds.
\end{proof}

Thus, we have a linear map $I_{\log}\binom{M_0}{M_1M_2}\rightarrow \PI_{12}\binom{M_0}{M_1M_2}$,
which is clearly an isomorphism of vector spaces.
%
Hence, we have:
\begin{prop}
\label{three_inter}
The functor $\PI_{12}:\Vmodu \times (\Vmodo)^2  \rightarrow \Vect$
is naturally isomorphic to the functor of logarithmic intertwining operators
\begin{align*}
I_{\log}\binom{\bullet_0}{\bullet_1\bullet_2}&:\Vmodu \times (\Vmodo)^2 \rightarrow \Vect,
(M_0,M_1,M_2) \mapsto I_{\log}\binom{M_0}{M_1M_2}
\end{align*}
as $\C$-linear functors.
\end{prop}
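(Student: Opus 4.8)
The plan is to exhibit mutually inverse $\C$-linear natural transformations between the two functors and to check naturality. By Proposition \ref{fCB_PIO}, the functor $\PI_{12}\binom{\bullet_0}{\bullet_1\bullet_2}$ is already naturally isomorphic to $\Hom_{\Drt}(D_{[\bullet_0;\bullet_1,\bullet_2]^{r_{12}}},T_{12}^\conv)$, so it suffices to construct a natural isomorphism $I_{\log}\binom{\bullet_0}{\bullet_1\bullet_2}\cong \PI_{12}\binom{\bullet_0}{\bullet_1\bullet_2}$. The forward map has in fact already been built: Lemma \ref{lem_log_int_pio} gives, for each triple $(M_0,M_1,M_2)$, a linear map
\[
F_{12}:I_{\log}\binom{M_0}{M_1M_2}\rightarrow \PI_{12}\binom{M_0}{M_1M_2},\quad
\Y_1\mapsto \bigl((m_1\otimes m_2)\mapsto \Y_1(m_1,x_{12})m_2\bigr),
\]
and the content of that lemma is precisely that the image satisfies (PI0)--(PI2) in the two-leaf case (where $V(A)$ is the single vertex $12$, $r_A=2$, and $T_{12}^\conv=\C[x_{12}^\C,\log x_{12}]$).

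First I would construct the inverse. Given $F\in\PI_{12}\binom{M_0}{M_1M_2}$, define $\Y(m_1,x_{12})m_2:=F(m_1\otimes m_2)\in M_0[[x_{12}^\C]][\log x_{12}]$, which one reads as a formal series $\Y(m_1,z)=\sum_{k\geq 0}\sum_{r\in\C}m_1(r;k)z^{-r-1}(\log z)^k$ with coefficients in $\Hom(M_2,M_0)$; condition (I1) (the truncation $\Y(m_1,z)m_2\in M_0[[z]][z^\C,\log z]$) is immediate from the target of $F$ together with (PI0) and Lemma \ref{lem_P}(6) applied after decomposing $m_i$ into $L(0)$-generalized eigenvectors. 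Then I would translate (PI1) and (PI2) into (I2) and (I3): since $r_A=2$, applying (PI1) with $p=1$ gives $F(L(-1)_1(m_1\otimes m_2))=\frac{d}{dz_1}F(m_1\otimes m_2)$, i.e. $\Y(L(-1)m_1,x_{12})m_2=\frac{d}{dx_{12}}\Y(m_1,x_{12})m_2$, which is (I2). For (I3), note $e_{12}((z_1-z_2)^{n})=x_{12}^n$ and $e_{12}((z_2-z_1)^n)=(-x_{12})^n$, so substituting $p=1$ into (PI2) and using $a(-k+n)F(\mrr)=a(-k+n)\Y(m_1,x_{12})m_2$ (the action on the $0$-th/top slot) reproduces exactly the two displayed identities defining (I3); substituting $p=2$ gives the commutator relation, already consistent by the computation in the proof of Lemma \ref{lem_log_int_pio} run backwards. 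This shows $\Y\in I_{\log}\binom{M_0}{M_1M_2}$, and the two assignments are visibly inverse to each other since both are ``read $F$ as $\Y(m_1,x_{12})m_2$''.

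Finally I would check naturality in all three variables. For $V$-module homomorphisms $f_i:N_i\to M_i$ ($i=1,2$) and $g:M_0\to N_0$, the functoriality of $I_{\log}$ and of $\PI_{12}$ (the latter coming from the functoriality of $D_{\bullet}$ in Lemma \ref{lem_D_functor} via Proposition \ref{fCB_PIO}, or directly: precompose $F$ with $f_1\otimes f_2$ and postcompose with $g$) both act on the underlying bilinear expression $\Y(m_1,x_{12})m_2$ by $\Y\mapsto g\circ\Y(f_1(\bullet),x_{12})f_2(\bullet)$, so the square commutes on the nose. Combining with Proposition \ref{fCB_PIO} yields the claimed natural isomorphism $\PI_{12}\cong I_{\log}\binom{\bullet_0}{\bullet_1\bullet_2}$ of $\C$-linear functors. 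I do not expect a genuine obstacle here; the only mildly delicate point is bookkeeping the identification $e_{12}((z_s-z_i)^{n})=(\pm x_{12})^n$ and the placement of the ``top'' module action $a(-k+n)$ versus the dual action, but this is exactly the dictionary already set up in Sections \ref{sec_D_def}--\ref{sec_parenthesized_def} and used in Lemma \ref{lem_log_int_pio}, so no new ideas are needed.
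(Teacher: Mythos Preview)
Your proposal is correct and follows the same approach as the paper: the paper constructs the forward map via Lemma \ref{lem_log_int_pio} and then simply asserts that it ``is clearly an isomorphism of vector spaces'', while you supply the (routine) verification of the inverse and naturality. One small imprecision worth noting: (PI1) with $p=1$ yields $\Y(L(-1)m_1,z)=\tfrac{d}{dz}\Y(m_1,z)$, which is not literally (I2) $[L(-1),\Y(m,z)]=\tfrac{d}{dz}\Y(m,z)$; to obtain (I2) you must combine this with the $p=2$, $a=\omega$, $n=0$ case of (PI2), which gives $[L(-1),\Y(m_1,z)]=\Y(L(-1)m_1,z)$---but you effectively have this once (I3) is verified.
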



\subsection{Composition of parenthesized intertwining operators}
\label{sec_parenthesized_comp}
Recall that in Section \ref{sec_operad_def} the operad structure was defined on $\{\Tr_r\}_{r\geq 0}$.
Let $r,r'\geq 1$,
$A\in \Tr_r$ and $B \in \Tr_{r'}$ and $p \in [r]$.
Then, $A\circ_p B \in \Tr_{r+r'-1}$.
Let $M_0,M_1^A,\dots,M_r^A$ and $M_1^B,\dots,M_{r'}^B$ be $V$-modules in $\Vmodf$
and $F_{A} \in \PI_A\binom{M_0}{M_{[r]}^A}$ and 
$F_B \in \PI_B \binom{M_p^A}{M_{[r']}^B}$.
Set
\begin{align*}
M_{[r\circ_p r']}^{A,B}=
M_1^A \otimes M_2^A\otimes \cdots \otimes M_{p-1}^A \otimes 
M_1^B\otimes \cdots \otimes M_{r'}^B \otimes M_{p+1}^A \otimes \cdots \otimes M_r^A.
\end{align*}

The purpose of this section is to show that
we can define an operadic composition $F_A\circ_p F_B$ which is a parenthesized intertwining operator of
type $(A\circ_p B, M_0, M_{[r\circ_p {r'}]}^{A,B})$.
Roughly speaking, $F_A\circ_p F_B$ is defined as:
\begin{align}
\begin{split}
M_1^A \otimes M_2^A\otimes \cdots &\otimes M_{p-1}^A \otimes 
\Bigl( M_1^B\otimes \cdots \otimes M_{r'}^B\Bigr) \otimes M_{p+1}^A \otimes \cdots \otimes M_r^A\\
\overset{\id\otimes F_B\otimes \id}{\longrightarrow}
M_1^A \otimes M_2^A\otimes \cdots \otimes M_{p-1}^A \otimes 
&\Bigl(M_p^A\hat{\otimes}T_B^\conv\Bigr) \otimes M_{p+1}^A \otimes \cdots \otimes M_r^A
\overset{F_A}{\longrightarrow} M_0 \hat{\otimes} T_A^\conv\hat{\otimes}T_B^\conv.
\label{eq_comp_rough}
\end{split}
\end{align}

Importantly, the labels of the leaves are shifted when trees are composited (see Fig. \ref{fig_partial_comp} in Section \ref{sec_operad_def}).
Hereafter, with this in mind, the variables in $F_B$ and $F_A$ are given in accordance with the labels of
$A\circ_p B$.

For $m_{[r\circ_p {r'}]}=m_1^A \otimes\cdots \otimes m_{p-1}^A \otimes m_{[{r'}]}^B\otimes  m_{p+1}^A \otimes \cdots \otimes m_r^{A}
\in M_{[r\circ_p {r'}]}^{A,B}$,
set
\begin{align}
(F_A \circ_p F_B)(m_{[r\circ_p{r'}]})
=F_A(m_1^A,\dots,m_{p-1}^A, F_B(m_{[{r'}]}^B),m_{p+1}^A,\dots,m_r^{A}).
\label{eq_FAB_def}
\end{align}
For simplicity, we will write $F_A(m_1^A,\dots,m_{p-1}^A, F_B(m_{[{r'}]}^B),m_{p+1}^A,\dots,m_r^{A})$ as 
\begin{align*}
F_A(m_{[r]\setminus p}^A,F_B(m_{[{r'}]}^B)).
\end{align*}
Soon after, we will see that \eqref{eq_FAB_def} is well-defined as a formal series of $M_0$-coefficients.
Note that if ${r'}=1$, then $F_B$ is just a $V$-module homomorphism,
and $F_A\circ_p F_B$ is clearly a parenthesized intertwining operator. Hence, we assume that ${r'}\geq 2$.

First, we describe vertices and edges of $A\circ_p B$ with those of $A$ and $B$.
There are $r+{r'}-2=(r-1)+({r'}-1)$ vertices in $A\circ_p B$, which can be written as the union set of the vertices of $A$ and $B$.
Also the edges of $A\circ_p B$ are $r+{r'}-3=(r-2)+({r'}-2)+1$ and can be written using the union set of the edges of $A$ and $B$ and the new edge joining the two trees $A$ and $B$, which we name $e_{AB}$.
Hence, we have:
\begin{align*}
V(A\circ_p B) &= V(A) \sqcup V(B),\\
E(A \circ_p B)&=E(A) \sqcup E(B) \sqcup \{e_{AB}\}\\
T_{A\circ_p B}&=\C[[\zeta_{e} \mid e \in E(A\circ_p B)]][\zeta_e^\C, \log \zeta_e, x_{A\circ_pB}^\C,\log x_{A\circ_pB} \mid e\in E(A\circ_p B)].
\end{align*}

The most difficult part of this section is to show that $\langle u,F_A(m_{[r]\setminus p}^A,F_B(m_{[{r'}]}^B))\rangle$
is in $T_{A\circ_p B}^\conv$ for any $u\in M_0^\vee$.
Let $\mfp \in \R_{>0}^{E(A\circ_p B)}$ be $A\circ_p B$-admissible.
For $X=A,B$, set $\mfp|_X =(p_e)_{e\in E(X)} \in \R_{>0}^{E(X)}$.
Then, it is clear that $\mfp|_X$ is $X$-admissible.
Set 
\begin{align*}
T_{A\sqcup B}^{\mfp}=\C[[\ze_e \mid e \in E(A)\sqcup E(B)]]_{\mfp|_A,\mfp|_B}^\conv
[\zeta_e^\C, \log \zeta_e \mid e\in E(A) \sqcup E(B)]
\end{align*}
and
\begin{align*}
\ze_{AB}=\ze_{e_{AB}},\quad x_A=x_{A\circ_p B},\quad r_A=r_{A\circ_pB}.
\end{align*}

For $h \in \C$, let
\begin{align*}
\prk: M_p^A = \bigoplus_{k\in \C}(M_p^A)_k \rightarrow (M_p^A)_h
\end{align*}
be the projection.
Let $L(0)=L_s(0)+L_n(0)$ be the decomposition of the locally finite operator $L(0)$
into the semisimple part, $L_s(0)$, and the nilpotent part, $L_n(0)$.
By Assumption (M3), there exists $M\in \Z_{\geq 0}$ such that 
$L_n(0)^{M+1}|_{M_p^A}=0$.
Then, we have:
\begin{lem}
\label{lem_comp_form_part}
For any $h\in \C$ and $u \in (M_0)^\vee$ and $m_{[r\circ_p {r'}]} \in M_{[r\circ_p {r'}]}^{A,B}$,
\begin{align*}
\langle x_A^{-L(0)} u, F_A(x_A^{L(0)}m_1^A, \dots, x_A^{L(0)}m_{p-1}^A, \prk F_B(x_B^{L(0)}m_{[{r'}]}), x_A^{L(0)}m_{p+1}^A,
\dots, x_A^{L(0)}m_{r}^A) \rangle
\end{align*}
is in 
\begin{align*}
\bigoplus_{i=0}^M \ze_{AB}^h (\log \ze_{AB})^i \C[[\ze_e \mid e \in E(A)\sqcup E(B)]]_{\mfp|_{A},\mfp|_{B}}^\conv[\ze_e^\C,\log \ze_e\mid
e\in E(A)\sqcup E(B)].
\end{align*}
Moreover, if $u\in (M_0)_{h_0}^\vee$, $m_i^A\in (M_i^A)_{h_i^A}$ and $m_j^B\in (M_j^B)_{h_j^B}$
for some $h_0,h_i^A,h_j^B \in \C$
and $M'\geq M$ satisfies $L_n(0)^{M'+1}$ on all modules $M_j^B$, $M_i^A$ and $M_0$,
then
\begin{align*}
\langle u, F_A(m_1^A, \dots, m_{p-1}^A, \prk F_B(m_{[{r'}]}),m_{p+1}^A,
\dots,m_{r}^A) \rangle
\end{align*}
is in
\begin{align*}
x_A^{\Delta_A}\ze_{AB}^{\Delta_{AB}}\bigoplus_{a,b=0}^{M'} (\log \ze_{AB})^a (\log z_A)^b \C[[\ze_e \mid e \in E(A)\sqcup E(B)]]_{\mfp|_{A},\mfp|_{B}}^\conv[\ze_e,\log \ze_e\mid
e\in E(A)\sqcup E(B)]
\end{align*}
with $\Delta_A= h_0-\sum_{i \in [r]\setminus p}h_i^A
-\sum_{j\in [r']}h_j^B$ and
$\Delta_{AB}=h- \sum_{j\in [r']} h_j^B$.
\end{lem}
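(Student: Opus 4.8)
The plan is to establish the ``conjugated'' assertion first and then to deduce the precise form in the second part by undoing the conjugations.

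\emph{First part.} The first task is to pin down the shape of $\prk F_B(x_B^{L(0)}m_{[r']})$. Applying Proposition~\ref{prop_rotation_del} to $F_B$ --- more precisely, to the formal conformal block corresponding to $F_B$ under Proposition~\ref{fCB_PIO} --- shows that $\langle x_B^{-L(0)}w^\vee,F_B(x_B^{L(0)}m_{[r']})\rangle$ contains no $x_B$ and no $\log x_B$ for every $w^\vee\in(M_p^A)^\vee$, and lies in $\C[[\ze_e\mid e\in E(B)]]^\conv[\ze_e^\C,\log\ze_e\mid e\in E(B)]$ with radii $\mfp|_B$. Since $(M_p^A)_h$ is finite-dimensional by Definition~\ref{def_module_grade}~(M2), and $(M_p^{A})^\vee_h$ pairs with $(M_p^A)_h$ (so $x_B^{-L(0)}$ acts there as $x_B^{-h}\exp(-L_n(0)^\vee\log x_B)$), one may expand over a basis of $(M_p^{A})^\vee_h$ and invert the resulting unipotent linear system over $\C[x_B^\pm,\log x_B]$; this gives
\begin{align*}
\prk F_B(x_B^{L(0)}m_{[r']})=x_B^{L(0)}G,\qquad G=\sum_\alpha e_\alpha\,g_\alpha,
\end{align*}
with $\{e_\alpha\}$ a basis of $(M_p^A)_h$ and each $g_\alpha\in\C[[\ze_e\mid e\in E(B)]]^\conv[\ze_e^\C,\log\ze_e\mid e\in E(B)]$ convergent with radii $\mfp|_B$; in particular $G$ involves neither $x_B$ nor $\log x_B$.

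Substituting this into the $p$-th slot of $F_A$, writing $x_B^{L(0)}=x_A^{L(0)}(x_B/x_A)^{L(0)}$ on $M_p^A$-valued series, and using the finite expansion $(x_B/x_A)^{L(0)}G=(x_B/x_A)^h\sum_{i=0}^{M}\frac{1}{i!}(\log(x_B/x_A))^i L_n(0)^iG$ (finite since $L_n(0)^{M+1}$ annihilates $M_p^A$), the $\C$-linearity of $F_A$ in its $p$-th entry over the finite-dimensional space $(M_p^A)_h$ turns the conjugated expression into a finite sum, over $i$ and $\alpha$, of
\begin{align*}
\frac{(x_B/x_A)^h}{i!}\,(\log(x_B/x_A))^i\,\langle x_A^{-L(0)}u,\,F_A(x_A^{L(0)}m_1^A,\dots,x_A^{L(0)}(L_n(0)^i e_\alpha),\dots,x_A^{L(0)}m_r^A)\rangle\,g_\alpha .
\end{align*}
Each bracket equals $C_A(x_A^{L(0)}\mr)$ for the evident $\mr\in\Mr$, so by Proposition~\ref{prop_rotation_del} it lies in $\C[[\ze_e\mid e\in E(A)]]^\conv[\ze_e^\C,\log\ze_e\mid e\in E(A)]$ with radii $\mfp|_A$ and contains no $x_A,\log x_A$. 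As the variable sets $\{\ze_e:e\in E(A)\}$ and $\{\ze_e:e\in E(B)\}$ are disjoint, the products above are convergent in the bi-polydisc of radii $\mfp|_A$ and $\mfp|_B$ (using that $\mfp|_A$, $\mfp|_B$ are $A$-, resp.\ $B$-admissible, together with (PI0)). Finally I would substitute $x_B/x_A=\ze_{AB}\prod_e\ze_e$ and $\log(x_B/x_A)=\log\ze_{AB}+\sum_e\log\ze_e$, the products running over the $E(A)$-edges on the path from $t_A$ down to $u(e_{AB})$, expand the $h$-th power and the powers of the logarithm, and collect the factor $\ze_{AB}^h$ together with the powers $(\log\ze_{AB})^j$, $j\le M$; this yields exactly the first assertion.

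\emph{Second part.} Here I would recover the non-conjugated expression from the first part via the rotation-scale covariance of $F_A$ and $F_B$ (a consequence of (PI1), (PI2) with $a=\om$, $n=1$; cf.\ \eqref{eq_cyclic_L0} and Proposition~\ref{prop_rotation_del}): undoing the $x_A$- and $x_B$-conjugations present in the first part multiplies the result by $x_A^{\Delta_A}$ with $\Delta_A=h_0-\sum_{i\in[r],\,i\ne p}h_i^A-\sum_{j\in[r']}h_j^B$ (the total $L(0)$-homogeneity of the composite, read off from Lemma~\ref{lem_P}(6) applied to the iterated structure of $F_A$ and $F_B$), turns $\ze_{AB}^h$ into $\ze_{AB}^{\Delta_{AB}}$ with $\Delta_{AB}=h-\sum_{j\in[r']}h_j^B$, and --- because the nilpotent operators $L_n(0)$ on $M_0$, on the $M_i^A$ and on the $M_j^B$ are all killed by their $(M'+1)$-st power --- introduces at most $M'$ extra factors of $\log x_A$ and of $\log\ze_{AB}$. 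This places the expression in the space claimed in the second assertion.

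\emph{Main obstacle.} The point requiring care is the summability underlying the middle two paragraphs: one must check that feeding the infinite parenthesized series $G$ into $F_A$ produces a genuinely convergent (not merely formal) parenthesized series with the stated radii. This is precisely why $\prk F_B$ must be factored through the finite-dimensional space $(M_p^A)_h$ in the first step, so that only finitely many values of $F_A$ on honest module vectors occur, and why the disjointness of the $E(A)$- and $E(B)$-variables matters --- it rules out any interference between the two radii of convergence. The exponent bookkeeping in the second part, via the covariance relations, is the other slightly delicate (but routine) ingredient.
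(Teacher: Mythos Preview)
Your proof is correct and follows essentially the same route as the paper: both arguments expand $\prk F_B(x_B^{L(0)}m_{[r']})$ over a basis of the finite-dimensional space $(M_p^A)_h$, apply Proposition~\ref{prop_rotation_del} separately to $F_A$ and $F_B$ to strip out $x_A,\log x_A$ and $x_B,\log x_B$, rewrite $x_B^{L(0)}=x_A^{L(0)}(x_B/x_A)^{L(0)}$ with the finite nilpotent expansion \eqref{eq_AB_nil}, and substitute $x_B/x_A=\ze_{AB}\prod_e\ze_e$ from \eqref{eq_ze_AB_run}. The only cosmetic difference is that you package the $B$-side as $\prk F_B(x_B^{L(0)}m_{[r']})=x_B^{L(0)}G$ via ``inverting a unipotent system,'' whereas the paper uses the dual-basis identity $\sum_k e_k\langle e_k^*,\,\cdot\,\rangle=\sum_k (x_B^{L(0)}e_k)\langle e_k^*,x_B^{-L(0)}\,\cdot\,\rangle$ directly; your treatment of the second part (undoing the conjugations and tracking the resulting monomial and logarithmic factors) is also correct and in fact more explicit than the paper's, which leaves that step implicit.
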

\begin{proof}
Let $\{e_k\}_{k\in I}$ be a basis of $(M_p^A)_h$ and $\{e_k^*\}_{k\in I}$ be the dual basis.
By Proposition \ref{prop_rotation_del},
\begin{align}
\begin{split}
\langle & e_k^*, x_B^{-L(0)}F_B(x_B^{L(0)}m_{[{r'}]}^B) \rangle\\
&= x_B^{-h} \langle \exp(-L(0)_n\log x_B) e_k^*, F_B(x_B^{L(0)}m_{[{r'}]}^B) \rangle
\in \C[[\zeta_e\mid e\in E(B)]][\zeta_e^\C,\log \zeta_e\mid e\in E(B)]
\end{split}
\label{eq_A_part_comp}
\end{align}
and
\begin{align}
\begin{split}
\langle x_A^{-L(0)} m_0^A, F_A(x_A^{L(0)}m_1^A, & \dots, x_A^{L(0)}m_{p-1}^A,x_A^{L(0)}e_k,x_A^{L(0)}m_{p+1}^A, \dots,
x_A^{L(0)}m_{r}^A) \rangle\\
&\in \C[[\zeta_e\mid e\in E(A)]][\zeta_e^\C,\log \zeta_e\mid e\in E(A)].
\end{split}
\label{eq_B_part_comp}
\end{align}

Note that 
\begin{align}
\frac{x_B}{x_A}=\ze_{{AB}}\Pi_{e_i}\ze_{e_i}
\label{eq_ze_AB_run}
\end{align}
where $e_i$ run through all edges in $E(A)$ which connects the topmost vertex of $A$ to $p$.
 and 
\begin{align}
\left(\frac{x_B}{x_A}\right)^{L(0)}e_k=\left(\frac{x_B}{x_A}\right)^h \sum_{i=0}^M \frac{1}{i!}\left(\log\left(\frac{x_B}{x_A}\right)\right)^i L_n(0)^ie_k.
\label{eq_AB_nil}
\end{align}
Hence, we have
\begin{align*}
&\langle x_A^{-L(0)} u, F_A(x_A^{L(0)}m_1^A, \dots, x_A^{L(0)}m_{p-1}^A, \prk F_B(x_B^{L(0)}m_{{r'}}), x_A^{L(0)}m_{p+1}^A,
\dots, x_A^{L(0)}m_{r}^A) \rangle\\
&=\sum_{k\in I}
\langle x_A^{-L(0)} u, F_A(x_A^{L(0)}m_1^A, \dots, x_A^{L(0)}m_{p-1}^A, e_k , x_A^{L(0)}m_{p+1}^A,
\dots, x_A^{L(0)}m_{r}^A) \rangle
\langle e_k^*, F_B(x_B^{L(0)}m_{[{r'}]}^B)\rangle\\
&=\sum_{k\in I}
\langle x_A^{-L(0)} u, F_A(x_A^{L(0)}m_1^A, \dots, x_A^{L(0)}m_{p-1}^A, x_A^{L(0)}\left(\frac{x_B}{x_A}\right)^{L(0)}e_k , x_A^{L(0)}m_{p+1}^A,
\dots, x_A^{L(0)}m_{r}^A) \rangle\\
&\quad \times \langle e_k^*, x_B^{-L(0)} F_B(x_B^{L(0)}m_{[{r'}]}^B)\rangle.
\end{align*}
Thus, the assertion follows from \eqref{eq_AB_nil}, \eqref{eq_ze_AB_run}, \eqref{eq_A_part_comp} and \eqref{eq_B_part_comp}.
\end{proof}

Motivated by the above Lemma,
set
\begin{align*}
T_{A,B,p}^\mfp &=\C[[\ze_e \mid e \in E(A)\sqcup E(B)]]_{\mfp|_{A},\mfp_{B}}^\conv[\ze_e,\log \ze_e\mid
e\in E(A)\sqcup E(B)][[\ze_{AB}]][\zeta_{AB}^\C,\log \zeta_{AB}, x_A^\C,\log x_A],
\end{align*}
which is a formal power series in $\ze_{AB}$ with coefficients in $\C[[\ze_e \mid e \in E(A)\sqcup E(B)]]_{\mfp|_{A},\mfp|_{B}}^\conv[\ze_e^\C,\log \ze_e\mid e\in E(A)\sqcup E(B)]$.
That is $T_{A,B,p}^\conv$ is spanned by
\begin{align*}
(\log x)^l x^r (\log \zeta_{AB})^{l_{AB}} \zeta_{AB}^{r_{AB}} \sum_{k=0}^\infty f_k \zeta_{AB}^k
\end{align*}
with $l,l_{AB}\in \Z_{\geq 0}$ and $r,r_{AB}\in \C$,
$f_k \in \C[[\ze_e \mid e \in E(A)\sqcup E(B)]]_{\mfp|_{A},\mfp_{B}}^\conv$.

It is clear that $T_{A,B,p}^\mfp$ naturally inherits a $\C$-algebra structure.
By expanding with respect to $\zeta_{AB}$, we also obtain a $\C$-algebra  homomorphism
\begin{align*}
\Ort^\alg \rightarrow T_{A,B,p}^\mfp.
\end{align*}
Moreover, we can introduce a structure of a $\Drt$-module on $T_{A,B,p}^\mfp$,
and this embedding map is a homomorphism of $\Drt$-modules.

%
By Lemma \ref{lem_comp_form_part} and Assumption (M4),
we have:
\begin{lem}
\label{lem_comp_formalsp}
For any $u \in (M_0)^\vee$ and $m_{[r\circ_p {r'}]} \in M_{[r\circ_p {r'}]}^{A,B}$,
\begin{align*}
\langle u, (F_A\circ_p F_B)(m_{[r\circ_p {r'}]}) \rangle
\end{align*}
is in $T_{A,B,p}^\mfp$.
\end{lem}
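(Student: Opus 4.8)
The plan is to combine the second (``Moreover'') half of Lemma \ref{lem_comp_form_part} with Assumption (M4), summing the contributions over the $L(0)$-weights of $M_p^A$. First I would fix an $A\circ_p B$-admissible $\mfp$ and record, from the definition \eqref{eq_FAB_def}, that $\langle u,(F_A\circ_p F_B)(m_{[r\circ_p r']})\rangle=\langle u,F_A(m_{[r]\setminus p}^A,F_B(m_{[r']}^B))\rangle$, with $F_B(m_{[r']}^B)=\sum_{h\in\C}\mathrm{pr}_hF_B(m_{[r']}^B)$ and only finitely many $\mathrm{pr}_h$ nonzero on each coefficient. Since $\langle u,-\rangle$ is $\C$-linear in $u$, the map $m_{[r\circ_p r']}\mapsto(F_A\circ_p F_B)(m_{[r\circ_p r']})$ is $\C$-multilinear, and $T_{A,B,p}^\mfp$ is a $\C$-vector space closed under finite sums, it suffices to treat the case in which $u\in(M_0)_{h_0}^\vee$ and every $m_i^A\in(M_i^A)_{h_i^A}$, $m_j^B\in(M_j^B)_{h_j^B}$ is a generalized $L(0)$-eigenvector (these span, by local finiteness of $L(0)$). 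Moreover, by (M4) applied to $M_p^A$, the grading of $M_p^A$ is supported on a finite union $\bigcup_\ell(\Delta_p^{(\ell)}+\Z_{\geq 0})$; grouping the sum over $h$ accordingly, it is enough to prove the claim for the part of $F_B(m_{[r']}^B)$ lying in $\bigoplus_{n\geq 0}(M_p^A)_{\Delta_p+n}$ for a single lowest weight $\Delta_p$.

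Next I would apply the ``Moreover'' part of Lemma \ref{lem_comp_form_part}. Choose $M'\in\Z_{\geq 0}$ with $L_n(0)^{M'+1}=0$ on each of the finitely many modules $M_0,M_1^A,\dots,M_r^A,M_1^B,\dots,M_{r'}^B$, which is possible by (M3). Then for every $n\geq 0$, putting $h=\Delta_p+n$, the lemma gives that $\langle u,F_A(m_{[r]\setminus p}^A,\mathrm{pr}_{\Delta_p+n}F_B(m_{[r']}^B))\rangle$ lies in
\begin{align*}
x_A^{\Delta_A}\,\ze_{AB}^{\Delta_{AB}}\bigoplus_{a,b=0}^{M'}(\log\ze_{AB})^a(\log x_A)^b\,\C[[\ze_e\mid e\in E(A)\sqcup E(B)]]_{\mfp|_A,\mfp|_B}^\conv[\ze_e,\log\ze_e\mid e\in E(A)\sqcup E(B)],
\end{align*}
where $\Delta_A=h_0-\sum_{i\in[r]\setminus p}h_i^A-\sum_{j\in[r']}h_j^B$ is independent of $n$, and $\Delta_{AB}=\bigl(\Delta_p-\sum_{j\in[r']}h_j^B\bigr)+n$.

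Finally I would assemble $\langle u,(F_A\circ_p F_B)(m_{[r\circ_p r']})\rangle=\sum_{n\geq 0}\langle u,F_A(m_{[r]\setminus p}^A,\mathrm{pr}_{\Delta_p+n}F_B(m_{[r']}^B))\rangle$. The $n$-th summand contributes only monomials of the shape $x_A^{\Delta_A}(\log x_A)^b\,\ze_{AB}^{(\Delta_p-\sum_jh_j^B)+n}(\log\ze_{AB})^a$ with $a,b\le M'$, so for each fixed exponent of $\ze_{AB}$ only finitely many $n$ contribute; hence the sum is a well-defined formal series, and collecting terms it equals
\begin{align*}
x_A^{\Delta_A}\,\ze_{AB}^{\Delta_p-\sum_{j}h_j^B}\sum_{a,b=0}^{M'}(\log\ze_{AB})^a(\log x_A)^b\sum_{n\geq 0}g_{n,a,b}\,\ze_{AB}^n,
\end{align*}
with each $g_{n,a,b}\in\C[[\ze_e\mid e\in E(A)\sqcup E(B)]]_{\mfp|_A,\mfp|_B}^\conv[\ze_e,\log\ze_e\mid e\in E(A)\sqcup E(B)]$. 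By the very definition of $T_{A,B,p}^\mfp$ this is an element of $T_{A,B,p}^\mfp$; undoing the reductions (a finite sum of such elements) gives the lemma in general.

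The step I expect to require the most care is the last one: one must use that the grading of $M_p^A$ is bounded below and that Lemma \ref{lem_comp_form_part} pins the $\ze_{AB}$-order of the $h$-component to be exactly $\Delta_{AB}$ (with only a bounded power of $\log\ze_{AB}$), so that the infinite sum over $h$ converges inside the space of honest power series in $\ze_{AB}$ built into $T_{A,B,p}^\mfp$, rather than producing a Laurent-type or more singular tail. A minor point, already subsumed in Lemma \ref{lem_comp_form_part}, is that $F_A$ must first be interpreted on the $T_B^\conv$-valued argument $F_B(m_{[r']}^B)$ by applying it coefficientwise and re-expanding through $x_B/x_A=\ze_{AB}\prod_{e_i}\ze_{e_i}$ as in \eqref{eq_ze_AB_run}; this is also what establishes that \eqref{eq_FAB_def} is well-defined as a formal series with $M_0$-coefficients.
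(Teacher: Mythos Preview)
Your proposal is correct and is exactly the argument the paper has in mind: the paper's entire proof is the one-line ``By Lemma \ref{lem_comp_form_part} and Assumption (M4),'' and you have faithfully unpacked that sentence by reducing to generalized eigenvectors, using (M4) to restrict to a single coset $\Delta_p+\Z_{\geq 0}$, invoking the ``Moreover'' clause of Lemma \ref{lem_comp_form_part} for each $n$, and then observing that the $\ze_{AB}$-exponent advances with $n$ so the sum is a genuine power series in $\ze_{AB}$. Your final caution about the $\ze_{AB}$-order is the right thing to flag, and it is handled precisely by the fact that the coefficient space in Lemma \ref{lem_comp_form_part} involves only $\ze_e$ for $e\in E(A)\sqcup E(B)$ (not $\ze_{AB}$), so each $n$ contributes a single $\ze_{AB}^{(\Delta_p-\sum_j h_j^B)+n}$ factor.
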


By Lemma \ref{lem_comp_formalsp},
we can define a linear map
\begin{align*}
C_{A\circ_p B}: M_0^\vee \otimes M_{[n\circ_p m]}^{A,B} &\rightarrow \quad T_{A,B,p}^\mfp,\\
(u, m_{[r\circ_p {r'}]})\;\;\; &\mapsto \langle u, (F_A \circ_p F_B)(m_{[r\circ_p {r'}]})\rangle.
\end{align*}
We will show that
$C_{A \circ_i B}$ defines a formal solution of the D-module, $\mathrm{Hom}_{\Drt}
(D_{M_{[r\circ_p r']}^{A,B}},T_{A,B,p}^\mfp)$.

%

\begin{lem}
\label{lem_comp_recursion}
The following properties hold:
\begin{enumerate}
\item
If $i \in \Leaf(A\circ_p B)$ is not the rightmost leaf of $A\circ_p B$, i.e., $i\neq r_{A}$,
then
\begin{align*}
(F_{A} \circ_p F_B)(L(-1)_i m_{[r\circ_p {r'}]})=\frac{d}{dz_i}(F_{A} \circ_p F_B)(m_{[r\circ_p {r'}]}).
\end{align*}
\item
For any $n\in \Z$ and $a\in V$ and $i\in \Leaf(A\circ_p B)$,
\begin{align*}
(F_A\circ_p F_B)\left(a(n)_i m_{[r\circ_p {r'}]}\right)&=\sum_{k\geq 0}\binom{n}{k}e_{A\circ_p B}\left((z_{r_{A}}-z_i)^k\right)a(n-k)(F_A\circ_p F_B)\left(m_{[{r}\circ_p {r'}]}\right)\\
&-\sum_{\substack{s\in \Leaf(A\circ_p B),\\ s\neq i}}\sum_{k\geq 0} \binom{n}{k}e_{A\circ_p B}\left((z_s-z_i)^{n-k}\right)(F_A\circ_p F_B)\left(a(k)_s m_{[r\circ_p {r'}]}\right).
\end{align*}
\end{enumerate}
\end{lem}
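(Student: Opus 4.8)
\textbf{Proof proposal for Lemma \ref{lem_comp_recursion}.}

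The plan is to verify both identities directly from the defining properties (PI0)--(PI2) of the parenthesized intertwining operators $F_A$ and $F_B$, treating the two cases that arise according to whether the relevant leaf $i$ (or the summation leaf $s$) sits in the $B$-part of the composite tree $A\circ_p B$ or in the $A$-part. Recall that by construction $r_{A\circ_p B}=r_A$ is never a leaf of $B$ (the leaves of $B$ get shifted into the middle), so in part (1) the case $i\neq r_A$ splits as: (a) $i$ is one of the original $A$-leaves $M_j^A$ with $j\neq p$; (b) $i$ is one of the shifted $B$-leaves. In case (a) the translation-covariance (PI1) for $F_A$ applies directly at the leaf $i$, since differentiation $\frac{d}{dz_i}$ commutes with the insertion of $F_B$ in the $p$-th slot; in case (b) one uses (PI1) for $F_B$ at the corresponding leaf, noting that $\frac{d}{dz_i}$ acts on $F_B(m_{[r']}^B)$ only, and that the change-of-variables $e_{A\circ_p B}$ restricts to $e_B$ on functions of the $B$-variables. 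The main bookkeeping is that the variables of $F_B$ are now expressed in the $A\circ_p B$-coordinate, so one must check that the formal derivative $\frac{d}{dz_i}$ in the ambient $D_{X_r(\C)}$-module sense agrees with the formal derivative used inside the definition of $F_B$; this is exactly the content of Lemma \ref{lem_change_of_derivation} together with the fact that $e_{A\circ_p B}$ is a $\Drt$-module homomorphism.

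For part (2) I would again split according to where $i$ lies. The substantive case is when $i$ is a shifted $B$-leaf: then $a(n)_i m_{[r\circ_p r']}$ only touches the $B$-factor, so $(F_A\circ_p F_B)(a(n)_i m_{[r\circ_p r']})=F_A\big(m_{[r]\setminus p}^A, F_B(a(n)_i m_{[r']}^B)\big)$, and one applies (PI2) for $F_B$. This produces a term $a(n-k)F_B(m_{[r']}^B)$ sitting in the $p$-th slot of $F_A$, on which one then applies Proposition \ref{prop_top_recursion} (the ``top recursion'' identity obtained by setting the leaf index equal to $r_B$ in (PI2) for $F_B$ is not needed here; rather one needs the recursion that moves the mode $a(n-k)$ acting on the $p$-th input of $F_A$ through $F_A$). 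The key algebraic identity that makes the two sides match is the re-expansion of $e_{A\circ_p B}\big((z_s-z_i)^{n-k}\big)$ and $e_{A\circ_p B}\big((z_{r_A}-z_i)^{k}\big)$ when $i$ is a $B$-leaf and $s$ ranges over $A$-leaves: this is governed by Lemma \ref{lem_well_inverse}, whose binomial identities \eqref{eq_well_sum_inv} and \eqref{eq_well_sum_inv2} convert the doubly-indexed sums coming from composing (PI2) for $F_B$ with (PI2) for $F_A$ into the single sum on the right-hand side. The opposite case, where $i$ is an original $A$-leaf $j\neq p$, is simpler: there $a(n)_i$ commutes past $F_B$ entirely and one applies (PI2) for $F_A$ directly, the only subtlety being that some of the summation leaves $s$ in the $F_A$-identity are now ``fat'' leaves carrying the whole $B$-insertion, which is handled again by Proposition \ref{prop_top_recursion} applied to the $p$-slot.

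The main obstacle I anticipate is precisely the compatibility of the two series expansions: the right-hand sides of the claimed identities are expressed via $e_{A\circ_p B}$, i.e.\ as honest elements of $T_{A,B,p}^{\mathfrak p}$, whereas naively composing the recursions for $F_A$ and $F_B$ produces iterated expansions first in the $B$-variables and then in $\zeta_{AB}$ and the $A$-variables; one must show these agree. Concretely, one must verify that expressions such as $\frac{1}{(z_s-z_i)^{k+1}}$ with $i$ a $B$-leaf and $s$ an $A$-leaf, when first expanded as $F_B$ sees them (around the $B$-singularity) and then re-expanded around the $A\circ_p B$-singularity, coincide with $e_{A\circ_p B}$ of the same rational function --- this is where the hypothesis that $\mathfrak p$ is $A\circ_p B$-admissible and Lemma \ref{lem_well_inverse} (the requirement $s,r\in\Leaf(e_0)$, $i\notin\Leaf(e_0)$, which is automatic here with $e_0=e_{AB}$) do the real work, ensuring the relevant geometric series converge in the right polydisc and sum to the correct rational function. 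Once this re-expansion lemma is in hand, both parts of the lemma reduce to finite manipulations of binomial coefficients via Lemma \ref{lem_binom} and Lemma \ref{identity}, carried out term by term in the $M_0$-coefficient.
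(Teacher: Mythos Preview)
Your overall strategy for part (2) matches the paper's: split on whether $i$ is an $A$-leaf or a $B$-leaf, apply (PI2) for $F_A$ or $F_B$ respectively, and then clean up the cross terms using Proposition \ref{prop_top_recursion} and the binomial identities of Lemma \ref{lem_well_inverse}. One small correction: when $i$ is a $B$-leaf and (PI2) for $F_B$ produces $a(n-k)$ acting on $F_B(m^B_{[r']})$ in the $p$-th slot of $F_A$, what you then apply is (PI2) for $F_A$ \emph{at the leaf $p$}, not Proposition \ref{prop_top_recursion}; the latter is the special case $i=r_A$ and is used in the other direction (when $i$ is an $A$-leaf and the summation produces $a(k)$ acting on the $p$-th slot, which one expands over the leaves of $B$).

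There is, however, a genuine gap in your treatment of part (1). Your case (b) ``$i$ is a shifted $B$-leaf, use (PI1) for $F_B$'' fails precisely when $i=r_B$, the rightmost leaf of $B$: (PI1) explicitly excludes that leaf, and your claim that ``$\frac{d}{dz_i}$ acts on $F_B(m^B_{[r']})$ only'' is false there, because after composition the $p$-th variable of $F_A$ is $z_{r_B}$, so $\frac{d}{dz_{r_B}}$ hits both factors. The paper singles out this sub-case and handles it with the cyclic translation identity $F_B(L(-1)_{r_B}m)=L(-1)F_B(m)-\sum_{s\neq r_B}F_B(L(-1)_s m)$ (this is \eqref{eq_cyclic_translation}, itself a consequence of (PI2) with $a=\omega$, $n=0$): the $L(-1)$ on the output of $F_B$ becomes $L(-1)_p$ on the $p$-th input of $F_A$, to which (PI1) for $F_A$ applies (here one needs $p\neq r_A$, which holds since $i\neq r_{A\circ_p B}$), and the remaining sum is absorbed using the translation invariance $\sum_{s\in[r']}\frac{d}{dz_s}F_B=0$. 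Without this extra step the proof of (1) is incomplete. The reference to Lemma \ref{lem_change_of_derivation} is not really needed here; what matters is that $T_{A,B,p}^{\mathfrak p}$ is a $\Drt$-module and that the formal derivative $\frac{d}{dz_i}$ obeys the product rule across the two factors of the composite.
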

\begin{proof}
Let $i \in \Leaf(A\circ_p B)$ which is not the rightmost leaf of $A\circ_i B$.
If $i$ is not the rightmost leaf of $B$, then
\begin{align*}
&(F_A\circ_p F_B)\left(L(-1)_i m_{[r\circ_p {r'}]}\right)\\
&=\sum_{h}F_A(m_1^A,\dots,m_{p-1}^A,\prk F_B(L(-1)_i m_{[{r'}]}^B),m_{p+1}^A,\dots,m_r^{A})\\
&=\sum_{h}F_A(m_1^A,\dots,m_{p-1}^A,\prk \frac{d}{dz_i}F_B(m_{[{r'}]}^B),m_{p+1}^A,\dots,m_r^{A})\\
&=\frac{d}{dz_i} \sum_{h}F_A(m_1^A,\dots,m_{p-1}^A,\prk F_B(m_{[{r'}]}^B),m_{p+1}^A,\dots,m_r^{A}).
\end{align*}
Let $i$ be the rightmost leaf of $B$. From the assumption, we may assume that $p$ is not the rightmost leaf of $A$.
Because if it is, then $i$ is the rightmost leaf of $A\circ_p B$.
By \eqref{eq_cyclic_translation},
\begin{align*}
&(F_A\circ_p F_B)\left(L(-1)_i m_{[r\circ_p {r'}]}\right)\\
&=\sum_{h}F_A(m_1^A,\dots,m_{p-1}^A,\prk F_B(L(-1)_i m_{[{r'}]}^B),m_{p+1}^A,\dots,m_r^{A})\\
&=\sum_{h}F_A(m_1^A,\dots,m_{p-1}^A,\prk L(-1)F_B(m_{[{r'}]}^B),m_{p+1}^A,\dots,m_r^{A})\\
&-\sum_{h}\sum_{s\in [{r'}],s\neq i}F_A(m_1^A,\dots,m_{p-1}^A,\prk F_B(L(-1)_s m_{[{r'}]}^B),m_{p+1}^A,\dots,m_r^{A})\\
&=\sum_{h}F_A(m_1^A,\dots,m_{p-1}^A,L(-1)\prk F_B(m_{[{r'}]}^B),m_{p+1}^A,\dots,m_r^{A})\\
&-\sum_{h}F_A(m_1^A,\dots,m_{p-1}^A,\prk \left(\sum_{s\in [r'],s\neq i}\frac{d}{dz_s} F_B(m_{[{r'}]}^B)\right),m_{p+1}^A,\dots,m_r^{A})\\
&=\sum_{h}\left(\frac{d}{dz_i}F_A\right)(m_1^A,\dots,m_{p-1}^A,\prk F_B(m_{[{r'}]}^B),m_{p+1}^A,\dots,m_r^{A})\\
&+\sum_{h}F_A(m_1^A,\dots,m_{p-1}^A,\prk\left( \frac{d}{dz_i}F_B(m_{[{r'}]}^B)\right),m_{p+1}^A,\dots,m_r^{A})\\
&=\frac{d}{dz_i}\left(\sum_{h}F_A(m_1^A,\dots,m_{p-1}^A,\prk F_B(m_{[{r'}]}^B),m_{p+1}^A,\dots,m_r^{A})\right).
\end{align*}
Here, we used $\sum_{s\in [{r'}]}\frac{d}{dz_s} F_B(m_{[{r'}]}^B)=0$.
Hence, (1) holds. 

We will show (2).
First assume that $i$ is a leaf of $A$, which is not $p$.
Then, for any $h\in \C$,
\begin{align}
\begin{split}
&F_A(a(n)_i m_{[r]\setminus p}^A,\prk F_B(m_{[{r'}]}^B))\\
&=\sum_{k\geq 0}\binom{n}{k}e_{A\circ_p B}\left((z_{r_A}-z_i)^k\right)a(n-k)(F_A\circ_p F_B)\left(m_{[r \circ_p {r'}]}\right)\\
&-\sum_{\substack{s\in \Leaf(A),\\ s\neq i,p}}\sum_{k\geq 0} \binom{n}{k}e_{A\circ_p B}\left((z_s-z_i)^{n-k}\right)F_A(a(k)_s m_{[r]\setminus p}^A,\prk F_B(m_{[{r'}]}^B))\\
&-\sum_{k\geq 0} \binom{n}{k}e_{A\circ_p B}\left((z_{r_B}-z_i)^{n-k}\right)F_A(m_{[r]\setminus p}^A, a(k)\prk F_B(m_{[{r'}]}^B)).
\label{eq_rec_fin}
\end{split}
\end{align}
Note that evaluated at $u\in M_0^\vee$,
the sum \eqref{eq_rec_fin} is finite 
since $a(k)\prk=0$ for sufficiently large $k$.

Summing \eqref{eq_rec_fin} for all $h \in \C$ yields an infinite sum. However, focusing on the coefficients of $\ze_{AB}^H$ for each $H\in \C$ shows that the sum is finite.
From Lemma \ref{lem_deg_formula},
$\deg_{A\circ_pB}^{AB}(z_{r_A}-z_i)=\deg_{A\circ_pB}^{AB}(z_{s}-z_i)=\deg_{A\circ_pB}^{AB}(z_{t_B}-z_i )=0$.
That is, these terms do not contain $\ze_{AB}$.
If $m_{[r\circ_p r']}$ satisfies the conditions of the second half of Lemma \ref{lem_comp_form_part}, then, similarly to the proof of Lemma \ref{lem_comp_form_part}, the order of $\ze_{AB}$ in the three terms of $\eqref{eq_rec_fin}$ is $h-\sum_{j \in [r']}h_j^B$.

Thus the following equality is a finite sum and well-defined for each $\ze_{AB}^H$-coefficients ($H\in \C$):
\begin{align*}
&(F_A\circ_p F_B)\left(a(n)_i m_{[r\circ_p {r'}]}\right)\\
&=\sum_{h}F_A(a(n)_i m_{[r]\setminus p}^A,\prk F_B(m_{[{r'}]}^B))\\
&=\sum_{h}\sum_{k\geq 0}\binom{n}{k}e_{A\circ_p B}\left((z_{t_A}-z_i)^k\right)a(n-k)(F_A\circ_p F_B)\left(m_{[r \circ_p {r'}]}\right)\\
&-\sum_{h}\sum_{\substack{s\in \Leaf(A),\\ s\neq i,p}}\sum_{k\geq 0} \binom{n}{k}e_{A\circ_p B}\left((z_s-z_i)^{n-k}\right)F_A(a(k)_s m_{[r]\setminus p}^A,\prk F_B(m_{[{r'}]}^B))\\
&-\sum_h\sum_{k\geq 0} \binom{n}{k}e_{A\circ_p B}\left((z_{t_B}-z_i)^{n-k}\right)F_A(m_{[r]\setminus p}^A, a(k)_p\prk F_B(m_{[{r'}]}^B)).
\end{align*}
Set
\begin{align*}
\Delta_{B}=\sum_{j\in [r']}h_j^B.
\end{align*}
By Proposition \ref{prop_top_recursion},
we have
\begin{align*}
&\sum_h\sum_{k\geq 0} \binom{n}{k}e_{A\circ_p B}\left((z_{r_B}-z_i)^{n-k}\right)F_A(m_{[r]\setminus p}^A, a(k)_p\prk F_B(m_{[{r'}]}^B))\\
&=\sum_h \sum_{k\geq 0}\sum_{s\in \Leaf(B)} \sum_{l\geq 0} \binom{n}{k}\binom{k}{l} e_{A\circ_p B}
\left((z_{r_B}-z_i)^{n-k}(z_s - z_{r_B})^{k-l}\right)
F_A(m_{[r]\setminus p}^A,\prk F_B(a(l)_s m_{[{r'}]}^B)).
\end{align*}
Since 
\begin{align*}
\deg_{A\circ_p B}^{AB}&\left(
e_{A\circ_p B}
\left((z_{r_B}-z_i)^{n-k}(z_s - z_{r_B})^{k-l}\right)
F_A(m_{[r]\setminus p}^A,\prk F_B(a(l)_s m_{[{r'}]}^B))\right)\\
&=(k-l)+ (h-\Delta_{B}-\Delta_a+l+1)\\
&=k+h-\Delta_{B}-\Delta_a+1,
\end{align*}
the coefficients of $\ze_{AB}^H$ are always finite sums for any $H\in \C$.
By Lemma \ref{lem_well_inverse},
\begin{align*}
\sum_{k\geq 0} \binom{n}{k}\binom{k}{l} e_{A\circ_p B}
\left(\frac{z_s - z_{r_B}}{z_{r_B}-z_i}\right)^k
&=\binom{n}{l}e_{A\circ_p B}
\left(\frac{z_s - z_{r_B}}{z_{r_B}-z_i}\right)^l
e_{A\circ_p B}\left(\frac{z_s - z_{i}}{z_{r_B}-z_i}\right)^{n-l}
\end{align*}
as formal power series.
Since $e_{A\circ_p B}$ is a ring homomorphism,
we have
\begin{align*}
\sum_{k\geq 0} \binom{n}{k}\binom{k}{l} e_{A\circ_p B}\left((z_{r_B}-z_i)^{n-k}(z_s - z_{r_B})^{k-l}\right)
=\binom{n}{l}e_{A\circ_p B}\left((z_{s}-z_i)^{n-l}\right).
\end{align*}

Hence, we have
\begin{align*}
\sum_h \sum_{k\geq 0}&\sum_{s\in \Leaf(B)} \sum_{l\geq 0} \binom{n}{k}\binom{k}{l} e_{A\circ_p B}
\left((z_{r_B}-z_i)^{n-k}(z_s - z_{r_B})^{k-l}\right)
F_A(m_{[r]\setminus p}^A,\prk F_B(a(l)_s m_{[{r'}]}^B))\\
&=\sum_h \sum_{s\in \Leaf(B)} \sum_{l\geq 0} \binom{n}{l}e_{A\circ_p B}
\left((z_s-z_{i})^{n-l}\right)
F_A(m_{[r]\setminus p}^A,\prk F_B(a(l)_s m_{[{r'}]}^B)).
\end{align*}
Hence, (2) holds.
Note that these calculations work because the functions being expanded are good functions in $U_{A\circ_p B}$ in the sense of Remark \ref{rem_ill_exp}.

Next, we assume that $i$ is a leaf of $B$. Then,
\begin{align*}
&(F_A\circ_p F_B)\left(a(n)_i m_{[r\circ_p {r'}]}\right)\\
&=\sum_{h}F_A(m_{[r]\setminus p}^A,\prk F_B(a(n)_i m_{[{r'}]}^B))\\
&=\sum_{h}\sum_{k\geq 0}\binom{n}{k}e_{A\circ_p B}\left((z_{r_B}-z_i)^k\right)
F_A(m_{[r]\setminus p}^A,\prk a(n-k) F_B( m_{[{r'}]}^B))\\
&-\sum_{h}\sum_{\substack{s\in \Leaf(B),\\ s\neq i}}\sum_{k\geq 0} \binom{n}{k}e_{A\circ_p B}\left((z_s-z_i)^{n-k}\right)F_A(m_{[r]\setminus p}^A,\prk F_B(a(k)_s m_{[{r'}]}^B)),
\end{align*}
which is well-defined as above
since
\begin{align*}
\deg_{A\circ_pB}^{AB}&\left(e_{A\circ_p B}\left((z_{r_B}-z_i)^k\right)
F_A(m_{[r]\setminus p}^A,\prk a(n-k) F_B( m_{[{r'}]}^B))\right)\\
&=h+k-\Delta_B
\end{align*}
and
\begin{align*}
\deg_{A\circ_pB}^{AB}&\left( e_{A\circ_p B}(z_s-z_i)^{n-k}F_A(m_{[r]\setminus p}^A,\prk F_B(a(k)_s m_{[{r'}]}^B))\right)\\
&=(n-k)+(h-\Delta_B-(\Delta_a-k-1))\\
&=n+h-\Delta_B -\Delta_a-1.
\end{align*}
Furthermore,
\begin{align*}
&\sum_{h}\sum_{k\geq 0}\binom{n}{k}e_{A\circ_p B}\left((z_{r_B}-z_i)^k\right)
F_A(m_{[r]\setminus p}^A,\prk a(n-k) F_B(a(k)_s m_{[{r'}]}^B))\\
&=\sum_{h}\sum_{k\geq 0}\sum_{l\geq 0}
\binom{n}{k}\binom{n-k}{l}
e_{A\circ_p B}\left((z_{r_B}-z_i)^k(z_{r_A}-z_{r_B})^{l}\right)
a(n-k-l)F_A(m_{[r]\setminus p}^A,\prk F_B(m_{[{r'}]}^B))\\
-&\sum_{h}\sum_{k\geq 0}\sum_{s\in \Leaf(A),s\neq p}
\sum_{l\geq 0}\binom{n}{k}\binom{n-k}{l}
e_{A\circ_p B}\left((z_{r_B}-z_i)^k(z_s-z_{r_B})^{n-k-l}\right)
F_A(a(l)_s m_{[r]\setminus p}^A,\prk F_B(m_{[{r'}]}^B)).
\end{align*}
These equations are well-defined as above.

If $n \geq 0$, then by Lemma \ref{identity}
\begin{align*}
\sum_{k\geq 0}\binom{n}{k}\binom{n-k}{l}
e_{A\circ_p B}\left((z_{r_B}-z_i)^k(z_s-z_{r_B})^{n-k-l}\right)&=\sum_{k'\geq 0}\binom{n}{n-k'}\binom{k'}{l}
e_{A\circ_p B}\left((z_{r_B}-z_i)^{n-k'}(z_s-z_{r_B})^{k'-l}\right)\\
&=\binom{n}{l}e_{A\circ_p B}\left((z_{s}-z_i)^{n-l}\right)
\end{align*}
and 
\begin{align*}
&\sum_{k,l\geq 0}\binom{n}{k}\binom{n-k}{l}
e_{A\circ_p B}\left((z_{r_B}-z_i)^k(z_{r_A}-z_{r_B})^{l}\right)a(n-k-l)\\
&=\sum_{k',l\geq 0}\binom{n}{k'}\binom{k'}{l}
e_{A\circ_p B}\left((z_{r_B}-z_i)^{n-k'}(z_{r_A}-z_{r_B})^{l}\right)a(k'-l)\\
&=\sum_{k',l'\geq 0}\binom{n}{k'}\binom{k'}{l'}
e_{A\circ_p B}\left((z_{r_B}-z_i)^{n-k'}(z_{r_A}-z_{r_B})^{k'-l'}\right)a(l')\\
&=\sum_{l'\geq 0}\binom{n}{l'}
e_{A\circ_p B}\left((z_{r_A}-z_{i})^{n-l'}\right)a(l').
\end{align*}
Thus, (2) holds if $n\geq 0$.
Assume  that $n <0$.
By Lemma \ref{lem_well_inverse},
we have
\begin{align*}
\sum_{k\geq 0}\binom{n}{k}\binom{n-k}{l}
e_{A\circ_p B}\left((z_{r_B}-z_i)^k(z_s-z_{r_B})^{n-k-l}\right)&=\binom{n}{l}e_{A\circ_p B}\left((z_{s}-z_i)^{n-l}\right).
\end{align*}
By
\begin{align*}
\sum_{k,l\geq 0}\binom{n}{k}\binom{n-k}{l} x^ky^l z^{k+l} &= 
\bigl(1+(x+y)z\bigr)^{n},
\end{align*}
we have
\begin{align*}
&\sum_{k,l\geq 0}\binom{n}{k}\binom{n-k}{l}
e_{A\circ_p B}\left((z_{r_B}-z_i)^k(z_{r_A}-z_{r_B})^{l}\right)a(n-k-l)\\
&=\sum_{l'\geq 0}
\binom{n}{l'}e_{A\circ_p B}\left((z_{r_A}-z_{i})^{l'}\right)a(n-l').
\end{align*}
Hence, the assertion holds.
\end{proof}

By Lemma \ref{lem_comp_recursion} and similarly to Lemma \ref{lem_formal_block},
$C_{A \circ_p B}$ defines an element in\\
 $\mathrm{Hom}_{\Drt}(D_{M_{[r\circ_p r']}^{A,B}},T_{A,B,p}^\conv)$.
Then, we have:
\begin{thm}
\label{thm_composition}
$F_A\circ_p F_B \in \PI_{A\circ_p B}\binom{M_0}{M_{[r\circ_p {r'}]}^{A,B}}$.
\end{thm}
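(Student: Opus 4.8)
The plan is to verify the three defining conditions (PI0), (PI1), (PI2) for the linear map $F_A\circ_p F_B$, together with the preliminary fact that \eqref{eq_FAB_def} is a well-defined element of $M_0[[x_v^\C,\log x_v\mid v\in V(A\circ_p B)]]$. Most of the technical work has already been done in the lemmas of this section, so the proof amounts to assembling them. First I would recall that, by Lemma \ref{lem_comp_formalsp} (which in turn rests on Lemma \ref{lem_comp_form_part}), for every $u\in M_0^\vee$ and every $m_{[r\circ_p r']}\in M_{[r\circ_p r']}^{A,B}$ the pairing $\langle u, (F_A\circ_p F_B)(m_{[r\circ_p r']})\rangle$ lies in $T_{A,B,p}^{\mfp}$ for every $A\circ_p B$-admissible $\mfp$. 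Since $T_{A\circ_p B}^\conv$ is, by its construction, the intersection over all such $\mfp$ of the subspaces of $T_{A,B,p}^{\mfp}$ spanned by genuine parenthesized power series in $\{\ze_e\mid e\in E(A\circ_p B)\}$ (recall $E(A\circ_p B)=E(A)\sqcup E(B)\sqcup\{e_{AB}\}$ and $\ze_{AB}$ is precisely the formal variable appearing as the outer series variable in $T_{A,B,p}^{\mfp}$), this gives (PI0). The well-definedness of the $M_0$-coefficients follows from the same $L(0)$-weight bookkeeping used in Lemma \ref{lem_comp_form_part}: by \eqref{eq_cyclic_L0}/\eqref{eq_arg_L0} applied to the composite, the coefficient of each monomial $\Pi_{v\in V(A\circ_p B)}x_v^{p_v}$ lands in a single finite-dimensional weight space $(M_0)_r$, so $F_A\circ_p F_B$ indeed maps into $M_0[[x_v^\C,\log x_v]]$ rather than only its completion.

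Next I would dispatch (PI1) and (PI2) using Lemma \ref{lem_comp_recursion}. Part (1) of that lemma is exactly condition (PI1): for any leaf $i$ of $A\circ_p B$ other than the rightmost leaf $r_{A\circ_p B}=r_A$, one has $(F_A\circ_p F_B)(L(-1)_i m_{[r\circ_p r']})=\frac{d}{dz_i}(F_A\circ_p F_B)(m_{[r\circ_p r']})$. Part (2) of the lemma, after noting that $a(n-k)$ acting on $F_A(\dots)$ is exactly the $0$-th–slot module action $a(-k+n)$ of $M_0$ on the parenthesized intertwining operator $F_A$ (i.e. matching the first term of (PI2) with $r_A=r_{A\circ_p B}$), and that the sum over $s\in\Leaf(A\circ_p B)$ with $s\ne i$ gives the second term of (PI2), is precisely condition (PI2). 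So once Lemma \ref{lem_comp_recursion} is in hand the three axioms are verified verbatim; the only remaining point is to cross-check that the normalisations of the variables in $F_A$ and $F_B$ have been shifted consistently with the operadic composition $A\circ_p B$ (the relabelling of leaves described in Section \ref{sec_operad_def}), which is built into the definition \eqref{eq_FAB_def} and the identifications $x_A=x_{A\circ_p B}$, $r_A=r_{A\circ_p B}$.

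There is also the degenerate case $r'=1$ to record: then $F_B$ is a $V$-module homomorphism by Lemma \ref{lem_PI_one}, $A\circ_p B=A$ with $M_p^A$ replaced by $M_1^B$ via $F_B$, and $F_A\circ_p F_B=F_A\circ(\id,\dots,F_B,\dots,\id)$ is visibly a parenthesized intertwining operator of type $(A,M_0,M_{[r\circ_p r']}^{A,B})$ because post-/pre-composing a parenthesized intertwining operator with module maps preserves (PI0)--(PI2) (this is immediate from the defining formulas). Hence from now on one assumes $r'\ge 2$, exactly as set up in the text before the theorem.

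The main obstacle — and the reason the bulk of the section precedes this theorem — is the convergence and well-definedness packaged into Lemmas \ref{lem_comp_form_part} and \ref{lem_comp_recursion}: showing that substituting $F_B(m_{[r']}^B)$, which is an \emph{infinite} sum over the weight-space projections $\prk F_B(m_{[r']}^B)$, into the slot $p$ of $F_A$ produces, coefficient-by-coefficient in $\ze_{AB}$, a \emph{finite} sum, and that the resulting double-series lies in $T_{A,B,p}^{\mfp}$. This rests on the degree estimates via $\deg_{A\circ_p B}^{e_{AB}}$ (Lemma \ref{lem_deg_formula}) controlling the order of $\ze_{AB}$ in each term, on the fact (Remark \ref{rem_ill_exp}, Lemma \ref{lem_well_inverse}) that the geometric re-expansions $e_{A\circ_p B}\big(\tfrac{z_s-z_{r_B}}{z_{r_B}-z_i}\big)\in\ze_{e_0}\C[[\ze_e]]$ are legitimate in $U_{A\circ_p B}$, and on Proposition \ref{prop_top_recursion} to move the $0$-slot action across the inner operator $F_B$. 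Since those lemmas are already proved, the proof of Theorem \ref{thm_composition} itself is short: it is just the observation that (PI0)=Lemma \ref{lem_comp_formalsp}, (PI1)=Lemma \ref{lem_comp_recursion}(1), (PI2)=Lemma \ref{lem_comp_recursion}(2), plus the trivial case $r'=1$.
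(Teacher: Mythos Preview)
Your reduction of (PI1) and (PI2) to Lemma \ref{lem_comp_recursion} is exactly right, and the $r'=1$ case is handled correctly. The genuine gap is in your argument for (PI0). Lemma \ref{lem_comp_formalsp} only places $\langle u,(F_A\circ_p F_B)(m_{[r\circ_p r']})\rangle$ in $T_{A,B,p}^{\mfp}$, and by definition this space consists of \emph{formal} power series in $\ze_{AB}$ whose coefficients lie in $\C[[\ze_e\mid e\in E(A)\sqcup E(B)]]_{\mfp|_A,\mfp|_B}^{\conv}[\dots]$. No convergence whatsoever in $\ze_{AB}$ is asserted there. Your claim that $T_{A\circ_p B}^{\conv}$ is ``the intersection over all such $\mfp$ of the subspaces of $T_{A,B,p}^{\mfp}$'' is not correct: an element of $T_{A\circ_p B}^{\conv}$ must be a convergent power series jointly in all $\ze_e$ for $e\in E(A\circ_p B)$, in particular in $\ze_{AB}$, which $T_{A,B,p}^{\mfp}$ does not enforce.

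This is precisely the missing step the paper supplies. Once (PI1) and (PI2) are known (your Lemma \ref{lem_comp_recursion}), Proposition \ref{prop_differential_equation} applies to the composite and yields a differential equation of the form $\bigl((\ze_{AB}\tfrac{d}{d\ze_{AB}})^K+\sum_{n<K} f_n(\ze_{AB}\tfrac{d}{d\ze_{AB}})^n\bigr)C_{A\circ_p B}(u,m_{[r\circ_p r']})=0$ with $f_n\in\Ort^{e_{AB}\reg}$, hence holomorphic at $\ze_{AB}=0$. Then Corollary \ref{cor_app} (the Frobenius-type convergence lemma in Appendix A) upgrades the formal $\ze_{AB}$-expansion to an absolutely and locally uniformly convergent one on $U_{A\circ_p B}^{\mfp}$. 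Finally one uses that $\CB$ is a locally constant sheaf on the simply-connected $U_{A\circ_p B}$ (Lemma \ref{lem_simply_conn}) together with Theorem \ref{thm_expansion} to conclude that the series actually lies in $T_{A\circ_p B}^{\conv}$. Without invoking the differential equation and the analytic convergence argument, (PI0) does not follow.
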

\begin{proof}
(PI1) and (PI2) follow from Lemma \ref{lem_comp_recursion}.
It suffices to show (PI0).
Let $m_{[r \circ_p {r'}]} \in M_{[r\circ_p {r'}]}^{A,B}$ and $u\in M_0^\vee$.
By Proposition \ref{prop_differential_equation} and Lemma \ref{lem_comp_recursion},
there exists $K \in \Z_{>0}$ and $f_0,\dots,f_{K-1} \in \Ort^{AB\reg}$ such that:
\begin{align*}
\left(\left(\zeta_{AB}\frac{d}{d\zeta_{AB}}\right)^K+\sum_{n=0}^{K-1}e_{A\circ_p B}(f_n) \left(\zeta_{AB}\frac{d}{d\zeta_{AB}}\right)^n \right) C_{A\circ_p B}(u, m_{[r \circ_p r']})=0.
\end{align*}
Then, $\{f_i\}_{i=0,\dots,K-1}$ are holomorphic functions on $\D_{\mfp|_A}^\times \times \D_{\mfp|_B}^\times \times \D_{p_{AB}}$ in the $A\circ_p B$-coordinate,
where
\begin{align*}
\D_{\mfp|_A}^\times=\Pi_{e \in E(A)} \D_{p_e}^\times\\
\D_{\mfp|_B}^\times=\Pi_{e \in E(B)} \D_{p_e}^\times.
\end{align*}
Then, by Lemma \ref{lem_comp_formalsp} and Corollary \ref{cor_app} in Appendix A,
the sum
\begin{align*}
C_{A\circ_p B}(u, m_{[r\circ_p r']})=\sum_{h \in \C} \langle u,
F_A(m_{[r]\setminus p}^A,\prk F_B(m_{[{r'}]}^B))\rangle
\end{align*}
converges absolutely and uniformly to 
a holomorphic function on $U_{A \circ_p B}^{\mathfrak{p}}$ for any $A\circ_p B$-admissible $\mathfrak{p}\in \R_{>0}^{E(A\circ_p B)}$.
Hence, $F_A\circ_p F_B$ defines a section in $\CB_{M_0,M_{[r\circ_p r']}^{A,B}}(U_{A\circ_p B}^\mfp)$.
Since $\CB$ is a locally constant sheaf
and $U_{A\circ_p B}^\mfp \subset U_{A\circ_p B}$, 
by Lemma \ref{lem_simply_conn},
$F_A\circ_p F_B$ can be extended to a section in
$\CB_{M_0,M_{[r\circ_p r']}^{A,B}}(U_{A\circ_p B})$.
Then, by Theorem \ref{thm_expansion}, the series 
$C_{A\circ_p B}(u, m_{[r\circ_p r']})$ is in fact in $T_{A\circ_p B}^\conv$.
Thus, the assertion holds.
\end{proof}
By the above proof, we have:
\begin{cor}
\label{cor_composition}
The sum
$\sum_{h \in \C}\langle m_0^A,F_A(m_1^A,\dots,m_{p-1}^A,\prk F_B(m_{[r']}^B),m_{p+1}^A,\dots,m_r^{A})\rangle$
converges uniformly and absolutely in $U_{A\circ_p B}$.
\end{cor}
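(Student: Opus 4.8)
The plan is to read the statement off the proof of Theorem \ref{thm_composition}, where exactly this convergence was established on the way to (PI0). First I would note that, by the definitions of $F_A\circ_p F_B$ and of the associated linear map $C_{A\circ_p B}$, the displayed series is precisely
\begin{align*}
C_{A\circ_p B}(m_0^A, m_{[r\circ_p {r'}]})=\sum_{h \in \C} \langle m_0^A,
F_A(m_{[r]\setminus p}^A,\prk F_B(m_{[{r'}]}^B))\rangle .
\end{align*}
Hence it suffices to show that for every $A\circ_p B$-admissible $\mfp\in \R_{>0}^{E(A\circ_p B)}$ this sum converges absolutely and uniformly on $U_{A\circ_p B}^{\mfp}$; the assertion on $U_{A\circ_p B}$ then follows at once from $U_{A\circ_p B}=\cup_{\mfp}U_{A\circ_p B}^{\mfp}$ together with Lemma \ref{lem_simply_conn}.

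Fixing such an $\mfp$, I would assemble two ingredients, both already available. On the one hand, Lemma \ref{lem_comp_formalsp} places each term $\langle m_0^A,(F_A\circ_p F_B)(m_{[r\circ_p {r'}]})\rangle$ in $T_{A,B,p}^\mfp$, which exhibits it as an honest formal power series in $\ze_{AB}$ with coefficients holomorphic on $\Pi_{e\in E(A)}\D_{p_e}^\times\times \Pi_{e\in E(B)}\D_{p_e}^\times$; the finiteness of the $\ze_{AB}^H$-coefficient of the $h$-sum for each $H\in\C$ was checked during the proof of Lemma \ref{lem_comp_recursion} using the degree computations of Lemma \ref{lem_comp_form_part}. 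On the other hand, Lemma \ref{lem_comp_recursion} shows that $C_{A\circ_p B}$ satisfies the recursions (PI1)--(PI2) for $A\circ_p B$, so Proposition \ref{prop_differential_equation}, applied to the joining edge $e_{AB}\in E(A\circ_p B)$, gives $K\in \Z_{>0}$ and $f_0,\dots,f_{K-1}\in \Ort^{AB\reg}$ with
\begin{align*}
\left(\left(\zeta_{AB}\frac{d}{d\zeta_{AB}}\right)^K+\sum_{n=0}^{K-1}e_{A\circ_p B}(f_n) \left(\zeta_{AB}\frac{d}{d\zeta_{AB}}\right)^n \right) C_{A\circ_p B}(m_0^A, m_{[r \circ_p {r'}]})=0 ,
\end{align*}
whose coefficients $e_{A\circ_p B}(f_n)$ are holomorphic on $\D_{\mfp|_A}^\times\times \D_{\mfp|_B}^\times\times \D_{p_{AB}}$ in the $A\circ_p B$-coordinate.

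With both inputs in place, the conclusion is immediate from Corollary \ref{cor_app} in Appendix A: a formal solution of a linear ODE with a regular singularity along $\{\zeta_{AB}=0\}$ whose coefficients are holomorphic on the relevant punctured polydisk, and which is already of the shape prescribed by $T_{A,B,p}^\mfp$, converges absolutely and uniformly on the cut domain $U_{A\circ_p B}^{\mfp}=\Psi_{A\circ_p B}^{-1}(\C\times \C^\cut\times \Pi_{e\in E(A\circ_p B)}\D_{p_e}^\cut)$. Taking the union over all $A\circ_p B$-admissible $\mfp$ yields convergence on $U_{A\circ_p B}$. I do not expect any genuine obstacle: the statement is essentially a repackaging of a step inside the proof of Theorem \ref{thm_composition}, and the only mildly delicate point — keeping track of which powers of $\ze_{AB}$ occur so that the partial sums over $h$ are finite — is exactly what Lemma \ref{lem_comp_form_part} and Lemma \ref{lem_comp_recursion} are designed to control.
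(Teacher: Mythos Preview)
Your proposal is correct and follows essentially the same route as the paper: the corollary is stated immediately after Theorem \ref{thm_composition} with the remark ``By the above proof, we have'', and that proof proceeds exactly as you outline—combining Lemma \ref{lem_comp_formalsp}, Lemma \ref{lem_comp_recursion}, Proposition \ref{prop_differential_equation} for the edge $e_{AB}$, and Corollary \ref{cor_app} to obtain convergence on each $U_{A\circ_p B}^{\mfp}$, then passing to the union.
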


\section{Pseudo-braided category structure on conformal blocks}
\label{sec_lax}
In this section, we will show that $\Vmodf$ inherits a natural pseudo-braided category structure.
By Proposition \ref{prop_app_operad}, it suffices to construct a sequence of functors
\begin{align}
\{\CPaB(n) \rightarrow \Endp_\Vmodf(n)\}_{n \geq 0},\label{eq_seq_func_recall}
\end{align}
with the composition maps \eqref{eq_mu_diag_add},
satisfying (LM0) - (LM3).
The definition of \eqref{eq_seq_func_recall} is given in Section \ref{sec_lax_def}.
Difficulties are in showing that 
the composition maps are natural transformations,
which will be shown in Section \ref{sec_lax_proof} under the preparation of Section \ref{sec_lax_ass}, \ref{sec_lax_braid} and \ref{sec_lax_vac}.


\subsection{Definition of lax 2-action}
\label{sec_lax_def}


Recall that $\CPaB(n)$ is isomorphic, as a category, to the full subcategory of $\Pi_1(X_n(\C))$ spanned by the points $\{Q_A\}_{A\in \Tr_{n}}$ (see Section \ref{sec_def_cpab}).
Here
\begin{align*}
Q: \Tr_n \;\longrightarrow\; X_n(\R), 
\qquad 
A \;\longmapsto\; Q(A) = (q_1^A, \dots, q_n^A)
\end{align*}
is any map satisfying (Q1) and (Q2).
From the assumptions (Q1) and (Q2), for $g \in \mathrm{Hom}_\CPaB(r)(A,A')$ there is a naturally defined path (up to homotopy)
$\ga(g):[0,1]\rightarrow X_r$
whose endpoints are $Q(A),Q(A') \in X_r(\C)$. We write $[g]_Q$ for the homotopy class of this path.
To connect with vertex operator algebras, in addition to the homotopical conditions (Q1) and (Q2) we assume
\begin{enumerate}
\item[Q3)]
$Q(A) \in U_A$ for any $A\in \Tr_n$.
\end{enumerate}
In this section, using analytic continuation of conformal blocks along paths between the points $\{Q(A)\}_{A\in \Tr_n}$, we construct the sequence of functors \eqref{eq_seq_func_recall} and show that the construction is independent of the choice of $Q$.


%
%
%
%

Define a functor $\PI_Q:\CPaB(n) \rightarrow \Endp_\Vmodf(n)$ as follows:
For an object $A\in \Tr_n$,
\begin{align*}
\PI_Q(A)=\PI_{A}:\Vmodf \times (\Vmodfo)^n \rightarrow \Vect.\qquad \text{(for $n \geq 1$)}
\end{align*}
For $n=0$, define $\PI_Q(\emptyset) \in \Endp_\Vmodf(0)=\Func(\Vmodf,\Vect)$ by
\begin{align*}
\PI_Q(\emptyset)=\mathrm{Hom}(V,\bullet): \Vmodf \rightarrow \Vect,\quad
M\mapsto \mathrm{Hom}(V,M).
\end{align*}
\begin{rem}
\label{rem_rep_V}
For $\PI_Q(\emptyset)$ to be a representative functor, the dual module $V^\vee$ must be $C_1$-cofinite;
since $V$ itself is always $C_1$-cofinite, this assumption is true if $V$ is a self-dual vertex operator algebra.
Whenever we deal with $\PI_Q(\emptyset)$ below (Section \ref{sec_lax_vac} and Section \ref{sec_lax_proof}), we assume that $V^\vee$ is finitely generated.
\end{rem}

Note that by Theorem \ref{thm_expansion}
and
Lemma \ref{lem_translation_isomorphism},
we have linear isomorphisms
\begin{align*}
s_A: \PI_A\binom{M_0}{\Mrr} \rightarrow \CB_{M_{[0;r]}^{r_A}}(U_A),\\
e_A:\CB_{M_{[r]}^{r_A}}(U_A)\rightarrow \PI_A\binom{M_0}{\Mrr},\\
Q_{r_A}: \CB_{\Mr}(U_A) \rightarrow \CB_{M_{[0;r]}^{r_A}}(U_A),
\end{align*}
which are natural for $M_0,M_1,\dots,M_r\in \Vmodf$.
For a morphism $g:A\rightarrow A'$ in $\CPaB(r)$,
let $\ga \in [g]_Q$
and
define a linear map $A_\ga:\CB_{\Mr}(U_A)\rightarrow \CB_{\Mr}(U_{A'})$
by the analytic continuation along the path $\ga$,
which is a natural transformation by Proposition \ref{prop_functor_monodromy}.
Since $\CB$ is a locally constant sheaf, $A_\ga$ is well-defined and
independent of the choice of $\ga\in [g]_Q$.
Define a natural transformation $\rho_Q(g):\PI_Q(A) \rightarrow \PI_Q(A')$ by
the composition
\begin{align}
\begin{split}
&\PI_A\binom{M_0}{\Mrr}
\stackrel{s_A}{\rightarrow}
\CB_{M_{[r]}^{r_A}}(U_A)
\stackrel{Q_{r_A}^{-1}}{\rightarrow}
\CB_{\Mr}(U_A)\\
&\stackrel{A_{[g]_Q}}{\rightarrow} \CB_{\Mr}(U_{A'})
\stackrel{Q_{r_{A'}}}{\rightarrow}
\CB_{M_{[r]}^{r_{A'}}}(U_{A'})
\stackrel{e_{A'}}{\rightarrow}
\PI_{A'}\binom{M_0}{\Mrr}
\end{split}
\label{eq_def_rho_comp}
\end{align}

\begin{lem}
\label{lem_indep_Q}
Let $Q_0,Q_1:\Tr_r\rightarrow \Xr$ satisfy (Q1), (Q2) and (Q3).
Then, $\PI_{Q_0}=\PI_{Q_1}$ as functors.
\end{lem}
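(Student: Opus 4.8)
The statement asserts that the functor $\rho_Q:\CPaB \to \Endp_\Vmodf$ does not depend on the choice of base points $Q$. On objects this is immediate, since $\rho_Q(A) = g_A \PI_{w_A}$ makes no reference to $Q$ at all, and similarly $\rho_Q(\emptyset) = \Hom(V,\bullet)$. So the content is entirely about morphisms: I must show that for every morphism $g:A\to A'$ in $\CPaB(r)$, the composite natural transformation $\rho_{Q_0}(g) = \rho_{Q_1}(g)$ as maps $g_A\PI_{w_A} \to g_{A'}\PI_{w_{A'}}$. Unwinding the definition \eqref{eq_def_rho_comp}, since the isomorphisms $d_A, s_A, Q_{r_A}, e_A$ are all defined without reference to $Q$, the claim reduces to showing that the analytic continuation map $A_{[g]_{Q_0}} = A_{[g]_{Q_1}}$ as a linear map $\CB_\Mr(U_A) \to \CB_\Mr(U_{A'})$.

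\textbf{Key steps.} First I would recall that $\CB_\Mr$ is a locally constant sheaf on $X_r$ (Corollary \ref{equi_dim}), so the analytic continuation $A_\ga$ depends only on the homotopy class of $\ga$ in $\Pi_1(X_r)$ with fixed endpoints, or more precisely, if $\ga$ and $\ga'$ are paths with $\ga(0)=\ga'(0)$ and $\ga(1)=\ga'(1)$ that are homotopic rel endpoints, then $A_\ga = A_{\ga'}$. Second, I would observe that the path classes $[g]_{Q_0}$ and $[g]_{Q_1}$ have different endpoints in general --- $[g]_{Q_0}$ runs from $Q_0(A)$ to $Q_0(A')$ while $[g]_{Q_1}$ runs from $Q_1(A)$ to $Q_1(A')$ --- so one cannot directly compare them; one must conjugate. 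The natural move: since $U_A$ is connected and simply-connected (Lemma \ref{lem_simply_conn}), there is an essentially unique homotopy class of path $\delta_A$ in $U_A$ from $Q_0(A)$ to $Q_1(A)$, and likewise $\delta_{A'}$ in $U_{A'}$ from $Q_0(A')$ to $Q_1(A')$. Moreover, because $U_A$ is simply-connected, $A_{\delta_A}$ is the \emph{identity} on $\CB_\Mr(U_A)$ under the canonical identifications (the sheaf is constant on $U_A$, so continuation within $U_A$ is trivial). The third and central step is to verify that the composite path $\delta_{A'}^{-1} \cdot \ga(g)_{Q_1} \cdot \delta_A$ is homotopic rel endpoints to $\ga(g)_{Q_0}$ in $X_r$; this follows because both represent the same element of the braid group $B_r$ under the correspondence between $\Hom_{\CPaB(r)}(A,A')$ and homotopy classes of paths connecting real configurations, the point being that $\ga(g)$ was defined only up to homotopy precisely to make such comparisons valid, and adjusting the endpoints within the simply-connected regions $U_A$, $U_{A'}$ does not change the braid-group class. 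Then $A_{[g]_{Q_0}} = A_{\delta_{A'}^{-1}} \circ A_{[g]_{Q_1}} \circ A_{\delta_A} = A_{[g]_{Q_1}}$ since $A_{\delta_A}$ and $A_{\delta_{A'}}$ are identities.

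\textbf{Main obstacle.} The delicate point is the third step: making precise that the braid-group element attached to $g$ is genuinely independent of which real base configurations (compatible with the combinatorial order of $A$ and $A'$) one picks as endpoints, and that the regions $U_A$ are large enough that the connecting paths $\delta_A$ stay inside them. This is really a statement about the geometry of the $U_A$'s relative to $X_r(\R)$: one needs that the subset of $X_r(\R)$ consisting of real configurations whose order agrees with $A$ lies in a single connected component of $U_A \cap X_r(\R)$ (or at least that any two such are joined by a path in $U_A$), so that the choice of $Q(A)$ is immaterial up to homotopy in $U_A$. Given conditions (Q1)--(Q3) and the explicit description of $U_A$ via admissible radii in Section \ref{sec_config}, this should be a direct, if somewhat fiddly, check --- essentially that scaling the gaps $|z_i - z_j|$ down along the tree $A$ keeps one inside $U_A$, which is exactly the content of the admissibility conditions illustrated in Remark \ref{rem_pole_p}. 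Once that convexity-type fact is in hand, the homotopy invariance of $A_\ga$ for a locally constant sheaf finishes the argument.
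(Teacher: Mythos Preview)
Your approach matches the paper's exactly: choose paths $p_A$ in $U_A$ from $Q_0(A)$ to $Q_1(A)$, observe that $p_{A'}\,\gamma\,p_A^{-1}\in[g]_{Q_1}$ for $\gamma\in[g]_{Q_0}$, and conclude using simple-connectedness of $U_A$. The paper dispatches your ``main obstacle'' with the single phrase ``it is clear that $p_{A'}\gamma p_A^{-1}\in[g]_{Q_1}$ since $U_A$ is simply-connected''; if you want to fill this in, note that your parenthetical ``any two such are joined by a path in $U_A$'' is trivially true by connectivity and is not what is needed---rather, one should choose $p_A$ to lie in the intersection of $U_A$ with the real chamber $\{z\in X_r(\R):\text{order of }z\text{ equals that of }A\}$, which is possible because each $U_A^{\mathfrak p}$ meets that chamber in a convex set (the constraints $0<\zeta_e<p_e$ are linear in $z$ there), so that $p_A$ is simultaneously a path in $U_A$ (hence acts trivially on sections) and a path through real ordered configurations (hence conjugates braid classes correctly).
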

\begin{proof}
For any $A\in \Tr_r$, since $Q_0(A),Q_1(A) \in U_A$ and $U_A$ is connected by Lemma \ref{lem_simply_conn},
there exists a path $p_A:[0,1]\rightarrow U_A$ such that $p_A(0)=Q_0(A)$ and $p_A(1)=Q_1(A)$.
Let $g:A\rightarrow A'$ be a morphism in $\CPaB(r)$
and $F\in \PI_A\binom{M_0}{\Mrr}$.
For any $\ga \in [g]_{Q_0}$, it is clear that $p_{A'}\ga p_A^{-1} \in [g]_{Q_1}$
since $U_A$ is simply-connected by Lemma \ref{lem_simply_conn}.
Hence, the assertion holds.
\end{proof}
By Lemma \ref{lem_indep_Q},
the functor $\PI_Q:\CPaB(r)\rightarrow \Endp_\Vmodf(r)$ is independent of the choice of $Q:\Tr_r\rightarrow \Xr$.
Hence, we simply denote it by $\PI$.

Recall that the symmetric group $S_r$ acts on
$\Endp_{\Vmodf}(r)$ by permuting the inputs.  Namely, for
$F\in \Endp_{\Vmodf}(r)$ and $g\in S_r$,
\[
(g\cdot F)\binom{M_0}{M_1,\ldots,M_r}
 =
F\binom{M_0}{M_{g^{-1}(1)},\ldots,M_{g^{-1}(r)}} .
\]
The functors $I_{gA}$ and $g\cdot I_A$ are not literally identical,
since the space of formal power series $T_{gA}$ and $T_A$ are defined using
different coordinate systems.  To make the equivariance strict, we
replace $I_A$ as follows.

By the bijection \eqref{eq_Tr_PW_Sr}, write
$A=(w_A,g_A) \in \PW_r \times S_r$ and put $\overline A=(w_A,\id)$.
We replace $I_A$ by the naturally isomorphic functor
\[
\widehat I_A =g_A\cdot I_{\overline A}.
\]
The natural isomorphism $\widehat I_A\simeq I_A$ is induced by the
permutation
\[
X(g_A):X_r\longrightarrow X_r,\qquad
(z_1,\ldots,z_r)\longmapsto
(z_{g_A^{-1}(1)},\ldots,z_{g_A^{-1}(r)}),
\]
together with the corresponding relabelling of the formal variables.
After transporting the morphisms $\rho(\gamma)$ through these isomorphisms,
we again denote the resulting functor by $\PI$.
Then, we have:
\begin{lem}
\label{rem_add_symmetry}
With this convention, the functor
\[
\PI:\CPaB(r)\longrightarrow \Endp_\Vmodf(r)
\]
is strictly equivariant with respect to the symmetric group action.  In particular, condition {\rm (LM0)} holds.
\end{lem}

Now we have obtained a family of functors $\{\PI:\CPaB(r)\rightarrow \Endp_\Vmodf(r)\}_{r\geq 0}$ and 
the maps 
$\comp_{A,B_1,\dots,B_n}:\PI(A)\circ(\PI(B_1),\dots,\PI(B_n))\rightarrow \PI\left( A\circ (B_1,\dots,B_n)\right)$ defined in Theorem \ref{thm_composition}.
To show that $(\PI,\comp^\PI)$ is a lax 2-morphism, it remains to verify the
following:
\begin{enumerate}
\item
Extend the definition of $\comp^\PI$ to the case where one of $B_i$ is
$\emptyset$, {\rm  (LM2)}.
\item
Show the associativity of $\comp^\PI$, {\rm (LM1)}.
\item
Show that $\comp^\PI$ is a natural transformation between the functors $\CPaB(n) \times \CPaB(m) \rightarrow
\Endp_\Vmodf(n+m-1)$.
That is, the following diagram commutes 
\begin{align}
\begin{split}
\begin{array}{ccc}
\int_{N\in \Vmodf} \PI_{A}\binom{\bullet}{\bullet N \bullet} \otimes \PI_B\binom{N}{\bullet}
      &\overset{\comp_{A,B,p}^\PI}{\longrightarrow}&
\PI_{A\circ_p B}\binom{\bullet}{\bullet}
    \\
    {}_{{\rho(\ga_A)\otimes \rho(\ga_B)}}\downarrow 
      && 
    \downarrow_{{\rho(\ga_A\circ_p \ga_B)}}
    \\
\int_{N\in \Vmodf} \PI_{A'}\binom{\bullet}{\bullet N \bullet} \otimes \PI_{B'}\binom{N}{\bullet}
      &\overset{\comp_{A',B',p}^\PI}{\longrightarrow}&
\PI_{A'\circ_p B'}\binom{\bullet}{\bullet}
\label{eq_lax_diagram}
\end{array}
\end{split}
\end{align}
for any $A,A'\in \Tr_n$, $B,B'\in \Tr_m$
and $\CPaB$-morphisms $\ga_A:A\rightarrow A'$, $\ga_B:B\rightarrow B'$ and $p \in \{1,\dots,n\}$.
\item
Check the compatibility of $\comp^\PI$ with the symmetric group action, {\rm (LM3)}.
\end{enumerate}

Conditions (2) and (4) are immediate from the construction.  Hence it
remains to verify (1) and (3).

Condition (1) will be treated in Section~\ref{sec_lax_vac}.  For (3),
morphisms in $\CPaB$ are generated by alpha-type maps and crossings.  In
the following two sections, we verify the commutativity of
diagram~\eqref{eq_lax_diagram} for these two types of morphisms.
%
%

%
%
%
%
%

\subsection{Compatibility for associator}
\label{sec_lax_ass}

Let $r \geq 3$ and $A\in \Tr_r$.
Recall that an edge $e \in E(A)$ is called {\it alpha-type} if
the vertex on the right side of $d(e)$ is not a leaf. 
Recall also that we defined a $\CPaB(r)$-morphism
$\alpha_e:A\rightarrow \al_e A$ for an alpha-type edge $e\in E(A)$
(see Section \ref{sec_def_cpab} and Definition \ref{def_alpha}).
%
%
The following lemma is important:
\begin{lem}
\label{lem_alpha_intersection}
Let $A\in \Tr_r$ and $\ee\in E(A)$ be alpha-type.
Then, 
there exists $q \in U_A\cap U_{\alpha_\ee A}\cap X_r(\R)$ such that
the order of $q$ and $A$ are the same.
\end{lem}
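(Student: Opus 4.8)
\textbf{Proof plan for Lemma \ref{lem_alpha_intersection}.}
The plan is to write down an explicit point $q$ in the overlap by choosing the $A$-coordinates (equivalently, the $x$-coordinates) to be positive reals with appropriate hierarchy of magnitudes, and then to verify that the same configuration lies in $U_{\al_\ee A}$ as well. First I would use the description $A = B \circ_p 1(23) \circ (A_1,A_2,A_3)$ and $\al_\ee A = B \circ_p (12)3 \circ (A_1,A_2,A_3)$ from Definition \ref{def_alpha}, so that the vertex $d(\ee)$ in $A$ carries the coordinate of the ``$1(23)$'' part and in $\al_\ee A$ carries that of the ``$(12)3$'' part. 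The key observation is that $A$ and $\al_\ee A$ share all vertices except the ones inside the $1(23)$ versus $(12)3$ subtree, and the edge sets $E(A)$ and $E(\al_\ee A)$ differ only near $\ee$; concretely, $\ze_\ee = x_{d(\ee)}/x_{u(\ee)}$ is replaced by a ratio of the form (new lower vertex)$/x_{u(\ee)}$, and the one internal ratio of $1(23)$ (say $\frac{z_3-z_2}{z_2-z_r}$, which in $A$-coordinate is a single $\ze$) gets traded for the internal ratio of $(12)3$. These two ratios are related by the elementary identity $\frac{z_3-z_2}{z_2-z_r} = \ze$ versus $\frac{z_2-z_1}{z_1-z_r}$, so making the ``inner'' differences extremely small compared to the ``outer'' ones keeps both families of $\ze$-coordinates small.

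Next I would carry this out quantitatively. Using Lemma \ref{lem_one_fix} (or simply Remark \ref{rem_pole_p}, ``$\mathfrak p$ is $A$-admissible if all $p_e$ are sufficiently small''), choose real $x$-coordinates $x_v > 0$ for $v \in V(A)$ decaying fast enough down the tree that every $\ze_e$ for $e \in E(A)$ is a small positive real (so $q \in U_A^{\mathfrak p} \subset U_A$, and $q \in X_r(\R)$ with the order matching $A$ by construction of the $x$-coordinate as differences $z_{L(v)}-z_{R(v)} > 0$). Then I would re-expand the same point $q$ in the $\al_\ee A$-coordinate: the only coordinates that change are those attached to the modified subtree, and each new $\ze_e$ is, by the identities above, a rational expression in the old (small, positive) coordinates that again tends to $0$ as the inner differences shrink. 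Hence for a sufficiently fast-decaying choice, all $\al_\ee A$-coordinates of $q$ are also small positive reals, so $q \in U_{\al_\ee A}^{\mathfrak p'} \subset U_{\al_\ee A}$ for some $\al_\ee A$-admissible $\mathfrak p'$, and $q$ lies in $\C^\cut$-fibers (positive reals avoid $\R_-$) so the branch cuts are respected. This gives $q \in U_A \cap U_{\al_\ee A} \cap X_r(\R)$ with the correct order.

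The main obstacle is bookkeeping: making precise that the finitely many ``new'' $\ze$-coordinates of the $\al_\ee A$-system, when evaluated at a point specified in the $A$-system, are genuinely small — i.e., identifying exactly which differences $z_i - z_j$ appear in the denominators of the $\al_\ee A$-coordinates and checking none of them degenerate. This is the analogue of the computation in Remark \ref{rem_pole_p}, and the cleanest way to handle it is to note that both $U_A$ and $U_{\al_\ee A}$ contain an open set of the ``iterated $\epsilon$-neighborhood'' type where, reading the tree from the root down, each successive difference is an arbitrarily small positive multiple of the previous one; such a configuration manifestly satisfies all the admissibility inequalities for both trees simultaneously because $\al_\ee$ only rearranges a local $1(23)\leftrightarrow(12)3$ pattern, under which the relevant magnitude hierarchy is preserved. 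I expect the proof to be short once this ``common refinement of the two nested neighborhoods'' is set up; the rest is the elementary identities relating the two coordinate systems near $\ee$.
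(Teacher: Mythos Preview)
Your proposal has a genuine gap at the crucial step. You claim that if all $A$-coordinates $\ze_e$ are small positive reals then, after re-expressing $q$ in the $\al_\ee A$-coordinate, ``each new $\ze_e$ \dots\ again tends to $0$ as the inner differences shrink.'' This is false for exactly one edge. In the local picture $A=B\circ_p 1(23)\circ(A_1,A_2,A_3)$, write $j=R(A_1)$, $l=R(A_2)$, $n=R(A_3)$. The internal edge of $1(23)$ carries $\ze_d=\dfrac{z_l-z_n}{z_j-z_n}$, while the internal edge of $(12)3$ in $\al_\ee A$ carries $\ze_{d'}=\dfrac{z_j-z_l}{z_l-z_n}$. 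These satisfy
\[
\ze_{d'}=\frac{1-\ze_d}{\ze_d},
\]
so making $\ze_d$ small forces $\ze_{d'}\to\infty$, not $0$; your ``common iterated $\epsilon$-neighborhood'' does not sit inside $U_{\al_\ee A}$.

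The paper's proof fixes this by \emph{not} making $\ze_d$ small: it sets $\ze_d=\tfrac{2}{3}$, which gives $\ze_{d'}=\tfrac{1}{2}$, and then makes all the remaining $A$-coordinates small. The explicit change-of-variable formulas show that the remaining $\al_\ee A$-coordinates are then also small. This is precisely where Lemma~\ref{lem_one_fix} is actually needed (not as a synonym for ``small is admissible''): it guarantees the existence of an $A$-admissible $\mathfrak p$ with $p_d$ equal to a prescribed value in $(0,1)$ while the other $p_e$ are small. Your plan can be salvaged by inserting this observation: fix the one ``bad'' ratio at a moderate value in $(0,1)$ and only then shrink the rest.
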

\begin{proof}

Assume that the tree $A$ can be written locally as in Fig. \ref{fig_local_A1}.
In this case, $\al_{e_0} A$ can be written as in Fig. \ref{fig_local_A2}.
We will study the change of coordinates between $A$-coordinate $\{\zeta_e\}_{e\in E(A)}$ and $\al_\ee A$-coordinate $\{\zeta_{e'}\}_{e'\in E(\al_\ee A)}$.

Note that if we take an edge $e$ from a part of the figure written in abbreviated form as $\blacksquare$, 
$\zeta_e=\zeta_{e'}$ holds,
where $e'$ is the edge of $\al_\ee A$ corresponding to $e$ in an obvious way.

\begin{minipage}[c]{7cm}
\centering
\begin{forest}
for tree={
  l sep=20pt,
  parent anchor=south,
  align=center
}
[
[$np$
[$jn$,edge label={node[midway,left]{\bf e}}
[$ij$,edge label={node[midway,left]{a}}[$\blacksquare$]]
[$ln$,edge label={node[midway,right]{d}}
[$kl$,edge label={node[midway,left]{b}}[$\blacksquare$]]
[$mn$,edge label={node[midway,right]{c}}[$\blacksquare$]]
]
]
[$\blacksquare$]
]
]
\end{forest}
\captionof{figure}{}
\label{fig_local_A1}
\end{minipage}
\hfill
\begin{minipage}[c]{7cm}
\centering
\begin{forest}
for tree={
  l sep=20pt,
  parent anchor=south,
  align=center
}
[
[$np$
[$ln$,edge label={node[midway,left]{\bf e'}}
[$jl$,edge label={node[midway,left]{d'}}
[$ij$,edge label={node[midway,left]{a'}}[$\blacksquare$]]
[$kl$,edge label={node[midway,right]{b'}}[$\blacksquare$]
]
]
[$mn$,edge label={node[midway,right]{c'}}[$\blacksquare$]]
]
[$\blacksquare$]
]
]
]
\end{forest}
\captionof{figure}{}
\label{fig_local_A2}
\end{minipage}
Since
\begin{align*}
\ze_a&=\frac{z_{ij}}{z_{jn}}=\ze_{a'}\frac{\ze_{d'}}{1+\ze_{d'}}
,\quad 
&\ze_{a'}&=\frac{z_{ij}}{z_{jl}}=\frac{\ze_a}{1-\ze_d},\\
\ze_b&=\frac{z_{kl}}{z_{ln}}=\ze_{b'}\ze_{d'},\quad
&\ze_{b'}&=\frac{z_{kl}}{z_{jl}}=\ze_b\frac{\ze_d}{1-\ze_d}
\\
\ze_c&=\frac{z_{mn}}{z_{ln}}=\ze_{c'},\quad
&\ze_{c'}&=\frac{z_{mn}}{z_{ln}}=\ze_{c}\\
\ze_d&=\frac{z_{ln}}{z_{jn}}=\frac{1}{1+\ze_{d'}}
,\quad
&\ze_{d'}&=\frac{z_{jl}}{z_{ln}}=\frac{1-\ze_d}{\ze_d}\\
\ze_e&=\frac{z_{jn}}{z_{np}}=\ze_{e'}(1+\ze_{d'})
,\quad
&\ze_{e'}&=\frac{z_{ln}}{z_{np}}=\ze_d\ze_e,
\end{align*}
if all $\ze_a,\ze_b,\ze_c,\ze_e$ are sufficiently small
and $\ze_d=\frac{2}{3}$, for example,
then $\ze_{a'},\ze_{b'},\ze_{c'},\ze_{e'}$ are small
and $\ze_{d'}=\frac{1}{2}$.
Thus, by Lemma \ref{lem_one_fix}, the assertion holds for this tree.

If any of $ij, kl,mn$ is a leaf (see Fig. \ref{fig_local_A3}) or $p$ is on the other side (see Fig. \ref{fig_local_A4}), a similar calculation can be used to show the assertion.

\begin{minipage}[c]{7cm}
\centering
\begin{forest}
for tree={
  l sep=20pt,
  parent anchor=south,
  align=center
}
[
[$np$
[$jn$,edge label={node[midway,left]{\bf e}}
[$ij$,edge label={node[midway,left]{a}}[$\blacksquare$]]
[$ln$,edge label={node[midway,right]{d}}
[$l$,]
[$mn$,edge label={node[midway,right]{c}}[$\blacksquare$]]
]
]
[$\blacksquare$]
]
]
\end{forest}
\captionof{figure}{}
\label{fig_local_A3}
\end{minipage}
\hfill
\begin{minipage}[c]{7cm}
\centering
\begin{forest}
for tree={
  l sep=20pt,
  parent anchor=south,
  align=center
}
[
[$pn$
[$\blacksquare$]
[$jn$,edge label={node[midway,right]{\bf e'}}
[$ij$,edge label={node[midway,left]{a}}[$\blacksquare$]]
[$ln$,edge label={node[midway,right]{d}}
[$kl$,edge label={node[midway,left]{b}}[$\blacksquare$]]
[$mn$,edge label={node[midway,right]{c}}[$\blacksquare$]]
]
]
]
]
\end{forest}
\captionof{figure}{}
\label{fig_local_A4}
\end{minipage}
\end{proof}

\begin{rem}
Since
$U_{12}=\{(z_1,z_2)\in X_2\mid |\arg(z_1-z_2)|<\pi\}$,
$U_{12} \cap U_{21}\cap X_2(\R) =\emptyset$.
\end{rem}


\begin{lem}
\label{lem_alpha_int}
Let $A\in \Tr_r$ and $e_0\in E(A)$.
Assume that the edge $e_0$ is $\alpha$-type
and set $A'=\alpha_\ee A$.
For $F \in \PI_A\binom{M_0}{\Mrr}$ and $G \in \PI_{A'}\binom{M_0}{\Mrr}$,
$\rho(\alpha_\ee)F=G$ if and only if
$s_A(F)|_{U_A\cap U_{A'}}=s_{A'}(G)|_{U_A\cap U_{A'}}$ as holomorphic functions.
\end{lem}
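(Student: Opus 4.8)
\textbf{Proof plan for Lemma \ref{lem_alpha_int}.}

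The plan is to reduce the statement to the uniqueness of analytic continuation on the simply-connected domains $U_A$ and $U_{A'}$, using Lemma \ref{lem_alpha_intersection} to produce a common real base point. First I would recall, from the definition of $\rho(\al_\ee)$ in \eqref{eq_def_rho_comp}, that $\rho(\al_\ee)$ is the composite $d_{A'}^{-1}\circ e_{A'}\circ Q_{r_{A'}}\circ A_{[\al_\ee]_Q}\circ Q_{r_A}^{-1}\circ s_A\circ d_A$; since $d_A$, $d_{A'}$, $Q_{r_A}$, $Q_{r_{A'}}$, $s_A$, $s_{A'}$, $e_A$, $e_{A'}$ are all natural isomorphisms (Theorem \ref{thm_expansion}, Lemma \ref{lem_sym_block}, Lemma \ref{lem_translation_isomorphism}), the identity $\rho(\al_\ee)F=G$ is equivalent to the identity $A_{[\al_\ee]_Q}\bigl(Q_{r_A}^{-1}s_A(F)\bigr)=Q_{r_{A'}}^{-1}s_{A'}(G)$ of sections of the locally constant sheaf $\CB_{\Mr}$; and since $Q_{r_A}^{-1}$, $Q_{r_{A'}}^{-1}$ are themselves isomorphisms relating $\CB_{\Mra}$ and $\CB_\Mr$, this in turn is equivalent to $A_{[\al_\ee]_Q}\bigl(s_A(F)\bigr)=s_{A'}(G)$ once one checks that the translation-fixing maps $Q_{r_A}$, $Q_{r_{A'}}$ intertwine the analytic continuation $A_{[\al_\ee]_Q}$ — this is a direct consequence of Proposition \ref{prop_functor_monodromy} applied with $g=\id$, $f_i=\id$, together with the fact that the exponential shift $\exp(-L(-1)_0^*z_t)$ used to define $Q_t$ is compatible with analytic continuation (both $z_t$ and the coefficients are single-valued).

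Next I would identify the path. By Lemma \ref{lem_alpha_intersection} there is a point $q\in U_A\cap U_{A'}\cap X_r(\R)$ whose order agrees with $A$ (hence with $A'$, since $\al_\ee$ does not change the left-to-right order of the leaves, only the parenthesization — recall Remark \ref{rem_alpha_untwist}, $\al_\ee$ is a topologically untwisted braid). Choosing $Q$ so that $Q(A)=Q(A')=q$ (which is legitimate since $q\in U_A\cap U_{A'}$ and we only need $Q$ to satisfy (Q1)–(Q3), and by Lemma \ref{lem_indep_Q} the functor $\rho$ does not depend on this choice), the morphism $\al_\ee$ corresponds in $[\al_\ee]_Q$ to the \emph{constant} path at $q$, because $\al_\ee$ maps to the identity element of the pure braid group and a representative path can be taken entirely inside the connected set $U_A\cap U_{A'}$, hence is null-homotopic rel endpoints in $X_r$. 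Therefore $A_{[\al_\ee]_Q}$ is simply the restriction-then-extension map: it sends a section $s_A(F)\in\CB_\Mr(U_A)$ to the unique section of $\CB_\Mr(U_{A'})$ agreeing with $s_A(F)$ on $U_A\cap U_{A'}$ (such a unique extension exists because $U_{A'}$ is connected and simply-connected by Lemma \ref{lem_simply_conn} and $\CB_\Mr$ is locally constant by Corollary \ref{equi_dim}). Consequently $A_{[\al_\ee]_Q}(s_A(F))=s_{A'}(G)$ holds if and only if these two holomorphic functions agree on $U_A\cap U_{A'}$, which is exactly the claimed criterion $s_A(F)|_{U_A\cap U_{A'}}=s_{A'}(G)|_{U_A\cap U_{A'}}$.

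The main obstacle I anticipate is twofold. First, one must verify carefully that the homotopy class $[\al_\ee]_Q$ really is the class of the constant path at $q$ for the particular $Q$ chosen: this needs the explicit change-of-coordinate computation in the proof of Lemma \ref{lem_alpha_intersection} (the formulas relating $\ze_a,\dots,\ze_e$ to $\ze_{a'},\dots,\ze_{e'}$ with $\ze_d=\tfrac23\leftrightarrow\ze_{d'}=\tfrac12$) to guarantee that a path realizing $\al_\ee$ stays inside $U_A\cap U_{A'}$, which is where the alpha-type hypothesis on $\ee$ is genuinely used — if $\ee$ were not alpha-type (e.g. the two-leaf case in Remark, where $U_{12}\cap U_{21}\cap X_2(\R)=\emptyset$) no such common real point exists and the statement fails. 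Second, one must confirm that $U_A\cap U_{A'}$ is nonempty and that restriction to it is injective on sections of the locally constant sheaf; nonemptiness is exactly Lemma \ref{lem_alpha_intersection}, and injectivity of restriction follows since $U_A\cap U_{A'}\supset U_A^\mfp\cap U_{A'}^{\mfp'}$ contains a nonempty open set and $\CB_\Mr$ is locally constant on the connected set $U_{A'}$. Assembling these observations, the equivalence follows.
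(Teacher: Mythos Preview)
Your proposal is correct and follows the same approach as the paper: use Lemma~\ref{lem_alpha_intersection} to choose $Q$ with $Q(A)=Q(A')=q\in U_A\cap U_{A'}$, note that $\al_\ee$ is the identity braid so the analytic continuation is trivial, and conclude. The paper's proof is just three sentences and omits the bookkeeping you spell out; in particular, since $\al_\ee$ does not reorder the leaves one has $r_A=r_{A'}$, so the $Q_{r_A}$--$Q_{r_{A'}}$ compatibility you worry about is automatic, and the verification that $[\al_\ee]_Q$ is the constant class needs only Remark~\ref{rem_alpha_untwist} together with $Q(A)=Q(A')$, not the coordinate formulas from Lemma~\ref{lem_alpha_intersection}.
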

\begin{proof}
By Lemma \ref{lem_alpha_intersection},
$Q:\Tr_r\rightarrow X_r$ can be taken to satisfy $Q_{A}=Q_{\al_\ee A}$ and (Q1), (Q2) and (Q3).
Then, $\rho(\alpha_\ee)F = e_{A'}(s_A(F))$. Thus, the assertion holds.
\end{proof}

We will use the following lemma:
\begin{prop}
\label{prop_alpha_compati}
Let $n,m\geq 1$ and
$A\in \Tr_n$, $B\in \Tr_m$ and $p \in \{1,\dots,n\}$.
\begin{enumerate}
\item
For any edge $\ee \in E(A)$ of $\alpha$-type,
the following diagram commutes:
\begin{align*}
\begin{split}
\begin{array}{ccc}
\int_{N\in \Vmodf} \PI_{A}\binom{\bullet}{\bullet N \bullet} \otimes \PI_B\binom{N}{\bullet}
      &\overset{\comp_{A,B,p}}{\longrightarrow}&
\PI_{A\circ_p B}\binom{\bullet}{\bullet}
    \\
    {}^{{\rho(\al_\ee)\otimes \rho(\id_B)}}\downarrow 
      && 
    \downarrow^{{\rho(\al_\ee \circ_p \id_B})}
    \\
\int_{N\in \Vmodf} \PI_{\al_\ee A}\binom{\bullet}{\bullet N \bullet} \otimes \PI_{B}\binom{N}{\bullet}
      &\overset{\comp_{\al_\ee A,B,p}}{\longrightarrow}&
\PI_{\al_\ee A\circ_p B}\binom{\bullet}{\bullet}.
\end{array}
\end{split}
\end{align*}
\item
For any edge $\ee \in E(B)$ of $\alpha$-type,
the following diagram commutes:
\begin{align*}
\begin{split}
\begin{array}{ccc}
\int_{N\in \Vmodf} \PI_{A}\binom{\bullet}{\bullet N \bullet} \otimes \PI_B\binom{N}{\bullet}
      &\overset{\comp_{A,B,p}}{\longrightarrow}&
\PI_{A\circ_p B}\binom{\bullet}{\bullet}
    \\
    {}^{{\rho(\id_A)\otimes \rho(\al_\ee )}}\downarrow 
      && 
    \downarrow^{{\rho(\id_A\circ_p \al_\ee)}}
    \\
\int_{N\in \Vmodf} \PI_{A}\binom{\bullet}{\bullet N \bullet} \otimes \PI_{\al_\ee B}\binom{N}{\bullet}
      &\overset{\comp_{A,\al_\ee B,p}}{\longrightarrow}&
\PI_{A \circ_p \al_\ee B}\binom{\bullet}{\bullet}.
\end{array}
\end{split}
\end{align*}
\end{enumerate}
\end{prop}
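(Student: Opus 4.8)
\textbf{Proof proposal for Proposition \ref{prop_alpha_compati}.}

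The plan is to reduce both commutative diagrams to a statement about equality of honest holomorphic functions on a common open domain, using Lemma \ref{lem_alpha_int} to translate $\rho(\al_\ee)$ into restriction of sections, and then to observe that the composition $\comp_{A,B,p}$ of parenthesized intertwining operators is given by the ``plug-in'' formula \eqref{eq_FAB_def}, which is manifestly insensitive to which coordinate system (that of $A$ versus that of $\al_\ee A$) is used to expand the result. More precisely, for part (1), fix $F_A \in \PI_A\binom{\bullet}{\bullet N \bullet}$ and $F_B \in \PI_B\binom{N}{\bullet}$ and chase the element $F_A \otimes F_B$ (or rather its class in the coend) around the square. Going right then down gives $\rho(\al_\ee \circ_p \id_B)(F_A \circ_p F_B)$, while going down then right gives $(\rho(\al_\ee)F_A) \circ_p F_B$. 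By Theorem \ref{thm_composition} both lie in $\PI_{\al_\ee A \circ_p B}$, and by the isomorphism $s_{\al_\ee A\circ_p B}$ (Theorem \ref{thm_expansion}) together with the fact that $\CB$ is a locally constant sheaf (Corollary \ref{equi_dim}), it suffices to check they agree as holomorphic functions on some nonempty connected open subset of $X_{n+m-1}$.

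The key geometric input is that the alpha-type edge $\ee$ of $A$ (resp. of $\al_\ee A$) persists as an alpha-type edge of $A\circ_p B$ (resp. of $\al_\ee A \circ_p B$), and that $\al_\ee(A\circ_p B) = (\al_\ee A)\circ_p B$ as operadic identities on trees --- this is immediate from the definition \eqref{eq_alpha_type_def} of $\al_\ee$ together with associativity of operadic composition, since $\ee$ lies in the $A$-part and is untouched by the insertion of $B$. Therefore Lemma \ref{lem_alpha_intersection} applies to $A\circ_p B$ and its alpha-type edge $\ee$, producing $q \in U_{A\circ_p B}\cap U_{\al_\ee A\circ_p B}\cap X_{n+m-1}(\R)$ whose order agrees with $A\circ_p B$. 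On a neighborhood of $q$ lying in $U_{A\circ_p B}\cap U_{\al_\ee A\circ_p B}$ the two candidate sections are realized by the same function: indeed $\rho(\al_\ee)$ acts by analytic continuation, and by Lemma \ref{lem_alpha_int} the section $s_{\al_\ee A}(\rho(\al_\ee)F_A)$ is literally the restriction of $s_A(F_A)$ to $U_A\cap U_{\al_\ee A}$; since the composition formula \eqref{eq_FAB_def} substitutes $F_B$ into a fixed slot of $F_A$ and does nothing to the $\ee$-direction, the composed functions $s_{A\circ_p B}(F_A\circ_p F_B)$ and $s_{\al_\ee A\circ_p B}((\rho(\al_\ee)F_A)\circ_p F_B)$ coincide on this overlap. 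Hence the square commutes. One must also check compatibility with the coend structure maps, i.e. that the chase descends from $\PI_A \otimes \PI_B$ to $\int_N \PI_A \otimes \PI_B$: this is automatic because both $\comp$ and $\rho(\al_\ee)\otimes\rho(\id_B)$ are built from natural transformations and the universal property of the coend (as in the construction \eqref{eq_comp_coend}), so it is enough to verify the diagram before taking the coend, which is what the above does.

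For part (2), the argument is structurally identical but with the alpha-type edge living in the $B$-part. Here one uses $\al_\ee(A\circ_p B) = A\circ_p(\al_\ee B)$ and the fact that $\ee \in E(B)$ maps to an alpha-type edge of $A\circ_p B$ under the inclusion $E(B)\hookrightarrow E(A\circ_p B)$; note that the new edge $e_{AB}$ created by the composition is distinct from $\ee$ and is unaffected. Again Lemma \ref{lem_alpha_intersection} gives a real point $q$ in $U_{A\circ_p B}\cap U_{A\circ_p \al_\ee B}$, and Lemma \ref{lem_alpha_int} identifies $\rho(\al_\ee)F_B$ with the restriction of $s_B(F_B)$; substituting into $F_A$ via \eqref{eq_FAB_def} yields the same holomorphic function whether one first restricts $F_B$ or first forms the composite and then restricts. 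The main obstacle I anticipate is a bookkeeping one rather than a conceptual one: carefully verifying that the edge $\ee$ and the trees $A_1,A_2,A_3,B$ in the decomposition $A = B_0\circ_q 1(23)\circ(A_1,A_2,A_3)$ behave correctly under $\circ_p$ --- in particular that the leaf relabelings (adding $p-1$ to $B$'s leaves, $m-1$ to $A$'s later leaves) do not disturb which edge is ``alpha-type'' nor the relation $U_{A\circ_p B}\cap U_{\al_\ee A\circ_p B}\ni q$ --- and checking that $\rho$, being defined through the chain of isomorphisms \eqref{eq_def_rho_comp} involving $d_A$, $Q_{r_A}$, $e_A$, is compatible with these restrictions (this reduces to Lemma \ref{lem_comp_compati} and Remark \ref{rem_rho_PI_ok}, so it suffices to argue with $\PI$ throughout).
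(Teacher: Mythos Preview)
Your approach is essentially the paper's: reduce to equality of holomorphic functions on the overlap $U_{A\circ_p B}\cap U_{\al_\ee A\circ_p B}$ via Lemma~\ref{lem_alpha_int}, and observe that $\al_\ee(A\circ_p B)=(\al_\ee A)\circ_p B$. The one point you gloss over, which the paper makes explicit, is that the ``substitution'' $F_A\circ_p F_B$ is not a finite operation but the infinite sum $\sum_{h\in\C}\langle m_0,F_A(\dots,\mathrm{pr}_h F_B(\dots),\dots)\rangle$; to pass from the termwise equality (coming from Lemma~\ref{lem_alpha_int} applied to each basis element $e_{h,u}\in (M_p^A)_h$) to equality of the limit you must invoke Corollary~\ref{cor_composition}, which guarantees this sum converges absolutely and locally uniformly on $U_{A\circ_p B}$ and on $U_{\al_\ee A\circ_p B}$, hence on their intersection.
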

\begin{proof}
We only show the case of (1).
We will use the same symbols as those used in Lemma \ref{lem_comp_formalsp}.
Let $F_A \in \PI_A\binom{M_0^A}{M_{[n]}^A}$ and $F_B \in \binom{M_p^A}{M_[m]^B}$
and $\{e_{h,u}\}_{u \in I_h}$ be a basis of $(M_p^A)_h$ and $\{e_{h,u}^*\}_{u \in I_h}$ the dual basis
for $h\in \C$.
By Lemma \ref{lem_alpha_int}, for any $h \in \C$ and $u\in I_h$
\begin{align*}
\langle &m_0^A,F_A(m_1^A,\dots,m_{p-1}^A,e_{h,u},m_{p+1}^A,\dots,m_n^{A})\rangle|_{U_A \cap U_{\al_\ee A}}\\
&= \langle m_0^A,(\rho(\al_\ee)F_A)(m_1^A,\dots,m_{p-1}^A,e_{h,u},m_{p+1}^A,\dots,m_n^{A})\rangle
|_{U_A \cap U_{\al_\ee A}},
\end{align*}
which implies that
\begin{align*}
\langle &m_0^A,F_A(m_1^A,\dots,m_{p-1}^A,e_{h,u},m_{p+1}^A,\dots,m_n^{A})\rangle
\langle e_{h,u}^*,F_B(m_{[m]}^B)\rangle|_{U_{A\circ_p B} \cap U_{\al_\ee A\circ_p B}}\\
&= \langle m_0^A,(\rho(\al_\ee)F_A)(m_1^A,\dots,m_{p-1}^A,e_{h,u},m_{p+1}^A,\dots,m_n^{A})\rangle
\langle e_{h,u}^*,F_B(m_{[m]}^B)\rangle|_{U_{A\circ_p B} \cap U_{\al_\ee A\circ_p B}}.
\end{align*}
Since $\dim(M_p^A)_h$ is finite,
\begin{align*}
\langle m_0^A, F_A(m_1^A,&\dots,m_{p-1}^A,\prk F_B(m_{[m]}^B),m_{p+1}^A,\dots,m_n^{A})\rangle\\
&=\sum_{u\in I_h} 
\langle m_0^A,F_A(m_1^A,\dots,m_{p-1}^A,e_{h,u},m_{p+1}^A,\dots,m_n^{A})\rangle
\langle e_{h,u}^*,F_B(m_{[m]}^B)\rangle
\end{align*}
is equal to 
\begin{align*}
\langle m_0^A, (\rho(\al_\ee)F_A)(m_1^A,&\dots,m_{p-1}^A,\prk F_B(m_{[m]}^B),m_{p+1}^A,\dots,m_n^{A})\rangle
\end{align*}
on $U_{A\circ_p B} \cap U_{\al_\ee A\circ_p B}$ as holomorphic functions.
By Corollary \ref{cor_composition},
\begin{align*}
\sum_{k \in \C} \langle m_0^A, F_A(m_1^A,\dots,m_{p-1}^A,\prk F_B(m_{[m]}^B),m_{p+1}^A,\dots,m_n^{A})\rangle
\end{align*}
converges to absolutely and locally uniformly $\langle m_0^A, (F_A\circ_p F_B)(m_{[n\circ_p m]})\rangle$ in $U_{A\circ_p B}$
and similarly
$\langle m_0^A, (\rho(\al_\ee) F_A)(m_1^A,\dots,m_{p-1}^A,\prk F_B(m_{[m]}^B),m_{p+1}^A,\dots,m_n^{A})\rangle$
converges absolutely and locally uniformly to $\langle m_0^A, ((\rho(\al_\ee) F_A)\circ_p F_B)(m_{[n\circ_p m]})\rangle$
in $U_{\al_\ee A\circ_p B}$.
Hence, by Lemma \ref{lem_alpha_int},
\begin{align*}
\rho(\al_\ee)(F_A\circ_p F_B)= (\rho(\al_\ee) F_A)\circ_p F_B,
\end{align*}
which implies the assertion.
\end{proof}

\subsection{Compatibility for crossing}
\label{sec_lax_braid}
Let $v_0 \in V(A)$. 
Then, $A$ can be written as $A=B\circ_p (12) \circ (A_1,A_2)$ for some trees $A_1,A_2,B$ (Fig. \ref{fig_local_sigma1}).

\vspace{4mm}

\begin{minipage}[l]{7cm}
\centering
\begin{forest}
for tree={
  l sep=20pt,
  parent anchor=south,
  align=center
}
[$\fbox{B}$,edge label={node[midway,right]{p}}
[$v_0$
[\fbox{$A_1$}]
[\fbox{$A_2$}]
]
]
\end{forest}
\captionof{figure}{}
\label{fig_local_sigma1}
\end{minipage}
\hfill
\begin{minipage}[r]{7cm}
\centering
\begin{forest}
for tree={
  l sep=20pt,
  parent anchor=south,
  align=center
}
[$\fbox{B}$,edge label={node[midway,right]{p}}
[$v_0$
[\fbox{$A_2$}]
[\fbox{$A_1$}]
]
]
\end{forest}
\captionof{figure}{}
\label{fig_local_sigma2}
\end{minipage}
\vspace{4mm}


Set $\si_{v_0} A=B\circ_p (21) \circ (A_1,A_2)$ (Fig. \ref{fig_local_sigma2})
and define a morphism
$\si_{v_0}:A\rightarrow \si_{v_0} A$ by
\begin{align}
\si_{v_0}= \id_B \circ_p \si \circ (\id_{A_1},\id_{A_2}):
B\circ_p (12) \circ (A_1,A_2)\rightarrow 
B\circ_p (21) \circ (A_1,A_2).\label{eq_add_sigma_v}
\end{align}
Here \(\sigma:(12)\to(21)\) is the morphism in \(\CPaB(2)\) whose geometric
representative is fixed in Remark~\ref{rem_fix_braid}. Throughout this
section, the path representing \(\sigma_{v_0}\) is obtained from that
representative by operadic insertion.
The goal of this section is to explicitly represent the analytic continuation $A(\si_{v_0})$ and show the naturality of $\comp_{A,B,p}$ for $\si_{v_0}$.

We begin by explaining the basic idea.
Consider the particular case.
Suppose $A=1(2((34)(56)))$ and $v_0=46$ as in Fig. \ref{fig_local_sigma3}.

\begin{minipage}[l]{7cm}
\centering
\begin{forest}
for tree={
  l sep=20pt,
  parent anchor=south,
  align=center
}
[16
  [1
  ]
  [26,edge label={node[midway,right]{d}}
    [2
    ]
    [{\bf 46},edge label={node[midway,right]{c}}
      [34,edge label={node[midway,left]{a}}
        [3
        ]
        [4
        ]
      ]
      [56,edge label={node[midway,right]{b}}
        [5
        ]
        [6
        ]
      ]
    ]
  ]
]
\end{forest}
\captionof{figure}{}
\label{fig_local_sigma3}
\end{minipage}
\hfill
\begin{minipage}[r]{7cm}
\centering
\begin{forest}
for tree={
  l sep=20pt,
  parent anchor=south,
  align=center
}
[14
  [1
  ]
  [24,edge label={node[midway,right]{d}}
    [2
    ]
    [{\bf 64},edge label={node[midway,right]{c}}
      [56,edge label={node[midway,left]{b}}
        [5
        ]
        [6
        ]
      ]
      [34,edge label={node[midway,right]{a}}
        [3
        ]
        [4
        ]
      ]
    ]
  ]
]
\end{forest}
\captionof{figure}{}
\label{fig_local_sigma4}
\end{minipage}

Let $\{x_v\}_{v\in V(A)}$ denote the $x$-coordinate of $A$ and $\{y_w\}_{w\in V(\si_{v_0}A)}$ the $x$-coordinate of $\si_{v_0}A$.
Note that there is a natural one-to-one correspondence between $V(A)$ and $V(\si_{v_0}(A))$.
Under this identification, we notice the following relation:
\begin{align}
\begin{split}
x_v &=y_v - y_{v_0},\quad (\text{if }v\text{ is above }v_0\text{ and }R(v)=R(v_0))\\
x_{v_0} &= -y_{v_0}, \quad (v=v_0)\\
x_v & = y_v,\quad \text{ (otherwise)} .
\end{split}
\label{eq_xy_rel}
\end{align}
\begin{lem}
\label{lem_xy_coordinate}
The relation \eqref{eq_xy_rel} holds for any $A\in \Tr_r$ and $v_0 \in V(A)$.
\end{lem}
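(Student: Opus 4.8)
\textbf{Proof plan for Lemma \ref{lem_xy_coordinate}.}

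The plan is to unwind the definitions of the $x$-coordinates on both sides and track exactly which vertices of $A$ change their associated function under the swap $\si_{v_0}$. Recall $x_v = z_{L(v)} - z_{R(v)}$, where $R(v)$ is the rightmost leaf descended from $v$ and $L(v)$ is the rightmost leaf among the descendants of the left child of $v$. The swap $\si_{v_0}$ interchanges the left and right subtrees hanging off $v_0$, so it is a tree isomorphism away from the data of "which child is left/which is right" at $v_0$ itself. Concretely, write $A = B \circ_p (12) \circ (A_1, A_2)$ with $A_1$ the left subtree and $A_2$ the right subtree at $v_0$; then $\si_{v_0} A = B\circ_p (12)\circ (A_2,A_1)$. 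First I would set up the canonical bijection $V(A)\xrightarrow{\sim} V(\si_{v_0}A)$ (it is the identity on $V(B)$, on $V(A_1)$, and on $V(A_2)$, and sends $v_0$ to $v_0$), and likewise for leaves, noting that a leaf keeps its label.

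The key step is a case analysis on $v\in V(A)$ according to its position relative to $v_0$. \emph{Case 1: $v \in V(A_1)\cup V(A_2)$ (strictly below $v_0$).} Both $L(v)$ and $R(v)$ are leaves within the same subtree $A_1$ or $A_2$, and the internal structure of $A_1$, $A_2$ is untouched; hence $x_v$ in $A$ equals $y_v$ in $\si_{v_0}A$, giving the "otherwise" clause. \emph{Case 2: $v = v_0$.} In $A$, the left child subtree is $A_1$, so $L(v_0) = R(\text{root of }A_1)$ and $R(v_0) = R(\text{root of }A_2)$; write these rightmost leaves as $r_1$ and $r_2$. Then $x_{v_0} = z_{r_1} - z_{r_2}$. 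In $\si_{v_0}A$ the subtrees are swapped, so $y_{v_0} = z_{r_2} - z_{r_1} = -x_{v_0}$, i.e.\ $x_{v_0} = -y_{v_0}$. \emph{Case 3: $v$ lies strictly above $v_0$ (in $V(B)$, on the path from $v_0$ to the root, so $v_0$ is a descendant of $v$).} Here one must compare $L(v), R(v)$ in $A$ versus $\si_{v_0}A$. The rightmost leaf $R(\cdot)$ of any such $v$ is computed by always descending to the right child; since the only place the right/left structure changed is at $v_0$, the value $R(v)$ changes only when the right-descent path from $v$ passes through $v_0$ — equivalently when $R(v)$ in $A$ equals $R(v_0) = r_2$. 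In that subcase, after the swap the right-descent path through $v_0$ lands instead on $r_1$, so $R(v)$ becomes $r_1$; the left-child leaf $L(v)$ is unaffected (the left subtree at $v$ does not contain $v_0$, because $v_0$ is reached by going right at $v$). Hence $x_v = z_{L(v)} - z_{r_2}$ while $y_v = z_{L(v)} - z_{r_1}$, and therefore $x_v = y_v + (z_{r_1} - z_{r_2}) = y_v - y_{v_0}$ by Case 2 (since $y_{v_0} = z_{r_2} - z_{r_1}$), which is exactly the first clause of \eqref{eq_xy_rel} under the condition "$v$ above $v_0$ and $R(v) = R(v_0)$". If instead $R(v) \neq R(v_0)$ (the right-descent path from $v$ avoids $v_0$, i.e.\ $v_0$ sits in the left subtree at $v$), then neither $L(v)$ nor $R(v)$ is altered, so $x_v = y_v$, again matching the "otherwise" clause.

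The only genuinely delicate point — and the one I would write most carefully — is the bookkeeping in Case 3: precisely characterizing when $L(v)$ or $R(v)$ is affected by the swap, and checking that $L(v)$ is \emph{never} affected for a vertex $v$ lying above $v_0$ with $R(v) = R(v_0)$ (this is where one uses that $v_0$ is then reached from $v$ by a sequence of right-descents, so it cannot lie in the left subtree of $v$, and in particular $L(v)$ depends only on leaves not below $v_0$ on that side). Everything else is a direct substitution. Once the three cases are dispatched, \eqref{eq_xy_rel} holds for every $v\in V(A)$, completing the proof. (One should also remark that the cases "$v$ above $v_0$ with $R(v)=R(v_0)$" and "otherwise" together with "$v=v_0$" exhaust $V(A)$, which is immediate.)
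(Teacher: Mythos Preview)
The paper gives no proof of this lemma; it is stated immediately after a single worked example in which $v_0$ lies on the rightmost branch of $A$. Your case analysis is the natural argument to supply, and Cases~1, 2, and the subcase $R(v)=R(v_0)$ of Case~3 are handled correctly.

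The gap is in the remaining subcase of Case~3. You claim that if $v$ lies above $v_0$ with $R(v)\neq R(v_0)$ then ``neither $L(v)$ nor $R(v)$ is altered''. This is false: $L(v)$ \emph{is} altered precisely when $v_0$ lies on the rightmost path of the \emph{left} subtree of $v$, i.e.\ when $L(v)=R(v_0)$ in $A$. In that situation the swap sends $L(v)$ from $r_2=R(v_0)$ to $r_1=L(v_0)$ while leaving $R(v)$ fixed, so $x_v=z_{r_2}-z_{R(v)}$ and $y_v=z_{r_1}-z_{R(v)}$, giving $x_v=y_v+(z_{r_2}-z_{r_1})=y_v+y_{v_0}$ rather than $x_v=y_v$. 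A minimal example is $A=((12)(34))5$ with $v_0$ the vertex joining $(12)$ and $(34)$: the root has $x_{\mathrm{root}}=z_4-z_5$ but $y_{\mathrm{root}}=z_2-z_5=x_{\mathrm{root}}-y_{v_0}$. (At most one ancestor of $v_0$ can satisfy $L(v)=R(v_0)$, namely the first vertex going up from $v_0$ at which the path to the root leaves via a left-child edge; in the paper's example $v_0$ sits on the rightmost branch of $A$, so no such vertex exists and the issue is invisible there.) Your computation thus actually uncovers a missing clause in \eqref{eq_xy_rel} itself: the correct general statement needs a fourth case $x_v=y_v+y_{v_0}$ when $v$ is above $v_0$ and $L(v)=R(v_0)$. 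With that amendment your case analysis goes through unchanged.
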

The analytic continuation along $\si_{v_0}$ can be roughly described using 
a $\Drt$-module homomorphism
\begin{align*}
\si_{v_0}^\alg: T_A^\conv\rightarrow T_{\si_{v_0}A}^\conv
\end{align*}
induced by the change of variables \eqref{eq_xy_rel} and 
\begin{align*}
(\si_{v_0}^\alg)_*:
\mathrm{Hom}(D_{M_{[0;r]}^{r_A}},T_A^\conv)\rightarrow \mathrm{Hom}(D_{M_{[0;r]}^{r_{\si_{v_0}A}}},T_{\si_{v_0} A}^\conv).
\end{align*}

Continue to consider the case of Fig.s \ref{fig_local_sigma3} and \ref{fig_local_sigma4}.
We will write $A$-coordinate using $\zeta$ and 
$\si_{v_0} A$-coordinate using $\xi$.
Then, we have
\begin{align}
\zeta_a &=\frac{z_{34}}{z_{46}}=-\xi_a, &\xi_a &= \frac{z_{34}}{z_{64}}=-\zeta_a\nonumber
\\
\zeta_b &=\frac{z_{56}}{z_{46}}=-\xi_b, &\xi_b &= \frac{z_{56}}{z_{64}}=-\zeta_b\nonumber\\
\zeta_c &=\frac{z_{46}}{z_{26}}=-\frac{\xi_c}{1-\xi_c}, &\xi_c &= \frac{z_{64}}{z_{24}}
=-\frac{\zeta_c}{1-\zeta_c}\label{eq_A_siA0}\\
\zeta_d &=\frac{z_{26}}{z_{16}}=\xi_d\frac{(1-\xi_c)}{(1-\xi_c\xi_d)}, &\xi_d &= \frac{z_{24}}{z_{14}}=\zeta_d \frac{1-\zeta_c}{1-\zeta_c\zeta_d}
\nonumber\\
x_A &=z_{16}=x_{\si_v A}(1-\xi_c \xi_d), &x_{\si_v A} &= z_{14}=x_A(1-\ze_c\ze_d).\nonumber
\end{align}

The reader will notice that if $\zeta_e$ are all small, then $\xi_e$ are all small too, and vice versa.
Thus $U_A \cap U_{\si_{v_0} A}$ is a non-empty set.
Let $\mathfrak{p}\in \R_{>0}^{E(A)}$ be $A$-admissible
and $Q=(q_1,\dots,q_6) \in U_A^\mathfrak{p}$.
Define a path $\ga:[0,1]\rightarrow X_6$ as follows according to Fig. \ref{fig_sigmav}:
\begin{figure}[t]
  \begin{minipage}[l]{7cm}
    \centering
    \includegraphics[width=5cm]{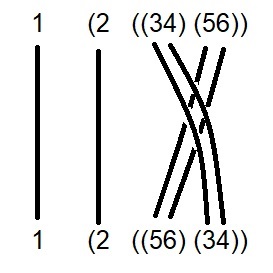}
    \captionof{figure}{}
\label{fig_sigmav}
  \end{minipage}
\end{figure}
\begin{align*}
\ga(t)&=(q_1(t),q_2(t),\dots,q_6(t))\\
&=\biggl(q_1,q_2,q_3+\frac{\exp(-\pi i t)-1}{2}(q_4-q_6),
q_4+\frac{\exp(-\pi i t)-1}{2}(q_4-q_6),\\
&q_5-\frac{\exp(-\pi i t)-1}{2}(q_4-q_6),
q_6-\frac{\exp(-\pi i t)-1}{2}(q_4-q_6)\biggr)
\end{align*}
By the convention fixed in Remark \ref{rem_fix_braid}, the path \(\gamma\) represents the homotopy class \([\sigma_{v_0}]_Q\) in \eqref{eq_add_sigma_v}. In fact, it is the unique path which satisfies:
\begin{itemize}
\item
$\ga(0)=Q$;
\item
$q_1(t), q_2(t),q_{3}(t)-q_4(t),q_{5}(t)-q_{6}(t),q_4(t)+q_6(t)$ are constant;
\item
$q_4(t)-q_6(t)=\exp(-\pi i t)(q_4-q_6)$.
\end{itemize}


In $A$-coordinate and $\si_{v_0} A$-coordinate, this path is expressed as follows:
\begin{align*}
\zeta_a(\ga(t)) &=\exp(\pi i t)\frac{q_{34}}{q_{46}}, &\xi_a(\ga(t)) 
&= -\exp(\pi i t) \frac{q_{34}}{q_{46}}\\
\zeta_b(\ga(t)) &=\exp(\pi i t)\frac{q_{56}}{q_{46}}, &\xi_b(\ga(t)) &= -\exp(\pi i t)\frac{q_{56}}{q_{46}}\\
\zeta_c(\ga(t)) &= \exp(-\pi i t) \frac{q_{46}}{q_{26}}\frac{1}{1+\frac{\exp(-\pi i t)-1}{2}\frac{q_{46}}{q_{26}}}, 
&\xi_c(\ga(t)) &= -\exp(-\pi i t) \frac{q_{46}}{q_{24}}\frac{1}{1-\frac{\exp(-\pi i t)-1}{2}\frac{q_{46}}{q_{24}}}
\\
\zeta_d(\ga(t)) &=\frac{q_{26}}{q_{16}}\frac{1+\frac{\exp(-\pi i t)-1}{2}\frac{q_{46}}{q_{26}}}{1+\frac{\exp(-\pi i t)-1}{2}\frac{q_{46}}{q_{16}}}, 
&\xi_d(\ga(t)) &= \frac{q_{24}}{q_{14}}\frac{1-\frac{\exp(-\pi i t)-1}{2}\frac{q_{46}}{q_{24}}}{1-\frac{\exp(-\pi i t)-1}{2}\frac{q_{46}}{q_{14}}}\\
x_A &=q_{16}\left(1+\frac{\exp(-\pi i t)-1}{2}\frac{q_{46}}{q_{16}}\right), &x_{\si_v A} &= q_{14}\left(1-\frac{\exp(-\pi i t)-1}{2}\frac{q_{46}}{q_{14}}\right),
\end{align*}
where $q_{ij}=q_i-q_j$.
Assume that $\mathfrak{p}$ is sufficiently small.
Then, by \eqref{eq_A_siA0},
$\xi_x(Q)$ is also small for $x=a,b,c,d$.
Hence, $\zeta_c(\ga(t))$ can be approximated by $\exp(-\pi i t)\frac{q_{46}}{q_{26}}$
and $\xi_c(\ga(t))$ can be approximated by $-\exp(-\pi i t) \frac{q_{24}}{q_{14}}$.
This is written as $\zeta_c(\ga(t))\sim \exp(-\pi i t)\frac{q_{46}}{q_{26}}$ and so on.

Thus, taking into account that $q_{ij}>0$ if $i<j$ and the branch cut
\begin{align*}
\D_p^\cut=\{z\in \C \mid 0<|z|<p, \;\;-\pi< \Arg(z)<\pi \},
\end{align*}
we find that there exists a small $\ep >0$ such that
\begin{align*}
\ga(t) &\in U_A\quad\quad \text{ if } 0\leq t<\ep,\\
\ga(t) &\in U_{\si_{v_0} A}\quad\; \text{ if } 1-\ep < t\leq 1.
\end{align*}
The same statement holds for a general $A\in \Tr_r$ and $v_0\in V(A)$.

Consider how the analytic continuation along the path $\ga$ of a function in $T_A^\conv$ 
can be written in the $\si_{v_0} A$-coordinate.
Note that the branch of $\Log$ is defined so that the value of $\Arg:\C^\cut \rightarrow \C$ is $(-\pi,\pi)$
(see \eqref{eq_log_def} in Section \ref{sec_config_conv}).
Then, along the path $\ga$, we have
\begin{align}
\begin{split}
\zeta_c(\ga(t))^r&=\exp(r \Log\ze_c(\ga(t)))=\exp(r \Log\left(-\frac{\xi_c(\ga(t))}{1-\xi_c(\ga(t))}\right))\\
&=\exp(r(-\pi i + \Log\left(\frac{\xi_c(\ga(t))}{1-\xi_c(\ga(t))}\right)))=\exp(-r\pi i)\left(\frac{\xi_c(\ga(t))}{1-\xi_c(\ga(t))}\right)^r\\
&=\exp(-\pi ir)\xi_c(\ga(t))^r \sum_{n\geq 0}(-1)^n \binom{-r}{n}\xi_c(\ga(t))^n,
\label{eq_zeta_cr}
\end{split}
\end{align}
where we used $\Arg (\zeta_c(\ga(t)))\sim -\pi i t$ (resp. $\Arg (\xi_c(\ga(t)))\sim \pi i -\pi it$) and
\begin{align}
\begin{split}
\Log \zeta_c(\ga(t)) &= \Log\left(-\frac{\xi_c(\ga(t))}{1-\xi_c(\ga(t))}\right) = -\pi i+\Log \left(\frac{\xi_c(\ga(t))}{1-\xi_c(\ga(t))}\right)\\
&=-\pi i+\Log \xi_c(\ga(t))+\sum_{n\geq 1}\frac{1}{n}\xi_c(\ga(t))^n.
\label{eq_zeta_crlog}
\end{split}
\end{align}
Similarly, by $\Arg (\ze_a(\ga(t)))\sim   \pi i t$ (resp. $\Arg (\xi_a(\ga(t))) \sim -\pi i +\pi i t$), for small $t>0$,
\begin{align}
\begin{split}
\zeta_a(\ga(t))^r&=\exp(\pi i r)\xi_a(\ga(t))^r,\\
\Log(\zeta_a(\ga(t)))&=\pi i +\Log(\xi_a(\ga(t))).
 \label{eq_zeta_a}
\end{split}
\end{align}

Based on this computation,
we consider the following $\C$-algebra homomorphism
\begin{align*}
\si_{v_0}^\alg:\C[\zeta_e^\C,\log\ze_e,x_A^\C,\log x_A \mid e\in E(A)] \rightarrow T_{\si_{v_0} A}:
\end{align*}

For any $r\in \C$,
\begin{align}
\si_{v_0}^\alg(\zeta_a^{r})&=\exp(\pi i r)\xi_a^r,
&\si_{v_0}^\alg(\log\ze_a)&=\pi i + \log\xi_a,\nonumber\\
\si_{v_0}^\alg(\zeta_b^r)&=\exp(\pi i r)\xi_b^r,
&\si_{v_0}^\alg(\log \zeta_b)&=\pi i + \log\xi_b,\nonumber\\
\si_{v_0}^\alg(\zeta_c^r)&=\exp(-\pi i r)\xi_c^r\sum_{n\geq 0}(-1)^n\binom{-r}{n}\xi_c^n,
&\si_{v_0}^\alg(\log \zeta_c)&=-\pi i + \log(\xi_c) + \sum_{n\geq 1}\frac{1}{n}\xi_c^n,\label{eq_si_alg}\\
\si_{v_0}^\alg(\zeta_d^r)&=\xi_d^r\sum_{m,n\geq 0}(-1)^{n+m}\binom{-r}{n}\binom{r}{m}
\xi_c^{m+n}\xi_d^n
&\si_{v_0}^\alg(\log \zeta_d)&=\log(\xi_d) + \sum_{n \geq 1}\frac{1}{n}\xi_c^n(\xi_d^n-1),\nonumber\\
\si_{v_0}^\alg(x_A^r)&=x_{\si_vA}^r\sum_{n\geq 0}\binom{r}{n}(-\xi_c\xi_d)^n
&\si_{v_0}^\alg(\log x_A)&=\log x_{\si_vA} - \sum_{n\geq 1} \frac{1}{n}(\xi_c\xi_d)^n.\nonumber
\end{align}
It is clear that this $\C$-algebra homomorphism can be extended to a $\C$-algebra homomorphism
from $T_A$ to $T_{\si_{v_0} A}$.
We will also denote it by $\si_{v_0}^\alg$.

\begin{rem}
\label{rem_change}
The map $\si_{v_0}^\alg$ is induced from the change of variables in Lemma \ref{lem_xy_coordinate} (being careful how to take the branch).
For example,
\begin{align*}
\ze_d&=\frac{z_{26}}{z_{16}}=\frac{z_{24}-z_{64}}{z_{14}-z_{64}}\\
&=\frac{z_{24}}{z_{14}}\frac{1-\frac{z_{64}}{z_{24}}}{1-\frac{z_{64}}{z_{14}}}
=\xi_d\frac{1-\xi_c}{1-\xi_c\xi_d},
\end{align*}
which is $\si_{v_0}^\alg(\ze_d)$.
\end{rem}
It is clear that we can define a similar map $\si_{v_0}^\alg:T_A\rightarrow T_{\si_{v_0}A}$ for general $A\in \Tr_r$ and ${v_0}\in V(A)$.
By the above argument, we have the following lemma:
\begin{lem}
\label{lem_D_sigma_isom}
For any $A\in \Tr_r$ and ${v_0}\in V(A)$,
$\si_{v_0}^\alg:T_A\rightarrow T_{\si_{v_0}A}$ is a $\Dr$-module isomorphism.
Moreover, there exists an $\si_{v_0}A$-admissible $\mathfrak{q} \in \R_{>0}^{E(\si_{v_0}A)}$ such that the image of $T_A^\conv$ is in $T_{\si_{v_0}A}^\mathfrak{q}$.
\end{lem}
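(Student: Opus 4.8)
\textbf{Proof strategy for Lemma \ref{lem_D_sigma_isom}.}
The plan is to reduce the statement for general $A$ and $v_0$ to the explicit local computation already carried out in the case $A = 1(2((34)(56)))$, $v_0 = 46$, since the change of variables \eqref{eq_xy_rel} (established for all $A, v_0$ in Lemma \ref{lem_xy_coordinate}) is purely local around $v_0$: only the vertices lying above $v_0$ whose rightmost leaf coincides with $R(v_0)$ are affected, and all other $\zeta_e$ are literally unchanged. First I would set up the one-to-one correspondence $E(A) \cong E(\si_{v_0}A)$ (swapping only the two subtrees $A_1, A_2$ below $v_0$, which induces the obvious bijection on edges and vertices), and then verify that the formulas \eqref{eq_si_alg1}--\eqref{eq_si_alg2} define a ring homomorphism $\si_{v_0}^\alg : T_A \to T_{\si_{v_0}A}$: this amounts to checking that each generator $\zeta_e^r$, $\log \zeta_e$, $x_A^r$, $\log x_A$ of $T_A$ is sent to an element of $T_{\si_{v_0}A}$, which is immediate from the power-series identities in \eqref{eq_zeta_cr}--\eqref{eq_zeta_a} and the fact (Remark \ref{rem_change}) that these are exactly the branch-corrected expansions of the coordinate change. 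Since $T_A$ and $T_{\si_{v_0}A}$ are generated as $\C$-algebras (with $\C^\C$-exponents and logs) by these elements and $\si_{v_0}^\alg$ is multiplicative and additive on exponents, it extends uniquely.

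Next I would show $\si_{v_0}^\alg$ is a $\Drt$-module (equivalently $\Dr$-module, once translation is handled) homomorphism. The cleanest route: the maps $\si_{v_0}^\alg$ are by construction the substitution operators coming from the biholomorphic coordinate change between $A$-coordinates and $\si_{v_0}A$-coordinates of Lemma \ref{lem_xy_coordinate}, analytically continued along the explicit path $\ga_{v_0}$ described via Fig.~\ref{fig_sigmav}; composition with a holomorphic change of coordinates commutes with the action of differential operators $\pa_i$ (chain rule), and since $\pa_i$ generate $\Drt$ over $\Ort^\alg$ and $\si_{v_0}^\alg$ restricted to $\Ort^\alg$ is the coordinate-change ring homomorphism, compatibility with all of $\Drt$ follows. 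To be fully rigorous I would instead verify directly on generators: apply $\pa_i$ to both sides of \eqref{eq_si_alg1}--\eqref{eq_si_alg2} using the $A$-coordinate derivations of Lemma \ref{lem_change_of_derivation}, i.e. check $\si_{v_0}^\alg(\zeta_e \tfrac{d}{d\zeta_e} f) = (\text{corresponding derivation in }\xi)(\si_{v_0}^\alg f)$ on each generator, which is a direct differentiation of the listed series.

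For invertibility, I would construct the inverse $\si_{v_0}^\alg$ for the tree $\si_{v_0}A$ with the same vertex $v_0$: because the relations \eqref{eq_A_siA0}--\eqref{eq_A_siA} are visibly symmetric under swapping $\zeta \leftrightarrow \xi$ up to the branch choices, $\si_{v_0}^\alg \circ \si_{v_0}^\alg$ is the identity (one checks this on generators; the branch corrections $\pm\pi i$ cancel and the geometric-series factors compose to $1$). Thus $\si_{v_0}^\alg$ is a $\Drt$-module isomorphism. Finally, for the convergence statement, I would invoke the computation preceding the lemma: if $\mathfrak{p} \in \R_{>0}^{E(A)}$ is $A$-admissible and sufficiently small, then along $\ga_{v_0}$ all $\xi_e(\ga(t))$ stay small (by \eqref{eq_A_siA0}--\eqref{eq_A_siA}), so by Lemma \ref{lem_admissible_radius} and Lemma \ref{lem_one_fix} there is a $\si_{v_0}A$-admissible $\mathfrak{q}$ with $\si_{v_0}^\alg(T_A^{\mathfrak p}) \subset T_{\si_{v_0}A}^{\mathfrak q}$, and taking the intersection over admissible $\mathfrak p$ gives $\si_{v_0}^\alg(T_A^\conv) \subset T_{\si_{v_0}A}^\conv$ for a suitable $\mathfrak q$; combined with the inverse direction this yields the isomorphism on the convergent subspaces.

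The main obstacle I expect is bookkeeping the branch cuts carefully: the path $\ga_{v_0}$ crosses the cuts of $\Log$ for the affected coordinates (notably $\zeta_a$ picks up $e^{-\pi i r}$ while $\zeta_c$ picks up $e^{\pi i r}$), and getting every sign and every geometric-series prefactor consistent between the $\zeta$- and $\xi$-pictures — so that $\si_{v_0}^\alg$ really is well-defined into $T_{\si_{v_0}A}$ rather than into some larger completion, and really squares to the identity — is the delicate part. Everything else (the ring-homomorphism property, the chain-rule compatibility with $\Drt$, the transfer of admissible radii) is routine given the explicit formulas already displayed.
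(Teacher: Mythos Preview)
Your overall plan matches the paper's: the $\Dr$-module compatibility is indeed immediate from the fact that $\si_{v_0}^\alg$ implements a biholomorphic coordinate change (Remark~\ref{rem_change}), and convergence comes from substituting the holomorphic change of variables into a convergent series. Two points need correction, though.

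\textbf{The invertibility argument is wrong.} Your claim that $\si_{v_0}^\alg \circ \si_{v_0}^\alg = \id$ is false: applying $\si_{v_0}^\alg$ twice corresponds to the full twist $\si_{v_0}^2$, and by Corollary~\ref{cor_twist} this is $B_{v_0}$, not the identity. Concretely, each application of $\si_{v_0}^\alg$ rotates the subtrees in the \emph{same} direction (counterclockwise along the path in Fig.~\ref{fig_sigmav}), so the branch correction $e^{-\pi i r}$ on $\ze_a^r$ appears with the same sign both times, giving $e^{-2\pi i r}$ rather than $1$. The relations \eqref{eq_A_siA0}--\eqref{eq_A_siA} are symmetric in $\ze \leftrightarrow \xi$ only at the level of the underlying rational coordinate change, not at the level of the chosen branches. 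Invertibility still holds---either because $B_{v_0}$ is obviously invertible, or (as the paper says in one line) because the coordinate change \eqref{eq_xy_rel} is biholomorphic and its inverse induces $(\si_{v_0}^\alg)^{-1}$ with the opposite branch corrections---but not by the argument you gave.

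\textbf{The convergence argument cites the wrong lemmas.} Lemmas~\ref{lem_admissible_radius} and~\ref{lem_one_fix} only concern the image of $\Ort^\alg$ under $e_A$, not arbitrary elements of $T_A^\conv$, so they do not directly yield $\si_{v_0}^\alg(T_A^\conv)\subset T_{\si_{v_0}A}^{\mathfrak q}$. What is actually needed is that substituting a convergent power series in $\xi$ (e.g.\ $\ze_c = -\xi_c/(1-\xi_c)$) into a convergent series $\sum a_l\ze_c^l$ yields a convergent series in $\xi$. The paper does this by the explicit absolute estimate $\sum_{l,n}\bigl|\binom{-l}{n}\bigr|\,|a_l|\,|\xi_c|^{l+n}=\sum_l |a_l|\bigl(\tfrac{|\xi_c|}{1-|\xi_c|}\bigr)^l$, which converges for small $|\xi_c|$. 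Your geometric intuition (small $\xi$ maps to small $\ze$, composition of analytic functions is analytic) is a correct alternative justification, but you should say that rather than invoke the two lemmas.
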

\begin{proof}
Since what is being done is the change of variables (see Remark \ref{rem_change}), the first claim is obvious.
We will show that $T_A^\conv$ maps to $T_{\si_vA}^\conv$.
Since all series in \eqref{eq_si_alg}
are convergent, it suffices to show that the image of 
any convergent formal power series in $\C[[\ze_e\mid e\in E(A)]]$
converges absolutely in $U_{\si_{v_0}A}^\mathfrak{q}$
for some $\mathfrak{q}$.
This is evident from the following calculations:
For any convergent series $\sum_{l\geq 0}a_l\ze_c^l$,
\begin{align*}
\si_v^\alg(\sum_{l\geq 0}a_l\ze_c^l)
&=\sum_{l\geq 0}a_l (-\xi_c)^l \left(\sum_{n\geq 0}(-1)^n\binom{-l}{n}\xi_c^n\right)
\end{align*}
and
\begin{align*}
\sum_{l,n\geq 0}|(-1)^n\binom{-l}{n}||a_l| |\xi_c|^{l+n}
&=\sum_{l\geq 0} |a_l|\left(\frac{|\xi_c|}{1-|\xi_c|}\right)^l,
\end{align*}
which is convergent if $|\xi_c|$ is small enough.
We omit the details.
\end{proof}

Recall that $r_A$ is the rightmost leaf among all leaves of $A$.
$r_A\neq r_{\si_{v_0}A}$ if and only if $v_0$ is on the rightmost side of the tree.
Note that in this case $z_{r_A}- z_{r_{\si_{v_0}A}}$ can be written as $y_{v_0}$
in $x$-coordinate of $\si_{v_0}A$.
\begin{prop}
\label{prop_crossing_alg}
Let $A\in \Tr_r$ and $v_0 \in V(A)$ and $F_A \in \PI_A\binom{M_0}{\Mrr}$.
Then,
\begin{align*}
\rho(\si_{v_0})(F_A)=
\exp\left((z_{r_A}-z_{r_{\si_{v_0}A}})L(-1)\right) \si_{v_0}^\alg(F_A).
\end{align*}
\end{prop}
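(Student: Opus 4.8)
\textbf{Proof plan for Proposition \ref{prop_crossing_alg}.}
The plan is to identify $\rho(\si_{v_0})(F_A)$ with an analytic continuation along the explicit path $\ga = \ga_{v_0}$ constructed above (generalizing the path written out for the tree $1(2((34)(56)))$ to an arbitrary $A\in\Tr_r$ and $v_0\in V(A)$), and then to compute that continuation in $\si_{v_0}A$-coordinate. First I would recall that by definition \eqref{eq_def_rho_comp}, $\rho(\si_{v_0})$ is the composite $d_{A'}^{-1}\circ e_{A'}\circ Q_{r_{A'}}\circ A_{[\si_{v_0}]_Q}\circ Q_{r_A}^{-1}\circ s_A\circ d_A$, where $A'=\si_{v_0}A$; so it suffices to understand $A_{[\si_{v_0}]_Q}$, the analytic continuation along $\ga$, together with the correction coming from the translation isomorphisms $Q_{r_A}$ and $Q_{r_{A'}}$ when $r_A\neq r_{A'}$. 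Using Lemma \ref{lem_indep_Q} I may choose $Q$ with $Q(A)\in U_A^{\mathfrak p}$ for $\mathfrak p$ sufficiently small; as shown in the discussion preceding the statement, then $\ga(t)\in U_A$ for $0\le t<1$ and $\ga(t)\in U_{\si_{v_0}A}$ for $0<t\le 1$, so $\ga$ indeed represents $[\si_{v_0}]_Q$ and the continuation is well-defined since $\CB$ is locally constant.

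The key computational step is to show that the analytic continuation of a parenthesized formal power series $f\in T_A^\conv$ along $\ga$, re-expanded in $\si_{v_0}A$-coordinate, equals $\si_{v_0}^\alg(f)$. This is exactly the content of the branch-tracking formulas \eqref{eq_zeta_cr}--\eqref{eq_zeta_a} and their generalizations: the change of variables of Lemma \ref{lem_xy_coordinate} relates $A$-coordinate to $\si_{v_0}A$-coordinate rationally, and the only subtlety is which branch of $\Log$ is picked up as $t$ runs from $0$ to $1$. I would argue this by the local-shape analysis: near $v_0$ the relevant coordinates are $\ze$ for the edges of $A_1,A_2$ and the edge just above $v_0$ (the analogues of $\ze_a,\ze_b,\ze_c,\ze_d,x_A$ in the worked example), and along $\ga$ these differ from the $\si_{v_0}A$-coordinates precisely by the factors $\exp(\pm\pi i r)$ recorded in \eqref{eq_si_alg1}--\eqref{eq_si_alg2}; all other coordinates (those from the abbreviated subtrees $\blacksquare$ and from $B$) are untouched. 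By Lemma \ref{lem_D_sigma_isom}, $\si_{v_0}^\alg$ is a $\Drt$-module isomorphism $T_A\to T_{\si_{v_0}A}$ carrying $T_A^\conv$ into some $T_{\si_{v_0}A}^{\mathfrak q}$, so the re-expanded continuation is again a genuine convergent parenthesized series and both sides of the claimed identity live in the same space $\PI_{\si_{v_0}A}\binom{M_0}{\Mrr}$.

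Finally I would account for the shift of the rightmost leaf. When $v_0$ lies on the rightmost branch of $A$, we have $r_A\neq r_{\si_{v_0}A}$, and $z_{r_A}-z_{r_{\si_{v_0}A}}$ is the $x$-coordinate function $y_{v_0}$ of $\si_{v_0}A$; the passage through $Q_{r_A}^{-1}$ and $Q_{r_{\si_{v_0}A}}$ (see Lemma \ref{lem_translation_isomorphism}, $Q_t(C)(\bullet)=C(\exp(-L(-1)_0^*z_t)\bullet)$) replaces the marked point $z_{r_A}$ by $z_{r_{\si_{v_0}A}}$ and hence introduces exactly the factor $\exp\bigl((z_{r_A}-z_{r_{\si_{v_0}A}})L(-1)\bigr)$ acting on the $M_0$-valued series; when $v_0$ is not rightmost this factor is $1$ and the statement reduces to the previous paragraph. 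I expect the main obstacle to be the bookkeeping in the general-$A$ branch computation: making rigorous that the argument of $\Log$ along $\ga$ stays in the cut plane $\C^\cut$ for every relevant coordinate for all $0<t<1$ (so that \eqref{eq_zeta_cr}, \eqref{eq_zeta_crlog}, \eqref{eq_zeta_a} apply as stated), and that the interchange of the (infinite) sum over $L(0)$-weights with the analytic continuation is justified — the latter following from Corollary \ref{cor_composition} / Corollary \ref{cor_app} as in the proof of Theorem \ref{thm_composition}, by first establishing the identity termwise on each $\ze$-coefficient and then invoking uniform convergence on $U_A\cap U_{\si_{v_0}A}$.
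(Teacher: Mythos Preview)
Your proposal is correct and follows essentially the same route as the paper: unwind the definition \eqref{eq_def_rho_comp}, identify the analytic continuation along $\ga_{v_0}$ with $\si_{v_0}^\alg$ via the branch-tracking formulas \eqref{eq_zeta_cr}--\eqref{eq_zeta_a}, and absorb the difference $Q_{r_{A'}}\circ\cdots\circ Q_{r_A}^{-1}$ into the factor $\exp\bigl((z_{r_A}-z_{r_{\si_{v_0}A}})L(-1)\bigr)$. One remark: your final worry about interchanging an ``infinite sum over $L(0)$-weights'' with analytic continuation is unnecessary here --- for a single $F_A$ the quantity $\langle u,F_A(\mrr)\rangle$ is already a single element of $T_A^\conv$ (no weight sum is involved; that issue arises only for compositions $F_A\circ_p F_B$), so Corollary \ref{cor_composition}/\ref{cor_app} play no role in this proposition.
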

\begin{proof}
Set $A'=\si_{v_0}A$ and let $\mrr\in \Mrr$ and $u\in M_0^\vee$.
By \eqref{eq_def_rho_comp} and the above arguments,
\begin{align*}
\langle u, \rho(\si_{v_0})(F_A)(\mrr) \rangle&=
e_{A'}Q_{r_{A'}} A_{\ga} Q_{r_A}^{-1}s_A (\langle u,  F_A(\mrr)\rangle)\\
&=
e_{A'}Q_{r_{A'}} A_{\ga} s_A (\langle \exp\left(z_{r_A}L(-1)^* \right) u,  F_A(\mrr)\rangle)\\
&=(\langle \exp\left((z_{r_A}-z_{r_{A'}})L(-1)^* \right) u,  \si_{v_0}^\alg F_A(\mrr)\rangle)\\
&=(\langle u, \exp\left((z_{r_A}-z_{r_{A'}})L(-1)\right) \si_{v_0}^\alg F_A(\mrr)\rangle).
\end{align*}
\end{proof}

Define a linear map $B_{v_0}:T_A^\conv \rightarrow T_A^\conv$
by 
\begin{align*}
x_v^r &\mapsto x_v^r\qquad \text{ if }v\neq v_0\\
\log x_v &\mapsto \log x_v\qquad \text{ if }v\neq v_0\\
x_{v_0}^r &\mapsto \exp(-2\pi i r) x_{v_0}^r \\
\log x_{v_0} &\mapsto -2\pi i +\log x_{v_0}
\end{align*}
for $r\in \C$. Then, by \eqref{eq_zeta_crlog}, we have:
\begin{cor}
\label{cor_twist}
Let $A\in \Tr_r$ and $v_0 \in V(A)$ and $F_A \in \PI_A\binom{M_0}{\Mrr}$.
Then,
\begin{align*}
\rho(\si_{v_0}^2)(F_A)=B_{v_0}F_A.
\end{align*}
\end{cor}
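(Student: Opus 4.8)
\textbf{Proof proposal for Corollary \ref{cor_twist}.}
The plan is to iterate Proposition \ref{prop_crossing_alg} twice. Writing $A'=\si_{v_0}A$ and $A''=\si_{v_0}A'=\si_{v_0}^2 A=A$ (as the square of a transposition is the identity at the level of underlying shapes and orderings), we have $\rho(\si_{v_0}^2)=\rho(\si_{v_0}|_{A'})\circ\rho(\si_{v_0}|_A)$, since $\rho$ is a functor on $\CPaB(r)$. Applying Proposition \ref{prop_crossing_alg} first to $F_A$ and then to $\si_{v_0}^\alg(F_A)\in\PI_{A'}\binom{M_0}{\Mrr}$ (adjusted by the translation exponential), the claim reduces to the purely algebraic identity that the composite $\si_{v_0}^\alg$ on $T_{A'}^\conv$ with $\si_{v_0}^\alg$ on $T_A^\conv$, together with the two translation factors $\exp\bigl((z_{r_A}-z_{r_{A'}})L(-1)\bigr)$ and $\exp\bigl((z_{r_{A'}}-z_{r_{A''}})L(-1)\bigr)$, collapses to the monodromy map $B_{v_0}$. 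The two translation factors combine (using that $L(-1)$ acts the same way and $z_{r_A}-z_{r_{A'}}=-(z_{r_{A'}}-z_{r_{A''}})$ when $v_0$ is on the rightmost side, and both vanish otherwise) to the identity, so the heart of the matter is that $\si_{v_0}^\alg\circ\si_{v_0}^\alg=B_{v_0}$ as $\Drt$-module automorphisms of $T_A^\conv$.

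First I would reduce to the local model used around Fig. \ref{fig_local_sigma1} and Fig. \ref{fig_local_sigma2}, i.e.\ the concrete six-leaf example $A=1(2((34)(56)))$, $v_0=46$, exactly as in the proof of Lemma \ref{lem_D_sigma_isom}; the general case is verbatim the same bookkeeping on the edges of $B$, $A_1$, $A_2$, which are fixed by $\si_{v_0}^\alg$. On this model I would track what happens to each generator under $\si_{v_0}^\alg$ applied twice, using formulas \eqref{eq_si_alg1}--\eqref{eq_si_alg2}. The key point is that applying the change of variables twice returns the same $A$-coordinate $\{\zeta_e\}$ but with the branch of $\Log$ shifted: the factors $\exp(-\pi i r)$ for $\zeta_a,\zeta_b$ multiply to $\exp(-2\pi i r)$, while the ``$-\pi i$'' shifts in $\log\zeta_a,\log\zeta_b$ add up to $-2\pi i$; for $\zeta_c$ the factor $\exp(\pi i r)$ squares to $\exp(2\pi i r)$ and the power-series corrections $\sum(-1)^n\binom{-r}{n}\xi_c^n$ telescope back to $1$ after the second substitution (because the second application reinserts $-\zeta_c/(1-\zeta_c)$ into $-\xi_c/(1-\xi_c)$, which is the involution $w\mapsto -w/(1-w)$ composed with itself, giving back the original variable up to the branch shift). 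A clean way to organize this is to observe that on each single scalar variable $w\in\{\zeta_a,\zeta_b,\zeta_c,\zeta_d,x_A\}$, $\si_{v_0}^\alg$ is exactly the analytic continuation of $w^r$ and $\log w$ along the half-loop $t\mapsto\ga(t)$, $t\in[0,1]$, read off in the target coordinate; doing it twice is continuation along the full loop $t\mapsto\ga(2t)$, $t\in[0,1]$, which is precisely the monodromy around the relevant singular divisor, and this is what $B_{v_0}$ records. So I would phrase the computation as: $\si_{v_0}^\alg$ twice equals ``parallel transport along the loop encircling $\{x_{v_0}=0\}$ once'', hence equals $B_{v_0}$ by definition of $B_{v_0}$.

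Concretely, the cleanest route avoiding a long direct calculation is: (i) note $\rho(\si_{v_0}^2)=A_{\ga\cdot\ga}$ after stripping the isomorphisms $d,s,Q,e$, where $\ga\cdot\ga$ is the concatenation, which is homotopic in $\Xr$ to the loop described by $q_{4}(t)-q_{6}(t)=\exp(2\pi i t)(q_4-q_6)$ with all other combinations constant; (ii) in $A$-coordinate this loop sends $x_{v_0}\mapsto\exp(2\pi i)x_{v_0}$ and fixes all other $x_v$ (using Lemma \ref{lem_xy_coordinate} to see that only $x_{v_0}$ and the $x_v$ above $v_0$ with $R(v)=R(v_0)$ involve $z_{r_{A}}$-type differences, but for those $x_v=y_v-y_{v_0}$ and after a full loop $y_{v_0}$ returns to itself, so $x_v$ is single-valued too); (iii) therefore the analytic continuation of any element of $T_A^\conv$ along this loop is exactly $B_{v_0}$, since $T_A$ is a subspace of $\C[[x_v^{\pm}]][x_v^\C,\log x_v]$ by Remark \ref{rem_TA_sub} and monodromy acts diagonally on the $x_{v_0}^r(\log x_{v_0})^k$-factor. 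The main obstacle I expect is step (ii): verifying that, after the full loop, \emph{none} of the other coordinates $\zeta_e$ or $x_v$ pick up monodromy — in particular that the $\zeta_c$-type variable, whose branch cut genuinely moves during the half-loop (as seen in \eqref{eq_zeta_cr}--\eqref{eq_zeta_crlog}), really does come back single-valued after the second half. This requires care with the branch of $\Log$ fixed in \eqref{eq_log_def} and with which divisors the path $\ga$ actually crosses; I would handle it by the same approximation argument as in Section \ref{sec_lax_braid} (all $\zeta_e$ small, so $\zeta_c(\ga(t))\sim\exp(\pi i t)q_{46}/q_{26}$ etc.), checking that $\Arg$ of each relevant coordinate stays within $(-\pi,\pi)$ except for $x_{v_0}$ itself, which winds once.
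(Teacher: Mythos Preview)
Your proposal is correct. The paper states this as an immediate corollary of Proposition~\ref{prop_crossing_alg} without proof, and your first approach---iterate the proposition and verify that $(\si_{v_0}^\alg)^2=B_{v_0}$ together with the cancellation of the two translation exponentials---is exactly the intended one-line argument. Your second route, going back to the definition of $\rho$ as analytic continuation and observing that $\si_{v_0}^2$ is represented by a loop that winds $x_{v_0}$ once around $0$ while the other $x_v$ ($v\neq v_0$) stay away from $0$, is a clean alternative that sidesteps the explicit branch bookkeeping of \eqref{eq_si_alg1}--\eqref{eq_si_alg2}; it buys you a conceptual picture (monodromy around the divisor $\{x_{v_0}=0\}$) at the cost of re-unwinding the $d,s,Q,e$ isomorphisms, which here is painless since source and target tree coincide. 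One small imprecision: in your ``obstacle'' paragraph you say ``none of the other $\zeta_e$ pick up monodromy,'' but of course the $\zeta_e$ adjacent to $v_0$ (like your $\zeta_a,\zeta_b,\zeta_c$) do---that is exactly what $B_{v_0}$ records once you pass from $x$- to $\zeta$-coordinates. Your step~(iii), working in $x$-coordinates via Remark~\ref{rem_TA_sub}, is the right way to phrase it.
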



Hence, we have:
\begin{prop}
\label{prop_sigma_compati}
Let $n,m\geq 1$ and
$A\in \Tr_n$, $B\in \Tr_m$ and $p \in \{1,\dots,n\}$.
\begin{enumerate}
\item
For any vertex ${v_0} \in V(A)$,
the following diagram commutes:
\begin{align*}
\begin{split}
\begin{array}{ccc}
\int_{N\in \Vmodf} \PI_{A}\binom{\bullet}{\bullet N \bullet} \otimes \PI_B\binom{N}{\bullet}
      &\overset{\comp_{A,B,p}}{\longrightarrow}&
\PI_{A\circ_p B}\binom{\bullet}{\bullet}
    \\
    {}^{{\rho(\si_{v_0})\otimes \rho(\id_B)}}\downarrow 
      && 
    \downarrow^{{\rho(\si_{v_0} \circ_p \id_B})}
    \\
\int_{N\in \Vmodf} \PI_{\si_{v_0} A}\binom{\bullet}{\bullet N \bullet} \otimes \PI_{B}\binom{N}{\bullet}
      &\overset{\comp_{\si_{v_0} A,B,p}}{\longrightarrow}&
\PI_{\si_{v_0} A\circ_p B}\binom{\bullet}{\bullet}.
\end{array}
\end{split}
\end{align*}
\item
For any vertex ${v_0} \in V(B)$,
the following diagram commutes:
\begin{align*}
\begin{split}
\begin{array}{ccc}
\int_{N\in \Vmodf} \PI_{A}\binom{\bullet}{\bullet N \bullet} \otimes \PI_B\binom{N}{\bullet}
      &\overset{\comp_{A,B,p}}{\longrightarrow}&
\PI_{A\circ_p B}\binom{\bullet}{\bullet}
    \\
    {}^{{\rho(\id_A)\otimes \rho(\si_{v_0})}}\downarrow 
      && 
    \downarrow^{{\rho(\id_A\circ_p \si_{v_0})}}
    \\
\int_{N\in \Vmodf} \PI_{A}\binom{\bullet}{\bullet N \bullet} \otimes \PI_{\si_{v_0} B}\binom{N}{\bullet}
      &\overset{\comp_{A,\si_{v_0} B,p}}{\longrightarrow}&
\PI_{A \circ_p \si_{v_0} B}\binom{\bullet}{\bullet}.
\end{array}
\end{split}
\end{align*}
\end{enumerate}
\end{prop}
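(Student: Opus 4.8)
\textbf{Proof plan for Proposition \ref{prop_sigma_compati}.}

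The plan is to reduce both statements to the single analytic fact established in the preceding sections: that the operadic composition $F_A \circ_p F_B$, viewed as a section of a conformal block, is an absolutely and locally uniformly convergent sum of products of sections $\langle u, F_A(\dots, e_{h,u}, \dots)\rangle \cdot \langle e_{h,u}^*, F_B(\dots)\rangle$ over the open set $U_{A\circ_p B}$ (Corollary \ref{cor_composition}), and that analytic continuation of such a convergent sum can be performed termwise. This is exactly the mechanism used in the proof of Proposition \ref{prop_alpha_compati}, so the structure of the argument here will parallel that proof closely, with $\si_{v_0}$ in place of $\al_\ee$. I will treat case (1) in detail and indicate that (2) is entirely analogous (in fact slightly simpler, since $\si_{v_0}$ acting on the inner tree $B$ only affects $F_B$, whose variables are untouched by the subsequent composition with $F_A$).

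First I would fix the branch data: by the discussion around Fig.~\ref{fig_sigmav}, for a $v_0 \in V(A)$ and $A\circ_p B$-admissible $\mathfrak{p}$ sufficiently small, there is an explicit path $\ga_{v_0}$ in $X_{n+m-1}$ from a point of $U_{A\circ_p B}^{\mathfrak p}$ to a point of $U_{\si_{v_0}A\circ_p B}^{\mathfrak q}$ realizing the crossing, with $\ga_{v_0}(t)\in U_{A\circ_p B}$ for $0\le t<1$ and $\ga_{v_0}(t)\in U_{\si_{v_0}A\circ_p B}$ for $0<t\le 1$ — this is the content of the computations \eqref{eq_A_siA0}--\eqref{eq_A_siA} and the sentences following them, which are stated to hold for general trees. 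Moreover the path $\ga_{v_0}$ for $A\circ_p B$ is the operadic composite $\ga_{v_0}\circ_p (\text{constant path at }Q_B)$ of the crossing path for $A$ with the trivial path for $B$; this is what makes $A(\ga_{v_0})$ compatible with $\comp_p$ via diagram \eqref{eq_intro_com} / Proposition \ref{prop_functor_monodromy}. Then, using Proposition \ref{prop_crossing_alg}, I would record that $\rho(\si_{v_0})(F_A) = \exp\bigl((z_{r_A}-z_{r_{\si_{v_0}A}})L(-1)\bigr)\,\si_{v_0}^\alg(F_A)$, so that the vertical maps in the diagram are realized concretely by the $\C$-algebra homomorphism $\si_{v_0}^\alg$ of \eqref{eq_si_alg1}--\eqref{eq_si_alg2} (which is a $\Drt$-module isomorphism by Lemma \ref{lem_D_sigma_isom}) together with a shift operator.

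The heart of the proof is then the identity $\rho(\si_{v_0})(F_A \circ_p F_B) = \bigl(\rho(\si_{v_0}) F_A\bigr) \circ_p F_B$ for case (1) (and $\rho(\si_{v_0})(F_A\circ_p F_B) = F_A \circ_p \bigl(\rho(\si_{v_0}) F_B\bigr)$ for case (2)). To prove it I would: (i) expand both $\langle u, (F_A\circ_p F_B)(m_{[n\circ_p m]})\rangle$ and $\langle u, ((\rho(\si_{v_0})F_A)\circ_p F_B)(m_{[n\circ_p m]})\rangle$ as the convergent sums over $h\in\C$ and over a basis $\{e_{h,u}\}$ of $(M_p^A)_h$ of the products of $A$-sections with $B$-sections, exactly as in \eqref{eq_rec_fin}-type expansions and Corollary \ref{cor_composition}; (ii) observe that since $\si_{v_0}$ only touches vertices of $A$ (equivalently, the path $\ga_{v_0}$ is constant on the $B$-strands), the $B$-factor $\langle e_{h,u}^*, F_B(m_{[m]}^B)\rangle$ is unchanged; (iii) use that $A(\ga_{v_0})$ is, termwise on $U_{A\circ_p B}\cap U_{\si_{v_0}A\circ_p B}$, the analytic continuation of each $A$-factor $\langle u, F_A(\dots,e_{h,u},\dots)\rangle$ to $\langle u, (\rho(\si_{v_0})F_A)(\dots,e_{h,u},\dots)\rangle$ — which holds because $\rho(\si_{v_0})F_A = e_{\si_{v_0}A}\,A(\ga_{v_0}^{A})\,s_A(F_A)$ by the very definition \eqref{eq_def_rho_comp} of $\rho$ on morphisms — so the two sums agree on the nonempty intersection $U_{A\circ_p B}\cap U_{\si_{v_0}A\circ_p B}$ as holomorphic functions; (iv) since $\CB$ is a locally constant sheaf on a simply-connected open set (Corollary \ref{equi_dim}, Lemma \ref{lem_simply_conn}) and both sides are sections of conformal blocks extending these restrictions, they are equal. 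Finally, by Theorem \ref{thm_composition} both $(\rho(\si_{v_0})F_A)\circ_p F_B$ and $\rho(\si_{v_0})(F_A\circ_p F_B)$ lie in $\PI_{\si_{v_0}A\circ_p B}\binom{M_0}{\cdot}$, so the equality of the associated holomorphic functions gives equality of parenthesized intertwining operators; passing to the coend over $N\in\Vmodf$ gives commutativity of the stated diagram by the universal property. The shift factor $\exp((z_{r_A}-z_{r_{\si_{v_0}A}})L(-1))$ needs a small separate check when $v_0$ is on the rightmost branch so that $r_A$ changes under $\si_{v_0}$; there one uses $(\sum_s L(-1)_s)$-invariance (cyclicity, \eqref{eq_cyclic_translation}) exactly as in the proof of Lemma \ref{lem_translation_isomorphism} to see the shift intertwines the two choices of rightmost leaf consistently with $\comp_p$.

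The main obstacle I anticipate is bookkeeping rather than a genuine difficulty: one must verify carefully that the path $\ga_{v_0}$ chosen on $X_{n+m-1}$ for the composite tree $A\circ_p B$ is genuinely the operadic composite (in $\Pi_1$) of the crossing path on $X_n$ with the constant path on $X_m$, so that Proposition \ref{prop_functor_monodromy} and the compatibility \eqref{eq_intro_com} of analytic continuation with $\comp_p$ apply — equivalently, that shrinking the $B$-subtree to a very thin strand (the definition of $\circ_p$ on $\CPaB$) does not introduce extra winding. This is geometrically obvious from Fig.~\ref{fig_partial_comp} and Fig.~\ref{fig_sigmav} but requires one to confirm that the branch cuts of the $A\circ_p B$-coordinate, when restricted to the strands belonging to $B$, are inherited from those of the $B$-coordinate, which is precisely the content of the analysis $E(A\circ_p B) = E(A)\sqcup E(B)\sqcup\{e_{AB}\}$ together with $\deg$-computations of Lemma \ref{lem_deg_formula} already carried out in Section \ref{sec_parenthesized_comp}. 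Once that compatibility is granted, case (2) is immediate by the same argument with the roles of the outer and inner trees exchanged, noting that in case (2) there is no shift factor at all since $r_{A\circ_p B}$ does not involve leaves of $B$ unless $p = r_A$, a case handled identically to the rightmost-branch subtlety above.
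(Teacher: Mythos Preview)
Your approach via termwise analytic continuation along the explicit path $\ga_{v_0}$ is a legitimate alternative, but it differs substantially from the paper's argument, which is algebraic rather than analytic. Having computed $\rho(\si_{v_0})$ explicitly in Proposition \ref{prop_crossing_alg} as $\exp\bigl((z_{r_A}-z_{r_{\si_{v_0}A}})L(-1)\bigr)\,\si_{v_0}^\alg$, the paper simply verifies the commutativity at the level of formal power series, without touching paths or convergence again. In particular it treats (1) as the trivial case and (2) as the one requiring work --- the opposite of your assessment. For (1), Lemma \ref{lem_xy_coordinate} shows that $\si_{v_0}^\alg$ leaves untouched every $x$-coordinate attached to a vertex strictly below $v_0$; since all vertices of $B$ in $A\circ_p B$ lie below $v_0 \in V(A)$, one has $\si_{v_0}^\alg(F_A\circ_p F_B) = (\si_{v_0}^\alg F_A)\circ_p F_B$ immediately, and the outer shift factor matches for the same reason.

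Your handling of (2) contains a misconception. You assert that in case (2) ``there is no shift factor at all since $r_{A\circ_p B}$ does not involve leaves of $B$ unless $p = r_A$'': this is true for the \emph{outer} shift in $\rho(\si_{v_0})(F_A\circ_p F_B)$, but the \emph{inner} shift $\exp\bigl((z_{r_B}-z_{r_{\si_{v_0}B}})L(-1)\bigr)$ coming from $\rho(\si_{v_0})F_B$ is nontrivial whenever $v_0$ lies on the rightmost branch of $B$, and it is precisely this shift that must be tracked through the composition. The paper's computation shows that when this $L(-1)$ enters $F_A$ via the $p$-th slot it becomes $\frac{d}{dz_p}$ by (PI1), and $\exp\bigl((z_{r_B}-z_{r_{\si_{v_0}B}})\frac{d}{dz_p}\bigr)F_A$ is exactly the effect of $\si_{v_0}^\alg$ on the $x$-coordinates of those $A$-vertices whose rightmost descendant passes through $p$ and hence through $r_B$; this is what makes the two sides agree (with an extra outer $L(-1)$ appearing when $p=r_A$, handled via \eqref{eq_Ct_0}). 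So the cancellation is real but not for the reason you give, and it is not a matter of ``bookkeeping'': it is the actual content of the proof. Your analytic approach would eventually recover this, but only after redoing the analysis behind Proposition \ref{prop_crossing_alg}; the paper's route is shorter because it cashes in that formula directly.
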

\begin{proof}
(1) is obvious, since $\rho(\si_{v_0})$ is essentially a change of variables in Lemma \ref{lem_xy_coordinate} and has no effect on the $x$-coordinates of vertices below $v_0$.

We will show (2).
Let $F_A\in \PI_A$ and $F_B\in \PI_B$.
By Proposition \ref{prop_crossing_alg},
\begin{align*}
\comp_{A,B,p}\left(F_A, \rho(\si_{v_0}B)F_B \right)
&=F_A(\bullet_{[n]\setminus p}, \exp(z_{r_B}-z_{r_{\si_{v_0}B}}L(-1))F_B(\bullet)).
\end{align*}
If $p$ is not the rightmost leaf, then by (PI1) and Lemma \ref{lem_xy_coordinate}
\begin{align*}
F_A(\bullet_{[n]\setminus p}, &\exp\left((z_{r_B}-z_{r_{\si_{v_0})B}}L(-1)\right)\si_{v_0}^\alg F_B(\bullet))\\
&=(\exp\left((z_{r_B}-z_{r_{\si_{v_0}B}})\frac{d}{dz_p}\right) F_A)(\bullet_{[n]\setminus p}, \si_{v_0}^\alg F_B(\bullet))\\
&=\si_{v_0}^\alg \comp_{A,B,p}\left(F_A, F_B \right)\\
&= \rho(\si_{v_0})\comp_{A,B,p}\left(F_A, F_B \right),
\end{align*}
where in the last equation we used $r_{A\circ_p B}=r_{A\circ_p \si_{v_0}B}$,
which follows from the assumption on $p$.
If $p$ is the rightmost leaf, then by \eqref{eq_Ct_0},
\begin{align*}
F_A(\bullet_{[n]\setminus p}, &\exp\left((z_{r_B}-z_{r_{\si_{v_0}B}})L(-1)\right)\si_{v_0}^\alg F_B(\bullet))\\
&=(\exp\left((z_{r_B}-z_{r_{\si_{v_0}B}})L(-1)\right)(\exp\left((z_{r_B}-z_{r_{\si_{v_0}B}})\frac{d}{dz_p}\right) F_A)(\bullet_{[n]\setminus p}, \si_{v_0}^\alg F_B(\bullet))\\
&=(\exp\left((z_{r_{A\circ_p B}}-z_{r_{A\circ_p \si_{v_0}B}})L(-1)\right) \si_{v_0}^\alg \comp_{A,B,p}\left(F_A, F_B \right)\\
&=\rho(\si_{v_0})\comp_{A,B,p}\left(F_A, F_B \right).
\end{align*}
\end{proof}

\subsection{Compatibility for vacuum insertion}
\label{sec_lax_vac}
In this and the next section, we assume that $V\in \Vmodf$, that is, $V^\vee$ is finitely generated (see Remark \ref{rem_rep_V}).
Let $A\in \Tr_r$ with $r\geq 1$ and $p \in \{1,2,\dots,r\}$.
Then, $\PI_\emptyset=\mathrm{Hom}(V,\bullet)$ is a representative functor
and, by the property of the coend,
\begin{align*}
\PI_A \circ_p \PI_\emptyset &=\int_{N \in \Vmodf}
\PI_A\binom{\bullet_0}{\bullet_{A_1}N\bullet_{A_2}}\otimes \mathrm{Hom}(V,N)\\
&\cong \PI_A\binom{\bullet_0}{\bullet_{A_1}V\bullet_{A_2}}.
\end{align*}
Therefore, the composition of operad is simply to insert $V$ into the $p$-th variable.

To define a lax 2-action of $\CPaB$ on $\Vmodf$,
we need to define a natural transformation
\begin{align*}
\comp_{A,\emptyset,p}:\PI_A\binom{\bullet_0}{\bullet_{A_1}V\bullet_{A_2}} \rightarrow
\PI_{A\circ_p \emptyset}\binom{\bullet_0}{\bullet_{A_1}\bullet_{A_2}}.
\end{align*}

Let $\Pi_p:\C^r\rightarrow \C^{r-1}$ be the projection defined by
$(z_1,\dots,z_r)\mapsto (z_1,\dots,z_{p-1},z_{p+1},\dots,z_r)$ for $(z_1,\dots,z_r)\in \C^r$.
We first show the following lemma:
\begin{lem}
\label{lem_empty_int}
Let $A\in \Tr_r$ with $r \geq 1$ and $p \in\{1,\dots,r\}$.
Then, for sufficiently small $\mathfrak{t}=\{t_e\}\in \R^{E(\Ae)}$,
\begin{align*}
U_{A\circ_p \emptyset}^{\mathfrak{t}} \subset \Pi_p(U_A).
\end{align*}
\end{lem}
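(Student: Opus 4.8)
\textbf{Proof plan for Lemma \ref{lem_empty_int}.}
The plan is to unwind the definitions of $U_{A\circ_p\emptyset}$ and $U_A$ in terms of the $A$-coordinate systems $\Psi_A$, $\Psi_{A\circ_p\emptyset}$, and then to exhibit, for each admissible $\mathfrak{t}$ on $E(A\circ_p\emptyset)$, a lift of a point of $U_{A\circ_p\emptyset}^{\mathfrak t}$ to a point of $U_A$ whose first $p-1$ and last $r-p$ coordinates agree with it. Concretely, recall that $A\circ_p\emptyset$ is obtained from $A$ by erasing the leaf $p$ and contracting; there is a natural inclusion $E(A\circ_p\emptyset)\hookrightarrow E(A)$ together with possibly one edge $e_p$ of $A$ that disappears (the parent edge of the erased leaf, if its sibling subtree becomes attached directly), and the vertices of $A\circ_p\emptyset$ form a subset of those of $A$. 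The key point is that the functions $x_v=z_{L(v)}-z_{R(v)}$ and $\zeta_e=x_{d(e)}/x_{u(e)}$ for the surviving vertices and edges are, after projecting away $z_p$, literally the same rational functions of the remaining variables in both coordinate systems — this is the content one must check carefully, and it follows from the combinatorial description of $\circ_p$ in Section \ref{sec_operad_def} together with the recipe for $L,R$ in Section \ref{sec_config_coordinate}.

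First I would fix $\mathfrak{t}=\{t_e\}_{e\in E(A\circ_p\emptyset)}$ that is $A\circ_p\emptyset$-admissible, take an arbitrary $Q'\in U_{A\circ_p\emptyset}^{\mathfrak t}$, and seek $Q\in U_A$ with $\Pi_p(Q)=Q'$. The natural choice is to place $z_p$ very close to $z_{R(v)}$ where $v$ is the (former) parent vertex of leaf $p$ in $A$: equivalently, to set the coordinate $\zeta_{e}$ associated to the newly created edge $e$ incident to the erased leaf — and the coordinate $x_{v}$ of the newly created vertex $v$ — to be extremely small, while keeping all surviving $\zeta_{e}$, $x_{v}$ equal to the corresponding values read off from $Q'$. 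By Proposition \ref{prop_psiA}, $\Psi_A^{-1}$ is a polynomial in the $\zeta_e$'s and $x_A,z_A$, so letting the ``new'' coordinates tend to $0$ produces a point of $X_r$ (one must invoke $A$-admissibility: for the chosen $\mathfrak p$ on $E(A)$ extending $\mathfrak t$ by sufficiently small entries on the new edges, the image of $\Psi_A^{-1}$ stays in $X_r$, which holds by Lemma \ref{lem_one_fix}-type reasoning since the forbidden hypersurfaces are cut out by the vanishing of explicit polynomials none of which is hit when the new coordinates are small and the old ones are $\mathfrak t$-admissible). Then $Q\in U_A^{\mathfrak p}\subset U_A$ because the arguments of all $\zeta_e$ and $x_A$ lie in the cut disks (the new coordinates can be taken real positive, and the surviving ones inherit their arguments from $Q'\in U_{A\circ_p\emptyset}^{\mathfrak t}$). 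Finally $\Pi_p(Q)=Q'$ because, as noted above, the $z$-coordinates of the surviving leaves computed from $\Psi_A^{-1}$ with the new coordinates set to $0$ coincide with those computed from $\Psi_{A\circ_p\emptyset}^{-1}$ — this is exactly the statement ``$3((12)4)\circ_2\emptyset=2(13)$'' unwound at the level of coordinates.

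The main obstacle I expect is the bookkeeping in the previous paragraph: precisely identifying which edge and vertex of $A$ are destroyed by erasing leaf $p$ (there are two combinatorial cases, according to whether leaf $p$ is the left or right child of its parent, and a degenerate case when the parent is the topmost vertex $t_A$), and verifying in each case that the surviving coordinate functions of $\Psi_A$ restrict, on the locus where the destroyed coordinates vanish, to the coordinate functions of $\Psi_{A\circ_p\emptyset}$ after $\Pi_p$. This is routine but must be done with care because of the asymmetry between $L(v)$ and $R(v)$ and the special role of $r_A$ versus $r_{A\circ_p\emptyset}$ (these two leaves differ exactly when leaf $p$ was the rightmost leaf of $A$, which forces a shift in which vertex is $t_{A\circ_p\emptyset}$). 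Once the coordinate identification is established, the admissibility and branch statements are immediate from the definitions in Section \ref{sec_config_conv}. I would present the argument by first treating the generic case in detail and then indicating that the remaining cases are analogous, as is done elsewhere in the paper (e.g. the proof of Lemma \ref{lem_alpha_intersection}).
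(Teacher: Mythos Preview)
There is a genuine gap in the coordinate identification, and it is not just bookkeeping. Your core claim is that ``the surviving coordinate functions of $\Psi_A$ restrict, on the locus where the destroyed coordinates vanish, to the coordinate functions of $\Psi_{A\circ_p\emptyset}$ after $\Pi_p$.'' This fails precisely in the case the paper's proof works out: when the sibling of leaf $p$ is a non-leaf subtree. Erasing $p$ contracts its parent vertex $v$, and the two edges of $E(A)$ incident to $v$ (the one above $v$, call it $e_2$, and the one to the sibling subtree, call it $f_3$) merge into a single edge of $A\circ_p\emptyset$. The $\xi$-coordinate on that merged edge is essentially the \emph{product} of the two $\zeta$-coordinates: in the paper's example $\xi_{f_3}=\zeta_{f_3}\,\zeta_{e_2}/(1-\zeta_{e_2})$. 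So sending your ``new'' coordinate $\zeta_{e_2}\to 0$ forces $\xi_{f_3}\to 0$, and you cannot lift an arbitrary $Q'\in U_{A\circ_p\emptyset}^{\mathfrak t}$ this way. (Your recipe ``place $z_p$ close to $z_{R(v)}$'' is also circular here, since $p$ is the right child so $R(v)=p$; and placing $z_p$ close to $z_{L(v)}$ instead sends $\zeta_{f_3}=x_{i_3j_3}/x_{j_3p}$ to infinity.)

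The paper's correction is small but decisive: instead of sending $\zeta_{e_2}\to 0$, \emph{fix} $\zeta_{e_2}$ at some value strictly between $0$ and $1$. With $\zeta_{e_2}$ frozen, the change of variables from the remaining $\zeta$'s to the $\xi$'s is bi-holomorphic near the origin (its linearization is diagonal with nonzero entries $1$ or $\zeta_{e_2}/(1-\zeta_{e_2})$), so for sufficiently small $\mathfrak t$ one can invert it and obtain the desired lift $Q\in U_A$ with $\Pi_p(Q)=Q'$. Your outline becomes correct after this change; the point is that the locus $\zeta_{e_2}=0$ is degenerate for the coordinate comparison and must be avoided.
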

\begin{proof}

\begin{minipage}[l]{7cm}
\begin{forest}
for tree={
  l sep=20pt,
  parent anchor=south,
  align=center
}
[
[$nq$
[$pn$,edge label={node[midway,left]{a}}
[$j_1p$,edge label={node[midway,left]{b}}
[$i_1j_1$,edge label={node[midway,left]{$f_1$}}[$\blacksquare$]]
[$j_2p$,edge label={node[midway,right]{$e_1$}}
[$i_2j_2$,edge label={node[midway,left]{$f_2$}}
[$\blacksquare$]]
[$j_3p$,edge label={node[midway,right]{\bf$e_2$}}
[$i_3j_3$,edge label={node[midway,left]{$f_3$}}
[$\blacksquare$]][$p:$vacuum]]
]]
[$nm$,,edge label={node[midway,right]{c}}[$\blacksquare$][$\blacksquare$]]
]
[$\blacksquare$]
]
]
\end{forest}
\end{minipage}
\hfill
\begin{minipage}[r]{7cm}
\begin{forest}
for tree={
  l sep=20pt,
  parent anchor=south,
  align=center
}
[
[$nq$
[$j_3n$,edge label={node[midway,left]{$a$}}
[$j_1j_3$,edge label={node[midway,left]{$b$}}
[$i_1j_1$,edge label={node[midway,left]{$f_1$}}[$\blacksquare$]]
[$j_2j_3$,edge label={node[midway,right]{$e_1$}}[$i_2j_2$,edge label={node[midway,left]{$f_2$}}[$\blacksquare$]]
[$i_3j_3$,edge label={node[midway,right]{$f_3$}}[$\blacksquare$]]
]]
[$nm$,,edge label={node[midway,right]{$c$}}[$\blacksquare$][$\blacksquare$]]
]
[$\blacksquare$]
]
]
\end{forest}
\end{minipage}
We will write $A$-coordinate using $\zeta$ and
$A\circ_p \emptyset$-coordinate using $\xi$.
Then,
\begin{align*}
\xi_a &=\ze_a(1+\ze_b\ze_{e_1}\ze_{e_2})\\
\xi_b&= \ze_b\frac{1-\ze_{e_1}\ze_{e_2}}{1+\ze_b\ze_{e_1}\ze_{e_2}}\\
\xi_c&=\ze_c\frac{1}{1+\ze_b\ze_{e_1}\ze_{e_2}}\\
\xi_{e_1}&=\ze_{e_1}\frac{1-\ze_{e_2}}{1-\ze_{e_1}\ze_{e_2}}\\
\xi_{f_1}&=\ze_{f_1}\frac{1}{1-\ze_{e_1}\ze_{e_2}}\\
\xi_{f_2}&=\ze_{f_2}\frac{1}{1-\ze_{e_2}}\\
\xi_{f_3}&=\ze_{f_3}\frac{\ze_{e_2}}{1-\ze_{e_2}}.
\end{align*}
Fixing $\ze_{e_2}$ to a value satisfying $0<\ze_{e_2}<1$,
the change of variables from $\ze$ to $\xi$ is bi-holomorphic around the origin.
Thus, for this tree, the assertion holds.
The same is obtained in the general case.
\end{proof}

Let $M_0,M_1,\dots,M_{p-1},M_{p+1},\dots,M_r\in \Vmodf$
and
set
\begin{align*}
\Mrh&=M_0^\vee \otimes M_1\otimes 
\cdots \otimes M_{p-1} \otimes M_{p+1}\otimes \cdots\otimes M_r,\\
\Mr&=M_0^\vee \otimes M_1\otimes 
\cdots \otimes M_{p-1}\otimes V \otimes M_{p+1}\otimes \cdots \otimes M_r.
\end{align*}

Let $C\in \CB_{\Mr}(U_A)$ and 
let $i_p:\Mrh \rightarrow \Mr$
be a linear map defined by
\begin{align*}
i_p(\mrh)=m_1\otimes m_2\otimes \cdots
m_{p-1}\otimes \1 \otimes m_{p+1}
\otimes \cdots \otimes m_r,
\end{align*}
where $\1$ is the vacuum vector of $V$.
For any $\mrh$, by (CB1),
\begin{align*}
0=C(L(-1)_p i_p(\mrh))=\frac{d}{dz_p}C(i_p(\mrh)),
\end{align*}
which implies that $C(i_p (\mrh))$ is independent of
$z_p$. Hence, $C(i_p (\mrh))$ can be regarded as a holomorphic function on $\Pi_p(U_A)$.
Here, $\Pi_p(U_A)$ is an open subset of $\C^{r-1}$ since the projection $\Pi_p:\C^r\rightarrow \C^{r-1}$
is an open mapping
and contains $U_\Ae^{\mathfrak{t}} \subset \Pi_p(U_A)$ by Lemma \ref{lem_empty_int}.
Thus, $C$ induces a linear map
\begin{align*}
i_p^*(C):
\Mrh \rightarrow \mO_{X_{r-1}}^\an(U_\Ae^{\mathfrak{t}}),\quad \mrh\mapsto C(i_p(\mrh)).
\end{align*}
Since $a(n)\1=0$ for any $a\in V$ and $n\geq 0$,
it is clear that $i_p^*(C)$ gives a section of the conformal block, i.e., $i_p^*(C) \in \CB_{\Mrh}(U_\Ae)$.
Define a linear map
\begin{align*}
\comp_{A,\emptyset,p}:\PI_{A}\binom{\bullet}{\bullet V\bullet}\rightarrow \PI_{A\circ_p \emptyset}\binom{\bullet}{\bullet\bullet}
\end{align*}
by the composition
\begin{align*}
\PI_{A}\binom{\bullet}{\bullet V\bullet}
\overset{e_A^{-1}}{\rightarrow}\CB_{\bullet}(U_A)
\overset{i_p^*}{\longrightarrow}
\CB_{\bullet}(U_\Ae)
\overset{e_{\Ae}}{\rightarrow}\PI_{\Ae}\binom{\bullet}{\bullet \bullet}.
\end{align*}

Then we have:
\begin{prop}
Let $r\geq 1$, $A\in \Tr_r$ and $p \in \{1,\dots,r\}$.
Then, there exists a natural transformation
\begin{align*}
\comp_{A,\emptyset,p}:\PI_{A}\circ_p \PI_{\emptyset} \rightarrow \PI_{A\circ_p \emptyset}
\end{align*}
between the functors in $\Func\left(\Vmodf \times (\Vmodfo)^{r-1}, \Vect\right)$.
\end{prop}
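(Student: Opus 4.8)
The plan is to verify that the linear map $\comp_{A,\emptyset,p}$ defined above really produces a natural transformation of functors, i.e.\ that it is compatible with $V$-module homomorphisms in all $r-1$ remaining slots and in the output slot $M_0$. Since $\comp_{A,\emptyset,p}$ is by construction the composite $e_{A\circ_p\emptyset}\circ i_p^*\circ e_A^{-1}$, and since $e_A$, $e_{A\circ_p\emptyset}$ are natural isomorphisms by Theorem~\ref{thm_expansion} (together with Lemma~\ref{lem_sym_block} and Lemma~\ref{lem_translation_isomorphism}), the whole issue reduces to naturality of the ``vacuum insertion'' map $i_p^*$. So the first step is to reformulate everything in terms of global conformal blocks: I would show that for any $V$-module homomorphisms $f_j\colon N_j\to M_j$ ($j\in[r]\setminus p$) and $g\colon M_0\to N_0$, the square
\begin{align*}
\begin{array}{ccc}
\CB_{\Mr}(U_A) & \overset{i_p^*}{\longrightarrow} & \CB_{\Mrh}(U_{A\circ_p\emptyset})\\
{}_{(g_*,f_j^*)}\downarrow && \downarrow_{(g_*,f_j^*)}\\
\CB_{N_{[0;\hat r]}'}(U_A) & \overset{i_p^*}{\longrightarrow} & \CB_{N_{[0;\hat r]}}(U_{A\circ_p\emptyset})
\end{array}
\end{align*}
commutes, where the left vertical map uses the functoriality of $D_\bullet$ from Lemma~\ref{lem_D_functor} and acts trivially on the $p$-th ($=V$) slot.

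The second step is the verification of this square, which is in fact straightforward: unwinding definitions, for $C\in\CB_{\Mr}(U_A)$ and $\mrh=m_1\otimes\cdots\otimes m_{p-1}\otimes m_{p+1}\otimes\cdots\otimes m_r$ one has $\big((g_*,f_j^*)(i_p^*C)\big)(\mrh)=(i_p^*C)(g^*u,f_1 m_1,\dots)=C\big(i_p(g^*u,f_1 m_1,\dots)\big)$, and since $i_p$ inserts the vacuum $\1$ in the $p$-th slot, $i_p\circ(g_*,f_j^*)=(g_*,f_j^*, \id_V)\circ i_p$ as maps $N_{[0;\hat r]}\to M_{[r]}$; applying $C$ and then comparing with $i_p^*\big((g_*,f_j^*)C\big)(\mrh)$ gives equality. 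The only point that requires a word is that $i_p^*C$ indeed lands in $\CB_{\Mrh}(U_{A\circ_p\emptyset})$ and that this open set is an honest subset of the domain available: this is exactly Lemma~\ref{lem_empty_int} (which gives $U_{A\circ_p\emptyset}^{\mathfrak t}\subset\Pi_p(U_A)$ for small $\mathfrak t$, hence by Lemma~\ref{lem_simply_conn} and the locally-constant-sheaf property of $\CB$ in Corollary~\ref{equi_dim} one extends uniquely to $U_{A\circ_p\emptyset}$), combined with the observation — already made in the text — that $C(i_p(\mrh))$ is $z_p$-independent by (CB1) because $L(-1)_p\1=0$, and that $a(n)\1=0$ for $n\geq 0$ forces $i_p^*C$ to satisfy (CB1)--(CB2) for the shorter tuple.

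The third step is purely bookkeeping: I would note that passing back through the expansion isomorphisms $e_A$, $e_{A\circ_p\emptyset}$ and the coend identification $\PI_A\circ_p\PI_\emptyset\cong\PI_A\binom{\bullet_0}{\bullet_{A_1}V\bullet_{A_2}}$ (which is natural by Yoneda, since $\PI_\emptyset=\Hom(V,\bullet)$ is representative, cf.\ the computation in the proof of Lemma~\ref{lem_iterated_rep}) transports the commuting square above into the naturality square for $\comp_{A,\emptyset,p}$ as a transformation of functors $\Vmodf\times(\Vmodfo)^{r-1}\to\Vect$. I expect the main — though modest — obstacle to be the careful identification of the open domains: one must check that the single small $\mathfrak t$ furnished by Lemma~\ref{lem_empty_int} can be chosen uniformly so that $U_{A\circ_p\emptyset}^{\mathfrak t}\subset\Pi_p(U_A)\cap\Pi_p(U_A')$ simultaneously for the source and target module tuples — but since $\Pi_p(U_A)$ depends only on the tree $A$ and not on the modules, this is automatic. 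The rest is routine.
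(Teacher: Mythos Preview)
Your proposal is correct and follows exactly the same route as the paper: the map $\comp_{A,\emptyset,p}$ is defined there as the composite $e_{A\circ_p\emptyset}\circ i_p^*\circ e_A^{-1}$, and the proposition is stated immediately after the construction with naturality left implicit. Your write-up simply spells out the routine naturality check for $i_p^*$ (and the extension from $U_{A\circ_p\emptyset}^{\mathfrak t}$ to $U_{A\circ_p\emptyset}$ via the locally-constant sheaf property), which the paper omits.
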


For any $a\in V$, since $a=a(-1)\1$, the following lemma follows from the definition:
\begin{lem}
\label{lem_empty_lift}
For any $\mrh=m_1\otimes m_2\otimes \cdots \otimes m_{p-1}\otimes m_{p+1}
\otimes \cdots \otimes m_r \in \Mrh$ and $a\in V$,
\begin{align*}
C(m_1&\otimes m_2\otimes \cdots \otimes m_{p-1}\otimes a\otimes m_{p+1}
\otimes \cdots \otimes m_r)\\
&=\sum_{k\geq 0} z_p^k i_p^*(C)\left(a(-k-1)_0^* \mrh\right)
+\sum_{s \in [r],s\neq p}\sum_{k \geq 0}
(z_p-z_s)^{-k-1}i_p^*(C)\left(a(k)_s \mrh\right).
\end{align*}
In particular, $\comp_{A,\emptyset,p}$ is injective for any $M_0,M_1,\dots,M_r$.
\end{lem}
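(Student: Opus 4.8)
The plan is to deduce the displayed formula from the single identity $a=a(-1)\1$ together with the element of $N_\Mr$ coming from the $n=-1$ case of \eqref{eq_ker1}, and then to read off injectivity as a formal consequence.

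First I would observe that, because $a=a(-1)\1$ and the operator $a(-1)_p$ acts only on the $p$-th tensor factor --- the slot in which $i_p(\mrh)$ carries the vacuum $\1$ --- the elementary tensor $m_1\otimes\cdots\otimes m_{p-1}\otimes a\otimes m_{p+1}\otimes\cdots\otimes m_r$ coincides with $a(-1)_p\,i_p(\mrh)$. By Lemma \ref{N_contain} (its first displayed element), the vector
\[
1\otimes a(-1)_p i_p(\mrh)-\sum_{k\geq 0}z_p^k\otimes a(-k-1)_0^* i_p(\mrh)-\sum_{s\in[r],\,s\neq p}\sum_{k\geq 0}(z_p-z_s)^{-k-1}\otimes a(k)_s i_p(\mrh)
\]
lies in $N_\Mr$ and is therefore annihilated by $C$, regarded as a $\Dr$-homomorphism $D_\Mr\to\Or^\an$. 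The formula then follows once I note that each mode occurring on the right --- $a(-k-1)_0^*$, which acts on the $0$-th factor $M_0^\vee$, and $a(k)_s$ with $s\neq p$, which acts on $M_s$ --- leaves the $p$-th factor untouched and hence commutes with the vacuum insertion $i_p$; so $C(a(-k-1)_0^* i_p(\mrh))=i_p^*(C)(a(-k-1)_0^*\mrh)$ and $C(a(k)_s i_p(\mrh))=i_p^*(C)(a(k)_s\mrh)$ by the very definition of $i_p^*$. The sums over $k$ are finite once $\mrh$ and $a$ are fixed and the $m_j$ lie in finitely many graded pieces, so no convergence issue arises beyond the one already implicit in \eqref{eq_ker1}.

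For the injectivity assertion I would use that $\comp_{A,\emptyset,p}$ is, by construction, the composite of the isomorphisms $e_A^{-1}$ and $e_{\Ae}$ (Theorem \ref{thm_expansion}) with $i_p^*:\CB_\Mr(U_A)\to\CB_\Mrh(U_{\Ae})$; hence it is enough to prove $i_p^*$ injective. If $i_p^*(C)=0$ then the formula above shows that $C$ vanishes on every elementary tensor whose $p$-th entry is an arbitrary vector of $V$, and since such tensors span $\Mr$ we conclude $C=0$.

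I do not expect a real obstacle: the argument is entirely the identity $a=a(-1)\1$ plus bookkeeping of which tensor slot each mode acts on. The only point to watch is that one should invoke Lemma \ref{N_contain} --- whose relations are phrased with $a(-k-1)_0^*$ --- rather than condition (CB2) of Lemma \ref{lem_block_linear} directly, so that the output is presented exactly in the stated form.
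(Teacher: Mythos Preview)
Your proposal is correct and matches the paper's approach exactly: the paper's entire proof is the one-line remark ``since $a=a(-1)\1$, the following lemma follows from the definition,'' and what you have written is precisely the unpacking of that sentence via Lemma~\ref{N_contain} and the observation that the modes $a(-k-1)_0^*$ and $a(k)_s$ for $s\neq p$ commute with the vacuum insertion $i_p$. Your reading of injectivity is also the intended one.
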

Hence, $C \in \CB_{\Mr}(U_A)$ can be recovered from $i_p^*(C)$.
Since all $(z_p-z_s)^{-k-1}$ and $z_p^k$ for $k\in \Z_{\geq 0}$ and $s\neq p$
are single-valued functions, by Lemma \ref{lem_empty_lift},
we have:
\begin{prop}
\label{prop_empty_commute}
Let $r\geq 1$ and
$A,A'\in \Tr_r$
and $\CPaB$-morphisms $g:A\rightarrow A'$ and $p \in \{1,\dots,r\}$.
Then, the following diagram commutes:
\begin{align*}
\begin{array}{ccc}
\PI_{A}\binom{\bullet}{\bullet V\bullet}&\overset{\comp_{A,\emptyset,p}}{\longrightarrow}&
\PI_{A\circ_p \emptyset}\binom{\bullet}{\bullet}\\
    {}^{{\rho(g)}}\downarrow 
      && 
    \downarrow^{{\rho(g\circ_p \emptyset)}}\\
\PI_{A'}\binom{\bullet}{\bullet V \bullet}
      &\overset{\comp_{A',\emptyset,p}}{\longrightarrow}&
\PI_{A'\circ_p \emptyset}\binom{\bullet}{\bullet}.
\end{array}
\end{align*}
\end{prop}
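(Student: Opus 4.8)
\textbf{Proof proposal for Proposition \ref{prop_empty_commute}.}

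The plan is to reduce the statement to the already-established compatibility of $\rho$ with analytic continuation on the conformal blocks themselves, using Lemma \ref{lem_empty_lift} to transfer the argument between $\PI_A\binom{\bullet}{\bullet V\bullet}$ and $\PI_{A\circ_p\emptyset}\binom{\bullet}{\bullet}$. By Remark \ref{rem_rho_PI_ok} it suffices to argue with the functors $\PI$, and by the definition \eqref{eq_def_rho_comp} the morphism $\rho(g)$ is implemented by analytic continuation $A_{[g]_Q}$ on $\CB_\bullet(U_A)$ together with the natural isomorphisms $d,s,Q_{r_A},e_A$ which are all canonical. So the statement to prove is really: for $C\in\CB_{\Mr}(U_A)$, one has $i_p^*(A_{[g]_Q}C) = A_{[g]_{Q'}}(i_p^*(C))$ as sections of $\CB_{\Mrh}$ over the appropriate open set, where $g':A\circ_p\emptyset\to A'\circ_p\emptyset$ is the induced morphism and the paths $Q,Q'$ are chosen compatibly.

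First I would fix $Q:\Tr_r\to X_r$ satisfying (Q1)--(Q3) and, for each tree, choose the image points so that the $p$-th coordinate is placed "far enough" and so that projecting away the $p$-th coordinate via $\Pi_p$ lands inside $U_{A\circ_p\emptyset}$ (possible by Lemma \ref{lem_empty_int}); this produces a compatible $Q'$ on $\Tr_{r-1}$ by composing with $\Pi_p$. Then for a $\CPaB(r)$-morphism $g:A\to A'$ the path $\ga\in[g]_Q$ can be chosen so that the $p$-th strand stays disjoint and essentially unbraided from the others (the relevant braid $g$ comes from $\CPaB(r)$ acting on $r-1$ genuine points plus one "vacuum" point, and the composition with $\emptyset$ is literally erasing the $p$-th strand), so that $\Pi_p\circ\ga$ is a representative of $[g']_{Q'}$. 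Next, I use the key observation that $C(i_p(\mrh))$ is independent of $z_p$ (shown in Section \ref{sec_lax_vac} via (CB1) and $L(-1)_p\1=0$), so the analytic continuation of $C$ along $\ga$, restricted to the vacuum-inserted vectors, depends only on the projected path; concretely, the monodromy operator commutes with the embedding $i_p$ because $a(n)\1=0$ for $n\geq0$ guarantees that $i_p$ intertwines the $\Dr$-module structures in the relevant way. This gives $i_p^*(A_\ga C) = A_{\Pi_p\ga}(i_p^* C)$, which is exactly the commutativity of the square once one unwinds the definitions of $\comp_{A,\emptyset,p}$ and $\comp_{A',\emptyset,p}$ as $e_{\Ae}\circ i_p^*\circ e_A^{-1}$.

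The main obstacle I anticipate is the bookkeeping around the branch choices and the homotopy class of the projected path: one must verify that the simply-connected domains $U_A$ and $U_{A\circ_p\emptyset}$ (and the intersections where the various $\Psi$-coordinates are simultaneously valid) are arranged so that $\Pi_p$ sends the chosen path $\ga$ to the correct homotopy class in $X_{r-1}$ — i.e. that erasing the $p$-th strand of the braid $g$ is compatible, under the magma-operad composition $g\mapsto g\circ_p\id_\emptyset$, with the assignment of paths $[\,\cdot\,]_Q$. Once the disjointness of the $p$-th strand is arranged (which is why one places $z_p$ far away and uses that $g\in\CPaB(r)$ restricted to a vacuum strand is topologically trivial on that strand, cf. the "just erasing the $p$-th strand" description of $\circ_p\emptyset$ in Section \ref{sec_def_cpab}), the independence of $C(i_p(\mrh))$ from $z_p$ makes the rest automatic. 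A secondary, purely routine point is checking that $i_p^*$ really is natural in $M_0,M_1,\dots$ and commutes with the canonical isomorphisms $e_A$, $e_{\Ae}$, which follows directly from their definitions as expansion maps and from Lemma \ref{lem_empty_lift}.
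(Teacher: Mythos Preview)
Your approach is correct and essentially matches the paper's, which dispatches the proposition in one sentence: by Lemma \ref{lem_empty_lift} every $C$ is reconstructed from $i_p^*(C)$ through a formula whose only $z_p$-dependent coefficients are the single-valued rational functions $z_p^k$ and $(z_p-z_s)^{-k-1}$, so analytic continuation of $C$ along any $\gamma$ in $X_r$ is entirely determined by analytic continuation of $i_p^*(C)$ along $\Pi_p\gamma$ in $X_{r-1}$. You argue the same commutation from the forward direction (the $z_p$-independence of $C(i_p(\mrh))$), which is equally valid and arguably more direct; the two viewpoints are dual to one another.

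The ``main obstacle'' you describe is not an obstacle. The projection $\Pi_p:X_r\to X_{r-1}$ is a well-defined continuous map (distinctness of the remaining coordinates is inherited), and the induced map on fundamental groupoids is by construction the strand-erasure map, which is exactly how $g\mapsto g\circ_p\emptyset$ is defined in Section \ref{sec_def_cpab}. No special placement of $z_p$ ``far away'' is needed: the $z_p$-independence (equivalently, the single-valuedness of the reconstruction coefficients) already makes the $p$-th coordinate of $\gamma$ irrelevant to the continuation. Compatibility of base-points is automatic from Lemma \ref{lem_indep_Q}, and Lemma \ref{lem_empty_int} ensures the projected open sets overlap as required.
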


\subsection{Proof of Main Theorem}
\label{sec_lax_proof}

We first prove the following proposition:
\begin{prop}
\label{prop_natural_comp}
Let $n,m\geq 1$ and
$A,A'\in \Tr_n$, $B\in \Tr_m$
and $\CPaB$-morphisms $g:A\rightarrow A'$, $g_B:B\rightarrow B'$ and $p \in \{1,\dots,n\}$
Then, the following diagram commutes:
\begin{align}
\begin{split}
\begin{array}{ccc}
\int_{N\in \Vmodf} \PI_{A}\binom{\bullet}{\bullet N \bullet} \otimes \PI_B\binom{N}{\bullet}
      &\overset{\comp_{A,B,p}}{\longrightarrow}&
\PI_{A\circ_p B}\binom{\bullet}{\bullet}
    \\
    {}^{{\rho(g_A)\otimes \rho(g_B)}}\downarrow 
      && 
    \downarrow^{{\rho(g_A\circ_p g_B)}}
    \\
\int_{N\in \Vmodf} \PI_{A'}\binom{\bullet}{\bullet N \bullet} \otimes \PI_{B'}\binom{N}{\bullet}
      &\overset{\comp_{A',B',p}}{\longrightarrow}&
\PI_{A'\circ_p B'}\binom{\bullet}{\bullet}
\label{eq_lax_diagram_lem}
\end{array}
\end{split}
\end{align}
\end{prop}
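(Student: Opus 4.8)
The plan is to reduce Proposition~\ref{prop_natural_comp} to the special cases already established, by factoring arbitrary morphisms in $\CPaB$ into generators. Recall that every morphism in $\CPaB(r)$ is a composition of alpha-type maps $\al_e$ (together with their inverses), crossings $\si_{v_0}$ (together with their inverses), and the symmetric-group relabellings built into the definition of $\rho$ via the decomposition $A=(w_A,g_A)$. Since $\comp$ is built as an iterated composition of the binary compositions $\comp_{\bullet,\bullet,p}$ and since composition of parenthesized intertwining operators is strictly associative (this is the content of (LM1), verified in Proposition~\ref{prop_lax_magma}), it suffices to prove the commutativity of \eqref{eq_lax_diagram_lem} when $g_A$ and $g_B$ are each one generator, and then paste these squares together along a chosen factorization of the general $g_A$, $g_B$.

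First I would dispose of the case where one of $g_A$, $g_B$ is the identity and the other is a single generator. If $g_B=\id_B$ and $g_A$ is an alpha-type map or a crossing, this is exactly Proposition~\ref{prop_alpha_compati}(1) or Proposition~\ref{prop_sigma_compati}(1). Symmetrically, if $g_A=\id_A$ and $g_B$ is a generator, this is Proposition~\ref{prop_alpha_compati}(2) or Proposition~\ref{prop_sigma_compati}(2). For the relabelling part (replacing $A$ by $gA$, $B$ by $hB$), one invokes the definition \eqref{eq_comp_def} of $\comp_{A,(B_1,\dots,B_n)}$ as $(g\circ(h_1,\dots,h_n))(\comp_{w,(v_1,\dots,v_n)})$ together with Lemma~\ref{lem_sym_block} and Lemma~\ref{lem_comp_compati}: the relabellings act on conformal blocks by the strict permutation action $X(g)$ on $X_r$, which is compatible with the coend composition by Proposition~\ref{prop_operad_symmetric} and Remark~\ref{rem_perm_2operad}, exactly as spelled out in the supplement to (LM4). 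Next, I would handle inverses of generators: since $\CPaB(r)$ is a groupoid and $\comp_{A,B,p}$ is (by the arguments in Sections~\ref{sec_lax_ass}--\ref{sec_lax_braid}) induced by analytic continuation along specified paths, the square for $g_A^{-1}$ commutes iff the square for $g_A$ does, because the vertical maps $\rho(g_A)$, $\rho(g_A^{-1})$ are mutually inverse natural isomorphisms.

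The final assembly step: given arbitrary $g_A:A\to A'$ and $g_B:B\to B'$, write $g_A\circ_p g_B$ as a composite path. Decompose $g_A=\gamma_k\circ\cdots\circ\gamma_1$ into generators (and their inverses), interpolating through trees $A=A^{(0)},A^{(1)},\dots,A^{(k)}=A'$, and similarly $g_B=\delta_l\circ\cdots\circ\delta_1$ through $B=B^{(0)},\dots,B^{(l)}=B'$. Then
\begin{align*}
g_A\circ_p g_B = (g_A\circ_p \id_{B'})\circ(\id_A\circ_p g_B) = (\gamma_k\circ_p\id)\circ\cdots\circ(\gamma_1\circ_p\id)\circ(\id\circ_p\delta_l)\circ\cdots\circ(\id\circ_p\delta_1),
\end{align*}
using the interchange $(x\circ_p y')\circ(x'\circ_p y)=(x\circ x')\circ_p(y'\circ y)$ valid in any operad (here applied in $\CPaB$), together with functoriality $\rho(ab)=\rho(a)\rho(b)$. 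Each factor square commutes by the single-generator cases above (with the appropriate $p$ tracked through the relabelling shifts via \eqref{eq_perm_operad}), and stacking the commuting squares gives \eqref{eq_lax_diagram_lem}.

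The main obstacle I anticipate is bookkeeping rather than mathematics: making sure that when a generator $\gamma_i$ acting on $A^{(i-1)}$ is composed with $\id_{B'}$, the insertion index $p$ is correctly transported (it may be shifted by the number of leaves of $B'$, or left fixed, depending on whether $p$ lies before, inside, or after the region affected by $\gamma_i$), and that the trees $A^{(i)}\circ_p B'$ really are the intermediate trees of a single factorization of $g_A\circ_p\id_{B'}$. This is governed by the operad axioms for $\magma$ and by \eqref{eq_perm_operad}, and requires care but no new idea. A secondary point to be careful about is that the generator-level squares in Propositions~\ref{prop_alpha_compati} and \ref{prop_sigma_compati} are stated for $\rho(\al_e)$, $\rho(\si_{v_0})$ with $e\in E(A)$ or $v_0\in V(A)$; when $\gamma_i$ acts on the intermediate tree $A^{(i-1)}$ one must check that the relevant edge or vertex still lies in $E(A^{(i-1)})$ (resp. $V(A^{(i-1)})$) after the previous moves, which is immediate since alpha-moves and crossings preserve the underlying set of non-leaf vertices and only rename edges.
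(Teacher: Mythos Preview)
Your proposal is correct and follows essentially the same approach as the paper's proof: both reduce to generators by observing that commuting squares for $(g_A,g_B)$ and $(g_{A'},g_{B'})$ stack to give commutativity for $(g_{A'}g_A,\,g_{B'}g_B)$ (using functoriality of $\rho$ and the operad interchange law $(g_{A'}\circ_p g_{B'})(g_A\circ_p g_B)=(g_{A'}g_A)\circ_p(g_{B'}g_B)$), and then invoke Propositions~\ref{prop_alpha_compati} and~\ref{prop_sigma_compati} for the case where one of $g_A,g_B$ is the identity and the other is a single $\alpha_e$ or $\sigma_{v_0}$. The paper's write-up is more terse and does not separately discuss inverses or symmetric-group relabellings (the latter are absorbed into the definition of $\rho$ via $d_A$ and so do not appear as generators of morphisms in $\CPaB(r)$), but the substance is the same.
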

\begin{proof}
Assume that for $\CPaB$-morphisms $g_A:A\rightarrow A'$ and $g_B:B\rightarrow B'$,
$g_{A'}:A'\rightarrow A''$ and $g_{B'}:B'\rightarrow B''$,
the diagram \eqref{eq_lax_diagram_lem} commutes.
Since $\rho$ is a groupoid homomorphism,
\begin{align*}
\left(\rho(g_{A'})\otimes \rho(g_{B'})\right) \left(\rho(g_A)\otimes \rho(g_B)\right)
&=\rho(g_{A'}g_A)\otimes \rho(g_{B'}g_B).
\end{align*}
By the functoriality of the operadic composition,
\begin{align*}
\rho(g_{A'}\circ_p g_{B'})\rho(g_A\circ_p g_B)&=\rho((g_{A'}\circ_p g_{B'}) (g_A\circ_p g_B))=
\rho((g_{A'}g_A)\circ_p (g_{B'}g_B))
\end{align*}
holds.
By assumption, the following diagram commutes:
\begin{align*}
\int_{N\in \Vmodf} \PI_{A}\binom{\bullet}{\bullet N \bullet} \otimes \PI_B\binom{N}{\bullet}
      &\quad\quad\quad\overset{\comp_{A,B,p}}{\longrightarrow}&
\PI_{A\circ_p B}\binom{\bullet}{\bullet}
    \\
    {}^{{\rho(g_A)\otimes \rho(g_B)}}\downarrow \quad\quad\quad\quad
      &\quad\quad\quad& 
    \downarrow^{{\rho(g_A\circ_i g_B)}}
    \\
    \int_{N\in \Vmodf} \PI_{A'}\binom{\bullet}{\bullet N \bullet} \otimes \PI_{B'}\binom{N}{\bullet}
      &\quad\quad\quad\overset{\comp_{A',B',p}}{\longrightarrow}&
\PI_{A'\circ_p B'}\binom{\bullet}{\bullet}\\
    {}^{{\rho(g_{A'})\otimes \rho(g_{B'})}}\downarrow \quad\quad\quad\quad
      &\quad\quad\quad& 
    \downarrow^{{\rho(g_{A'}\circ_i g_{B'})}}
    \\
\int_{N\in \Vmodf} \PI_{A''}\binom{\bullet}{\bullet N \bullet} \otimes \PI_{B''}\binom{N}{\bullet}
      &\quad\quad\quad\overset{\comp_{A'',B'',p}}{\longrightarrow}&
\PI_{A''\circ_p B''}\binom{\bullet}{\bullet}.
\end{align*}
Thus, the diagram \eqref{eq_lax_diagram_lem} commutes for $g_{A'}g_A:A\rightarrow A''$
and $g_{B'}g_B:B\rightarrow B''$.
Hence, it suffices to show that 
the diagram commutes for a generating set of the morphisms of $\CPaB(n)\times \CPaB(m)$.
Since any morphism in $\CPaB$ can be written as compositions of $\si_v$ and $\al_e$,
we may assume that
one of $g_A$ or $g_B$ is the identity map and the other is $\si_v$ or $\alpha_e$.
This follows from Proposition \ref{prop_alpha_compati} and Proposition \ref{prop_sigma_compati}.
\end{proof}

\begin{thm}
\label{thm_action}
Let $V$ be a vertex operator algebra.
Then, the pair of the functors $\PI:\CPaB \rightarrow \Endp_\Vmodf$
and the natural transformations $\comp_{A,B,p}: \PI_{A}\circ_p \PI_{B}\rightarrow \PI_{A \circ_p B}$ ($A,B\in \Tr_{*}$) is a lax 2-morphism of 2-operads.
Moreover,
if $V^\vee$ is a finitely generated $V$-module, then $\PI(\emptyset)$ is represented by $V \in \Vmodf$.
\end{thm}
\begin{proof}
By Proposition \ref{prop_natural_comp} and Proposition \ref{prop_empty_commute},
$\comp_{A,B,p}$ is a natural transformation. The rest of the conditions follow from the
construction and Lemma \ref{rem_add_symmetry}.
\end{proof}

\section{Representativity of lax 2-action}
\label{sec_rep}
In this section, we show that the lax 2-morphism satisfies the assumptions of
Theorem \ref{thm_add_BTC} when $V$ is a rational $C_2$-cofinite vertex operator algebra.
Thus, in this case, \(V\text{-mod}_{f.g.}\) has a braided tensor category
structure. We will further show at the end of this section that this braided
tensor category is balanced.
%
This gives another proof of the result of Huang and Lepowsky \cite{HL4,HL1,HL2,HL3,H1,H2,H3}.
It is important to note that we do not assume that $V$ is a rational $C_2$-cofinite vertex operator algebra until Section \ref{sec_rep_rational}.

Section \ref{sec_rep_zhu} reviews Frenkel-Zhu's bimodules.
Section \ref{sec_rep_pole} is the most important part of this section.
Here, we show that poles of a parenthesized intertwining operator between $C_1$-cofinite modules are uniformly bounded below after certain modifications. 
We further show that by looking at the deepest part of poles, we can obtain a map between Frenkel-Zhu bimodules
 from a parenthesized intertwining operator.
In Section \ref{sec_rep_rational}, under the assumption that $V$ is simple rational $C_2$-cofinite, we use this result to show that 
the lax 2-morphism satisfies the assumptions in Theorem \ref{thm_add_BTC}.
Section \ref{sec_explicit_braiding} gives explicit formulas for the braiding and associator in terms
of conformal blocks and proves that the resulting braided tensor category is
balanced.

\subsection{Zhu algebra and Frenkel-Zhu bimodules}
\label{sec_rep_zhu}
In \cite{Zh}, Zhu constructs an associative algebra $A(V)$ from a vertex operator algebra $V$
and shows that there is a one-to-one correspondence between simple
$V$-modules and left simple $A(V)$-modules.
Furthermore, in \cite{FZ}, Frenkel and Zhu construct an $A(V)$-bimodule $A(M)$
from a $V$-module $M$
and describe the space of intertwining operators in terms of $A(V)$-bimodules,
which is studied in more detail in \cite{Li}.
In this section, we recall their results.

Let $V$ be a vertex operator algebra and $M$ be a $V$-module.
Frenkel and Zhu define a bilinear operation $a*m$, $m*a$ and $a\circ m$ for $a\in V$ and $m\in M$ as follows:
\begin{align}
a * m &=\mathrm{Res}_z\left(Y(a,z)m \frac{(z+1)^{\Delta_a}}{z}
\right)=\sum_{k\geq 0}\binom{\Delta_a}{k} a(k-1)m, \label{eq_zhu_l}\\
m* a &=\mathrm{Res}_z\left(Y(a,z)m \frac{(z+1)^{\Delta_a-1}}{z}
\right)=\sum_{k\geq 0}\binom{\Delta_a-1}{k} a(k-1)m, \label{eq_zhu_r}\\
a\circ m &= \mathrm{Res}_z\left(Y(a,z)m \frac{(z+1)^{\Delta_a}}{z^2}
\right)=\sum_{k\geq 0}\binom{\Delta_a}{k} a(k-2)m, \label{eq_zhu_n}
\end{align}
where $\Delta_a$ is the $L(0)$-grading of $a$.
Let $O(M) \subset M$ be a subspace spanned by the elements of $\{a \circ m\}$ with $a\in V$ and $m \in M$ and set $A(M)=M/O(M)$.
If $M=V$, Zhu showed that two products \eqref{eq_zhu_l} and \eqref{eq_zhu_r} coincide in $A(V)$
and $A(V)$ is an associative algebra with respect to the product $*$ \cite{Zh}.
Moreover, Frenkel and Zhu showed that $A(M)$ is an $A(V)$-bimodule \cite{FZ}.
Since the left multiplication $a*-$ and the right multiplication $-*a$ raise the degree,
the following lemma follows:
\begin{lem}
\label{lem_ideal_zhu}
The subspace $V_+=\bigoplus_{k \geq 1}V_k$ is a two-sided ideal of $A(V)$.
\end{lem}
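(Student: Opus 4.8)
The statement to prove, Lemma~\ref{lem_ideal_zhu}, asserts that $V_+ = \bigoplus_{k\geq 1} V_k$ is a two-sided ideal of the Zhu algebra $A(V)$. The plan is to establish this by a direct degree count, using the explicit formulas \eqref{eq_zhu_l} and \eqref{eq_zhu_r} for the left and right multiplications in $A(V)$, together with the grading properties of the modes $a(n)$ recorded in Lemma~\ref{lem_P}(6) (specialized to $M = V$, so that $a(n)$ maps $V_h$ to $V_{h+\Delta_a - n - 1}$).

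First I would note that $V_+$ is a graded subspace of $V$, and that $O(V)$ is also graded: from \eqref{eq_zhu_n}, the element $a\circ m$ for homogeneous $a, m$ is a sum of terms $a(k-2)m$ each of degree $\Delta_a + \Delta_m - (k-2) - 1 = \Delta_a + \Delta_m - k + 1$, so $a\circ m$ is not homogeneous in general, but $O(V)$ is spanned by homogeneous elements if we expand; more carefully, since $O(V)$ is spanned by the $a\circ m$ with $a,m$ homogeneous and each such element lies in $\bigoplus_k V_{\Delta_a+\Delta_m-k+1}$, the quotient $A(V)$ inherits a $\Z_{\geq 0}$-grading in which the image of $V_n$ sits in degree $n$. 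Then $V_+$ maps onto $\bigoplus_{n\geq 1}(\text{degree-}n\text{ part of }A(V))$. The key computation is then: for $a \in V$ homogeneous and $m \in V_+$ homogeneous of degree $\Delta_m \geq 1$, every term $a(k-1)m$ appearing in $a*m$ or $m*a$ has degree $\Delta_a + \Delta_m - (k-1) - 1 = \Delta_a + \Delta_m - k$; since the binomial coefficients $\binom{\Delta_a}{k}$ and $\binom{\Delta_a - 1}{k}$ vanish for $k > \Delta_a$ (as $\Delta_a \in \Z_{\geq 0}$ for a VOA of CFT type), the sums run over $0 \leq k \leq \Delta_a$, and the lowest-degree term is at $k = \Delta_a$, giving degree $\Delta_m$. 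Hence $a*m$ and $m*a$ lie in $\bigoplus_{n \geq \Delta_m} V_n \subseteq V_+$, which shows $A(V)\cdot V_+ \subseteq V_+$ and $V_+ \cdot A(V) \subseteq V_+$ in $A(V)$.

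I would then simply remark that $V_+$ is obviously an additive subgroup (a sum of the graded pieces), so combining this with the two absorption properties just established gives that $V_+$ is a two-sided ideal. The argument does not require rationality or $C_2$-cofiniteness, and is purely a consequence of the grading.

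The main (and really only) obstacle here is a minor bookkeeping point: one must be careful that $V_0 = \C\1$ acts as the identity in $A(V)$ and that the grading on $A(V)$ is well-defined, i.e.\ that $O(V)$ does not identify elements of different degrees in a way that would collapse the claimed grading — but this is standard (it is part of Zhu's original construction) and follows because $O(V)$ is spanned by the homogeneous pieces of the elements $a\circ m$. So the proof is essentially a one-line degree estimate once the grading of $A(V)$ is in place; no serious difficulty arises.
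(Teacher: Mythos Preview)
Your core computation—that $a*m$ and $m*a$ lie in $\bigoplus_{n \geq \Delta_m} V_n \subset V_+$ for homogeneous $m \in V_+$—is precisely the paper's one-line argument (stated just before the lemma as ``the left multiplication $a*-$ and the right multiplication $-*a$ raise the degree''). One correction: your claim that $A(V)$ inherits a $\Z_{\geq 0}$-grading is false in general (the subspace $O(V)$ is not homogeneous—the individual graded pieces of $a\circ m$ need not lie in $O(V)$—so $A(V)$ is only filtered, not graded), but this claim is unnecessary for your proof, since the containments $V*V_+\subset V_+$ and $V_+*V\subset V_+$ already hold in $V$ itself before passing to the quotient.
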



Set
\begin{align*}
\Om(M)=\{m\in M\mid a(\Delta_a -1+k)m=0\text{ for any }a\in V \text{ and } k\geq 1\},
\end{align*}
which is the ``lowest weight space''.
%
Then, define an action of $V$ on $\Om(M)$ by 
$o:V \rightarrow \End\, \Om(M),\;\; a\mapsto o(a)=a(\Delta_a-1)$.
Then, it was shown that $o(a)|_{\Om(M)}=0$ for any $a \in O(V)$
and thus $o$ defines a bilinear map $A(V)\otimes \Om(M) \rightarrow \Om(M)$.
In fact, $\Om(M)$ is a left $A(V)$-module \cite{Zh}
and thus defines a $\C$-linear functor
\begin{align}
\Om:\Vmodu\rightarrow \AVm,\;\;M\mapsto \Om(M),
\label{eq_functor_H}
\end{align}
where $\AVm$ is a category of left $A(V)$-modules.

It is noteworthy that since $o(\om)=L(0)$, if the action of $L(0)$ on $M$ is semisimple,
then $\om \in A(V)$ acts semisimply on $\Om(M)$.

\begin{lem}
\label{lem_V0}
Let $M=\bigoplus_{k\geq 0}M_{\Delta+k}$ be a simple $V$-module with $M_\Delta\neq 0$.
Then, $\Om(M)=M_{\Delta}$.
\end{lem}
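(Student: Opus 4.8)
\textbf{Proof strategy for Lemma \ref{lem_V0}.}
The plan is to prove the two inclusions $M_\Delta \subseteq \Om(M)$ and $\Om(M) \subseteq M_\Delta$ separately. The first inclusion is immediate: if $m \in M_\Delta$ and $a \in V$ with $k \geq 1$, then by Lemma \ref{lem_P}(6), $a(\Delta_a - 1 + k)m$ lies in $M_{\Delta_a - 1 - (\Delta_a - 1 + k) + \Delta} = M_{\Delta - k}$, which is zero since $M = \bigoplus_{j \geq 0} M_{\Delta + j}$ and $k \geq 1$. Hence $M_\Delta \subseteq \Om(M)$.

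For the reverse inclusion, the key idea is to use the simplicity of $M$. Suppose $\Om(M) \neq M_\Delta$; since $\Om(M)$ is $L(0)$-graded (the operators $a(\Delta_a - 1 + k)$ are homogeneous of degree $-k$, so $\Om(M) = \bigoplus_{j \geq 0}(\Om(M) \cap M_{\Delta + j})$), there exists a nonzero homogeneous vector $v \in \Om(M) \cap M_{\Delta + j}$ for some $j \geq 1$. I would then show that the $V$-submodule $W$ generated by $v$ is proper, contradicting simplicity. Concretely, $W$ is spanned by vectors of the form $a_1(n_1)\cdots a_\ell(n_\ell) v$; one shows, by a standard PBW-type argument reordering the modes so that annihilating (sufficiently positive) modes act first, that $W \cap M_\Delta = 0$. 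The point is that to reach degree $\Delta$ from $v \in M_{\Delta + j}$ one must apply a net total of modes lowering the degree by $j \geq 1$, and using the defining property of $\Om(M)$ — that all modes $a(\Delta_a - 1 + k)$ with $k \geq 1$ (equivalently, all modes strictly lowering the $L(0)$-degree, after commuting past the others) kill $v$ — every such product vanishes. Since $M_\Delta \neq 0$ but $W \cap M_\Delta = 0$, $W$ is a proper nonzero submodule, contradicting that $M$ is simple. Therefore $\Om(M) = M_\Delta$.

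The main obstacle is the bookkeeping in the submodule argument: one must be careful that commutator terms arising from reordering the modes (via (V4)) do not produce degree-$\Delta$ contributions. This is handled by downward induction on the total degree shift, noting that each commutator $[a(m), b(n)] = \sum_{k \geq 0}\binom{m}{k}(a(k)b)(m+n-k)$ preserves the net degree shift, so the inductive hypothesis applies to all terms; the base case is exactly the statement that positive-degree-lowering modes annihilate $v$. An alternative, cleaner route avoiding the explicit reordering is to invoke the fact (implicit in Zhu's theory, and compatible with the functor $\Om$ of \eqref{eq_functor_H}) that $\Om(M)$ generates $M$ as a $V$-module whenever $M$ is generated by its lowest-weight space, combined with the observation that $v \in M_{\Delta+j}$ with $j \geq 1$ generates a submodule lying entirely in $\bigoplus_{i \geq j} M_{\Delta+i}$; I would use whichever formulation is already available, but the direct submodule argument above is self-contained.
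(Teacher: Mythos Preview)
Your proposal is correct and follows exactly the paper's approach: the paper's proof is the terse assertion that a nonzero homogeneous vector of $\Om(M)$ in degree $\Delta+k$ with $k\geq 1$ generates a proper submodule, which is precisely your ``alternative, cleaner route.'' One small slip: in your reordering argument the induction should be on the \emph{length} of the monomial (number of modes), not on the total degree shift, since as you yourself note the commutator terms preserve the net degree shift but have strictly fewer factors.
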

\begin{proof}
It is obvious that $M_\Delta \subset \Om(M)$.
Suppose that $M_\Delta \neq \Om(M)$.
Since $\Om(M)$ is homogeneous,
there exists $k \geq 1$ such that $M_{\Delta+k} \cap \Om(M) \neq 0$. Then, $M_{\Delta+k} \cap \Om(M)$ generates a proper submodule of $M$, a contradiction.
\end{proof}
Note that $V_0$ is the one-dimensional trivial $A(V)$-module
and is isomorphic to $A(V)/V_+$ as a left $A(V)$-module.

Set 
\begin{align*}
L(M)&=A(M)\otimes_{A(V)}V_0,
\end{align*}
where $A(M)\otimes_{A(V)}V_0$ is the tensor product of a right $A(V)$-module $A(M)$
and a left $A(V)$-module $V_0$.
It is clear that $L(M)$ is a left $A(V)$-module
and defines a $\C$-linear functor
\begin{align*}
L:\Vmodu\rightarrow \AVm,\;\;M\mapsto L(M).
\end{align*}

We will later show that 
if $V$ is a simple rational $C_2$-cofinite vertex operator algebra,
then two functors $L$ and $\Om$ coincide.


\subsection{Factorization for standard trees}
\label{sec_rep_pole}
Let $\Delta', \Delta_i \in \C$ and $M_0,M_1,\dots,M_r$ be $V$-modules
such that $M_0=\bigoplus_{k\geq 0}(M_0)_{\Delta'+k}$
and $M_i=\bigoplus_{k\geq 0}(M_i)_{\Delta_i+k}$ for $i=1,\dots,r$.

\begin{minipage}[l]{4cm}
\begin{forest}
for tree={
  l sep=20pt,
  parent anchor=south,
  align=center
}
[15
  [1
  ]
  [25,edge label={node[midway,right]{}}
    [2
    ]
    [35,edge label={node[midway,right]{}}
      [3
      ]
      [45,edge label={node[midway,right]{}}
        [4
        ]
        [5
        ]
      ]
    ]
  ]
]
\end{forest}
\captionof{figure}{}
\end{minipage}
\hfill
\begin{minipage}[r]{9cm}
Recall that we set $\std_r = 1(2(3\dots (r-1 r)\dots) \in \Tr_r$,
which is called {\it a standard tree} and appeared in Section \ref{app_pseudo}. Then,
\begin{align*}
x_i&=x_{ir}=z_i-z_r,\\
T_{\std_r}&=\C\Bigl[\Bigl[\frac{x_2}{x_1},\frac{x_3}{x_2},\dots,\frac{x_{r-1}}{x_{r-2}}\Bigr]\Bigr][x_1^\C,\log x_1,\dots,x_{r-1}^\C,\log x_{r-1}].
\end{align*}
\end{minipage}

By \eqref{eq_std_fac} and Theorem \ref{thm_composition}, we have a natural transformation
\begin{align}
\PI_{(12)} \circ_2 \left( \PI_{(12)} \circ_2 \left( \cdots \circ_2 \left( \PI_{(12)} \circ_2 \PI_{(12)} \right) \right)\right)
\rightarrow \PI_{\std_r}.
\label{eq_std_fac_pi}
\end{align}

In this section, what we want to do is to prepare a proof of Assumption (2) in Theorem \ref{thm_add_BTC},
that is, \eqref{eq_std_fac_pi} is a natural isomorphism.

A parenthesized intertwining operator of type $(\std_r,M_0,\Mrr)$ is an $M_0$-valued series expanded in the $x$-coordinate $x_i=z_i-z_r$ ($i=1,\dots,r-1$).
For our purposes we need to consider series expanded in $z$-coordinate $z_1,\dots,z_r$ in the following domain
\begin{align}
\{(z_1,\dots,z_r)\in X_r\mid |z_r|<|z_{r-1}|<|z_{r-2}|<\dots <|z_2|<|z_1|\}.
\label{eq_standard_open}
\end{align}



Set
\begin{align*}
T_{\tstd_r} = \C[[\tfrac{z_2}{z_1},\dots,\tfrac{z_r}{z_{r-1}}]][z_1^\C,\dots,z_r^\C,\log(z_1),\dots,\log(z_r)].
\end{align*}
Similarly to Section \ref{sec_config_conv}, by the Taylor expansion,
we can define a $\C$-algebra homomorphism
\begin{align*}
e_{\tstd_r}: \Or^\alg \rightarrow T_{\tstd_r},
\end{align*}
which is a $\Dr$-module.

Let $C_\std: T_{\std_r} \rightarrow T_{\tstd_r}$ be a linear map defined by
the formal Taylor expansion, i.e.,
for example,
\begin{align*}
\frac{x_2}{x_1}&\mapsto \frac{z_2-z_r}{z_1-z_r}=\frac{z_2}{z_1}\frac{1-\frac{z_r}{z_2}}{1-\frac{z_r}{z_1}}=\frac{z_2}{z_1}\Bigl(1-\frac{z_r}{z_2}\Bigr)\sum_{k \geq 0}\Bigl(\frac{z_r}{z_1}\Bigr)^k \in T_{\tstd_r}.
\end{align*}
Then, it is clear that $C_\std$ is a $\Drt$-module homomorphism and the following diagram commutes:
\begin{align}
\begin{split}
\begin{array}{ccc}
T_{\std_r}&\overset{C_\std}{\longrightarrow} & T_{\tstd_r}\\
{}_{e_{\std_r}}\nwarrow && \nearrow_{e_{\tstd_r}}\\
&\Ort^\alg&.
\end{array}
\label{eq_std_com}
\end{split}
\end{align}

Let $F \in \PI_{\std}\binom{M_0}{\Mrr}$ and set
\begin{align}
\tilde{F} = \exp(L(-1)z_r)C_{\std}(F).
\label{eq_tilde_F_def}
\end{align}
More precisely, $\tilde{F}$ is defined by 
\begin{align*}
\langle u, \tilde{F}(\mrr)\rangle=\sum_{k\geq 0}\frac{z_r^k}{k!}  C_\std(\langle L(1)^k u, F\rangle)
\in T_{\tstd_r}
\end{align*}
for $u\in M_0^\vee$ and $\mrr\in \Mrr$.

By definition, it is clear that the image of $C_\std$ is in
\begin{align*}
 \C[[\tfrac{z_2}{z_1},\dots,\tfrac{z_r}{z_{r-1}}]][z_1^\C,\dots,z_{r-1}^\C,\log(z_1),\dots,\log(z_{r-1})].
\end{align*}
That is, it is a series which does not contain $\log z_{r}$ and $z_r^\C$ terms.
Therefore, the assignment $z_r = 0$ is well-defined for the image of $C_\std$, and clearly 
\begin{align*}
f=\lim_{\substack{z_r\mapsto 0\\z_i\mapsto x_i\\ i=1,\dots,r-1 }} C_\std(f)
\end{align*}
holds for any $f\in T_{\std_r}$.
Hence, we have:
\begin{align}
F=\lim_{\substack{z_r\mapsto 0\\z_i\mapsto x_i\\ i=1,\dots,r-1 }} \tilde{F}.
\label{eq_zr_substitute}
\end{align}

\begin{prop}
\label{prop_std_F_tilde}
The linear map $\tilde{F}$ satisfies the following conditions:
\begin{enumerate}
\item
For any $p\in [r]$ and $\mrr\in \Mrr$,
\begin{align*}
\tilde{F}(L(-1)_p \mrr)=\pa_p \tilde{F}(\mrr).
\end{align*}
\item
For any $p\in [r]$, $\mrr\in \Mrr$ and $a\in V$, $n\in \Z$,
\begin{align*}
\tilde{F}(a(n)_p \mrr)=\sum_{k\geq 0}\binom{n}{k} (-z_p)^k a(n-k)\tilde{F}(\mrr)
-\sum_{s\in [r],s\neq p}\sum_{k\geq 0}\binom{n}{k} e_{\tstd_r}\Bigl((z_s-z_p)^{n-k}\Bigr)\tilde{F}(a(k)_s \mrr).
\end{align*}
\end{enumerate}
\end{prop}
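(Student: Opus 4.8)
The plan is to translate the two conditions (PI1) and (PI2) for $F \in \PI_{\std_r}\binom{M_0}{\Mrr}$, which are stated in the $x$-coordinate with the special leaf $r_{\std_r}=r$ singled out, into the corresponding conditions for $\tilde F = \exp(L(-1)z_r)C_\std(F)$ expressed in the $z$-coordinate via $e_{\tstd_r}$. The change of variables is exactly $z_i \mapsto x_i + z_r$ together with the shift by $\exp(L(-1)z_r)$ acting on the $M_0$-coefficient; so morally this is the inverse of the translation isomorphism $Q_{r}$ and $R_{r}$ studied in Section \ref{sec_D_translation}, and the commuting square \eqref{eq_std_com} will be used throughout to move between $e_{\std_r}$ and $e_{\tstd_r}$.

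First I would prove (1). Since $F$ satisfies (PI1) only for $p \neq r$, I cannot directly read off $\pa_r \tilde F$; instead I would use the "top recursion" for the rightmost leaf. Recall that by Proposition \ref{prop_top_recursion} applied with $a=\om$, $n=-1$, the operator $L(-1) = \om(0)$ acting on $F(\mrr)$ differs from $F(L(-1)_r \mrr)$ by $\sum_{s\neq r} F(L(-1)_s \mrr)$; equivalently $F$ satisfies the cyclicity identity \eqref{eq_Ct_0}, i.e. $\sum_{s\in[r]} (\text{translation-}z_s\text{ derivative}) = 0$ after conjugation by the translation. Then the argument is the same computation as in the proof of Lemma \ref{lem_translation_isomorphism} for $R_t$: conjugating by $\exp(L(-1)z_r)$ turns the $L(-1)$ on the $M_0$-coefficient into a $\pa_{z_r}$ acting on $\exp(L(-1)z_r)$ itself, and for $p\neq r$ the identity $\pa_p e_{\tstd_r}(f) = e_{\tstd_r}(\pa_p f)$ plus (PI1) for $F$ gives $\tilde F(L(-1)_p\mrr) = \pa_p \tilde F(\mrr)$ directly; for $p=r$ one uses $\pa_r = -\sum_{s\neq r}\pa_s$ on translation-invariant pieces (which the image of $C_\std$ is, up to the explicit $\exp(L(-1)z_r)$ factor) together with cyclicity. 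This is the mechanical analogue of the $R_t$ computation already carried out in the excerpt.

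Next I would prove (2). Here the key input is (PI2) for $F$, which in $x$-coordinate reads
\begin{align*}
F(a(n)_p\mrr) = \sum_{k\geq 0}\binom{n}{k} e_{\std_r}\bigl((z_r-z_p)^k\bigr) a(n-k)F(\mrr) - \sum_{s\neq p}\sum_{k\geq 0}\binom{n}{k} e_{\std_r}\bigl((z_s-z_p)^{n-k}\bigr) F(a(k)_s\mrr),
\end{align*}
where the $z$'s here are the $x$-coordinate entries; note $z_r - z_p$ in $x$-coordinate is $-x_p$, explaining the $(-z_p)^k$ that should appear after the substitution. Applying $\exp(L(-1)z_r)C_\std$ to both sides, using \eqref{eq_std_com} to rewrite $C_\std e_{\std_r} = e_{\tstd_r}$, and using Lemma \ref{lem_P}(1), i.e. $\exp(L(-1)z_r) P_k(a,-z_p) \exp(-L(-1)z_r)$-type conjugation formulas, to absorb the $\exp(L(-1)z_r)$ past the $a(n-k)$ — exactly as in the $Q_t$/$R_t$ proof — should yield the stated formula, with $e_{\std_r}((z_r-z_p)^k)$ becoming $(-z_p)^k$ (genuinely polynomial, hence the clean leading term $a(n-k)\tilde F(\mrr)$ with no expansion) and $e_{\std_r}((z_s-z_p)^{n-k})$ becoming $e_{\tstd_r}((z_s-z_p)^{n-k})$. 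The combinatorial identities (Lemma \ref{identity}, Lemma \ref{lem_binom}, Lemma \ref{lem_well_inverse}) will be invoked to check that the conjugation by $\exp(L(-1)z_r)$ is compatible with the two-variable expansions, exactly as in Lemma \ref{inter_formula}.

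The main obstacle I expect is bookkeeping the branch/expansion conventions: one must check that $e_{\tstd_r}\bigl((z_s - z_p)^{n-k}\bigr)$ is the expansion in the domain \eqref{eq_standard_open} (i.e. $|z_r|<\dots<|z_1|$) and that all the infinite sums that arise from conjugating $\exp(L(-1)z_r)$ past $a(n-k)$ and from the binomial expansions of $(z_s - z_p + z_r)^{\bullet}$ actually converge termwise as formal power series in $T_{\tstd_r}$ — this is the analogue of the "well-defined for each $\ze_{AB}^H$-coefficient" arguments in Section \ref{sec_parenthesized_comp}, and the degree map / Lemma \ref{lem_well_inverse} is the right tool. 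Everything else is a direct transcription of the $Q_t$, $R_t$ computations in the proof of Lemma \ref{lem_translation_isomorphism}, combined with the commutativity of \eqref{eq_std_com}.
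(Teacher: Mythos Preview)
Your approach is correct and essentially matches the paper's proof. The only difference is ordering: the paper proves (2) first and then deduces the $p=r$ case of (1) by substituting $a=\omega$, $n=0$ into (2), whereas you propose to handle (1) for $p=r$ directly via the cyclicity identity from Proposition~\ref{prop_top_recursion} and translation-invariance of the image of $C_\std$; these are the same computation reshuffled. One simplification: your anticipated ``main obstacle'' about convergence of the infinite sums is resolved more cheaply than you expect---after pairing with any $u\in M_0^\vee$ the sums in (PI2) are finite, so $C_\std$ commutes with them termwise and \eqref{eq_std_com} applies directly, with no need for degree-map or Lemma~\ref{lem_well_inverse}-type arguments here.
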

\begin{proof}
We first show (2).
By (PI2),
\begin{align}
\begin{split}
\tilde{F}(a(n)_p \mrr)&=\exp(L(-1)z_r) C_\std\Biggl(
\sum_{k\geq 0}\binom{n}{k} e_{\std_r}\Bigl((z_r-z_p)^k \Bigr) a(n-k)F(\mrr)\\
&-\sum_{s\in [r],s\neq p}\sum_{k\geq 0}\binom{n}{k} e_{\std_r}\Bigl((z_s-z_p)^{n-k}\Bigr){F}(a(k)_s \mrr)
\Biggr)
\label{eq_std_rec_proof}
\end{split}
\end{align}
Evaluating \eqref{eq_std_rec_proof} with $u\in M_0^\vee$ yields a finite sum.
Hence, by \eqref{eq_std_com},
\begin{align*}
\tilde{F}(a(n)_p \mrr)&=\exp(L(-1)z_r) \Biggl(
\sum_{k\geq 0}\binom{n}{k} e_{\tstd_r}\Bigl((z_r-z_p)^k \Bigr) a(n-k) C_\std F(\mrr)\\
&-\sum_{s\in [r],s\neq p}\sum_{k\geq 0}\binom{n}{k} e_{\tstd_r}\Bigl((z_s-z_p)^{n-k}\Bigr)C_\std{F}(a(k)_s \mrr)
\Biggr).
\end{align*}
By the proof of Lemma \ref{lem_translation_isomorphism}, we have
\begin{align*}
\exp(L(-1)z_r) \sum_{k\geq 0}\binom{n}{k} e_{\tstd_r}\Bigl((z_r-z_p)^k \Bigr)a(n-k)
=\sum_{k\geq 0}\binom{n}{k} (-z_p)^k a(n-k) \exp(L(-1)z_r).
\end{align*}
Hence, (2) holds.
For $p \in [r]$ with $p\neq r$, by \eqref{eq_std_com},
\begin{align*}
\tilde{F}(L(-1)_p \mrr)&=\exp(L(-1)z_r)C_\std F(L(-1)_p \mrr)
=\exp(L(-1)z_r)C_\std \pa_p F(\mrr)\\
 &=\exp(L(-1)z_r)\pa_p C_\std F(\mrr) = \pa_p \tilde{F}(\mrr).
\end{align*}
By substituting $p=r$, $a=\om$ and $n=0$ into (2),
\begin{align*}
\tilde{F}(L(-1)_r \mrr)&=L(-1) \exp(L(-1)z_r)C_\std{F}(\mrr) -\sum_{s\in [r],s\neq r}  \exp(L(-1)z_r)C_\std{F}(\om(0)_s\mrr)\\
&=L(-1) \exp(L(-1)z_r)C_\std{F}(\mrr) + \exp(L(-1)z_r)\pa_r C_\std{F}\\
&=\pa_r \tilde{F}(\mrr).
\end{align*}
Hence, the assertion holds.
\end{proof}

Note that similarly to the proof of Lemma \ref{N_contain},
\begin{align}
a(n)\tilde{F}(\mrr)=\sum_{s\in [r]}\sum_{k\geq 0}\binom{n}{k} z_s^{n-k} \tilde{F}(a(k)_s \mrr)
\label{eq_top_std}
\end{align}
holds for any $n\geq 0$.
However, by Remark \ref{rem_range_n}, Proposition \ref{prop_top_recursion} does not hold.

Let $\pr:M_0=\bigoplus_{k\geq 0}(M_0)_{\Delta'+k} \rightarrow (M_0)_{\Delta'}$ be the projection.
For $\mrr\in \Mrr$,
set (for the definition of $z_i^{L(0)}$, see Section \ref{sec_D_rotation})
\begin{align*}
z^{L(0)}\mrr= z_1^{L(0)}m_1 \otimes z_2^{L(0)}m_2 \otimes \dots \otimes z_{r-1}^{L(0)}m_{r-1}\otimes z_{r-1}^{L(0)}m_r.
\end{align*}

Let $\bF:M_{[r]} \rightarrow (M_0)_{\Delta'} \otimes {T}_{\tstd_r}$ be a linear map defined by
\begin{align}
\bF(\mrr)&=\pr z_1^{-L(0)} \tilde{F}(z^{L(0)}\mrr),
\label{eq_bar_F}
\end{align}
for $\mrr \in \Mrr$. 
Then, we have
\begin{align*}
(\sum_{s \in [r]}z_s\pa_s) \bF(\mrr)
&=(\sum_{s \in [r]}z_s\pa_s)  \Bigl(\pr z_1^{-L(0)} \tilde{F}(z^{L(0)}\mrr)\Bigr)\\
&=-\pr z_1^{-L(0)} L(0)\tilde{F}(z^{L(0)}\mrr) + \sum_{s\in [r]} \pr z_1^{-L(0)}\tilde{F}((L(0)_s+z_s L(-1)_s)z^{L(0)}\mrr)\\
&=0,
\end{align*}
where in the last equality we used \eqref{eq_top_std} with $a=\om$ and $n=1$.
Then, we have:
\begin{lem}
\label{lem_std_delta_r}
For any $\mrr \in \Mrr$ with $m_r \in (M_r)_{\Delta_r+k}$, the formal power series $\bF(\mrr)$ is in
\begin{align*}
\Bigl(\tfrac{z_{r}}{z_{r-1}}\Bigr)^{\Delta_r+k}
(M_0)_{\Delta'}\otimes \C\Bigl{[}\Bigl{[}\tfrac{z_2}{z_1},\tfrac{z_3}{z_2},\dots,\tfrac{z_{r}}{z_{r-1}}\Bigr{]}\Bigr{]}
\Bigl[\Bigl(\tfrac{z_2}{z_1}\Bigr)^\C,\log \Bigl(\tfrac{z_2}{z_1}\Bigr),
,\dots,
\Bigl(\tfrac{z_{r-1}}{z_{r-2}}\Bigr)^\C,\log \Bigl(\tfrac{z_{r-1}}{z_{r-2}}\Bigr),\log\Bigl(\tfrac{z_{r}}{z_{r-1}}\Bigr)\Bigr].
\end{align*}
\end{lem}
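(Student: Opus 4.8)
\textbf{Proof plan for Lemma \ref{lem_std_delta_r}.}
The plan is to analyze the $L(0)$-degree in the variable $z_r$ separately from the other variables, using the rotation-scaling trick already set up in \eqref{eq_bar_F}. First I would observe that $\bF(\mrr)$ lies in $(M_0)_{\Delta'}\otimes T_{\tstd_r}$ by the remark preceding the lemma (the scale-invariance computation $(\sum_s z_s\pa_s)\bF(\mrr)=0$ shows $\bF(\mrr)$ is homogeneous of degree $0$ under the simultaneous scaling, hence after the substitution $z_i\mapsto z_i$ it is a well-defined element of $T_{\tstd_r}$). Next I would isolate the behavior in $z_{r-1}$ and $z_r$: the point is that in $\tilde F$ the last two slots carry the vertex operator $\Y(m_{r-1},x_{r-1})m_r$ in the convergence region $|z_r|<|z_{r-1}|$, so the dependence on $z_r$ enters only through the ratio $z_r/z_{r-1}$ and through nonnegative powers thereof, together with the $z_r$-degree of $m_r$.

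The key step is to pin down the exact power of $z_r/z_{r-1}$. Since $m_r\in (M_r)_{\Delta_r+k}$, the rescaled vector $z_{r-1}^{L(0)}m_r$ equals $z_{r-1}^{\Delta_r+k}$ times a polynomial in $\log z_{r-1}$ applied to $m_r$ (using the semisimple–nilpotent decomposition $L(0)=L_s(0)+L_n(0)$ as in Section \ref{sec_D_rotation}). By the defining property (I1) / (PI1) of the (logarithmic) intertwining operator governing the innermost composition — equivalently, by the analogue of Lemma \ref{lem_P}(6), which says $m_{r-1}(\rho;j)m_r\in (M_0)_{h_{r-1}+h_r-\rho-1}$ so that the lowest $z$-power produced is controlled from below — the series $\tilde F(z^{L(0)}\mrr)$ expanded in $T_{\tstd_r}$ has its $z_r$-valuation bounded below by $\Delta_r+k$ after the rescaling, and the rescaling by $z_{r-1}^{L(0)}$ on the $m_r$ slot exactly supplies the factor $(z_r/z_{r-1})^{\Delta_r+k}$ while removing any noninteger power of $z_r$ itself. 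Thus $\bF(\mrr)$, after factoring out $(z_r/z_{r-1})^{\Delta_r+k}$, lies in $\C[[\tfrac{z_2}{z_1},\dots,\tfrac{z_r}{z_{r-1}}]]$ with complex powers and logs only in $\tfrac{z_2}{z_1},\dots,\tfrac{z_{r-1}}{z_{r-2}}$ and at most a logarithm in $\tfrac{z_r}{z_{r-1}}$. Applying $\pr$ to land in $(M_0)_{\Delta'}$ and invoking Lemma \ref{lem_log_finite} to bound the logarithmic degrees finishes the identification of the stated target space.

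I would carry this out in the order: (i) record that $\bF(\mrr)\in (M_0)_{\Delta'}\otimes T_{\tstd_r}$ via the scaling identity; (ii) use $z^{L(0)}$ to rewrite the $m_r$-slot contribution as $z_{r-1}^{\Delta_r+k}$ (times logs) acting on $m_r$; (iii) use the expansion of the innermost intertwining operator in the region \eqref{eq_standard_open} to see that the only $z_r$-dependence is through nonnegative integer powers of $z_r/z_{r-1}$, so the total $z_r/z_{r-1}$-valuation is exactly $\Delta_r+k$; (iv) read off that no $z_r^\C$ or $\log z_r$ term survives (this was already noted for the image of $C_\std$) and that the complex/log structure in the remaining ratios is as claimed, with the single allowed $\log(z_r/z_{r-1})$ coming from $L_n(0)$ on $M_r$; (v) apply $\pr$ and Lemma \ref{lem_log_finite}.

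The main obstacle I expect is step (iii): carefully tracking that the substitution $z^{L(0)}\mrr$ together with the coordinate change from $x$-coordinates (where $\std_r$ naturally lives, $x_i=z_i-z_r$) to the $z$-coordinate expansion of \eqref{eq_standard_open} does not introduce extra negative powers of $z_r/z_{r-1}$ or unwanted fractional powers — i.e., that the expansion map $C_\std$ genuinely sends the relevant piece into $(z_r/z_{r-1})^{\Delta_r+k}\C[[\cdots]]$ rather than merely into a Laurent-type space. This is the same kind of degree bookkeeping as in the proof of Proposition \ref{prop_differential_equation} and Lemma \ref{lem_comp_form_part}, and I would model the argument on those: fix the $(z_r/z_{r-1})^H$-coefficient for each $H\in\C$, show it is a finite sum, and compute its minimal $H$ to be $\Delta_r+k$ using Lemma \ref{lem_P}(6).
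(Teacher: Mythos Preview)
Your overall outline is reasonable, and step (i) matches the paper. The gap is in step (iii): you appeal to ``the (logarithmic) intertwining operator governing the innermost composition'', but at this point $F\in\PI_{\std_r}\binom{M_0}{\Mrr}$ is an \emph{arbitrary} parenthesized intertwining operator, not a composite of two-variable ones. That every such $F$ factorizes through intermediate modules is precisely the content of the later Proposition \ref{thm_factor}, and the present lemma is one of the ingredients feeding into that result --- so Lemma \ref{lem_P}(6) and condition (I1) for an ``innermost'' operator are not available here, and the degree bookkeeping you model on Lemma \ref{lem_comp_form_part} has nothing to grip.

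The paper's argument is much shorter and bypasses this entirely. After your step (i), it uses only the structural observation recorded just before \eqref{eq_zr_substitute}: the image of $C_\std$ lies in the subspace with no $z_r^\C$ and no $\log z_r$, so $\tilde F=\exp(L(-1)z_r)C_\std(F)$ carries $z_r$ only through nonnegative integer powers. Hence in $\bF(\mrr)=\pr\, z_1^{-L(0)}\tilde F(z^{L(0)}\mrr)$ the only possible source of a non-integer power or logarithm of $z_r$ is the last input factor $z_r^{L(0)}m_r$ (the displayed definition of $z^{L(0)}\mrr$ with $z_{r-1}^{L(0)}m_r$ is evidently a typo --- the paper's own proof reads ``all nontrivial powers of $z_r$ in $\bF$ come from $z_r^{L(0)}m_r$''), and for $m_r\in(M_r)_{\Delta_r+k}$ this contributes exactly $z_r^{\Delta_r+k}$ times a polynomial in $\log z_r$. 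Together with step (i) this pins down the $(z_r/z_{r-1})$-behavior immediately; the coordinate-change obstacles you anticipate do not arise. Your step (ii) assertion that ``$z_{r-1}^{L(0)}$ on the $m_r$-slot exactly supplies the factor $(z_r/z_{r-1})^{\Delta_r+k}$'' is accordingly off: it is $z_r^{L(0)}$ that supplies the $z_r^{\Delta_r+k}$, and the scaling invariance from step (i) then forces the matching $z_{r-1}^{-(\Delta_r+k)}$.
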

\begin{proof}
Similarly to Proposition \ref{prop_rotation_del}, it is clear that
\begin{align*}
\bF(\mrr) \in (M_0)_{\Delta'}\otimes \C\Bigl{[}\Bigl{[}\tfrac{z_2}{z_1},\tfrac{z_3}{z_2},\dots,\tfrac{z_{r}}{z_{r-1}}\Bigr{]}\Bigr{]}
\Bigl[\Bigl(\tfrac{z_2}{z_1}\Bigr)^\C,\log \Bigl(\tfrac{z_2}{z_1}\Bigr),
,\dots,
\Bigl(\tfrac{z_{r}}{z_{r-1}}\Bigr)^\C,\log\Bigl(\tfrac{z_{r}}{z_{r-1}}\Bigr)\Bigr]
\end{align*}
and
all nontrivial powers of $z_r$ in $\bF$ come from $z_r^{L(0)}m_r$ by \eqref{eq_zr_substitute}.
Hence, the assertion holds.
\end{proof}

\begin{lem}
\label{simple_non_dege}
Suppose that $M_0^\vee$ is generated by $(M_0)_{\Delta'}^*$ as a $V$-module.
Then, $F=0$ if and only if $\bF=0$.
\end{lem}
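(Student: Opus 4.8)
\textbf{Proof plan for Lemma \ref{simple_non_dege}.}
The strategy is to show that each of $F$ and $\bF$ can be recovered from the other, so that one vanishes precisely when the other does. The ``only if'' direction is immediate: if $F=0$, then $\tilde F=0$ by \eqref{eq_tilde_F_def} (since $C_\std$ and $\exp(L(-1)z_r)$ are linear), hence $\bF=0$ by \eqref{eq_bar_F}. So the real content is the ``if'' direction: assuming $M_0^\vee$ is generated by $(M_0)_{\Delta'}^*$ as a $V$-module, deduce that $\bF=0$ forces $F=0$.

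First I would reduce $F=0$ to $\tilde F=0$. By \eqref{eq_zr_substitute}, $F$ is obtained from $\tilde F$ by the substitution $z_r\mapsto 0$, $z_i\mapsto x_i$; since this substitution is (by construction of $C_\std$) injective on the image of $C_\std$, it suffices to show $\tilde F=0$. Next I would show $\bF=0$ implies $\tilde F=0$. The idea is that $\tilde F$ is determined by its ``leading part'' $\bF$ together with the recursion relations of Proposition \ref{prop_std_F_tilde}. Concretely, since $M_0^\vee$ is generated by $(M_0)_{\Delta'}^*$, for any $u\in M_0^\vee$ there exist $a^{(1)},\dots,a^{(l)}\in V$, integers $n_1,\dots,n_l$, and $u_0\in (M_0)_{\Delta'}^*$ with $u = a^{(1)}(n_1)^*\cdots a^{(l)}(n_l)^* u_0$. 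Using \eqref{eq_top_std} (and its analogue for negative modes, obtained as in Lemma \ref{N_contain} / Remark \ref{rem_range_n}, being careful that on the dual side only the relevant modes occur), the pairing $\langle u, \tilde F(\mrr)\rangle$ can be rewritten as a (finite, after evaluation) sum of pairings $\langle u_0, \tilde F(\mrr')\rangle$ with $\mrr'$ obtained from $\mrr$ by applying mode operators $a(k)_s$ on the input components, with coefficients in $T_{\tstd_r}$ expanded as power series in $z_2/z_1,\dots,z_r/z_{r-1}$. Thus $\tilde F$ is completely determined by the collection of functionals $\mrr\mapsto \langle u_0,\tilde F(\mrr)\rangle$ for $u_0\in (M_0)_{\Delta'}^*$, i.e.\ by $\pr\,\tilde F$.

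It then remains to see that $\pr\,\tilde F$ is determined by $\bF$. But $\bF(\mrr)=\pr\, z_1^{-L(0)}\tilde F(z^{L(0)}\mrr)$, and $z^{L(0)}$ is invertible on $\Mrr$ (it acts, on each graded component, as $z_i^{h_i}$ times a unipotent operator, with $z_i$ a formal variable carrying a logarithm), while $z_1^{-L(0)}$ is invertible on $(M_0)_{\Delta'}$ in the same sense; hence $\pr\,\tilde F$ can be recovered from $\bF$ by applying the inverse substitutions, i.e.\ $\pr\,\tilde F(\mrr)=z_1^{L(0)}\bF(z^{-L(0)}\mrr)$. Combining the two reductions: $\bF=0 \Rightarrow \pr\,\tilde F=0 \Rightarrow \tilde F=0 \Rightarrow F=0$.

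The main obstacle I anticipate is the middle step — making rigorous the claim that $\pr\,\tilde F$ determines $\tilde F$ when $M_0^\vee$ is generated in lowest degree. One must handle both positive and negative modes $a(n)^*$ acting on $M_0^\vee$, and for the negative modes the naive recursion of the form \eqref{eq_top_std} involves infinite sums in $z_s$; the point (as in the proof of Lemma \ref{N_contain} and Remark \ref{rem_range_n}) is that because we are working in $T_{\tstd_r}$ with its fixed expansion in the region \eqref{eq_standard_open}, each $z_s^{-k-1}$ is a genuine convergent/formal series there, and after pairing with any fixed $u$ the sums truncate degree-by-degree. I would organize this as an induction on the $L(0)$-degree of $u\in M_0^\vee$ (equivalently, on the number of mode operators needed to reach $u$ from $(M_0)_{\Delta'}^*$), using Proposition \ref{prop_std_F_tilde}(2) at each step, and checking that the coefficient functions produced lie in $T_{\tstd_r}$ so that the expressions are well-defined. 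The remaining invertibility facts about $z^{L(0)}$ are routine.
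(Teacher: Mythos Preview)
Your plan follows the same strategy as the paper: pass from $\bF=0$ to $\pr F=0$ (equivalently $\pr\tilde F=0$) via invertibility of $z^{L(0)}$, then use the generation hypothesis and the recursion to kill all of $F$. The paper's execution is shorter because of two simplifications you do not exploit.

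First, no induction on the depth $l$ is needed. It is a general fact about VOA modules that if $M$ is generated by $S$, then $M$ is already spanned by $\{a(n)s:a\in V,\,n\in\Z,\,s\in S\}$; the span of single mode actions on $S$ is automatically a submodule by associativity of vertex operators. Hence it suffices to show $\langle a(\Delta_a-1-m)u,F(\mrr)\rangle=0$ for $u\in(M_0)_{\Delta'}^*$ and $m>0$.

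Second, your ``main obstacle'' about negative modes evaporates once you use the dual-module formula \eqref{eq_dual2} rather than Lemma~\ref{N_contain}/Remark~\ref{rem_range_n} (the latter in fact warns that the negative-mode analogue of \eqref{eq_top_std} \emph{fails}, so it is the wrong reference). By \eqref{eq_dual2}, the module action $a(\Delta_a-1-m)$ on $M_0^\vee$ is a finite sum of adjoint operators $(L(1)^ka)\bigl((\Delta_a-k)-1+m\bigr)^*$, and since $m\geq 1$ and $L(1)^ka=0$ for $k>\Delta_a$, every surviving mode index $(\Delta_a-k)-1+m$ is non-negative. Thus $\langle a(\Delta_a-1-m)u,F(\mrr)\rangle$ reduces to terms $\langle u,\pr\, b(n')F(\mrr)\rangle$ with $n'\geq 0$, and Proposition~\ref{prop_top_recursion} rewrites $b(n')F(\mrr)$ as a finite combination of $F(\mrr')$; since $\pr F=0$, this vanishes. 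Your inductive plan would also succeed once you route through \eqref{eq_dual2} at each step, but note that the generation hypothesis concerns the \emph{module} action $a(n)$ on $M_0^\vee$, not the star action $a(n)^*$---your expression $u=a^{(1)}(n_1)^*\cdots u_0$ conflates the two.
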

\begin{proof}
By assumption, $M_0^\vee$ is linearly spanned by
$\{a(n)u\}_{a \in V_+,u\in (M_0)_{\Delta'},n\in \Z}$.
Then, for any $m> 0$, $\mrr\in \Mrr$ and $u\in (M_0)_{\Delta'}^\vee$, 
by \eqref{eq_dual2},
we have:
\begin{align*}
\langle a(\Delta_a-1-m)u, F(\mrr)\rangle
&=\sum_{k \geq 0}\frac{(-1)^{\Delta_a}}{k!}
\langle u,  (L(1)^k a)((\Delta_a-k)-1+m)F(\mrr)\rangle,\\
&=\sum_{k \geq 0}\frac{(-1)^{\Delta_a}}{k!}
\langle u, \pr (L(1)^k a)((\Delta_a-k)-1+m)F(\mrr)\rangle=0,
\end{align*}
where we used $L(1)^{\Delta_a+1}a=0$ and \eqref{eq_top_std}.
Thus, $F=0$.
\end{proof}

By Proposition \ref{prop_std_F_tilde}, we have:
\begin{lem}
\label{lem_std_recursion}
Let $\mrr \in \Mrr$. Then, the following properties hold for $\bF$:
\begin{enumerate}
\item
For any $p \in \{1,\dots,r\}$ and $a \in V_{\Delta_a}$ with $\Delta_a >0$ and $\Delta_a-1 > n$,
\begin{align*}
\bF(a(n)_p  \mrr)
&=- \sum_{s \in [r], s \neq p} \sum_{k \geq 0}\binom{n}{k}
e_{\tstd_r}\left((z_s-z_p)^{n-k}\right) z_p^{\Delta_a-1-n}z_s^{-\Delta_a+1+k} \bF(a(k)_s\mrr).
\end{align*}
\item
For any $a \in V_+$ and $p\in [r]$,
\begin{align*}
\bF(a(\Delta_a-1)_p \mrr)
&=a(\Delta_a-1)\bF(\mrr)\\
&-\sum_{s \in [r],s\neq p}\sum_{k \geq 0} \binom{\Delta_a-1}{k}e_{\tstd_r}\left((z_s-z_p)^{\Delta_a-1-k}\right)
z_s^{-\Delta_a+1+k} \bF(a(k)_s m_{[r]}).
\end{align*}
\end{enumerate}
\end{lem}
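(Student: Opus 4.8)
\textbf{Proof proposal for Lemma \ref{lem_std_recursion}.}

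The plan is to derive both formulas directly from Proposition \ref{prop_std_F_tilde}(2) applied to $\tilde F$, then conjugate by $\pr\, z_1^{-L(0)}$ and the substitution $\mrr\mapsto z^{L(0)}\mrr$ that define $\bF$ in \eqref{eq_bar_F}. First I would record how $z^{L(0)}$ interacts with the mode $a(n)_p$ on the $p$-th tensor slot: since $z_p^{L(0)}a(n) = z_p^{\Delta_a-1-n}\,a(n)\,z_p^{L(0)}$ as operators on $M_p$ (which follows from $[L(0),a(n)]=(\Delta_a-1-n)a(n)$ and the definition of $z^{L(0)}$ in Section \ref{sec_D_rotation}), we get $a(n)_p\,z^{L(0)}\mrr = z_p^{\Delta_a-1-n}\, z^{L(0)}(a(n)_p\mrr)$, and similarly $a(k)_s\,z^{L(0)}\mrr = z_s^{\Delta_a-1-k}\,z^{L(0)}(a(k)_s\mrr)$ for $s\neq p$.

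For part (1), I take $a\in V_{\Delta_a}$ with $\Delta_a>0$ and $\Delta_a-1>n$, so the ``$z_r$-term'' $\binom{n}{k}(-z_p)^k a(n-k)$ in Proposition \ref{prop_std_F_tilde}(2) involves only $a(m)$ with $m=n-k\le n<\Delta_a-1$; applying $\pr$ and using \eqref{eq_top_std} (the analogue of Lemma \ref{N_contain}, valid for $n\ge 0$) together with the fact that $a(m)$ with $m<\Delta_a-1$ strictly raises $L(0)$-degree, one sees that $\pr\, z_1^{-L(0)}\big(\text{this term evaluated on } z^{L(0)}\mrr\big)$ vanishes — here one must be slightly careful and argue via the dual pairing exactly as in Lemma \ref{simple_non_dege}, writing $\langle u, a(m)\,\tilde F(\cdots)\rangle$ and moving $a(m)$ to the left using \eqref{eq_dual2} and $L(1)^{\Delta_a+1}a=0$, so that $\pr$ kills it because $m<\Delta_a-1$ forces a positive shift. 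The surviving sum is $-\sum_{s\neq p}\sum_{k\ge 0}\binom{n}{k}e_{\tstd_r}((z_s-z_p)^{n-k})\,\tilde F(a(k)_s\,z^{L(0)}\mrr)$, and substituting the commutation identity above and moving $\pr\, z_1^{-L(0)}$ through gives the stated formula with the extra factor $z_p^{\Delta_a-1-n}z_s^{-\Delta_a+1+k}$.

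For part (2), I set $n=\Delta_a-1$ in Proposition \ref{prop_std_F_tilde}(2); now $\Delta_a-1-k\ge 0$ for $0\le k\le\Delta_a-1$ and the $z_r$-term contributes $\sum_k\binom{\Delta_a-1}{k}(-z_p)^k a(\Delta_a-1-k)$, whose highest mode $a(\Delta_a-1)$ (at $k=0$) does not raise degree, while all lower modes $a(\Delta_a-1-k)$ with $k\ge 1$ again vanish after $\pr$ by the same dual-pairing argument; the $z_p^{\Delta_a-1-n}=z_p^0=1$ so the leading term is exactly $a(\Delta_a-1)\bF(\mrr)$, and the off-diagonal sum transforms as in part (1) with exponents $\Delta_a-1-k$ and $-\Delta_a+1+k$. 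The main obstacle is the vanishing-after-$\pr$ step: one needs to confirm that for $m<\Delta_a-1$ the operator $\pr\, z_1^{-L(0)}a(m)_?$ applied to $\tilde F(z^{L(0)}\mrr)$ really dies, which requires combining \eqref{eq_top_std} to rewrite $a(m)$ acting at $\infty$ as a sum of modes acting on the finite insertions and then checking the $L(0)$-degree bookkeeping against the projection to $(M_0)_{\Delta'}$ — this is where Remark \ref{rem_range_n}'s warning (that Proposition \ref{prop_top_recursion} fails) matters, so I must use \eqref{eq_top_std} rather than the naive recursion. Everything else is routine manipulation of binomial coefficients and the ring homomorphism $e_{\tstd_r}$.
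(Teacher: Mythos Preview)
Your approach is the same as the paper's: unwind the definition $\bF=\pr\,z_1^{-L(0)}\tilde F(z^{L(0)}\bullet)$, commute $a(n)_p$ through $z^{L(0)}$, apply Proposition~\ref{prop_std_F_tilde}(2), and identify which pieces survive $\pr$. Two remarks.

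First, your commutation identity is stated with the wrong sign: from $z_p^{L(0)}a(n)=z_p^{\Delta_a-1-n}a(n)z_p^{L(0)}$ one gets $z^{L(0)}(a(n)_p\mrr)=z_p^{\Delta_a-1-n}\,a(n)_p(z^{L(0)}\mrr)$, not the relation you wrote; this is what produces the factor $z_p^{\Delta_a-1-n}$ in front, and correspondingly $z_s^{-\Delta_a+1+k}$ when you undo the substitution on the $s$-th slot. This is a harmless slip, but fix it before writing the computation out.

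Second, and more importantly, the vanishing step is much simpler than you suggest, and your detour through \eqref{eq_top_std}, the dual pairing, and Remark~\ref{rem_range_n} is unnecessary. The first sum in Proposition~\ref{prop_std_F_tilde}(2) has $a(n-k)$ acting on the \emph{output} $M_0$, and $\pr$ is the projection to the lowest weight space $(M_0)_{\Delta'}$. Since $a(m)$ raises $L(0)$-degree by $\Delta_a-1-m>0$ whenever $m<\Delta_a-1$, its image lies in $\bigoplus_{k\ge 1}(M_0)_{\Delta'+k}$, so $\pr\circ a(m)=0$ identically on $M_0$. That is the paper's entire argument for killing the first term in~(1), and for killing all but the $k=0$ summand (namely $a(\Delta_a-1)$, which preserves degree and commutes with both $z_1^{-L(0)}$ and $\pr$) in~(2). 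There is no need to rewrite $a(m)\tilde F$ via \eqref{eq_top_std} or to pass to the dual side; the operator $a(m)$ is already acting where $\pr$ acts, so a one-line weight count suffices.
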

\begin{proof}
Since $\pr$ is the projection onto the lowest weight space,
$a(n)^* u=0$ if $\Delta_a-1>n$.
Let $n \in \Z$ with $\Delta_a-1>n$.
By (PI2), we have
\begin{align*}
\bF(a(n)_p  \mrr)&=
\pr z_1^{-L(0)} F(z^{L(0)} a(n)_p  \mrr)\\
&=\pr z_1^{-L(0)} F(a(n)_p z^{L(0)} \mrr) z_p^{\Delta_a-1-n}\\
&=\sum_{k \geq 0}\binom{n}{k}e_{\tstd_r}\left((z_r-z_p)^k\right)
\pr z_1^{-L(0)} a(-k+n) F(z^{L(0)} \mrr)z_p^{\Delta_a-1-n}\\
&- \sum_{s \in [r], s \neq p} \sum_{k \geq 0}\binom{n}{k}
e_{\tstd_r}\left((z_s-z_p)^{n-k}\right) 
\pr z_1^{-L(0)} F(a(k)_s z^{L(0)} \mrr)z_p^{\Delta_a-1-n}\\
&=
- \sum_{s \in [r], s \neq p} \sum_{k \geq 0}\binom{n}{k}
e_{\tstd_r}\left((z_s-z_p)^{n-k}\right) 
\pr z_1^{-L(0)} F(a(k)_s z^{L(0)} \mrr) z_p^{\Delta_a-1-n}\\
&=
- \sum_{s \in [r], s \neq p} \sum_{k \geq 0}\binom{n}{k}
e_{\tstd_r}\left((z_s-z_p)^{n-k}\right) z_p^{\Delta_a-1-n}z_s^{-\Delta_a+1+k}
\pr z_1^{-L(0)} F(z^{L(0)}a(k)_s\mrr)\\
&=
- \sum_{s \in [r], s \neq p} \sum_{k \geq 0}\binom{n}{k}
e_{\tstd_r}\left((z_s-z_p)^{n-k}\right) z_p^{\Delta_a-1-n}z_s^{-\Delta_a+1+k}
\bF(a(k)_s\mrr).
\end{align*}

Similarly, by (PI2), we have
\begin{align*}
\bF(a(\Delta_a-1)_p  \mrr)&=
\pr z_1^{-L(0)} F(z^{L(0)} a(\Delta_a-1)_p  \mrr)\\
&= \pr z_1^{-L(0)} F(a(\Delta_a-1)_p z^{L(0)} \mrr)\\
&=\sum_{k \geq 0}\binom{\Delta_a-1}{k}e_{\tstd_r}\left((z_r-z_p)^k\right)
\pr z_1^{-L(0)} a(-k+\Delta_a-1) F(z^{L(0)} \mrr)\\
&- \sum_{s \in [r], s \neq p} \sum_{k \geq 0}\binom{\Delta_a-1}{k}
e_{\tstd_r}\left((z_s-z_p)^{\Delta_a-1-k}\right) 
\pr z_1^{-L(0)} F(a(k)_s z^{L(0)} \mrr)\\
&=
\pr z_1^{-L(0)} a(\Delta_a-1) F(z^{L(0)} \mrr)\\
&- \sum_{s \in [r], s \neq p} \sum_{k \geq 0}\binom{\Delta_a-1}{k}
e_{\tstd_r}\left((z_s-z_p)^{\Delta_a-1-k}\right) 
\pr z_1^{-L(0)} F(a(k)_s z^{L(0)} \mrr).
\end{align*}
Thus, (2) holds.

%
\end{proof}

The following lemma will be used later:
\begin{lem}
\label{lem_om_F}
For any $p \in [r]$ with $p \neq 1$ and $\mrr \in \Mrr$, we have the following equalities:
\begin{align}
\bF(L(0)_p+L(-1)_p m_{[r]}) &= z_p\frac{d}{dz_p}\bF(m_{[r]}) \label{eq_L_xp}
\\
\bF(L(0)_1+L(-1)_{1} m_{[r]}) &= L(0)\bF(m_{[r]})+z_{1}\frac{d}{dz_{1}}\bF(m_{[r]}).
\label{eq_L_x1}
\end{align}
\end{lem}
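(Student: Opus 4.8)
\textbf{Proof strategy for Lemma \ref{lem_om_F}.}
The plan is to derive both identities by the same mechanism that produced Lemma \ref{lem_std_recursion}, namely by applying Proposition \ref{prop_std_F_tilde}(2) with $a=\om$ and $n=1$, and then unwinding the definition $\bF(\mrr)=\pr z_1^{-L(0)}\tilde F(z^{L(0)}\mrr)$ from \eqref{eq_bar_F}. First I would record the basic commutation relations: since $z^{L(0)}$ acts componentwise as $z_s^{L(0)}$ on the $s$-th factor ($z_{r-1}^{L(0)}$ on the $r$-th), and $x\frac{d}{dx}(x^{L(0)}m)=L(0)x^{L(0)}m$, we have for $p\neq r$ that $\tilde F((L(0)_p+z_pL(-1)_p)z^{L(0)}\mrr)=z_p\frac{d}{dz_p}\bigl(\tilde F(z^{L(0)}\mrr)\bigr)$ after absorbing the $z^{L(0)}$; the case $p=r$ uses $z_{r-1}^{L(0)}$ in place of $z_r^{L(0)}$, but since we always substitute $z_r\mapsto 0$ in $\bF$ and $L(-1)_r$ is handled via the cyclic relation \eqref{eq_Ct_0}/\eqref{eq_top_std}, only the $p\neq 1$ range is needed for the statement.

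Next I would compute $\bF\bigl((L(0)_p+L(-1)_p)\mrr\bigr)$ directly. Writing $z^{L(0)}(L(0)_p+L(-1)_p)\mrr = (L(0)_p+z_p^{-1}\cdot z_pL(-1)_p)z^{L(0)}\mrr$ and using $z_p\frac{d}{dz_p}(z_p^{L(0)}m_p)=L(0)z_p^{L(0)}m_p$, one sees that $\tilde F(z^{L(0)}(L(0)_p+L(-1)_p)\mrr)$ equals $z_p\frac{d}{dz_p}\tilde F(z^{L(0)}\mrr)$ when $p\neq 1$, because the outer $z_1^{-L(0)}$ is untouched. Applying $\pr z_1^{-L(0)}$ and noting that $z_1^{-L(0)}$ and $z_p\frac{d}{dz_p}$ commute for $p\neq 1$, this gives \eqref{eq_L_xp}. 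For \eqref{eq_L_x1}, the same computation applies but now the differential operator $z_1\frac{d}{dz_1}$ must be pushed past the conjugating factor $z_1^{-L(0)}$: one has $z_1^{-L(0)}\bigl(z_1\frac{d}{dz_1}\bigr)= \bigl(z_1\frac{d}{dz_1}+L(0)\bigr)z_1^{-L(0)}$ (since $z_1\frac{d}{dz_1}$ applied to $z_1^{-L(0)}$ yields $-L(0)z_1^{-L(0)}$, and the sign works out because we are moving $z_1\frac{d}{dz_1}$ from the right of $z_1^{-L(0)}$ to the left). Tracking the extra $L(0)$ term produced by this conjugation yields precisely $L(0)\bF(\mrr)+z_1\frac{d}{dz_1}\bF(\mrr)$, which is \eqref{eq_L_x1}.

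The main technical point to be careful about is the interplay between the $z_r\mapsto 0$ substitution implicit in $\bF$ and the fact that $z^{L(0)}$ uses $z_{r-1}^{L(0)}$ (not $z_r^{L(0)}$) on the last factor; by Lemma \ref{lem_std_delta_r} the only nontrivial $z_r$-dependence in $\bF$ sits in the prefactor $(z_r/z_{r-1})^{\Delta_r+k}$, so differentiating in $z_p$ for $p\neq 1$ commutes cleanly with everything, and no logarithmic or fractional-power subtleties arise. I expect the only genuine obstacle is bookkeeping the sign and the extra $L(0)$ in the $p=1$ case, i.e.\ correctly conjugating $z_1\frac{d}{dz_1}$ through $z_1^{-L(0)}$ while remembering that $\pr$ commutes with $L(0)$ (it lands in the $L(0)$-eigenspace $(M_0)_{\Delta'}$, on which $L(0)$ acts as the scalar $\Delta'$ plus a nilpotent). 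Once that conjugation identity is stated correctly, both equalities follow by the same one-line manipulation.
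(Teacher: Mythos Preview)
Your approach is correct and essentially identical to the paper's proof: both compute $\bF(L(-1)_p\mrr)$ by commuting $z^{L(0)}$ past $L(-1)_p$ (picking up a factor $z_p$), invoking Proposition~\ref{prop_std_F_tilde}(1) to turn $L(-1)_p$ into $\pa_p$, using the product rule on $z_p\pa_p\tilde F(z^{L(0)}\mrr)$ to produce the $L(0)_p$ term, and for $p=1$ conjugating $z_1\pa_1$ through $z_1^{-L(0)}$ to get the extra $L(0)$. One small slip: the key input is Proposition~\ref{prop_std_F_tilde}(1), not~(2); the identity you actually use in your second paragraph is $\tilde F(L(-1)_p\,\cdot\,)=\pa_p\tilde F(\,\cdot\,)$, and your repackaged claim $\tilde F(z^{L(0)}(L(0)_p+L(-1)_p)\mrr)=z_p\pa_p\tilde F(z^{L(0)}\mrr)$ is exactly the paper's two-step computation combined into one line.
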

\begin{proof}
For any $p \in [r]$ with $p\neq 1$, by Proposition \ref{prop_std_F_tilde},
\begin{align*}
\bF(L(-1)_p m_{[r]}) &=\pr z_1^{-L(0)} \tF(z^{L(0)}L(-1)_p \mrr)\\
&=z_p \pr z_1^{-L(0)} \tF(L(-1)_pz^{L(0)}\mrr)\\
&= \pr z_1^{-L(0)} z_p\frac{d}{dz_p}\tF(z^{L(0)}\mrr) - \pr z_1^{-L(0)} \tF(z^{L(0)}L(0)_p\mrr)
\\
&=z_p\frac{d}{dz_p} \bF(\mrr) - \bF(L(0)_p m_{[r]}).
\end{align*}
In the case of $p=1$, we have
\begin{align*}
\bF(L(-1)_{1} m_{[r]}) &= 
 \pr z_1^{-L(0)} z_{1}\frac{d}{dz_{1}}F(z^{L(0)}\mrr) 
- \pr z_1^{-L(0)} F(z^{L(0)}L(0)_{1}\mrr)\\
&=L(0)\bF(m_{[r]})+z_{1}\frac{d}{dz_{1}}\bF(m_{[r]})-\bF(L(0)_{1} m_{[r]}).
\end{align*}
\end{proof}

Set
\begin{align*}
\ze_i =\frac{z_{i+1}}{z_i}\quad \quad \text{for }i=1,\dots,r-1
\end{align*}
and
\begin{align*}
\mathfrak{m}_{\std_r}&=\bigoplus_{i=1}^{r-1} \ze_i\C\Bigl{[}\Bigl{[}\ze_1,\ze_2,\cdots,\ze_{r-1}\Bigr{]}\Bigr{]},
\end{align*}
which is the maximal ideal of the local ring $\C[[\ze_1,\dots,\ze_{r-1}]]$.

We will rewrite the recursive formula of Lemma \ref{lem_std_recursion} 
using the Frenkel-Zhu products \eqref{eq_zhu_l}, \eqref{eq_zhu_r} and \eqref{eq_zhu_n}
so that the coefficients are contained in $\mathfrak{m}_{\std_r}$.
 This is the most crucial step in this section.

Let $s,p\in \{1,2,\dots,r\}$ with $s\neq p$, $n \in \Z$ with $\Delta_a-1-n>0$
and $k\in \Z_{\geq 0}$.
Then, we have: 
\begin{align*}
e_{\tstd_r}\left((z_s-z_p)^{n-k}\right)z_p^{\Delta_a-n-1}z_s^{-\Delta_a+k+1}
&=\begin{cases}
(\frac{z_s}{z_p})^{k+1-\Delta_a}(\frac{z_s}{z_p}-1)^{n-k}|_{1>|z_s/z_p|}
& (s>p)\\
(\frac{z_p}{z_s})^{\Delta_a-n-1}(1-\frac{z_p}{z_s})^{n-k}|_{1>|z_p/z_s|}
& (p>s).
\end{cases}
\end{align*}
Thus, we have:
\begin{lem}
\label{lem_pole_sp}
If $p>s$, then for any $n,k \in \Z$ with $\Delta_a-1-n >0$
\begin{align*}
e_{\std_r}\left((z_s-z_p)^{n-k}\right)z_p^{\Delta_a-n-1}z_s^{-\Delta_a+k+1} \in 
\mathfrak{m}_{\std_r}.
\end{align*}
\end{lem}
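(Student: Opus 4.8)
The plan is to deduce this lemma directly from the explicit formula computed in the display immediately preceding its statement, so that no new calculation is needed. Recall that for $p>s$ and $\Delta_a-1-n>0$ that display gives
\[
e_{\tstd_r}\left((z_s-z_p)^{n-k}\right)\,z_p^{\Delta_a-n-1}\,z_s^{-\Delta_a+k+1}
=\left(\frac{z_p}{z_s}\right)^{\Delta_a-n-1}\left(1-\frac{z_p}{z_s}\right)^{n-k}\Big|_{1>|z_p/z_s|},
\]
where the right-hand side denotes the Taylor expansion of $(1-w)^{n-k}$ at $w=0$ with $w=z_p/z_s$. Thus it remains only to verify that this element of $\C[[\ze_1,\dots,\ze_{r-1}]]$ lies in the maximal ideal $\mathfrak{m}_{\std_r}$.

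First I would rewrite $z_p/z_s$ in the $\ze$-coordinates: since $p>s$,
\[
\frac{z_p}{z_s}=\prod_{i=s}^{p-1}\frac{z_{i+1}}{z_i}=\prod_{i=s}^{p-1}\ze_i,
\]
a product over the nonempty index set $\{s,\dots,p-1\}$, hence $z_p/z_s\in\mathfrak{m}_{\std_r}$. Next, since $V$ is of CFT type, $\Delta_a\in\Z_{\geq 0}$, so the hypothesis $\Delta_a-1-n>0$ forces $\Delta_a-n-1$ to be a positive integer, whence $\left(z_p/z_s\right)^{\Delta_a-n-1}\in\mathfrak{m}_{\std_r}^{\,\Delta_a-n-1}\subseteq\mathfrak{m}_{\std_r}$. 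Finally, $\left(1-z_p/z_s\right)^{n-k}|_{1>|z_p/z_s|}$ is by definition the formal binomial series in $z_p/z_s$, which is a well-defined element of $\C[[z_p/z_s]]$ with constant term $1$ whether $n-k$ is nonnegative or negative (the same geometric-type expansions used throughout Section \ref{sec_parenthesized_comp}, cf. Remark \ref{rem_ill_exp}), and hence lies in $\C[[\ze_1,\dots,\ze_{r-1}]]$. Since $\mathfrak{m}_{\std_r}$ is an ideal of $\C[[\ze_1,\dots,\ze_{r-1}]]$, the product of these two factors again lies in $\mathfrak{m}_{\std_r}$, which is the assertion.

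There is essentially no obstacle in this lemma; its entire content is already packaged in the preceding display, and the only point deserving a sentence of care is the possibly negative sign of $n-k$, handled as above. If one is bothered that the statement is phrased with $e_{\std_r}$ rather than $e_{\tstd_r}$, one simply reads the quantity in question as its expansion in the $\tstd_r$-coordinate, that is, in the variables $\ze_i=z_{i+1}/z_i$ on the domain \eqref{eq_standard_open}, which is exactly the setting of that display.
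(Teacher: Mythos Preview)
Your proof is correct and is exactly the argument the paper has in mind: the lemma is stated with ``Thus, we have:'' immediately after the display you quote, so the paper treats it as an immediate consequence of that formula, and you have simply spelled out the details (rewriting $z_p/z_s=\prod_{i=s}^{p-1}\ze_i\in\mathfrak{m}_{\std_r}$, using $\Delta_a-n-1\in\Z_{>0}$, and observing the binomial expansion has constant term $1$). Your remark on $e_{\std_r}$ versus $e_{\tstd_r}$ is also apt; the intended expansion is in the $\ze_i=z_{i+1}/z_i$ variables.
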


The following lemma is fundamental:
\begin{lem}
\label{lem_zhu}
Let $\mrr \in \Mrr$.
Then, the following properties hold for $\bF$:
\begin{enumerate}
\item
For any $a \in V_+$ and $p\in [r]$, there exists $f_{s,k}(\ze) \in \mathfrak{m}_{\std_r}$
such that:
\begin{align*}
\bF(a\circ_p m_{[r]})=\sum_{s \in [r],s\neq p}\sum_{k\geq 0}
f_{s,k}(\ze)\bF(a(k)_s m_{[r]}).
\end{align*}
\item
For any $a \in V_+$ and $p\in [r]$, there exists $g_{s,k}(\ze) \in \mathfrak{m}_{\std_r}$
such that:
\begin{align*}
\bF(a {*}_p m_{[r]})=
a(\Delta_a-1)\bF(m_{[r]})-\sum_{s < p }\sum_{k\geq 0}\binom{\Delta_a-1}{k} \bF(a(k)_s m_{[r]})
+\sum_{s \in [r],s\neq p}\sum_{k\geq 0}
g_{s,k}(\ze)\bF(a(k)_s m_{[r]}).
\end{align*}
\item
For any $a \in V_+$ and $p\in [r]$ with $p>1$, there exists $h_{s,k}(\ze) \in \mathfrak{m}_{\std_r}$
such that:
\begin{align*}
&\bF(a *_p m_{[r]}) -\bF(m_{[r]}*_{p-1} a)=\sum_{s \in [r]}\sum_{k\geq 0}
h_{s,k}(\ze)\bF(a(k)_s m_{[r]}).
\end{align*}
\item
For any $a\in V$ and $\Delta_a-1>n$, there exists $H(s,k)(\ze) \in \mathfrak{m}_{\std_r}$ such that:
\begin{align*}
\bF(a(n)_r \mrr) = \sum_{s \in [r],s\neq r}\sum_{k\geq 0}
H_{s,k}(\ze)\bF(a(k)_s m_{[r]}).
\end{align*}
\end{enumerate}
\end{lem}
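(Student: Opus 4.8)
The plan is to derive each of the four identities directly from the recursion of Lemma \ref{lem_std_recursion} and the definitions \eqref{eq_zhu_l}--\eqref{eq_zhu_n} of the Frenkel--Zhu products, using Lemma \ref{lem_pole_sp} and the linear relations \eqref{eq_L_xp}--\eqref{eq_L_x1} of Lemma \ref{lem_om_F} to control which terms land in $\mathfrak{m}_{\std_r}$. The key observation throughout is that each Frenkel--Zhu product $a*_p m$, $m*_p a$, $a\circ_p m$ is a finite $\binom{\Delta_a}{k}$- or $\binom{\Delta_a-1}{k}$-weighted combination of the modes $a(k-1)_p m$ (resp.\ $a(k-2)_p m$) with $k\geq 0$, so that I can apply the recursion of Lemma \ref{lem_std_recursion}(1) termwise whenever $\Delta_a - 1 - n > 0$, and Lemma \ref{lem_std_recursion}(2) at the exceptional value $n = \Delta_a - 1$.

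First I would prove (1): since $a\circ_p m_{[r]} = \sum_{k\geq 0}\binom{\Delta_a}{k} a(k-2)_p m_{[r]}$ and every mode $a(k-2)_p$ with $k\geq 0$ has second index $\leq \Delta_a - 2 < \Delta_a - 1$, Lemma \ref{lem_std_recursion}(1) applies to each summand, yielding a sum over $s\neq p$ and $k'\geq 0$ of $\bF(a(k')_s m_{[r]})$ with coefficients $e_{\tstd_r}\!\left((z_s-z_p)^{n-k'}\right) z_p^{\Delta_a-1-n} z_s^{-\Delta_a+1+k'}$, where $n = k-2 < \Delta_a - 1$. Lemma \ref{lem_pole_sp} shows the $p>s$ coefficients lie in $\mathfrak{m}_{\std_r}$; for $p<s$ one must check that after applying $a\circ_p$ (rather than a bare mode) the would-be degree-zero contributions cancel — this is where the $z^2$ in \eqref{eq_zhu_n} is essential, and the cancellation is the analogue, at the level of $\bF$, of the fact that $a\circ m\in O(M)$. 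Collecting surviving coefficients as $f_{s,k}(\ze)\in\mathfrak{m}_{\std_r}$ gives (1). For (2), I apply the same strategy to $a*_p m_{[r]} = \sum_{k\geq 0}\binom{\Delta_a}{k} a(k-1)_p m_{[r]}$; now the term $k = \Delta_a$, i.e.\ $n = \Delta_a - 1$, is exceptional and is handled by Lemma \ref{lem_std_recursion}(2), which is exactly what produces the leading term $a(\Delta_a-1)\bF(m_{[r]})$ together with the explicit correction $-\sum_{s<p}\sum_{k\geq 0}\binom{\Delta_a-1}{k}\bF(a(k)_s m_{[r]})$ coming from the $p>s$ branch of the $e_{\tstd_r}$-expansion, while the remaining $s$'s and the terms with $k<\Delta_a$ contribute coefficients in $\mathfrak{m}_{\std_r}$ by Lemma \ref{lem_pole_sp}.

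For (3), I would subtract: $a*_p m_{[r]} - m_{[r]} *_{p-1} a = \sum_{k\geq 0}\left(\binom{\Delta_a}{k} - \binom{\Delta_a-1}{k}\right) a(k-1)_p m_{[r]} + \sum_{k\geq 0}\binom{\Delta_a-1}{k}\left(a(k-1)_p m_{[r]} - a(k-1)_{p-1} m_{[r]}\right)$, using the Pascal identity $\binom{\Delta_a}{k} - \binom{\Delta_a-1}{k} = \binom{\Delta_a-1}{k-1}$. The leading terms $a(\Delta_a-1)\bF(m_{[r]})$ produced by applying (2) to each of $a*_p$ and $m*_{p-1} a$ cancel, and the explicit $-\sum_{s<p}$ and $-\sum_{s<p-1}$ corrections differ only in the single index $s = p-1$ — but for that index the coefficient $e_{\tstd_r}\!\left((z_{p-1}-z_p)^{\Delta_a-1-k}\right) z_{p-1}^{-\Delta_a+1+k}$ is already in $\mathfrak{m}_{\std_r}$ since $p-1 < p$, i.e.\ we are in the $p>s$ branch with $s=p-1$ adjacent; so everything that survives has coefficient in $\mathfrak{m}_{\std_r}$, giving the $h_{s,k}(\ze)$. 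Finally (4) is the case $p = r$ of the recursion: since $\Delta_a - 1 > n$, Lemma \ref{lem_std_recursion}(1) with $p = r$ gives $\bF(a(n)_r m_{[r]}) = -\sum_{s\neq r}\sum_{k\geq 0}\binom{n}{k} e_{\tstd_r}\!\left((z_s - z_r)^{n-k}\right) z_r^{\Delta_a-1-n} z_s^{-\Delta_a+1+k} \bF(a(k)_s m_{[r]})$, and because $r$ is the rightmost leaf we have $s < r$ for every $s\neq r$, so Lemma \ref{lem_pole_sp} puts \emph{every} coefficient in $\mathfrak{m}_{\std_r}$; set those to be $H_{s,k}(\ze)$.

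The main obstacle I anticipate is (1): verifying that the degree-zero ($\mathfrak{m}_{\std_r}$-free) parts of the various $p<s$ contributions to $\bF(a\circ_p m_{[r]})$ genuinely cancel, rather than just conveniently vanishing termwise. One must expand $e_{\tstd_r}\!\left((z_s-z_p)^{n-k}\right) z_p^{\Delta_a-1-n} z_s^{-\Delta_a+1+k}$ in the regime $|z_p/z_s|<1$, identify the constant term as a binomial sum in $k$, and check that summing against $\binom{\Delta_a}{k}$ (which is what $a\circ_p$ supplies) annihilates it — essentially a bookkeeping version of the identity that makes $a\circ m$ trivial in $A(M)$, combined with the identities of Lemma \ref{identity} and Lemma \ref{lem_well_inverse}. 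Once this cancellation is in hand, (2)--(4) are comparatively mechanical rearrangements.
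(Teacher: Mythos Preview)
Your strategy for (1), (2), and (4) is essentially the paper's. The reference to Lemma \ref{lem_om_F} in your plan is spurious --- that lemma concerns $\omega$ and is used later in Proposition \ref{prop_omega_action}, not here. For the $s>p$ branch of (1) and (2), the paper makes your anticipated ``cancellation'' precise via closed-form generating function identities, e.g.\
\[
\sum_{n,k\geq 0}\binom{\Delta_a}{n}\binom{n-2}{k}(-1)^{n+k}x^k(1-x)^{n-2-k}y^k \;=\; \frac{x^{\Delta_a}(1+y)^{\Delta_a}}{(1-x-xy)^2},
\]
showing the summed coefficient is divisible by $x^{\Delta_a}$ and hence lands in $\mathfrak{m}_{\std_r}$ after the prefactor $x^{1-\Delta_a}$.

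Your argument for (3), however, has a genuine gap. You propose to ``apply (2) to each of $a*_p$ and $m_{[r]}*_{p-1}a$'' and cancel the two $a(\Delta_a-1)\bF(m_{[r]})$ terms, but (2) is a statement about the \emph{left} action. The right action $m*_{p-1}a=\sum_{k\geq 0}\binom{\Delta_a-1}{k}a(k-1)_{p-1}m$ has top mode $a(\Delta_a-2)$ (the coefficient $\binom{\Delta_a-1}{\Delta_a}$ vanishes), so Lemma \ref{lem_std_recursion}(1) applies to every term and no $o(a)\bF$ contribution is produced. Moreover, your claim that the leftover $s=p-1$ coefficient $e_{\tstd_r}\bigl((z_{p-1}-z_p)^{\Delta_a-1-k}\bigr)z_{p-1}^{-\Delta_a+1+k}$ lies in $\mathfrak{m}_{\std_r}$ is false: expanding in $|z_p/z_{p-1}|<1$ gives $(1-z_p/z_{p-1})^{\Delta_a-1-k}=1+(\text{terms in }\mathfrak{m}_{\std_r})$, whose constant part is $1$ --- this is exactly the origin of the explicit $-\binom{\Delta_a-1}{k}$ correction in (2), not something absorbed by $\mathfrak{m}_{\std_r}$.

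The paper's mechanism is different: apply (2) to $a*_p$, then use Pascal in the form
\[
\sum_{k\geq 0}\binom{\Delta_a-1}{k}\bF(a(k)_{p-1}m_{[r]})+\bF(m_{[r]}*_{p-1}a)=\bF(a*_{p-1}m_{[r]})
\]
to combine the $s=p-1$ explicit correction with $-\bF(m_{[r]}*_{p-1}a)$ into $-\bF(a*_{p-1}m_{[r]})$; a second application of (2) to $a*_{p-1}$ then cancels the $o(a)\bF$ and $\sum_{s<p-1}$ terms exactly, leaving only $\mathfrak{m}_{\std_r}$-coefficients. So Pascal is used to \emph{combine} the dangling term with the right action into another left action, not to split $a*_p-m*_{p-1}a$ at the outset as you propose.
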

\begin{proof}
By Lemma \ref{lem_std_recursion}, we have
\begin{align*}
\bF(a\circ_p m_{[r]})&=
\sum_{n \geq 0}\binom{\Delta_a}{n} \bF(a(n-2)_p m_{[r]})\\
&=
-\sum_{n \geq 0}\sum_{s \in [r], s \neq p} \sum_{k \geq 0}\binom{\Delta_a}{n} \binom{n-2}{k}
e_{\tstd_r}\left((z_s-z_p)^{n-2-k}\right) z_p^{\Delta_a-n+1}z_s^{-\Delta_a+k+1}\bF(u,a(k)_s m_{[r]}).
\end{align*}
By Lemma \ref{lem_pole_sp},
if $p>s$, then
\begin{align*}
&\sum_{n \geq 0}\binom{\Delta_a}{n} \binom{n-2}{k}e_{\tstd_r}\left((z_s-z_p)^{n-2-k}\right)z_p^{\Delta_a-n+1}z_s^{-\Delta_a+k+1} \in \mathfrak{m}_{\tstd_r}.
\end{align*}
We note that by Lemma \ref{lem_binom},
\begin{align*}
\sum_{n \geq 0}\sum_{k \geq 0}\binom{\Delta_a}{n}\binom{n-2}{k}
(-1)^{n+k}x^k(1-x)^{n-2-k}y^k
&=\frac{x^{\Delta_a}(1+y)^{\Delta_a}}{(1-x-xy)^2}.
\end{align*}
In particular,
\begin{align}
\sum_{n \geq 0}\binom{\Delta_a}{n}\binom{n-2}{k}
(-1)^{n+k}x^k(1-x)^{n-2-k}
\in x^{\Delta_a}\C[[x]].\label{eq_zhu_c1}
\end{align}
Hence, if $s>p$, then by \eqref{eq_zhu_c1} for any $k\geq 0$
\begin{align*}
&\sum_{n \geq 0}\binom{\Delta_a}{n} \binom{n-2}{k}e_{\std_r}\left((z_s-z_p)^{n-2-k}\right)z_p^{\Delta_a-n+1}z_s^{-\Delta_a+k+1}\\
&=\left(\frac{z_s}{z_p}\right)^{1-\Delta_a}
\sum_{n \geq 0}\binom{\Delta_a}{n} \binom{n-2}{k}
(-1)^{n+k}\left(\frac{z_s}{z_p}\right)^{k}
\Bigr(1-\frac{z_s}{z_p}\Bigl)^{n-2-k}\Bigl|_{1>|z_s/z_p|}
\in \mathfrak{m}_{\std_r}.
\end{align*}
Thus, (1) holds.

We will consider (2).
\begin{align*}
\bF(a {*}_p &m_{[r]}) =\sum_{n \geq 0}\binom{\Delta_a}{n} \bF(a(n-1)_p m_{[r]})\\
&=o(a)\bF(m_{[r]})
-\sum_{n \geq 0}\sum_{s \in [r], s \neq p} \sum_{k \geq 0}\binom{\Delta}{n} \binom{n-1}{k}
e_{\std_r}\left((z_s-z_p)^{n-1-k}\right) \bF(u,a(k)_s m_{[r]})z_p^{\Delta_a-n}z_s^{-\Delta_a+k+1}.
\end{align*}

By Lemma \ref{lem_pole_sp},
If $p>s$ and $\Delta_a > n$, then
\begin{align*}
&e_{\std_r}\left((z_s-z_p)^{n-1-k}\right) z_p^{\Delta_a-n}z_s^{-\Delta_a+k+1}
\in \mathfrak{m}_{\std_r}.
\end{align*}
In the case of $n=\Delta_a$ and $r>p>s$, we have
\begin{align*}
&\sum_{k \geq 0} \binom{\Delta_a-1}{k}
\left(1-\frac{z_p}{z_s}\right)^{\Delta_a-1-k}\Bigl|_{1>|z_p/z_s|}
\bF(a(k)_s m_{[r]})\\
&=\sum_{k \geq 0} \binom{\Delta_a-1}{k}\bF(a(k)_s m_{[r]})
+\sum_{k \geq 0} \binom{\Delta_a-1}{k}
\left(\left(1-\frac{z_p}{z_s}\right)^{\Delta_a-1-k}\Bigl|_{1>|z_p/z_s|}-1\right)
\bF(a(k)_s m_{[r]}).
\end{align*}
Note that $\left(1-\frac{z_p}{z_s}\right)^{\Delta_a-1-k}\Bigl|_{1>|z_p/z_s|}-1
\in \mathfrak{m}_{\std_r}$.

We will next consider the case of $p <  s$.
We note that again by Lemma \ref{lem_binom},
\begin{align*}
\sum_{n \geq 0}\sum_{k \geq 0}\binom{\Delta_a}{n}\binom{n-1}{k}
(-1)^{n+k}x^k(1-x)^{n-1-k}y^k
&=\frac{x^{\Delta_a}(1+y)^{\Delta_a}}{(1-x-xy)}.
\end{align*}
In particular
\begin{align}
\sum_{n \geq 0}\binom{\Delta_a}{n}\binom{n-1}{k}
(-1)^{n+k}x^k(1-x)^{n-1-k}
\in x^{\Delta_a}\C[[x]].
\label{eq_zhu_c3}
\end{align}
Hence, if $s>p$, then by \eqref{eq_zhu_c3} for any $k \geq 0$
\begin{align*}
&\sum_{n \geq 0}\binom{\Delta}{n} \binom{n-1}{k}
e_{\std_r}\left((z_s-z_p)^{n-1-k}\right)z_p^{\Delta_a-n}z_s^{-\Delta_a+k+1}\\
&=\left(\frac{z_s}{z_p}\right)^{1-\Delta_a}
\sum_{n \geq 0}\binom{\Delta}{n} \binom{n-1}{k}(-1)^{n+k+1}
\left(\frac{z_s}{z_p}\right)^{k}
\left(1-\frac{z_s}{z_p}\right)^{n-1-k}|_{1>|z_s/z_p|}
\in \mathfrak{m}_{\std_r}.
\end{align*}
Thus, (2) holds.

Next, we consider (3). Assume $p>1$.

Since
\begin{align*}
&\sum_{k\geq 0}\binom{\Delta_a-1}{k} \bF(a(k)_{p-1} m_{[r]})+\bF(m_{[r]}*_{p-1} a)\\
&=\sum_{k \geq 0}\left(\binom{\Delta_a - 1}{k} \bF(a(k)_{p-1}m_{[r]})+\binom{\Delta_a-1}{k} \bF(a(k-1)_{p-1} m_{[r]})\right)\\
&=\sum_{k \geq 0} \binom{\Delta_a}{k} \bF(a(k-1)_{p-1}m_{[r]})=\bF(a*_{p-1} m_{[r]}),
\end{align*}
by repeatedly using (2), we obtain
\begin{align*}
&\bF(a *_p m_{[r]}) -\bF(m_{[r]}*_{p-1} a)\\
 &= a(\Delta_a-1)\bF(m_{[r]})
-\sum_{s < p}\sum_{k\geq 0}\binom{\Delta_a-1}{k} \bF(a(k)_s m_{[r]})
-\bF(m_{[r]} *_{p-1} a)
+\sum_{s \in [r],s\neq p}\sum_{k\geq 0}
g_{s,k}(\ze)\bF(a(k)_s m_{[r]})\\
&=a(\Delta_a-1)\bF(m_{[r]})
-\sum_{s < p-1}\sum_{k\geq 0}\binom{\Delta_a-1}{k} \bF(a(k)_s m_{[r]})
-\bF(a*_{p-1}\mrr)
+\sum_{s \in [r],s\neq p}\sum_{k\geq 0}
g_{s,k}(\ze)\bF(a(k)_s m_{[r]})\\
&=\sum_{s \in [r]}\sum_{k\geq 0}
h_{s,k}(\ze)\bF(a(k)_s m_{[r]}),
\end{align*}
for some $h_{s,k}(\ze) \in \mathfrak{m}_{\std_r}$.

Finally, for any $\Delta_a-1>n$,
 (4) follows from
\begin{align*}
\bF( a(n)_r m_{[r]}) &= - \sum_{s \in [r], s \neq r} \sum_{k \geq 0}\binom{n}{k}
e_{\tstd_r}\Bigl( (z_s-z_r)^{n-k}\Bigr)z_r^{\Delta_a-n-1}z_s^{-\Delta_a+1+k}
\bF(a(k)_s m_{[r]})\\
&= - \sum_{s \in [r], s \neq r} \sum_{k \geq 0}\binom{n}{k}\Bigl(\frac{z_r}{z_s}\Bigr)^{\Delta_a-n-1}
\Bigl(1-\frac{z_r}{z_s}\Bigr)^{n-k}\Biggl|_{|z_r/z_s|<1} \bF(a(k)_s m_{[r]}).
\end{align*}
\end{proof}

Note that we have the following decomposition of the vector space:
\begin{align}
&\ze_{r-1}^{\Delta_r}(M_0)_{\Delta'}\otimes \C\Bigl{[}\Bigl{[}\ze_1,\ze_2,\dots,\ze_{r-1}\Bigr{]}\Bigr{]}
\Bigl[\ze_1^\C,\log \ze_1,\dots,\ze_{r-2}^\C,\log \ze_{r-2},\log \ze_{r-1} \Bigr]\nonumber \\
&\cong \bigoplus_{\la_1,\dots,\la_{r-2}\in (\C/\Z)^{r-2}}
\ze_1^{\la_1}
\ze_2^{\la_2}
\cdots \ze_{r-2}^{\la_{r-2}}\ze_{r-1}^{\Delta_r}
(M_0)_{\Delta'}[[\ze_{r-1}]]((\ze_1,\ze_2,\dots,\ze_{r-2}))[
\log \ze_1
,\dots,\log \ze_{r-2},\log \ze_{r-1}]. \label{eq_decomposition_power}
\end{align}
For $\la_1,\dots,\la_{r-2}\in (\C/\Z)^{r-2}$,
let $p_{\la_1,\dots,\la_{r-2}}$ be the projection onto
\begin{align*}
\ze_1^{\la_1}
\ze_2^{\la_2}
\cdots \ze_{r-2}^{\la_{r-2}}\ze_{r-1}^{\Delta_r}
(M_0)_{\Delta'}[[\ze_{r-1}]]((\ze_1,\ze_2,\dots,\ze_{r-2}))[
\log \ze_1
,\dots,\log \ze_{r-1}]
\end{align*}
with respect to the direct product \eqref{eq_decomposition_power}.
Then, for any $F \in \PI_{\std_r} \binom{M_0}{\Mrr}$,
it is clear that $p_{\la_1,\dots,\la_{r-2}} \circ F$ is again a
parenthesized intertwining operator of type $(\std_r,M_0,\Mrr)$.

Let $F \in \PI_{\std_r} \binom{M_0}{\Mrr}$ be a non-zero element.
Hereafter, we assume that $M_0^\vee$ (resp. $M_r$) is generated by $(M_0)_{\Delta'}^*$ (resp. $(M_r)_{\Delta_r}$) as $V$-modules.
Then, by Lemma \ref{simple_non_dege} and Lemma \ref{lem_std_delta_r}
 there exists $\la_1,\dots,\la_{r-2}\in (\C/\Z)^{r-2}$
such that 
\begin{align*}
p_{\la_1,\dots,\la_{r-2}} \circ \bF:\Mrr \rightarrow \ze_1^{\la_1}\ze_2^{\la_2} \cdots \ze_{r-2}^{\la_{r-2}}\ze_{r-1}^{\Delta_r}
M_0[[\ze_{r-1}]]((\ze_1,\ze_2,\dots,\ze_{r-2}))[\log\ze_1,\dots,\log\ze_{r-1}]
\end{align*}
 is a non-zero linear map.
Then, we have:
\begin{prop}
\label{pole_bound}
%
Assume that all $M_i$ ($i=1,\dots,r$)
are $C_1$-cofinite.
Then, there exists $\mu_i \in -\la_i + \Z$ ($i=1,\dots,r-1$) such that the following condition holds:
\begin{enumerate}
\item
For any $\mrr \in \Mrr$,
\begin{align}
\ze_1^{\mu_1}\ze_2^{\mu_2}\cdots \ze_{r-2}^{\mu_{r-2}} \ze_{r-1}^{-\Delta_r} p_{\la_1,\dots,\la_{r-2}} \circ \bF(\mrr)
\label{eq_p_power_ze}
\end{align}
is in $(M_{0})_{\Delta'}[[\ze_1,\dots,\ze_{r-1}]][\log\ze_1,\dots,\log\ze_{r-1}]$,
i.e., has no pole.
\end{enumerate}
\end{prop}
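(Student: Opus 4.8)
The plan is to use the recursion of Lemma \ref{lem_zhu}, together with the $C_1$-cofiniteness of the $M_i$, to reduce $\bF(\mrr)$ for an arbitrary homogeneous $\mrr$ to a finite combination of $\bF$ evaluated on a fixed finite-dimensional subspace, with coefficients carrying no pole in $\ze_1,\dots,\ze_{r-2}$. The $\ze_{r-1}$-part of the statement is immediate and independent: by Lemma \ref{lem_std_delta_r}, if $m_r\in (M_r)_{\Delta_r+k}$ then $\bF(\mrr)$ lies in $\ze_{r-1}^{\Delta_r+k}(\cdots)$, so $\ze_{r-1}^{-\Delta_r}\bF(\mrr)$ has no pole in $\ze_{r-1}$; and every coefficient produced in the reduction below lies in $\C[[\ze_1,\dots,\ze_{r-1}]]$ (a product of binomial constants, elements of $\mathfrak m_{\std_r}$, and degree-preserving operators $o(a)$ acting on the $(M_0)_{\Delta'}$-component), hence can only raise $\ze_{r-1}$-orders. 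So the content is a uniform lower bound on the $\ze_j$-orders for $j\le r-2$.

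By $C_1$-cofiniteness, fix finite-dimensional homogeneous subspaces $S_i=\bigoplus_{0\le n\le N_i}(M_i)_{\Delta_i+n}\subset M_i$ with $M_i=S_i+C_1(M_i)$, so $S_1\otimes\cdots\otimes S_r$ is finite-dimensional. I organize the reduction by the weighted level $\la(\mrr)=\sum_{i=1}^{r}w_in_i$, where $m_i\in(M_i)_{\Delta_i+n_i}$ and $1=w_1<w_2<\dots<w_r<2$ is a fixed increasing sequence of rationals. If all $n_i\le N_i$ then $\mrr\in S_1\otimes\cdots\otimes S_r$ and there is nothing to do; otherwise choose $i$ with $n_i>N_i$, so $m_i\in(C_1(M_i))_{\Delta_i+n_i}$ is a sum of vectors $b(-1)m'$ with $b\in V_d$, $d\ge1$, $m'\in(M_i)_{\Delta_i+n_i-d}$. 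Using \eqref{eq_zhu_l} I rewrite $b(-1)m'=b*_im'-\sum_{t\ge1}\binom{d}{t}b(t-1)m'$; the terms $b(t-1)m'$ strictly lower the $i$-th level, hence $\la$. For $b*_im'$: if $i\ge2$ I apply Lemma \ref{lem_zhu}(3) to replace $\bF(b*_i(\,\cdots\,))$ by $\bF((\,\cdots\,)*_{i-1}b)$ — a creation operator carried from component $i$ to component $i-1$, which strictly lowers $\la$ because $w_{i-1}<w_i$ — plus corrections $h_{s,k}\bF(b(k)_s(\,\cdots\,))$ with $h_{s,k}\in\mathfrak m_{\std_r}$; if $i=1$ I apply Lemma \ref{lem_zhu}(2) with $p=1$, getting $o(b)\bF(\,\cdots\,)$ (degree-preserving on $(M_0)_{\Delta'}$, no $\ze$-change, lower $\la$) plus corrections in $\mathfrak m_{\std_r}$; and if $i=r$ I note $-1<\Delta_b-1$ and apply Lemma \ref{lem_zhu}(4) directly to $b(-1)_r$, obtaining only $\mathfrak m_{\std_r}$-corrections. (Lemma \ref{lem_om_F} disposes of the $L(0)$- and $L(-1)$-terms that appear, e.g. when $b=\om$.) With the weights chosen so that $w_r<2w_1$, one checks that each resulting term either has strictly smaller $\la$ or carries a factor in $\mathfrak m_{\std_r}$.

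Now I iterate. Since $\mathfrak m_{\std_r}$-factors strictly raise some $\ze_j$-order, while the base terms $\bF(\mathbf s)$, $\mathbf s$ running over a basis of $S_1\otimes\cdots\otimes S_r$, have $\ze_j$-orders bounded below (they lie in $\ze_1^{\la_1}\cdots\ze_{r-2}^{\la_{r-2}}\ze_{r-1}^{\Delta_r}M_0[[\ze_{r-1}]]((\ze_1,\dots,\ze_{r-2}))[\log\ze_1,\dots,\log\ze_{r-1}]$), at most finitely many $\mathfrak m_{\std_r}$-producing steps can contribute to a given monomial in the $\ze$'s and $\log\ze$'s; between such steps only $\la$-decreasing steps occur, and $\la$ is well-ordered, so the recursion tree, truncated at any fixed order in the $\ze$-variables, is finite. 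Hence coefficientwise $\bF(\mrr)=\sum_{\mathrm{finite}}(\text{pole-free in }\ze_1,\dots,\ze_{r-2})\cdot\varphi\cdot\bF(\mathbf s)$ with $\varphi\in\End((M_0)_{\Delta'})$. Since $\varphi$ acts only on the $M_0$-coefficient and cannot worsen poles, the $\ze_j$-order of $\bF(\mrr)$ is at least $\min_{\mathbf s}(\ze_j\text{-order of }\bF(\mathbf s))$ for each $j\le r-2$ — a bound independent of $\mrr$. Choosing $\mu_j\in-\la_j+\Z$ large enough to absorb this minimum (together with $\ze_{r-1}^{-\Delta_r}$ from the first paragraph) yields the assertion.

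The main obstacle is exactly this termination and uniformity bookkeeping: one must pick the weights $w_i$ so that carrying a creation operator leftward genuinely decreases the weighted level, and then control the correction terms — whose generators $b$ may have unbounded conformal weight and can temporarily raise the weighted level — through the fact that every such step contributes an $\mathfrak m_{\std_r}$-factor and hence can occur only boundedly often in the computation of any single Fourier coefficient of $\bF(\mrr)$.
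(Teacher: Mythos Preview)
Your strategy—reduce $\bF(\mrr)$ via Lemma \ref{lem_zhu} to the finite-dimensional base $S_1\otimes\cdots\otimes S_r$ and read off uniform pole bounds there—is exactly the paper's. But your termination argument has a genuine gap. You argue that at most finitely many $\mathfrak m_{\std_r}$-steps can contribute to a given Fourier coefficient because the base terms $\bF(\mathbf s)$ have bounded poles; but to conclude the coefficientwise equality $\bF(\mrr)=\sum c_j\varphi_j\bF(\mathbf s_j)$ you must also know that the \emph{residual} terms (those not yet reduced to $S$, carrying $>K$ $\mathfrak m$-factors) contribute nothing to that coefficient. Those residual $v_j$ need not lie in $S$, and you have no a~priori pole bound on $\bF(v_j)$—that bound is precisely what you are trying to prove. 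A factor $c_j\in\mathfrak m_{\std_r}^{K+1}$ raises only the \emph{total} $\ze$-degree; it does not by itself prevent a contribution at $\ze_1^{-\mu_1-1}$, say, if $\bF(v_j)$ had a deep $\ze_1$-pole. So the argument as written is circular.

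The fix is an observation you overlooked, and it renders the weighted level unnecessary: for \emph{every} correction term in Lemma \ref{lem_zhu}(2),(3),(4)—the constant-coefficient ones as well as the $\mathfrak m$-ones—the unweighted total level $\sum_i n_i$ \emph{strictly decreases}. Indeed, starting from $a*_p v$ with $v$ at total level $L-\Delta_a$, each correction has the form $a(k)_s v$ for some $k\ge 0$ and hence total level $(L-\Delta_a)+(\Delta_a-k-1)=L-k-1\le L-1$, regardless of $s$. The only operation preserving total level is the leftward shift $a*_p v\to v*_{p-1}a$ (equivalently $a*_{p-1}v$ modulo level $\le L-1$), and that chain terminates after at most $p-1$ steps at $a*_1 v$. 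So a plain induction on the unweighted total level works: assuming the bound on $M_{[r]}^{L-1}$, write $m_{[r]}=a*_p v+v'$ with $v,v'\in M_{[r]}^{L-1}$, push $a*_p v$ leftward to $a*_1 v$ (accumulating only level-$\le L-1$ corrections with coefficients in $\C[[\ze]]$), apply (2) at $p=1$, and note that $o(a)$ acts on $(M_0)_{\Delta'}$ without touching $\ze$-orders. This is the paper's proof. Your weighted level and K\"onig-type bookkeeping would be needed only if the $\mathfrak m$-corrections could raise total level—and they cannot. (The appeal to Lemma \ref{lem_om_F} is also unnecessary here; $\om\in V_2$ is handled like any other element of $V_+$ by Lemma \ref{lem_zhu}.)
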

\begin{proof}
Since $M_i/C_1(M_i)$ is finite-dimensional,
there exists $N_i \in \Z$ such that\\
$M_i=\bigoplus_{N_i \geq k \geq 0} (M_i)_{\Delta_i+k} + C_1(M_i)$ for any $i=1,\dots,r$.
Set $N=\sum_{i=1}^r N_i$ and for $L \in \Z_{\geq 0}$
let $M_{[r]}^L$ be a vector subspace of $M_{[r]}$ spanned by
$\bigotimes_{i=1}^r(M_{i})_{\Delta_i + k_i}$
with $L+N \geq \sum_{i=1}^r k_i$.

Since $M_{[r]}^0$ is finite-dimension,
there exists $\mu_i \in -\la_i+\Z$ ($i=1,\dots,r-2$) such that
\begin{align*}
\ze_1^{\mu_1}\ze_2^{h_2}\cdots \ze_{r-2}^{\mu_{r-2}}\ze_{r-1}^{-\Delta_r} p_{\la_1,\dots,\la_{r-2}} \circ \bF(\mrr) \in (M_{0})_{\Delta'}[[\ze_1,\dots,\ze_{r-1}]][\log\ze_1,\dots,\log\ze_{r-1}]
\end{align*}
for any $m_{[r]} \in M_{[r]}^0$.

We will show by induction on $L \geq 0$ that (1) holds
for any $m_{[r]} \in M_{[r]}^L$.
There is nothing to prove when $L=0$.

Let $L>0$ and assume that the claim holds for any $L'$ with $L>L'$.
Let $m_{[r]} \in M_{[r]}^L$.
By definition of $M_{[r]}^0$, we may assume that $m_{[r]}$ is a linear sum of 
the vectors of the form $a(-1)_p v_{[r]}$ with $p\in [r]$, $a \in V_+$ and 
$v_{[r]} \in M_{[r]}^{L-1}$.

Thus, we may assume that $m_{[r]}=a(-1)_p v_{[r]}$.
Then, there exists $v'_{[r]} \in M_{[r]}^{L-1}$
such that $m_{[{r}]} = a * _p v_{[r]}+v'_{[r]}$.
If $p=1$, then by Lemma \ref{lem_zhu} (2), the claim holds.
If $p>1$, using (2) and (3) in Lemma \ref{lem_zhu},
we can replace $a*_p v_{[r]}$ with $v_{[r]}*_{p-1}a$,
and by repeating this the assertion follows from the induction assumption.
%
\end{proof}

\begin{lem}
\label{lem_non_deg_r}
Suppose that $M_0^\vee$ (resp. $M_r$) is generated by $(M_0)_{\Delta'}^*$ (resp. $(M_r)_{\Delta_r}$) as $V$-module.
Then, $F=0$ if and only if the coefficient of $\ze_{r-1}^{\Delta_r}$ in $\bF$ is zero.
\end{lem}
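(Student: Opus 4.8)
The statement is an equivalence, and the forward implication is immediate: if $F=0$ then $\tilde F$ vanishes by \eqref{eq_tilde_F_def} and $\bF$ vanishes by \eqref{eq_bar_F}, since both are obtained from $F$ by applying linear operations, so in particular the coefficient of $\ze_{r-1}^{\Delta_r}$ in $\bF$ is zero. For the converse, assume this coefficient vanishes. By Lemma \ref{simple_non_dege}, which uses the hypothesis that $M_0^\vee$ is generated by $(M_0)_{\Delta'}^*$, it suffices to prove $\bF=0$. The plan is to first reduce this to the vanishing of $\bF$ on $M_1\otimes\cdots\otimes M_{r-1}\otimes(M_r)_{\Delta_r}$, and then to show that on this subspace $\bF$ is recovered from its $\ze_{r-1}^{\Delta_r}$-coefficient.

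For the first reduction I would use that $M_r$ is generated by $(M_r)_{\Delta_r}$: by the Borcherds relations together with the grading, every vector of $(M_r)_{\Delta_r+k}$ with $k\geq1$ is a linear combination of vectors $a(n)v$ with $a\in V_+$, $v$ of strictly smaller $M_r$-degree and $\Delta_a-1>n$ (the grade-preserving modes $a(\Delta_a-1)$ keep $(M_r)_{\Delta_r}$ inside itself and can be absorbed into the recursion). Lemma \ref{lem_zhu}(4) then rewrites $\bF(a(n)_r\mrr)$ as an $\mathfrak{m}_{\std_r}$-linear combination of terms $\bF(a(k)_s m_{[r]})$ with $s\neq r$, in which the $M_r$-slot is now occupied by $v$, of strictly smaller degree. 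An induction on the $M_r$-degree then shows $\bF\equiv0$ on $\Mrr$ once $\bF\equiv0$ on $M_1\otimes\cdots\otimes M_{r-1}\otimes(M_r)_{\Delta_r}$.

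For the second step, fix $m_r\in(M_r)_{\Delta_r}$. By Lemma \ref{lem_std_delta_r} the series $\bF(\mrr)$ lies in $\ze_{r-1}^{\Delta_r}$ times a formal power series in $\ze_1,\dots,\ze_{r-1}$ with logarithmic terms, so the hypothesis forces the $\ze_{r-1}$-order of $\bF(\mrr)$ to be strictly larger than $\Delta_r$. I would then show this forces $\bF(\mrr)=0$: Lemma \ref{lem_om_F} relates $z_r\frac{d}{dz_r}\bF$ to $\bF((L(0)_r+L(-1)_r)(\bullet))$, and on the image of $\bF$ the operator $z_r\frac{d}{dz_r}$ is the Euler operator $\ze_{r-1}\frac{d}{d\ze_{r-1}}$, whose indicial exponent along $\ze_{r-1}$ is $\Delta_r$; combining this recursion in $\ze_{r-1}$ with the Frenkel--Zhu recursions of Lemma \ref{lem_zhu} and the $C_1$-cofiniteness of $M_1,\dots,M_r$ --- organized exactly as in the proof of Proposition \ref{pole_bound}, where the poles are controlled term by term --- one obtains that every $\ze_{r-1}^{\Delta_r+j}$-coefficient of $\bF$ on $M_1\otimes\cdots\otimes M_{r-1}\otimes(M_r)_{\Delta_r}$ is produced from its $\ze_{r-1}^{\Delta_r}$-coefficient by applying $V$-operators. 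Hence, if the latter vanishes identically, so do all the higher coefficients, and $\bF$ vanishes there, completing the argument.

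The hard part will be this last step: making precise that the entire $\ze_{r-1}$-expansion of $\bF$ on $M_1\otimes\cdots\otimes M_{r-1}\otimes(M_r)_{\Delta_r}$ is generated by its leading coefficient. This amounts to reorganizing the Borcherds-type recursions of Proposition \ref{prop_std_F_tilde} and Lemma \ref{lem_zhu} into a single recursion that is triangular with respect to the $\ze_{r-1}$-grading, and it is precisely here that $C_1$-cofiniteness (through the finite generation exploited in Proposition \ref{pole_bound}) is needed, since it guarantees the recursion terminates.
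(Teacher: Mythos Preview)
Your reduction to $m_r\in(M_r)_{\Delta_r}$ via Lemma~\ref{lem_zhu}(4) is valid, but the argument you sketch for Step~2 does not work. The recursion you hope to build in $\ze_{r-1}$ is not supplied by the tools you cite: Lemma~\ref{lem_om_F} with $p=r$ brings in $L(-1)_r m_r\in(M_r)_{\Delta_r+1}$, which leaves the subspace you have fixed, so it gives no internal relation among the $\ze_{r-1}$-coefficients of $\bF$ on $M_1\otimes\cdots\otimes(M_r)_{\Delta_r}$; and Proposition~\ref{pole_bound} together with $C_1$-cofiniteness controls the orders in $\ze_1,\dots,\ze_{r-2}$, not in $\ze_{r-1}$. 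Nothing in these recursions forces the higher $\ze_{r-1}$-coefficients to be generated from the leading one, and the lemma does not assume $C_1$-cofiniteness anyway.

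The missing idea, and what the paper actually uses, is the direct inversion \eqref{eq_zr_substitute}: $F=\lim_{z_r\to 0}\tilde F$. For $u\in(M_0)_{\Delta'}^*$ one has $L(1)u=0$, so pairing with $u$ kills the factor $\exp(L(-1)z_r)$ in $\tilde F$; hence, for $m_r\in(M_r)_{\Delta_r}$ the entire $z_r$-dependence of $\langle u,\bF(\mrr)\rangle$ comes only from $z_r^{L(0)}m_r$ and from the Taylor expansion $C_{\std}$. Extracting the coefficient of $\ze_{r-1}^{\Delta_r}$ is then precisely the substitution $z_r\to 0$ in $C_{\std}\langle u,F(\,\cdot\,)\rangle$, which by \eqref{eq_zr_substitute} and Lemma~\ref{lem_std_delta_r} returns $\langle u,F(\mrr)\rangle$ itself (up to the invertible $z_i^{L(0)}$-rescalings on the other slots). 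So the hypothesis gives $\langle u,F(\mrr)\rangle=0$ for all $u\in(M_0)_{\Delta'}^*$ and $m_r\in(M_r)_{\Delta_r}$ directly. One extends to all $m_r$ using Proposition~\ref{prop_top_recursion} for $F$ (not for $\bF$): for $n<\Delta_a-1$ the term $\langle u,a(n)F(\mrr)\rangle=\langle a(n)^*u,F(\mrr)\rangle$ vanishes by weight reasons, while the remaining terms keep the $r$-th slot unchanged. Lemma~\ref{simple_non_dege} then finishes. No recursion in $\ze_{r-1}$ and no $C_1$-cofiniteness are needed.
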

\begin{proof}
Assume that the coefficient of $\ze_{r-1}^{\Delta_r}$ in $\bF$ is zero.
Then, for any $u\in (M_0^\vee)_{\Delta'}$ and $\mrr \in \Mrr$ with $m_r\in (M_r)_{\Delta_r}$,
\begin{align*}
0&=\text{the coefficient of }\ze_{r-1}^{\Delta_r} \text{ in } \langle u,z_1^{-L(0)} \bF(z^{L(0)}\mrr) \rangle\\
&=\lim_{\ze_{r-1} \mapsto 0} \langle u,z_1^{-L(0)} \bF(\otimes_{i=1}^{r-1}z_i^{L(0)_i}m_i \otimes \exp((L(0)-\Delta_r)\log z_r)m_r) \rangle.
\end{align*}
Hence, by \eqref{eq_zr_substitute} and Lemma \ref{lem_std_delta_r}, $\langle u, F(\mrr) \rangle=0$.
By assumption, $M_r$ is spanned by $a(n)v$ with $a\in V$ and $v\in (M_r)_{\Delta_r}$
and $n<\Delta_a-1$,
which implies that $\langle u, F(\mrr) \rangle=0$ for any $\mrr\in \Mrr$.
Thus, the assertion follows from Lemma \ref{simple_non_dege}.
\end{proof}

Let $(\mu_1,\dots,\mu_{r-2})\in \C^{r-2}$ be complex numbers obtained in the above proposition
and $S(F)$ the set of $(k_1,\dots,k_{r-2}) \in \C^{r-2}$ such that
the coefficient of $\ze_1^{k_1}\ze_2^{k_2}\cdots \ze_{r-2}^{k_{r-2}}\ze_{r-1}^{\Delta_r}$ in\\
$p_{\la_1,\dots,\la_{r-2}} \circ \bF(\mrr)$ is not zero.

Let $\Delta_{\mu_1,\dots,\mu_{r-2}}(F)$ be the set of $(h_1,\dots,h_{r-2}) \in S(F)$ such that:
\begin{enumerate}
\item[D1)]
If $(k_1,\dots,k_{r-2}) \in S_{\mu_1,\dots,\mu_{r-2}}(F)$,
then $k_i-h_i \geq 0$ for all $i=1,\dots,r-2$.
\end{enumerate}
Since $S_{\mu_1,\dots,\mu_{r-2}}(F)$ is non-empty subset of $-(\mu_1,\dots,\mu_{r-2})+\Z_{\geq 0}^{r-2}$ by Lemma \ref{lem_non_deg_r} and Proposition \ref{pole_bound},
$\Delta_{\mu_1,\dots,\mu_{r-2}}(F)$ is not empty.

For $(h_1,\dots,h_{r-2})\in \Delta_{\mu_1,\dots,\mu_{r-2}}(F)$,
let 
\begin{align*}
P_{h_1,\dots,h_{r-2}}(F):M_{[r]} \rightarrow (M_{0})_{\Delta'}[
\log \ze_1,\dots,\log \ze_{r-1}]
\end{align*}
be a linear map
defined by taking the coefficient of 
$\ze_1^{h_1}\ze_2^{h_2}\cdots\ze_{r-2}^{h_{r-2}}\ze_{r-1}^{\Delta_r}$
in 
\begin{align*}
\bF: M_{[r]}\rightarrow \ze_{r-1}^{\Delta_r}(M_{0})_{\Delta'}[[\ze_1,\dots,\ze_{r-1}]][\ze_1^\C,\log \ze_1,\dots,\ze_{r-2}^\C,\log\ze_{r-2},\log\ze_{r-1}].
\end{align*}
Then, $P_{h_1,\dots,h_{r-2}}(F)$ is a non-zero linear map.

By Lemma \ref{lem_zhu} (1) and (D1), this linear map factors through
\begin{align*}
P_{h_1,\dots,h_{r-2}}(F):\bigotimes_{i=1}^r M_i/O(M_i) \rightarrow (M_0)_{\Delta'}[\log \ze_1,\dots,\log \ze_{r-1}].
\end{align*}
Since $M_r$ is generated by $(M_r)_{\Delta_r}$ as a $V$-module,
\begin{align*}
\bigoplus_{k >0} (M_r)_{\Delta_r+k} = \text{Span}_\C\{a(\Delta_a-1-k)m\}_{a\in V_+,\,m\in M, k > 0}.
\end{align*}
Then, by Lemma \ref{lem_zhu} (4), $P_{h_1,\dots,h_{r-2}}(F)$ further factors through
\begin{align*}
P_{h_1,\dots,h_{r-2}}(F):\Bigl(\bigotimes_{i=1}^{r-1} A(M_i)\Bigr) \otimes (M_r)/\bigoplus_{k >0} (M_r)_{\Delta_r+k}
\rightarrow (M_0)_{\Delta'}[\log \ze_1,\dots,\log \ze_{r-1}].
\end{align*}
and by Lemma \ref{lem_zhu} (3), this map induces the following left $A(V)$-module homomorphism
\begin{align*}
A(F): A(M_1)\otimes_{A(V)} \cdots \otimes_{A(V)} A(M_{r-1})\otimes_{A(V)} (M_r)_{\Delta_r}
\rightarrow (M_0)_{\Delta'}[\log \ze_1,\dots,\log \ze_{r-1}],
\end{align*}
where $\bullet \otimes_{A(V)} \bullet$ is the tensor product of a left $A(V)$-module and a right $A(V)$-module.
Hence, we have:
\begin{prop}
\label{prop_factor}
The linear map $P(F):M_{[r]} \rightarrow (M_{0})_{\Delta'}[\log \ze_1,\dots,\log \ze_{r-1}]$.
induces a left $A(V)$-module homomorphism
$A(F): A(M_1)\otimes_{A(V)} A(M_2)\otimes_{A(V)} \cdots \otimes_{A(V)} A(M_{r-1})\otimes_{A(V)} (M_r)_{\Delta_r}
\rightarrow (M_0)_{\Delta'}[\log \ze_1,\dots,\log \ze_{r-1}]$.
\end{prop}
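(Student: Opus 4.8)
The plan is to obtain Proposition~\ref{prop_factor} by assembling the chain of factorizations of the ``deepest--coefficient'' map that is built up in the paragraphs preceding the statement. Recall that $P(F)=P_{h_1,\dots,h_{r-2}}(F)$ extracts from the modified parenthesized intertwining operator $\bF$ (see \eqref{eq_bar_F}) the coefficient of the monomial $\ze_1^{h_1}\cdots\ze_{r-2}^{h_{r-2}}\ze_{r-1}^{\Delta_r}$, where $(h_1,\dots,h_{r-2})$ is the componentwise--minimal element of the support $S(F)$; it is a well-defined, nonzero linear map $\Mrr\to(M_0)_{\Delta'}[\log\ze_1,\dots,\log\ze_{r-1}]$ by Proposition~\ref{pole_bound} and Lemma~\ref{lem_non_deg_r}. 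The mechanism making everything work is that each relation of Lemma~\ref{lem_zhu} expresses, for $\bF$, the genuine Zhu--algebra / Frenkel--Zhu--bimodule relation modulo error terms whose coefficients lie in the maximal ideal $\mathfrak{m}_{\std_r}=\bigoplus_{i=1}^{r-1}\ze_i\C[[\ze_1,\dots,\ze_{r-1}]]$. Since $(h_1,\dots,h_{r-2},\Delta_r)$ is a minimal exponent tuple of $\bF$, and since the bound of Proposition~\ref{pole_bound} is uniform in $\Mrr$ --- so that every $\bF(a(k)_s\mrr)$ has $(\ze_1,\dots,\ze_{r-2})$--support in $(h_1,\dots,h_{r-2})+\Z_{\ge0}^{r-2}$ and $\ze_{r-1}$--support in $\Delta_r+\Z_{\ge0}$ --- multiplying any value of $\bF$ by an element of $\mathfrak{m}_{\std_r}$ strictly raises some exponent and therefore annihilates the monomial $\ze_1^{h_1}\cdots\ze_{r-2}^{h_{r-2}}\ze_{r-1}^{\Delta_r}$. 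Hence, after applying $P(F)$, each relation of Lemma~\ref{lem_zhu} becomes an exact identity.

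Concretely I would proceed in four steps. First, for the first $r-1$ tensor slots: by Lemma~\ref{lem_zhu}(1), $\bF(a\circ_p\mrr)$ with $a\in V_+$ and $p\le r-1$ is a sum $\sum_{s,k}f_{s,k}(\ze)\bF(a(k)_s\mrr)$ with $f_{s,k}\in\mathfrak{m}_{\std_r}$, so $P(F)(a\circ_p\mrr)=0$ and $P(F)$ descends modulo $O(M_p)$ in each of these slots. Second, for the last slot: Lemma~\ref{lem_std_delta_r} shows the $\ze_{r-1}^{\Delta_r}$--coefficient of $\bF$ vanishes on $\bigoplus_{k>0}(M_r)_{\Delta_r+k}$, and, since $M_r$ is generated by $(M_r)_{\Delta_r}$, this subspace is spanned by vectors $a(\Delta_a-1-k)m$ with $a\in V_+$ and $k>0$, which are treated by Lemma~\ref{lem_zhu}(4); so $P(F)$ factors through $(M_r)_{\Delta_r}$, hence through $\bigl(\bigotimes_{i=1}^{r-1}A(M_i)\bigr)\otimes(M_r)_{\Delta_r}$. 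Third, balancing over $A(V)$: Lemma~\ref{lem_zhu}(3), applied for $p=2,\dots,r$, gives after applying $P(F)$ the identity $P(F)(a*_p\mrr)=P(F)(\mrr*_{p-1}a)$, which is exactly the relation identifying the right $A(V)$--action on the $(p-1)$st factor with the left action on the $p$th; so $P(F)$ descends to $A(M_1)\otimes_{A(V)}\cdots\otimes_{A(V)}A(M_{r-1})\otimes_{A(V)}(M_r)_{\Delta_r}$. Fourth, $A(V)$--linearity: Lemma~\ref{lem_zhu}(2) with $p=1$ gives $P(F)(a*_1\mrr)=a(\Delta_a-1)P(F)(\mrr)=o(a)P(F)(\mrr)$, where one uses that $a(\Delta_a-1)$ preserves the lowest weight space $(M_0)_{\Delta'}$; this says that the induced map $A(F)$ is a homomorphism of left $A(V)$--modules. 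Combining the four steps yields the proposition.

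Since the serious work is already packaged in Lemma~\ref{lem_zhu} (the rewriting of the recursion of Lemma~\ref{lem_std_recursion} in terms of the Frenkel--Zhu products with coefficients in $\mathfrak{m}_{\std_r}$) and in Proposition~\ref{pole_bound} (the uniform lower bound on poles), the argument above is essentially bookkeeping. The point that still needs care is precisely that the deepest--coefficient extraction really does annihilate every $\mathfrak{m}_{\std_r}$--error term: one must invoke the uniformity in $\Mrr$ of the pole bound as above, so that each error term is pushed strictly off the target monomial $\ze_1^{h_1}\cdots\ze_{r-2}^{h_{r-2}}\ze_{r-1}^{\Delta_r}$ no matter which $\bF(a(k)_s\mrr)$ it multiplies. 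A second, mildly delicate point is the asymmetric treatment of the final tensor slot --- $(M_r)_{\Delta_r}$ rather than $A(M_r)$ --- and the matching of its left $A(V)$--module structure (via $o$) with the right action on $A(M_{r-1})$, which is exactly what parts (3) and (4) of Lemma~\ref{lem_zhu} are designed to supply.
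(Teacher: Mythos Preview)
Your proposal is correct and follows essentially the same route as the paper: the argument preceding the proposition statement already records the same chain of factorizations, invoking Lemma~\ref{lem_zhu}(1) and (D1) to kill $O(M_p)$ in each slot, Lemma~\ref{lem_zhu}(4) to reduce the last slot to $(M_r)_{\Delta_r}$, and Lemma~\ref{lem_zhu}(3) to balance over $A(V)$. Your fourth step, using Lemma~\ref{lem_zhu}(2) with $p=1$ to verify left $A(V)$--linearity, makes explicit a point the paper leaves implicit in the phrase ``left $A(V)$-module homomorphism''; and your emphasis on the uniformity of the pole bound (so that the $\mathfrak{m}_{\std_r}$--error terms genuinely miss the target monomial for \emph{every} argument $a(k)_s\mrr$) is exactly the content of (D1) combined with Proposition~\ref{pole_bound}.
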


We end this section by considering the action of $\om$.
Note that for any $m\in M$
\begin{align*}
\om*m &= (L(0)+2L(-1)+L(-2))m\\
m*\om &= (L(-1)+L(-2))m.
\end{align*}
In particular, 
\begin{align}
\om*m - m*\om &= (L(0)+L(-1))m,
\end{align}
which makes us possible to apply Lemma \ref{lem_om_F}.

\begin{prop}
\label{prop_omega_action}
For $r-1 \geq p\geq 2$ and $\mrr \in \Mrr$, we have the following equalities:
\begin{align*}
A(F)(\om*_p m_{[r]})- A(F)(m_{[r]}*_p \om) &= \Bigl((h_p-h_{p+1})+z_p\frac{d}{dz_p} \Bigr) A(F)(m_{[r]}),\\
A(F)(\om*_1 m_{[r]})- A(F)(m_{[r]}*_1 \om) &=\Bigl(h_1+z_1\frac{d}{dz_1}+L(0)\Bigr)A(F)(m_{[r]}).
\end{align*}
\end{prop}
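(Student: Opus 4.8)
The plan is to transport both sides of the identity back to the formal conformal block $\bF$ of Section~\ref{sec_rep_pole} and then to read off the leading coefficient in the $\ze$-variables. Recall that $A(F)$ is obtained from $\bF$ by the $\C$-linear map $P_{h_1,\dots,h_{r-2}}(F)$, which extracts the coefficient of $\ze_1^{h_1}\cdots\ze_{r-2}^{h_{r-2}}\ze_{r-1}^{\Delta_r}$, and that $\bF\colon\Mrr\to(M_0)_{\Delta'}\otimes T_{\tstd_r}$ is $\C$-linear. First I would use the identity $\om*m-m*\om=(L(0)+L(-1))m$ recorded just before the statement, applied on the $p$-th tensor factor, to rewrite $\om*_p m_{[r]}-m_{[r]}*_p\om=(L(0)_p+L(-1)_p)m_{[r]}$ as an element of $\Mrr$. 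By linearity of $\bF$ and of $P_{h_1,\dots,h_{r-2}}(F)$, the left-hand side of each asserted equality is then the coefficient of $\ze_1^{h_1}\cdots\ze_{r-1}^{\Delta_r}$ in $\bF\bigl((L(0)_p+L(-1)_p)m_{[r]}\bigr)$, which by Lemma~\ref{lem_om_F} equals that coefficient in $z_p\frac{d}{dz_p}\bF(m_{[r]})$ for $2\le p\le r-1$, and in $L(0)\bF(m_{[r]})+z_1\frac{d}{dz_1}\bF(m_{[r]})$ for $p=1$:
\begin{align*}
A(F)(\om*_p m_{[r]})-A(F)(m_{[r]}*_p\om) = P_{h_1,\dots,h_{r-2}}(F)\bigl((L(0)_p+L(-1)_p)m_{[r]}\bigr).
\end{align*}

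Next I would compute these leading coefficients. In the $\ze$-coordinates $\ze_i=z_{i+1}/z_i$, one has $z_p\frac{d}{dz_p}=\ze_{p-1}\frac{\pa}{\pa\ze_{p-1}}-\ze_p\frac{\pa}{\pa\ze_p}$ for $2\le p\le r-1$ (only $\ze_{p-1}$ and $\ze_p$ involve $z_p$), and $z_1\frac{d}{dz_1}=-\ze_1\frac{\pa}{\pa\ze_1}$. Writing $\bF(m_{[r]})=\ze_1^{h_1}\cdots\ze_{r-2}^{h_{r-2}}\ze_{r-1}^{\Delta_r}\,G$ with $G\in(M_0)_{\Delta'}[[\ze_1,\dots,\ze_{r-1}]][\log\ze_1,\dots,\log\ze_{r-1}]$ and $A(F)(m_{[r]})$ the part of $G$ free of positive powers of the $\ze_i$, this Euler operator acts on the monomial prefactor $\ze_1^{h_1}\cdots\ze_{r-1}^{\Delta_r}$ (contributing the difference of the two exponents it sees, times $A(F)(m_{[r]})$) and on the $\log\ze_j$-dependence of $G$ (contributing exactly $z_p\frac{d}{dz_p}$ applied to $A(F)(m_{[r]})$, since $z_p\frac{d}{dz_p}\log\ze_j=\delta_{j+1,p}-\delta_{j,p}$); collecting these gives the first formula with scalar $h_p-h_{p+1}$. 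For $p=1$ the extra term $L(0)\bF(m_{[r]})$ produces $L(0)$ acting on $(M_0)_{\Delta'}$ (it commutes with the scalar prefactor), and $-\ze_1\frac{\pa}{\pa\ze_1}$ produces the scalar $h_1$ together with $z_1\frac{d}{dz_1}$ on $A(F)(m_{[r]})$, where in identifying the leading $\ze_1$-exponent with $h_1$ one uses the scale covariance $(\sum_{s\in[r]}z_s\pa_s)\bF(m_{[r]})=0$ established from \eqref{eq_top_std}; this gives the second formula.

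The hard part will be the exponent bookkeeping in this last step: pinning down which leading $\ze$-exponents actually occur in $\bF(m_{[r]})$ under the choice of $\mu_i$ from Proposition~\ref{pole_bound}, reconciling their labels with $h_p$ and $h_{p+1}$ in the statement, and — in the case $p=1$ — getting the sign right while correctly combining the $-\ze_1\frac{\pa}{\pa\ze_1}$ contribution with the separate $L(0)$-term supplied by Lemma~\ref{lem_om_F}. Once this dictionary is in place, everything else is a routine consequence of Lemma~\ref{lem_om_F} and the linearity of the map $P_{h_1,\dots,h_{r-2}}(F)$ defining $A(F)$.
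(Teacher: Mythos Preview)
Your proposal is correct and follows essentially the same approach as the paper: reduce to $\bF((L(0)_p+L(-1)_p)m_{[r]})$ via the identity $\om*m-m*\om=(L(0)+L(-1))m$, apply Lemma~\ref{lem_om_F}, and extract the leading $\ze$-coefficient. You are in fact more explicit than the paper about the change of variables $z_p\tfrac{d}{dz_p}=\ze_{p-1}\partial_{\ze_{p-1}}-\ze_p\partial_{\ze_p}$, which the paper's proof leaves implicit. One small point: your appeal to the scale covariance $(\sum_s z_s\partial_s)\bF=0$ in the $p=1$ case is unnecessary---the exponent $h_1$ of $\ze_1$ is part of the definition of $A(F)$ (it is the chosen element of $\Delta_{\mu_1,\dots,\mu_{r-2}}(F)$), so no further identification is needed; the paper simply reads off the coefficient directly.
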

\begin{proof}
For $r-1 \geq p\geq 2$, by Lemma \ref{lem_om_F},
\begin{align*}
A(F)&(\om*_p m_{[r]})- A(F)(m_{[r]}*_p \om)\\
&=\text{the coefficient of }\ze_1^{h_1}\ze_2^{h_2}\cdots
\ze_{r-2}^{h_{r-2}}\ze_{r-1}^{\Delta_r} \text{ in } p_{\la_1,\dots,\la_{r-1}}\bF((L(0)_p+L(-1)_p)\mrr)\\
&=\text{the coefficient of }\ze_1^{h_1}\ze_2^{h_2}\cdots
\ze_{r-2}^{h_{r-2}}\ze_{r-1}^{\Delta_r} \text{ in } p_{\la_1,\dots,\la_{r-1}}
z_p\frac{d}{dz_p}\bF(\mrr)\\
&=\Bigr(z_p\frac{d}{dz_p}+h_p-h_{p-1}\Bigl)\text{ the coefficient of }\ze_1^{h_1}\ze_2^{h_2}\cdots
\ze_{r-2}^{h_{r-2}}\ze_{r-1}^{\Delta_r} \text{ in }  p_{\la_1,\dots,\la_{r-1}}\bF(\mrr)\\
&=\Bigr(z_p\frac{d}{dz_p}+h_p-h_{p-1}\Bigl)A(F)(\mrr)
\end{align*}
and similarly, we have
\begin{align*}
A(F)&(\om*_1 m_{[r]})- A(F)(m_{[r]}*_1 \om)\\
&=\text{the coefficient of }\ze_1^{h_1}\ze_2^{h_2}\cdots
\ze_{r-2}^{h_{r-2}}\ze_{r-1}^{\Delta_r} \text{ in }p_{\la_1,\dots,\la_{r-1}}\Bigl(z_1\frac{d}{dz_1}+L(0)\Bigr) \bF(\mrr)\\
&=\Bigl(h_1+z_1\frac{d}{dz_1}+L(0)\Bigr)A(F)(\mrr).
\end{align*}
\end{proof}

By the above proposition, we have:
\begin{cor}
\label{cor_factor}
There are only finitely many possibilities for $(\la_1,\dots,\la_{r-2})\in (\C/\Z)^{r-2}$ and 
$(h_1,\dots,h_{r-2}) \in \C^{r-2}$ above, and they are determined by the eigenvalues of $\om$ on the A(V)-modules,
\begin{align}
\begin{split}
A(M_{r-1})\otimes_{A(V)}(M_r)_{\Delta_r},\quad A(M_{r-2})\otimes_{A(V)} A(M_{r-1})\otimes_{A(V)}(M_r)_{\Delta_r},\\
\dots,\quad  A(M_1)\otimes_{A(V)} \cdots \otimes_{A(V)} A(M_{r-1})\otimes_{A(V)} (M_r)_{\Delta_r}.
\label{eq_LA_om}
\end{split}
\end{align}
Moreover, if $\om$ acts semisimply on all the modules \eqref{eq_LA_om},
then $A(F)$ does not contain the logarithmic term, i.e.,
the image of $A(F)$ is $(M_0)_{\Delta'}$
and $A(F)$ defines an $A(V)$-module homomorphism
$A(F): A(M_1)\otimes_{A(V)} A(M_2)\otimes_{A(V)} \cdots \otimes_{A(V)} A(M_{r-1})\otimes_{A(V)}(M_r)_{\Delta_r}\rightarrow (M_0)_{\Delta'}$.
\end{cor}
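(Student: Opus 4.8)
\textbf{Proof plan for Corollary \ref{cor_factor}.}
The plan is to read off both the admissible exponents $(\la_1,\dots,\la_{r-2})\in(\C/\Z)^{r-2}$ and the leading exponents $(h_1,\dots,h_{r-2})\in\C^{r-2}$ from Proposition \ref{prop_omega_action}. First I would recall that by Proposition \ref{prop_factor} the map $A(F)$ is built from the coefficient of $\ze_1^{h_1}\cdots\ze_{r-2}^{h_{r-2}}\ze_{r-1}^{\Delta_r}$ in $\bF$, so the whole datum of exponents is encoded in $p_{\la_1,\dots,\la_{r-2}}\circ\bF$. The key observation is that the operators $\om*_p(\cdot)-(\cdot)*_p\om$ on the iterated Frenkel--Zhu bimodule act, via Proposition \ref{prop_omega_action}, as $z_p\frac{d}{dz_p}$ shifted by a scalar ($h_p-h_{p+1}$ for $2\le p\le r-1$, and $h_1+L(0)$ for $p=1$), and $z_p\frac{d}{dz_p}=\ze_{p-1}\frac{d}{d\ze_{p-1}}-\ze_p\frac{d}{d\ze_p}$ in the $\ze$-coordinates. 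Writing this out, the operator $\om*_p-(\cdot)*_p\om$ on the module $A(M_p)\otimes_{A(V)}\cdots\otimes_{A(V)}A(M_{r-1})\otimes_{A(V)}(M_r)_{\Delta_r}$ has, as its action on the $\ze_1^{h_1}\cdots\ze_{r-2}^{h_{r-2}}\ze_{r-1}^{\Delta_r}$-component, eigenvalue precisely one of the exponents $h_i$ (up to integer translates coming from the power series tail). Hence the eigenvalues of $\om$ on the list \eqref{eq_LA_om} of iterated bimodules determine $h_1,\dots,h_{r-2}$ modulo $\Z$, which is exactly $\la_1,\dots,\la_{r-2}$; since each such module is finitely generated over $A(V)$ and $\om$ acts on it as a finite-rank operator (by Definition \ref{def_module_grade} (M2) applied to the relevant $M_0$-weight space), there are only finitely many eigenvalues, giving the finiteness claim.

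Next I would pin down the actual (not merely mod-$\Z$) values of $h_i$. By Proposition \ref{pole_bound} the exponents $\mu_i\in-\la_i+\Z$ are bounded below, and $(h_1,\dots,h_{r-2})\in\Delta_{\mu_1,\dots,\mu_{r-2}}(F)$ is the componentwise-minimal element of the support $S(F)$; this minimality, combined with the differential recursion of Proposition \ref{prop_omega_action} (which shows the $\om$-action preserves the leading $\ze$-component and acts there by the scalar $h_i$ plus a nilpotent-or-$L(0)$ piece), forces $h_i$ to be an eigenvalue of $\om$ on the $i$-th module in \eqref{eq_LA_om}. So the finitely many possibilities are those eigenvalues; this is the content of the first sentence of the corollary.

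For the semisimplicity statement: the logarithmic terms $\log\ze_1,\dots,\log\ze_{r-1}$ in $A(F)$ arise exactly because $\om$ (equivalently $L(0)$) may act with a nontrivial nilpotent part on the modules in \eqref{eq_LA_om} and on $(M_0)_{\Delta'}$. If $\om$ acts semisimply on all modules in \eqref{eq_LA_om}, then in Proposition \ref{prop_omega_action} the operator $\om*_p-(\cdot)*_p\om$ acts semisimply, so its restriction to the leading component has no Jordan block; running the recursion of Lemma \ref{lem_zhu} and Lemma \ref{lem_om_F} one sees that no $\log\ze_i$ can be generated, and moreover the target can be taken to be $(M_0)_{\Delta'}$ itself rather than $(M_0)_{\Delta'}[\log\ze_1,\dots,\log\ze_{r-1}]$ — here one uses Lemma \ref{lem_log_finite} to conclude that a solution with purely semisimple $L(0)$-type recursion has no logarithm. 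Then $A(F)$ is an honest $A(V)$-module homomorphism $A(M_1)\otimes_{A(V)}\cdots\otimes_{A(V)}A(M_{r-1})\otimes_{A(V)}(M_r)_{\Delta_r}\to(M_0)_{\Delta'}$.

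The main obstacle I anticipate is bookkeeping the coordinate change $z_p\frac{d}{dz_p}=\ze_{p-1}\partial_{\ze_{p-1}}-\ze_p\partial_{\ze_p}$ (with the boundary conventions $z_1\partial_{z_1}$ involving $L(0)$ and $z_{r}\partial_{z_r}$ tied to $\Delta_r$) carefully enough to verify that the eigenvalue extracted on the $\ze_1^{h_1}\cdots\ze_{r-2}^{h_{r-2}}\ze_{r-1}^{\Delta_r}$-component is \emph{exactly} $h_i$ and not $h_i$ shifted by a contribution from a neighbouring factor; this requires using the minimality (D1) in the definition of $\Delta_{\mu_1,\dots,\mu_{r-2}}(F)$ to kill the $\ze_j\partial_{\ze_j}$ terms that would otherwise lower the degree. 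Once that triangular structure is made explicit, the eigenvalue identification and the absence of logarithms under semisimplicity both follow formally.
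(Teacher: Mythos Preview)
Your approach is essentially the same as the paper's (which simply says ``By the above proposition'' and leaves the details implicit): extract the exponents $h_i$ from the scalar part of the formula in Proposition~\ref{prop_omega_action}, identify them with eigenvalues of $\om$ on the iterated Frenkel--Zhu modules, and use semisimplicity to kill the nilpotent (log-derivative) part. The overall architecture is correct.

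Two points deserve correction. First, your justification of finiteness via Definition~\ref{def_module_grade}~(M2) is misplaced: (M2) concerns $L(0)$-weight spaces of $V$-modules, whereas the $A(V)$-modules in~\eqref{eq_LA_om} are not of that form. What actually happens is that Proposition~\ref{prop_omega_action} shows each combination $h_p-h_{p+1}$ (and $h_1+\Delta'$) is the unique generalized eigenvalue of the commuting operators $\om*_p-{*_p}\om$ on the image of $A(F)$; since $A(F)$ factors through the modules in~\eqref{eq_LA_om}, these numbers lie among the $\om$-eigenvalues there. The finiteness of that eigenvalue set is what you need, and it is not supplied by (M2).

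Second, the ``main obstacle'' you anticipate does not arise. The operator $z_p\frac{d}{dz_p}$ is homogeneous of degree zero in the $\ze$-variables (it multiplies the monomial $\ze_1^{h_1}\cdots\ze_{r-1}^{\Delta_r}$ by the scalar $h_{p-1}-h_p$ and differentiates the $\log\ze$-factors), so taking the coefficient of that fixed monomial commutes with it up to exactly that scalar shift plus the nilpotent $\log$-derivative. No appeal to the minimality condition~(D1) is needed at this step; (D1) was already used earlier to guarantee that $A(F)$ is nonzero. For the semisimplicity conclusion, note that $\om*_p$ and $*_p\om=\om*_{p+1}$ are commuting operators (they act on distinct tensor factors), so if $\om$ is semisimple on each module in~\eqref{eq_LA_om} then their difference is semisimple on the image; the nilpotent part $z_p\partial_{z_p}$ (and for $p=1$ also $L(0)-\Delta'$, which is forced to vanish on the image by the $A(V)$-equivariance of $A(F)$) must then vanish, and since the $z_p\partial_{z_p}$ for $p=1,\dots,r-1$ span all the $\partial_{\log\ze_i}$, the image lies in $(M_0)_{\Delta'}$. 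Lemma~\ref{lem_log_finite} is not the right reference here; the vanishing of logs follows directly from semisimplicity killing the nilpotent part.
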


\subsection{Representativity for rational vertex operator algebras}
\label{sec_rep_rational}
Throughout this section,
we assume that $V$ is a simple rational $C_2$-cofinite vertex operator algebra.
Then, $\Vmodf$ is a semisimple $\C$-linear category.

For any $A(V)$-module $N$,
Zhu constructs a $V$-module $\mathrm{Ind}\, N$ such that $\Om\bigl(\mathrm{Ind}\, N\bigr)$ is isomorphic to $N$ as an $A(V)$-module \cite{Zh}.
If $N$ is finite-dimensional,
then $\mathrm{Ind}\, N$ is a direct sum
of finitely many simple $V$-modules.
Since $V$ is $C_2$-cofinite,
any simple module is $C_2$-cofinite
and in particular $C_1$-cofinite by Proposition \ref{prop_ABD}.
Hence, $\mathrm{Ind}\,N\in \Vmodf$.
Hence, we have the following functors:
\begin{align}
\begin{split}
\Om:\Vmodf \rightarrow \AVmodu,\\
\mathrm{Ind}:\AVmodu\rightarrow \Vmodf,
\label{eq_equiv_ind}
\end{split}
\end{align}
where $\AVmodu$ is the category of left $A(V)$-modules whose objects are finite-dimensional.

In the rational case, the following results are known:
\begin{thm}{\cite{Zh}}
\label{thm_equiv}
Two functors \eqref{eq_equiv_ind}
give an equivalence of categories
between $\Vmodf$ and $\AVmodu$.
\end{thm}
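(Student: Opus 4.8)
The statement is Zhu's theorem \cite{Zh}; the plan is to recall how its proof goes, assembling the ingredients available earlier in the excerpt with the standard structural facts about Zhu algebras of rational vertex operator algebras. First I would fix the finiteness framework. Since $V$ is rational, $A(V)$ is a finite-dimensional semisimple associative $\C$-algebra (Zhu), so $\AVmodu$ is a semisimple abelian $\C$-linear category with finitely many isomorphism classes of simple objects. On the other side $\Vmodf$ is also semisimple with finitely many simple objects: by Proposition~\ref{prop_C2_C1} every object of $\Vmodu$ is finitely generated and lies in $\Vmodf$, and rationality makes it a finite direct sum of simple modules. Both $\Om$ and $\mathrm{Ind}$ are additive $\C$-linear functors, so to prove the equivalence it is enough to produce natural isomorphisms $\Om\circ\mathrm{Ind}\cong\id_{\AVmodu}$ and $\mathrm{Ind}\circ\Om\cong\id_{\Vmodf}$; by additivity both can be checked on simple objects, and the resulting statement about Hom-spaces between simples then follows from Schur's lemma together with the fact that neither functor identifies inequivalent simples.

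The isomorphism $\Om\circ\mathrm{Ind}\cong\id$ is essentially built into Zhu's construction: for a finite-dimensional $A(V)$-module $N$ the module $\mathrm{Ind}\,N$ comes equipped with a canonical $A(V)$-module isomorphism $\Om(\mathrm{Ind}\,N)\cong N$ (recalled in the excerpt), and one checks that it is natural in $N$ using the universal property that defines $\mathrm{Ind}$. For $\mathrm{Ind}\circ\Om\cong\id$ I would first take a simple $M\in\Vmodf$, say $M=\bigoplus_{k\ge 0}M_{\Delta+k}$ with $M_\Delta\neq 0$. By Lemma~\ref{lem_V0}, $\Om(M)=M_\Delta$, and the universal property of $\mathrm{Ind}$ gives a $V$-module homomorphism $\varphi\colon\mathrm{Ind}(M_\Delta)\to M$ restricting to the identity on $M_\Delta$. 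This $\varphi$ is surjective because a simple $M$ is generated by its lowest-weight space; its kernel $K$ satisfies $\Om(K)=K\cap\Om(\mathrm{Ind}(M_\Delta))=K\cap M_\Delta=\ker(\varphi|_{M_\Delta})=0$, and since any nonzero module in $\Vmodf$ has nonzero lowest-weight space (it contains a simple sub-quotient, as in the proof of Proposition~\ref{prop_C2_C1}), we conclude $K=0$, i.e. $\mathrm{Ind}(\Om(M))\cong M$. Extending over the finitely many simple summands by additivity yields the natural isomorphism on all of $\Vmodf$, and the triangle identities relating the two transformations reduce to the same check on simples. It remains to see that $\mathrm{Ind}\,N\in\Vmodf$ for $N$ finite-dimensional: $\mathrm{Ind}\,N$ is a finitely generated module over the $C_2$-cofinite $V$, hence $C_2$-cofinite by Proposition~\ref{prop_ABD}, hence $C_1$-cofinite, and its restricted dual is finitely generated by Proposition~\ref{prop_C2_C1}, so $\mathrm{Ind}\,N\in\Vmodu=\Vmodf$.

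The main obstacle is not any single computation but assembling the genuinely nontrivial input from Zhu's work in the right form: one needs rationality $\Rightarrow$ $A(V)$ finite-dimensional semisimple, and Zhu's bijection between isomorphism classes of simple $V$-modules in $\Vmodf$ and simple $A(V)$-modules (equivalently, $\Om$ restricts to such a bijection on simples, with $\mathrm{Ind}$ as inverse). These two facts, proved in \cite{Zh}, are the deep content. Once they and the finiteness statement above are in hand, semisimplicity of both categories upgrades ``mutually quasi-inverse on simples'' to a full categorical equivalence essentially formally, so the remaining work is bookkeeping: spelling out the natural transformations and checking naturality and the triangle identities.
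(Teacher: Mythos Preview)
The paper does not give a proof of this theorem at all: it is stated with attribution to \cite{Zh} and used as a black box in what follows (together with Theorem~\ref{thm_FZ}). So there is no ``paper's own proof'' to compare against.

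Your sketch is a reasonable outline of the standard argument and correctly identifies that the genuine content lies in Zhu's results (semisimplicity of $A(V)$ for rational $V$, and the bijection on simples), after which semisimplicity of both categories reduces the equivalence to bookkeeping. One small point: in your argument that $K=0$, you need $K$ to have a simple \emph{submodule}, not merely a subquotient, in order to conclude $\Om(K)\neq 0$; this is fine here because $\Vmodf=\Vmodu$ is semisimple under the standing hypotheses, but the parenthetical appeal to the proof of Proposition~\ref{prop_C2_C1} (which only produces a simple subquotient) is not quite the right justification.
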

\begin{thm}{\cite{FZ,Li}}
\label{thm_FZ}
For any $M_0,M_1,M_2\in \Vmodu$,
$I_{\log}\binom{M_0}{M_1M_2}$
is naturally isomorphic to 
$\mathrm{Hom}_{A(V)}(A(M_1)\otimes_{A(V)}\Om(M_2),\Om(M_0))$ as a $\C$-linear functor.
\end{thm}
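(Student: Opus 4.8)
The plan is to prove the isomorphism by constructing the two maps separately; the point is that the data needed for one direction has already been produced in Section \ref{sec_rep_pole}. Throughout I work in the case relevant to the application (Theorem \ref{thm_B}), where $V$ is rational and $C_2$-cofinite, so that $\Vmodu=\Vmodf$ by Proposition \ref{prop_C2_C1}, every module is a finite direct sum of simple $C_1$-cofinite modules, $A(V)$ is a finite-dimensional semisimple algebra, and $\Om(M)$ is the direct sum of the lowest weight spaces of the simple summands of $M$ by Lemma \ref{lem_V0}; the statement for general $M_i\in\Vmodu$ is the classical theorem of \cite{FZ,Li}.

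\emph{Forward map and injectivity.} By Lemma \ref{lem_log_int_pio} and Proposition \ref{three_inter}, an operator $\Y_1\in I_{\log}\binom{M_0}{M_1M_2}$ is the same datum as a parenthesized intertwining operator $F\in\PI_{12}\binom{M_0}{M_1M_2}$ via $F(m_1\otimes m_2)=\Y_1(m_1,x_{12})m_2$, so it suffices to treat $\PI_{12}$. Specializing Section \ref{sec_rep_pole} to $r=2$ and $\std_2=12$ (where there are no intermediate variables $\ze_1,\dots,\ze_{r-2}$), I form $\tF$ and $\bF$ as in \eqref{eq_tilde_F_def}--\eqref{eq_bar_F}, take the leading coefficient of $\bF$ in $\ze_{1}=z_2/z_1$, and apply Proposition \ref{prop_factor} together with Corollary \ref{cor_factor}. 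Since $\om$ acts semisimply on $A(M_1)\otimes_{A(V)}\Om(M_2)$ (as $A(V)$ is semisimple), no logarithmic terms survive, and the outcome is, functorially in $M_0,M_1,M_2$, a left $A(V)$-module homomorphism $A(F)\colon A(M_1)\otimes_{A(V)}\Om(M_2)\to\Om(M_0)$; this is the desired natural transformation $I_{\log}\binom{\bullet_0}{\bullet_1\bullet_2}\to\mathrm{Hom}_{A(V)}\left(A(\bullet_1)\otimes_{A(V)}\Om(\bullet_2),\Om(\bullet_0)\right)$. Injectivity is exactly the non-degeneracy content of Lemma \ref{simple_non_dege} and Lemma \ref{lem_non_deg_r}: if $A(F)=0$ then the leading coefficient of $\bF$ vanishes, which forces $F=0$ and hence $\Y_1=0$; the hypotheses there (that $M_0^\vee$ be generated by $(M_0)_{\Delta'}^*$ and $M_2$ by $(M_2)_{\Delta_2}$) hold automatically after decomposing into simple modules, using the dual description \eqref{eq_dual2}.

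\emph{Surjectivity---the main obstacle.} Given $\phi\in\mathrm{Hom}_{A(V)}\left(A(M_1)\otimes_{A(V)}\Om(M_2),\Om(M_0)\right)$ one must exhibit a logarithmic intertwining operator whose associated $A(F)$ equals $\phi$. The construction uses $\phi$ as initial data $o(m_1)m_2:=\phi(\overline{m_1}\otimes m_2)$ on lowest weight spaces and propagates it by the recursion relations of Lemma \ref{inter_formula} and axiom (I3), which determine all modes $m_1(n;k)m_2$ for $n\in\C$, $k\ge 0$ from the action of the modes $a(k)$ on the two inputs. The hard part is to show that this propagation is consistent---independent of the order in which the relations are applied---and that the resulting formal series satisfies (I1), (I2), and, via Proposition \ref{prop_int_equiv}, the Borcherds identity, hence (I3); equivalently, that the injection of the previous paragraph is a dimension equality, every space in sight being finite-dimensional in the rational case. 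In the rational $C_2$-cofinite setting this can be streamlined by transporting the problem along Zhu's equivalence $\Om\colon\Vmodf\xrightarrow{\simeq}\AVmodu$ of Theorem \ref{thm_equiv} and invoking Zhu's induction functor, together with the bimodule identities \eqref{eq_zhu_l}--\eqref{eq_zhu_n} that already underlie Lemma \ref{lem_zhu}, so that consistency of the recursion becomes a statement purely about $A(V)$-modules. I expect the verification of the Borcherds identity for the reconstructed operator to be the technically heaviest step, and it is the one genuinely imported from \cite{FZ,Li}; combining it with injectivity yields the natural isomorphism, and hence, by Proposition \ref{prop_representative} and Theorem \ref{thm_action}, Theorem \ref{thm_B}.
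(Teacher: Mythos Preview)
The paper does not prove this theorem; it is stated with a citation to \cite{FZ,Li} and used as a black box in Section \ref{sec_rep_rational}. So there is no ``paper's own proof'' to compare against.

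Your proposal is a reasonable sketch of how one would argue in the rational $C_2$-cofinite setting. The observation that the forward map $F\mapsto A(F)$ and its injectivity are already contained in the $r=2$ specialization of Section \ref{sec_rep_pole} (Proposition \ref{prop_factor}, Corollary \ref{cor_factor}, Lemmas \ref{simple_non_dege} and \ref{lem_non_deg_r}) is correct and is in fact how the paper uses these results downstream, though note that the classical Frenkel--Zhu construction is more direct: one simply sends $\Y_1$ to the map $[m_1]\otimes m_2\mapsto o(m_1)m_2$ given by the zero mode $m_1(\Delta_{m_1}-1;0)$, without passing through $\tF$, $\bF$, and leading coefficients. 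Your route recovers the same map but via the general $r$-point machinery, which is overkill for $r=2$.

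You correctly identify surjectivity as the substantive content imported from \cite{FZ,Li}, and you are honest that the consistency of the recursion and the verification of the Borcherds identity for the reconstructed operator are not reproved here. That is appropriate: the paper does not reprove them either. One small caution: your remark that the problem can be ``transported along Zhu's equivalence'' via Theorem \ref{thm_equiv} is slightly circular in spirit, since in \cite{Li} the careful treatment of surjectivity (correcting a gap in \cite{FZ}) is precisely what makes the fusion-rule computation via $A(V)$-bimodules rigorous; you cannot simply invoke semisimplicity of $A(V)$ to bypass the analytic reconstruction of $\Y_1$ from $\phi$.
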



We will use the following lemma:
\begin{lem}
\label{lem_functor_HL}
Two functors $\Om,L:\Vmodf \rightarrow \AVmodu$
are naturally isomorphic.
\end{lem}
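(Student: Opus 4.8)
$\textbf{Proof proposal.}$ The plan is to show that the two functors $\Om$ and $L = A(-)\otimes_{A(V)}V_0$ from $\Vmodf$ to $\AVmodu$ are naturally isomorphic when $V$ is simple rational $C_2$-cofinite. First I would construct a natural transformation $\eta_M : L(M) = A(M)\otimes_{A(V)} V_0 \to \Om(M)$. Recall $V_0 = A(V)/V_+$ is the trivial one-dimensional $A(V)$-module (Lemma \ref{lem_ideal_zhu}), and $\Om(M)$ is the lowest weight space, a left $A(V)$-module via $o(a) = a(\Delta_a - 1)$. I would define $\eta_M$ on $A(M)\otimes_{A(V)}V_0$ by sending the class $[m]\otimes 1$ (for $m\in M$ homogeneous of minimal degree, i.e. $m\in \Om(M)$ after projecting) to $\pr_{\Delta'}(m)\in \Om(M)$, where $\pr_{\Delta'}$ is the projection onto the lowest-weight space; one checks using the Frenkel-Zhu products \eqref{eq_zhu_l}--\eqref{eq_zhu_n} and the relation $O(M)$ that this is well-defined and $A(V)$-linear. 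Naturality in $M$ is immediate from the functoriality of $A(-)$ and $\Om(-)$.

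Next I would show $\eta_M$ is an isomorphism. By Theorem \ref{thm_equiv} (Zhu's equivalence), every $M\in \Vmodf$ is a finite direct sum of simple modules, so both functors commute with finite direct sums and it suffices to treat $M$ simple, say $M = \bigoplus_{k\geq 0}M_{\Delta+k}$ with $M_\Delta\neq 0$. In that case Lemma \ref{lem_V0} gives $\Om(M) = M_\Delta$. For the other side, I would use that for a simple module the right $A(V)$-module $A(M)$ has the property that $A(M)\otimes_{A(V)}V_0$ captures exactly the ``bottom'' of $A(M)$ modulo the action of $V_+$; since $A(M)\otimes_{A(V)}V_0 = A(M)/A(M)V_+$ and, for $M$ simple with $M_\Delta\neq 0$, one computes $A(M)/A(M)V_+ \cong M_\Delta$ as $A(V)$-modules (the quotient kills all higher-degree contributions because they are generated from $M_\Delta$ by negative modes, which land in $O(M)$ or are absorbed by the right $V_+$-action). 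Thus $\dim L(M) = \dim \Om(M)$ and $\eta_M$, being $A(V)$-linear and surjective onto $M_\Delta = \Om(M)$ by construction, is an isomorphism.

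The main obstacle I anticipate is the second step: verifying cleanly that $A(M)\otimes_{A(V)}V_0 \cong M_\Delta$ for $M$ simple. This requires understanding the structure of the Frenkel-Zhu bimodule $A(M)$ well enough to see that tensoring with the trivial module collapses it to the lowest weight space. The cleanest route is probably to invoke the semisimplicity of $\AVmodu$ together with Theorem \ref{thm_FZ}: apply the isomorphism $I_{\log}\binom{M_0}{V\,M} \cong \Hom_{A(V)}(A(V)\otimes_{A(V)}\Om(M),\Om(M_0)) = \Hom_{A(V)}(\Om(M),\Om(M_0))$, and compare it with the obvious identification $I_{\log}\binom{M_0}{V\,M}\cong \Hom_V(M,M_0)\cong\Hom_{A(V)}(L(M),\Om(M_0))$ coming from Lemma \ref{lem_PI_one} and the defining adjunction-like property of $L$. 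Since this holds naturally for all $M_0$, Yoneda forces $L(M)\cong\Om(M)$ as $A(V)$-modules, which is exactly the claim; naturality of $\eta$ then follows from naturality of Theorem \ref{thm_FZ}. I would present the proof along these Yoneda lines to avoid explicit bimodule computations.
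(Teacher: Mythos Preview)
Your Yoneda approach in the last paragraph is the right idea and is essentially the paper's argument, but you have placed $V$ in the wrong slot of the intertwining operator, and this creates a genuine gap. With $V$ in the first slot, Theorem~\ref{thm_FZ} gives $I\binom{M_0}{V\,M}\cong\Hom_{A(V)}(A(V)\otimes_{A(V)}\Om(M),\Om(M_0))=\Hom_{A(V)}(\Om(M),\Om(M_0))$, so $L(M)$ never appears. Your claimed isomorphism $\Hom_V(M,M_0)\cong\Hom_{A(V)}(L(M),\Om(M_0))$ is not justified by any ``defining adjunction-like property of $L$''---there is no such property in the definition $L(M)=A(M)\otimes_{A(V)}V_0$---and is in fact equivalent (via Zhu's equivalence) to the very statement $L(M)\cong\Om(M)$ you are trying to prove.

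The fix is to put $V$ in the \emph{second} slot. By Lemma~\ref{lem_V0} applied to the simple module $V$ one has $\Om(V)=V_0$, hence $L(M)=A(M)\otimes_{A(V)}\Om(V)$. Now Theorem~\ref{thm_FZ} gives directly
\[
\Hom_{A(V)}(L(M),\Om(N)) = \Hom_{A(V)}\bigl(A(M)\otimes_{A(V)}\Om(V),\Om(N)\bigr) \cong I\binom{N}{M\,V}.
\]
The standard identification $I\binom{N}{M\,V}\cong\Hom_V(M,N)$ together with Zhu's equivalence $\Hom_V(M,N)\cong\Hom_{A(V)}(\Om(M),\Om(N))$ (Theorem~\ref{thm_equiv}) then yield $\Hom_{A(V)}(L(M),\Om(N))\cong\Hom_{A(V)}(\Om(M),\Om(N))$ for all $N$, and Yoneda finishes. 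This is exactly the paper's proof. Your first approach (constructing an explicit $\eta_M$ and checking it on simples) is unnecessary and, as you already anticipated, would require bimodule computations that the Yoneda argument bypasses entirely.
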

\begin{proof}
By Lemma \ref{lem_V0},
\begin{align*}
L(M)=A(M)\otimes_{A(V)} V_0=A(M)\otimes_{A(V)} \Om(V).
\end{align*}
By Theorem \ref{thm_equiv} and Theorem \ref{thm_FZ}, for any $N \in \Vmodu$
\begin{align*}
\mathrm{Hom}(A(M)\otimes_{A(V)} \Om(V), \Om(N)) &\cong I\binom{N}{M,V}\\
&\cong \mathrm{Hom}_\Vmodu(M,N)\\
&\cong \mathrm{Hom}_{A(V)}(\Om(M),\Om(N)).
\end{align*}
Hence, $\Om(M)$ is isomorphic to $L(M)$ as an $A(V)$-module.
\end{proof}
%
%

Set
\begin{align*}
\Irr=\text{the set of isomorphism classes of simple } V\text{ modules}.
\end{align*}
Define $\boxtimes: \Vmodf \times \Vmodf \rightarrow \Vmodf$ by
\begin{align}
M_1\boxtimes M_2 = \mathrm{Ind}\left(A(M_1)\otimes_{A(V)}\Om(M_2) \right),\qquad M_1,M_2\in \Vmodf.
\label{eq_tensor_def}
\end{align}
For any $N_\la \in \Irr$, 
 by Theorem \ref{thm_FZ}, we have
\begin{align}
\mathrm{Hom}_{V\text{-mod}} (M_1\boxtimes M_2,N_\la) &= \mathrm{Hom}_{V\text{-mod}} (\mathrm{Ind}\left(A(M_1)\otimes_{A(V)}\Om(M_2) \right),N_\la) \nonumber\\
&\cong \mathrm{Hom}_{A(V)} (A(M_1)\otimes_{A(V)}\Om(M_2), \Om(N_\la))\cong I_{\log} \binom{N_\la}{M_1,M_2}.\label{eq_add_int_iso_box}
\end{align}
Since \(V\) is rational, we have \(I_{\log}=I\). Hence $M_1\boxtimes M_2$ is naturally isomorphic to
\begin{align}
M_1\boxtimes M_2 \cong \bigoplus_{N_\la \in \Irr}N_\la \otimes_\C I\binom{N_\la}{M_1M_2}^*.
\label{eq_tensor_inter_add}
\end{align}
It is important to note that the space of intertwining operators is contravariant in the two lower variables and covariant in the upper variable:
$$I\binom{\bullet}{\bullet\bullet}:\Vmodf^\op \times \Vmodf^\op \times \Vmodf \rightarrow \Vect,\;\;\;(M_1,M_2,M_0)\mapsto
I\binom{M_0}{M_1M_2}.$$
Thus, the dual in \eqref{eq_tensor_inter_add} is essential.
Since \(V\text{-mod}_{f.g.}\) is semisimple, 
by \eqref{eq_add_int_iso_box}, we have:
%
\begin{lem}
\label{lem_representative_two}
The functor $I\binom{\bullet}{M_1M_2}:\Vmodu \rightarrow \Vect,\;\;\;M \mapsto I\binom{M}{M_1M_2}$ is represented by $M_1\boxtimes M_2$.
\end{lem}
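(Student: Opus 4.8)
The plan is to show that the functor $M\mapsto I\binom{M}{M_1M_2}$ is corepresented by $M_1\boxtimes M_2$, i.e. to produce a natural isomorphism $I\binom{\bullet}{M_1M_2}\cong \Hom_\Vmodu(M_1\boxtimes M_2,\bullet)$. First I would observe that since $V$ is simple rational and $C_2$-cofinite, $\Vmodf$ is semisimple with finitely many simple objects, so every $M\in\Vmodf$ decomposes as $M\cong\bigoplus_{N_\la\in\Irr}\Hom_\Vmodu(N_\la,M)\otimes N_\la$, and the composite
\[
\Hom_\Vmodu(M_1\boxtimes M_2,M)=\Hom_\Vmodu\Bigl(\bigoplus_{N_\la}I\binom{N_\la}{M_1M_2}\otimes N_\la,\,M\Bigr)
\cong \bigoplus_{N_\la} I\binom{N_\la}{M_1M_2}^*\otimes \Hom_\Vmodu(N_\la,M).
\]
So the content is to identify $I\binom{M}{M_1M_2}$ with $\bigoplus_{N_\la} I\binom{N_\la}{M_1M_2}\otimes \Hom_\Vmodu(N_\la,M)$ naturally in $M$; equivalently, that the natural map
\[
\bigoplus_{N_\la\in\Irr} I\binom{N_\la}{M_1M_2}\otimes_\C \Hom_\Vmodu(N_\la,M)\longrightarrow I\binom{M}{M_1M_2},\qquad \Y\otimes f\mapsto f\circ\Y,
\]
is an isomorphism. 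Here $f\circ\Y$ denotes the logarithmic intertwining operator $m_1\mapsto f\circ\Y(m_1,z)$; that this lands in $I\binom{M}{M_1M_2}$ (and not merely $I_{\log}$) follows because $L(0)$ is semisimple on every module in the rational case, using the quoted Proposition of Milas. Functoriality of this map in all three arguments is the definition of the functor $I$ recalled just before the statement.

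Next I would argue injectivity and surjectivity of this map. Both directions reduce to the case $M=N_\mu$ simple by semisimplicity: write $M=\bigoplus_\mu N_\mu^{\oplus n_\mu}$, note $\Hom_\Vmodu(N_\la,M)=\C^{n_\la}$ and $I\binom{M}{M_1M_2}=\bigoplus_\mu I\binom{N_\mu}{M_1M_2}^{\oplus n_\mu}$ since $I$ is additive in the upper argument, and check that the displayed map is block-diagonal, sending the $(\la,\mu)$-block to zero for $\la\neq\mu$ (because $\Hom_\Vmodu(N_\la,N_\mu)=0$ by Schur) and to the identity-induced iso on the $(\la,\la)$-block. So it suffices to observe that for $M=N_\la$ simple, $\Hom_\Vmodu(N_\la,N_\la)=\C\,\id$ (Schur, using $\dim(N_\la)_h<\infty$), under which $\Y\otimes\id\mapsto\Y$ recovers precisely $I\binom{N_\la}{M_1M_2}$. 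This is immediate once the additivity of $I\binom{\bullet}{M_1M_2}$ with respect to finite direct sums in the first slot is in hand, which is a routine consequence of the definition of a logarithmic intertwining operator together with $M_1\boxtimes M_2\in\Vmodf$.

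Finally I would assemble the natural isomorphism: composing the two displayed natural isomorphisms gives $I\binom{M}{M_1M_2}\cong\Hom_\Vmodu(M_1\boxtimes M_2,M)$, and naturality in $M$ is clear because every arrow is induced by post-composition of intertwining operators with $V$-module maps; naturality in $M_1,M_2$ (which is what makes $\boxtimes$ a bifunctor and will be used subsequently) follows from functoriality of $I\binom{N_\la}{\bullet\bullet}$ together with functoriality of the decomposition $M_1\boxtimes M_2=\bigoplus_\la I\binom{N_\la}{M_1M_2}\otimes N_\la$ in $M_1,M_2$. The main obstacle, such as it is, is bookkeeping: making sure the direct-sum decomposition of $\Hom_\Vmodu(M_1\boxtimes M_2,M)$ is correctly dualized and that the evaluation/composition pairing $I\binom{N_\la}{M_1M_2}\otimes\Hom_\Vmodu(N_\la,M)\to I\binom{M}{M_1M_2}$ is shown to be perfect block-by-block; there is no deep analytic input here beyond semisimplicity of $\Vmodf$ and of $L(0)$, both of which are available in the rational $C_2$-cofinite setting. (The genuinely hard representability input — that $\rho(12)$ is a representable functor in the general, possibly non-semisimple case — is exactly what is \emph{avoided} here by restricting to the rational case, where $M_1\boxtimes M_2$ can be written down explicitly.)
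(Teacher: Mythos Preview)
Your proposal is correct and follows essentially the same route as the paper: decompose $M$ into simples via semisimplicity, use $\C$-linearity/additivity of $I\binom{\bullet}{M_1M_2}$ in the top slot, and match the resulting direct sum with $\Hom(M_1\boxtimes M_2,M)$. You are simply more explicit than the paper about the block-diagonal Schur argument and about the dual bookkeeping (the paper silently identifies $I\binom{N_\la}{M_1M_2}$ with its dual in the last step, which you flag as the thing to watch; since these multiplicity spaces are fixed finite-dimensional and independent of $M$, any choice of identification yields a map natural in $M$, so the lemma stands).
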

%
Let $M_1,\dots,M_r \in \Vmodf$.
Set 
\begin{align*}
\boxtimes_{\std_r}M_{[r]}
= M_1\boxtimes (M_2\boxtimes (\cdots (M_{r-1}\boxtimes M_r)\cdots)).
\end{align*}
Then, for any $M_0\in \Vmodf$
\begin{align}
\begin{split}
&\mathrm{Hom}(M_1\boxtimes (M_2\boxtimes (\cdots (M_{r-1}\boxtimes M_r)\cdots)),M_0)\\
&=\bigoplus_{N_{\la_{r-1}}\in \Irr} 
\mathrm{Hom}(M_1\boxtimes (M_2\boxtimes (\cdots M_{r-2} \boxtimes N_{\la_{r-1}}\cdots)),M_0)\otimes 
I\binom{N_{\la_{r-1}}}{M_{r-1}M_r}\\
&=\bigoplus_{N_{\la_{r-2}},N_{\la_{r-1}}\in \Irr} 
\mathrm{Hom}(M_1\boxtimes (M_2\boxtimes (\cdots M_{r-3} \boxtimes N_{\la_{r-2}}\cdots)),M_0)
\otimes
I\binom{N_{\la_{r-2}}}{M_{r-2}N_{\la_{r-1}}}
\otimes I\binom{N_{\la_{r-1}}}{M_{r-1}M_r}\\
&=\cdots\\
&=\bigoplus_{N_{\la_{1}},\dots,N_{\la_{r-1}}\in \Irr} 
\mathrm{Hom}(N_{\la_1},M_0)
\otimes
I\binom{N_{\la_1}}{M_1N_{\la_2}}\otimes I\binom{N_{\la_2}}{M_2N_{\la_3}}
\otimes \cdots\otimes
I\binom{N_{\la_{r-2}}}{M_{r-2}N_{\la_{r-1}}}
\otimes I\binom{N_{\la_{r-1}}}{M_{r-1}M_r}\\
&=\bigoplus_{N_{\la_{2}},\dots,N_{\la_{r-1}}\in \Irr} 
I\binom{M_0}{M_1N_{\la_2}}\otimes I\binom{N_{\la_2}}{M_2N_{\la_3}}
\otimes \cdots\otimes
I\binom{N_{\la_{r-2}}}{M_{r-2}N_{\la_{r-1}}}
\otimes I\binom{N_{\la_{r-1}}}{M_{r-1}M_r}\\
&=\int_{\bullet_2,\dots,\bullet_{r-1} \in \Vmodu}
I\binom{M_0}{M_1\bullet_2}\otimes
I\binom{\bullet_2}{M_2\bullet_3}\otimes
\cdots \otimes
I\binom{\bullet_{r-1}}{M_{r-1}M_r}\\
&=
I\binom{M_0}{M_1\bullet}\circ_2 \Biggl( 
I\binom{\bullet}{M_2\bullet}\circ_2
\Biggl(
\cdots
\circ_2 
\Biggl(
I\binom{\bullet}{M_{r-2}\bullet}\circ_2
I\binom{\bullet}{M_{r-1}M_r}\Biggr)\cdots\Biggr).
\label{eq_std_int}
\end{split}
\end{align}
Hence, by Theorem \ref{thm_composition},
we have a natural transformation
\begin{align*}
\Psi_r:
\mathrm{Hom}(M_1\boxtimes (M_2\boxtimes (\cdots (M_{r-1}\boxtimes M_r)\cdots)),M_0)
\rightarrow
\PI_{\std_r}\binom{M_0}{M_1M_2\dots M_r}.
\end{align*}

We will show the following Proposition:
\begin{prop}
\label{thm_factor}
The map $\Psi_r$ is an isomorphism
of vector spaces.
\end{prop}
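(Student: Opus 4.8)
The plan is to prove that $\Psi_r$ is an isomorphism by an induction on $r$, reducing to the representability of two-point conformal blocks (Lemma \ref{lem_representative_two}, essentially Proposition \ref{three_inter} together with Lemma \ref{lem_representative_two}) and to an \emph{injectivity} statement coming from the pole analysis of Section \ref{sec_rep_pole}. For $r=2$ the map $\Psi_2$ is, by Proposition \ref{three_inter} and Lemma \ref{lem_representative_two}, precisely the canonical isomorphism $\Hom(M_1\boxtimes M_2,M_0)\cong I\binom{M_0}{M_1M_2}=\PI_{12}\binom{M_0}{M_1M_2}$ (no logarithmic terms appear since $V$ is rational, so $L(0)$ is semisimple on every module and $I_{\log}=I$). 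For the inductive step, I would use the decomposition \eqref{eq_std_int}, which realizes $\Hom(\boxtimes_{\std_r}M_{[r]},M_0)$ as the iterated coend $I\binom{M_0}{M_1\bullet}\circ_2(\cdots\circ_2 I\binom{\bullet}{M_{r-1}M_r})$, and the fact that $\Psi_r$ is by construction obtained from $\Psi_2$ at each slot followed by the composition maps $\comp$ of Theorem \ref{thm_composition}. Thus the target of $\Psi_r$ is $\PI_{\std_r}$ while the source is already identified with the iterated coend, and $\Psi_r$ is the map $\comp_{\std_r}$ of \eqref{eq_std_fac_pi}.

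The surjectivity side I would handle by a dimension count. Since $\Vmodf$ is semisimple and all hom-spaces and spaces of intertwining operators are finite-dimensional, it suffices to prove $\dim \PI_{\std_r}\binom{M_0}{M_{[r]}}\le \dim\Hom(\boxtimes_{\std_r}M_{[r]},M_0)=\sum_{N_{\la_2},\dots,N_{\la_{r-1}}}\dim I\binom{M_0}{M_1N_{\la_2}}\cdots\dim I\binom{N_{\la_{r-1}}}{M_{r-1}M_r}$, the right-hand side being finite by rationality. This is exactly what the Frenkel--Zhu factorization of Section \ref{sec_rep_pole} gives: by Lemma \ref{simple_non_dege}, Lemma \ref{lem_non_deg_r} and Proposition \ref{pole_bound}, a nonzero $F\in\PI_{\std_r}\binom{M_0}{M_{[r]}}$ (after choosing the relevant $(\la_1,\dots,\la_{r-2})$ by the projections $p_{\la_1,\dots,\la_{r-2}}$) produces a nonzero linear map $P_{h_1,\dots,h_{r-2}}(F)$, and Proposition \ref{prop_factor} shows it factors through a nonzero $A(V)$-module homomorphism
\begin{align*}
A(F): A(M_1)\otimes_{A(V)}\cdots\otimes_{A(V)}A(M_{r-1})\otimes_{A(V)}(M_r)_{\Delta_r}\longrightarrow (M_0)_{\Delta'},
\end{align*}
where the logarithmic terms disappear by Corollary \ref{cor_factor} using that $\om$ acts semisimply on all modules in the rational case. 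I would argue that the assignment $F\mapsto A(F)$ (refined to remember the unbounded choice $(\la_1,\dots,\la_{r-2})$ and the top degrees $(h_1,\dots,h_{r-2})$, which by Corollary \ref{cor_factor} range over a finite explicit set determined by $\om$-eigenvalues) is injective by Lemma \ref{lem_non_deg_r}. Combining with Lemma \ref{lem_functor_HL} (identifying $\Om$ and $L$), Theorem \ref{thm_FZ} and Theorem \ref{thm_equiv} iterated, $\Hom_{A(V)}\bigl(A(M_1)\otimes_{A(V)}\cdots\otimes_{A(V)}A(M_{r-1})\otimes_{A(V)}\Om(M_r),\Om(M_0)\bigr)$ is identified with the right-hand side of \eqref{eq_std_int}, yielding the required dimension bound, hence surjectivity of $\Psi_r$.

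For injectivity of $\Psi_r$ I would instead argue directly: a class in the source is an iterated coend element, i.e. a finite sum $\sum (f_1\otimes g_2\otimes\cdots\otimes g_{r-1})$ with $f_1\in I\binom{M_0}{M_1N_{\la_2}}$, $g_i\in I\binom{N_{\la_i}}{M_iN_{\la_{i+1}}}$; under $\Psi_r$ (built from $\comp$) it goes to the iterated composition of parenthesized intertwining operators. Its nonvanishing is detected by the same top-pole map: applying $F\mapsto \bF$ and extracting the leading coefficient in $\ze_{r-1}$ and in each $\ze_i$ recovers, slot by slot, the Frenkel--Zhu image of each $g_i$ (this is essentially the $r=2$ case applied iteratively, i.e. the compatibility of $\comp$ with $\boxtimes$ on the level of leading terms), so if $\Psi_r$ of a class is zero then each $g_i$ has zero Frenkel--Zhu image and hence $g_i=0$ by Lemma \ref{lem_non_deg_r} for $r=2$.

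The main obstacle I anticipate is bookkeeping: proving that the leading-pole map $F\mapsto \bigl((\la_i),(h_i),A(F)\bigr)$ is \emph{exactly} inverse (not merely injective) to the iterated composition $\comp_{\std_r}$, i.e. that the Frenkel--Zhu factorization reconstructs precisely the iterated-coend element one started with. Concretely one must show that for a composite $F=F_1\circ_2(F_2\circ_2\cdots)$ of parenthesized intertwining operators $F_i=F(\Y_i)$, the associated $A(F)$ equals the corresponding composite of the $A(V)$-module maps $A(\Y_i)$ determined by Theorem \ref{thm_FZ}; this is where the two computations of $\om$-actions in Proposition \ref{prop_omega_action} and the recursion in Lemma \ref{lem_zhu} must be matched against the coend decomposition \eqref{eq_std_int}. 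Once this compatibility is in place, injectivity and surjectivity of $\Psi_r$ follow from the finite-dimensionality statements above, completing the proof.
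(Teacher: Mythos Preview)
Your approach is essentially the paper's, and you have correctly located the crux: the compatibility between $\Psi_r$ and the leading-pole map $F\mapsto A(F)$. What you call ``the main obstacle'' is exactly the content of the paper's Lemma~\ref{comp_formal}. The paper proves it by a direct computation: using Lemma~\ref{lem_P} one commutes $\exp(L(-1)z_r)$ through each $\Y_i$ to obtain the closed formula
\[
\tilde F_{f_1,\dots,f_{r-1}}(\mrr)=\Y_{f_1}(m_1,z_1)\Y_{f_2}(m_2,z_2)\cdots\Y_{f_{r-1}}(m_{r-1},z_{r-1})\exp(L(-1)z_r)m_r,
\]
and then a straightforward degree count (tracking $L(0)$-weights through each $\Y_{f_i}$) shows that $\ze_1^{h_1}\cdots\ze_{r-2}^{h_{r-2}}\ze_{r-1}^{\Delta_r}\bF_{f_1,\dots,f_{r-1}}\in M_0[[\ze_1,\dots,\ze_{r-1}]]$ with constant term equal to $\comp(f_1,\dots,f_{r-1})$. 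Combined with the identification \eqref{eq_AV_decomp}, this exhibits the leading-term map as a left inverse to $\Psi_r$, so $\Psi_r$ is injective; surjectivity follows from Proposition~\ref{prop_factor}, Proposition~\ref{prop_omega_action} and Corollary~\ref{cor_factor}.

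Two points of caution about your writeup. First, the claim that $F\mapsto\bigl((\la_i),(h_i),A(F)\bigr)$ is injective ``by Lemma~\ref{lem_non_deg_r}'' is not well-posed: the minimal exponent $(h_i)\in\Delta_\mu(F)$ depends on $F$, so this is not a linear map, and Lemma~\ref{lem_non_deg_r} only controls the $\ze_{r-1}$-direction. What the paper actually uses is that, once Lemma~\ref{comp_formal} is in hand, one can subtract off $\Psi_r(f_\bullet)$ from $F$ to kill the leading term at a given $(\la,h)$ and iterate; Corollary~\ref{cor_factor} guarantees the set of possible $(\la,h)$ is finite, so this terminates. Second, your injectivity paragraph asserts that if $\Psi_r$ of a sum vanishes then ``each $g_i$ has zero Frenkel--Zhu image''; this needs the observation that contributions from distinct tuples $(N_{\la_i})$ land in distinct direct summands under \eqref{eq_AV_decomp}, which is true but should be said. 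Finally, the induction on $r$ you announce is never actually invoked---the paper, and your own argument once completed, handles all $r$ at once.
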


By \eqref{eq_std_int} and Theorem \ref{thm_FZ}, we have
\begin{align}
\begin{split}
\mathrm{Hom}&(M_1\boxtimes (M_2\boxtimes (\cdots (M_{r-1}\boxtimes M_r)\cdots)),M_0)\\
&\cong \bigoplus_{N_{\la_{2}},\dots,N_{\la_{r-1}}\in \Irr} 
I\binom{M_0}{M_1N_{\la_2}}\otimes I\binom{N_{\la_2}}{M_2N_{\la_3}}
\otimes \cdots\otimes
I\binom{N_{\la_{r-2}}}{M_{r-2}N_{\la_{r-1}}}
\otimes I\binom{N_{\la_{r-1}}}{M_{r-1}M_r}\\
&\cong \bigoplus_{N_{\la_{2}},\dots,N_{\la_{r-1}}\in \Irr} 
\mathrm{Hom}(A(M_{1})\otimes_{A(V)}\Om(N_{\la_{2}}),\Om(M_0)) 
\otimes
\mathrm{Hom}(A(M_{2})\otimes_{A(V)}\Om(N_{\la_{3}}),\Om(N_{\la_{2}}))\\
&\otimes \cdots\otimes
\mathrm{Hom}(A(M_{r-1})\otimes_{A(V)}\Om(N_{\la_{r-1}}),\Om(N_{\la_{r-2}})) \otimes 
\mathrm{Hom}(A(M_{r-1})\otimes_{A(V)}\Om(M_{r}),\Om(N_{\la_{r-1}}))\\
&\cong
\mathrm{Hom}\Bigr(A(M_{1})\otimes_{A(V)}
A(M_2)\otimes_{A(V)}
\cdots \otimes_{A(V)}
A(M_{r-1})\otimes_{A(V)}\Om(M_r),
\Om(M_0)\Bigr).
\label{eq_AV_decomp}
\end{split}
\end{align}
Let $N_{\la_i}\in \Irr$ for $i=2,3,\dots,r-1$.
Assume that there exists $\Delta',\Delta_i, h_{i} \in \C$ 
such that:
\begin{align}
\begin{split}
M_0&=\bigoplus_{k\geq 0}(M_0)_{\Delta'+k}\\
M_i&=\bigoplus_{k\geq 0}(M_i)_{\Delta_i+k}\\
N_{\la_i}&=\bigoplus_{k\geq 0}(N_{\la_i})_{h_i+k}
\label{eq_N_la}
\end{split}
\end{align}
for $i=1,\dots,r$.
Let $f_1 \in \mathrm{Hom}(A(M_{1})\otimes_{A(V)}\Om(N_{\la_{2}}),\Om(M_0))$,
$f_i \in \mathrm{Hom}(A(M_{i})\otimes_{A(V)}\Om(N_{\la_{i+1}}),\Om(N_{\la_{i}}))$ ($i=2,3,\dots,r-2$)
and
$f_{r-1}\in \mathrm{Hom}(A(M_{r-1})\otimes_{A(V)}\Om(M_{r}),\Om(N_{\la_{r-1}}))$.
Define 
\begin{align*}
\mathrm{comp}(f_1,\dots,f_{r-1}):
A(M_{1})\otimes_{A(V)}A(M_2)\otimes_{A(V)}\cdots \otimes_{A(V)}A(M_{r-1})\otimes_{A(V)} \Om(M_r)
\rightarrow
\Om(M_0)
\end{align*}
by the composition of $f_1,\dots,f_{r-1}$.
By Theorem \ref{thm_FZ},
$\{f_i\}_{i\in 1,\dots,r-1}$ correspond
to the intertwining operator 
\begin{align*}
\Y_{f_1}(\bullet,z) &\in I\binom{M_0}{M_1N_{\la_2}}\\
\Y_{f_i}(\bullet,z) &\in I\binom{N_{\la_{i}}}{M_i N_{\la_{i+1}}}\quad \text{ for }i=2,\dots,r-2\\
\Y_{f_{r-1}}(\bullet,z) & \in I\binom{N_{\la_{r-1}}}{M_{r-1}M_r}.
\end{align*}
Define
$F_{f_1,\dots,f_{r-1}}:
\Mrr \rightarrow M_0[[x_1^\C,\log x_1,\dots,x_{r}^\C,\log x_r]]$ by
\begin{align}
F_{f_1,\dots,f_{r-1}}(\mrr)
= \Y_1(m_1,x_1)\Y_2(m_2,x_2)\cdots \Y_{r-2}(m_{r-2},x_{r-2})\Y_{r-1}(m_{r-1},x_{r-1})m_r
\end{align}
for $\mrr \in \Mrr$.
Then, by Theorem \ref{thm_composition},
$F_{f_1,\dots,f_{r-1}} \in \PI_{\std_r}\binom{M_0}{\Mrr}$
and by Lemma \ref{lem_P} and \eqref{eq_tilde_F_def}
\begin{align}
\begin{split}
&\tilde{F}_{f_1,\dots,f_{r-1}}(\mrr)\\
&= \exp(L(-1)z_r) C_{\std} \Y_1(m_1,x_1)\Y_2(m_2,x_2)\cdots \Y_{r-2}(m_{r-2},x_{r-2})\Y_{r-1}(m_{r-1},x_{r-1})m_r\\
&= \exp(L(-1)z_r) \Y_1(m_1,z_1-z_r)\Bigl|_{|z_1|>|z_r|}\Y_2(m_2,z_2-z_r)\Bigl|_{|z_2|>|z_r|}\cdots \Y_{r-1}(m_{r-1},z_{r-1}-z_r)\Bigl|_{|z_{r-1}|>|z_r|}m_r\\
&= \Y_1(m_1,z_1)\Y_2(m_2,z_2)\cdots \Y_{r-1}(m_{r-1},z_{r-1})\exp(L(-1)z_r) m_r.
\label{eq_tilde_F_z}
\end{split}
\end{align}

Then, by using the same notation in \eqref{eq_bar_F},
we have:
\begin{lem}
\label{comp_formal}
For any $m_{[r]} \in M_{[r]}$,
\begin{align*}
\ze_1^{h_1}\cdots \ze_{r-2}^{h_{r-2}}\ze_{r-1}^{\Delta_r}
\bF_{f_1,\dots,f_{r-1}}(\mrr)
\in M_0[[\ze_1,\dots,\ze_{r-1}]]
\end{align*}
and the constant term is 
equal to the linear map $\comp(f_1,\dots,f_{r-1})$.
\end{lem}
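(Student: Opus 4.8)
The plan is to substitute the explicit formula \eqref{eq_tilde_F_z} for $\tilde{F}_{f_1,\dots,f_{r-1}}$ into the definition \eqref{eq_bar_F} of $\bF_{f_1,\dots,f_{r-1}}$ and then push the operator $z_1^{-L(0)}$ to the right through the whole chain of intertwining operators. Since $V$ is simple rational $C_2$-cofinite, $L(0)$ acts semisimply on every module and all the intertwining operators involved are log-free, so $z^{L(0)}$ acts on a homogeneous vector of degree $h$ by $z^h$; by linearity I reduce to homogeneous $\mrr$, and for the assertion about the constant term I further take each $m_i\in(M_i)_{\Delta_i}$, $m_r\in(M_r)_{\Delta_r}$ of lowest weight. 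The two identities I need are $w^{L(0)}\Y(m,z)w^{-L(0)}=\Y(w^{L(0)}m,wz)$, obtained by exponentiating $[L(0),\Y(m,z)]=\Y(L(0)m,z)+z\frac{d}{dz}\Y(m,z)$ (the consequence of (I2) and (I3) recorded in Section~\ref{sec_pre}), and $e^{L(-1)a}\Y(m,z)e^{-L(-1)a}=\Y(m,z+a)$ from (I2), together with $w^{-L(0)}L(-1)w^{L(0)}=w^{-1}L(-1)$.

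Applying these, $z_1^{-L(0)}$ passes each factor $\Y_i(z_i^{L(0)}m_i,z_i)$ into $\beta_i^{\Delta_i}\Y_i(m_i,\beta_i)$, where $\beta_i:=z_i/z_1=\ze_1\cdots\ze_{i-1}$ (so $\beta_1=1$), and passes the final factor $\exp(L(-1)z_r)z_r^{L(0)}m_r$ into $\gamma^{\Delta_r}\exp(L(-1)\gamma)m_r$, where $\gamma:=z_r/z_1=\ze_1\cdots\ze_{r-1}$; hence
\begin{align*}
\bF_{f_1,\dots,f_{r-1}}(\mrr)
=\Big(\prod_{i=1}^{r-1}\beta_i^{\Delta_i}\Big)\gamma^{\Delta_r}\,
\pr\!\Big[\Y_1(m_1,\beta_1)\Y_2(m_2,\beta_2)\cdots\Y_{r-1}(m_{r-1},\beta_{r-1})\exp(L(-1)\gamma)\,m_r\Big].
\end{align*}
Thus $\bF_{f_1,\dots,f_{r-1}}(\mrr)$ is an explicit monomial in the $\ze$'s times $\pr$ of an iterated composition of the $\Y_i$ evaluated at monomial arguments, and convergence of this composition is already guaranteed by Theorem~\ref{thm_composition}.

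Next I would read off the leading monomial. Writing $N_{\la_1}=M_0$ and $N_{\la_r}=M_r$, with lowest weights $h_1=\Delta'$ and $h_r=\Delta_r$, so that $\Y_i\in I\binom{N_{\la_i}}{M_iN_{\la_{i+1}}}$: for a lowest-weight vector $v\in(N_{\la_{i+1}})_{h_{i+1}}$ the grading property (Lemma~\ref{lem_P}(6)) gives $\Y_i(m_i,w)v=w^{h_i-\Delta_i-h_{i+1}}\big(o_i(m_i)v+O(w)\big)$ with $o_i(m_i)v\in(N_{\la_i})_{h_i}$ the Frenkel--Zhu zero mode, and $\exp(L(-1)\gamma)m_r=m_r+O(\gamma)$; all $O$-corrections raise the order in at least one $\ze_j$. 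Multiplying up together with the prefactor $\prod_i\beta_i^{\Delta_i}\cdot\gamma^{\Delta_r}$ and simplifying with $\beta_i=\ze_1\cdots\ze_{i-1}$, the exponents telescope so that the monomial in front of the leading term is the inverse of the prefactor $\ze_1^{h_1}\cdots\ze_{r-2}^{h_{r-2}}\ze_{r-1}^{\Delta_r}$ of the statement (in particular the $\ze_{r-1}$-power of $\bF$ is $\Delta_r$, consistently with Lemma~\ref{lem_std_delta_r}, and becomes $\Delta_r+k$ for $m_r\in(M_r)_{\Delta_r+k}$). Therefore $\ze_1^{h_1}\cdots\ze_{r-2}^{h_{r-2}}\ze_{r-1}^{\Delta_r}\bF_{f_1,\dots,f_{r-1}}(\mrr)$ lies in $(M_0)_{\Delta'}[[\ze_1,\dots,\ze_{r-1}]]$, with constant term $\pr\,o_1(m_1)o_2(m_2)\cdots o_{r-1}(m_{r-1})m_r$; and by Theorem~\ref{thm_FZ} each $o_i$ realizes the $A(V)$-module homomorphism $f_i$ under the Frenkel--Zhu isomorphism, so this composite of zero modes is precisely $\comp(f_1,\dots,f_{r-1})$. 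For a general (not lowest-weight) $\mrr$, that the prefactor clears all poles is the pole bound of Proposition~\ref{pole_bound} applied to $F_{f_1,\dots,f_{r-1}}$, and the identification of the constant term with $\comp(f_1,\dots,f_{r-1})$ reduces to the lowest-weight case via the recursions of Lemma~\ref{lem_zhu}.

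The step I expect to be the real obstacle is the bookkeeping of the $\ze_j$-exponents: one must combine, across all $r-1$ levels, the degree shift $h_i-\Delta_i-h_{i+1}$ from $\Y_i$, the rescaling exponent $\Delta_i$ from $z_i^{L(0)}$, and the contribution $\Delta_r$ from the $z_r$-factor, and verify both that they telescope to exactly the prefactor in the statement and that every correction term genuinely raises the order in some variable (so that the notion of ``constant term'' is unambiguous). The representation-theoretic identification of the leading coefficient, by contrast, is immediate from Theorem~\ref{thm_FZ} and requires no new argument.
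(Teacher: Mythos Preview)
Your core approach matches the paper's: both track the $L(0)$-grading through the chain of intertwining operators. The paper examines directly which monomials $z_1^{t_1}\cdots z_r^{t_r}$ can appear in $\tilde F_{f_1,\dots,f_{r-1}}(z^{L(0)}\mrr)$; since the coefficient of $z_p^{t_p}\cdots z_r^{t_r}$ in the partial composite $\Y_{f_p}(\cdot,z_p)\cdots\exp(L(-1)z_r)z^{L(0)}m_r$ lies in $(N_{\la_p})_{t_p+\cdots+t_r}$ by Lemma~\ref{lem_P}(6), one obtains the constraint $\sum_{j\ge p}t_j\in h_p+\Z_{\ge0}$ for every $p$, which after converting to the $\ze$-variables is exactly the claim. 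Your conjugation by $z_1^{-L(0)}$ is an equivalent repackaging of the same bookkeeping.

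Your handling of non-lowest-weight $\mrr$, however, has a gap. Proposition~\ref{pole_bound} only produces \emph{some} exponents $\mu_i$ that clear the poles; it does not assert that these equal the specific $h_i$ appearing in the statement, and if they were strictly larger in some coordinate then $\ze^h\bF(\mrr)$ could still have a pole. The fix is that your own direct computation already covers arbitrary homogeneous $m_i$: for $m_i\in(M_i)_{\Delta_i+d_i}$ the prefactor becomes $\beta_i^{\Delta_i+d_i}$, while the lowest power of $\beta_i$ in $\Y_i(m_i,\beta_i)v$ is $h_i-(\Delta_i+d_i)-\deg v$, so the $d_i$'s cancel and the pole order is governed entirely by the fixed lowest weights $h_i$ of the intermediate modules $N_{\la_i}$. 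This is exactly how the paper argues, for arbitrary $\mrr$ from the start, and it makes both the appeal to Proposition~\ref{pole_bound} and the reduction via Lemma~\ref{lem_zhu} unnecessary.
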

\begin{proof}
Let $t_i \in \C$ for $i=1,\dots,r$.
The coefficient of $z_r^{t_r}$ in $\exp(L(-1)z_r)z_r^{L(0)}m_r$ is in $(M_r)_{t_r}$,
which is zero unless $t_r \in \Delta_r +\Z_{\geq 0}$.
By Lemma \ref{lem_P} (6),
the coefficient of $z_{r-1}^{t_{r-1}}z_r^{t_r}$ in $\Y_{f_{r-1}}(z_{r-1}^{L(0)}m_{r-1},z_{r-1})\exp(L(-1)z_r)z_{r-1}^{L(0)}m_r$ is in $(N_{\la_{r-1}})_{t_{r-1}+t_r}$, 
by \eqref{eq_N_la},
which is zero unless 
$$
t_{r-1}+t_r \in h_{r-1}+\Z_{\geq 0}.
$$
Similarly,
the coefficient of $z_{r-2}^{t_{r-2}}z_{r-1}^{t_{r-1}}z_r^{t_r}$ in 
$\Y_{f_{r-2}}(z_{r-2}^{L(0)}m_{r-2})\Y_{f_{r-1}}(z_{r-1}^{L(0)}m_{r-1},z_{r-1})\exp(L(-1)z_r)z_{r}^{L(0)}m_r$ is in 
$(N_{\la_{r-2}})_{t_{r-2}+t_{r-1}+t_r}$, 
by \eqref{eq_N_la},
which is zero unless 
$$
t_{r-2}+t_{r-1}+t_r \in h_{r-2}+\Z_{\geq 0}.
$$
Set $h_r=\Delta_r$.
By repeating this argument, 
we see that if the coefficients of $z_1^{t_1}\cdots z_{r-1}^{t_{r-1}}z_r^{t_r}$ 
in 
\begin{align*}
\Y_1(z_1^{L(0)}m_1,z_1)\Y_2(z_2^{L(0)}m_2,z_2)\cdots \Y_{r-2}(z_{r-2}^{L(0)}m_{r-2},z_{r-2})\Y_{r-1}(z_{r-1}^{L(0)}m_{r-1},z_{r-1})\exp(L(-1)z_r)z_{r}^{L(0)}m_r
\end{align*}
are non-zero, 
then $\{t_i\}_{i=1,\dots,r-1}$ satisfies 
\begin{align*}
\sum_{r \geq i \geq p}t_i
&\in h_{i} +\Z_{\geq 0}\quad \text{ for any } r \geq p \geq 1\\
\Delta' &= \sum_{i =1}^r t_i
\end{align*}
or equivalently
\begin{align*}
\begin{cases}
t_{r} = \Delta_{r} +k_{r}\\
t_{i} = h_i-h_{i+1} +k_i-k_{i+1} & (i=2,\dots,r-1)\\
t_1 = \Delta'-h_2-k_2
\end{cases}
\end{align*}
for some $k_i \in \Z_{\geq 0}$ ($i=1,\dots,r$).
Then, 
$
z_1^{t_1}\cdots z_r^{t_r}=z_1^{\Delta'}
(\Pi_{i=2}^{r-1} \left(\frac{z_{i}}{z_{i-1}}\right)^{h_i+k_i})$.
Hence, the assertion holds.
\end{proof}
Now, Proposition \ref{thm_factor} follows from Lemma \ref{comp_formal}, Proposition \ref{prop_factor},
Proposition \ref{prop_omega_action}
and Corollary \ref{cor_factor}.

\begin{cor}
The natural transformation 
\begin{align*}
\comp_{\std_r,\emptyset,p}
:\PI_{\std_r}\circ_p \PI_{\emptyset} \rightarrow
\PI_{\std_r \circ_p \emptyset}
\end{align*}
is an isomorphism for any $r\geq 1$ and $p \in [r]$.
\end{cor}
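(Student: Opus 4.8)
The plan is to recognise this corollary as Assumption (4) of Proposition \ref{prop_representative} and to prove it by combining the factorization result Proposition \ref{thm_factor} with the fact that $V$ is a two-sided unit for the fusion product $\boxtimes$ in the rational case. The case $r=1$ is immediate, since both $\PI_{\std_1}\circ_1\PI_\emptyset$ and $\PI_{\std_1\circ_1\emptyset}$ are the functor $\Hom(V,\bullet)$; so I would assume $r\geq 2$, in which case $\std_r\circ_p\emptyset=\std_{r-1}$ in the magma operad.

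First I would use the representability of $\PI_\emptyset=\Hom(V,\bullet)$ from Theorem \ref{thm_action}, so that, as already observed in Section \ref{sec_lax_vac}, the coend defining the operadic composition collapses:
\[
\PI_{\std_r}\circ_p\PI_\emptyset\;\cong\;\PI_{\std_r}\binom{\bullet_0}{\bullet_1\cdots\bullet_{p-1}\,V\,\bullet_{p+1}\cdots\bullet_r},
\]
i.e. it amounts to inserting $V$ into the $p$-th slot. Applying Proposition \ref{thm_factor} to $\std_r$ (with $V$ in slot $p$) and to $\std_{r-1}$ then yields natural isomorphisms
\begin{align*}
\PI_{\std_r}\circ_p\PI_\emptyset&\cong\Hom\!\bigl(\boxtimes_{\std_r}(M_1,\dots,M_{p-1},V,M_{p+1},\dots,M_r),\bullet_0\bigr),\\
\PI_{\std_r\circ_p\emptyset}&\cong\Hom\!\bigl(\boxtimes_{\std_{r-1}}(M_1,\dots,M_{p-1},M_{p+1},\dots,M_r),\bullet_0\bigr).
\end{align*}

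Next I would check that $V$ is a unit for $\boxtimes$ on both sides, up to natural isomorphism. For the left unit, Lemma \ref{lem_representative_two}, Theorem \ref{thm_FZ} and Theorem \ref{thm_equiv} give
\[
\Hom(V\boxtimes N,\bullet)\cong I\binom{\bullet}{VN}\cong\Hom_{A(V)}\!\bigl(A(V)\otimes_{A(V)}\Om(N),\Om(\bullet)\bigr)\cong\Hom_{A(V)}(\Om(N),\Om(\bullet))\cong\Hom_{\Vmodf}(N,\bullet);
\]
for the right unit,
\[
\Hom(N\boxtimes V,\bullet)\cong I\binom{\bullet}{NV}\cong\Hom_{A(V)}\!\bigl(A(N)\otimes_{A(V)}\Om(V),\Om(\bullet)\bigr)=\Hom_{A(V)}(L(N),\Om(\bullet))\cong\Hom_{A(V)}(\Om(N),\Om(\bullet)),
\]
where the last step uses Lemma \ref{lem_functor_HL}. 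Hence $V\boxtimes N\cong N\cong N\boxtimes V$ naturally, and composing these isomorphisms along the linear bracketing of $\std_r$ (the left unit when $p\leq r-1$, the right unit when $p=r$) produces a natural isomorphism $\boxtimes_{\std_r}(M_1,\dots,V,\dots,M_r)\cong\boxtimes_{\std_{r-1}}(M_1,\dots,\widehat{M_p},\dots,M_r)$.

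Consequently the source and target of $\comp_{\std_r,\emptyset,p}$ are functors $\Vmodf\times(\Vmodfo)^{r-1}\to\Vect$ with finite-dimensional values of equal dimension at every object. Since $\comp_{\std_r,\emptyset,p}$ is injective by Lemma \ref{lem_empty_lift} and $\Vmodf$ is semisimple, an injective natural transformation between such functors is automatically an isomorphism, which gives the claim. I expect the only real difficulty to be the bookkeeping: making the unit collapse in $\boxtimes_{\std_r}$ genuinely natural in all variables and distinguishing the cases $p=r$ (right unit) and $p\leq r-1$ (left unit). The apparent danger of having to match this abstract isomorphism against the specific map $\comp_{\std_r,\emptyset,p}$ is sidestepped, because injectivity together with the dimension count already forces bijectivity.
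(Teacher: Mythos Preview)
Your proof is correct and follows essentially the same strategy as the paper: reduce to a dimension count via Proposition \ref{thm_factor}, then conclude from the injectivity in Lemma \ref{lem_empty_lift}. The only cosmetic difference is the level at which the unit is cancelled: the paper works directly with the $A(V)$-module description from \eqref{eq_AV_decomp}, observing that inserting $V$ in slot $p<r$ contributes a factor $A(V)$ to the chain $A(M_1)\otimes_{A(V)}\cdots\otimes_{A(V)}\Om(M_r)$ (which drops out), and that for $p=r$ one has $A(M_{r-1})\otimes_{A(V)}\Om(V)=L(M_{r-1})\cong\Om(M_{r-1})$ by Lemma \ref{lem_functor_HL}; you instead phrase this as $V$ being a two-sided unit for $\boxtimes$, but your left- and right-unit computations unwind to exactly these same $A(V)$-module identities.
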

\begin{proof}
For $r=1$, we have $\PI_{1}\circ_1 \PI_{\emptyset}=\PI\binom{\bullet}{V} \cong \mathrm{Hom}(V,\bullet)$
by Proposition \ref{two_inter}.
Assume that $r\geq 2$.
By Proposition \ref{thm_factor}, if $p <r$, then
\begin{align*}
&\PI_{\std_r}\binom{M_0}{M_1\dots M_{p-1}VM_{p+1}\dots M_r}\\
&\cong
\mathrm{Hom}\left(A(M_1)\otimes_{A(V)} \cdots \otimes_{A(V)} A(M_{p-1})\otimes_{A(V)} A(V)
\otimes_{A(V)} A(M_{p+1})\otimes_{A(V)} \cdots \otimes_{A(V)} \Om(M_r), \Om(M_0)\right)\\
&\cong
\mathrm{Hom}\left(A(M_1)\otimes_{A(V)} \cdots \otimes_{A(V)} A(M_{p-1})\otimes_{A(V)}A(M_{p+1})\otimes_{A(V)} \cdots \otimes_{A(V)} \Om(M_r), \Om(M_0)\right)\\
&\cong\PI_{\std_{r-1}}\binom{M_0}{M_1\dots M_{p-1}M_{p+1}\dots M_r}.
\end{align*}
Hence, the dimension of the vector spaces
$\PI_{\std_r}\binom{M_0}{M_1\dots M_{p-1}VM_{p+1}\dots M_r}$
and $\PI_{\std_{r-1}}\binom{M_0}{M_1\dots M_{p-1}M_{p+1}\dots M_r}$ are the same.
By Lemma \ref{lem_empty_lift},
$\comp_{\std_r,\emptyset,p}$ is injective
and thus $\comp_{\std_r,\emptyset,p}$ is a natural isomorphism.
In the case of $p=r$, by Lemma \ref{lem_V0} and Lemma \ref{lem_functor_HL},
\begin{align*}
&\PI_{\std_r}\binom{M_0}{M_1\dots M_{r-1}V}\\
&\cong
\mathrm{Hom}\left(A(M_1)\otimes_{A(V)} \cdots \otimes_{A(V)} A(M_{r-1}) \otimes_{A(V)} \Om(V), \Om(M_0)\right)\\
&\cong
\mathrm{Hom}\left(A(M_1)\otimes_{A(V)} \cdots \otimes_{A(V)} L(M_{r-1}), \Om(M_0)\right)\\
&\cong
\mathrm{Hom}\left(A(M_1)\otimes_{A(V)} \cdots \otimes_{A(V)} \Om(M_{r-1}), \Om(M_0)\right)\\
&\cong\PI_{\std_{r-1}}\binom{M_0}{M_1\dots M_{r-1}}.
\end{align*}
Hence, the assertion holds.
\end{proof}

Now, all the assumptions in Theorem \ref{thm_add_BTC} are shown for a rational $C_2$-cofinite vertex operator algebra. Hence, $(\Vmodf,\boxtimes,\rho)$ is a braided tensor category.

As a consequence, we obtain the following description of \(r\)-point conformal blocks:
\begin{thm}
\label{cor_factorize}
Assume that $V$ is a simple rational $C_2$-cofinite vertex operator algebra.
Then, for any $M_0,M_1,\dots,M_r\in \Vmodf$ and $A\in \Tr_r$,
\begin{align*}
\CB_{\Mr}(U_A)\cong \mathrm{Hom}_\Vmodf(\boxtimes_A \Mrr, M_0)
\end{align*}
as functors.
\end{thm}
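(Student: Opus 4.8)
The plan is to derive the statement as a direct corollary of Proposition \ref{prop_representative}, once the hypotheses of that proposition have been checked in the rational $C_2$-cofinite case. The first step is the bookkeeping that identifies the object of interest with the functor $\rho(A)$: by Lemma \ref{lem_translation_isomorphism} the map $Q_{r_A}$ gives a natural isomorphism $\CB_\Mr(U_A)\cong\CB_{M_{[0;r]}^{r_A}}(U_A)$, by Theorem \ref{thm_expansion} the expansion map $e_A$ identifies the latter with the formal conformal block $\Hom_{\Drt}(D_{M_{[0;r]}^{r_A}},T_A^\conv)$, which is $\PI_A\binom{M_0}{\Mrr}$ by Proposition \ref{fCB_PIO}, and finally $d_A$ identifies this with $\rho(A)=g_A\PI_{w_A}$ by Lemma \ref{lem_sym_block}; all of these are natural in $M_0,M_1,\dots,M_r$ (this is exactly the chain \eqref{eq_def_rho_comp}). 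So it suffices to show that the lax $2$-morphism $(\PI,\comp)\colon\CPaB\to\Endp_\Vmodf$ from Theorem \ref{thm_action} is a representative strong $2$-action, and in particular that $\rho(A)(\bullet_0,\Mrr)\cong\Hom_\Vmodf(\boxtimes_A\Mrr,\bullet_0)$ for every $A\in\Tr_r$, where $\boxtimes_A$ is the iterated tensor built from $\boxtimes=\rho(12)$ as in \eqref{eq_iterated_tensor}.

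The second step is to verify the four hypotheses of Proposition \ref{prop_representative} when $V$ is simple rational $C_2$-cofinite. Hypothesis (1), representability of $\rho(\emptyset)$, is part of Theorem \ref{thm_action} (and $V$ is self-dual because it is simple). Hypothesis (2), representability of $\rho(12)(\bullet_0,M_1,M_2)$, follows from Proposition \ref{three_inter} identifying $\PI_{12}$ with logarithmic intertwining operators — which carry no logarithmic part in the rational case — together with Lemma \ref{lem_representative_two}, whose representing object is $M_1\boxtimes M_2=\bigoplus_{N_\la\in\Irr}I\binom{N_\la}{M_1M_2}\otimes N_\la$. Hypothesis (4), that $\mu\colon\rho(\std_r)\circ_p\rho(\emptyset)\to\rho(\std_r\circ_p\emptyset)$ is a natural isomorphism, is the last corollary of Section \ref{sec_rep_rational}, proved by matching dimensions via Theorem \ref{thm_FZ}, Lemma \ref{lem_V0} and Lemma \ref{lem_functor_HL} and using the injectivity from Lemma \ref{lem_empty_lift}.

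The main obstacle is Hypothesis (3): that $\mu_{\std_r}\colon\int_{\std_r}\rho(12)^{r-1}\to\rho(\std_r)$ is a natural isomorphism for all $r\ge3$, equivalently that the natural transformation $\Psi_r$ of Proposition \ref{thm_factor} is an isomorphism. I would prove this via the Frenkel--Zhu bimodule analysis of Section \ref{sec_rep_pole}: for a parenthesized intertwining operator $F\in\PI_{\std_r}\binom{M_0}{\Mrr}$, the modified operator $\bF$ of \eqref{eq_bar_F} has, after projection onto a single power lattice, poles in the variables $\ze_i=z_{i+1}/z_i$ that are uniformly bounded below (Proposition \ref{pole_bound}, using $C_1$-cofiniteness of the $M_i$); the leading coefficients, together with the Zhu-type recursions of Lemma \ref{lem_zhu}, produce a left $A(V)$-module homomorphism $A(F)$ on $A(M_1)\otimes_{A(V)}\cdots\otimes_{A(V)}A(M_{r-1})\otimes_{A(V)}(M_r)_{\Delta_r}$ (Proposition \ref{prop_factor}), with $\omega$ acting semisimply so that no logarithms survive (Corollary \ref{cor_factor}). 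Conversely, intertwining operators $\Y_{f_i}$ built from $A(V)$-module maps compose, via $\bF_{f_1,\dots,f_{r-1}}$, back to $\comp(f_1,\dots,f_{r-1})$ by Lemma \ref{comp_formal}. Combining these two directions with the decomposition \eqref{eq_AV_decomp} coming from Theorem \ref{thm_FZ} and Theorem \ref{thm_equiv} shows $\Psi_r$ is bijective.

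Granting all four hypotheses, Proposition \ref{prop_representative} yields that $\rho$ is a representative strong $2$-action, so $\rho(A)(\bullet_0,\Mrr)\cong\Hom_\Vmodf(\boxtimes_A\Mrr,\bullet_0)$ naturally in all arguments; unwinding the identification of the first paragraph gives $\CB_\Mr(U_A)\cong\Hom_\Vmodf(\boxtimes_A\Mrr,M_0)$ as functors $\Vmodf\times(\Vmodfo)^r\to\Vect$, which is the assertion.
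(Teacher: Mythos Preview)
Your proposal is correct and takes essentially the same approach as the paper: the theorem is stated there as a direct byproduct of having verified all four hypotheses of Proposition~\ref{prop_representative} in Section~\ref{sec_rep_rational}, and you have correctly identified where each hypothesis is checked (Theorem~\ref{thm_action} for (1), Proposition~\ref{three_inter} with Lemma~\ref{lem_representative_two} for (2), Proposition~\ref{thm_factor} for (3), and the subsequent corollary for (4)). The identification of $\CB_\Mr(U_A)$ with $\rho(A)$ via the chain $Q_{r_A}$, $e_A$, $d_A$ is exactly how the paper sets things up in \eqref{eq_def_rho_comp}.
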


\subsection{Explicit formula for braiding and associator}
\label{sec_explicit_braiding}
In this section, we describe explicitly the braiding and the associator
constructed in Theorem \ref{thm_add_BTC} in terms of conformal blocks.
Recall that, for \(M_1,M_2\in \Vmodf\), we have
\begin{align}
M_1\boxtimes M_2
  \cong
  \bigoplus_{\lambda\in\Irr}
  N_\lambda \otimes_\C
  I\binom{N_\lambda}{M_1\,M_2}^{*}.
\label{eq_tensor_last}
\end{align}
Let
\[
  R_{\lambda;M_1,M_2}:
  I\binom{N_\lambda}{M_1\,M_2}
  \rightarrow
  I\binom{N_\lambda}{M_2\,M_1}
\]
be the linear isomorphism induced from the analytic continuation along the clockwise representative of
\(\sigma:(12)\to(21)\)
fixed in Remark \ref{rem_fix_braid}.
More explicitly, \(R_{\lambda;M_1,M_2}\) is the composition
\begin{align*}
I\binom{N_\lambda}{M_1\,M_2} \xrightarrow{s_{12}} \CB_{N_\lambda,M_1,M_2}(U_{12}) \xrightarrow{A(\si)} \CB_{N_\lambda,M_1,M_2}(U_{21}) \xrightarrow{e_{21}}I\binom{N_\lambda}{M_2\,M_1}.
\end{align*}
Here \(s_{12}\) and \(e_{21}\) denote the standard identifications between
intertwining operators and conformal blocks developed in Section \ref{sec_D_formal}.
By the definition of the braiding in
Theorem \ref{thm_add_BTC}, $B_{M_1,M_2}:M_1\boxtimes M_2\longrightarrow M_2\boxtimes M_1$ is the Yoneda pullback of \(\rho(\sigma)^{-1}\), which is given by
\begin{align}
  B_{M_1,M_2}
  =
  \bigoplus_{\lambda\in\Irr}
  \id_{N_\lambda}\otimes \left((R_{\lambda;M_1,M_2})^{-1}\right)^*
  \label{eq_explicit_braiding}
\end{align}
where $\left((R_{\lambda;M_1,M_2})^{-1}\right)^*: I\binom{N_\lambda}{M_1\,M_2}^* \rightarrow I\binom{N_\lambda}{M_2\,M_1}^*$ 
denotes the transpose of its inverse.

We next describe the associator. Define
\[
  \mathcal H^{(12)3}_\lambda
  =
  \bigoplus_{\mu\in\Irr}
  I\binom{N_\lambda}{N_\mu\,M_3}
  \otimes_\C
  I\binom{N_\mu}{M_1\,M_2},
\]
and
\[
  \mathcal H^{1(23)}_\lambda
  =
  \bigoplus_{\nu\in\Irr}
  I\binom{N_\lambda}{M_1\,N_\nu}
  \otimes_\C
  I\binom{N_\nu}{M_2\,M_3}.
\]
These spaces are identified with the spaces of conformal blocks associated
with the trees \((12)3\) and \(1(23)\), respectively, with output
\(N_\lambda\).
Let
\[
  F_{\lambda;M_1,M_2,M_3}:
  \mathcal H^{(12)3}_\lambda
  \longrightarrow
  \mathcal H^{1(23)}_\lambda
\]
be the linear isomorphism induced from the analytic continuation along the path \(\al:(12)3\to1(23)\) in $X_3(\C)$.
Equivalently, \(F_{\lambda;M_1,M_2,M_3}\) is the the composition
\begin{align*}
&\bigoplus_{\mu\in\Irr}
I\binom{N_\lambda}{N_\mu\,M_3}
  \otimes_\C
  I\binom{N_\mu}{M_1\,M_2} \xrightarrow{\comp_1} I_{(12)3}\binom{N_\la}{M_1\,M_2\,M_3}  
  \xrightarrow{s_{(12)3}} \CB_{N_\lambda,M_1,M_2,M_3}(U_{(12)3})\\
   &\xrightarrow{A(\al)} \CB_{N_\lambda,M_1,M_2,M_3}(U_{1(23)})\xrightarrow{e_{1(23)}} I_{1(23)}\binom{N_\lambda}{M_1\,M_2\,M_3} \xrightarrow{\comp_2^{-1}} \bigoplus_{\nu\in\Irr}
  I\binom{N_\lambda}{M_1\,N_\nu}
  \otimes_\C
  I\binom{N_\nu}{M_2\,M_3}.
\end{align*}
Using
\[
  (M_1\boxtimes M_2)\boxtimes M_3
  \cong
  \bigoplus_{\lambda\in\Irr}
  N_\lambda\otimes_\C
  \left(\mathcal H^{(12)3}_\lambda\right)^*
\]
and
\[
  M_1\boxtimes(M_2\boxtimes M_3)
  \cong
  \bigoplus_{\lambda\in\Irr}
  N_\lambda\otimes_\C
  \left(\mathcal H^{1(23)}_\lambda\right)^*,
\]
the associator
\[
  \alpha_{M_1,M_2,M_3}:
  (M_1\boxtimes M_2)\boxtimes M_3
  \longrightarrow
  M_1\boxtimes(M_2\boxtimes M_3)
\]
is given by
\begin{align}
  \alpha_{M_1,M_2,M_3}
  =
  \bigoplus_{\lambda\in\Irr}
  \left(F_{\lambda;M_1,M_2,M_3}^{-1}\right)^*
  \otimes \id_{N_\lambda}.
  \label{eq_explicit_associator}
\end{align}

Finally, we will show that $\Vmodf$ is balanced (see Definition \ref{def_balanced}).
For any $M\in \Vmodf$,
let $\theta$ be a linear map defined by
\begin{align}
\theta_M: M\rightarrow M,\quad m\mapsto \exp(2\pi i L(0))m.
\label{eq_balancing_def}
\end{align}
Since 
\begin{align*}
\theta_M(a(n)m)&= \exp(2\pi i L(0))a(n)m\\
&=a(n) \exp(2\pi i (\Delta_a -n-1)) \exp(2\pi i L(0))m\\
&=a(n) \theta_M(m),
\end{align*}
$\theta_M$ is a $V$-module homomorphism
and, in fact, a natural isomorphism from the identity functor on $\Vmodf$ to itself.
Assume now that \(M_1,M_2,N_\la\) are simple and write
\[
  M_i=\bigoplus_{k\geq0}(M_i)_{\Delta_i+k},
  \qquad
  N_\lambda=\bigoplus_{k\geq0}(N_\lambda)_{\Delta_\lambda+k}.
\]
It suffices to show that
\begin{align*}
\theta_{M_1\boxtimes M_2}\circ (\theta_{M_1}^{-1}\boxtimes \theta_{M_2}^{-1})=B_{M_2,M_1}\circ B_{M_1,M_2}
\end{align*}
holds.
Since any $\Y(\bullet,z) \in I\binom{N_\la}{M_1M_2}$ is of the form
\begin{align}
\Y(\bullet,z)=\sum_{n \in \Z}a(n+\Delta_1+\Delta_2-\Delta_{\la}) z^{\Delta_\la -\Delta_1-\Delta_2-n-1},
\label{eq_exp_last}
\end{align}
by Corollary \ref{cor_twist} and \eqref{eq_exp_last}, the monodromy map with respect to the clockwise rotation
\begin{align*}
I\binom{N_\la}{M_1M_2} \overset{R_{\la;M_1,M_2}}{\longrightarrow}I\binom{N_\la}{M_2M_1} \overset{R_{\la;M_2,M_1}}{\longrightarrow}I\binom{N_\la}{M_1M_2}
\end{align*}
is given by $\exp(-2\pi i (\Delta_\la-\Delta_1-\Delta_2))$.
Hence, the action of $B_{M_2,M_1}\circ B_{M_1,M_2}$ on the $\la$-component of \eqref{eq_tensor_last}
is 
\begin{align}
\exp(2\pi i (\Delta_\la-\Delta_1-\Delta_2))
\end{align}
by \eqref{eq_explicit_braiding},
which obviously coincides with $\theta_{M_1\boxtimes M_2}\circ (\theta_{M_1}^{-1}\boxtimes \theta_{M_2}^{-1})$.
Hence, we have:
\begin{thm}
\label{thm_BTC}
Assume that $V$ is a simple rational $C_2$-cofinite vertex operator algebra.
Then, $(\Vmodf,\boxtimes,B,\al,l,r,\theta)$ is a balanced braided tensor category.
\end{thm}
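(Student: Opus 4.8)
The plan is to observe that the braided tensor category structure $(\Vmodf,\boxtimes,B,\al,l,r)$ is already in hand. Indeed, by Theorem~\ref{thm_action} the conformal blocks furnish a lax $2$-morphism $\CPaB\to\Endp_\Vmodf$ satisfying assumption~(1) of Proposition~\ref{prop_representative}; assumption~(2) holds because $\Vmodf$ is semisimple (so $I_{\log}=I$) and $I\binom{\bullet}{M_1M_2}$ is represented by $M_1\boxtimes M_2$ by Lemma~\ref{lem_representative_two}, assumption~(3) is Proposition~\ref{thm_factor}, and assumption~(4) is the corollary following it. Hence Proposition~\ref{prop_representative} produces a strong $2$-morphism $\boxtimes\colon\CPaB\to\Endo_\Vmodf$ and makes $\Vmodf$ a braided tensor category. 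So the only remaining task is to exhibit a balance in the sense of Definition~\ref{def_balanced}, and the candidate is $\theta_M\colon M\to M,\ m\mapsto\exp(2\pi iL(0))m$.

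First I would check that $\theta$ is a natural automorphism of the identity functor on $\Vmodf$. Since $V$ is of CFT type, a homogeneous $a\in V$ has $\Delta_a\in\Z_{\geq0}$, whence $L(0)a(n)=a(n)(L(0)+\Delta_a-n-1)$ gives $\exp(2\pi iL(0))a(n)=a(n)\exp(2\pi i(\Delta_a-n-1))\exp(2\pi iL(0))=a(n)\exp(2\pi iL(0))$ for $n\in\Z$; therefore $\theta_M(a(n)m)=a(n)\theta_M(m)$ and $\theta_M\in\End_\Vmodf(M)$. By rationality $L(0)$ acts semisimply on every object of $\Vmodf$, so $\theta_M$ is invertible, and naturality is automatic as $\theta_M$ is given uniformly by the $V$-action. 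In fact, if $M=\bigoplus_{k\geq0}M_{\Delta+k}$ then $\theta_M=\exp(2\pi i\Delta)\,\id_M$, so $\theta$ is a scalar on each homogeneous summand. This is the routine part.

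The substantive step is to evaluate both sides of the balance identity
\begin{align*}
B_{N,M}\circ B_{M,N}=\theta_{M\boxtimes N}\circ(\theta_M^{-1}\boxtimes\theta_N^{-1})
\end{align*}
on each isotypic component. Fix $M_1,M_2\in\Vmodf$ with $M_i=\bigoplus_{k\geq0}(M_i)_{\Delta_i+k}$, decompose $M_1\boxtimes M_2=\bigoplus_{N_\la\in\Irr}I\binom{N_\la}{M_1M_2}\otimes N_\la$ as in \eqref{eq_tensor_last}, and write $N_\la=\bigoplus_{k\geq0}(N_\la)_{\Delta_\la+k}$. On the $N_\la$-component, $\theta_{M_1\boxtimes M_2}$ acts by $\exp(2\pi i\Delta_\la)$, while by bifunctoriality of $\boxtimes$ applied to $\theta_{M_1}^{-1}=\exp(-2\pi i\Delta_1)\id$ and $\theta_{M_2}^{-1}=\exp(-2\pi i\Delta_2)\id$, the map $\theta_{M_1}^{-1}\boxtimes\theta_{M_2}^{-1}$ acts by $\exp(-2\pi i(\Delta_1+\Delta_2))$; hence the right-hand side is multiplication by $\exp(2\pi i(\Delta_\la-\Delta_1-\Delta_2))$. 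For the left-hand side, the double braiding corresponds under representability to $\rho(\si_{v_0}^2)$ for the square of the braiding generator $\si\colon(12)\to(12)$ on the unique vertex $v_0$; by Corollary~\ref{cor_twist}, $\rho(\si_{v_0}^2)$ acts on $\PI_{(12)}\cong I_{\log}\binom{\bullet}{\bullet\bullet}$ (Proposition~\ref{three_inter}) through the monodromy operator $B_{v_0}$, which multiplies $x_{12}^{\,r}$ by $\exp(2\pi ir)$. Since every intertwining operator $\Y\in I\binom{N_\la}{M_1M_2}$ has the form \eqref{eq_exp_last}, i.e. a sum of terms $a(n+\Delta_1+\Delta_2-\Delta_\la)z^{\Delta_\la-\Delta_1-\Delta_2-n-1}$ with $n\in\Z$, $B_{v_0}$ multiplies it by $\exp(2\pi i(\Delta_\la-\Delta_1-\Delta_2-n-1))=\exp(2\pi i(\Delta_\la-\Delta_1-\Delta_2))$. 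Thus both sides act by the same scalar on each component; since $\boxtimes$ is representable the identity holds for all $M,N\in\Vmodf$ (each a finite direct sum of simple modules). Hence $\theta$ is a balance and $(\Vmodf,\boxtimes,B,\al,l,r,\theta)$ is a balanced braided tensor category.

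I expect the main obstacle to be the bookkeeping in the last paragraph: pinning down conventions so that $\rho(\si_{v_0}^2)$ is precisely the double braiding $B_{N,M}\circ B_{M,N}$ on $M\boxtimes N$ rather than its inverse, and confirming that the exponent of $x_{12}$ on which $B_{v_0}$ operates is $\Delta_\la-\Delta_1-\Delta_2$ modulo $\Z$ (not shifted by the weight of the field $a$). This is where the explicit expansion \eqref{eq_exp_last}, the identification $\PI_{(12)}\cong I_{\log}$ of Proposition~\ref{three_inter}, and the $L(0)$-semisimplicity forced by rationality (so that Corollary~\ref{cor_twist} contributes no logarithmic monodromy term) all enter in an essential way.
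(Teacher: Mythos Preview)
Your proposal is correct and follows essentially the same approach as the paper: after invoking the already-established braided tensor category structure, you define $\theta_M=\exp(2\pi iL(0))$, verify it is a $V$-module automorphism, and then compare both sides of the balance identity on each isotypic component of $M_1\boxtimes M_2$ using Corollary~\ref{cor_twist} together with the explicit exponent in \eqref{eq_exp_last}. The paper's proof is identical in structure, only slightly terser; your additional remarks about semisimplicity of $L(0)$ and the scalar action of $\theta$ on simples make explicit what the paper leaves implicit.
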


\appendix
\def\thesection{\Alph{section}}
\section{Expansion and convergence}
\label{sec_appendix_A}

For $p >0$,
set
\begin{align*}
\D_p&=\{\ze\in \C\mid |\ze|<p\},\\
\D_p^\times&=\{\ze\in \C\mid 0<|\ze|<p\},\\
\D_p^\cut&=\{\ze\in \C^\cut\mid |\ze|<p \}.
\end{align*}
Let $p_1,\dots,p_r \in \R_{>0}$
and set $\mathfrak{p}=(p_1,\dots,p_r)$
\begin{align*}
\D_\mfp^a = \D_{p_1}^a \times \cdots \times \D_{p_r}^a, \quad \text{ for }a =\emptyset, \times, \cut.
\end{align*}
Let \(D^{\an}(V)\) denote the sheaf of analytic differential operators on an
open subset \(V\subset D_p\).

For $i=1,\dots,r$, let $N_i \geq 1$ and $\{f_k^i\}_{k=0,\dots,N_i-1}$ be holomorphic functions on $\D_\mfp^\times$ satisfying the following condition:
\begin{enumerate}
\item[PC)]\label{condition_PC}
$f_k^i(\ze_1,\dots,\ze_r)$ has an expansion of the form
in $\C[[\ze_i]]((\ze_1,\ze_2,\dots,\ze_{i-1},\ze_{i+1},\dots,\ze_r))$.
\end{enumerate}
That is, $f_k^i(\ze_1,\dots,\ze_r)$ is holomorphic at $\ze_i=0$ and has possible poles at $\ze_p=0$ ($p\neq i$).
For $i=1,\dots,r$, set
\begin{align}
P_i =(\ze_i \pa_i)^{N_i} + \sum_{k=0}^{N_i-1} f_k^{i} (\ze_i\pa_i)^k, \label{eq_differential_appendix}
\end{align}
which are elements of $D^\an(\D_\mfp^\times)$.
Let $D^\an(\D_\mfp^\times)\langle P_1,\dots,P_r \rangle$ be the left ideal of $D^\an(\D_\mfp^\times)$ generated by $P_1,\dots,P_r$
and set $M_P =D^\an(\D_\mfp^\times) / D^\an(\D_\mfp^\times)\langle P_1,\dots,P_r \rangle$,
which is a left $D^\an(\D_\mfp^\times)$-module.
It is clear that $M_P$ is a finitely generated $\mO_{\D_\mfp^\times}^\an$-module.
In particular, the sheaf of holomorphic solutions
\begin{align*}
V \mapsto \Hom_{D^\an(V)}(M_P, \mO^\an(V))\quad \quad\text{ for }V \subset \D_\mfp^\times
\end{align*}
 is a locally constant sheaf of finite rank.

We recall that 
$\C[[\ze_1,\dots,\ze_r]]_\mfp^\conv$ is the space of formal power series which converges
absolutely in $\D_\mfp$.
We will show the following proposition:
\begin{prop}
\label{thm_appendix}
Any holomorphic solution in $\Hom_{D^\an(\D_\mfp^\cut)}(M_P, \mO^\an(\D_\mfp^\cut))$
has a series expansion of the form 
$\C[[\ze_1,\dots,\ze_r]]_\mfp^\conv [\ze_1^\C,\log \ze_1,\dots,\ze_r^\C,\log \ze_r]$.
\end{prop}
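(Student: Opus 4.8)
The plan is to prove Proposition \ref{thm_appendix} by induction on the number of variables $r$, reducing the multivariable statement to the classical one-variable theory of regular singular ODEs via Frobenius' method. For $r=1$, $M_P$ is the $D$-module attached to a single ODE $P_1 = (\ze_1\pa_1)^{N_1} + \sum_{k=0}^{N_1-1} f_k^1 (\ze_1\pa_1)^k$ whose coefficients $f_k^1$ are holomorphic at $\ze_1 = 0$; this is exactly the situation of a regular singular point, and the classical Frobenius theorem (see e.g.\ the standard references on ODEs with regular singularities) tells us that every holomorphic solution on $\D_{p_1}^\cut$ is a finite $\C$-linear combination of terms $\ze_1^{s}(\log \ze_1)^k g(\ze_1)$ with $s\in\C$, $k\in\Z_{\geq 0}$, and $g\in\C[[\ze_1]]_{p_1}^\conv$. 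The branch of $\log$ and of $\ze_1^s$ is the one fixed in \eqref{eq_log_def}, so the solution is single-valued on $\D_{p_1}^\cut$ as required.

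For the inductive step, suppose the statement holds for $r-1$ variables. Given a solution $C\in\Hom_{D^\an(\D_\mfp^\cut)}(M_P,\mO^\an(\D_\mfp^\cut))$, first freeze the variables $\ze_2,\dots,\ze_r$ and regard $C$ as a solution of the single ODE $P_1 C = 0$ in $\ze_1$ with coefficients $f_k^1(\ze_1,\dots,\ze_r)$; by Condition (PC) these are holomorphic at $\ze_1 = 0$ (with poles only along $\ze_p = 0$, $p\neq 1$), so the indicial equation at $\ze_1 = 0$ has coefficients holomorphic in $\ze_2,\dots,\ze_r$, and the roots $s_1,\dots,s_{N_1}$ can be chosen locally holomorphically in the remaining variables. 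Applying the one-variable Frobenius analysis with parameters, $C$ admits an expansion $C = \sum_{j} \ze_1^{s_j}\sum_{l=0}^{L}(\log \ze_1)^l C_{j,l}(\ze_1,\ze_2,\dots,\ze_r)$ where each $C_{j,l}$ is holomorphic at $\ze_1=0$ and the coefficients of its $\ze_1$-power series, as functions of $\ze_2,\dots,\ze_r$, are holomorphic on $\D_{(p_2,\dots,p_r)}^\cut$. Substituting back into $P_2,\dots,P_r$ and comparing $\ze_1$-coefficients shows each such coefficient function is itself a solution of the induced $(r-1)$-variable system of the same type (the operators $P_2,\dots,P_r$ commute with $\ze_1\pa_1$ up to lower-order corrections that preserve the structure), so the induction hypothesis applies to each coefficient. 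Reassembling yields the claimed expansion in $\C[[\ze_1,\dots,\ze_r]]_\mfp^\conv[\ze_1^\C,\log\ze_1,\dots,\ze_r^\C,\log\ze_r]$.

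Two technical points need care. The first is the \emph{joint convergence}: the one-variable Frobenius solutions converge on $\D_{p_1}$ for each fixed choice of the other variables, but to land in $\C[[\ze_1,\dots,\ze_r]]_\mfp^\conv$ one must show the double series converges absolutely on the full polydisk $\D_\mfp$. This follows from the majorant/Cauchy-estimate method: the recursion defining the Frobenius coefficients can be dominated by a geometric majorant whose radius of convergence is controlled uniformly by the sup-norms of the $f_k^i$ on compact subsets of $\D_\mfp^\times$ away from the coordinate hyperplanes, together with the holomorphy of $f_k^i$ at $\ze_i=0$; since $M_P$ is holonomic its solution sheaf is locally constant of finite rank, which rules out any accumulation of singularities and gives the needed uniformity. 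The second point, which I expect to be the \textbf{main obstacle}, is handling \emph{resonances} — when two indicial roots $s_j$ differ by a nonnegative integer, either identically or along a subvariety of the $(\ze_2,\dots,\ze_r)$-space. There the logarithmic terms appear and their number $L$ must be bounded uniformly; one deals with this exactly as in the classical shearing-transformation argument, grouping resonant roots and allowing $(\log\ze_1)^l$ up to $l = N_1 - 1$, but one must check that the parameter-dependence does not produce an unbounded tower of logarithms. The holonomicity of $M_P$ (finite rank of the solution sheaf) again provides the a priori bound on the total number of logarithmic factors, so the resonant case, while notationally heavy, does not require any genuinely new idea. Corollary \ref{cor_app} (convergence of the reassembled sum on $U_{A\circ_p B}^\mfp$) then follows by the same majorant estimates applied to the specific differential equations coming from Proposition \ref{prop_differential_equation}.
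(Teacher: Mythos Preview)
Your inductive approach is genuinely different from the paper's argument, and it has a real gap in the reduction step. The claim that the $\ze_1$-Frobenius coefficients $C_{j,l}$ satisfy an $(r-1)$-variable system ``of the same type'' does not follow: for $i\geq 2$ the coefficients $f_k^i$ of $P_i$ are allowed to have \emph{poles} at $\ze_1=0$ by Condition~(PC), so the equation $P_iC=0$ does not split cleanly into equations for the individual $C_{j,l}$, and there is no obvious candidate for the reduced operators $P_2',\dots,P_r'$ acting on functions of $(\ze_2,\dots,\ze_r)$ alone. Relatedly, the indicial roots $s_j$ of $P_1$ at $\ze_1=0$ are roots of a polynomial whose coefficients $f_k^1(0,\ze_2,\dots,\ze_r)$ are only meromorphic in the remaining variables; you assert they can be chosen locally holomorphically, but the target space $\C[[\ze_1,\dots,\ze_r]]_\mfp^\conv[\ze_1^\C,\log\ze_1,\dots]$ requires the exponents to be \emph{constants} in $\C$, not functions of $(\ze_2,\dots,\ze_r)$, and you give no mechanism for this.

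The paper sidesteps all of this with a monodromy argument that treats the variables symmetrically. Take a basis $\Psi=(\psi_1,\dots,\psi_N)$ of solutions on $\D_\mfp^\cut$; since $\pi_1(\D_\mfp^\times)\cong\Z^r$ is abelian, the monodromy matrices $M_1,\dots,M_r$ around the coordinate hyperplanes commute, so one can choose commuting logarithms $L_i$ with $\exp(2\pi i L_i)=M_i$ and form $s(\ze)=\Psi(\ze)\,\ze_1^{-L_1}\cdots\ze_r^{-L_r}$, which is single-valued on $\D_\mfp^\times$. The regular-singular nature of each $P_i$ (one variable at a time, with the others frozen at \emph{nonzero} values, where (PC) does give holomorphy at $\ze_i=0$) shows that $s(\ze)$ has at worst a finite-order pole in each $\ze_i$ separately. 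A Hartogs-type lemma of Saito--Sturmfels--Takayama \cite[Lemma~2.4.13]{SST}, proved via Baire's category theorem, then upgrades ``finite-order pole in each variable separately'' to ``meromorphic at $\ze=0$'', which gives the convergent expansion immediately. No induction, no resonance bookkeeping, no parameter-dependent exponents; the constant monodromy matrices supply the constant exponents, and the SST lemma is the ingredient your sketch is missing.
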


To prove this, we will use the following lemma \cite[Lemma 2.4.13]{SST},
which follows from Baire's category theorem:
\begin{lem}{\cite{SST}}
\label{lem_SST}
Let $\psi(\ze)$ be a holomorphic function on $\D_\mfp^\times$.
Assume that for all $i \in [r]$ and all fixed values of
\begin{align*}
(\ze_1,\dots,\ze_{i-1},\ze_{i+1},\dots,\ze_r) \in \D_{p_1}^\times \times \D_{p_2}^\times \cdots \times \D_{p_{i-1}}^\times \times \D_{p_{i+1}}^\times  \times \cdots \times D_{p_r}^\times,
\end{align*}
$\psi(\ze)$ has a pole of finite order as a function of $\ze_i$.
Then, $\psi(\ze)$ is meromorphic at $\ze=0$.
\end{lem}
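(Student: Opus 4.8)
The plan is to prove the statement in two stages: first, upgrade the hypothesis — that on every fixed slice the pole order is finite — to a \emph{uniform} bound on the pole order in each variable; and second, glue these one-variable bounds together by a removable-singularity argument across the codimension-two locus where two coordinates vanish. For the first stage, I would fix an index $i$, write $\ze' = (\ze_1,\dots,\ze_{i-1},\ze_{i+1},\dots,\ze_r)$, and introduce the $k$-th Laurent coefficient of $\psi$ in $\ze_i$,
\[
c_k^{(i)}(\ze') = \frac{1}{2\pi i}\oint_{|\ze_i|=\rho}\frac{\psi(\ze)}{\ze_i^{k+1}}\,d\ze_i \qquad (0<\rho<p_i).
\]
Each $c_k^{(i)}$ is holomorphic on the connected open set $\prod_{j\neq i}\D_{p_j}^\times$ (holomorphic dependence of a contour integral on parameters, with independence of $\rho$ by Cauchy's theorem). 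The assumption that $\psi$ has a finite-order pole in $\ze_i$ on every slice says precisely that $\prod_{j\neq i}\D_{p_j}^\times = \bigcup_{N\geq 0} A_N^{(i)}$, where $A_N^{(i)} = \bigcap_{k<-N}\{c_k^{(i)} = 0\}$ is a \emph{closed} set, being an intersection of zero-sets of holomorphic functions. Since $\prod_{j\neq i}\D_{p_j}^\times$ is an open subset of $\C^{r-1}$, hence a Baire space, some $A_N^{(i)}$ has nonempty interior; on that interior every $c_k^{(i)}$ with $k<-N$ vanishes, and by the identity theorem on the connected domain $\prod_{j\neq i}\D_{p_j}^\times$ these coefficients vanish identically. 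Setting $N_i = N$, this shows $\ze_i^{N_i}\psi$ has no negative powers of $\ze_i$ on any slice, so it extends to a function holomorphic on $D_i = \D_{p_1}^\times\times\cdots\times\D_{p_i}\times\cdots\times\D_{p_r}^\times$ (its $\ze_i$-expansion having nonnegative exponents with coefficients holomorphic in $\ze'$, whence joint holomorphy).

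For the second stage, I would set $\Phi = \ze_1^{N_1}\cdots\ze_r^{N_r}\,\psi$. Since multiplication by the holomorphic factors $\ze_j^{N_j}$ with $j\neq i$ does not affect holomorphy, $\Phi$ is holomorphic on each $D_i$, hence on $\bigcup_{i=1}^r D_i$. A point of $\D_\mfp$ fails to lie in $\bigcup_i D_i$ exactly when at least two of its coordinates vanish, so $\Phi$ is holomorphic on $\D_\mfp \setminus S$ with $S = \bigcup_{i<j}\{\ze_i=\ze_j=0\}$ an analytic subset of codimension two. By the second Riemann extension theorem, $\Phi$ extends holomorphically across $S$ to all of $\D_\mfp$; thus $\Phi \in \C[[\ze_1,\dots,\ze_r]]_\mfp^\conv$ and $\psi = \Phi/(\ze_1^{N_1}\cdots\ze_r^{N_r})$ is meromorphic at the origin, as claimed.

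The main obstacle is the uniform bounding in the first stage: a priori the pole order in $\ze_i$ could grow without bound as the slice approaches the other coordinate hyperplanes, and ruling this out is exactly where Baire's theorem is indispensable. The delicate points there are verifying that the $A_N^{(i)}$ are genuinely closed and that the passage from ``vanishing on an open set'' to ``vanishing identically'' is legitimate, which relies on the connectedness of $\prod_{j\neq i}\D_{p_j}^\times$ so that the identity theorem applies. Once the uniform orders $N_i$ are secured, the gluing in the second stage is routine, resting only on the standard codimension-two extension theorem.
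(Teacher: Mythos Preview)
The paper does not prove this lemma itself; it merely quotes it from \cite[Lemma~2.4.13]{SST} with the remark that it ``follows from Baire's category theorem.'' Your argument is correct and is precisely the Baire-based proof the paper alludes to: the uniform bound on pole orders via the closed-set decomposition $\bigcup_N A_N^{(i)}$ and the identity theorem, followed by extension across the codimension-two locus, is the standard route.
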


We will use the following lemma:
\begin{lem}
\label{lem_fixed_pole}
Let $i \in [r]$ and fix values of 
\begin{align*}
(\ze_1,\dots,\ze_{i-1},\ze_{i+1},\dots,\ze_r) \in \D_{p_1}^\times \times \D_{p_2}^\times \cdots \times \D_{p_{i-1}}^\times \times \D_{p_{i+1}}^\times  \times \cdots \times D_{p_r}^\times.
\end{align*}
There exists $N \in \Z$ such that
$\lim_{z_i \to 0} \ze_i^N \psi=0$ for any $\psi \in \Hom_{D^\an(\D_\mfp^\cut)}(M_P, \mO^\an(\D_\mfp^\cut))$.
\end{lem}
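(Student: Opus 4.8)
The plan is to reduce the statement to the classical Frobenius theory of ordinary differential equations with a regular singular point. Fix once and for all values of $\ze_j \in \D_{p_j}^\times$ for $j\in [r]\setminus\{i\}$, and for a solution $\psi\in \Hom_{D^\an(\D_\mfp^\cut)}(M_P,\mO^\an(\D_\mfp^\cut))$ let $\phi(\ze_i)=\psi(\ze_1,\dots,\ze_r)$ denote the restriction of $\psi$ to the corresponding $\ze_i$-slice, which is a holomorphic function of the single variable $\ze_i$ on the cut disk $\D_{p_i}^\cut$. Since $\psi$ is annihilated by the whole left ideal generated by $P_1,\dots,P_r$, in particular $P_i\psi=0$, so $\phi$ satisfies the ordinary differential equation $\bigl((\ze_i\pa_i)^{N_i}+\sum_{k=0}^{N_i-1}f_k^i(\ze_1,\dots,\ze_r)(\ze_i\pa_i)^k\bigr)\phi=0$.

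First I would check that, after freezing the variables $\ze_j$ ($j\ne i$), this equation is Fuchsian at $\ze_i=0$. By Condition (PC) each $f_k^i$ lies in $\C[[\ze_i]]((\ze_1,\dots,\ze_{i-1},\ze_{i+1},\dots,\ze_r))$; since $f_k^i$ is a holomorphic function on $\D_\mfp^\times$ whose Taylor--Laurent expansion contains no negative power of $\ze_i$, its restriction to the chosen slice is a convergent power series in $\ze_i$ with no pole at the origin, hence holomorphic in a full neighbourhood of $\ze_i=0$. Therefore the operator $(\ze_i\pa_i)^{N_i}+\sum_{k<N_i}f_k^i(\ze_i\pa_i)^k$ is a polynomial in the Euler operator $\ze_i\pa_i$ with coefficients holomorphic at $\ze_i=0$, which is exactly the normal form of a linear operator with a regular singular point at the origin; this step is where Condition (PC) is essential, as it is what rules out an irregular singularity.

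Then I would invoke the standard structure theorem for regular singular points: on any simply connected open subset of the punctured disk, in particular on $\D_{p_i}^\cut$, the solution space of such an equation is spanned by finitely many functions of the form $\ze_i^{s}(\log\ze_i)^{j}h(\ze_i)$ with $h$ holomorphic at $\ze_i=0$, $j\in \Z_{\geq 0}$, and $s$ a root of the indicial polynomial $s^{N_i}+f_{N_i-1}^i(0)s^{N_i-1}+\dots+f_0^i(0)$ evaluated at the frozen slice. Consequently every such solution $\phi$ obeys the moderate-growth estimate $|\phi(\ze_i)|=O(|\ze_i|^{-M})$ as $\ze_i\to 0$ inside $\D_{p_i}^\cut$, where $M$ is any integer with $M>-\min_s\mathrm{Re}(s)$; note that $M$ depends only on the indicial polynomial of the frozen equation and not on the chosen solution $\psi$. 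Taking $N>M$ then yields $\lim_{\ze_i\to 0}\ze_i^{N}\phi(\ze_i)=0$, which is the assertion (with the understanding that the limit is taken within the cut disk, and that the integer $N$ works uniformly for all $\psi$ on the given slice).

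The only delicate point, as already indicated, is the verification that the frozen coefficients $f_k^i(\ze_i)$ are holomorphic rather than merely meromorphic at $\ze_i=0$; everything else is a direct citation of one-variable regular-singular ODE theory. No uniformity of $N$ with respect to the frozen variables is needed for this lemma --- that finer statement, obtained from Baire's theorem in Lemma \ref{lem_SST}, is precisely what the subsequent argument toward Proposition \ref{thm_appendix} will require.
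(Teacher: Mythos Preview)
Your proof is correct and follows essentially the same approach as the paper: freeze the other variables, observe that $P_i\psi=0$ becomes an ODE with a regular singular point at $\ze_i=0$ because Condition (PC) makes the coefficients holomorphic there, and then invoke the Frobenius structure theorem to get moderate growth with an exponent controlled by the indicial polynomial (hence independent of $\psi$). Your write-up is in fact more explicit than the paper's on both the role of (PC) and the reason $N$ is uniform in $\psi$.
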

\begin{proof}
Let $\psi \in \Hom_{D^\an(\D_\mfp^\cut)}(M_P, \mO^\an(\D_\mfp^\cut))$.
Then, $\psi$ satisfies the differential equation
\begin{align*}
P_i \psi =\left( (\ze_i \pa_i)^{N_i} + \sum_{k=0}^{N_i-1} f_k^{i} (\ze_i\pa_i)^k \right)\psi=0
\end{align*}
and $f_k^i(\ze_1,\dots,\ze_r)$ is holomorphic at $\ze_i=0$,
$P_i$ is a differential equation with a regular singular point at $\ze_i=0$.
Hence, for each fixed values $\ze_1,\dots,\ze_{i-1},\ze_{i+1},\dots,\ze_r$,
there exists $N \in \Z$ such that $\lim_{\ze_i \to 0} \ze_i^N \psi=0$.
Furthermore, $N$ can be taken independent of $\psi$.
\end{proof}

\begin{proof}[proof of Proposition \ref{thm_appendix}]
Let $\psi_1,\dots,\psi_N$ be a basis of $\Hom_{D^\an(\D_\mfp^\cut)}(M_P, \mO^\an(\D_\mfp^\cut))$.
and set $\Psi=(\psi_1,\dots,\psi_N)$, which is a vector-valued holomorphic function.
The fundamental group $\pi_1(\D_\mfp^\times)=\Z^r$ is a free abelian group generated by $r$ elements $\ga_i$ which encircle $\ze_i=0$.
Consider the analytic continuation $\ga_i \Psi$ of $\Psi$ along the path $\ga_i$.
Then, there exists a matrix $M_i$ such that $\ga_i \Psi =\Psi M_i$.
Since $\pi_1(\D_\mfp^\times)$ is Abelian, the matrices $M_i$ commute with each other and 
are invertible. Then, there exists a commuting family of matrices $L_i$ such that
$\exp(2\pi i L_i) =M_i$. Set
\begin{align*}
s(\ze) = \Psi(\ze) \ze_1^{-L_1}\cdots \ze_r^{-L_r}.
\end{align*}
Then, each component of $s(\ze)$ is a single valued meromorphic function on $\D_\mfp^\times$.
By Lemma \ref{lem_fixed_pole}, for each $i \in \{1,\dots,r\}$ and fixed values $\ze_1,\dots,\ze_{i-1},\ze_{i+1},\dots,\ze_r$, each component of $s(\ze)$ has a pole of finite order at $\ze_i=0$.
Hence, by Lemma \ref{lem_SST}, $s(\ze)$ is a meromorphic function at $\ze=0$
and thus $\{\psi_k\}_{k=1,\dots,N}$ has a convergent expansion of the form $\C[[\ze_1,\dots,\ze_r]]_\mfp^\conv[\ze_1^\C,\log \ze_1,\dots,\ze_r^\C,\log \ze_r]$.
Hence, Proposition \ref{thm_appendix} is obtained.
\end{proof}

Next, we will show a result of a convergence of formal power series.


%
%
%
\begin{lem}
\label{lem_Frob}
Let $r \geq 0$, $p_1,\dots,p_r,q \in \R_{>0}$ and $N \in \Z_{>0}$,
$\{f_k(\ze_1,\dots,\ze_r,\xi)\}_{k=0,1,\dots,N-1},g(\ze_1,\dots,\ze_r,\xi)$
be continuous functions on $\D_\mfp^\cut \times \D_{q}$
which is holomorphic with respect to $\xi \in \D_q$
and $\{c_l(\ze_1,\dots,\ze_r)\}_{l=0,1,2,\dots}$ be continuous functions on $\D_\mfp^\cut$.
Set
\begin{align}
\psi=\sum_{l=0}^\infty c_l(\ze_1,\dots,\ze_r)\xi^l,
\label{eq_app_psi}
\end{align}
which is a formal power series of continuous function coefficients.
Assume that $\psi$ formally satisfies
\begin{align}
\left(\left(\xi \frac{d}{d\xi}\right)^N + \sum_{k=0}^{N-1} f_k(\ze_1,\dots,\ze_r,\xi)\left(\xi\frac{d}{d\xi}\right)^k\right)\psi=g(\ze_1,\dots,\ze_r,\xi).
\label{eq_diff_xi}
\end{align}
Then, the sum
\begin{align*}
\sum_{l=0}^\infty c_l(\ze_1,\dots,\ze_r)\xi^l
\end{align*}
converges absolutely and locally uniformly on $\D_\mfp^\cut \times \D_q$.
\end{lem}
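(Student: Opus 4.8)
The statement is a parametrized version of the classical fact that a formal Frobenius-type solution of an ODE with a regular singular point at $\xi=0$ converges in a disk whose radius is controlled by the radius of convergence of the coefficients. The plan is to reduce to a majorant (method of majorants / Cauchy estimates) argument, carried out uniformly in the spectator parameters $\ze_1,\dots,\ze_r$, and then upgrade pointwise-in-$\ze$ convergence to locally uniform convergence in $(\ze_1,\dots,\ze_r,\xi)$ by a standard equicontinuity/normal-families argument.

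\textbf{Step 1: recursion for the coefficients.} First I would expand the coefficient functions $f_k(\ze,\xi)=\sum_{j\geq 0}f_{k,j}(\ze)\xi^j$ and $g(\ze,\xi)=\sum_{j\geq 0}g_j(\ze)\xi^j$ in $\xi$ (these series converge in $\D_q$ since the functions are holomorphic in $\xi$), and substitute \eqref{eq_app_psi} into \eqref{eq_diff_xi}. Matching coefficients of $\xi^l$ gives, for each $l$, an equation of the form
\begin{align*}
P(l)\,c_l(\ze) = g_{l}(\ze) - \sum_{k=0}^{N-1}\sum_{j=1}^{l} f_{k,j}(\ze)\,(l-j)^k\, c_{l-j}(\ze),
\end{align*}
where $P(l)=l^N+\sum_{k=0}^{N-1}f_{k,0}(\ze)\,l^k$ is the indicial polynomial evaluated at $l$. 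Since $\psi$ is assumed to satisfy \eqref{eq_diff_xi} \emph{formally} with the given $c_l$, these identities hold as identities of continuous functions on $\D_\mfp^\cut$; in particular I do not need $P(l)$ to be invertible — the $c_l$ are given to me, and the recursion merely records a relation they satisfy. (This is the point that makes the lemma true without hypotheses on the indicial roots.)

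\textbf{Step 2: majorant/Cauchy estimate, for fixed $\ze$.} Fix $(\ze_1,\dots,\ze_r)\in \D_\mfp^\cut$ and fix $0<q'<q$. Since $f_k$ and $g$ are holomorphic in $\xi\in\D_q$ and continuous on the compact polydisc-slice, Cauchy's inequality gives bounds $|f_{k,j}(\ze)|\leq A\,(q')^{-j}$ and $|g_j(\ze)|\leq B\,(q')^{-j}$ with $A,B$ depending on $\ze$ and $q'$. From the recursion in Step 1, since $P(l)\sim l^N$ dominates the factors $(l-j)^k$ with $k\leq N-1$ for $l$ large, one gets for $l\geq l_0$ an estimate of the shape $|c_l(\ze)|\leq \frac{C}{l}\sum_{j=1}^{l}(q')^{-j}|c_{l-j}(\ze)| + \frac{B'}{l}(q')^{-l}$. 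A routine induction (comparing with the coefficients of $\frac{1}{1-\xi/q'}$, or directly bounding $|c_l(\ze)|(q')^l$) then shows $\limsup_l |c_l(\ze)|^{1/l}\leq 1/q'$; letting $q'\uparrow q$ gives radius of convergence $\geq q$ in $\xi$. The only subtlety is the finitely many low-order terms $l<l_0$ where $P(l)$ could interfere, but these contribute only a fixed finite sum and do not affect the $\limsup$.

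\textbf{Step 3: local uniformity in $\ze$.} The constants $A,B,C$ above can be taken locally uniform in $\ze$: on any compact subset $K\subset \D_\mfp^\cut$, $|f_k|,|g|$ are bounded on $K\times \overline{\D_{q'}}$, and the $c_l$ are continuous hence bounded on $K$, so the induction in Step 2 yields $|c_l(\ze)|\leq M_K\,(q')^{-l}$ for all $\ze\in K$ and all $l$, with $M_K$ independent of $l$ and $\ze$. This Weierstrass $M$-test bound on $K\times \overline{\D_{q''}}$ (any $q''<q'<q$) gives absolute and uniform convergence of $\sum_l c_l(\ze)\xi^l$ there; since $K$ and $q''$ are arbitrary, convergence is absolute and locally uniform on $\D_\mfp^\cut\times\D_q$, which is the claim.

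\textbf{Main obstacle.} The one genuinely delicate point is handling the indicial polynomial $P(l)$: I must make sure the recursion can be run past the (finitely many) integers $l$ where $P(l)$ is small or vanishes. Because the $c_l$ are \emph{given} (the hypothesis is that $\psi$ formally solves the equation, not that we construct it), this is not an obstruction to existence — it only means that in the inductive estimate of Step 2 I should isolate the finitely many indices $l<l_0$ (where $l_0$ is chosen so that $|P(l)|\geq \tfrac12 l^N$ for $l\geq l_0$, uniformly on the compact $K$ — possible since $P(l)/l^N\to 1$) and absorb their contribution into the constant $M_K$. Everything else is the standard method of majorants, and I would keep those computations terse.
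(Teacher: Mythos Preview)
Your proposal is correct and follows essentially the same approach as the paper: both derive the coefficient recursion, apply Cauchy estimates to the $\xi$-expansions of $f_k$ and $g$ on a compact $K\subset\D_\mfp^\cut$ and a radius $R<q$, use that the indicial polynomial $P_0(l)\sim l^N$ dominates for $l$ large (absorbing finitely many low indices into the constant), and then run a majorant argument to obtain uniform bounds of the type $|c_l|\lesssim R^{-l}$ on $K$. The only cosmetic difference is in how the majorant is packaged: the paper introduces an explicit comparison sequence $d_p$ satisfying $d_p=B^{-1}(1+N\sum_{l<p}d_l)$ for $p$ large, sums its generating function $F(\xi)$ to a rational function with pole at $\xi=(1+NB^{-1})^{-1}$, and then lets $B\to\infty$ and $R\to q$; you instead sketch a direct inductive bound and a $\limsup$ argument, which amounts to the same thing.
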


\begin{proof}
Set $f_N(\ze_1,\dots,\ze_r,\xi)=1$ and let $a_{k,n}(\ze_1,\dots,\ze_r)$ and $b_{n}(\ze_1,\dots,\ze_r)$ be the coefficient of $\xi^n$ ($n=0,1,2,\dots$) in $f_k(\ze_1,\dots,\ze_r,\xi)$ and $g(\ze_1,\dots,\ze_r,\xi)$, respectively, for $k=0,\dots,N$.
Then,
\begin{align}
\begin{split}
f_k(\ze_1,\dots,\ze_r,\xi) &= \sum_{n \geq 0}a_{k,n}(\ze_1,\dots,\ze_r) \xi^n,\\
g(\ze_1,\dots,\ze_r,\xi) &= \sum_{n \geq 0}b_n(\ze_1,\dots,\ze_r)\xi^n.
\end{split}
\label{eq_app_fg}
\end{align}
Taking the coefficient of $\xi^p$ in the differential equation \eqref{eq_diff_xi},
by \eqref{eq_app_fg} and \eqref{eq_app_psi},
we have
\begin{align*}
\sum_{k=0}^{N} \sum_{p \geq l \geq 0} l^k a_{k,p-l}(\ze_1,\dots,\ze_r)c_l(\ze_1,\dots,\ze_r)
= b_p(\ze_1,\dots,\ze_r).
\end{align*}
For $t\geq 0$, set $P_{t}(l)= \sum_{k=0}^N l^k a_{k,t}(\ze_1,\dots,\ze_r)$.
Since $a_{N,p}(\ze_1,\dots,\ze_r)=0$ for any $p\geq 1$,
\begin{align}
P_{0}(p) c_p(\ze_1,\dots,\ze_r) = b_p(\ze_1,\dots,\ze_r) -\sum_{p> l \geq 0}P_{p-l}(l)c_l(\ze_1,\dots,\ze_r).
\label{eq_recursion_diff}
\end{align}
Let $K$ be a compact subset of $\D_\mfp^\cut$ and $R \in \R$ satisfy $q>R>0$.
Set
\begin{align*}
C_k = \max_{\substack{(\ze_1,\dots,\ze_r) \in K\\ \theta \in [0,2\pi]}} |f_k(\ze,R\exp(i\theta))|
\;\; \text{ and }\;\; C'=  \max_{\substack{(\ze_1,\dots,\ze_r) \in K\\ \theta \in [0,2\pi]}} |g(\ze,R\exp(i\theta))|.
\end{align*}
Since $f_k$ and $g$ are continuous on $\D_\mfp^\cut \times \D_q$, these numbers are finite.
Set $C= \max \{C_0,\dots,C_{N-1},C'\}$.
Then, by the Cauchy inequality,
\begin{align*}
|a_{k,t}(\ze_1,\dots,\ze_r)| \leq R^{-t}C \;\;\text{ and }\;\;
|b_t(\ze_1,\dots,\ze_r)| \leq R^{-t}C
\end{align*}
for any $t\geq 0$, $k=0,\dots,N-1$ and $(\ze_1,\dots,\ze_r)\in K$.
Hence, if $t \geq 1$,
\begin{align*}
|P_t(l)|\leq N C R^{-t}l^{N-1}.
\end{align*}
Since $P_0(l)=l^N+\sum_{k=0}^{N-1}a_{k,0}(\ze_1,\dots,\ze_r) l^k$,
$P_0(l)$ is a polynomial of degree $N$ in $l$
and its coefficients are bounded for any $(\ze_1,\dots,\ze_r)\in K$.
Hence, for any real number $B>0$,
there exists $M \geq 0$ such that $|P_0(l)| \geq B C l^{N-1}$
for any $l\geq M$.

Then, by \eqref{eq_recursion_diff} and combining the above inequalities,
we have
\begin{align*}
|c_p(\ze_1,\dots,\ze_r)| &\leq (BC)^{-1}p^{-(N-1)}
\left(CR^{-p} +N C R^{-p}\sum_{l=0}^{p-1}R^l|c_l(\ze_1,\dots,\ze_r)|l^{N-1}\right)
\end{align*}
for any $l \geq M$.
Hence, we have
\begin{align*}
R^p|c_p(\ze_1,\dots,\ze_r)| \leq  B^{-1}\left(1+N \sum_{l=0}^{p-1}R^l |c_l(\ze_1,\dots,\ze_r)|\right)
\text{ for any }p\geq M \text{ and }(\ze_1,\dots,\ze_r) \in K.
\end{align*}
To show that
$\sum_{l\geq 0}c_l(\ze_1,\dots,\ze_r)\xi^l$ converges uniformly and absolutely in $K \times \{|z|<R\}$, we will use the comparison test.
Set 
\begin{align*}
d_p =\begin{cases}
R^p|c_p(\ze_1,\dots,\ze_r)| & p \leq M\\
B^{-1}\left(1+N\sum_{l=0}^{p-1}d_l \right) & p>M
\end{cases}
\end{align*}
and $F(\xi)=\sum_{p=0}^\infty d_p \xi^p \in \C[[\xi]]$.
Then, $F(\xi)$ satisfies
\begin{align}
F(\xi) &= \sum_{p\leq M}\xi^p \left(d_p-B^{-1}-B^{-1}N\sum_{l=0}^{p-1}d_l \right)
+\sum_{p=0}^\infty \xi^p \left(B^{-1}+N\sum_{l=0}^{p-1}d_l\right)\\
&= Q(\xi)+B^{-1}\left(\frac{1}{1-\xi}+N\frac{\xi F(\xi)}{1-\xi}\right), \label{eq_W}
\end{align}
where $Q(\xi)=\sum_{p\leq M}\xi^p \left(d_p-B^{-1}-N\sum_{l=0}^{p-1}d_l \right)$,
a polynomial of degree $M$.
Hence, 
\begin{align*}
F(\xi)=\frac{B^{-1}+(1-\xi)Q(\xi)}{1-(1+NB^{-1})\xi},
\end{align*}
which is absolutely convergent if $|\xi|<\frac{1}{1+NB^{-1}}$.
Hence, $\sum_{l\geq 0}c_l(\ze_1,\dots,\ze_r)\xi^l$ converges uniformly and absolutely in 
$K \times \{|\xi|<\frac{R}{1+NB^{-1}} \}.$
Since $B>0$ and $0<R<q$ could be chosen arbitrarily,
$\psi$ converges uniformly and absolutely in $K \times \D_q$.
\end{proof}

\begin{prop}
\label{prop_one_convergence}
Let $p_1,\dots,p_r,q \in \R_{>0}$ and $N \in \Z_{>0}$,
$\{f_k(\ze_1,\dots,\ze_r,\xi)\}_{k=0,1,\dots,N-1}$
be continuous functions on $\D_\mfp^\cut \times \D_{q}$
which is holomorphic with respect to $\xi \in \D_q$.
Let $M \in \Z_{\geq 0}$ and $\{c_{l,i}(\ze_1,\dots,\ze_r)\}_{l =0,1,2,\dots,i=0,1,\dots,M}$ be continuous functions on $\D_\mfp^\cut$ 
and set
\begin{align*}
\psi=\sum_{i=0}^M \sum_{l \geq 0}c_{l,i}(\ze_1,\dots,\ze_r)\xi^l (\log\xi)^i.
\end{align*}
Assume that $\psi$ formally satisfies 
\begin{align}
\left(\left(\xi \frac{d}{d\xi}\right)^N + \sum_{k=0}^{N-1} f_k(\ze_1,\dots,\ze_r,\xi)\left(\xi \frac{d}{d\xi}\right)^k\right)\psi=0.
\label{eq_app_diff}
\end{align}
Then, for any $i=0,\dots,M$ the series
\begin{align*}
\sum_{l \geq 0}c_{l,i}(\ze_1,\dots,\ze_r)\xi^l 
\end{align*}
converges absolutely and locally uniformly on $\D_\mfp^\cut \times \D_q$.
\end{prop}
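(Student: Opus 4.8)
The plan is to peel off the logarithmic terms one power at a time, reducing at each stage to an inhomogeneous Fuchsian equation in $\xi$ to which Lemma \ref{lem_Frob} applies directly.

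Write $\vartheta = \xi\frac{d}{d\xi}$, set $f_N = 1$, and denote by $L = \sum_{k=0}^N f_k\vartheta^k$ the operator appearing in \eqref{eq_app_diff}. For $i = 0,\dots,M$ put $\psi_i = \sum_{l\ge 0} c_{l,i}(\ze_1,\dots,\ze_r)\xi^l$, so that $\psi = \sum_{i=0}^M \psi_i(\log\xi)^i$. Since $\vartheta(\log\xi) = 1$, Leibniz's rule gives $\vartheta^k\bigl(\psi_j(\log\xi)^j\bigr) = \sum_{a\ge 0}\binom{k}{a}\frac{j!}{(j-a)!}(\vartheta^{k-a}\psi_j)(\log\xi)^{j-a}$; collecting in $L\psi$ the coefficient of $(\log\xi)^i$ therefore yields, for every $i$,
\begin{align*}
\Bigl(\vartheta^N + \sum_{k=0}^{N-1} f_k\vartheta^k\Bigr)\psi_i \;=\; -\sum_{j>i} R_{i,j}\,\psi_j ,
\end{align*}
where each $R_{i,j}$ is a differential operator in $\vartheta$ of order at most $N-1$ whose coefficients are $\C$-linear combinations (with integer binomial/factorial weights) of the $f_k$. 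The essential structural facts are that applying $L$ never raises the power of $\log\xi$, that the top-order operator $\vartheta^N$ lands only on the diagonal term $\psi_i$, and that the off-diagonal contributions involve only strictly higher $\psi_j$; thus the system is triangular. (The vanishing of every $(\log\xi)^i$-coefficient of $L\psi$ is forced by $L\psi = 0$ together with the formal independence of $1,\log\xi,\dots,(\log\xi)^M$.)

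The argument then proceeds by downward induction on $i$ from $M$ to $0$. For $i = M$ the right-hand side is empty, so $\psi_M$ formally satisfies $\vartheta^N\psi_M + \sum_{k=0}^{N-1} f_k\vartheta^k\psi_M = 0$; Lemma \ref{lem_Frob} (with $g = 0$) shows that $\psi_M$ converges absolutely and locally uniformly on $\D_\mfp^\cut\times\D_q$, and its limit is continuous there and holomorphic in $\xi$, being a locally uniform limit of polynomials in $\xi$. Assume $\psi_{i+1},\dots,\psi_M$ have been shown to converge in this sense. Then $g_i := -\sum_{j>i} R_{i,j}\psi_j$ is a finite sum of products of the $f_k$ with $\vartheta$-derivatives of functions already known to be continuous on $\D_\mfp^\cut\times\D_q$ and holomorphic in $\xi$; hence $g_i$ is itself continuous on $\D_\mfp^\cut\times\D_q$, holomorphic with respect to $\xi$, and its $\xi$-Taylor coefficients are continuous functions of $(\ze_1,\dots,\ze_r)$. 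Since $\psi_i$ formally satisfies $\vartheta^N\psi_i + \sum_{k=0}^{N-1} f_k\vartheta^k\psi_i = g_i$, Lemma \ref{lem_Frob} applies once more and gives the absolute, locally uniform convergence of $\psi_i = \sum_l c_{l,i}(\ze_1,\dots,\ze_r)\xi^l$ on $\D_\mfp^\cut\times\D_q$. This closes the induction and yields the proposition.

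The main obstacle is the bookkeeping in the second paragraph: organizing $L\bigl(\sum_i\psi_i(\log\xi)^i\bigr)$ by powers of $\log\xi$, checking the claimed triangular shape of the resulting system, and verifying that the off-diagonal operators $R_{i,j}$ inherit continuity in $(\ze_1,\dots,\ze_r)$ and holomorphy in $\xi$ from the $f_k$, so that the hypotheses of Lemma \ref{lem_Frob} are met at each step. Once this is in place, the convergence statement is an immediate iterated application of Lemma \ref{lem_Frob}.
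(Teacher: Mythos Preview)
Your proposal is correct and follows essentially the same approach as the paper: set $\psi_i=\sum_l c_{l,i}\xi^l$, extract the coefficient of $(\log\xi)^i$ from $L\psi=0$ to obtain a triangular system, and apply Lemma~\ref{lem_Frob} by downward induction on $i$, starting with the homogeneous equation for $\psi_M$. The paper merely writes out the cases $i=M$ and $i=M-1$ explicitly and then says ``by induction,'' whereas you package the off-diagonal terms into operators $R_{i,j}$, but the content is the same.
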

\begin{proof}
Set
\begin{align*}
\psi_i = \sum_{l \geq 0}c_{l,i}(\ze_1,\dots,\ze_r)\xi^l
\end{align*}
for $i=0,\dots,M$.
By taking the coefficients of $(\log \xi)^M$ in the differential equation \eqref{eq_app_diff},
we have
\begin{align*}
\left(\left(\xi \frac{d}{d\xi}\right)^N + \sum_{k=0}^{N-1} f_k(\ze_1,\dots,\ze_r,\xi)\left(\xi \frac{d}{d\xi}\right)^k\right)\psi_M=0.
\end{align*}
Hence, by Lemma \ref{lem_Frob}, the series $\psi_M$ converges absolutely and uniformly,
and thus $\psi_M$ is a continuous function on $\D_\mfp^\times \times \D_q$ and holomorphic with respect to $\xi$.

Similarly, by taking the coefficients of $(\log \xi)^{M-1}$ in \eqref{eq_app_diff}
we have
\begin{align*}
&\left(\left(\xi \frac{d}{d\xi}\right)^N + \sum_{k=0}^{N-1} f_k(\ze_1,\dots,\ze_r,\xi)\left(\xi \frac{d}{d\xi}\right)^k\right)
\psi_{M-1}(\ze_1,\dots,\ze_r,\xi)\\
&=\sum_{k=1}^{M}  f_k(\ze_1,\dots,\ze_r,\xi) Mk \left(\xi\frac{d}{d\xi}\right)^{k-1} \psi_{M}.
\end{align*}
Since the right hand side is a continuous function on $\D_\mfp^\times\times \D_q$ and holomorphic with respect to $\xi$.
By Lemma \ref{lem_Frob}, $\psi_{M-1}$ also satisfies this property.
Hence, the assertion holds by the induction.
\end{proof}

\begin{cor}
\label{cor_app}
Let $r \geq 0$, $p_1,\dots,p_r,q \in \R_{>0}$ and $N,M \in \Z_{>0}$, $h\in \C$,
$\{f_k(\ze_1,\dots,\ze_r,\xi)\}_{k=0,1,\dots,N-1}$
be continuous functions on $\D_\mfp^\cut \times \D_{q}$,
which is holomorphic with respect to $\xi \in \D_q$
and 
\begin{align*}
c_{l,i}(\ze_1,\dots,\ze_r) \in \C[[\ze_1,\dots,\ze_r]]_\mfp^\conv[\ze_1^\C,\log \ze,\dots,\ze_r^\C,\log\ze_r]
\end{align*}
for $l=0,1,2,\dots$, $i=0,1,\dots,M$ be convergent formal power series.
Set 
\begin{align}
\psi=\sum_{i=0}^M \sum_{l=0}^\infty c_{l,i}(\ze_1,\dots,\ze_r)\xi^{h+l} (\log\xi)^i,
\label{eq_app_psi2}
\end{align}
which is a formal power series of continuous function coefficients.
Assume that $\psi$ formally satisfies
\begin{align}
\left(\left(\xi \frac{d}{d\xi}\right)^N + \sum_{k=0}^{N-1} f_k(\ze_1,\dots,\ze_r,\xi)\left(\xi\frac{d}{d\xi}\right)^k\right)\psi=0.
\label{eq_diff_xi}
\end{align}
Then, the series $\psi$ converges absolutely and locally uniformly on $\D_\mfp^\cut \times \D_q$.
In particular, $\psi$ is a holomorphic function on $\D_{\mfp}^\cut \times \D_q^\cut$.
\end{cor}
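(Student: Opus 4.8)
The plan is to reduce Corollary~\ref{cor_app} to Proposition~\ref{prop_one_convergence} by two elementary reductions: absorbing the overall power $\xi^h$ into a conjugation of the differential operator, and observing that the convergent parenthesized coefficients $c_{l,i}(\ze_1,\dots,\ze_r)$ are, once a branch is fixed, continuous functions on $\D_\mfp^\cut$ in the sense required there.

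First I would write $\tilde\psi=\sum_{i=0}^M\sum_{l\geq 0}c_{l,i}(\ze_1,\dots,\ze_r)\xi^l(\log\xi)^i$, so that $\psi=\xi^h\tilde\psi$ as formal series. Using the formal identity $\xi\frac{d}{d\xi}\circ\xi^h=\xi^h\circ\bigl(\xi\frac{d}{d\xi}+h\bigr)$ on series in $\xi^\C[\log\xi]$, the equation $\bigl((\xi\frac{d}{d\xi})^N+\sum_{k=0}^{N-1}f_k(\xi\frac{d}{d\xi})^k\bigr)\psi=0$ is equivalent to $\bigl((\xi\frac{d}{d\xi}+h)^N+\sum_{k=0}^{N-1}f_k(\xi\frac{d}{d\xi}+h)^k\bigr)\tilde\psi=0$. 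Expanding $(\xi\frac{d}{d\xi}+h)^n=\sum_j\binom{n}{j}h^{n-j}(\xi\frac{d}{d\xi})^j$ and collecting powers of $\xi\frac{d}{d\xi}$ shows that this operator is monic of order $N$, i.e.\ of the form $(\xi\frac{d}{d\xi})^N+\sum_{j=0}^{N-1}\tilde f_j(\xi\frac{d}{d\xi})^j$, where each $\tilde f_j=\binom{N}{j}h^{N-j}+\sum_{k=j}^{N-1}\binom{k}{j}h^{k-j}f_k$ is a fixed $\C$-linear combination of $1,f_j,\dots,f_{N-1}$, hence again continuous on $\D_\mfp^\cut\times\D_q$ and holomorphic in $\xi\in\D_q$.

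Next I would note that, with the branch of $\Log$ fixed as in~\eqref{eq_log_def}, every element of $\C[[\ze_1,\dots,\ze_r]]_\mfp^\conv[\ze_1^\C,\log\ze_1,\dots,\ze_r^\C,\log\ze_r]$ is a finite sum of products of a function holomorphic on $\D_\mfp$ with the single-valued functions $\ze_i^{s_i}$ and $\log\ze_i$ on $\D_{p_i}^\cut$; in particular each $c_{l,i}$ is a continuous (indeed holomorphic) function on $\D_\mfp^\cut$. Thus $\tilde\psi$, together with the operator $(\xi\frac{d}{d\xi})^N+\sum_j\tilde f_j(\xi\frac{d}{d\xi})^j$, meets exactly the hypotheses of Proposition~\ref{prop_one_convergence}, which gives that each series $\sum_{l\geq 0}c_{l,i}(\ze_1,\dots,\ze_r)\xi^l$ converges absolutely and locally uniformly on $\D_\mfp^\cut\times\D_q$. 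Since the sum over $i$ is finite, the same holds for $\tilde\psi$ and hence for $\psi=\xi^h\tilde\psi$. For the last sentence, a locally uniformly convergent series of functions holomorphic on $\D_\mfp^\cut\times\D_q$ is holomorphic there by the Weierstrass convergence theorem; multiplying by $\xi^h$ and $(\log\xi)^i$, which are holomorphic on $\D_q^\cut$, shows that $\psi$ is holomorphic on $\D_\mfp^\cut\times\D_q^\cut$.

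I do not expect a serious obstacle here: all the analytic content is already contained in Proposition~\ref{prop_one_convergence} (and, through it, in Lemma~\ref{lem_Frob}). The only steps requiring care are the bookkeeping ones — verifying that conjugation by $\xi^h$ preserves the relevant class of operators (monic of order $N$, coefficients continuous in all variables and holomorphic in $\xi$), and being consistent about the choice of branch of the logarithm and of the fractional powers so that the parenthesized coefficients genuinely define single-valued continuous functions on the cut domain $\D_\mfp^\cut$.
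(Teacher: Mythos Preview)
Your proposal is correct and follows the same route as the paper: observe that the convergent parenthesized coefficients $c_{l,i}$ are continuous on $\D_\mfp^\cut$, then invoke Proposition~\ref{prop_one_convergence} and conclude holomorphicity via uniform convergence. Your explicit conjugation by $\xi^h$ to reduce to the $h=0$ case is a detail the paper leaves implicit, but otherwise the arguments coincide.
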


\begin{proof}
By definition, $\{c_{l,i}(\ze_1,\dots,\ze_r)\}_{l=0,1,\dots,i=0,1,\dots,M}$ are continuous function
on $\D_\mfp^\cut$.
Hence, by Proposition \ref{prop_one_convergence}, the series $\psi$ converges absolutely and uniformly.
Since each term of $\psi$ is a holomorphic function $\D_\mfp^\cut \times \D_q^\cut$,
$\psi$ is also holomorphic. Hence, the assertion holds.
\end{proof}

\noindent
\begin{center}
{\bf Acknowledgements}
\end{center}

I wish to express my gratitude to Tomoyuki Arakawa for valuable discussions and comments.
I would also like to thank Shigenori Nakatsuka
and Thomas Creutzig for valuable comments.
This work is partially supported by the Research Institute for Mathematical Sciences,
an International Joint Usage/Research Center located in Kyoto University,
and RIKEN iTHEMS Program.

\bibliographystyle{alpha}
\bibliography{references}

\end{document}